\documentclass[11pt,reqno]{amsart}

\usepackage[leqno]{amsmath}
\usepackage{amsthm}
\usepackage{amsfonts, tikz-cd}
\usepackage{amssymb}
\usepackage{eucal}
\usepackage[all]{xy}
\usepackage{mathtools}
\usepackage{stmaryrd}
\usepackage{csquotes}
\usepackage{enumitem}
\usepackage{upgreek}

\setenumerate{itemsep=2pt,parsep=2pt,before={\parskip=2pt}}

\usepackage[colorlinks=true,hyperindex, linkcolor=magenta, pagebackref=false, citecolor=cyan,pdfpagelabels]{hyperref}

\newcommand{\xrightarrowdbl}[2][]{%
  \xrightarrow[#1]{#2}\mathrel{\mkern-14mu}\rightarrow
}

   \topmargin=0in
   \oddsidemargin=0in
   \evensidemargin=0in
   \textwidth=6.5in
   \textheight=8.5in

\usepackage{xspace}
\usepackage{epsfig,epic,eepic,latexsym,color}
\usepackage[all]{xy}
\usepackage{comment} 
\numberwithin{equation}{section}

\usepackage[backend=biber, style=alphabetic, maxalphanames=5, maxnames=99, useprefix=true]{biblatex}
\addbibresource{biblio.bib}

\usepackage[english]{babel}
\usepackage{latexsym}

\newcommand{\nc}{\newcommand}
\nc{\rnc}{\renewcommand}
\rnc{\P}{\mathbf P}
\nc{\R}{\mathbf R}
\rnc{\rm}{\mathrm}
\rnc{\bf}{\mathbf}
\nc{\cal}{\mathcal}

\nc{\C}{\mathbf C}
\nc{\Q}{\mathbf Q}
\nc{\Z}{\mathbf Z}
\nc{\A}{\mathbf A}

\nc{\rR}{\mathrm{R}}

\nc{\an}{\operatorname{an}}
\nc{\perfd}{\operatorname{perfd}}
\nc{\perf}{\operatorname{new, perf}}
\nc{\diam}{\diamondsuit}

\nc{\htt}{\operatorname{ht}}
\nc{\Nm}{\operatorname{Nm}}
\nc{\Ker}{\operatorname{Ker}}
\nc{\mmod}{\operatorname{mod}}
\nc{\End}{\operatorname{End}}
\nc{\Tor}{\operatorname{Tor}}
\nc{\coker}{\operatorname{Coker}}
\nc{\dR}{\mathrm{dR}}
\nc{\crys}{\mathrm{crys}}
\nc{\dcrys}{\mathrm{dcrys}}
\nc{\cris}{\mathrm{cris}}
\nc{\Fil}{\mathrm{Fil}}
\nc{\gr}{\mathrm{gr}}
\nc{\conj}{\mathrm{conj}}

\nc{\Aut}{\operatorname{Aut}}
\nc{\cont}{\text{cont}}
\nc{\sep}{\text{sep}}
\nc{\Hom}{\mathrm{Hom}}
\nc{\Gal}{\mathrm{Gal}}
\nc{\Spec}{\text{Spec}\,}
\nc{\RZ}{\operatorname{RZ}}
\nc{\Syn}{\mathrm{Syn}}
\nc{\ProSyn}{\mathrm{ProSyn}}
\nc{\psyn}{\mathrm{psyn}}
\nc{\Psyn}{\mathrm{pSyn}}
\nc{\aff}{\mathrm{aff}}
\nc{\hocolim}{\operatorname{hocolim}}

\rnc{\t}{\tau}
\nc{\mm}{\pmb{\mu}}
\rnc{\a}{\alpha}
\nc{\n}{\mathfrak n}
\nc{\m}{\mathfrak m}
\nc{\mfs}{\mathfrak s}
\nc{\mf}{\mathfrak f}
\nc{\e}{\varepsilon}
\nc{\dd}{\delta}
\nc{\Imm}{\operatorname{Im}}
\rnc{\sp}{\operatorname{sp}}
\nc{\Supp}{\operatorname{Supp}}

\nc{\p}{\mathfrak p}
\nc{\q}{\mathfrak q}

\nc{\Sym}{\operatorname{Sym}}
\nc{\codim}{\operatorname{codim}}
\nc{\rk}{\operatorname{rk}}
\nc{\GL}{\operatorname{GL}}
\nc{\SL}{\operatorname{SL}}
\nc{\Lie}{\operatorname{Lie}}
\nc{\Ind}{\operatorname{Ind}}
\nc{\Div}{\underline{Div}}
\nc{\Pic}{\mathbf{Pic}}
\nc{\red}{\mathrm{red}}
\nc{\uPic}{\underline{ \mathbf{Pic}}}
\nc{\rH}{\mathrm{H}}
\nc{\Spf}{\operatorname{Spf}}
\nc{\Frac}{\operatorname{Frac}}
\nc{\colim}{\operatorname{colim}}
\nc{\Spa}{\operatorname{Spa}}
\rnc{\Spec}{\operatorname{Spec}}
\nc{\Alg}{\operatorname{Alg}}
\nc{\Poly}{\operatorname{Poly}}
\nc{\PShv}{\operatorname{PShv}}
\nc{\Shv}{\operatorname{Shv}}
\nc{\Fun}{\operatorname{Fun}}
\nc{\op}{\mathrm{op}}
\nc{\id}{\mathrm{id}}
\nc{\tr}{\mathrm{tr}}
\nc{\IC}{\mathrm{IC}}
\nc{\Ann}{\mathrm{Ann}}
\nc{\Ass}{\mathrm{Ass}}
\nc{\WeakAss}{\mathrm{WeakAss}}

\rnc{\an}{\operatorname{an}}
\nc{\et}{\text{\'et}}
\nc{\Et}{\text{\'Et}}

\rnc{\et}{\text{\'et}}
\nc{\proet}{\text{pro\'et}}
\nc{\syn}{\text{syn}}
\nc{\prosyn}{\text{prosyn}}

\nc{\xr}{\xrightarrow}

\nc{\eps}{\epsilon}
\nc{\ov}{\overline}
\nc{\ud}{\underline}
\nc{\wdh}{\widehat}

\nc{\bG}{\mathbf G}
\nc{\bZ}{\mathbf Z}

\nc{\F}{\mathcal F}
\nc{\G}{\mathcal G}
\nc{\E}{\mathcal E}
\nc{\K}{\mathcal K}
\nc{\I}{\mathcal I}
\nc{\sQ}{\mathcal Q}

\nc{\cY}{\mathcal Y}
\nc{\cU}{\mathcal U}
\nc{\cV}{\mathcal V}

\nc{\X}{\mathcal X}
\nc{\T}{\mathfrak T}

\nc{\LL}{\mathcal{L}}
\rnc{\S}{\mathcal S}
\nc{\M}{\mathcal M}
\nc{\sU}{\mathfrak U}
\nc{\V}{\mathfrak V}
\nc{\N}{\mathrm N}

\nc{\rV}{\mathrm{V}}

\nc{\ra}{\rangle}

\nc{\os}{\overset}


\rnc{\O}{\mathcal O}
\nc{\J}{\mathcal J}
\theoremstyle{definition}

\newtheorem{thm1}{Theorem}[section]
\newtheorem{lemma1}[thm1]{Lemma}

\newtheorem{prop1}[thm1]{Proposition}

\newtheorem{rmk1}[thm1]{Remark}

\newtheorem{cor1}[thm1]{Corollary}

\newtheorem{conjecture1}[thm1]{Conjecture}
\newtheorem{question1}[thm1]{Question}
\newtheorem{example1}[thm1]{Example}

\newtheorem{thm}{Theorem}[subsection]
\newtheorem{lemma}[thm]{Lemma}
\newtheorem{defn}[thm]{Definition}
\newtheorem{prop}[thm]{Proposition}
\newtheorem{construction}[thm]{Construction}
\newtheorem{conjecture}[thm]{Conjecture}

\newtheorem{rmk}[thm]{Remark}

\newtheorem{warning}[thm]{Warning}
\newtheorem{cor}[thm]{Corollary}
\newtheorem{variant}[thm]{Variant}

\newtheorem{question2}[thm]{Question}
\newtheorem{notation}[thm]{Notation}

\setcounter{tocdepth}{2}

\begin{document}

\title[Arithmetic properties of \'etale cohomology and nearby cycles]{Arithmetic properties of $\ell$-adic \'etale cohomology and nearby cycles of rigid analytic spaces}
\author{David Hansen}
\author{Bogdan Zavyalov}
\maketitle
\begin{abstract} We prove a number of results on the \'etale cohomology of rigid analytic varieties over $p$-adic non-archimedean local fields. Among other things, we establish bounds for Frobenius eigenvalues, show a strong version of Grothendieck's local monodromy theorem, prove mixedness of the nearby cycle sheaf, and show that for any formal model, the IC sheaf on the special fiber is captured by the nearby cycles of the IC sheaf on the generic fiber. We also prove a local version of Deligne's weight-monodromy conjecture, by a novel perfectoid analysis of nearby cycles. 

Along the way, we develop the theory of ``constructible $\ell$-adic complexes on Deligne's topos'' (six operations, perverse t-structure, a notion of mixedness, etc.), which is prerequisite to a precise discussion of the Galois action on nearby cycles for algebraic and rigid analytic varieties over non-archimedean fields.
\end{abstract}

\tableofcontents

\section{Introduction}

This paper centers around three main results, all dealing with $\ell$-adic cohomology groups of quasi-compact and quasi-separated rigid-analytic varieties over $p$-adic non-archimedean fields. We briefly list these results here and then discuss each result in more detail. The first result is a proof of (a slightly weakened version of) a conjecture of Bhatt--Hansen (see \cite[Conjecture 4.15]{Bhatt-Hansen}). The second result is a strong uniform-in-$\ell$ version of Grothendieck's local monodromy theorem for rigid-analytic varieties. The last result concerns a local version of the weight-monodromy conjecture for nearby cycle sheaves, which was proved in the equal characteristic case by Gabber; this paper makes the first progress in mixed characteristic. \smallskip

Besides these three main points, we also develop the general theory of $\Z_\ell$- and $\Q_\ell$-constructible sheaves on Deligne's topos (see Definition~\ref{defn:deligne-topos}) in significant detail. These results are crucial even to give a correct formulation of \cite[Conjecture 4.15]{Bhatt-Hansen} and the local weight-monodromy conjecture for nearby cycles. 

\subsection{Deligne's topos}\label{intro-deligne}

Our main initial goal was to prove  \cite[Conjecture 4.15]{Bhatt-Hansen}. However, it quickly turned out that even to formulate \cite[Conjecture 4.15]{Bhatt-Hansen} correctly (or the nearby cycle version of the weight-monodromy conjecture), we have to use sheaves on Deligne's topos and their structure theory. \smallskip

Let us briefly explain the main source of this necessity. According to \cite[Conjecture 4.15]{Bhatt-Hansen}, for an admissible formal $\O_K$-scheme $\X$, the nearby cycles complex $\rm{R}\Psi_\X \rm{IC}_{\X_\eta, \Q_\ell}$ should be a mixed perverse sheaf on the special fiber. However, this claim does not quite make sense, since the nearby cycles is {\it not} a complex of sheaves on the special fiber $\X_s$. What it is, rather, is a 
\begin{center}
    ``complex of $\Q_\ell$-sheaves on the geometric special fiber $\X_{\ov{s}}$ with a continuous action of $G_K$ compatible with the action of $G_K$ on $\X_{\ov{s}}$''.
\end{center} 
This definition, however, is rather difficult to make precise by hand, and the additional problem of defining the six functors for such sheaves suggests we should take a more conceptual approach. Also, since the nearby cycles do not have any preferred descent to a complex of sheaves on the special fiber, we might instead try to adapt the notion of mixedness to this situation. \smallskip

We resolve both issues in Appendix~\ref{appendix:deligne}, Appendix~\ref{appendix:adic}, and Section~\ref{section:deligne}. The results of Appendix~\ref{appendix:deligne} and Appendix~\ref{appendix:adic} are (mostly) not new, but they seem very difficult to find explicitly stated in the literature. Therefore, we decided to present these results in the generality needed for this paper. The material of Section~\ref{section:deligne} seems to be somewhat known to the experts, but we were not able to find any rigorous discussion of these results in the literature. In particular, even a precise definition of a mixed sheaf on Deligne's topos seems not to be present in the existing literature. \smallskip

We now briefly summarize the main results of each of these sections in more detail. Throughout this discussion, we fix a non-archimedean field $K$ with ring of integers $\O_K$ and residue field $k$. We also fix a prime number $\ell$.  \smallskip

In Appendix~\ref{appendix:deligne}, we follow \cite{SGA7_2} and introduce the notion of Deligne's topos $X\times_s \eta$ for a finite type $k$-scheme $X$ (see Definition~\ref{defn:deligne-topos}). Although general product topoi are quite abstract, Deligne's topos is very concrete, and gives a precise meaning to the intuition of a sheaf on $X_{\ov{s},\et}$ with a continuous compatible $G_K$-action. In particular, we show that there is a morphism of topoi $\pi_X\colon X_{\ov{s}, \et} \to X\times_s \eta$ (see Lemma~\ref{lemma:properties-Deligne}), where intuitively $\pi_X^{\ast}$ corresponds to forgetting the $G_K$-action. As evidence for this intuition, we show that for any object $\F\in D(X\times_s\eta; \Z/\ell^n)$, there is a functorial ``action of $G_K$'' on the pullback $\pi^*_X\F$ compatible with the action of $G_K$ on $X_{\ov{s}}$ (see Construction~\ref{construction:action} and Construction~\ref{construction:action-adic} for a precise formulation). The rest of Appendix~\ref{appendix:deligne} is devoted to defining six functors for Deligne's categories and the (analytic and algebraic) nearby cycles; here we closely follow some ideas and constructions of Lu--Zheng \cite{Lu-Zheng}. Appendix~\ref{appendix:adic} is devoted to extending these results to $\Z_\ell$ and $\Q_\ell$-coefficients. We also show that the ``derived category of constructible $\Q_\ell$-sheaves'' $D^b_c(X\times_s \eta; \Q_\ell)$ admits both standard and perverse $t$-structures (see Corollary~\ref{cor:constructible-t-structure-rationally} and Lemma~\ref{lemma:perverse-t-structure-rationally}). \smallskip


Now we discuss the content of Section~\ref{section:deligne}. Throughout this section, we fix a non-archimedean field $K$ which is \emph{arithmetic} (see Definition~\ref{defn:arithmetic}), and a prime number $\ell$ invertible in $\O_K$. Any continuous section $\sigma\colon G_k\to G_K$ of the canonical projection $G_K\to G_k$ of Galois groups defines a morphism of topoi $\sigma_X: X_{\et} \to X\times_s \eta$ with an associated conservative pullback functor
\[
\sigma^*_X\colon D^b_c(X\times_s \eta; \Q_\ell) \to D^b_c(X; \Q_\ell)
\]
for any finite type $k$-scheme $X$. We show that, for any $\F\in D^b_c(X\times_s \eta; \Q_\ell)$, mixedness of $\sigma^*_X \F$ is independent of $\sigma$:

\begin{thm}\label{intro-1}(Lemma~\ref{lemma:mixed-of-weight-d} and Corollary~\ref{cor:pure-independent}) Let $K$ be an arithmetic non-archimedean field, $X$ a finite type $k$-scheme, and
\[
    \sigma, \sigma'\colon G_k \to G_K
\]
two continuous sections, and $\F\in D^b_{c}(X\times_s \eta; \Q_\ell)$. Then $\sigma_X^*\F\in D^b_c(X; \Q_\ell)$ is pure of weight $w$ (resp. mixed of weights $\leq w$, resp. mixed of weights $\geq w$) if and only if ${\sigma}_{X}'^{*}\F\in D^b_c(X; \Q_\ell)$ is pure of weight $w$ (resp. mixed of weights $\leq w$, resp. mixed of weights $\geq w$).
\end{thm}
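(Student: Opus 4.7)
The idea is that any two continuous sections $\sigma, \sigma'$ of $G_K \twoheadrightarrow G_k$ differ pointwise by elements of the inertia subgroup $I_K := \ker(G_K \to G_k)$, and inertia acts quasi-unipotently on the stalks of any constructible $\Q_\ell$-sheaf on Deligne's topos. Granting this, the Frobenius eigenvalues at closed points of $X$ differ, between the two sections, only by roots of unity, hence have the same complex absolute values---which is precisely what governs mixedness and purity.

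First I would reduce to comparing Frobenius eigenvalues on stalks at closed points. By the definition of (pointwise) mixedness, it suffices to show that for every closed point $x \in X$ and every integer $i$, the Frobenius-at-$x$ eigenvalues on $\rH^i(\sigma_X^*\F)_{\ov x}$ and on $\rH^i({\sigma}_X'^*\F)_{\ov x}$ coincide in complex absolute value. Using the functorial action of $G_K$ on $\pi_X^*\F$ (Construction~\ref{construction:action-adic}) and the factorization of $\sigma_X$ and $\sigma_X'$ through $\pi_X$, both stalks are canonically identified with the same $\Q_\ell$-vector space $V := \rH^i(\pi_X^*\F)_{\ov x}$. The stabilizer $G_{K(x)} \subset G_K$ of $\ov x$ acts continuously on $V$, and the two actions of $G_{k(x)}$ on $V$ induced by restriction along $\sigma$ and $\sigma'$ are implemented by two lifts $\phi, \phi' \in G_{K(x)}$ of the Frobenius of $k(x)$, related by $\phi' = u\phi$ for some $u \in I_{K(x)} = I_K$.

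The core of the proof is then the following purely Galois-theoretic lemma: for any continuous finite-dimensional $\Q_\ell$-representation $V$ of $G_L$ (with $L$ arithmetic non-archimedean) and any two lifts $\phi, \phi' \in G_L$ of the Frobenius of $k_L$, the eigenvalues of $\phi$ and $\phi'$ on $V$ agree up to roots of unity. Its proof invokes Grothendieck's local monodromy theorem: after passing to a finite Galois extension $L'/L$ of degree $n$ such that $I_{L'}$ acts unipotently on $V$, one has $(\phi')^n = \wdh{u}\cdot \phi^n$ for some $\wdh{u} \in I_{L'}$. Since $\wdh{u}$ acts unipotently on $V$ and hence trivially on the graded pieces of the monodromy filtration, $\phi^n$ and $(\phi')^n$ have identical eigenvalues on each graded piece, and therefore identical eigenvalues on $V$ itself. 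Consequently the eigenvalues of $\phi$ and $\phi'$ differ by $n$-th roots of unity, preserving complex absolute values; the theorem follows.

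The main obstacle, in my view, is not the Galois-theoretic lemma---which is essentially classical once formulated---but rather ensuring that Grothendieck's local monodromy theorem genuinely applies to the stalks $(\pi_X^*\F)_{\ov x}$ in the full generality of arithmetic non-archimedean fields (Definition~\ref{defn:arithmetic}), rather than just the classical case of $K$ $p$-adic local with finite residue field. This demands a careful verification that the $G_K$-action on stalks coming from the formalism of Appendix~\ref{appendix:adic} is continuous in the sense required, and that the quasi-unipotence argument still goes through under the broader axiomatic framework of \enquote{arithmetic}.
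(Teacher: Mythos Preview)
Your approach is essentially correct and is in fact somewhat more direct than the paper's. Both arguments ultimately rest on the same Galois-theoretic input (Grothendieck's quasi-unipotence and Deligne's Weil II \S1.7), but they are organized differently. The paper first stratifies to reduce to a lisse sheaf on a smooth stratum, then invokes the global weight filtration of Weil II Th\'eor\`eme~3.4.1/Variante~3.4.9 on $\sigma_X^*\F$, and finally checks---by reducing to a closed point and citing Weil II Lemme~1.7.4 and Proposition~1.7.5---that this same filtration is preserved by $\rho(\Phi')$ and that its graded pieces remain pure for the second lift. The case of weights $\geq w$ is then obtained from weights $\leq w$ by Verdier duality. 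Your argument bypasses the global weight filtration entirely: you pass immediately to the stalk at a closed point, and compare the two Frobenius lifts there via the monodromy filtration, obtaining all three cases (pure, $\leq w$, $\geq w$) symmetrically in one stroke. Your proof of the key lemma is also correct: the observation that $(\phi')^n \phi^{-n} \in G_{L'}\cap I_L = I_{L'}$ (because $|G_L/G_{L'}|=n$) is exactly what makes $\hat u$ act unipotently, hence trivially on $\rm{gr}^M$.

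There is, however, one genuine gap in your reduction step. You write that ``by the definition of (pointwise) mixedness, it suffices to show that the Frobenius eigenvalues \dots\ coincide in complex absolute value.'' But \emph{mixed of weights $\leq w$} in the sense used here (Freitag--Kiehl III.12.3, BBD \S5) is not a purely pointwise condition: it asserts the existence of a finite filtration by subsheaves with punctually pure graded pieces. Knowing only that the eigenvalues at every closed point are Weil numbers of the right weights does not, a priori, produce such a filtration. To close this gap you must first stratify so that $\F$ becomes lisse on each smooth stratum (as the paper does), and then invoke Weil II 3.4.1 to pass from the pointwise eigenvalue condition back to genuine mixedness of the lisse sheaf. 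Once this is said, your argument is complete---and your worry about continuity and quasi-unipotence over a general arithmetic field is precisely what the paper establishes in Corollary~\ref{cor:continuous-action} and Corollary~\ref{cor:quasi-unipotent}, by reducing to the point case and then to the local-field case via the axiomatics of Definition~\ref{defn:arithmetic}.
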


Theorem~\ref{intro-1} allows us to define mixed and pure sheaves on $X\times_s \eta$: we simply say that $\F\in D^b_c(X\times_s \eta; \Q_\ell)$ is mixed (resp. pure) if $\sigma_X^*\F$ is mixed (resp. pure) for a(ny) choice of a continuous section $\sigma$ (see Definition~\ref{defn:pure-mixed-Deligne}).  We also show that for mixed perverse sheaves, the weight filtration can be constructed on the level of Deligne's categories, recovering the usual weight filtration after applying $\sigma_X^*$ for any continuous section $\sigma$:

\begin{thm}\label{intro-2}(Theorem~\ref{thm:weight-filtration}) Let $K$ be an arithmetic non-archimedean field, $X$ a finite type $k$-scheme, and $\F\in \rm{Perv}(X\times_s \eta; \Q_\ell)$ a mixed perverse sheaf (see Definition~\ref{defn:perverse}). Then there is a unique functorial increasing weight filtration
\[
\rm{Fil}_{\rm{W}}^n\F \subset \F
\]
such that 
\begin{enumerate}
    \item each $\rm{Fil}_{\rm{W}}^n\F$ is a perverse sheaf;
    \item $\rm{Gr}^n_{\rm{W}} \F$ is zero or a pure sheaf of weight $n$;
    \item $\rm{Fil}_{\rm{W}}^{-n}\F = 0$ and $\rm{Fil}_{\rm{W}}^n\F = \F$ for a large $n\gg 0$.
\end{enumerate}
Furthermore, the weight filtration satisfies the following properties:
\begin{enumerate}
    \item any morphism of mixed perverse sheaves $f\colon \F\to \G$ is strictly compatible with the weight filtrations, i.e.  $f(\rm{Fil}_{\rm{W}}^\bullet \F) =\rm{Fil}_{\rm{W}}^\bullet \G \cap f(\F)$;
    \item for any continuous section $\sigma\colon G_k \to G_K$ of the projection $r\colon G_K \to G_k$, there is an equality of filtrations
    \[
    \sigma_X^*\rm{Fil}_{\rm{W}}^\bullet \F = \rm{Fil}_{\rm{W}}^\bullet \sigma_X^*\F,
    \]
    where $\rm{Fil}_{\rm{W}}^\bullet \sigma_X^*\F$ is the weight filtration from \cite[Th\'eorem\`e 5.3.5]{BBD}.
\end{enumerate}
\end{thm}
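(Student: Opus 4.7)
The plan is to characterize $\rm{Fil}_{\rm{W}}^n\F$ intrinsically on Deligne's topos and then match it with the classical BBD filtration under $\sigma_X^*$. I would define $\rm{Fil}_{\rm{W}}^n\F\subset\F$ as the maximal sub-perverse-sheaf of $\F$ that is mixed of weights $\leq n$ in the sense of Definition~\ref{defn:pure-mixed-Deligne}; this is well-posed because that notion is independent of the choice of section $\sigma$ by Theorem~\ref{intro-1}. Such a maximum exists because $\rm{Perv}(X\times_s\eta;\Q_\ell)$ is Noetherian---inherited from the corresponding fact on $X$ via $t$-exactness and conservativity of $\sigma_X^*$, which together transfer ascending chain conditions---and because ``mixed of weights $\leq n$'' is closed under subquotients and extensions in the abelian category of perverse sheaves, as can be checked by pulling back via any section and invoking the classical statement over $X$.

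\textbf{Matching with BBD and descent.} The heart of the argument will be to establish $\sigma_X^*\rm{Fil}_{\rm{W}}^n\F = W_n(\sigma_X^*\F)$ for every section $\sigma$, where $W_\bullet$ denotes the BBD weight filtration of \cite[Th\'eor\`eme 5.3.5]{BBD} applied to the mixed perverse sheaf $\sigma_X^*\F$. The inclusion $\subseteq$ is immediate from the maximality defining $W_n$ together with $t$-exactness of $\sigma_X^*$. For the reverse, I must show that $W_n(\sigma_X^*\F)$ descends from $X$ to a sub-perverse-sheaf of $\F$ on $X\times_s\eta$. By functoriality and uniqueness of the BBD filtration, $W_n$ is preserved by every automorphism of $\sigma_X^*\F$ (or of its pullback to $X_{\bar s}$) arising from the natural $G_K$-action; combined with Theorem~\ref{intro-1} (which guarantees that the ``weight $\leq n$'' property is invariant under all relevant $G_K$-twists), this gives the required $G_K$-equivariance of $W_n$ as a subobject of $\pi_X^*\F$ equipped with its canonical $G_K$-action from Construction~\ref{construction:action-adic}. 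Such a $G_K$-equivariant subobject then descends uniquely to a sub-perverse-sheaf of $\F$ on Deligne's topos via the structure theory of $D^b_c(X\times_s\eta;\Q_\ell)$ developed in Appendix~\ref{appendix:deligne} and Appendix~\ref{appendix:adic}.

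\textbf{Properties, uniqueness, and main obstacle.} Given the identification $\sigma_X^*\rm{Fil}_{\rm{W}}^n\F = W_n(\sigma_X^*\F)$, all of properties (1)--(3) together with the two functoriality statements reduce to the classical BBD case: perverseness follows from $t$-exactness of $\sigma_X^*$; purity of $\rm{Gr}^n_{\rm{W}}\F$ follows via Definition~\ref{defn:pure-mixed-Deligne} from purity of the BBD graded piece; boundedness follows from boundedness of $W_\bullet$ combined with conservativity; strict compatibility with morphisms follows from BBD strictness combined with faithfulness and conservativity of $\sigma_X^*$; and compatibility with arbitrary sections is built into the $\sigma$-independent definition. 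Uniqueness is inherited from uniqueness of $W_\bullet$ via conservativity of $\sigma_X^*$. The main obstacle I anticipate is the descent step above: upgrading the $G_K$-equivariance of $W_n(\sigma_X^*\F)$ to an actual sub-perverse-sheaf on Deligne's topos requires a precise dictionary between subobjects in $\rm{Perv}(X\times_s\eta;\Q_\ell)$ and $G_K$-equivariant subobjects of $\pi_X^*\F$, which must be carefully extracted from the foundational material of the earlier appendices and will likely use the explicit ``continuous action'' package developed there.
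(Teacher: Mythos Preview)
Your strategy is genuinely different from the paper's, and the descent step you flag as the main obstacle is indeed a real gap that the paper deliberately avoids.

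The paper never attempts to descend the BBD filtration from $X$. Instead it rebuilds the entire BBD argument \emph{inside} $\rm{Perv}(X\times_s\eta;\Q_\ell)$, using two preparatory lemmas proved directly on Deligne's topos: (i) every \emph{simple} mixed perverse sheaf in $\rm{Perv}(X\times_s\eta;\Q_\ell)$ is pure (Lemma~\ref{lemma:simple-perverse-sheaves-pure}, reduced via intermediate extension to the lisse case Lemma~\ref{lemma:simple-local-systems-pure}), and (ii) $\rm{Hom}$ and $\rm{Ext}^1$ vanish between pure perverse sheaves of increasing weight (Lemma~\ref{lemma:no-homs-pure}, proved by a Galois-cohomology computation using Lemma~\ref{lemma:no-H-1}). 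With these in hand, the weight filtration is constructed by induction on length exactly as in \cite[Th\'eor\`eme 5.3.5]{BBD}: pick a simple subobject, which is pure; the quotient has a filtration by induction; splice them together using the $\rm{Ext}^1$-vanishing. Uniqueness, strictness, and compatibility with $\sigma_X^*$ then follow from the $\rm{Hom}$-vanishing and from uniqueness on the BBD side.

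Your descent route has two difficulties that the appendices do not resolve. First, the $G_\eta$-stability of $c_X^*W_n(\sigma_X^*\F)\subset\pi_X^*\F$ is not obvious: for $g\in I$ the automorphism $\rho_g$ lives on $X_{\bar s}$, where there is no intrinsic notion of weight, so you cannot simply invoke functoriality of the weight filtration; you would need an argument of the flavor of Claim~1 in the proof of Proposition~\ref{prop:mixed-independent}, but upgraded from lisse sheaves to perverse sheaves. Second, even granting $G_\eta$-stability, the statement that a $G_\eta$-equivariant perverse subobject of $\pi_X^*\F$ comes from a subobject of $\F$ is not proved in Appendices~\ref{appendix:deligne}--\ref{appendix:adic}; what is available is the formula $\rm{Hom}(\F,\G)=\rm{Hom}(\pi_X^*\F,\pi_X^*\G)^{G_\eta}$ for perverse $\F,\G$, which controls morphisms but not the essential image on subobjects. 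Note, however, that your ``maximal subobject of weights $\le n$'' characterization can be salvaged without descent: once you have Lemma~\ref{lemma:simple-perverse-sheaves-pure}, the quotient $\F/\rm{Fil}_{\rm{W}}^n\F$ can have no simple subobject of weight $\le n$ (else its preimage would contradict maximality), hence all its Jordan--H\"older factors, being pure, have weight $>n$---and this recovers purity of the graded pieces directly on Deligne's topos.
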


We also show that all complexes in $D^b_c(X\times_s \eta; \Q_\ell)$ automatically satisfy a version of the Grothendieck quasi-unipotence theorem and admit a canonical nilpotent monodromy operator $N$.

\begin{thm}\label{intro-3}(Corollary~\ref{cor:quasi-unipotent}, Lemma~\ref{lemma:monodromy-operator}, and Definition~\ref{defn:monodromy-operator-perverse}) Let $K$ be an arithmetic non-archimedean field, $X$ a finite type $k$-scheme, and $\F\in D^b_c(X\times_s \eta; \Q_\ell)$. Then
\begin{enumerate}
    \item there is an open subgroup $I_1\subset I$ such that the action of $I_1$ on $\pi^*_X\F$ is unipotent;
    \item there is a unique (independent of $I_1$) nilpotent morphism 
    \[
    N\colon \pi_X^*\F\to \pi_X^*\F(-1)
    \]
    in $D^b_c(X_{\ov{s}}; \Q_\ell)$ such that 
    \[
    \rho_g=\exp(Nt_\ell(g))
    \]
    for $g\in I_1$;
    \item if $\F$ is perverse, then $N$ descends to a morphism $N\colon \F \to \F(-1)$.
\end{enumerate}
\end{thm}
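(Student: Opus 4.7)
The overall strategy is to reduce (1) to the classical Grothendieck local monodromy theorem via a stratification-and-dévissage argument, to define $N$ in (2) as the logarithm of a unipotent action, and to use $G_K$-equivariance together with the perverse-sheaf equivalence set up in the appendices to obtain the descent in (3).

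For (1), I would choose a finite stratification $X = \sqcup_i X_i$ such that each cohomology sheaf $\mathcal{H}^j(\F)$ restricts to a lisse $\Q_\ell$-sheaf on $X_i\times_s\eta$. Each such lisse sheaf corresponds, via the explicit description of Deligne's topos from Appendix~\ref{appendix:deligne}, to a continuous representation of the relevant arithmetic fundamental group, and the classical Grothendieck local monodromy theorem yields an open subgroup $I^{(i,j)}\subset I$ acting unipotently. Taking the (finite) intersection gives a single $I_1$ that acts unipotently on every cohomology sheaf of $\pi_X^*\F$. A dévissage along the truncations $\t_{\leq n}\F$ then promotes this to unipotency on $\pi_X^*\F$ in $D^b_c(X_{\ov{s}};\Q_\ell)$, using that the action of $\rho_g - 1$ on a bounded constructible complex is nilpotent whenever its action on every cohomology sheaf is uniformly nilpotent.

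For (2), once $\rho_g - 1$ is nilpotent for $g\in I_1$, the series $\log(\rho_g) := \sum_{k\geq 1}(-1)^{k+1}(\rho_g-1)^k/k$ is a finite sum defining a nilpotent endomorphism. Pick $g_0\in I_1$ with $t_\ell(g_0)$ a topological generator of an open subgroup of $\Z_\ell(1)$, and set $N := \log(\rho_{g_0})/t_\ell(g_0)$, where the division is interpreted as producing a morphism $N\colon \pi_X^*\F\to \pi_X^*\F(-1)$. The identity $\rho_g = \exp(N t_\ell(g))$ for all $g\in I_1$ and the uniqueness of $N$ both follow from injectivity of $\exp$ on nilpotent endomorphisms together with continuity of the two homomorphisms $I_1\to 1 + \End(\pi_X^*\F)$.

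For (3), assume $\F$ is perverse. By the equivalence between perverse sheaves on $X\times_s\eta$ and perverse sheaves on $X_{\ov{s}}$ equipped with a compatible continuous $G_K$-action (from Appendix~\ref{appendix:adic}), it suffices to show that $N$ is $G_K$-equivariant as a morphism to the Tate twist. For $h\in G_K$ and $g\in I_1\cap hI_1h^{-1}$,
\[
\exp\bigl((hNh^{-1})t_\ell(g)\bigr) = h\rho_g h^{-1} = \rho_{hgh^{-1}} = \exp\bigl(N\, t_\ell(hgh^{-1})\bigr) = \exp\bigl(\chi(h) N\, t_\ell(g)\bigr),
\]
and injectivity of $\exp$ on nilpotents yields $hNh^{-1} = \chi(h)N$, which is precisely the equivariance of $N$ viewed as a morphism $\pi_X^*\F\to \pi_X^*\F(-1)$. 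The main obstacle will be carrying out the dévissage in (1) carefully in the derived category of Deligne's topos so that a single open subgroup $I_1$ is produced for the whole complex, while keeping the $G_K$-actions on the strata coherent; the remaining steps are then fairly formal given the foundations developed in the appendices.
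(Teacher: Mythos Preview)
Your overall strategy matches the paper's: stratify and reduce (1) to the classical Grothendieck theorem, define $N$ as a logarithm in (2), and descend via $G_\eta$-equivariance in (3). But there is a genuine gap in (2).

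You assert that $\rho_g = \exp(Nt_\ell(g))$ for all $g\in I_1$ follows from continuity of $\rho\colon I_1 \to \Aut(\pi_X^*\F)$. However, continuity of $\rho$ on the full complex $\pi_X^*\F$ is not established by your argument in (1): there you obtain continuity only for each cohomology sheaf over each stratum (via the fundamental-group description of lisse sheaves), and this does not assemble into continuity of the action on $\End_{\Q_\ell}(\pi_X^*\F)$, which is not simply a product of the stratum-wise endomorphism rings. More fundamentally, before the formula $\rho_g = \exp(Nt_\ell(g))$ can even be posed for all $g\in I_1$, you need $\rho|_{I_1}$ to factor through $t_\ell$, i.e.\ that $P_{1,\ell} = I_1\cap P_\ell$ acts trivially. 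The paper handles both issues together by an indirect route (Corollary~\ref{cor:p1-doesnot-act} and Corollary~\ref{cor:rho-continuous}): from quasi-unipotence alone, working modulo $\ell^n$ shows that $\rho(P_{1,\ell})$ lands in a finite $\ell$-group, and since $P_{1,\ell}$ is pro-(prime-to-$\ell$), \emph{any} homomorphism from it to a finite $\ell$-group is trivial (Lemma~\ref{lemma:no-homs}, which crucially needs no continuity hypothesis). Once $\rho|_{I_1}$ factors through $I_1/P_{1,\ell} \subset \Z_\ell(1)$, continuity and your density argument follow at once. You should insert this step explicitly.

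Two smaller imprecisions. In (1), the ``relevant arithmetic fundamental group'' of a stratum is not $G_K$, so to actually invoke Grothendieck's theorem (Theorem~\ref{thm:grothendieck}) you must restrict to closed points, reducing to $X=\Spec k'$ for finite $k'/k$ and then to $\Spec k$, as the paper does in Steps~2--3 of Corollary~\ref{cor:quasi-unipotent}. In (3), $\rho_h$ for $h\notin I$ is a morphism $\ov{h}^*\pi_X^*\F \to \pi_X^*\F$ rather than an endomorphism, so ``$hNh^{-1}$'' is shorthand for the commutative square of Corollary~\ref{cor:commutation}; the descent then uses the formula $\Hom_{\Q_\ell}(\F,\F(-1)) = \Hom_{\Q_\ell}(\pi_X^*\F,\pi_X^*\F(-1))^{G_\eta}$ for perverse $\F$ (Corollary~\ref{cor:compute-hom-Q-ell}), rather than a full equivalence of abelian categories, which the paper does not establish.
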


\subsection{Mixedness of the nearby cycles}

Using the machinery discussed in Section~\ref{intro-deligne}, we can formulate and prove the corrected (and slightly weakened) version of the $\ell$-adic conjecture from \cite{Bhatt-Hansen}:

\begin{thm}\label{intro-4} Let $X$ be quasi-compact quasi-separated rigid-analytic variety over a $p$-adic local field\footnote{Local fields are defined in Section~\ref{section:terminology}. A $p$-adic local field is always a finite extension of $\Q_p$.} $K$, and $\X$ an admissible formal $\O_K$-model of $X$ with special fiber $\X_s$, so $X=\X_\eta$. Then  
\begin{enumerate}
\item the nearby cycles $\rm{R}\Psi_{\X} \rm{IC}_{X, \Q_\ell}$ is a mixed perverse sheaf. Moreover, if $\X_s$ is of pure dimension $d$, then $\rm{IC}_{\X_s\times_s \eta, \Q_\ell}$ (see Definition~\ref{defn:IC-Deligne}) is a direct summand of the $d$-th graded piece of the weight filtration on $\rm{R}\Psi_\X \rm{IC}_{X, \Q_\ell}$ (see Theorem~\ref{thm:weight-filtration});
\item\label{2} for any $g\in G_K$ projecting to the geometric Frobenius in $G_k$ and any integer $i$, the eigenvalues of $g$ acting on $\rm{IH}^i(X_{\wdh{\ov{\eta}}}; \Q_\ell)$ are $q$-Weil numbers of weights $\geq i$;
\item If $X$ is smooth or the $\ell$-adic Decomposition theorem for rigid-analytic varieties holds (see \cite[Conjecture 4.17]{Bhatt-Hansen}), then weights in (\ref{2}) are $\geq \rm{max}(0, i)$. 
\end{enumerate}
\end{thm}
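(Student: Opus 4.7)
The plan is to establish part (1) by reducing to the algebraic setting via a semistable formal alteration of $\X$, and then to derive parts (2) and (3) from the resulting mixedness by taking hypercohomology on $\X_{\ov s}$. First, I would choose a proper surjective morphism $f\colon \Y \to \X$ of admissible formal $\O_K$-schemes with $\Y$ strictly semistable, provided by (a suitable version of) Temkin's altered uniformization of admissible formal schemes. On such a $\Y$, the complex $\rm{R}\Psi_\Y \Q_\ell$ admits an explicit Rapoport--Zink-type description and is visibly a mixed object of $D^b_c(\Y_s \times_s \eta; \Q_\ell)$; after shifting, $\rm{R}\Psi_\Y \rm{IC}_{\Y_\eta, \Q_\ell}$ is a mixed perverse sheaf on Deligne's topos. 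Since nearby cycles commute with proper pushforward (Appendix~\ref{appendix:deligne}), and since proper pushforward preserves mixedness on Deligne's topos by pulling back along a continuous section as in Theorem~\ref{intro-1} and invoking the classical BBDG result, the pushforward $\rm{R}\Psi_\X \rm{R}(f_\eta)_* \rm{IC}_{\Y_\eta, \Q_\ell}$ is mixed perverse.

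To descend mixedness to $\rm{R}\Psi_\X \rm{IC}_{X, \Q_\ell}$ itself, I would exploit the fact that admissible formal $\O_K$-schemes admit local algebraizations, so that \'etale-locally on $\X_s$ the morphism $f_\eta$ is algebraizable. The algebraic BBDG decomposition theorem then exhibits $\rm{IC}_{X, \Q_\ell}$ as a direct summand of $\rm{R}(f_\eta)_* \rm{IC}_{\Y_\eta, \Q_\ell}$ locally, and the characterization of the IC sheaf as an intermediate extension makes these local summands canonical, so that they glue to a global direct summand; hence $\rm{R}\Psi_\X \rm{IC}_{X, \Q_\ell}$ is a summand of a mixed perverse sheaf, and therefore mixed. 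To identify $\rm{IC}_{\X_s \times_s \eta, \Q_\ell}$ as a summand of $\rm{Gr}_{\rm{W}}^d \rm{R}\Psi_\X \rm{IC}_{X, \Q_\ell}$, I would apply the local invariant cycle theorem in the semistable case to locate the top-weight part on $\Y_s \times_s \eta$ as $\rm{IC}_{\Y_s \times_s \eta, \Q_\ell}$ and then push forward by $f_s$, again using the algebraic BBDG decomposition locally on $\X_s$.

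Parts (2) and (3) follow by combining (1) with the comparison
\[
\rm{IH}^i(X_{\wdh{\ov\eta}}; \Q_\ell) \cong \rm{H}^i\bigl(\X_{\ov s};\, \pi_X^* \rm{R}\Psi_\X \rm{IC}_{X, \Q_\ell}\bigr)
\]
and the standard fact that Frobenius weights on hypercohomology of a mixed perverse sheaf are controlled from below by the weights of the graded pieces of its weight filtration, shifted by the perverse degree. For part (3), the lower bound is improved to $\max(0, i)$ either in the smooth case, where $\rm{IC}_X$ reduces to a shifted constant sheaf and $\rm{R}\Psi_\X \rm{IC}_{X, \Q_\ell}$ sits in a single perverse degree with a clean weight structure, or under the rigid decomposition theorem, which splits $\rm{R}(f_\eta)_* \Q_\ell$ into pure perverse summands that can be treated individually.

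The main obstacle is the algebraization-and-gluing step producing $\rm{IC}_{X, \Q_\ell}$ as a natural direct summand of $\rm{R}(f_\eta)_* \rm{IC}_{\Y_\eta, \Q_\ell}$ in the rigid setting without invoking the full rigid decomposition theorem. This requires delicate bookkeeping to ensure the local BBDG summands assemble coherently across an \'etale cover of $\X_s$, and it is precisely here that the perverse $t$-structure on Deligne's topos from Appendix~\ref{appendix:deligne} and Section~\ref{section:deligne} is essential, both in formulating the summand property intrinsically and in upgrading the purely sheaf-theoretic mixedness bookkeeping to incorporate the continuous $G_K$-action.
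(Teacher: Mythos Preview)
The central gap is the step producing $\rm{IC}_{X,\Q_\ell}$ as a \emph{direct summand} of $\rm{R}(f_\eta)_* \rm{IC}_{\Y_\eta,\Q_\ell}$. You correctly flag this as the main obstacle, and it is a genuine one: obtaining such a summand globally is essentially equivalent to the rigid-analytic decomposition theorem for $f_\eta$, which is exactly the unproved \cite[Conjecture 4.17]{Bhatt-Hansen}. Your workaround via local algebraization has two problems. First, an arbitrary admissible formal $\O_K$-scheme need not admit local algebraizations; Elkik's theorem requires rig-smoothness, which fails for a general $\X$. Second, even granting local algebraizability of $\X$, there is no reason the alteration $f\colon \Y \to \X$ algebraizes compatibly, and even if it did, the BBDG projection onto the IC summand is not canonical as a morphism, so the local splittings have no reason to glue across an \'etale cover.

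The paper sidesteps this entirely with a more elementary observation: one only needs $\rm{IC}_{X,\Q_\ell}$ to be a perverse \emph{subquotient} of ${}^p\rm{R}^0(f_\eta)_*\,\Q_\ell[d]$ for a resolution $f\colon X' \to \X_\eta$ of the (reduced, normalized) generic fiber. This is automatic from the classification of simple perverse sheaves, since the two agree on a dense open (Lemma~\ref{lemma:ic-subquotient}); no decomposition theorem is needed. Because nearby cycles are perverse $t$-exact, $\rm{R}\Psi_\X \rm{IC}_{X,\Q_\ell}$ is then a perverse subquotient of ${}^p\cal{H}^0\bigl(\rm{R}(f_s\times_s\eta)_*\,\rm{R}\Psi_{\X'}\Q_\ell[d]\bigr)$, and a subquotient of a mixed perverse sheaf is mixed. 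The passage to the semistable case is carried out only for the \emph{smooth} $\X'_\eta$, where Elkik algebraization is legitimately available, followed by de Jong alterations in the algebraic world and Saito's explicit computation. For the IC-summand assertion in part (1), the paper does not push forward a local invariant cycle statement from $\Y_s$; instead it argues directly on $\X_s$: the graded piece $\rm{gr}_{\rm{W}}^d \rm{R}\Psi_\X \rm{IC}_{X,\Q_\ell}$ is pure, hence its geometric pullback is a sum of IC sheaves, and an elementary argument with intermediate extensions (Lemma~\ref{lemma:split}) singles out $\rm{IC}_{\X_s\times_s\eta,\Q_\ell}$ as a summand.
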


\begin{rmk} We note that the original formulation of \cite[Conjecture 4.15]{Bhatt-Hansen} contains a typo: over a non-algebraically closed base field, the functor $\rm{R}\lambda_{\X*}$ used in \cite[Conjecture 4.15]{Bhatt-Hansen} is different from the nearby cycles functor and does not preserve perverse sheaves. For a precise comparison between $\rm{R}\Psi_{\X}$ and $\rm{R}\lambda_{\X_{\O_C}\ast}$, see Lemma \ref{lemma:compute-nearby-cycles}.(1) and Remark \ref{rmk:adic-good}.
\end{rmk}

\begin{rmk} Theorem~\ref{intro-4} is weaker than \cite[Conjecture 4.15]{Bhatt-Hansen} since the latter predicts that the weights of the geometric Frobenius action are all non-negative\footnote{We use the same normalization for the intersection cohomology groups as the one used in \cite{Bhatt-Hansen}. In particular, intersection cohomology groups of a smooth space live in degrees $[-\dim X, \dim X]$.}. We can prove this either for smooth $X$ or under the assumption that the $\ell$-adic decomposition theorem holds for a resolution of singularities of $X$.
\end{rmk}

\begin{rmk} A version of Theorem~\ref{intro-4} for algebraic $\X$ has been previously obtained in \cite[\textsection 10. Appendix]{Gortz-Haines}. However, we note that the proof in {\it loc.\,cit.\,}only shows that $\rm{R}\Psi_{\X} \rm{IC}_{X, \Q_\ell}$ is a mixed perverse sheaf for {\it some} choice of a continuous section of $G_K \to G_k$, while our result ensures that this holds for {\it any} such section. In fact, in order to run our argument in the analytic situation, we have to prove the result for all sections simultaneously. This level of generality is possible thanks to Theorem~\ref{intro-1}, whose proof relies on a general careful analysis of sheaves on Deligne's topos. 
\end{rmk}

The essential idea of the proof of Theorem~\ref{intro-4} is to use perverse exactness of the nearby cycles and resolution of singularities to reduce to the smooth case. In this case, one can use Elkik's algebraization and the comparison of analytic and algebraic nearby cycles to reduce to an analogous result in the algebraic world. Then one can use de Jong's alterations to reduce to the strictly semi-stable case, where the nearby cycles were explicitly computed by T.\,Saito. \smallskip

As a byproduct of our methods, we also show that the nearby cycles of the constant sheaf are mixed and give some estimates on the weights of the Frobenius action on (ordinary and compactly supported) cohomology of $X$.

\begin{thm}\label{intro-5}(Lemma~\ref{lemma:trivial-action-any-model} and Theorem~\ref{thm:global-weights}) Let $K$ be a $p$-adic local field, and $\X$ an admissible formal $\O_K$-scheme with generic fiber $X=\X_\eta$. Then
\begin{enumerate}
    \item the nearby cycles $\rm{R}\Psi_{\X} \Q_\ell\in D^b_c(\X_s\times_s \eta; \Q_\ell)$ are mixed;
    \item For any $g\in G_K$ projecting to the geometric Frobenius in $G_k$ and any integer $i\geq 0$, the eigenvalues of $g$ acting on $\rm{H}^i(X_{\wdh{\ov{\eta}}}; \Q_\ell)$ are $q$-Weil numbers of weights $\geq 0$;
    \item For any $g\in G_K$ projecting to the geometric Frobenius in $G_k$ and any integer $i\geq 0$, the eigenvalues of $g$ acting on $\rm{H}^i_c(X_{\wdh{\ov{\eta}}}; \Q_\ell)$ are $q$-Weil numbers;
    \item For any $g\in G_K$ projecting to the geometric Frobenius in $G_k$ and any integer $i\geq 0$, the eigenvalues of $g$ acting on $\rm{IH}^i_c(X_{\wdh{\ov{\eta}}}; \Q_\ell)$ are $q$-Weil numbers of weights $\leq 2d+i$;
    \item For any $g\in G_K$ projecting to the geometric Frobenius in $G_k$ and any integer $i\geq 0$, the eigenvalues of $g$ acting on $\rm{IH}^i(X_{\wdh{\ov{\eta}}}; \Q_\ell)$ are $q$-Weil numbers of weights $\geq i$.
\end{enumerate}
\end{thm}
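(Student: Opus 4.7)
The plan is to first prove part (1), from which parts (2)--(5) follow by passing to hypercohomology and applying weight bounds for mixed sheaves. The strategy for (1) mirrors the proof of Theorem~\ref{intro-4} described in the preceding paragraph: reduce to the strictly semistable case via de Jong's alterations, then invoke T. Saito's explicit description of nearby cycles in that setting.

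Concretely, for part (1), I would first use that mixedness on $\X_s \times_s \eta$ is local on $\X_s$ in the \'etale topology, so that Elkik's theorem allows algebraizing $\X$ locally and reduces the question to the analogous assertion for a finite type $\O_K$-scheme. Next, de Jong's alteration theorem yields, after a finite extension $K'/K$, a proper generically finite surjection $f\colon \X'\to \X_{\O_{K'}}$ with $\X'$ strictly semistable. T. Saito's computation then shows that each $\rm{R}^j\Psi_{\X'}\Q_\ell$ is an iterated pushforward of $\Q_\ell(-j)$ along closed immersions of strata, so $\rm{R}\Psi_{\X'}\Q_\ell$ is mixed with non-negative weights. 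By proper base change, $\rm{R}f_{s,*}\rm{R}\Psi_{\X'}\Q_\ell = \rm{R}\Psi_\X(\rm{R}f_{\eta,*}\Q_\ell)$, which is mixed since $\rm{R}f_{s,*}$ preserves mixedness (Deligne's theorem, transported to Deligne's topos using Theorem~\ref{intro-1}). Finally, the trace map for the generically finite alteration $f_\eta$ (whose degree is invertible in $\Q_\ell$) realizes $\Q_\ell$ as a direct summand of $\rm{R}f_{\eta,*}\Q_\ell$, so $\rm{R}\Psi_\X \Q_\ell$ appears as a direct summand of a mixed complex, hence is itself mixed.

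For parts (2)--(5), one embeds $\X$ as an open subscheme of a proper admissible formal $\O_K$-scheme $\bar\X$ (using standard formal compactification results, after algebraization if needed). Using the compatibilities of $\rm{R}\Psi$ with $j_!$ and with proper pushforward, each of $\rm{H}^i$, $\rm{H}^i_c$, $\rm{IH}^i$, $\rm{IH}^i_c$ of $X_{\wdh{\ov{\eta}}}$ is realized as hypercohomology on $\bar\X_{\ov s}$ of a mixed object on Deligne's topos; part (1) handles the constant-sheaf inputs, while Theorem~\ref{intro-4} handles the $\rm{IC}$ inputs. For (3), mixedness alone implies that the eigenvalues are $q$-Weil. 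The non-negativity in (2) propagates from the semistable weight computation outlined above. The upper bound $\leq 2d+i$ in (4) is the standard weight estimate for hypercohomology of a mixed perverse sheaf coming from Theorem~\ref{intro-4}, and (5) follows from (4) by Poincar\'e duality.

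The main obstacle I expect is the descent step in (1): transferring mixedness from the semistable alteration $\X'$ back to $\X$. This relies on the trace-map splitting being compatible with the Deligne topos formalism, which in turn uses Theorem~\ref{intro-1} and the conservativity of $\sigma_X^*$ to reduce the statement that a direct summand of a mixed object is mixed to the classical one on the schematic side, and it demands care with the degree normalization. A secondary difficulty is matching analytic and algebraic nearby cycles across the Elkik algebraization, for which the comparison theorem alluded to in the proof sketch of Theorem~\ref{intro-4} is essential.
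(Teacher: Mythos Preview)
Your approach to part~(1) has a genuine gap. The trace-map splitting you invoke---realizing $\Q_\ell$ as a direct summand of $\rm{R}f_{\eta,*}\Q_\ell$ for an alteration $f$---requires the target $X_\eta$ to be smooth: the trace $\rm{R}f_*\Q_\ell\to\Q_\ell$ is built from the counit $\rm{R}f_*f^!\Q_\ell\to\Q_\ell$ together with the identification $f^!\Q_\ell\simeq f^*\Q_\ell$, and the latter needs smoothness of the base (or at least that $\Q_\ell[d]$ is self-dual there). Since $\X_\eta$ is an arbitrary qcqs rigid space, this fails. Relatedly, Elkik's algebraization only applies to rig-smooth affinoids, so that step is also unavailable in general. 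The paper circumvents this entirely: rather than a single alteration, it builds (Corollary~\ref{cor:strictly-semi-stable-hypercovering}) a rig-surjective \emph{hypercovering} $a\colon\Y_\bullet\to\X_{\O_L}$ with each $\Y_n$ algebraically strictly semistable, and then uses cohomological descent for the $v$-topology (Lemma~\ref{lemma:hyperdescent}) to write $\Q_\ell\simeq\rm{R}a_{\eta,*}\Q_\ell$ on the nose. Mixedness of $\rm{R}\Psi_\X\Q_\ell$ then follows from two nested spectral sequences, reducing to the semistable case.

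There is a second gap in your derivation of~(5). Poincar\'e duality gives $\rm{IH}^i_c(X_{\wdh{\ov\eta}};\Q_\ell)^\vee\simeq\rm{IH}^{-i}(X_{\wdh{\ov\eta}};\Q_\ell)$, so the bound $\leq 2d+i$ in~(4) dualizes only to $\rm{IH}^j$ having weights $\geq j-2d$, which is strictly weaker than~(5). The paper instead proves~(4) and~(5) in parallel: Lemma~\ref{lemma:mixed-of-weight-d} shows $\rm{R}\Psi_\X\rm{IC}_{X,\Q_\ell}$ is mixed of weights $\leq 2d$ \emph{and} $\geq 0$, and then~(4) follows from the $\leq$ bound via $\rm{R}f_!$, while~(5) follows from the $\geq$ bound via $\rm{R}f_*$ (both using \cite[5.1.14]{BBD}). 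Similarly, your one-line ``propagates from the semistable computation'' for~(2) hides real content: mixedness of $\rm{R}\Psi_\X\Q_\ell$ alone gives no lower weight bound on $\rm{H}^i$, so the paper runs a separate hypercovering argument (Lemma~\ref{lemma:positive-weighs-constant}) invoking Saito's weight spectral sequence on each semistable term.
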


\subsection{Grothendieck's local monodromy theorem}

The methods used in the proof of Theorem~\ref{intro-4} can also be adapted to show the Grothendieck Local Monodromy Theorem for rigid-analytic varieties: 

\begin{thm}\label{intro-6}(Theorem~\ref{thm:local-monodromy-cohomology}) Let $K$ be a discretely valued $p$-adic non-archimedean field, $\ell\neq p$ a prime number, $\Lambda$ a ring $\Z/\ell^n\Z$, $\Z_\ell$, or  $\Q_\ell$, and $X$ a quasi-compact quasi-separated rigid-analytic variety over $K$. Then there is an open subgroup $I_1\subset I$ and an integer $N$ (both independent of $\ell\neq p$ and $\Lambda$) such that, for each $g\in I_1$, $(g-1)^N$ acts trivially on 
\[
\rm{H}^i(X_{\wdh{\ov{\eta}}}, \Lambda), \rm{H}^i_c(X_{\wdh{\ov{\eta}}}, \Lambda), \rm{IH}^i(X_{\wdh{\ov{\eta}}},\Lambda), \text{ and }\rm{IH}^i_c(X_{\wdh{\ov{\eta}}},\Lambda)
\]
for each integer $i$.
\end{thm}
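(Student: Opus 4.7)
The plan is to mirror the strategy used for Theorem~\ref{intro-4}: pass to a formal model, reduce via de Jong's alteration to the strictly semistable case over a finite extension of $K$, and invoke T.~Saito's explicit computation of nearby cycles on strictly semistable schemes to extract a quasi-unipotence bound that is visibly independent of both $\ell$ and $\Lambda$. Quasi-unipotence for a fixed $\ell$ and $\Lambda=\Q_\ell$ is already contained in Theorem~\ref{intro-3} applied to $\rm{R}\Psi$ of the relevant sheaf; the new content here is the uniformity in $\ell$ and the upgrade to integral coefficients.

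More precisely, I would first choose an admissible formal $\O_K$-model $\X$ of $X$ and, using a formal Nagata-style compactification, an admissible formal model $\ov{\X}$ of a compactification $\ov{X}$ of $X$ with open immersion $j\colon X\hookrightarrow \ov{X}$. Each of the four cohomology groups in the statement is then of the form
\[
\rH^i\bigl(\ov{\X}_{\ov{s}},\, \rm{R}\Psi_{\ov{\X}}\F\bigr)
\]
for $\F$ one of $\Lambda$, $j_!\Lambda$, $\rm{IC}_{\ov{X},\Lambda}$, or $j_!\rm{IC}_{X,\Lambda}$, so it suffices to bound $(g-1)^N$ acting on $\rm{R}\Psi_{\ov{\X}}\F\in D^b_c(\ov{\X}_{\ov{s}}\times_s\eta;\Lambda)$ uniformly in $\ell$ and $\Lambda$. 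De Jong's alteration applied to $\ov{\X}$ produces a surjective proper morphism $f\colon \X'\to \ov{\X}$ with $\X'$ strictly semistable over a finite extension $\O_{K'}/\O_K$; set $I_1 := I_{K'}\subset I_K$, an open subgroup of finite index independent of $\ell$. For $\F$ a constant or boundary-extended constant sheaf, the trace retraction for $f$ realizes $\F$ as a direct summand of $\rm{R}f_\ast f^*\F$ after inverting $\deg f$, while for $\F$ an $\rm{IC}$-sheaf the algebraic decomposition theorem (unconditional after Elkik algebraization and comparison of analytic and algebraic nearby cycles) realizes $\F$ as a direct summand of a pushforward from $\X'_\eta$, rationally. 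Hence it suffices to bound monodromy on $\rm{R}\Psi_{\X'}\F'$ for the corresponding pulled-back sheaf $\F'$.

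In the strictly semistable case, T.~Saito's explicit description of $\rm{R}\Psi_{\X'}\Lambda$ --- via the nearby cycles complex with its Rapoport--Zink-style weight filtration --- shows that $\rm{R}\Psi_{\X'}\Lambda$ is tamely ramified and that the monodromy operator is nilpotent of exponent at most $d+1$, where $d=\dim X$. Consequently $(g-1)^{d+1}$ annihilates $\rm{R}\Psi_{\X'}\Lambda$ for every $g\in I_{K'}$ and every $\Lambda$, and passage to hypercohomology introduces at most an additive bump determined by the cohomological amplitude of the nearby cycle complex, producing a uniform $N$ depending only on $d$. The main obstacle is the transfer step with integral coefficients: the decomposition theorem yields summand identifications only after inverting $\ell$, so torsion in $\rm{IH}^i$ and $\rm{IH}^i_c$ must be handled separately. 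Here one uses finite generation of the $\Z_\ell$-cohomology together with the fact that torsion is killed by a fixed power of $\ell$ determined by the alteration $f$, which lets the rational monodromy bound be upgraded to an integral one at the cost of enlarging $N$ by an $\ell$-independent constant.
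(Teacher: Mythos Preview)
Your overall architecture is right (formal model, reduce to strictly semistable, invoke Saito), but two of your transfer steps have genuine gaps.

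First, for the $\rm{IC}$ case you invoke ``the algebraic decomposition theorem (unconditional after Elkik algebraization).'' This does not work: Elkik's theorem algebraizes a \emph{smooth affinoid}, not an arbitrary (singular, possibly proper) rigid space together with a resolution, so you cannot import the algebraic decomposition theorem to get $\rm{IC}_{X,\Lambda}$ as a \emph{summand} of $\rm{R}f_*\Lambda[d]$. In the rigid-analytic world the decomposition theorem is still a conjecture (cf.\ \cite[Conjecture~4.17]{Bhatt-Hansen}). The paper avoids this by observing (Lemma~\ref{lemma:ic-subquotient}) that $\rm{IC}_{X,\Lambda}$ is a perverse \emph{subquotient} of ${}^p\rm{R}^0f_*(\Lambda[d])$ for a resolution $f$; since $\rm{R}\Psi$ is perverse $t$-exact, $\rm{R}\Psi_\X\rm{IC}_{X,\Lambda}$ is a subquotient of ${}^p\cal{H}^0$ of a pushforward of $\rm{R}\Psi$ of a constant sheaf, and $(g-1)^N=0$ passes to subquotients. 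No summand identification is needed, and this works uniformly in $\Lambda$.

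Second, your trace retraction for a single alteration $f$ requires inverting $\deg f$, which fails for $\Lambda=\Z/\ell^n\Z$ whenever $\ell\mid\deg f$; you then try to repair this with a torsion argument, but ``torsion killed by a fixed power of $\ell$'' does not obviously convert a rational nilpotency bound on $(g-1)$ into an integral one with $\ell$-independent exponent. The paper sidesteps this entirely: instead of one alteration plus trace, it builds a rig-surjective \emph{hypercovering} $\Y_\bullet\to\X$ by algebraically strictly semistable formal schemes (Corollary~\ref{cor:strictly-semi-stable-hypercovering}) and uses cohomological $v$-descent (Lemma~\ref{lemma:hyperdescent}). The resulting spectral sequences reduce quasi-unipotence of $\rm{R}\Psi_\X\Lambda$ to quasi-unipotence of each $\rm{R}\Psi_{\Y_n}\Lambda$, with no division and hence uniform in $\Lambda$ and $\ell$ (Lemma~\ref{lemma:trivial-action-any-model}, Corollary~\ref{cor:trivial-action-any-model}). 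Finally, the paper handles $\rm{H}_c$ and $\rm{IH}_c$ not via a compactification but via the commutation $\rm{R}(\mf_s\times_s\eta)_!\circ\rm{R}\Psi_\X\simeq\rm{R}\Psi\circ\rm{R}\mf_{\eta,!}$ (Theorem~\ref{thm:compact-pushforward-nearby-cycles}, Remark~\ref{rmk:action-coincide-compact}), so no ``formal Nagata'' step is required.
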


The idea of the proof of Theorem~\ref{intro-6} is similar to that of Theorem~\ref{intro-5}: we reduce the general case to the case of an algebraic strictly semi-stable formal model, where the result is well-known. A more careful analysis of the proof leads us to a stronger version of Theorem~\ref{intro-6} in case of usual cohomology groups:

\begin{thm}\label{thm:intro-7}(Theorem~\ref{thm:unipotent-action-strong}) Under the same assumptions as in Theorem~\ref{intro-6}, there is an open subgroup $I_1\subset I$, independent of $\ell$ and $\Lambda$, such that for all $g\in I_1$ and all integers $i$, $(g-1)^{i+1}=0$ on  $\rm{H}^i(X_{\wdh{\ov{\eta}}}, \Lambda)$.
\end{thm}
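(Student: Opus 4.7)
The plan is to combine the general reduction strategy of Theorem~\ref{intro-6}---reducing the qcqs case to a strictly semi-stable algebraic formal model---with the classical sharp monodromy bound $N^{i+1}=0$ on $\rm{H}^i$ of the generic fiber of such a model, originally due to Rapoport--Zink and T.~Saito. The key refinement over Theorem~\ref{intro-6} is that each reduction step must be executed while carefully tracking the nilpotence index, so as to preserve the sharp exponent $i+1$ rather than just some abstract bound.

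By finiteness of $\ell$-adic cohomology and passage to limits, I would first reduce to the case $\Lambda=\Z/\ell^n$ uniformly in $n$; the $\Z_\ell$ and $\Q_\ell$ statements then follow by taking inverse limits and inverting $\ell$. Elkik's algebraization together with the analytic/algebraic comparison of nearby cycles then reduces to the case where $X$ is algebraic over $K$. The passage to the smooth projective case proceeds via Temkin's compactification (or its algebraic analogue) combined with a resolution or alteration; induction on dimension uses excision sequences in which the lower-dimensional closed strata contribute via cohomological shifts, and by the inductive hypothesis these contributions are already annihilated by $(g-1)^{i+1}$.

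For smooth projective $X$, de Jong's alterations yield a proper surjective $f\colon X'\to X$ over some finite extension $K'/K$, with $X'$ admitting a regular strictly semi-stable formal model $\X'/\O_{K'}$. Via trace maps (valid for $\ell\nmid \deg f$; for the finitely many bad primes, one uses a separate alteration of coprime degree), $f^*$ is a Galois-equivariant injection on $\rm{H}^i(-,\Lambda)$, so the bound transfers from $X'$ to $X$. On $X'$, the Rapoport--Zink--Saito explicit description of the nearby cycles $\rm{R}\Psi_{\X'}\Lambda$ together with the associated weight spectral sequence gives $(g-1)^{i+1}=0$ on $\rm{H}^i(X'_{\wdh{\ov{\eta}}},\Lambda)$ for every $g\in I_{K'}$. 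Setting $I_1:=I_{K'}$, which is manifestly independent of $\ell$ and $\Lambda$, then completes the argument.

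The hard part will be the reduction from qcqs rigid $X$ down to the smooth projective case without inflating the exponent past $i+1$: the naive application of excision together with the inductive hypothesis tends to add nilpotence indices (if $N^a=0$ on $A$ and $N^c=0$ on $C$ then a priori one only gets $N^{a+c}=0$ on $B$ in a triangle $A\to B\to C$), so one must exploit the cohomological shifts carefully, using that higher-codimension closed strata contribute to $\rm{H}^{<i}$ where a strictly smaller inductive bound applies, and that Tate twists do not affect nilpotence.
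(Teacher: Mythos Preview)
Your approach diverges from the paper's and has a genuine gap in the reduction steps.

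The paper does \emph{not} reduce to the algebraic smooth projective case. Instead, it works directly with an admissible formal model $\X$ and constructs a rig-surjective hypercovering $a\colon \Y_\bullet \to \X_{\O_L}$ (after a finite extension $K\subset L$) such that each $\Y_n$ is algebraically strictly semi-stable. For such $\Y_n$, Saito's computation gives that $g-1$ annihilates the \emph{sheaf} $\rm{R}^j\Psi_{\Y_n}\Lambda$ for every $g\in I_L$ and every $j$. Cohomological descent then identifies $\rm{R}\Gamma(X_{\wdh{\ov{\eta}}},\Lambda)$ with $\rm{R}\Gamma(\Y_{\bullet,\ov{s}}, \rm{R}\Psi_{\Y_\bullet}\Lambda)$, and the Grothendieck spectral sequence $E_2^{p,q}=\rm{H}^p(\Y_{\bullet,\ov{s}},\rm{R}^q\Psi_{\Y_\bullet}\Lambda)\Rightarrow \rm{H}^{p+q}(X_{\wdh{\ov{\eta}}},\Lambda)$ has $g-1$ acting trivially on each $E_2^{p,q}$. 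Since the abutment filtration on $\rm{H}^i$ has at most $i+1$ steps (indexed by $(p,q)$ with $p,q\geq 0$, $p+q=i$), one gets $(g-1)^{i+1}=0$ immediately. No excision, no induction on dimension, no tracking of exponents through triangles.

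Your proposed route fails at two points. First, Elkik's theorem only algebraizes a \emph{smooth affinoid}; it does not reduce an arbitrary qcqs rigid space to an algebraic variety, so your first reduction step is invalid as stated. Second, and as you yourself flag, the excision/induction argument from qcqs down to smooth projective does not preserve the exponent: in a triangle $A\to B\to C$, nilpotence indices add, and the codimension bookkeeping you sketch does not close the gap (the boundary in a compactification is codimension $1$, so its $\rm{H}^{i-1}$ feeds into $\rm{H}^i$ of the open, and inductively you only know $(g-1)^i=0$ there, giving at best $(g-1)^{2i+1}=0$ on the open). The paper avoids both issues entirely by using hypercoverings, which stay in the analytic category and produce the sharp filtration length automatically from the spectral sequence.
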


\subsection{Weight-monodromy conjecture for the nearby cycles}

For the rest of this section, we fix a $p$-adic local field $K$ and a prime number $\ell\neq p$. \smallskip

Let $X$ be a smooth and proper $K$-scheme. Then its geometric \'etale cohomology groups $\rm{H}^i(X_{\ov{K}}, \Q_\ell)$ come equipped with the monodromy filtration $\rm{Fil}_{\rm{M}}^\bullet \rm{H}^i(X_{\ov{K}}, \Q_\ell)$. The following famous conjecture is due to P.\,Deligne, and is motivated by analogy with properties of limit mixed Hodge structures; we refer to \cite{Illusie-autor} for a beautiful overview of this circle of ideas.

\begin{conjecture}\label{conj:weight-monodromy}(Weight-Monodromy Conjecture) Let $X$ be as above. Then the eigenvalues of any geometric Frobenius lift on $\rm{gr}^j_{\rm{M}} \rm{H}^i(X_{\ov{K}}, \Q_\ell)$ are $q$-Weil numbers of weight $i+j$ for every pair of integers $i, j$.
\end{conjecture}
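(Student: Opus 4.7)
This is Deligne's weight-monodromy conjecture, famously open in mixed characteristic. My plan follows the classical strategy of reducing to strictly semi-stable reduction via de Jong's alterations and analyzing the Rapoport-Zink weight spectral sequence, combined with the local weight-monodromy theorem for nearby cycles developed earlier in this paper.

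First I would use de Jong's alterations, refined by Gabber to yield alterations of degree prime to $\ell$, to reduce to the case where $X$ admits a proper strictly semi-stable model $\X$ over $\O_K$: in that situation, $\rm{H}^i(X_{\ov{K}}, \Q_\ell)$ is a direct summand of the analogous group for the alteration, compatibly with the Galois and monodromy structures. For strictly semi-stable $\X$, T.\,Saito's explicit computation of the nearby cycles in terms of a logarithmic de Rham complex produces the Rapoport-Zink weight spectral sequence
\begin{equation*}
E_1^{p,q} \Longrightarrow \rm{H}^{p+q}(X_{\ov{K}}, \Q_\ell),
\end{equation*}
whose $E_1$-terms are direct sums of cohomology groups of the smooth proper strata $\X_s^{(r)}$ with appropriate Tate twists, and which is compatible with the canonical monodromy operator $N$ of Theorem~\ref{intro-3}. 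By Deligne's proof of the Weil conjectures, each $E_1^{p,q}$ is pure of weight $q$.

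Next, the local weight-monodromy theorem proved earlier in this paper asserts that $\rm{R}\Psi \Q_\ell[d]$ is pure as a perverse sheaf on Deligne's topos, so its monodromy and weight filtrations coincide up to the canonical shift, and in particular $N^r$ induces isomorphisms $\rm{gr}^{d+r}_{\rm{W}} \rm{R}\Psi\Q_\ell \xr{\sim} \rm{gr}^{d-r}_{\rm{W}}\rm{R}\Psi\Q_\ell(-r)$. Granted degeneration of the weight spectral sequence at $E_2$, purity of the $E_1$-terms propagates to purity of $E_\infty^{p,q}$ of weight $q$, producing a weight filtration on $\rm{H}^i(X_{\ov{K}}, \Q_\ell)$; the compatibility of $N$ with the spectral sequence, combined with the local weight-monodromy isomorphisms above, then identifies this filtration with the monodromy filtration shifted by $i$, yielding the conjecture.

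The main obstacle---and the reason the conjecture remains open in mixed characteristic---is precisely $E_2$-degeneration of the Rapoport-Zink spectral sequence. In equal characteristic, Deligne (with subsequent refinements by Gabber) deduces this from relative purity of $\rm{R}\Psi$ over the base trait together with Frobenius weight arguments available in the equicharacteristic setting; in mixed characteristic, no comparable tool is known, because $\Spec \O_K$ does not support a parallel theory of relative weights, and the bounds that can be extracted from the local theorem of this paper fall just short of forcing $d_2 = 0$. Overcoming this gap appears to require a genuinely new mixed-characteristic input---either a decomposition-type theorem for pure perverse sheaves on Deligne's topos, or a substantial enhancement of the perfectoid methods underlying the local conjecture---and I do not see how to carry it out here.
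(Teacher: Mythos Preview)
The statement is Conjecture~\ref{conj:weight-monodromy}: it is stated in the paper as an open conjecture, not a theorem, and the paper offers no proof. There is no ``paper's own proof'' to compare against. You correctly recognise this and conclude that your sketch does not close.

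Your diagnosis of the obstruction, however, is misplaced. You locate the gap at $E_2$-degeneration of the Rapoport--Zink weight spectral sequence. But for strictly semistable $\X$, the $E_1$-terms are cohomology groups of the smooth proper strata $\X_s^{(r)}$ with Tate twists, hence pure of the predicted weight by the Weil conjectures over the finite residue field; the higher differentials $d_r$ for $r\geq 2$ then vanish for weight reasons, so $E_2$-degeneration is already known and is not the missing step. The actual gap lies in your clause ``the local weight-monodromy isomorphisms above then identify this filtration with the monodromy filtration'': the sheaf-level isomorphisms $N^r\colon \rm{gr}^r_{\rm{M}}\rm{R}\Psi \xr{\sim} \rm{gr}^{-r}_{\rm{M}}\rm{R}\Psi(-r)$ are simply the defining property of the monodromy filtration and hold automatically; they give isomorphisms on $E_1$-terms, but there is no reason these survive to isomorphisms on the relevant $E_2$-terms, since $N^r$ is not an isomorphism on the adjacent $E_1$-groups entering $d_1$. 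Equivalently, the spectral-sequence filtration on $\rm{H}^i$ is the weight filtration, but nothing forces it to coincide with the monodromy filtration on $\rm{H}^i$. The Hopf surface example in the paper shows that monodromy-purity of $\rm{R}\Psi$ together with properness of $\X_s$ genuinely does not force global weight-monodromy.

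The paper frames the missing ingredient differently, as Question~\ref{question:super-Weil}: whether $\rm{R}(f\times_s\eta)_*$ preserves monodromy-purity for \emph{projective} $f$. A positive answer combined with Corollary~\ref{cor:cofinal-family} would yield the conjecture for any smooth proper variety with projective reduction (see Remark~\ref{rmk:question-weight-monodromy}), hence in particular for all smooth projective $K$-schemes.
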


Conjecture~\ref{conj:weight-monodromy} is a global statement which is specific to proper algebraic varieties. In this paper, we prove (under some small additional assumption) the following local version of this conjecture, which seems to be folklore:

\begin{conjecture}\label{conj:intro-local-weight-monodromy-nearby} (Local Weight-Monodromy Conjecture) Let $\X$ be an admissible formal $\O_K$-scheme with smooth generic fiber $\X_{\eta}$. Then the nearby cycles $\rm{R}\Psi_{\X} \Q_\ell \in D^{b}_c(\X_s \times_s \eta; \Q_\ell)$ are monodromy-pure of weight $0$ (see Definition~\ref{defn:monodromy-pure}).
\end{conjecture}

\begin{thm}\label{thm:intro-8}(Theorem~\ref{thm:local-weight-monodromy}) Let $\X$ an admissible formal $\O_K$-scheme with smooth generic fiber $\X_\eta$. Suppose that each point $x\in \X$ admits a pointed \'etale morphism $(\sU, u) \to (\X, x)$ such that $\sU_\eta$ admits an \'etale morphism to a closed unit disk $\bf{D}^d_K$. Then the nearby cycles $\rm{R}\Psi_\X \Q_\ell$ is a monodromy-pure sheaf of weight $0$. 
\end{thm}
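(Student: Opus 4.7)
The plan is to prove the theorem in two stages: first, use the étale-local hypothesis to reduce to the case where $\X_\eta$ is globally étale over a polydisc $\mathbf{D}^d_K$; second, analyze the nearby cycles via a perfectoid pro-étale cover of the polydisc.

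For the reduction, monodromy-purity of $\rm{R}\Psi_\X \Q_\ell$ can be checked étale-locally on the formal model, since nearby cycles commute with étale base change on the formal side (Appendix~\ref{appendix:deligne}), and the monodromy operator $N$ of Theorem~\ref{intro-3} together with the weight filtration of Theorem~\ref{intro-2} are defined functorially on Deligne's topos. Thus we may replace $\X$ by a suitable étale neighborhood of each point and assume there is a global étale map $f\colon \X_\eta \to \mathbf{D}^d_K$.

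For the analysis, form the perfectoid cover $\widetilde{\mathbf{D}}^d_K = \Spa K\langle T_1^{1/p^\infty}, \ldots, T_d^{1/p^\infty}\rangle \to \mathbf{D}^d_K$ and pull back along $f$ to obtain a perfectoid space $\widetilde{\X}_\eta \to \X_\eta$ which is pro-étale with deck group essentially $\Gamma \cong \Z_p(1)^d$. The perfectoid disc admits a formally smooth formal model whose nearby cycles of $\Q_\ell$ are trivial, and this structure pulls back along $f$ to an analogous perfectoid formal cover of $\X$. One then hopes to recover $\rm{R}\Psi_\X \Q_\ell$ from the nearby cycles on the perfectoid cover by a form of Galois descent along $\Gamma$, with the monodromy operator explicitly governed by the $\Gamma$-action. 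Alternatively, one can tilt the perfectoid cover to equal characteristic $p$ and invoke Gabber's monodromy-purity theorem for nearby cycles there, transferring the conclusion back via the tilting equivalence on $\Q_\ell$-cohomology.

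The main obstacle is the ``novel perfectoid analysis of nearby cycles'' promised in the abstract: nearby cycles depend on a formal model and are not \emph{a priori} compatible with pro-étale perfectoid covers or with tilting. A careful comparison between $\rm{R}\Psi_\X$ and its avatar on a perfectoid (or tilted) formal model, tracking the $G_K$-action on Deligne's topos and the weight filtration from Theorem~\ref{intro-2}, is required. Once such a comparison is established, monodromy-purity of weight $0$ should follow from either the Galois descent description or from Gabber's input on the characteristic-$p$ side.
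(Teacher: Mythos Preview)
Your outline correctly identifies the overall architecture (étale-local reduction, perfectoid cover of the disc, tilting, Gabber's theorem in characteristic $p$), and this is indeed the paper's strategy. But the proposal has two genuine gaps in the mechanism, and your ``Galois descent along $\Gamma$'' is not how the argument actually runs.

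First, you are missing the upgrade from an étale map to a \emph{finite} étale map. After passing to $K_\infty$ (which is perfectoid and arithmetic; the Reduced Fiber Theorem is needed along the way so that $\Spf A^\circ$ behaves well under this base change), the paper invokes Achinger's result that any smooth affinoid étale over $\mathbf{D}^d_{K_\infty}$ is in fact \emph{finite} étale over it. This is essential: finiteness lets one extend $f_\eta$ to a finite morphism $f\colon \X \to \widehat{\mathbf{A}}^d$ on formal models and then use that finite pushforward both preserves and reflects monodromy-purity. Without a finite map on the special fiber, there is no common target on which to compare nearby cycles across the tilt.

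Second, the transfer from characteristic $p$ is not by descent but by a \emph{direct-summand} argument. Writing $\gamma\colon X^\flat_{\et}\to X_{\et}$ for the morphism induced by the perfectoid cover plus tilting, the unit $\Q_\ell \to \rm{R}\gamma_*\gamma^*\Q_\ell$ splits, because the tower $\Spa K_\infty\langle T_j^{1/p^i}\rangle \to \mathbf{D}^d_{K_\infty}$ is finite flat of $p$-power degree and the normalized trace $\frac{1}{p^{di}}\mathrm{tr}$ furnishes a retraction. A commutative diagram of topoi over $\mathbf{A}^d_s\times_s\eta$ (the ``Claim'' in the proof of Theorem~\ref{thm:first-weight-monodromy}) then exhibits $(f_s\times_s\eta)_*\rm{R}\Psi_\X\Q_\ell$ as a direct summand of $(f^\flat_s\times_s\eta^\flat)_*\rm{R}\Psi_{\X^\flat}\Q_\ell$. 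Gabber gives monodromy-purity of the latter, and monodromy-purity passes to summands and is reflected by finite pushforward. Your proposed ``recovery by $\Gamma$-descent'' would require controlling the full $\rm{R}\gamma_*\gamma^*$, which is unnecessary and would not obviously preserve monodromy-purity; the splitting is the point.
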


\begin{rmk} Unfortunately, Theorem~\ref{thm:intro-8} (or even Conjecture~\ref{conj:intro-local-weight-monodromy-nearby}) does not imply the global weight-monodromy conjecture. We hope that Theorem~\ref{thm:intro-8} can be combined with some other ideas to give a new approach to Conjecture~\ref{conj:weight-monodromy}. In particular, Theorem~\ref{thm:intro-8} allows to reduce the weight-monodromy conjecture to a question about varieties over finite fields. 
\end{rmk}

\begin{cor}\label{cor:intro} A smooth rigid-analytic $K$-variety $X$ admits a cofinal family of admissible formal models $\{\X_i\}_{i\in I}$ such that $\rm{R}\Psi_{\X_i} \Q_\ell$ is monodromy-pure of weight $0$.
\end{cor}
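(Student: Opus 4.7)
The plan is to reduce the claim to Theorem~\ref{thm:intro-8} by constructing a single admissible formal model $\X_0$ of $X$ satisfying its hypothesis, and then using Raynaud's theorem to deduce cofinality of such models.

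Since $X$ is smooth and quasi-compact, cover $X$ by finitely many affinoid opens $V_i = \Spa(A_i, A_i^\circ)$ equipped with étale morphisms $V_i \to \bf{D}^{d_i}_K$. Each such étale morphism corresponds to an étale ring map $\O_K\langle T_1, \ldots, T_{d_i}\rangle[1/\pi] \to A_i$, which by Elkik's algebraization theorem lifts to an étale morphism of $\pi$-adically complete $\O_K$-algebras $\O_K\langle T_1, \ldots, T_{d_i}\rangle \to B_i$ with $B_i[1/\pi] = A_i$. The resulting affine formal schemes $\V_i := \Spf B_i$ are then admissible formal models of $V_i$ equipped with étale morphisms to formal polydisks, so each $\V_i$ trivially satisfies the hypothesis of Theorem~\ref{thm:intro-8} (take $\sU = \V_i$ at each point). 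By Raynaud's theorem, after appropriate admissible blowups $\V_i' \to \V_i$, the $\V_i'$ glue to an admissible formal model $\X_0$ of $X$ in which each $\V_i'$ appears as an open formal subscheme.

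I next observe that if $\X'$ is an admissible blowup of a formal model $\X$ satisfying the hypothesis of Theorem~\ref{thm:intro-8}, then $\X'$ also satisfies it. Indeed, given $x' \in \X'$ with image $\bar x \in \X$, pull back an étale neighborhood $(\sU, u) \to (\X, \bar x)$ with $\sU_\eta$ étale over a polydisk along the blowup: the base change $\sU \times_\X \X' \to \X'$ is étale, has a point over $x'$, and its generic fiber equals $\sU_\eta$ since admissible blowups are isomorphisms on generic fibers. Applying this stability to $\V_i' \to \V_i$ and then to the open inclusions $\V_i' \hookrightarrow \X_0$ shows that $\X_0$ itself satisfies the hypothesis. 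For cofinality, given any admissible formal model $\Y$ of $X$, Raynaud's theorem produces a common admissible blowup $\Y'$ dominating both $\Y$ and $\X_0$; by the stability argument $\Y'$ inherits the property from $\X_0$ while refining $\Y$. Hence models satisfying the hypothesis are cofinal, and Theorem~\ref{thm:intro-8} applies to each of them.

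The main technical input is Elkik's algebraization theorem, namely the lifting of étale morphisms of the generic fiber to étale morphisms of suitable formal models. The rest is a formal consequence combining Theorem~\ref{thm:intro-8}, Raynaud's theory of admissible formal models, and the above stability argument for admissible blowups.
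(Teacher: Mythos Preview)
Your overall strategy is correct and matches the paper's approach: the paper simply invokes Theorem~\ref{thm:intro-8} together with \cite[Proposition~3.7]{BLR3}, and what you have written is essentially an unpacking of the latter reference (construct one model satisfying the hypothesis, then use stability under admissible blowups to get cofinality).

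However, your invocation of Elkik's theorem is both incorrect and unnecessary. It is \emph{not} true in general that an \'etale map $K\langle T_1,\dots,T_d\rangle \to A$ lifts to an \'etale morphism of admissible formal $\O_K$-schemes $\Spf B \to \Spf \O_K\langle T_1,\dots,T_d\rangle$ with $B[1/\pi]=A$. Already for $d=0$ this fails: if $L/K$ is a ramified finite extension, then $\Spa(L,\O_L)$ is \'etale over $\Spa(K,\O_K)=\bf{D}^0_K$, but no admissible formal model of $\Spa(L,\O_L)$ is \'etale over $\Spf\O_K$ (any such model would be a finite \'etale $\O_K$-algebra with fraction field $L$, forcing $L/K$ to be unramified). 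The same obstruction persists for $d\geq 1$ by base change along $T_i\mapsto 0$.

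Fortunately you do not need this lift. The hypothesis of Theorem~\ref{thm:intro-8} only asks that the \emph{generic fiber} $\sU_\eta$ be \'etale over a disk, not that $\sU$ itself be \'etale over a formal disk. So simply take $\V_i$ to be \emph{any} admissible formal model of $V_i$ (for instance $\Spf A_i^\circ$, which is topologically finitely presented since $K$ is discretely valued and $V_i$ is smooth); then $\V_i$ tautologically satisfies the hypothesis with $\sU=\V_i$, since $(\V_i)_\eta = V_i$ was chosen \'etale over $\bf{D}^{d_i}_K$. The rest of your argument (gluing via Raynaud, stability under admissible blowups, cofinality) then goes through unchanged.
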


\begin{rmk} Theorem~\ref{thm:intro-8}, in particular, proves that $\rm{R}\Psi_\X\Q_\ell$ is monodromy-pure of weight $0$ for any semi-stable formal $\O_K$-scheme. Previously, it was known in the algebraic semi-stable case by an explicit calculation of the nearby cycles. Our proof is completely different: it is quite soft and does not require any explicit computations.
\end{rmk}

If $K$ is a characteristic $p$ local field (so $K\cong \bf{F}_q(\!(T)\!)$), then Theorem~\ref{thm:intro-8} holds for any admissible formal model of $X$. This result is essentially due to O.\,Gabber in the algebraic case. The analytic case can be easily deduced from this using Elkik's algebraization (see Theorem~\ref{lemma:char-p-local-weight-monodromy}). \smallskip

The proof of Theorem~\ref{thm:intro-8} is inspired by Scholze's proof of Conjecture~\ref{conj:weight-monodromy} for smooth proper varieties which can be realized as set-theoretic complete intersections in a projective space (see \cite[Theorem 1.14]{Sch0}). We briefly recall the strategy used in \cite{Sch0}. P.\,Scholze uses the embedding into a projective space to reduce Conjecture~\ref{conj:weight-monodromy} to an analogous claim for a perfectoid covering of $X$, then the tilting equivalence and subtle approximation and algebraization results (this is where the complete intersection assumption becomes necessary) allows to reduce the question to the Weight-Monodromy Conjecture in equal characteristic $p>0$. This was already proven by P.\,Deligne and T.\,Ito (see \cite{Weil2} and \cite{Ito}). \smallskip

Our idea is somewhat similar: we use the \'etale morphism $\X_\eta\to \bf{D}^d_K$ (which is assumed to exist locally on $\X$) to reduce the original claim for $X$ to an analogous claim for a suitable perfectoid covering, obtained by pullback from a canonical perfectoid covering of $\bf{D}^d_K$. Then using the tilting equivalence and the algebraization and approximation results of R.\,Elkik and Gabber--Ramero, we can eventually reduce to the equi-characteristic version already proven by O.\,Gabber. In particular, our proof does not require any explicit computations. \smallskip

These results suggest the following generalization of the weight-monodromy conjecture. \smallskip

\begin{conjecture}\label{conj:intro} Let $K$ be a $p$-adic local field, $X$ a smooth proper rigid-analytic $K$-variety, and $\ell\neq p$ a prime number. Suppose that $X$ admits an admissible formal $\O_K$-model $\X$ with a projective special fiber $\X_s$. Then the eigenvalues of any geometric Frobenius lift on $\rm{gr}^j_{\rm{M}} \rm{H}^i(X_{\wdh{\ov{\eta}}}, \Q_\ell)$ are $q$-Weil numbers of weight $i+j$ for every integers $i, j$.
\end{conjecture}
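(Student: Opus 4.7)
The plan is to deduce Conjecture~\ref{conj:intro} from Theorem~\ref{thm:intro-8} combined with a BBD-style purity-preservation theorem on Deligne's topos. Since $\X_s$ is projective, $\X$ is proper over $\Spf\O_K$, so proper base change for nearby cycles yields a $G_K$-equivariant isomorphism
\[
\rm{H}^i(X_{\wdh{\ov{\eta}}}, \Q_\ell) \cong \rm{H}^i(\X_{\ov{s}}, \rm{R}\Psi_{\X}\Q_\ell),
\]
which one expects to be compatible with the monodromy filtration on both sides. The conjecture thus becomes a statement about the cohomology of $\rm{R}\Psi_\X\Q_\ell \in D^b_c(\X_s \times_s \eta; \Q_\ell)$ along the projective $k$-scheme $\X_{\ov s}$.

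First I would reduce to a formal model that simultaneously has projective special fiber and satisfies the hypotheses of Theorem~\ref{thm:intro-8}. Since $X$ is smooth, Corollary~\ref{cor:intro} furnishes a cofinal family of admissible models with monodromy-pure (weight $0$) nearby cycles. Using Raynaud--Gruson flattening together with resolution of rigid singularities, I would arrange such a model $\X'\to \X$ via admissible blow-ups; as admissible blow-ups preserve properness and projectivity, $\X'_s$ remains projective, and proper base change identifies $\rm{R}\Gamma(\X_{\ov s}, \rm{R}\Psi\Q_\ell) \cong \rm{R}\Gamma(\X'_{\ov s}, \rm{R}\Psi\Q_\ell)$ as filtered $G_K$-representations. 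We may therefore assume $\rm{R}\Psi_\X\Q_\ell$ is monodromy-pure of weight $0$ to begin with.

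The remaining and main step is a purity-preservation theorem on Deligne's topos: for a projective $k$-scheme $Y$ and $\F \in D^b_c(Y \times_s \eta; \Q_\ell)$ monodromy-pure of weight $w$, the complex $\rm{R}\Gamma(Y_{\ov s}, \F)$ should be monodromy-pure of weight $w$, i.e.\ $\rm{gr}^j_{\rm{M}}\rm{H}^i$ pure of weight $w+i+j$. Classically this combines Deligne's stability of purity under proper pushforward (Weil~II) with the hard-Lefschetz identity $N^j\colon \rm{gr}_j^{\rm{M}}\xrightarrow{\sim} \rm{gr}_{-j}^{\rm{M}}$ coming from monodromy-purity. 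The weight filtration of Theorem~\ref{intro-2} and the nilpotent monodromy of Theorem~\ref{intro-3} supply the formalism; for a continuous section $\sigma\colon G_k\to G_K$, Theorem~\ref{intro-1} reduces pointwise purity on $Y\times_s \eta$ to that of $\sigma_Y^*\F$ on $Y_{\ov s}$, where Weil~II applies directly.

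The main obstacle is the sheaf-to-cohomology transfer of monodromy-purity. Concretely, one must verify that the nilpotent $N$ on $\rm{R}\Psi_\X\Q_\ell$ produced by Theorem~\ref{intro-3} pushes forward under $\rm{R}\Gamma(\X_{\ov s},-)$ to the classical monodromy on $\rm{H}^*(X_{\wdh{\ov\eta}},\Q_\ell)$, and that the sheaf-level hard-Lefschetz identity encoded by monodromy-purity yields the corresponding global identity. This is essentially an analog of the BBD decomposition theorem for monodromy-pure complexes on Deligne's topos; working out its proof---likely by replaying the BBD argument with Theorem~\ref{intro-2} in place of the BBD weight filtration and $\sigma_X^*$ giving access to Weil~II for the pullbacks---is where I would expect the bulk of the technical work to lie.
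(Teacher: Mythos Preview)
The statement you are attempting to prove is a \emph{conjecture}, and the paper does not prove it. In fact, the paper explicitly identifies the missing ingredient: immediately after stating this conjecture, it poses Question~\ref{intro:question:super-Weil} (whether $\rm{R}(f\times_s\eta)_*$ preserves monodromy-purity for projective $f$) and observes that a positive answer, together with Corollary~\ref{cor:intro}, would imply both this conjecture and the full Weight--Monodromy Conjecture~\ref{conj:weight-monodromy}. Your reduction strategy is therefore exactly the one the paper outlines---but the paper stops there, because the key step is open.

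The genuine gap in your proposal is the ``main step'': that projective pushforward on Deligne's topos preserves monodromy-pure complexes. You suggest this should follow by ``replaying the BBD argument with Theorem~\ref{intro-2} in place of the BBD weight filtration,'' but this does not work. Weil~II gives preservation of \emph{ordinary} purity under proper pushforward; monodromy-purity is a statement about the interaction between the weight filtration and the monodromy filtration induced by $N$. The problem is that the monodromy filtration does not commute with $\rm{R}f_*$: the filtration on $\rm{R}f_*\F$ is built from $\rm{R}f_*(N)$, and there is no a priori reason its graded pieces should agree with $\rm{R}f_*(\rm{gr}^k_{\rm{M}}\F)$. Establishing that they do is precisely the content of the Weight--Monodromy Conjecture. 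Indeed, if your sketch worked it would prove Deligne's original conjecture for all smooth proper $K$-varieties (which have projective integral models after alteration), a famously open problem in mixed characteristic. The paper is explicit that Question~\ref{intro:question:super-Weil} is ``probably very hard,'' and your proposal does not supply the missing idea.
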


\begin{rmk} Conjecture~\ref{conj:intro} has no chances to hold for {\it all} smooth and proper rigid-analytic varieties $X$ over $K$ because it is already false for the Hopf surface. However, the condition that $X$ admits a formal model with projective reduction is very strong. This condition was first singled out by S.\,Li \cite{Li-projective}, and Hansen--Li then suggested that it might have consequences in $p$-adic Hodge theory, and in particular that it might imply Hodge symmetry \cite{Hansen-Li}. This hope was then dispelled by some explicit counterexamples constructed by A.\,Petrov \cite{Petrov-hodge}. However, $\ell$-adic cohomology has rather different formal properties than Hodge cohomology in this setting, and the spaces constructed by Petrov do satisfy Conjecture \ref{conj:intro}.
\end{rmk}

\begin{question2}\label{intro:question:super-Weil} Let $K$ be a $p$-adic local field, $\ell\neq p$ a prime number, $f\colon X \to Y$ a projective morphism of finite type $k$-schemes, and $\F\in D^b_c(X\times_s \eta; \Q_\ell)$ monodromy pure of weight $w$. Is $\rm{R}(f\times_s \eta)_*\F\in D^b_c(Y\times_s \eta; \Q_\ell)$ monodromy-pure of weight $w$?
\end{question2}

Question~\ref{intro:question:super-Weil} together with Corollary~\ref{cor:intro} imply both Conjecture~\ref{conj:intro-local-weight-monodromy-nearby} and Conjecture~\ref{conj:intro} (and also Conjecture~\ref{conj:weight-monodromy}). In particular, Corollary~\ref{cor:intro} allows us to reduce the Weight-Monodromy conjecture to a (probably very hard) question purely on the special fiber. We hope that this could help to shed some new insights on the general version of this conjecture. 

\subsection{Terminology}\label{section:terminology}

A {\it non-archimedean} field $K$ is a complete rank-$1$ valued field. A non-archimedean field $K$ is {\it $p$-adic} if $K$ is a non-archimedean field of mixed characteristic $(0, p)$. A non-archimedean field $K$ is {\it local} if it is discretely valued non-archimedean field with finite residue field. We denote ring of integers of $K$ by $\O_K$ and its residue field by $k$. \smallskip

In this paper, we always write {\it qcqs} as a shortcut for quasi-compact quasi-separated. It applies to adic spaces, formal schemes, and schemes. \smallskip

A {\it rigid-analytic variety} over a non-archimedean field $K$ is a locally finite type adic space over $\Spa(K, \O_K)$. An {\it admissible} formal $\O_K$-scheme is a (topologically) finitely presented flat formal $\O_K$-scheme. If $\X$ is an admissible formal $\O_K$-scheme, we denote by $\X_\eta$ its adic generic fiber, and by $\X_s$ its special fiber. Likewise, we denote by $\X_{\wdh{\ov{\eta}}}$ its geometric generic fiber, and by $\X_{\ov{s}}$ its geometric special fiber. More generally, if $X$ is a rigid-analytic space over $K$ and $C=\wdh{\ov{K}}$ is a completed algebraic closure of $K$, we denote the base change $X_C$ by $X_{\wdh{\ov{\eta}}}$. \smallskip

If $\cal{A}$ is a Grothendieck abelian category, we denote by $\cal{D}(\cal{A})$ its associated {\it $\infty$-derived category}. Its homotopy category is denoted by $D(\cal{A})$ and it coincides with the usual triangulated derived category of $\cal{A}$. \smallskip

We denote by $\cal{T}$ the {\it $2$-category of topoi} and by $\rm{Pith}(\cal{T})$ its pith, i.e. the $(2, 1)$-category obtained from $\cal{T}$ by removing the non-invertible $2$-morphisms (see \cite[\href{https://kerodon.net/tag/00AL}{Tag 00AL}]{kerodon}). Likewise, $\cal{C}at$ denotes the {\it $2$-category of categories} and $\rm{Pith}(\cal{C}at)$ denotes its pith.

\subsection{Acknowledgements} We learned a lot about product topoi and nearby cycles from Lu--Zheng's article \cite{Lu-Zheng}, and the influence of this paper will hopefully be clear to the reader. We would like to thank Ko Aoki, Bhargav Bhatt, Sasha Petrov, and Peter Scholze for many fruitful and inspiring discussions. We also thank Tom Haines for drawing our attention to the paper \cite{Gortz-Haines}, and for some clarifying discussions about that article. We are grateful to Weizhe Zheng and the anonymous referee for many helpful comments on the previous version of this paper. The authors gratefully acknowledge funding through the Max Planck Institute for Mathematics in Bonn, Germany, during the preparation of this work.

\section{Deligne's category}\label{section:deligne}

For the rest of this section, we fix the following notation. We fix a non-archimedean field $K$ with ring of integers $\O_K$ and residue field $k=k(s)$. In what follows, we denote by $G_s$ the absolute Galois group of $k$ and by $G_\eta$ the absolute Galois group of $K$. We also fix a prime number $\ell$ invertible in $\O_K$. \smallskip

We denote by $s$ (resp. $\eta$) the classifying topos of the pro-finite group $G_s$ (resp. $G_\eta$), or equivalently the \'etale topos of $\Spec k$ (resp. $\Spec K$ or $\Spa(K, \O_K)$); it consists of discrete sets equipped with a continuous action of $G_s$ (resp. $G_\eta$). The natural morphism $r\colon G_\eta \to G_s$ induces a canonical morphism of topoi $r\colon \eta \to s$. For each $g\in G_\eta$, we often denote its image $r(g)\in G_s$ simply by $\ov{g}$.\smallskip

We refer to Definition~\ref{defn:deligne-topos} for the definition of Deligne's topos $X\times_s \eta$ for a qcqs $k$-scheme $X$. And we refer to Definition~\ref{defn:constructible-deligne} for the definition of the ``constructible'' $\infty$-categories $\cal{D}^b_c(X\times_s \eta; \Z_\ell)$ and $\cal{D}^b_c(X\times_s \eta; \Q_\ell)$. Their homotopy categories are denoted by $D^b_c(X\times_s \eta; \Z_\ell)$ and $D^b_c(X\times_s \eta; \Q_\ell)$ respectively.

\subsection{Arithmetic fields}

The main goal of this section is to define the notion of an arithmetic field and verify its main properties. In what follows, we will mostly be interested in Deligne's category $D(X\times_s \eta; \Q_\ell)$ for an arithmetic field $K$.

\begin{defn}\label{defn:arithmetic} A non-archimedean field $K$ is {\it arithmetic} if there is a local field $L$ with ring of integers $\O_L$ and residue field $l$ such that
\begin{enumerate}
    \item there is an isomorphism $\varphi\colon l\xr{\sim} k$;
    \item there is an isomorphism of topological groups $\psi\colon G_K\xr{\sim} G_L$ compatible with $\varphi$. More precisely, the natural diagram
    \[
    \begin{tikzcd}
    G_K \arrow{r}{r_K}\arrow{d}{\psi} & G_k \arrow{d}{\varphi_*} \\
    G_L \arrow{r}{r_L} & G_l
    \end{tikzcd}
    \]
    commutes, where $r_K$ and $r_L$ are the natural reduction morphisms and $\varphi_*$ is an isomorphism induced by $\varphi$. 
\end{enumerate}
\end{defn}

\begin{rmk}\label{rmk:finite-residue-field} The residue field of any arithmetic field is finite. 
\end{rmk}

We first discuss some examples of arithmetic fields. 

\begin{lemma}\label{lemma:perfectified-completion} Let $K\cong \bf{F}_q((T))$ be the field of Laurent power series, and let $\wdh{K_{\rm{perf}}}$ be its completed perfection. Then $\wdh{K_{\rm{perf}}}$ is an arithmetic field. 
\end{lemma}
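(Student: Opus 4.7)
The plan is to take $L := K = \mathbf{F}_q((T))$ itself as the comparison local field in Definition~\ref{defn:arithmetic}, with residue field $l = \mathbf{F}_q$. The isomorphism $\varphi\colon l \simeq k$ is then immediate: $K_{\rm{perf}} = \bigcup_n \mathbf{F}_q((T^{1/p^n}))$ has ring of integers $\bigcup_n \mathbf{F}_q[[T^{1/p^n}]]$ with residue field $\mathbf{F}_q$, and completion does not alter residue fields. The entire task is thus to produce a topological isomorphism $\psi\colon G_{\wdh{K_{\rm{perf}}}} \simeq G_K$ compatible with the canonical reduction maps to $G_{\mathbf{F}_q}$.

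I would factor $\psi$ as $G_{\wdh{K_{\rm{perf}}}} \simeq G_{K_{\rm{perf}}} \simeq G_K$. For the second arrow, since $K_{\rm{perf}}/K$ is purely inseparable and $K_{\rm{perf}}$ is perfect, every algebraic extension of $K_{\rm{perf}}$ is separable and $\ov{K} = \ov{K_{\rm{perf}}}$; I would then show the restriction map $\Gal(\ov{K}/K_{\rm{perf}}) \to \Gal(K^{\rm{sep}}/K)$ is a topological isomorphism. Injectivity uses that an automorphism of $\ov{K}$ fixing both $K^{\rm{sep}}$ and $K_{\rm{perf}}$ is trivial, and surjectivity uses that any $\sigma \in \Gal(K^{\rm{sep}}/K)$ extends uniquely to $\ov{K}$ by $\tilde\sigma(z^{1/p^n}) := \sigma(z)^{1/p^n}$, an extension which tautologically fixes $K_{\rm{perf}}$.

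For the first arrow I would invoke the general principle that a henselian valued field $F$ and its completion $\wdh{F}$ share the same absolute Galois group, via the bijection $M \mapsto M \cdot \wdh{F}$ on finite separable extensions --- a standard application of Krasner's lemma together with density of $F$ in $\wdh{F}$. Here $K_{\rm{perf}}$ is henselian because it is a filtered union of the complete discretely valued (hence henselian) fields $\mathbf{F}_q((T^{1/p^n}))$ with compatible valuations, and such filtered unions of henselian valuation rings remain henselian.

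Compatibility with the reduction maps is essentially automatic: both isomorphisms in the factorization are induced by restriction of automorphisms of a common algebraic closure, and the inclusions $K \hookrightarrow K_{\rm{perf}} \hookrightarrow \wdh{K_{\rm{perf}}}$ act as the identity on $\mathbf{F}_q$, so the quotients by inertia line up functorially. I expect the henselian-to-completion comparison to be the main technical ingredient; the perfection comparison is elementary and the compatibility with reduction is formal.
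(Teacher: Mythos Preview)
Your proposal is correct and follows essentially the same route as the paper: both factor the comparison as $G_{\wdh{K_{\rm{perf}}}} \simeq G_{K_{\rm{perf}}} \simeq G_K$, establish the first isomorphism via henselianity of $K_{\rm{perf}}$ (as a filtered colimit of complete fields), and the second via the purely inseparable nature of $K_{\rm{perf}}/K$. The only cosmetic difference is in the justifications invoked: the paper cites invariance of the \'etale site under universal homeomorphisms for the second step and Berkovich's quasi-completeness criterion \cite[Propositions 2.4.1, 2.4.3]{Berkovich-etale} for the first, whereas you argue both directly (explicit Galois extension for the purely inseparable step, Krasner's lemma for the henselian-to-completion step).
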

\begin{proof}
    Since the residue field of $K$ is perfect, we conclude that the natural morphism $K \to \wdh{K_{\rm{perf}}}$ induces an isomorphism on residue fields. Therefore, it suffices to show that the natural morphism 
    \[
    G_{\wdh{K_{\rm{perf}}}} \to G_K
    \]
    is an isomorphism. By the invariance of \'etale site under universal homeomorphisms, we conclude that the natural morphism
    \[
    G_{K_{\rm{perf}}}\to G_K
    \]
    is an equivalence. So it suffices to show that 
    \[
    G_{\wdh{K_{\rm{perf}}}}\to G_{K_{\rm{perf}}}
    \]
    is an isomorphism. Now note that $\O_{K_{\rm{perf}}}$ is $T$-adically henselian (as a filtered colimit of $T$-adically complete rings). Therefore, it is henselian with respect to its maximal ideal by \cite[\href{https://stacks.math.columbia.edu/tag/09XJ}{Tag 09XJ}]{stacks-project} and the observation that $\rm{rad}(T)=\mathfrak{m}_{\O_{K_{\rm{perf}}}}$. Therefore, \cite[Proposition 2.4.3]{Berkovich-etale} ensures that $K_{\rm{perf}}$ is quasi-complete (in the sense of \cite[Definition 2.3.1]{Berkovich-etale}). And so \cite[Proposition 2.4.1]{Berkovich-etale} implies that the natural morphism
    \[
    G_{\wdh{K_{\rm{perf}}}} \to G_{K_{\rm{perf}}}
    \]
    is an isomorphism. 
\end{proof}

For the next definition, we fix an algebraic closure $\Q_p \subset \ov{\Q}_p$ and a choice $\left\{p^{1/p^n}\right\}_{n\in \bf{N}}$ of compatible $p$-power roots of $p$.

\begin{defn}\label{defn:standard-zp-extension} For a finite extension $\Q_p\subset K$, a {\it  $p^{1/p^\infty}$-Kummer extension} $K\subset K_{\infty}=K(p^{1/p^\infty})$ is the field $\wdh{K\left(\cup_{n=1}^\infty p^{1/p^n}\right)}$ is the $p$-adic completion of the field obtained by adding all $p$-power roots $p^{1/n}$. 
\end{defn}

\begin{warning} The definition of $K_\infty$ depends on a choice of an algebraic closure $\ov{\Q}_p$ and a sequence of compatible $p$-power roots of $p$.
\end{warning}

\begin{lemma} Let $\Q_p\subset \Q_p\left(p^{1/p^\infty}\right)$ be a $p^{1/p^\infty}$-Kummer extension. Then $\Q_p\left(p^{1/p^\infty}\right)$ is an arithmetic field. 
\end{lemma}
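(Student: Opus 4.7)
The plan is to take $L := \bf{F}_p((T))$, which is a local field with residue field $\bf{F}_p$, and to construct the required topological isomorphism $\psi\colon G_K \simeq G_L$ (where $K := \wdh{\Q_p(p^{1/p^\infty})}$) by combining the tilting equivalence for perfectoid fields with Lemma~\ref{lemma:perfectified-completion} applied to $\bf{F}_p((T))$. Each finite subextension $\Q_p \subset \Q_p(p^{1/p^n})$ is Eisenstein and hence totally ramified, so the residue field of $K$ is $\bf{F}_p$ and the residue-field isomorphism $\varphi$ required by Definition~\ref{defn:arithmetic} can be taken to be $\rm{id}_{\bf{F}_p}$.

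The first step is to verify that $K$ is a perfectoid field: its value group contains $\bf{Z}[1/p]$ and is therefore non-discrete, and Frobenius on $\O_K/p$ is surjective because $p^{1/p^{n+1}}$ is a $p$-th root of $p^{1/p^n}$. Next, I would identify the tilt. The compatible sequence $(p, p^{1/p}, p^{1/p^2}, \ldots)$ defines an element $T\in K^\flat$ of positive valuation, and the natural inclusion $\bf{F}_p((T))_{\rm{perf}} \hookrightarrow K^\flat$ becomes an isomorphism after $T$-adic completion, yielding $K^\flat \cong \wdh{\bf{F}_p((T))_{\rm{perf}}}$.

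With this identification in hand, the tilting equivalence provides a canonical topological isomorphism $G_K \xr{\sim} G_{K^\flat}$, and Lemma~\ref{lemma:perfectified-completion} applied to $L = \bf{F}_p((T))$ gives $G_{K^\flat} = G_{\wdh{\bf{F}_p((T))_{\rm{perf}}}} \xr{\sim} G_L$; composing these produces $\psi$. To verify compatibility with $\varphi$, I would note that both steps preserve integral subrings and induce the identity on residue fields: tilting supplies a canonical identification $\O_K/\m_K \simeq \O_{K^\flat}/\m_{K^\flat}$, and the Fontaine--Wintenberger bijection $(\ov{K})^\flat \simeq \ov{K^\flat}$ intertwines the $G_K$- and $G_{K^\flat}$-actions compatibly with the reductions to $\ov{\bf{F}}_p$; the second step inherits compatibility from the inclusions used in the proof of Lemma~\ref{lemma:perfectified-completion}, all of which are the identity on residue fields. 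Hence the two reduction maps $G_K \to G_{\bf{F}_p}$ and $G_L \to G_{\bf{F}_p}$ are intertwined by $\psi$.

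I expect the main obstacle to be the careful tracking of residue-field compatibility through the tilting step: one must confirm that the canonical identification $(\ov{K})^\flat \simeq \ov{K^\flat}$ does indeed carry the reduction $G_K \to G_{\bf{F}_p}$ to the reduction $G_{K^\flat} \to G_{\bf{F}_p}$. While the existence of the Galois-group isomorphism is part of the standard perfectoid machinery, establishing the precise commutativity of the square in Definition~\ref{defn:arithmetic} requires explicitly unwinding how residue fields of perfectoid integral rings and their algebraic closures behave under $(-)^\flat$.
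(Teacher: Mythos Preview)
Your proposal is correct and follows essentially the same approach as the paper: verify that $K$ is perfectoid, identify $K^\flat$ with $\wdh{\bf{F}_p((T))_{\rm{perf}}}$, then combine the tilting equivalence $G_K\simeq G_{K^\flat}$ with Lemma~\ref{lemma:perfectified-completion} to obtain $G_K\simeq G_{\bf{F}_p((T))}$ compatibly with the identification of residue fields. The paper's proof differs only cosmetically, computing $\O_K/p\simeq \bf{F}_p[T^{1/p^\infty}]/T$ directly rather than via the element $(p,p^{1/p},\dots)$, and it likewise flags the residue-field compatibility through tilting as the point requiring care.
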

\begin{proof}
    Firsly, we note that $K$ is a perfectoid in the sense of \cite[Definition 1.2]{Sch0}. So essentially the claim follows from the tilting equivalence (see \cite[Theorem 3.7]{Sch0}). For the reader's convenience, we spell out the argument in more detail.\smallskip
    
    Namely, we first use that there is a unique perfectoid field $K^\flat$ of characteristic $p$ with a pseudo-uniformizer $T\in K^\flat$ and an isomorphism
    \[
    \O_{K}/p \simeq \O_{K^\flat}/T.
    \]
    This implies that residue field of $K$ and $K^\flat$ are canonically isomorphic, and \cite[Theorem 3.7]{Sch0} implies that $G_K\simeq G_{K^\flat}$. Moreover, the proof of \cite[Theorem 3.7]{Sch0} ensures that this isomorphism is compatible with the isomorphism on residue field in the sense of Definition~\ref{defn:arithmetic}).\smallskip
    
    Now we claim that $K^\flat \simeq \wdh{\bf{F}_p((T))_{\rm{perf}}}$. Indeed, this follows from the observation that $\wdh{\bf{F}_p((T))_{\rm{perf}}}$ is a perfectoid field of characteristic $p$ (that is equivalent to being perfect) and a sequence of isomorphisms
    \[
    \O_K/p\simeq \bf{Z}_p[p^{1/p^\infty}]/p \simeq \bf{F}_p[T^{1/p^\infty}] \simeq \O_{\wdh{\bf{F}_p((T))_{\rm{perf}}}}/T.
    \]
    Finally, (the proof of) Lemma~\ref{lemma:perfectified-completion} ensures that the natural morphism
    \[
    \bf{F}_p((T)) \to K^\flat
    \]
    induces an isomorphism on residue fields and Galois groups, so $G_K\simeq G_{\bf{F}_p((T))}$ and $k\simeq \bf{F}_p$ in a compatible way. 
\end{proof}

\begin{lemma}\label{lemma:finite-extensions} Let $K$ be an arithmetic field and let $K\subset K'$ be a finite separable extension. Then $K'$ is an arithmetic field. 
\end{lemma}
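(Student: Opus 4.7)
The plan is to transport the open finite-index subgroup $G_{K'} \subset G_K$ through the isomorphism $\psi$ and produce $L'$ as the corresponding finite extension of $L$. Fix an algebraic closure $\ov{K}$ of $K$, identify $G_{K'}$ with an open subgroup of $G_K$ of finite index, and set $H := \psi(G_{K'}) \subset G_L$, which is likewise open of finite index. By the Galois correspondence over $L$ (with respect to some fixed $\ov{L}$), there is a unique finite separable extension $L \subset L' \subset \ov{L}$ with $G_{L'} = H$. Since $L$ is local and $L'/L$ is finite separable, $L'$ is again a local field: the valuation extends uniquely and remains discrete, the residue field extension is finite, and so the residue field of $L'$ is again finite.

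It remains to match residue fields compatibly. Let $k'$ (resp.\ $l'$) denote the residue field of $K'$ (resp.\ $L'$). The key input is the following standard fact: for any non-archimedean field $F$ with perfect residue field $f$ and any finite separable extension $F'/F$ with residue field $f'$, one has $r_F(G_{F'}) = G_{f'}$ as subgroups of $G_f$. Indeed, $r_F$ surjects onto $G_f$ (finite separable residue field extensions lift uniquely to unramified extensions of the henselian $F$), and commutativity of the tautological square together with surjectivity of $r_{F'}\colon G_{F'} \twoheadrightarrow G_{f'}$ forces the image to be exactly $G_{f'}$. Applying this to $K'/K$ (noting that $k$ is finite by Remark~\ref{rmk:finite-residue-field}, hence perfect) and to $L'/L$, and using the compatibility $r_L \circ \psi = \varphi_* \circ r_K$ from Definition~\ref{defn:arithmetic}, I obtain
\[
G_{l'} \;=\; r_L(G_{L'}) \;=\; r_L(\psi(G_{K'})) \;=\; \varphi_*(r_K(G_{K'})) \;=\; \varphi_*(G_{k'})
\]
as subgroups of $G_l$, so $\varphi$ restricts to an isomorphism $\varphi'\colon l' \xrightarrow{\sim} k'$. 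The restriction $\psi|_{G_{K'}}\colon G_{K'} \xrightarrow{\sim} G_{L'}$ is a topological isomorphism of profinite groups, and the compatibility square of Definition~\ref{defn:arithmetic} for the tuple $(K', L', \varphi', \psi|_{G_{K'}})$ is simply the restriction of the analogous square for $(K, L, \varphi, \psi)$, hence commutes.

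The only real work is the equality $r_F(G_{F'}) = G_{f'}$; everything else is a formal diagram chase and basic Galois theory. The surjectivity of $r_F$ (hence of $r_{F'}$) is the essential ingredient, and it is automatic here because $k$ is finite and $K, K'$ are complete (so henselian). Thus I do not anticipate a serious obstacle.
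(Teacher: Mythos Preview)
Your proof is correct and follows essentially the same approach as the paper: transport $G_{K'}$ through $\psi$ to an open subgroup of $G_L$, take the corresponding finite extension $L'$, and verify the residue-field compatibility. You are more explicit than the paper about the key equality $r_F(G_{F'}) = G_{f'}$ (the paper merely says ``one easily checks that this isomorphism is compatible with an isomorphism on residue fields''), but the argument is the same. One minor wording quibble: since $l' \supset l$, the phrase ``$\varphi$ restricts to an isomorphism $l' \simeq k'$'' should really be ``the extension of $\varphi$ to separable closures (implicit in the definition of $\varphi_*$) restricts to $l' \simeq k'$''; the content is unchanged.
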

\begin{proof}
    By Galois theory, a finite separable extension $K\subset K'$ corresponds to an open subgroup $G'\subset G_K$. Using the isomorphism $G_K\simeq G_L$ for a local field $L$, we can transform $G'$ to an open subgroup $G''\subset G_L$ that defines a finite extension $L\subset L'$. Using that the isomorphism $G_K\simeq G_L$ is compatible with an isomorphism of residue field $\varphi\colon l\simeq k$, we conclude that the images of $G'$ and $G''$ coincide in $G_l$ under the isomorphism $\varphi_*\colon G_k\simeq G_l$. Then 
    \[
    G_{K'}\simeq G' \simeq G'' \simeq G_{L'}.
    \]
    One easily checks that this isomorphism is compatible with an isomorphism on residue fields, so $K'$ is arithmetic. 
\end{proof}

\begin{lemma}\label{lemma:standard-zp-extension} Let $\Q_p\subset K$ be a finite extension and let $K\subset K_\infty$ be a $p^{1/p^\infty}$-Kummer extension. Then $K_\infty$ is an arithmetic field.
\end{lemma}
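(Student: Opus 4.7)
The strategy is to realize $K_\infty$ as a finite separable extension of $\Q_p\left(p^{1/p^\infty}\right)$ and then combine the preceding lemma (the case $K=\Q_p$) with Lemma~\ref{lemma:finite-extensions}.

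Using the fixed embedding $K\subset \ov{\Q}_p$ and the chosen sequence $\{p^{1/p^n}\}$, form the compositum
\[
L := K\cdot \Q_p\left(p^{1/p^\infty}\right)
\]
inside a completed algebraic closure of $\Q_p$. Since $\Q_p\left(p^{1/p^\infty}\right)$ is a complete non-archimedean field (being perfectoid) and $L/\Q_p\left(p^{1/p^\infty}\right)$ is finite (any finite $\Q_p$-basis of $K$ generates $L$ over $\Q_p\left(p^{1/p^\infty}\right)$), the field $L$ is itself complete. Moreover, $\Q_p\left(\bigcup_n p^{1/p^n}\right)$ is dense in $\Q_p\left(p^{1/p^\infty}\right)$ by definition, so fixing such a $\Q_p$-basis of $K$ identifies $K\left(\bigcup_n p^{1/p^n}\right)=K\cdot \Q_p\left(\bigcup_n p^{1/p^n}\right)$ with a dense $\Q_p\left(\bigcup_n p^{1/p^n}\right)$-subspace of $L$. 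Its completion is therefore $L$, and so $K_\infty = L$.

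The extension $L/\Q_p\left(p^{1/p^\infty}\right)$ is finite and, being in characteristic zero, automatically separable. By the preceding lemma, $\Q_p\left(p^{1/p^\infty}\right)$ is arithmetic; Lemma~\ref{lemma:finite-extensions} then yields that $K_\infty$ is arithmetic as well.

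The only non-formal step in this plan is the identification $K_\infty = L$, which rests on the completeness of the compositum $L$. Once this is in place, the lemma is an immediate combination of the two previously established results and requires no additional Galois-theoretic or tilting input.
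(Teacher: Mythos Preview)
Your proof is correct and follows essentially the same approach as the paper: both realize $K_\infty$ as a finite separable extension of $\Q_p\left(p^{1/p^\infty}\right)$ and then apply the preceding lemma together with Lemma~\ref{lemma:finite-extensions}. The only difference is in the justification that completion preserves the finite extension---the paper invokes Berkovich's notion of quasi-completeness, while you use the more elementary fact that a finite extension of a complete non-archimedean field is complete; your argument is slightly more self-contained.
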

\begin{proof}
    Firstly, we note that $K\left(\cup_{n=1}^\infty p^{1/p^n}\right)$ is sub-algebra of the tensor product
    \[
    K\otimes_{\Q_p}\Q_p\left(\cup_{n=1}^\infty p^{1/p^n}\right).
    \]
    Therefore, $K\left(\cup_{n=1}^\infty p^{1/p^n}\right)$ is a finite separable extension of $\Q_p\left(\cup_{n=1}^\infty p^{1/p^n}\right)$. Now (similarly to the argument used in Lemma~\ref{lemma:perfectified-completion}) $\Q_p(\cup_{n=1}^\infty p^{1/p^n})$ is quasi-complete in the sense of \cite[Definition 2.3.1]{Berkovich-etale}, and thus \cite[Proposition 2.4.1]{Berkovich-etale} ensures that
    \[
    \wdh{\Q_p(\cup_{n=1}^\infty p^{1/p^n})} \subset K_\infty
    \]
    is a finite separable extension. Thus, $K_\infty$ is an arithmetic field by Lemma~\ref{lemma:finite-extensions}.
\end{proof}

\begin{rmk}\label{rmk:non-standard-tilt} The proofs of Lemma~\ref{lemma:finite-extensions} and Lemma~\ref{lemma:standard-zp-extension} actually show slightly more. For any finite extension $\Q_p\subset K$, there is a unique pair of a characteristic $p$ local field $L$ and a morphism
\[
\a\colon L \to K^{\flat}_\infty
\]
such that $\a$ realizes $K^\flat_\infty$ as a completed perfection of $L$. In what follows, we call $L$ a {\it non-standard tilt} of $K$ and denote it by $K^\flat$. 
\end{rmk}

\subsection{Inertia action: the case of a point}

For the rest of this section, we fix a non-archimedean field $K$ of residue characteristic $p$, and a prime number $\ell\neq p$.  \smallskip

We refer to Definition~\ref{defn:deligne-topos} for the definition of Deligne's topos $X\times_s \eta$ for a qcqs $k$-scheme $X$, and to Definition~\ref{defn:constructible-deligne} for the definition of the constructible ``derived'' categories $D^b_c(X\times_s \eta; \Z_\ell)$, $D^b_c(X\times_s \eta; \Q_\ell)$. \smallskip

We recall that, for a finite type $k$-scheme $X$ and $\F \in D^b_c(X\times_s \eta; \Lambda)$ for $\Lambda\in \{\Z/\ell^n\Z, \Z_\ell, \Q_\ell\}$, we have a well-defined action
\[
\rho\colon I\to \rm{Aut}_{\Lambda}\left(\pi^*_X \F\right) 
\]
discussed in Construction~\ref{construction:action} and Construction~\ref{construction:action-adic}. More generally, for any $g\in G_\eta$ with an image $\ov{g}\in G_s$, we have a well-defined isomorphism
\[
\rho_g\colon \ov{g}^*\left(\pi_X^*\F\right) \xr{\sim} \pi_X^*\F
\]
such that these isomorphisms satisfy the cocyle condition 
\[
\rho_{gh}= \rho_g \circ \ov{g}^*(\rho_h)
\]
``up to a canonical identification $\ov{gh}^*\simeq \ov{g}^*\circ \ov{h}^*$''. We also note that if $X=\Spec k$ is the base point, there is an equivalence $D^b_c(X; \Lambda)\simeq D^b_c(\eta; \Lambda)$, so $\rho$ extends to a homomorphism
\[
\rho\colon G_\eta \to \rm{Aut}_{\Lambda}(\pi^*_{s}\F)
\]

The main goal of this and the next sections is to show that, for an arithmetic field $K$, the inertia action 
\[
\rho\colon I \to \rm{Aut}_{\Q_\ell}\left(\pi^*_X \F\right) 
\]
is always continuous for any $\F\in D^b_c(X\times_s\eta; \Q_\ell)$ and an explicitly specified topology on the automorphism group. Our argument will be somewhat roundabout: we first treat the case $X=\Spec k$ and then deduce the general case from this one. \smallskip

We start by defining topology on $\rm{Aut}_{\Lambda}(\pi^*_{s} \F)$. We will need the following well-known lemma:

\begin{lemma}\label{lemma:different-topologies} Let $\ov{X}$ be a finite type $\ov{k}$-scheme, let $\ell$ be a prime number invertible in $k$, and let $\F, \G \in D^b_c(\ov{X}; \Z_\ell)$. Suppose $\F\simeq \lim_n \F_{n}$, $\G=\lim_n \G_n$ with $\F_n, \G_n \in D^b_{ctf}(\ov{X}; \Z/\ell^n\Z)$. Then the natural morphism
\[
\rm{Hom}_{\Z_\ell}(\F, \G)\to \lim_n \rm{Hom}_{\Z/\ell^n\Z}(\F_n, \G_n)
\]
is an isomorphism, $\rm{Hom}_{\Z_\ell}(\F, \G)$ is finitely generated, and the limit topology on $\rm{Hom}_{\Z_\ell}(\F, \G)$ coincides with the $\ell$-adic topology. 
\end{lemma}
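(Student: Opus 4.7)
The plan is to deduce all three statements from a single computation, namely identifying $M := \rm{Hom}_{\Z_\ell}(\F,\G)$ with the inverse limit $\lim_n \rm{Hom}_{\Z/\ell^n\Z}(\F_n,\G_n)$ and then exploiting that the transition system consists of finite groups. First, since $\G \simeq \lim_n \G_n$ and $\rm{RHom}_{\Z_\ell}(\F,-)$ preserves limits, one has $\rm{RHom}_{\Z_\ell}(\F,\G) \simeq \lim_n \rm{RHom}_{\Z_\ell}(\F,\G_n)$. Because $\G_n$ is a $\Z/\ell^n\Z$-module and $\F_n \simeq \F \otimes^L_{\Z_\ell} \Z/\ell^n\Z$, adjunction identifies $\rm{RHom}_{\Z_\ell}(\F,\G_n) \simeq \rm{RHom}_{\Z/\ell^n\Z}(\F_n,\G_n)$, and so the Milnor exact sequence associated to the inverse limit reads
\[
0 \to \rm{R}^1\lim_n \rm{Hom}^{-1}_{\Z/\ell^n\Z}(\F_n,\G_n) \to M \to \lim_n \rm{Hom}_{\Z/\ell^n\Z}(\F_n,\G_n) \to 0.
\]

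Next I would invoke Deligne's finiteness theorem for \'etale cohomology (SGA 4 XIV, or SGA 4.5 Arcata III): since $\ov{X}$ is of finite type over the separably closed field $\ov{k}$ with $\ell$ invertible there, and since $\rm{R}\mathcal{H}om_{\Z/\ell^n\Z}(\F_n,\G_n)$ is a bounded constructible $\Z/\ell^n\Z$-complex on $\ov{X}$ (here the tor-finiteness hypothesis $\F_n \in D^b_{ctf}$ is essential), each global cohomology group $\rm{Hom}^i_{\Z/\ell^n\Z}(\F_n,\G_n) = \rm{H}^i(\ov{X},\rm{R}\mathcal{H}om_{\Z/\ell^n\Z}(\F_n,\G_n))$ is finite. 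Finite groups automatically satisfy Mittag--Leffler, so $\rm{R}^1\lim$ vanishes in the sequence above, yielding the isomorphism asserted in the statement.

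For finite generation and the topology comparison, I would apply $\rm{RHom}_{\Z_\ell}(\F,-)$ to the fiber sequence $\G \xrightarrow{\ell^n} \G \to \G_n$ to obtain
\[
0 \to M/\ell^n M \to \rm{Hom}_{\Z/\ell^n\Z}(\F_n,\G_n) \to \rm{Ext}^1_{\Z_\ell}(\F,\G)[\ell^n] \to 0.
\]
Setting $n = 1$ shows that $M/\ell M$ is finite. The limit presentation equips $M$ with a profinite topology whose fundamental system of neighborhoods of $0$ is $F_n M := \ker(M \to \rm{Hom}(\F_n,\G_n))$, and the displayed short exact sequence identifies $F_n M$ with $\ell^n M$, so the $\ell$-adic topology coincides with the limit topology; in particular $\bigcap_n \ell^n M = 0$. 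Lifting generators $x_1,\ldots,x_r$ of $M/\ell M$ to elements of $M$ and iterating the replacement $m \mapsto m - \sum_i a_i^{(n)} x_i \in \ell^n M$ produces $a_i \in \Z_\ell$ by $\ell$-adic completeness, giving a surjection $\Z_\ell^{\oplus r} \twoheadrightarrow M$ and hence finite generation.

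The main technical point is identifying the correct finiteness input: one needs finiteness of \'etale cohomology for constructible torsion sheaves on a possibly non-proper finite-type $\ov{k}$-scheme, applied to the \emph{local} $\rm{R}\mathcal{H}om$ -- which is why the tor-finiteness hypothesis $D^b_{ctf}$, rather than merely $D^b_c$, on the $\F_n$ is used. Everything else is formal manipulation of derived limits, Milnor sequences, and topological Nakayama.
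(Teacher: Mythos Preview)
Your proof is correct and, for the first claim, essentially identical to the paper's: both set up the Milnor sequence and kill the $\rm{R}^1\lim$ term by invoking finiteness of the groups $\rm{Ext}^{i}_{\Z/\ell^n\Z}(\F_n,\G_n)$ over a finite type scheme over a separably closed field (the paper cites \cite[Theorem 9.5.3]{Lei-Fu} rather than SGA directly, but it is the same input).

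Where you diverge is in the argument for finite generation and the topology comparison. The paper simply invokes \cite[Proposition 0.7.2.11]{FujKato}, a general black-box statement about inverse limits of finite modules. You instead give a direct argument: using the fiber sequence $\G \xrightarrow{\ell^n} \G \to \G_n$ to identify $\ker\bigl(M \to \rm{Hom}(\F_n,\G_n)\bigr) = \ell^n M$, hence matching the two topologies, and then running topological Nakayama to get finite generation. This is slightly more hands-on but entirely self-contained, and it makes transparent exactly why the limit topology is the $\ell$-adic one (namely, the image of $M$ in each $\rm{Hom}(\F_n,\G_n)$ is precisely $M/\ell^n M$). The only small thing you leave implicit is that $M$ is $\ell$-adically complete, which you need for the Nakayama step; this follows because $M = \lim_n \rm{Hom}(\F_n,\G_n)$ is an inverse limit of finite groups and the filtration $F_nM$ has been identified with $\ell^n M$, so $M \simeq \lim_n M/\ell^n M$.
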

\begin{proof}
    We start with the first claim. By construction, we have Milnor's exact sequence computing Homs in the homotopy category of an $(\infty)$-limit of $\infty$-categories:
    \[
    0 \to \rm{R}^1\lim_n \rm{Ext}^{-1}_{\Z/\ell^n\Z}(\F_n, \G_n) \to \rm{Hom}_{\Z_\ell}(\F, \G) \to \lim_n \rm{Hom}_{\Z/\ell^n\Z}(\F_n, \G_n) \to 0.
    \]
    Now \cite[Theorem 9.5.3]{Lei-Fu} ensures that all groups $\rm{Ext}^{-1}_{\Z/\ell^n\Z}(\F_n, \G_n)$ are finite. So the Mittag-Leffler criterion implies vanishing of the $\rm{R}^1\lim$ term. Now  \cite[Proposition 0.7.2.11]{FujKato} guarantees that $\rm{Hom}_{\Z_\ell}(\F, \G)$ is finitely generated and the limit topology coincides with the $\ell$-adic topology. 
\end{proof}

For the next definition, we fix a finite type $\ov{k}$-scheme $\ov{X}$.

\begin{defn}\label{defn:topology} For $\F, \G\in D^b_c(\ov{X}; \Z_\ell)$, we {\it topologize} $\rm{Hom}_{\Z_\ell}(\F, \G)$ via the $\ell$-adic topology. \smallskip

For $\F, \G\in D^b_c(\ov{X}; \Q_\ell)$ with lattices $\F\simeq \F_0[\frac{1}{\ell}]$, $\G\simeq \G_0[\frac{1}{\ell}]$ with $\F_0, \G_0\in D^b_c(\ov{X};\Z_\ell)$, we {\it topologize} 
\[
\rm{Hom}_{\Q_\ell}(\F, \G)=\rm{Hom}_{\Z_\ell}(\F_0, \G_0)\left[\frac{1}{\ell}\right]\simeq \colim_{\times \ell} \rm{Hom}_{\Z_\ell}(\F_0, \G_0)
\]
via the colimit topology. 

Finally, for $\F \in D^b_c(\ov{X};\Q_\ell)$ (resp. $\F \in D^b_c(\ov{X}; \Z_\ell)$), we topologize 
\[
\rm{Aut}(\F) \subset \rm{End}(\F)
\]
via the subspace topology. 
\end{defn}

\begin{rmk} It is straightforward to check that, for $\F, \G\in D^b_c(X; \Q_\ell)$, the topology on
\[
\rm{End}_{\Q_\ell}(\F, \G)
\]
is independent of a choice of lattices $\F_0$ and $\G_0$. 
\end{rmk}

\begin{cor}\label{cor:continuous-action} Let $\F\in D^b_c(\eta; \Q_\ell)$. Then the homomorphism
\[
\rho\colon G_\eta \to \rm{Aut}_{\Q_\ell}(\pi_s^*\F)
\]
is continuous.
\end{cor}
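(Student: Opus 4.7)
The plan is a two-step dévissage: reduce from $\Q_\ell$- to $\Z_\ell$-coefficients via a lattice, then from $\Z_\ell$- to $\Z/\ell^n\Z$-coefficients via the defining pro-system. At mod-$\ell^n\Z$ the automorphism group is finite and continuity of the action is automatic; reassembling via the limit topology then yields the result.

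For the first reduction, pick $\F_0\in D^b_c(\eta;\Z_\ell)$ with $\F\simeq \F_0[\tfrac{1}{\ell}]$. Construction~\ref{construction:action-adic} supplies an integral action $\rho_0\colon G_\eta\to \rm{Aut}_{\Z_\ell}(\pi_s^*\F_0)$ which recovers $\rho$ after inverting $\ell$. By Definition~\ref{defn:topology}, the inclusion $\rm{End}_{\Z_\ell}(\pi_s^*\F_0)\hookrightarrow \rm{End}_{\Z_\ell}(\pi_s^*\F_0)[\tfrac{1}{\ell}]=\rm{End}_{\Q_\ell}(\pi_s^*\F)$ of the zeroth term into its colimit is continuous, and this continuity persists after restricting to the unit groups with their subspace topologies. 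Hence it suffices to prove that $\rho_0$ is continuous for the $\ell$-adic topology on $\rm{Aut}_{\Z_\ell}(\pi_s^*\F_0)$.

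For the second reduction, write $\F_0\simeq \lim_n \F_{0,n}$ with $\F_{0,n}\in D^b_{ctf}(\eta;\Z/\ell^n\Z)$. Each $\F_{0,n}$ is a bounded complex of finite $\Z/\ell^n\Z$-modules whose $G_\eta$-action factors through a finite quotient (this being the content of a constructible $\Z/\ell^n\Z$-sheaf on the point $\eta$), so $\rm{End}_{\Z/\ell^n\Z}(\pi_s^*\F_{0,n})$ is a finite discrete ring and each induced homomorphism $\rho_n\colon G_\eta\to \rm{Aut}_{\Z/\ell^n\Z}(\pi_s^*\F_{0,n})$ is continuous. By Lemma~\ref{lemma:different-topologies}, the natural map
\[
\rm{End}_{\Z_\ell}(\pi_s^*\F_0)\xrightarrow{\sim} \lim_n \rm{End}_{\Z/\ell^n\Z}(\pi_s^*\F_{0,n})
\]
is a topological isomorphism with the $\ell$-adic topology on the source and the limit of discrete topologies on the target. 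Functorial compatibility of the $\rho_n$ in $n$ — inherent in Construction~\ref{construction:action-adic}, whose adic action is built termwise from the mod-$\ell^n\Z$ actions — and the universal property of the limit topology then yield continuity of $\rho_0$. The only non-formal point is this last compatibility in $n$, which I expect to be read off directly from the definition of the adic action rather than to require genuine additional argument.
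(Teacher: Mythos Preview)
Your proposal is correct and follows the same two-step d\'evissage as the paper's proof: reduce to a $\Z_\ell$-lattice, then pass to the mod-$\ell^n$ system where the target automorphism group is finite and continuity comes down to an open subgroup acting trivially. The only difference is that the paper first reduces to the heart of the standard $t$-structure---using that $\rm{End}_{\Q_\ell}(\pi_s^*\F)$ splits as a direct sum over cohomology since $\Q_\ell$-vector spaces have no higher Exts---so that after choosing an $\ell$-torsionfree lattice the mod-$\ell^n$ reduction lands in $\rm{Shv}_c(\eta;\Z/\ell^n\Z)$ rather than in $D^b_{ctf}$, making the finite-quotient factorization immediate from the definition of a constructible sheaf on $\eta$; your direct argument at the complex level works just as well once one observes (as you do) that a bounded complex of such sheaves is acted on through a common finite quotient.
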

\begin{proof}
By definition, it suffices to show that the composition
    \[
    G_\eta \to \rm{Aut}_{\Q_\ell}(\pi_{s}^*\F) \to \rm{End}_{\Q_\ell}(\pi^*_s\F)
    \]
is continuous. Now note that since $D^b_{c}(\ov{\eta}; \Q_\ell)=D^b_{\rm{coh}}(\Q_\ell)$ the bounded derived category of finite dimensional (equivalently, coherent) $\Q_\ell$-vector spaces, so there are no higher Ext groups. Therefore, 
\[
\rm{End}_{\Q_\ell}(\pi^*_s\F) \simeq \bigoplus_{i\in \bf{Z}} \rm{End}_{\Q_\ell}\left(\pi^*_s\left(\cal{H}^i\left(\F\right)\right)\right).
\]
So it suffices to show $\rho$ is continuous for $\F\in D^b_c(\eta; \Q_\ell)^{\heartsuit}$ with respect to the constructible $t$-structure on $D^b_c(\eta; \Q_\ell)$. \smallskip

To prove this, we choose a lattice $\F\simeq \F'[\frac{1}{\ell}]$ with $\F'\in D^b_c(\eta; \Z_\ell)^{\heartsuit}$ and $\ell$-torsionfree. In particular,  $\F'\otimes_{\Z_\ell}^L \Z/\ell^n\Z$ lies in $D^b_c(\eta; \Z/\ell^n\Z)^{\heartsuit}=\mathrm{Shv}_{c}(\eta; \Z/\ell^n\Z)$ for any integer $n\geq 0$. By definition of the topology on $\rm{End}_{\Q_{\ell}}(\pi^*_s\F)$, it suffices to show that the natural map
\[
G_\eta\to \rm{End}_{\Z_\ell}(\pi^*_s\F')
\]
is continuous. Then we write $\F'=\lim_n \F_n$ with $\F_n\in \rm{Shv}_c(\eta; \Z/\ell^n\Z)$. Lemma~\ref{lemma:different-topologies} ensures that it suffices to show that each map
\[
G_\eta \to \rm{End}_{\Z/\ell^n\Z}(\pi^*_s\F_n)
\]
is continuous for every $n\geq 0$. In this case, it suffices to show that there is an open subgroup of $G_\eta$ that acts trivially on $\pi^*_s\F_n$. \smallskip

Now we identify $\rm{Shv}_c(\eta; \Z/\ell^n\Z)$ with $\rm{Mod}^{\rm{disc}, \rm{coh}}_{\Z/\ell^n\Z[G_\eta]}$ the category of finite (equivalently, coherent) discrete $\Z/\ell^n\Z$-modules with a continuous $\Z/\ell^n\Z$-linear action of $G_\eta$. Say $\F$ corresponds to 
\[
V\in \rm{Mod}^{\rm{disc}, \rm{coh}}_{\Z/\ell^n\Z[G_\eta]}.
\]
Continuity of the action implies that the stabilizer of each point is an open subgroup. Thus, the finiteness assumption implies that there is an open subgroup $U\subset G$ that acts trivially on $V$ finishing the proof.
\end{proof}

\subsection{Inertia action: general case}

In what follows, we fix a non-archimedean arithmetic field  $K$ of residue characteristic $p$ and a prime number $\ell\neq p$. \smallskip

The main goal of this section is to prove an analogue of Corollary~\ref{cor:continuous-action} for an arbitrary finite type $k$-scheme $X$. Our argument will be somewhat indirect: we first show that the representation $\rho$ is quasi-unipotent, and then we deduce that $\rho$ is continuous. \smallskip

For this, we will need the structure theory for the Galois groups of an arithmetic field $K$. To see this, we note the definition of an arithmetic field $K$ there is a local field $L$ and an isomorphism $G_K\simeq G_L$ compatible with an isomorphism of residue field $k\simeq l$. In particular, it also induces an isomorphism of inertia subgroups $I_K\simeq I_L$. Therefore, it suffices to understand the Galois and inertia groups of a local field $K$. \smallskip

In what follows, we denote by $P \subset I$ the group of wild inertia. The structure of a Galois group of a local field is well-known: there is a short exact sequence
\[
0 \to P\to I \xr{t} \prod_{p'\neq p}\Z_{p'}(1)\to 0
\]
such that $P$ is pro-$p$ group. We denote by $t_\ell \colon I\to \Z_\ell(1)$ the composition of $t$ with the projection onto the $\ell$-factor. We also denote by $P_\ell$ the kernel of $t_\ell$. We recall that, for any $g\in G_\eta$ and $h\in I$, 
\[
t_\ell(ghg^{-1})=\chi_\ell(g) t_\ell(h),
\]
where $\chi_\ell\colon G_\eta \to \Z_\ell^\times$ is the cyclotomic character of $G_\eta$. 

\begin{rmk}\label{rmk:log-cont} Let $K$ be a local field. Then we note that, from the Galois-theoretic point of view, the morphism $t_\ell \colon I \to \Z_\ell(1)$ is a morphism $\rm{Gal}(K^{\rm{sep}}/K_{\rm{nr}}) \to \rm{Gal}(K_\ell/K_{\rm{nr}})$, where $K_{\rm{nr}}$ is the maximal unramified extension of $K$, and $K_\ell$ is the (pro)-Kummer extension $K_\ell=\cup K_{\rm{nr}}(\pi^{1/\ell^n})$ for a choice of a uniformizer $\pi\in K$. In particular, the target (even as an abelian group) of $t_\ell$ is canonically isomorphic to $\Z_\ell(1)=\lim_n \mu_{\ell^n}(\ov{K})$ and not to $\Z_\ell$. Of course, these groups are isomorphic after a choice of a compatible sequence of primitive $\ell$-power roots of unity $\zeta_{\ell^n}$, but we do not want to fix this choice. 
\end{rmk}

For the next definition, we fix a finite type $k$-scheme $X$ and $\F\in D^b_c(X\times_s \eta; \Q_\ell)$
.
\begin{defn} A subgroup $I_1\subset I$ {\it acts unipotenly} under an action $\rho\colon I\to \rm{Aut}_{\Q_\ell}(\pi^*_X\F)$ if there is an integer $N$ such that $\big(1-\rho(g)\big)^N=0$ for every $g\in I_1$.\smallskip

An action $\rho\colon I\to \rm{Aut}_{\Q_\ell}(\pi^*_X\F)$ is {\it quasi-unipotent} if there is an open subgroup $I_1\subset I$ that acts unipotently. \smallskip

An action $\rho\colon I\to \rm{Aut}_{\Q_\ell}(\pi^*_X\F)$  is {\it strongly quasi-unipotent} if it is quasi-unipotent and $\rho(P_\ell)$ is finite. 
\end{defn}

\begin{thm}\label{thm:grothendieck}(\cite[Proposition on p.515]{Serre-Tate}, Grothendieck) Let $K$ be an arithmetic non-archimedean field, and $\rho\colon G_\eta \to \rm{GL}(V)$ a continuous representation of $G_\eta$ on a finite dimensional $\Q_\ell$-vector space $V$. Then $\rho(P_\ell)$ is a finite group, and there is an open subgroup $I_1\subset I$ that acts unipotently.
\end{thm}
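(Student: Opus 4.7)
The plan is to reduce to the classical setting of a local field via the arithmetic hypothesis, and then follow Grothendieck's original argument. By Definition~\ref{defn:arithmetic} there is a topological isomorphism $\psi\colon G_K \simeq G_L$ for some local field $L$, compatible with the residue-field projections; in particular, $\psi$ carries $I_K$ onto $I_L$ and $P_{\ell,K}$ onto $P_{\ell,L}$. Transporting $\rho$ via $\psi$ therefore reduces us to the case of a local field $K$, which we assume from now on.

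First, by compactness of $G_K$ we may choose a $G_K$-stable $\Z_\ell$-lattice $T \subset V$, so that $\rho$ factors through $\rm{GL}_n(\Z_\ell)$. The principal congruence subgroup $\Gamma := 1 + \ell^2 M_n(\Z_\ell)$ is an open, normal, torsion-free, pro-$\ell$ subgroup. From the short exact sequence
\[
1 \to P \to P_\ell \to \prod_{p' \neq p, \ell} \Z_{p'}(1) \to 1,
\]
the group $P_\ell$ is pro-(prime to $\ell$), so $\rho(P_\ell) \cap \Gamma = 1$. Since $\rho^{-1}(\Gamma)$ is open in $G_K$, the image $\rho(P_\ell)$ injects into the finite quotient $\rm{GL}_n(\Z_\ell)/\Gamma \simeq \rm{GL}_n(\Z/\ell^2)$, proving the first claim.

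For quasi-unipotence, set $I_1 := I \cap \rho^{-1}(\Gamma)$, an open subgroup of $I$. The same pro-(prime to $\ell$) argument gives $\rho(I_1 \cap P_\ell) = 1$, so $\rho|_{I_1}$ factors through $I_1/(I_1 \cap P_\ell)$, which embeds as an open subgroup of $I/P_\ell \simeq \Z_\ell(1)$. Fix a Frobenius lift $F$. The identity $t_\ell(F g F^{-1}) = \chi_\ell(F) \cdot t_\ell(g) = q \cdot t_\ell(g)$ yields $\rho(F)\rho(g)\rho(F)^{-1} = \rho(g)^q \cdot \rho(\pi)$ for some $\pi \in P_\ell$. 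Since conjugation by $\rho(F)$ preserves eigenvalue multisets and the perturbation $\rho(\pi)$ ranges over the finite group $\rho(P_\ell)$, after passing to an $F$-stable open subgroup of $I_1$ (so the perturbation is trivial), the eigenvalue multiset of $\rho(g)$ is invariant under $\lambda \mapsto \lambda^q$; hence each eigenvalue is a $(q^{n!}-1)$-th root of unity. On the other hand, $\rho(g) \in \Gamma$ forces eigenvalues to lie in $1 + \ell^2 \ov{\Z}_\ell$, and the only roots of unity there have $\ell$-power order (a primitive $m$-th root with $\gcd(m,\ell)=1$ satisfies $v_\ell(\zeta - 1) = 0$). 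Combining, eigenvalues have order dividing $\ell^d := \gcd(q^{n!}-1, \ell^\infty)$, hence $\rho(g)^{\ell^d}$ is unipotent of index at most $n$. The preimage $I_2$ in $I_1$ of $\ell^d \cdot t_\ell(I_1)$ then provides the desired open subgroup acting unipotently.

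The main delicate point is extracting the clean Frobenius-conjugation identity on $I_1$: the subgroup $I_1$ need not be $F$-stable, and a priori $\rho(F g F^{-1})$ equals $\rho(g)^q$ only up to a finite error in $\rho(P_\ell)$. This is a mild technical nuisance — one can replace $I_1$ by an intersection of finitely many $F$-conjugates of open subgroups lying in $\rho^{-1}(\Gamma)$, or absorb the finite-order perturbation by passing to a high enough power of $g$. Once handled, the eigenvalue analysis above is routine.
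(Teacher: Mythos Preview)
The paper does not supply a proof of this theorem; it simply records it as a known result of Grothendieck and cites Serre--Tate. Your write-up is the classical argument and is correct.

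One small simplification: the ``delicate point'' you flag at the end is not actually an issue. Because $\Gamma = 1 + \ell^2 M_n(\Z_\ell)$ is \emph{normal} in $\GL_n(\Z_\ell)$, the subgroup $I_1 = I \cap \rho^{-1}(\Gamma)$ is normal in $G_K$ and hence automatically $F$-stable. Thus for $g \in I_1$ the element $FgF^{-1}g^{-q}$ already lies in $I_1 \cap P_\ell$, so its image under $\rho$ is trivial and no perturbation term appears. The eigenvalue argument then shows directly that every eigenvalue of $\rho(g)$ is a root of unity lying in $1 + \ell^2 \ov{\Z}_\ell$; but the only such root of unity is $1$ (a primitive $\ell^k$-th root $\zeta$ has $v_\ell(\zeta-1) = 1/\ell^{k-1}(\ell-1) < 2$, and roots of order prime to $\ell$ have $v_\ell(\zeta-1)=0$). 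Hence $\rho(g)$ is already unipotent for all $g\in I_1$, and the final passage to a smaller $I_2$ is unnecessary.
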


\begin{cor}\label{cor:quasi-unipotent} Let $K$ be an arithmetic non-archimedean field, let $X$ be a finite type $k$-scheme, let $\F\in D^b_c(X\times_s \eta; \Q_\ell)$, and let $\rho\colon I \to \rm{Aut}_{\Q_\ell}(\pi_X^*\F)$ the corresponding action of the inertia group. Then $\rho$ is quasi-unipotent.
\end{cor}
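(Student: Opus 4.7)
The strategy is to reduce, by stratification and d\'evissage, to the case $X = \Spec k$, which is handled by combining Corollary~\ref{cor:continuous-action} (continuity of the $G_\eta$-action on each finite-dimensional cohomology group of a stalk) with Theorem~\ref{thm:grothendieck} (Grothendieck) applied degree by degree. This immediately produces, for any $\F \in D^b_c(\eta; \Q_\ell)$, an open subgroup of $I$ acting unipotently on $\pi_s^*\F$.

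First, by constructibility of $\F$, choose a finite stratification $X = \bigsqcup_i X_i$ into connected locally closed $k$-subschemes such that every cohomology sheaf of $\F|_{X_i \times_s \eta}$ is lisse. For each $i$, pick a finite collection of geometric points $\bar{x}_{i,j} \in X_{i, \bar{s}}$ hitting every connected component. Pulling $\F$ back along $\bar{x}_{i,j} \to X$ (viewed as a morphism of Deligne topoi) lands in $D^b_c(\eta; \Q_\ell)$ and realizes the stalk $(\pi_X^*\F)_{\bar{x}_{i,j}}$; the point case above then supplies an open subgroup $I_{i,j} \subset I$ and an integer $N_{i,j}$ with $(\rho_g - 1)^{N_{i,j}} = 0$ on this stalk for every $g \in I_{i,j}$. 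Setting $I_1 := \bigcap_{i,j} I_{i,j}$ (a finite intersection, hence open in $I$) and $N := \max_{i,j} N_{i,j}$, every $g \in I_1$ acts on each chosen stalk with $(\rho_g - 1)^N = 0$.

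By functoriality of Construction~\ref{construction:action-adic}, the endomorphism $\rho_g$ commutes with the $\pi_1^{\et}$-monodromy on each lisse sheaf $\cal{H}^n(\pi_X^*\F)|_{X_{i, \bar{s}}}$. Nilpotence of such an endomorphism can be detected on a single stalk per connected component, so $(\rho_g - 1)^N$ vanishes on every $\cal{H}^n(\pi_X^*\F)|_{X_{i, \bar{s}}}$, and then (via the truncation triangles for the standard $t$-structure, applied finitely many times since $\F$ is bounded) $\rho_g - 1$ is nilpotent on $\pi_X^*\F|_{X_{i, \bar{s}}}$ itself. Finally, induct on the number of strata using the $I$-equivariant distinguished triangles $i_* i^* \pi_X^*\F \to \pi_X^*\F \to j_! j^* \pi_X^*\F \to$ coming from open-closed decompositions, together with the elementary observation that an endomorphism nilpotent on both outer terms of a distinguished triangle is nilpotent on the middle term (with exponent at most the sum). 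This propagates nilpotence from the strata to all of $\pi_X^*\F$. The main obstacle is purely bookkeeping: all substantive inputs -- the point case, Grothendieck's theorem, and the functorial $I$-action of Construction~\ref{construction:action-adic} -- are already in place.
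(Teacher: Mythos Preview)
Your overall route matches the paper's---stratify, reduce to lisse pieces, pass to closed points, invoke Corollary~\ref{cor:continuous-action} and Theorem~\ref{thm:grothendieck}---but there is a real gap in the sentence ``Pulling $\F$ back along $\bar{x}_{i,j}\to X$ (viewed as a morphism of Deligne topoi) lands in $D^b_c(\eta;\Q_\ell)$.'' A geometric point $\bar{x}\colon\Spec\bar k\to X_{\bar s}$ does not give a morphism of Deligne topoi landing in $\eta$; if you instead use the underlying closed point $x\in X$, you land in $D^b_c(\Spec k(x)\times_s\eta;\Q_\ell)$, and $k(x)$ is typically a proper finite extension of $k$. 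More substantively, the single stalk $(\pi_X^*\F)_{\bar x}$ only carries an $I$-action, not a continuous $G_\eta$-action: for $g\notin I$ the morphism $\rho_g$ relates the stalk at $\bar g(\bar x)$ to that at $\bar x$, and these are different points of $X_{\bar s}$. Since Theorem~\ref{thm:grothendieck} genuinely needs the full $G_\eta$-representation (its proof uses Frobenius conjugation on $I$), you cannot apply it to an individual stalk as written.

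The paper plugs exactly this hole in its Step~2: for $X=\Spec k'$ with $k'/k$ finite, push forward along $f\colon\Spec k'\to\Spec k$; since $f_{\bar s,*}$ is exact and detects zero endomorphisms, quasi-unipotence for $(f\times_s\eta)_*\F\in D^b_c(\eta;\Q_\ell)$ yields it for $\F$. Inserting this step repairs your argument. Two minor remarks: the triangle should read $j_!j^*\to\rm{id}\to i_*i^*\to{}$, not the other way; and the paper dispenses with the triangle induction by observing directly that an endomorphism of $\pi_X^*\F$ vanishes iff its restriction to every stratum does (check stalkwise), which is cleaner.
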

\begin{proof}
    {\it Step~$0$. Reduce to $\F\in D^b_c(X\times_s \eta; \Q_\ell)^{\heartsuit}$ for the ``standard'' $t$-structure from Corollary~\ref{cor:constructible-t-structure-rationally}.} Note that there are only finite number of ``cohomology sheaves'' $\cal{H}^i(\F)$ and $\pi_X^*$ is $t$-exact. Therefore, we conclude that an element $g\in I$ acts unipotently on $\pi_X^*\F$ if and only if it acts unipotently on each $\cal{H}^i(\pi_X^*\F)=\pi_X^*(\cal{H}^i(\F))$ (probably with a different exponent). Therefore, it suffices to show the claim for $\F\in D^b_c(X\times_s\eta; \Q_\ell)^{\heartsuit}$.\smallskip

    {\it Step~$1$. $X=\Spec k$.} In this case, $D^b_c(X\times_s \eta; \Q_\ell)^{\heartsuit}=D^b_c(\eta; \Q_\ell)^{\heartsuit}$ is the category of constructible \'etale $\Q_\ell$-sheaves on $\Spec K$. In this case, it suffices to show that the action of $I$ on $\F|_{\Spec\ov{K}}$ is quasi-unipotent. This action extends to a continuous action of $G_\eta$ by Corollary~\ref{cor:continuous-action}, and so the result follows from Theorem~\ref{thm:grothendieck}. \smallskip
    
    {\it Step~$2$. $X=\Spec k'$ for a finite extension $k\subset k'$.} Consider the morphism $f\colon \Spec k'\to \Spec k$. After passing to the algebraic closure, $X_{\ov{s}}$ becomes a disjoint union of finite copies of $\Spec k(\ov{s})$. Thus, an endomorphism of $\pi_X^*\F$ is zero if and only if it is zero on $f_{\ov{s}, *}\pi_X^*\F$. Therefore, Lemma~\ref{lemma:pushforward-pullback} (and Remark~\ref{rmk:adic-good}) ensures that it suffices to prove the claim for $X'=\Spec k$ and $\F'=(f\times_s\eta)_*\F$  that is already covered by Step~$1$. \smallskip
    
    {\it Step~$3$. $X$ is smooth and $\pi^*_X\F \in D^b_c(X_{\ov{s}}; \Q_\ell)^{\heartsuit}$ is lisse.} For each connected component $\{X_i\}_{i=1}^n$ of $X_{\ov{s}}$, we pick a closed point $\ov{x_i}\in X_i$ and a closed point $x_i\in X$ such that $\Spec k(x_i)\times_{k} \Spec k(\ov{s})$ contains $\ov{x_i}$. \smallskip
    
    Now we use the identification of the category of lisse $\Q_\ell$-sheaves on $X_i$ with the category of continuous $\pi_{1}(X_i, \ov{x}_i)$-representations (see \cite[Proposition 10.1.23]{Lei-Fu}) to conclude that an endomorphism of $\pi_X^*\F$ is zero if and only if it is zero on stalks at each $\ov{x_i}$. Therefore, we can replace $X$ with $X'=\sqcup_{i=1}^n \Spec k(x_i)$ and $\F$ with its pullback onto $X'\times_s \eta$. In this case, the result follows from Step~$2$. \smallskip
    
    {\it Step~$4$. General case.} Suppose $X=\sqcup_{i\in I} X_i$ is a finite stratification of $X$, then an automorphism of $\pi_X^*\F$ is unipotent if and only if it is unipotent on each $\pi^*_{X_i}\left(\F|_{(X_i\times_s \eta)}\right) \simeq \left(\pi^*_X\F\right)|_{X_{i, \ov{s}}}$. \smallskip
    
    Now we note that Lemma~\ref{lemma:lisse-stratification} we can find a stratification of $X=\sqcup_{i=1}^n X_i$ such that each $X_{i, \rm{red}}$ is smooth\footnote{Here we use that the residue field is perfect, so smoothness of $X_{i, \ov{k}, \rm{red}}$ implies smoothness of $X_{i, \rm{red}}$} and $\pi^*_X\F|_{X_{i,\ov{s}}}$ is lisse. Therefore, we can replace $X$ with each $X_{i, \rm{red}}$ to assume that  $\pi_X^*\F$ is lisse. Then the result follows from Step~$3$. 
\end{proof}

\begin{rmk}\label{rmk:normal-subgroup} We can always find an open subgroup $I_1\subset I$ such that $\rho|_{I_1}$ is unipotent and $I_1$ is a normal subgroup of $G$. Indeed, pick any open subgroup $I_1\subset I$ such that $\rho|_{I_1}$ is unipotent. Then there is an open subset $U_1\subset G$ such that $U_1\cap I = I_1$. Since $G$ is profinite, we can find an open normal subgroup $G'_1\subset G_1$ such that $G'_1\subset U_1$ (see \cite[Proposition (1.1.3)]{cohomology-of-number-fields}). Then $I'_1\coloneqq G'_1\cap I$ is normal in $G$ and $\rho|_{I'_1}$ is unipotent.
\end{rmk}

\begin{lemma}\label{lemma:no-homs} Let $G$ be a pro-(prime-to-$\ell$) group, and let $M$ be a finite $\ell^\infty$-torsion group. Then there are no non-trivial homomorphisms $G \to M$.
\end{lemma}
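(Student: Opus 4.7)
The plan is to exploit the contradictory divisibility constraints on the order of the image. Let $\phi \colon G \to M$ be any continuous homomorphism, where continuity here refers to the profinite topology on $G$ and the discrete topology on the finite group $M$.

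First, I would observe that since $M$ is finite and every element of $M$ has order a power of $\ell$, the order $|M|$ itself is a power of $\ell$ (by Cauchy's theorem applied to any hypothetical prime $\ell' \neq \ell$ dividing $|M|$). In particular, any subgroup of $M$ has order a power of $\ell$, so $|\phi(G)|$ is a power of $\ell$.

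Next, since $\phi$ is continuous and $M$ is discrete, the kernel $\ker\phi \subset G$ is an open normal subgroup, and $\phi$ factors through the finite quotient $G/\ker\phi$. By the definition of a pro-(prime-to-$\ell$) group, every continuous finite quotient of $G$ has order prime to $\ell$; therefore $|\phi(G)| = |G/\ker\phi|$ is prime to $\ell$.

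Combining the two constraints, $|\phi(G)|$ is simultaneously a power of $\ell$ and coprime to $\ell$, forcing $|\phi(G)| = 1$, i.e., $\phi$ is trivial. There is no real obstacle here; the only subtlety is the implicit use of continuity, but this is forced on us since the lemma is meant to be applied to the representations $\rho$ arising from Galois actions in Corollary~\ref{cor:quasi-unipotent}, where continuity is automatic.
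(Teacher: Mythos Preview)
Your argument has a genuine gap: you assume continuity of $\phi$, but the lemma is stated---and, more importantly, is \emph{applied}---without any continuity hypothesis. Look at the sentence immediately following the lemma in the paper: ``For the applications later in this paper, it is important that we do not make any continuity assumptions in the formulation of Lemma~\ref{lemma:no-homs}.'' Concretely, the lemma is invoked in Corollary~\ref{cor:p1-doesnot-act} to show that $\rho(P_{1,\ell})$ is trivial, and continuity of $\rho$ is only established \emph{afterwards} in Corollary~\ref{cor:rho-continuous}, using Corollary~\ref{cor:p1-doesnot-act} as input. So your claim that ``continuity is automatic'' is circular in the paper's logical structure.

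The paper's proof avoids this by a different mechanism: it shows that the $\ell^N$-th power map $G \to G$ is surjective (in fact bijective), where $\ell^N$ bounds the exponent of $M$. This is proved by checking surjectivity on each finite quotient $G/U_i$ (which has order prime to $\ell$, so raising to the $\ell^N$-th power is a bijection there) and then passing to the inverse limit using uniqueness of $\ell^N$-th roots. Once every $g\in G$ is an $\ell^N$-th power, any (abstract) homomorphism to $M$ must be trivial. No continuity is used anywhere.
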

For the applications later in this paper, it is important that we do not make any continuity assumptions in the formulation of Lemma~\ref{lemma:no-homs}.
\begin{proof}
    Since $M$ is a finite group, there is an integer $N$ such that $m^{\ell^N}=e$ for any $m\in M$. Therefore, it suffices to show that the $\ell^N$-power map $(-)^{\ell^N}\colon G\to G$ is bijective\footnote{This map is not a group homomorphisms if $G$ is not abelian.}.\smallskip
    
    Let $U_i$ be a basis of open normal finite index subgroups in $G$, so $G=\lim G/U_i$ and the order of $|G/U_i|$ is co-prime to $\ell$ by our assumption on $G$. \smallskip
    
    {\it Step~$1$. $(-)^{\ell^N}\colon G/U_i\to G/U_i$ is bijective for each $U_i$.} Since $G/U_i$ is a finite group, it suffices to show that the map is surjective. Pick an element $x\in G/U_i$. Since $G/U_i$ is finite, there is an integer $d$ such that $x^d=e$. Since the order of $G/U_i$ is coprime with $\ell$, $d$ is also coprime with $\ell$. Therefore, there are integers $a$ and $b$ such that $da+\ell^Nb=1$. Therefore, we conclude that
    \[
    (x^b)^{\ell^N}=x^{\ell^Nb}=x^{da+\ell^nb}=x.
    \]
    
    {\it Step~$2$. $(-)^{\ell^N}\colon G\to G$ is bijective.} Now let $x\in G$ be an element, we denote by $x_i$ its image in $G/U_i$. And let $y_i$ be a unique element in $G/U_i$ such that $y_i^{\ell^N}=x_i$, its existence follows from Step~$1$. By uniquness, if $U_i\subset U_j$, the the image of $y_i$ under the natural projection map $G/U_i\to G/U_j$ is equal to $y_j$. Then the sequence $\{y_i\}_{i\in I}$ defines an element $y\in G$ such that $y^{\ell^N}=x$.
\end{proof}

\begin{cor}\label{cor:p1-doesnot-act} Let $K$ be an arithmetic non-archimedean field, let $X$ be a finite type $k$-scheme, and let $\F\in D^b_c(X\times_s \eta; \Q_\ell)$. Then $\rho\colon I \to \rm{Aut}_{\Q_\ell}(\pi_X^*\F)$ is strongly quasi-unipotent. More precisely, $\rho(P_{1, \ell})=\{\rm{Id}\}$ where $P_{1, \ell}\coloneqq I_1\cap P_{\ell}$ and $I_1\subset I$ is an(y) open subgroup such that $\rho|_{I_1}$ is unipotent.
\end{cor}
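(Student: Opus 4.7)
The plan is to extract $\ell^k$-th roots $g_k \in P_{1,\ell}$ of any given $g \in P_{1,\ell}$ using the pro-(prime-to-$\ell$) structure of $P_{1,\ell}$, then combine the identity $g = g_k^{\ell^k}$ with the uniform nilpotence of $\rho|_{I_1}$ and Kummer's formula on $\ell$-adic valuations of binomials to show that $\rho(g) - 1$ is divisible by arbitrarily high powers of $\ell$ in a suitable $\Z_\ell$-lattice, hence vanishes.

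The first step will be to set up integrality. I would fix an integral model $\F' \in D^b_c(X\times_s \eta; \Z_\ell)$ with $\F \simeq \F'[\tfrac{1}{\ell}]$, so that by Construction~\ref{construction:action-adic} the action $\rho$ factors through $\Aut_{\Z_\ell}(\pi_X^*\F')$. By Lemma~\ref{lemma:different-topologies}, $\End_{\Z_\ell}(\pi_X^*\F')$ is a finitely generated $\Z_\ell$-module, so its image $L$ inside $\End_{\Q_\ell}(\pi_X^*\F)$ is a finitely generated torsion-free $\Z_\ell$-module, i.e.\ a $\Z_\ell$-lattice whose $\ell$-adic topology is separated. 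Proving $\rho(g) = 1$ then reduces to showing $\rho(g) - 1 \in \ell^k L$ for every $k \geq 0$.

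Next, I would invoke the pro-(prime-to-$\ell$) structure. The extension
\[
1 \to P \to P_\ell \to \prod_{p'\neq p,\ell}\Z_{p'}(1)\to 1
\]
realizes $P_\ell$ as an extension of a pro-(prime-to-$p\ell$) group by a pro-$p$ group, so $P_\ell$ (and hence its closed subgroup $P_{1,\ell} = I_1 \cap P_\ell$) is pro-(prime-to-$\ell$). The argument in the proof of Lemma~\ref{lemma:no-homs} then gives bijectivity of the $\ell^k$-th power map on $P_{1,\ell}$, producing a (unique) $g_k \in P_{1,\ell} \subset I_1$ with $g_k^{\ell^k} = g$ for every $k \geq 0$.

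The main calculation is the binomial estimate. Set $T_k := \rho(g_k) - 1 \in L$; uniform unipotence of $\rho|_{I_1}$ provides a fixed $N$, independent of $k$, with $T_k^N = 0$ in $\End_{\Q_\ell}(\pi_X^*\F)$. Expanding,
\[
\rho(g) - 1 = (1 + T_k)^{\ell^k} - 1 = \sum_{j=1}^{N-1} \binom{\ell^k}{j}\, T_k^j,
\]
and Kummer's formula $v_\ell\!\left(\binom{\ell^k}{j}\right) = k - v_\ell(j)$ bounds each summand in $\ell^{k-C} L$ for a constant $C$ depending only on $N$. Letting $k \to \infty$ forces $\rho(g) - 1 \in \bigcap_k \ell^{k-C} L = 0$, hence $\rho(g) = 1$. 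The main obstacle is verifying that the image of $\End_{\Z_\ell}(\pi_X^*\F')$ inside $\End_{\Q_\ell}(\pi_X^*\F)$ is genuinely a $\Z_\ell$-lattice so that $\ell$-adic separation applies; this reduces to the finite-generation statement of Lemma~\ref{lemma:different-topologies}.
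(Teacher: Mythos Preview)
Your proof is correct, but it takes a genuinely different route from the paper's argument. The paper reduces to torsion coefficients: it passes to $\F_n \in D^b_c(X\times_s\eta;\Z/\ell^n\Z)$, observes that for $g\in P_{1,\ell}$ the unipotent element $\rho_g = 1+\phi$ satisfies $(\rho_g)^{\ell^m}=1$ for $m\gg 0$, and then uses that $\End_{\Z/\ell^n\Z}(\pi_X^*\F_n)$ is \emph{finite} to conclude that $\rho(P_{1,\ell})$ is a finite $\ell^\infty$-torsion group---at which point Lemma~\ref{lemma:no-homs} kills it in one stroke. Your approach instead stays at the $\Z_\ell$-level and works quantitatively: you extract $\ell^k$-th roots $g_k$ in $P_{1,\ell}$ (using the bijectivity of the $\ell$-power map established in the proof of Lemma~\ref{lemma:no-homs}), expand $(1+T_k)^{\ell^k}$, and invoke Kummer's valuation formula $v_\ell\binom{\ell^k}{j}=k-v_\ell(j)$ to force $\rho(g)-1$ into $\bigcap_k \ell^{k-C}L=0$. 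Both arguments ultimately rest on the same group-theoretic fact (bijectivity of $(-)^{\ell^k}$ on pro-(prime-to-$\ell$) groups); the paper packages it cleanly via the abstract Lemma~\ref{lemma:no-homs}, while you unwind it into an explicit $\ell$-adic estimate. Your route avoids the reduction to finite coefficients and the appeal to finiteness of $\End_{\Z/\ell^n\Z}$, at the cost of the Kummer computation; the paper's route is shorter and more conceptual.
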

\begin{proof}
    Corollary~\ref{cor:quasi-unipotent} ensures that the action of $\rho$ is quasi-unipotent. Similarly to the proof of Corollary~\ref{cor:continuous-action}, we can reduce to the case $\F\in D^b_c(X\times_s\eta; \Z/\ell^n\Z)$ for some integer $n$. Then, for any $g\in P_{1,\ell}$, we already know that $\rho_g$ is unipotent. So we can write
    \[
    \rho_g=1+\phi
    \]
    for some nilpotent $\phi\in \rm{End}_{\Z/\ell^n\Z}(\pi_X^* \F)$. Therefore, we conclude that
    \[
    (\rho_g)^{\ell^m}=\rho_g^{\ell^m}=(1+\phi)^{\ell^m}=1
    \]
    for large enough $m$. Now we use that $\rm{End}_{\Z/\ell^n\Z}(\pi_X^*\F)$ is a finite $\Z/\ell^n\Z$-module to conclude that $\rho(P_{1,\ell})$ is a finite $\ell^\infty$-torsion group. However, $P_{1,\ell}$ is pro-(prime-to-$\ell$) group, so there are no non-trivial (possibly not continuous) homomorphisms to a finite $\ell^\infty$-torsion group by Lemma~\ref{lemma:no-homs}. Therefore, $\rho(P_{1,\ell})$ must be trivial. 
\end{proof}

\begin{cor}\label{cor:rho-continuous} Under the assumption of Corollary~\ref{cor:quasi-unipotent}, $\rho$ is continuous.
\end{cor}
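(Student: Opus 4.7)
The plan is to mimic the proof of Corollary~\ref{cor:quasi-unipotent} almost verbatim, replacing ``quasi-unipotent'' by ``open kernel'' and using Corollary~\ref{cor:continuous-action} in place of Theorem~\ref{thm:grothendieck} as the base case. The first step is to pass from $\Q_\ell$-coefficients to finite coefficients. Choose a lattice $\F_0\in D^b_c(X\times_s\eta;\Z_\ell)$ with $\F\simeq \F_0[1/\ell]$; by Definition~\ref{defn:topology} and Lemma~\ref{lemma:different-topologies}, continuity of $\rho$ (for the colimit topology on $\mathrm{End}_{\Q_\ell}(\pi_X^*\F)$) is equivalent to continuity of the induced $I$-action on $\mathrm{End}_{\Z_\ell}(\pi_X^*\F_0) \simeq \lim_n \mathrm{End}_{\Z/\ell^n\Z}(\pi_X^*\F_n)$, where $\F_n \coloneqq \F_0\otimes^{L}\Z/\ell^n\Z$. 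This reduces the problem to showing, for every $n\geq 1$, that the homomorphism $\rho_n\colon I\to \mathrm{Aut}_{\Z/\ell^n\Z}(\pi_X^*\F_n)$ has open kernel.

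Fix $n$. The condition ``$\ker \rho_n$ is open'' is preserved by exactly the same operations that were used to reduce quasi-unipotence in Corollary~\ref{cor:quasi-unipotent}: it passes to cohomology sheaves (via $t$-exactness of $\pi_X^*$ and the fact that a finite product of homomorphisms has open kernel iff each factor does); it refines along the finite stratification provided by Lemma~\ref{lemma:lisse-stratification} (finite intersections of open subgroups are open); for lisse sheaves it descends to a finite set of chosen closed-point stalks via the $\pi_1$-representation correspondence; and it transports through pushforward along a finite morphism $f\colon\Spec k'\to\Spec k$, using that $f_{\ov{s},*}$ is conservative on endomorphisms (Lemma~\ref{lemma:pushforward-pullback}), so the kernels of $\rho_n$ and of its pushforward agree. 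Running these four steps in order, exactly as in Steps~$0$--$4$ of Corollary~\ref{cor:quasi-unipotent}, we reduce to the case $X=\Spec k$.

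In that case, $D^b_c(\Spec k\times_s\eta;\Z/\ell^n\Z)\simeq D^b_c(\eta;\Z/\ell^n\Z)$ is the category of bounded complexes of finite $\Z/\ell^n\Z$-modules equipped with a continuous $G_\eta$-action; in particular, an open subgroup of $G_\eta$ (hence of $I$) acts trivially on $\pi_s^*\F_n$, yielding the required open kernel. This is precisely the finite-coefficient core of Corollary~\ref{cor:continuous-action}. The only obstacle is bookkeeping: one must verify that each of the four reduction steps really does preserve openness of kernels. Because every reduction is finite in character (finite stratifications, finitely many stalks, finite base change), this is routine and requires no new analytic input. As an alternative route, one can instead invoke Corollary~\ref{cor:p1-doesnot-act} to factor $\rho|_{I_1}$ through the topologically cyclic pro-$\ell$ group $t_\ell(I_1)\subset \Z_\ell(1)$ and deduce continuity from the explicit $\exp$/$\log$ description of the resulting unipotent image, but this route demands more care with coherence across the $\rho_n$'s and is not logically simpler.
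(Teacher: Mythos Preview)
Your Step~0 reduction to cohomology sheaves has a genuine gap. In the proof of Corollary~\ref{cor:quasi-unipotent}, the passage to $\mathcal{H}^i$ works because unipotence combines across a finite filtration: if $(g-1)^{N_i}=0$ on each $\mathcal{H}^i(\pi_X^*\F_n)$ then $(g-1)^{\sum N_i}=0$ on the complex. But ``open kernel'' does not combine this way: if an open $I_0\subset I$ acts trivially on every $\mathcal{H}^i(\pi_X^*\F_n)$, you only conclude that $I_0$ acts \emph{unipotently} on $\pi_X^*\F_n$, not trivially. Your ``finite product of homomorphisms'' justification tacitly assumes that $\mathrm{Aut}(\pi_X^*\F_n)$ embeds into $\prod_i\mathrm{Aut}(\mathcal{H}^i(\pi_X^*\F_n))$, which fails whenever there are nonzero higher Ext groups between the cohomology sheaves over $\Z/\ell^n\Z$---and there usually are, even for $X=\Spec k$, since $\Z/\ell^n\Z$ is not a field. (This is exactly why the analogous reduction in Corollary~\ref{cor:continuous-action} was carried out at the $\Q_\ell$ level over a point, where higher Ext's vanish.)

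The paper's proof is precisely your ``alternative route'', and contrary to your assessment it is both shorter and requires no coherence across the $\rho_n$. After reducing to a single $\F_n$, one invokes Corollary~\ref{cor:quasi-unipotent} to obtain an open $I_1$ acting unipotently, then Corollary~\ref{cor:p1-doesnot-act} to see that $P_{1,\ell}=I_1\cap P_\ell$ lies in $\ker\rho_n$; hence $\rho_n|_{I_1}$ factors through $I_1/P_{1,\ell}\hookrightarrow\Z_\ell(1)$. Since $\mathrm{End}_{\Z/\ell^n\Z}(\pi_X^*\F_n)$ is a finite group, any abstract homomorphism from $\Z_\ell$ to it has open kernel (the kernel contains $|G|\cdot\Z_\ell=\ell^{v_\ell(|G|)}\Z_\ell$), and one is done. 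No $\exp$/$\log$ description is needed.
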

\begin{rmk} It is probably true that $\rho$ is continuous without any assumption on $K$. However, our proof uses the structure theory for the Galois group of an arithmetic field that is not available without this assumption. 
\end{rmk}
\begin{proof}[Proof of Corollary~\ref{cor:rho-continuous}]
    Let $\F=\lim_n \F_n$ with $\F_n\in D^b_c(X\times_s \eta; \Z/\ell^n\Z)$. Similarly to the proof of Corollary~\ref{cor:continuous-action}, it suffices to prove the claim for each $\F_n$ separately. Thus, we only need to show that there is an open subgroup of $I$ that acts trivially on $\F_n$. Corollary~\ref{cor:quasi-unipotent} ensures that there is an open subgroup $I_1\subset I$ that acts unipotently. Then Corollary~\ref{cor:p1-doesnot-act} implies that $P_{1, \ell}=I_1 \cap P_{\ell}$ lies in the kernel of $\rho$. Thus the action of $\rho$ on $I_1$ factors through the quotient
    \[
    I_1/P_{1, \ell} \subset \Z_\ell(1).
    \]
    Since $I_1/P_{1, \ell} \hookrightarrow \Z_\ell(1)$ is a continuous morphism of profinite groups, we conclude that $I_1/P_{1, \ell}$ is a non-trivial closed subgroup of $\Z_\ell(1)\cong \Z_\ell$. Therefore, \cite[Proposition 2.7.1(a,b)]{profinite} implies that it must be (non-canonically) isomorphic to $\Z_\ell$. Thus, it suffices to show that a finite index subgroup of the quotient $I_1/P_{1, \ell}$ acts trivially on $\pi_X^*\F_n$. This follows formally from the fact that $\rm{End}_{\Z/\ell^n\Z}(\pi^*_X\F_n)$ is a finite group.
\end{proof}

\subsection{Mixed sheaves on Deligne's topos}

In what follows, we fix a non-archimedean arithmetic field  $K$ of residue characteristic $p$ and a prime number $\ell\neq p$. \smallskip

The main goal of this section is to define the notion of a mixed object in $D^b_c(X\times_s \eta; \Q_\ell)$ for an arithmetic field $K$ and a finite type $k$-scheme $X$. The main issue is that the standard notion of a mixed sheaf is  only defined for (complexes of) sheaves on a finite type $k$-scheme, but there is no canonical functor  $D^b_c(X\times_s \eta; \Q_\ell)\to D^b_c(X; \Q_\ell)$. So it is not entirely formal to extend the standard definition to $D^b_c(X\times_s \eta; \Q_\ell)$. \smallskip

A way to overcome this issue is to observe that the Galois group of the residue field $G_s$ is isomorphic to a free pro-finite group $\wdh{\Z}$ (recall that the residue field $k$ is finite by Remark~\ref{rmk:finite-residue-field}). So the continuous surjection
\[
r\colon G_\eta \to G_s
\]
has plenty of continuous sections $\sigma\colon G_s\to G_\eta$, and each such $\sigma$ defines a functor
\[
\sigma^*_X\colon D^b_c(X\times_s \eta; \Q_\ell) \to D^b_c(X; \Q_\ell)
\]
by Construction~\ref{construction:sigma} and Remark~\ref{rmk:adic-good}. Then a natural way to define mixedness is to require a complex $\F\in D^b_{c}(X\times_s \eta; \Q_\ell)$ to be mixed after applying the pullback functor $\sigma^*_X$ for some continuous section $\sigma\colon G_s \to G_\eta$. The main content of this section is to show that this definition is independent of a choice of $\sigma$.

\begin{rmk} It will be crucial for our arguments in the paper to know that mixedness in $D^b_c(X\times_s \eta; \Q_\ell)$ can be checked after a finite extension $K\subset K'$. For this argument, it is crucial to know that the notion mixedness is independent of a choice of $\sigma$.
\end{rmk}

In the next proposition, we are going to use the notion of pure and mixed objects in $D^b_c(X; \Q_\ell)$ for a finite type $k$-scheme $X$. We refer to \cite[Section II.12]{KW} for an extensive discussion of this notion. We also refer to \cite[Section I.2]{KW} for the notion of punctually pure and mixed objects in $\rm{Shv}_c(X; \Q_\ell)$. The proof of the proposition below will use that punctual purity/mixedness can be defined for a more general notion of Weil sheaves (see \cite[Convention on p. 8]{KW}), we refer to \cite[Section I.1]{KW} for an extensive discussion of this notion.

\begin{prop}\label{prop:mixed-independent} Let $K$ be an arithmetic non-archimedean field, let $X$ be a finite type $k$-scheme, let $\sigma, \sigma'\colon G_s \to G_\eta$ be two continuous sections, and let $\F\in D^b_{c}(X\times_s \eta; \Q_\ell)$. Then $\sigma_X^*\F\in D^b_c(X; \Q_\ell)$ is mixed of weights $\leq w$ (in the sense of \cite[Definition III.12.3]{FK}) if and only if ${\sigma'}_X^*\F\in D^b_c(X; \Q_\ell)$ is mixed of weights $\leq w$.
\end{prop}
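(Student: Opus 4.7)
The plan is to reduce the comparison to Frobenius eigenvalues at closed points of $X$ and then use the quasi-unipotence of the inertia action (Corollary~\ref{cor:quasi-unipotent}) to show that the two Frobenius actions have eigenvalues differing only by roots of unity, preserving all archimedean absolute values and hence the mixedness bound. By $t$-exactness of $\sigma_X^*$ and $\sigma'^{*}_X$, mixedness of weights $\leq w$ for the pullbacks is tested degree-by-degree on the cohomology sheaves $\mathcal{H}^i(\sigma_X^*\F)\simeq\sigma_X^*\mathcal{H}^i(\F)$; by \cite[Def.~III.12.3]{FK}, this in turn is tested at each closed point $x\in X$ by requiring, for every embedding $\iota\colon\ov{\Q}_\ell\hookrightarrow\C$, that every eigenvalue $\a$ of the geometric Frobenius $F_x\in G_{k(x)}$ on the stalk at a geometric point $\ov{x}$ over $x$ satisfies $|\iota(\a)|\leq q_x^{(w+i)/2}$. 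By Construction~\ref{construction:action-adic} and Remark~\ref{rmk:adic-good}, both stalks $(\sigma_X^*\F)_{\ov{x}}$ and $(\sigma'^{*}_X\F)_{\ov{x}}$ are canonically the same $\Q_\ell$-vector space $V\coloneqq(\pi_X^*\F)_{\ov{x}}$, on which the stabilizer $G_{K,x}\subset G_\eta$ of $\ov{x}$ acts via a canonical representation $\rho$; the two $G_{k(x)}$-actions are obtained by restricting $\rho$ along the two sections. Writing $\sigma'(F_x)=h\cdot\sigma(F_x)$ for the unique $h\in I$, setting $A\coloneqq\rho(\sigma(F_x))$ and $H\coloneqq\rho(h)$, it suffices to show that the eigenvalues of $A$ and $HA$ on $V$ have the same $\iota$-absolute values.

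By Corollary~\ref{cor:quasi-unipotent} together with Remark~\ref{rmk:normal-subgroup}, there is an open normal subgroup $I_1\subset I$ acting unipotently on $V$. Since $I/I_1$ is finite, $h^N \in I_1$ for some $N$, so $H^N$ is unipotent and every eigenvalue of $H$ is an $N$-th root of unity. The commutation relation $t_\ell(\sigma(F_x)\,g\,\sigma(F_x)^{-1})=\chi_\ell(\sigma(F_x))\,t_\ell(g)$ for $g\in I$, together with Corollary~\ref{cor:p1-doesnot-act} and Theorem~\ref{intro-3}, endow $V$ with the structure of a Weil--Deligne-type representation having nilpotent monodromy operator $N$. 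For such representations, changing the Frobenius lift by an element of inertia multiplies the multiset of Frobenius eigenvalues only by roots of unity. Concretely, filter $V$ by $V_i\coloneqq\ker N^i$. Then $A$ preserves this filtration (since $A$ normalizes $N$ up to a scalar), and on each graded piece $V_i/V_{i-1}$ the operator $N$ vanishes, so $I_1$ acts trivially and $I$ acts through the finite quotient $I/I_1$. Decomposing $V_i/V_{i-1}$ into $I/I_1$-isotypic pieces (after a finite extension of scalars if $I/I_1$ is nonabelian), Frobenius permutes the pieces and replacing $A$ by $HA$ multiplies the resulting eigenvalues by character values of $h$ on $I/I_1$, which are $N$-th roots of unity. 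Since $A$ and $HA$ are block-upper-triangular for the filtration $V_\bullet$, their eigenvalue multisets on $V$ are the respective unions over graded pieces, and therefore differ only by $N$-th roots of unity.

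Because roots of unity have absolute value $1$ in every complex embedding, the bound $|\iota(\a)|\leq q_x^{(w+i)/2}$ is preserved under $A\mapsto HA$, and therefore $\sigma_X^*\F$ is mixed of weights $\leq w$ if and only if $\sigma'^{*}_X\F$ is. The main obstacle is the eigenvalue comparison of the second paragraph: although quasi-unipotent inertia plus the cyclotomic commutation with Frobenius gives the familiar Weil--Deligne picture, converting this into a rigorous eigenvalue comparison requires the filtration argument above (using Corollary~\ref{cor:p1-doesnot-act} and the monodromy operator of Theorem~\ref{intro-3}) to reduce to the case where $I$ acts through a finite group on each graded piece, where the assertion becomes elementary character theory.
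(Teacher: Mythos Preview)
Your argument is correct and genuinely different from the paper's. The paper stratifies to the lisse case, invokes Deligne's weight filtration from \cite[Variante (3.4.9)]{Weil2} on $\sigma_X^*\F$, and verifies that both the filtration and the purity of its graded pieces survive the change $\sigma\to\sigma'$ by reducing to $X=\Spec k$ and quoting \cite[Lemme (1.7.4) and Proposition-d\'efinition (1.7.5)]{Weil2}. You instead reduce directly to Frobenius eigenvalues at closed points and use the monodromy filtration $\ker N^\bullet$ (via Lemma~\ref{lemma:monodromy-operator} and Corollary~\ref{cor:commutation}) to pass to graded pieces on which $I$ acts through the finite quotient $I/I_1$; there a short group-theoretic argument shows the two Frobenius eigenvalue multisets differ by roots of unity. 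In effect you are reproving \cite[Lemme (1.7.4)]{Weil2} by hand, trading the deep weight filtration input for the elementary monodromy filtration. Two remarks: first, Lemma~\ref{lemma:monodromy-operator} and Corollary~\ref{cor:commutation} appear later in the paper than this proposition, but there is no circularity since they depend only on the material of Corollary~\ref{cor:quasi-unipotent} and Corollary~\ref{cor:p1-doesnot-act}; you are simply reordering the logic. Second, your isotypic-decomposition sketch is imprecise when $I/I_1$ is nonabelian (extending scalars does not abelianize the group, and ``multiplying eigenvalues by character values of $h$'' is not literally what happens); a cleaner fix is to note that if $G$ denotes the finite image of $I$ on a graded piece and $d$ is chosen so that $A^d$ centralizes $G$, then $(HA)^{d|G|}=A^{d|G|}$ as operators, which yields the eigenvalue comparison directly.
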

\begin{proof}
Mixedness of weights $\leq w$ is the condition on cohomology sheaves, so we can assume that $\F$ lies in the heart $D^b_c(X\times_s \eta; \Q_\ell)^{\heartsuit}$ of the standard $t$-structure on $D^b_c(X\times_s \eta; \Q_\ell)$ (see Corollary~\ref{cor:constructible-t-structure-rationally}). Then $\sigma_X^*\F$ is mixed of weights $\leq w$ is equivalent to $\sigma_X^*\F$ being (punctually) mixed sheaf on $X$ of weights $\leq w$. Then Lemma~\ref{lemma:lisse-stratification} implies that there is a stratification $X=\sqcup_{i=1}^n X_i$ such that $\pi_X^*\F|_{X_{i,\ov{s}}}$ is lisse, mixed of weights $\leq w$, and $X_{i, \rm{red}}$ is smooth for each $i$. Since mixed sheaves of weights $\leq w$ are preserved by extensions and $(j\times_s \eta)_!$ (see Definition~\ref{defn:functors-rationally}) preserves mixed complexes of weight $\leq w$ for a locally closed immersion $j\colon Z\to X$, we can replace $X$ with $X_{i, \rm{red}}$ to assume that $\pi_X^*\F$ is lisse and $X$ is smooth. \smallskip

In this case, \cite[Variante (3.4.9)]{Weil2} implies that there is a functorial (essentially finite) increasing filtration 
\[
\rm{Fil}^n_{\rm{W}}\F \subset \sigma^*_X\F
\]
by lisse $\Q_\ell$-sheaves such that $\rm{gr}_{\rm{W}}^n\sigma_X^*\F$ is pure of weight $n$ (or zero). \smallskip

Let us denote $\sigma(F)=\Phi$ and $\sigma'(F)=\Phi'$ for a geometric Frobenius $F\in G_s$, and $b\colon X_{\ov{s}} \to X_{s}$ the natural morphism of schemes. We wish to show that ${\sigma'_X}^*\F$ is also (punctually) mixed sheaf on $X$ of the same weights as $\sigma_X^*\F$.\smallskip

First, we note that $b^*(\sigma^*_X \F)\simeq \pi^*_X\F\simeq b^*({\sigma'_X}^*\F)$. So we can think of $b^*\rm{Fil}^n_{\rm{W}}\F$ as subsheaves of $\pi^*_X\F\simeq b^*({\sigma'_X}^*\F)$. Secondly, we note that the notion of a (punctually) mixed $\Q_\ell$-sheaf on $X$ depends only on the underlying Weil sheaf. Thus, ${\sigma'_X}^*\F$ is (punctually) mixed of weights $\{w_i\}_{i\in I}$ if and only if the Weil sheaf $(\pi_X^*\F, \rho(\Phi')\colon F^*\pi_X^*\F\to \pi_X^*\F)$ is punctually mixed of weights $\{w_i\}_{i\in I}$. And, by assumption, the Weil sheaf $(\pi_X^*\F, \rho(\Phi)\colon F^*\pi_X^*\F\to \pi_X^*\F)$ is a (punctually) mixed Weil sheaf of weights $\{w_i\}_{i\in I}$ with $w_i\leq w$. \smallskip

{\it Claim~$1$. $\rho(\Phi')$ induces an isomorphism $b^*\rm{Fil}^n_{\rm{W}}\F \to b^*\rm{Fil}^n_{\rm{W}}\F$ for each integer $n$.} It suffices to check on closed points of $X$, so we can assume that $X$ is a point. Then arguing similarly to the Step~$2$ (and using that $f_{*}$ preserves local systems of weight $d$ for a finite \'etale $f$) in the proof of Corollary~\ref{cor:quasi-unipotent}, we can assume that $X=\Spec k$. Then $D^b_c(X\times_s\eta; \Q_\ell)\simeq D^b_c(\eta; \Q_\ell)$, and so we can assume that $K$ is a local field since $D^b_c(\eta; \Q_\ell)$ depends only on the Galois group $G_K$. Thus the result follows from \cite[Proposition-definition (1.7.5)]{Weil2}. \smallskip

{\it Claim~$2$. Weil sheaves $(b^*\rm{Fil}^n_{\rm{W}}\F, \rho(\Phi')\colon F^*b^*\rm{gr}_{\rm{W}}^n \F \to b^*\rm{gr}_{\rm{W}}^n \F)$ are pure of weight $n$.} Again, the same reduction as in the proof of Claim~$1$ reduces the question to the case $X=\Spec k$ is the base point. Then the claim follows from \cite[Lemme (1.7.4)]{Weil2}.\smallskip

Now claims $1$ and $2$ together imply that the Weil sheaf
\[
(\pi_X^*\F=b^*\sigma_X^*\F, \rho(\Phi')\colon F^*\pi_X^*\F\to \pi_X^*\F)
\]
admits an essentially finite filtration by Weil $\Q_\ell$-sheaves 
\[
(b^*\rm{Fil}^n_{\rm{W}}\F, \rho(\Phi')\colon F^*b^*\rm{Fil}^n_{\rm{W}}\F \to b^*\rm{Fil}^n_{\rm{W}}\F)\subset (b^*{\sigma'}_X^*\F, \rho(\Phi')\colon F^*\pi_X^*\F\to \pi_X^*\F)
\]
such that each quotient is a pure Weil sheaf of weight $n$. Since $(b^*\rm{gr}^n_{\rm{W}}\F, \rho(\Phi')\colon F^*b^*\rm{gr}_{\rm{W}}^n \F \to b^*\rm{gr}_{\rm{W}}^n \F)$ is a zero Weil sheaf if and only if $(b^*\rm{gr}_{\rm{W}}^n \F, \rho(\Phi)\colon F^*b^*\rm{gr}_{\rm{W}}^n \F \to b^*\rm{gr}_{\rm{W}}^n \F)$ is a zero Weil sheaf, we conclude that $(b^*{\sigma'}_X^*\F, \rho(\Phi')\colon F^*\pi_X^*\F\to \pi_X^*\F)$ is a mixed Weil sheaf and its weights coincide with the weights of $(b^*{\sigma}_X^*\F, \rho(\Phi)\colon F^*\pi_X^*\F\to \pi_X^*\F)$
\end{proof}


\begin{cor}\label{cor:pure-independent} Let $K$ be an arithmetic non-archimedean field, let $X$ be a finite type $k$-scheme, let $\sigma, \sigma'\colon G_s \to G_\eta$ be two continuous sections, and let $\F\in D^b_{c}(X\times_s \eta; \Q_\ell)$. Then $\sigma_X^*\F\in D^b_c(X; \Q_\ell)$ is pure of weight $w$ (resp. mixed of weights $\geq w$) if and only if ${\sigma'}_X^*\F\in D^b_c(X; \Q_\ell)$ is pure of weight $w$ (resp. mixed of weights $\geq w$).
\end{cor}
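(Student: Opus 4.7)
The plan is to deduce both the "mixed of weights $\geq w$" and "pure of weight $w$" assertions from Proposition~\ref{prop:mixed-independent} by Verdier duality on Deligne's topos. First I would recall that for any $\G \in D^b_c(X; \Q_\ell)$, being mixed of weights $\geq w$ is by definition equivalent to $\mathbb{D}_X \G$ being mixed of weights $\leq -w$, and that purity of weight $w$ is the conjunction of mixedness of weights $\leq w$ and $\geq w$.

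Next I would invoke the compatibility
\[
\mathbb{D}_X \circ \sigma_X^* \;\simeq\; \sigma_X^* \circ \mathbb{D}_{X\times_s \eta},
\]
and similarly for $\sigma'$. This is a consequence of the six-functor formalism for Deligne's categories built in Appendix~\ref{appendix:deligne}: the morphism of topoi $\sigma_X \colon X_{\et} \to X\times_s\eta$ sits in a commutative square over the morphism $\sigma\colon s \to \eta$ induced by a section of $G_\eta \twoheadrightarrow G_s$; since $\sigma$ is a section between classifying topoi of pro-finite groups whose absolute dualizing complexes are trivial, $\sigma^! \Q_\ell \simeq \Q_\ell$, and thus the relative dualizing complex of $\sigma_X$ is trivial, yielding $\sigma_X^! \simeq \sigma_X^*$ and the displayed compatibility.

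With these two inputs, the chain of equivalences is
\[
\sigma_X^*\F \text{ is mixed of weights } \geq w \iff \sigma_X^*(\mathbb{D}_{X\times_s\eta}\F) \text{ is mixed of weights } \leq -w,
\]
which by Proposition~\ref{prop:mixed-independent} applied to $\mathbb{D}_{X\times_s\eta}\F$ is equivalent to $\sigma'^{*}_X(\mathbb{D}_{X\times_s\eta}\F)$ being mixed of weights $\leq -w$, and then by the compatibility again to $\sigma'^{*}_X\F$ being mixed of weights $\geq w$. The pure case follows by combining this with the "$\leq w$" assertion in Proposition~\ref{prop:mixed-independent}.

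The main obstacle is verifying the duality compatibility $\mathbb{D}_X \sigma_X^* \simeq \sigma_X^* \mathbb{D}_{X\times_s\eta}$; this is what makes the corollary non-formal on top of the proposition. If the appendix does not package this directly, a fallback route is to inspect the proof of Proposition~\ref{prop:mixed-independent}: Claims~1 and 2 there actually produce, from the BBD weight filtration on $\sigma_X^* \F$, a filtration on the Weil sheaf $(b^*\sigma'^{*}_X\F, \rho(\Phi'))$ whose graded pieces are pure Weil sheaves of the \emph{same} weights as those of $\sigma_X^*\F$. Thus the multiset of weights of the two Weil sheaves coincides, which simultaneously yields the "$\leq w$", "$\geq w$", and "pure" variants of the statement with no further argument.
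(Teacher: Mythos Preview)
Your proposal is correct and matches the paper's own proof: the paper also invokes the compatibility $\sigma_X^*\bf{D}_{X\times_s\eta}(\F)\simeq \bf{D}_X(\sigma_X^*\F)$ (recorded as Remark~\ref{rmk:verdier-duality-compatible}, extended to $\Q_\ell$-coefficients via Remark~\ref{rmk:rational-good}) and then applies Proposition~\ref{prop:mixed-independent} to $\bf{D}_{X\times_s\eta}(\F)$ for the ``$\geq w$'' case and to both $\F$ and its dual for purity. Your worry about whether the duality compatibility is available is therefore unfounded---it is packaged in the appendix exactly as needed---and the fallback route is unnecessary.
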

\begin{proof}
    Note that, for every continuous section $\sigma\colon G_s \to G_\eta$, we have an isomorphism 
    \[
    \sigma^*_X(\bf{D}_{X\times_s\eta}(\F)) \simeq \bf{D}_{X}(\sigma^*_X \F)
    \]
    by Remark~\ref{rmk:verdier-duality-compatible} and Remark~\ref{rmk:rational-good}. Therefore, the claim for mixed complexes of weight $\geq w$ follows from Proposition~\ref{prop:mixed-independent} applied to $\bf{D}_{X\times_s \eta}(\F)$. The claim for pure complexes of weight $w$ follows from Proposition~\ref{prop:mixed-independent} applied to both $\F$ and $\bf{D}_{X\times_s \eta}(\F)$.
\end{proof}

\begin{defn}\label{defn:pure-mixed-Deligne} An object $\F\in D^b_c(X\times_s \eta; \Q_\ell)$ is {\it mixed of weights $\leq w$} (resp. {\it mixed of weights $\geq w$}, resp. {\it pure of weight $w$}) if $\sigma_X^*\F\in D^b_c(X; \Q_\ell)$ is mixed of weights $\leq w$ (resp. mixed of weights $\geq w$, resp. pure of weight $w$) for a(ny) choice of a continuous splitting $\sigma\colon G_s \to G_\eta$.
\end{defn}


Now we discuss that mixedness (resp. purity) of a complex in $D^b_c(X\times_s \eta; \Q_\ell)$ can be checked after a (possibly non-finite) extension of arithmetic fields. 

\begin{notation}\label{notation:field-extension}  For an extension of non-archimedean arithmetic fields $K\subset K'$, we denote by $k'$ residue field of $K'$, $\eta'$ the classifying topos of the pro-finite group $G_{K'}$, and $s'$ the classifying topos of the pro-finite group $G_{k'}$. The diagram
\[
\begin{tikzcd}
\eta' \arrow{d}\arrow{r} & s'\arrow{d} &\arrow{l} X_{s', \et} \arrow{d}\\
\eta \arrow{r}& s & \arrow{l} X_{s,\et}
\end{tikzcd}
\]
commutes up to an equivalence. So we have a natural morphism of Deligne's topoi 
\[
b\colon X_{s'}\times_{s'}\eta'\to X \times_s \eta.
\]
\end{notation}

\begin{lemma}\label{lemma:mixed-after-base-change} Let $K\subset K'$ be a (possibly non-algebraic) extension of arithmetic fields, let $X$ be a finite type $k$-scheme, and let $\F\in D^b_{c}(X\times_s \eta; \Q_\ell)$. Then $\F$ is mixed of weights $\leq w$ (resp. mixed of weights $\geq w$, resp. pure of weight $w$) if and only if $b^*\F$ is mixed of weights $\leq w$ (resp. mixed of weights $\geq w$, resp. pure of weight $w$).
\end{lemma}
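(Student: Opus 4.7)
By applying Corollary~\ref{cor:pure-independent} to both $\F$ and its Verdier dual, and using the compatibility of Verdier duality with base-change pullback (Remark~\ref{rmk:verdier-duality-compatible}, Remark~\ref{rmk:rational-good}), it suffices to prove the statement for complexes mixed of weights $\leq w$; the mixed-of-weights-$\geq w$ case then follows by duality, and purity is their conjunction.

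Since $K$ and $K'$ are arithmetic, both residue fields are finite (Remark~\ref{rmk:finite-residue-field}), so $[k':k]<\infty$. Let $K^{\rm{nr}}\subset K'$ be the maximal unramified subextension of $K$ inside $K'$; it is finite over $K$, has residue field $k'$, and is arithmetic by Lemma~\ref{lemma:finite-extensions}. Accordingly $b$ factors as
\[
X_{s'}\times_{s'}\eta' \xrightarrow{b_2} X_{s'}\times_{s'}\eta^{\rm{nr}} \xrightarrow{b_1} X\times_s\eta,
\]
and I handle the two halves separately, using Proposition~\ref{prop:mixed-independent} to choose sections in the most convenient way for each step.

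For the unramified step $b_1$, pick any continuous section $\sigma\colon G_s\to G_\eta$. Since $\sigma$ is a section of $r$, its restriction to the open subgroup $G_{s'}\subset G_s$ automatically lands in $r^{-1}(G_{s'})=G_{K^{\rm{nr}}}$, yielding a continuous section $\sigma^{\rm{nr}}\colon G_{s'}\to G_{K^{\rm{nr}}}$. Unwinding the construction of Deligne's topos in Appendix~\ref{appendix:deligne}, the square
\[
\begin{tikzcd}
X_{s',\et} \arrow{r}{\sigma^{\rm{nr}}_{X_{s'}}} \arrow{d}{c} & X_{s'}\times_{s'}\eta^{\rm{nr}} \arrow{d}{b_1}\\
X_{\et} \arrow{r}{\sigma_X} & X\times_s\eta
\end{tikzcd}
\]
of topoi commutes, where $c\colon X_{s'}\to X$ is the base change. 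Consequently $(\sigma^{\rm{nr}}_{X_{s'}})^*b_1^*\F\simeq c^*\sigma_X^*\F$, and the classical finite-field base change (a constructible complex on a finite-type $k$-scheme is mixed of weights $\leq w$ if and only if its pullback along the finite extension $c$ is; see for instance \cite[Section~I.2]{KW}) gives the equivalence for this step.

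For the step $b_2$ with equal residue fields, pick a continuous section $\sigma'\colon G_{s'}\to G_{K'}$ (which exists because $G_{K'}\twoheadrightarrow G_{s'}$ and $G_{s'}\cong\wdh{\Z}$). Composition with the inclusion $G_{K'}\hookrightarrow G_{K^{\rm{nr}}}$ produces a continuous section $\sigma^{\rm{nr}}\colon G_{s'}\to G_{K^{\rm{nr}}}$ agreeing set-theoretically with $\sigma'$; the analogous triangle of topoi commutes, so $(\sigma')^*_{X_{s'}}b_2^*=(\sigma^{\rm{nr}}_{X_{s'}})^*$ on $D^b_c(X_{s'}\times_{s'}\eta^{\rm{nr}};\Q_\ell)$. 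Applied to $b_1^*\F$, the two defining pullbacks for mixedness coincide literally, so mixedness is preserved and reflected. Chaining the two equivalences completes the proof. The main obstacle is verifying commutativity of the two topos diagrams directly from the construction of Deligne's topos (and the fact that Deligne's topos is functorial both in the base field and the section); past that the argument is a formal combination of section-independence (Proposition~\ref{prop:mixed-independent}) with classical descent of mixedness along finite extensions of the residue field.
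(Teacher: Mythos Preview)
Your argument is correct and follows essentially the same route as the paper: factor the extension through an intermediate field that is finite unramified over $K$ with residue field $k'$, then treat the unramified step by restricting a section and invoking classical descent of mixedness along finite residue-field extensions, and treat the equal-residue-field step by choosing a section upstairs and pushing it down via the inclusion $G_{K'}\hookrightarrow G_{K^{\rm nr}}$. The only cosmetic difference is that you open with a Verdier-duality reduction to the $\leq w$ case, whereas the paper handles all three cases in parallel; both rely on Proposition~\ref{prop:mixed-independent} in exactly the same way.
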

\begin{proof}
    Since the residue fields $k$ and $k'$ are finite, the extension $k\subset k'$ is also finite. Therefore, we can find an unramified sub-extension $K\subset K''  \subset K'$ such that $k''=k'$. Lemma~\ref{lemma:finite-extensions} implies that $K''$ is arithmetic, so we treat the case of an unramified and ``totally ramified" extensions separately. 
    
    Proposition~\ref{prop:mixed-independent} ensures that we can check mixedness with respect to any continuous section $\sigma$. We will crucially use this property in the proof. \smallskip
    
    {\it Case $1$. The extension $K\subset K'$ is finite unramified} Since $K\subset K'$ is unramified, we have a Cartesian square
    \[
    \begin{tikzcd}
    G_{K'}\arrow{d}\arrow{r}{r'} & G_{k'}\arrow{d}\\
    G_K \arrow{r}{r} & G_k.
    \end{tikzcd}
    \]
    So the universal property of a pullback diagram implies that a continuous section $\sigma\colon G_k \to G_K$ defines a continuous section $\sigma'\colon G_{k'}\to G_K$. We consider the natural morphism of \'etale topos $q\colon X_{s', \et} \to X_\et$ to see that we have a canonical isomorphism
    \[
    q^*\sigma^*_X\F \simeq (\sigma'_{X_{s'}})^{*}b^*\F
    \]
    for any $\F\in D^b_c(X\times_s \eta; \Q_\ell)$. Proposition~\ref{prop:mixed-independent} (resp. Corollary~\ref{cor:pure-independent}) ensures that $b^*\F$ is mixed of weights $\leq w$ (resp. mixed of weights $\geq w$, resp. pure of weight $w$) if and only if so is $(\sigma'_{X_{s'}})^{*}b^*\F\simeq q^*\sigma^*_X \F$. Therefore, it suffices to show that  a complex of sheaves is mixed (resp. pure) of prescribed weights if and only if the same holds after a finite extension of the ground field. This is standard and can be deduced from \cite[Permanence Property (3) on p.14]{KW} by a standard argument. \smallskip
    
    {\it Case~$2$. $K\subset K'$ is ``totally ramified'', i.e. induces an isomorphism on residue field $k\simeq k'$.} In this case, we have a commutative diagram
    \[
    \begin{tikzcd}
    G_{K'}\arrow{d}{u}\arrow{rd}{r'} &\\
    G_K \arrow{r}{r} & G_k
    \end{tikzcd}
    \]
    with surjective $r$ and $r'$. Therefore, we can choose a continuous section $\sigma'\colon G_k \to G_{K'}$. So the composition 
    \[
    \sigma\coloneqq u\circ \sigma' \colon G_k \to G_K
    \]
    is a section of $r$. Then we see that
    \[
    {\sigma'}^*_{X} b^* \F \simeq \sigma_X^*\F.
    \]
    Thus the result follows from Proposition~\ref{prop:mixed-independent} and Corollary~\ref{cor:pure-independent}. 
\end{proof}

\subsection{Monodromy operator}

For the rest of the section, we fix a non-archimedean arithmetic field $K$ (unless it is specified otherwise) of residue characteristic $p>0$ and a prime number $\ell\neq p$. \smallskip

The main goal of this section is to show that any complex $\F\in D^b_c(X\times_s \eta; \Q_\ell)$ comes equipped with a {\it monodromy operator} 
\[
N\colon \pi^*_X \F \to \pi^*_X\F(-1).
\]
We will construct this operator by adapting  Grothendieck's original construction of the monodromy operator on cohomology groups of a variety over a $p$-adic field, using quasi-unipotence of the inertia action established in Corollary~\ref{cor:quasi-unipotent} and Corollary~\ref{cor:p1-doesnot-act}. \smallskip

Before we start discussing the construction of the monodromy operator, we recall the construction of the exponent and logarithm morphisms. In what follows, we fix a finite type $k$-scheme $X$ and a complex $\F\in D^b_{c}(X\times_s \eta; \Q_\ell)$. \smallskip

For any unipotent operator $1+\varphi\in \rm{Aut}_{\Q_\ell}\left(\pi_X^*\F\right)$, we define {\it logarithm}
\[
\log (1+\varphi) = \sum_{k=1}^\infty (-1)^{k+1} \frac{\varphi^k}{k} \in \rm{End}_{\Q_\ell}\left(\pi_X^*\F\right)
\]
that is easily seen to be a nilpotent endomorphism of $\pi_X^*\F$. Likewise, for a nilpotent operator $\psi\in \rm{End}_{\Q_\ell}\left(\pi_X^*\F\right)$, we  define {\it exponent}
\[
\exp \psi = 1+ \sum_{k=1}^\infty \frac{\psi^k}{k!} \in \rm{Aut}_{\Q_\ell}(\pi_X^*\F)
\]
that is easily seen to be a unipotent automorphism of $\pi_X^*\F$. 

\begin{lemma}\label{lemma:continuous} Let $K$ be a non-archimedean field  and let $X$ be a finite type $k$-scheme. Then 
\[
\rm{exp}\colon \rm{End}_{\Q_\ell}\left(\pi_X^*\F\right)^{\rm{nil}} \to \rm{Aut}_{\Q_\ell}\left(\pi^*\F\right)^{\rm{uni}},
\]
\[
\rm{log} \colon \rm{Aut}_{\Q_\ell}\left(\pi_X^*\F\right)^{\rm{uni}}\to \rm{End}_{\Q_\ell}\left(\pi_X^*\F\right)^{\rm{nil}}
\]
are continuous with respect to the topologies defined in Definition~\ref{defn:topology}, and inverses to each other. 
\end{lemma}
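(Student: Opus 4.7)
The plan is to reduce continuity to polynomiality by exploiting that the endomorphism algebra $E \coloneqq \rm{End}_{\Q_\ell}(\pi_X^*\F)$ is finite-dimensional over $\Q_\ell$. Choosing a lattice $\F \simeq \F_0[\tfrac{1}{\ell}]$ with $\F_0 \in D^b_c(X_{\ov{s}}; \Z_\ell)$, Lemma~\ref{lemma:different-topologies} shows that $\rm{End}_{\Z_\ell}(\F_0)$ is a finitely generated $\Z_\ell$-module, so $E$ is a finite-dimensional $\Q_\ell$-vector space; set $N \coloneqq \dim_{\Q_\ell} E$.

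Next, I would establish a uniform nilpotency bound: for any nilpotent $\phi \in E$, the commutative subalgebra $\Q_\ell[\phi] \subset E$ has $\Q_\ell$-dimension at most $N$, so the minimal polynomial of $\phi$ has degree at most $N$; since $\phi$ is nilpotent, $\phi^N = 0$. Consequently the series defining $\exp \phi$ and $\log(1+\varphi)$ truncate to polynomial expressions
\[
\exp \phi = \sum_{k=0}^{N-1} \frac{\phi^k}{k!}, \qquad \log(1+\varphi) = \sum_{k=1}^{N-1} (-1)^{k+1}\frac{\varphi^k}{k}
\]
of uniformly bounded degree. Since addition, scalar multiplication, and composition are continuous on $E$ for the topology of Definition~\ref{defn:topology}, both maps are continuous. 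A direct check shows $\exp \phi - 1 = \phi \cdot Q(\phi)$ and $\log(1+\varphi)$ are nilpotent, so $\exp$ lands in $\rm{Aut}_{\Q_\ell}(\pi_X^*\F)^{\rm{uni}}$ and $\log$ lands in $\rm{End}_{\Q_\ell}(\pi_X^*\F)^{\rm{nil}}$.

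Finally, to see the two maps are mutually inverse, I would invoke the formal power series identities $\log(\exp X) = X$ in $\Q[[X]]$ and $\exp(\log(1+Y)) = 1+Y$ in $\Q[[Y]]$. Modulo $X^N$, respectively $Y^N$, these become identities between polynomials of degree $< N$; substituting $\phi$ of nilpotency index $\leq N$ (respectively $1+\varphi$ with $\varphi$ of index $\leq N$) yields the corresponding identity in $E$, since all higher-order terms vanish automatically. The only subtlety in the whole argument is the uniform nilpotency bound, and this is immediate from the finite-dimensionality of $E$ supplied by Lemma~\ref{lemma:different-topologies}; everything else is formal manipulation.
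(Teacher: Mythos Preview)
Your proposal is correct and follows essentially the same approach as the paper: both use Lemma~\ref{lemma:different-topologies} to conclude that $\rm{End}_{\Q_\ell}(\pi_X^*\F)$ is finite-dimensional over $\Q_\ell$, deduce a uniform nilpotency bound, and observe that $\exp$ and $\log$ thereby reduce to fixed polynomials (hence continuous) which are mutually inverse by the standard formal identities. Your write-up is in fact a bit more explicit than the paper's on why the uniform bound holds and why the inverse identities transfer from $\Q[[X]]$, but the argument is the same.
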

\begin{proof}
    We note that $\rm{End}_{\Q_\ell}\left(\pi_X^*\F\right)$ is a finite $\Q_\ell$-algebra by Lemma~\ref{lemma:different-topologies}. Therefore, there is an integer $N$ such that, for any nilpotent $\varphi\in \rm{End}_{\Q_\ell}\left(\pi_X^*\F\right)$, we have $\varphi^N=0$. Therefore, one easily sees that both $\rm{log}$ and $\rm{exp}$ are polynomials in $\varphi$ and $\psi$ respectively, so they are continuous. \smallskip
    
    Using that all infinite sums in the definition of $\rm{log}$ and $\rm{exp}$ boil down to finite sums, one easily checks that $\exp\big(\log (1+\varphi)\big)=1+\varphi$ and $\log(\exp \psi)=\psi$ for any $\varphi, \psi \in \rm{End}_{\Q_\ell}\left(\pi_X^*\F\right)^{\rm{nil}}$. 
\end{proof}

\begin{lemma}\label{lemma:monodromy-operator} Let $K$ be a non-archimedean arithmetic field, $X$ a finite type $k$-scheme, $\F\in D^b_c(X\times_s \eta; \Q_\ell)$, and $I_1\subset I$ is an(y) open subgroup such that $\rho|_{I_1}$ is unipotent (it exists by Corollary~\ref{cor:quasi-unipotent}). Then there is a unique (independent of $I_1$) nilpotent morphism 
\[
N\colon \pi_X^*\F\to \pi_X^*\F(-1)
\]
in $D^b_c(X_{\ov{s}}; \Q_\ell)$ such that 
\[
\rho_g=\exp(Nt_\ell(g))
\]
for $g\in I_1$. 
\end{lemma}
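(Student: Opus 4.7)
We follow Grothendieck's classical construction, adapted to Deligne's topos. Since $\rho|_{I_1}$ is unipotent (Corollary~\ref{cor:quasi-unipotent}), the logarithm $\log \rho_g \in \rm{End}_{\Q_\ell}(\pi_X^*\F)^{\rm{nil}}$ is defined for every $g \in I_1$ by Lemma~\ref{lemma:continuous}. The plan is to show that $g \mapsto \log\rho_g$ is a continuous additive homomorphism $I_1 \to \rm{End}_{\Q_\ell}(\pi_X^*\F)$ that factors through $t_\ell$ and is $\Z_\ell$-linear in the source, hence is encoded by a single morphism $N\colon \pi_X^*\F \to \pi_X^*\F(-1)$ via the formula $\log \rho_g = N \cdot t_\ell(g)$, equivalently $\rho_g = \exp(N t_\ell(g))$.

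The crucial input is additivity, which rests on commutativity. For $g, h \in I_1$ we have $\ov{g} = \ov{h} = e$ in $G_s$, so the cocycle formula from Construction~\ref{construction:action-adic} reduces to $\rho_{gh} = \rho_g \circ \rho_h$. By Corollary~\ref{cor:p1-doesnot-act}, $\rho|_{I_1}$ kills $P_{1,\ell}$, hence factors through $I_1/P_{1,\ell}$, which embeds into the abelian group $\Z_\ell(1)$. Therefore $\rho_g$ and $\rho_h$ commute, and the standard identity $\log(AB) = \log A + \log B$ for commuting unipotent operators (an identity of formal power series that here reduces to a finite sum in $\rm{End}_{\Q_\ell}(\pi_X^*\F)$, cf.\ Lemma~\ref{lemma:continuous}) gives $\log \rho_{gh} = \log \rho_g + \log \rho_h$.

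The map $g \mapsto \log\rho_g$ is continuous by Lemma~\ref{lemma:continuous} and Corollary~\ref{cor:rho-continuous}, additive by the above, and vanishes on $P_{1,\ell}$, so it descends to a continuous homomorphism $\phi\colon t_\ell(I_1) \to \rm{End}_{\Q_\ell}(\pi_X^*\F)$. Since $t_\ell\colon I \to \Z_\ell(1)$ is an open quotient map and $I_1 \subset I$ is open, $t_\ell(I_1) = \ell^n \Z_\ell(1)$ for some $n \geq 0$. Any continuous homomorphism $\ell^n \Z_\ell(1) \to V$ to a finite-dimensional $\Q_\ell$-vector space $V$ is determined by its value at a topological generator, hence automatically $\Z_\ell$-linear, and extends uniquely along $\ell^n \Z_\ell(1) \hookrightarrow \Z_\ell(1)$ because $\ell^n$ is invertible in $V$. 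The extended map corresponds to an element
\[
N \in \rm{Hom}_{\Z_\ell}\!\bigl(\Z_\ell(1), \rm{End}_{\Q_\ell}(\pi_X^*\F)\bigr) \simeq \rm{Hom}_{D^b_c(X_{\ov{s}};\Q_\ell)}\!\bigl(\pi_X^*\F,\pi_X^*\F(-1)\bigr)
\]
satisfying $\log\rho_g = N\cdot t_\ell(g)$ for all $g \in I_1$, whence $\rho_g = \exp(N t_\ell(g))$. Nilpotency of $N$ (in the sense that the iterated composition $\pi_X^*\F \to \pi_X^*\F(-k)$ vanishes for $k \gg 0$) follows from the uniform nilpotency bound on $\rm{End}_{\Q_\ell}(\pi_X^*\F)^{\rm{nil}}$ coming from finite-dimensionality (Lemma~\ref{lemma:different-topologies}), applied to $\log \rho_g$ for any single $g \in I_1$ with $t_\ell(g) \neq 0$.

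Uniqueness and independence of $I_1$ are immediate: if $N'$ works for an open subgroup $I_1'$, then $(N - N')\cdot t_\ell(g) = 0$ in $\rm{End}_{\Q_\ell}(\pi_X^*\F)$ for every $g \in I_1 \cap I_1'$, and since $t_\ell(I_1 \cap I_1')$ spans $\Q_\ell(1)$ over $\Q_\ell$, we get $N = N'$. The main obstacle in the entire argument is packaged into Corollary~\ref{cor:p1-doesnot-act}: without triviality of $\rho|_{P_{1,\ell}}$ one loses both the commutativity of $\rho_g,\rho_h$ and the factorization through $t_\ell$. Once that corollary is available, the rest of the construction is essentially formal.
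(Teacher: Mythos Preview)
Your proof is correct and follows essentially the same Grothendieck-style construction as the paper. The only organizational difference is that the paper defines $N$ explicitly from a single element $u\in I_1$ with $t_\ell(u)=\ell^n$ via $N=\log(\rho_u)/\ell^n$ and then verifies $\rho_g=\exp(Nt_\ell(g))$ for all $g\in I_1$ by a density-plus-continuity argument, whereas you first establish additivity of $g\mapsto\log\rho_g$ directly (using commutativity of the image, via Corollary~\ref{cor:p1-doesnot-act}) and then read off $N$ as the resulting linear map; both routes use the same ingredients and are standard variants of the same argument.
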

\begin{proof}
    Firstly, we choose some compatible sequence $\zeta_{\ell^n}\in \ov{K}$ of $\ell$-power roots of unity. It both trivializes the Galois group $\Z_\ell(1)\cong \Z_\ell$ and $\pi^*_X\F(-1)\cong \pi^*_X\F$. So $t_\ell(I_1)\subset \Z_\ell$ is isomorphic to $\ell^n\Z_\ell$ for some integer $n$. We pick $u\in I_1$ such that $t_\ell(u)=\ell^m$ for $m\geq n$. \smallskip
    
    Now we note that uniqueness of $N$ is clear because the formula
    \[
    \rho_u=\exp(Nt_\ell(u))    
    \]
    implies that $N=\frac{\log(\rho_u)}{\ell^m}$. It is also independent of $I_1$ because for two choices $I_1$ and $I'_1$, we can find $u\in I_1\cap I'_1$ such that $t_\ell(u)=\ell^m$ for some large $m$. \smallskip
    
    Now we show existence. We pick $u\in I_1$ such that $t_\ell(u)=\ell^n\in \Z_\ell$. Firstly, we note that the formula above
    \[
    N=\frac{\log(\rho_u)}{\ell^n}
    \]
    is independent of a choice of (compatible) trivializations $\Z_\ell(1)\cong \Z_\ell$ and $\pi^*_X\F(-1) \cong \pi^*_X\F$, and so defines a homomorphism $N\colon \pi^*_X\F\to \pi^*_X\F(-1)$. Now we wish to show that
    \[
    \rho_g=\exp(Nt_\ell(g))
    \]
    for any $g\in I_1$. This formula clearly holds for $g=u^m$ for an integer $m$. Now Corollary~\ref{cor:rho-continuous} and Lemma~\ref{lemma:continuous} imply that  
    \[
    g\mapsto \rho_g\in \rm{Aut}_{\Q_\ell}\left(\pi_X^*\F\right), \text{ and}
    \]
    \[
    g\mapsto \exp\left(Nt_\ell\left(g\right)\right) \in \rm{Aut}_{\Q_\ell}\left(\pi_X^*\F\right)
    \]
    are two continuous homomorphisms $I_1 \to \rm{Aut}_{\Q_\ell}\left(\pi_X^*\F\right)$ that are trivial on $P_{1, \ell}$ and coincide on a dense subgroup 
    \[
    \ell^n\Z \subset I_1/P_{1,\ell}\simeq \ell^n\Z_\ell\subset \Z_\ell\simeq \Z_\ell(1).
    \]
    Therefore, they coincide everywhere. \smallskip
\end{proof}

\begin{rmk} We could have defined $N$ to be a nilpotent morphism $\pi_X^*\F \to \pi_X^*\F$, but then this operator would depend on a choice of a trivialization $\rm{Gal}(K_\ell/K_{\rm{nr}})\simeq \Z_\ell(1)\cong \Z_\ell$.
\end{rmk}

\begin{defn}\label{defn:monodromy-operator} The {\it monodromy operator} of $\F\in D^b_{c}(X\times_s \eta; \Q_\ell)$ is a nilpotent morphism 
\[
    N\colon \pi_X^*\F \to \pi^*_X\F(-1)
\]
constructed in Lemma~\ref{lemma:monodromy-operator}.
\end{defn}

\begin{rmk} In Section~\ref{section:log-pure}, we show that $N$ descends to a morphism $\F\to \F(-1)$ in $D^b_c(X\times_s \eta; \Q_\ell)$ for a perverse $\F$ (see Definition~\ref{defn:perverse}). It should be possible to show that $N\colon  \pi_X^*\F\to \pi^*_X\F(-1)$ descends to an operator $N\colon \F\to \F(-1)$ for any $\F\in D^b_c(X\times_s \eta; \Q_\ell)$. However, it seems a bit difficult to do it rigorously and we will never need this, so we do not discuss this generalization here. 
\end{rmk}

Our next goal is to show that the monodromy operator intervenes with the action of the Galois group via the cyclotomic character $\chi_\ell$. But before we do this, we need to recall how to raise to $\ell$-adic power in $\rm{End}_{\Q_\ell}(\pi_X^*\F)$.

\begin{rmk}\label{rmk:ell-adic-powers} For $g\in I_1$ and $x\in \Z_\ell$, it makes sense to talk about $\rho_g^x\in \rm{End}_{\Q_\ell}\left(\pi_X^*\F\right)$. Indeed, $\rho(I_1)$ with its subspace topology is a pro-$\ell$ group by Corollary~\ref{cor:p1-doesnot-act}. Therefore, the homomorphism
\[
\a_{g}\colon \Z\to \rho(I_1) \subset \rm{End}_{\Q_\ell}\left(\pi_X^*\F\right)
\]
\[
n\mapsto \rho(x)^{n}
\]
is continuous in the $\ell$-adic topology on $\Z$. So it uniquely extends to a continuous homomorphism
\[
\a_{g, \ell}:\Z_\ell \to \rho(I_1)\subset \rm{End}_{\Q_\ell}\left(\pi_X^*\F\right).
\]
We define $\rho(g)^{x}\coloneqq \a_{g, \ell}(x)$. 
\end{rmk}

\begin{cor}\label{cor:commutation} Let $X$ be a finite type $k$-scheme and let $\F\in \cal{D}^b_c(X\times_s \eta; \Q_\ell)$ with the monodromy operator $N\colon \pi_X^*\F\to \pi_X^*\F(-1)$. Then the following diagram
\[
\begin{tikzcd}
\ov{g}^*\pi_X^*\F\arrow{r}{\ov{g}^*\left(N\right)} \arrow{d}{\rho_g} & \ov{g}^*\pi_X^*\F(-1) \arrow{d}{\rho_{g} \otimes \chi^{-1}_\ell(g)} \\
\pi^*_X\F \arrow{r}{N} & \pi_X^*\F(-1)
\end{tikzcd}
\]
commutes for every $g\in G_\eta$.
\end{cor}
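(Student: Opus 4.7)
The plan is to deduce the commutation relation from the defining formula $\rho_h = \exp(N \cdot t_\ell(h))$ on an open normal subgroup $I_1 \subset I$ of $G_\eta$ on which $\rho$ is unipotent, combined with the identity $t_\ell(ghg^{-1}) = \chi_\ell(g)\, t_\ell(h)$ recalled before Remark~\ref{rmk:log-cont}.

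Using Remark~\ref{rmk:normal-subgroup}, I would first choose $I_1 \subset I$ open, normal in $G_\eta$, and such that $\rho|_{I_1}$ is unipotent, so that Lemma~\ref{lemma:monodromy-operator} gives $\rho_h = \exp(N\cdot t_\ell(h))$ for every $h \in I_1$. Fix $g \in G_\eta$. The cocycle relation $\rho_{xy} = \rho_x \circ \ov{x}^*(\rho_y)$, applied iteratively and using that $\ov{h}^* = \mathrm{id}$ for $h \in I$ together with $\ov{g}^*(\rho_{g^{-1}}) = \rho_g^{-1}$ (a consequence of $\rho_g \circ \ov{g}^*(\rho_{g^{-1}}) = \rho_e = \mathrm{id}$), produces the conjugation formula
\[
\rho_{ghg^{-1}} \;=\; \rho_g \circ \ov{g}^*(\rho_h) \circ \rho_g^{-1}
\]
as automorphisms of $\pi_X^*\F$, for every $h \in I_1$.

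Since $I_1$ is normal in $G_\eta$, both $h$ and $ghg^{-1}$ lie in $I_1$, so both sides can be expanded via Lemma~\ref{lemma:monodromy-operator}. The left-hand side becomes $\exp\bigl(\chi_\ell(g)\, N \cdot t_\ell(h)\bigr)$, while the right-hand side, after pulling $\ov{g}^*$ through the (finite) exponential series and conjugating by $\rho_g$, becomes $\exp\bigl(\rho_g \circ \ov{g}^*(N) \circ \rho_g^{-1} \cdot t_\ell(h)\bigr)$. Taking logarithms of both nilpotent exponents and specializing to an element $u \in I_1$ with $t_\ell(u) \neq 0$ in $\Z_\ell(1)$, one can cancel the factor $t_\ell(u)$ (which is invertible after inverting $\ell$) to obtain
\[
\chi_\ell(g)\, N \;=\; \rho_g \circ \ov{g}^*(N) \circ \rho_g^{-1}
\]
as morphisms $\pi_X^*\F \to \pi_X^*\F(-1)$. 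Rewriting this as $N \circ \rho_g = \chi_\ell(g)^{-1}\cdot \rho_g \circ \ov{g}^*(N)$ and reading the right vertical arrow $\rho_g \otimes \chi_\ell^{-1}(g)$ as the $g$-action on the Tate twist $\pi_X^*\F(-1)$ gives precisely the commutativity of the stated square.

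The main obstacle is purely bookkeeping around the Tate twist: since $N$ lands in $\pi_X^*\F(-1)$ rather than in $\pi_X^*\F$ itself, the expression ``$N \cdot t_\ell(h)$'' must consistently be interpreted as the composite of $N$ with the morphism $\pi_X^*\F(-1) \to \pi_X^*\F$ induced by $t_\ell(h) \in \Z_\ell(1)$, and likewise for $\ov{g}^*(N)$. Once this interpretation is fixed and one checks that the cocycle relation yields exactly the conjugation formula above (using in an essential way that $\ov{h} = 1$ for $h \in I$), the argument reduces to a direct manipulation of nilpotent power series and requires no choice of trivialization of $\Z_\ell(1)$.
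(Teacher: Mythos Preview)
Your proposal is correct and follows essentially the same route as the paper's proof: both derive the conjugation formula $\rho_{ghg^{-1}} = \rho_g \circ \ov{g}^*(\rho_h) \circ \rho_g^{-1}$ from the cocycle relation (the paper's Steps~1--2), then combine the defining formula $\rho_h = \exp(N\,t_\ell(h))$ with $t_\ell(ghg^{-1}) = \chi_\ell(g)\,t_\ell(h)$ and take logarithms to isolate $N$ (the paper's Steps~3--4). The only organizational difference is that the paper fixes a trivialization $\Z_\ell(1)\cong\Z_\ell$ and a single element $u$ from the outset, writing $N = \ell^{-n}\log\rho_u$ explicitly, and separates out the continuity argument needed to interpret $\rho_u^{\chi_\ell(g)}$; your version absorbs that continuity into the already-established formula from Lemma~\ref{lemma:monodromy-operator} and keeps the Tate twist untrivialized until the end.
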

\begin{proof}
    The result basically follows from the commutativity relations inside the Galois group $G_\eta$. We spell out a detailed proof for the convenience of the reader. \smallskip
    
    We use Remark~\ref{rmk:normal-subgroup} to get an open {\it normal} subgroup $I_1\subset G$ such that $\rho|_{I_1}$ is unipotent. We pick $u\in I_1$ and $\ell^n$ as in the proof of Lemma~\ref{lemma:monodromy-operator}. \smallskip
    
    {\it Step~$1$. $\rho_{g}^{-1}=\ov{g}^*(\rho_{g^{-1}})$}. Firstly, we note the the formula
    \[
    \rm{Id}=\rho_{1}=\rho_{gg^{-1}}=\rho_g\circ \ov{g}^{*}(\rho_{g^{-1}})
    \]
    implies that $\ov{g}^{*}(\rho_{g^{-1}})$ is a right inverse to $\rho_g$. Likewise, the formula
    \[
    \rm{Id}=\rho_{1}=\rho_{g^{-1}g}=\rho_{g^{-1}}\circ (\ov{g^{-1}})^*(\rho_g)
    \]
    implies that $\rm{Id}=\ov{g}^*(\rho_{g^{-1}})\circ \rho_g$, and so $\ov{g}^*(\rho_{g}^{-1})$ is also a left inverse to $\rho_g$. \smallskip
    
    {\it Step~$2$. $\rho_g\circ \ov{g}^*(\rho_u)\circ\rho_{g}^{-1}=\rho_{gug^{-1}}$}. It follows from the following sequence of equalities:
    \begin{align*}
        \rho_{gug^{-1}}=\rho_g\circ \ov{g}^{*}(\rho_{ug^{-1}}) =\rho_g\circ \ov{g}^*(\rho_u)\circ\ov{g}^*(\rho_{g^{-1}})=\rho_g\circ \ov{g}^*(\rho_u)\circ \rho_{g}^{-1},
    \end{align*}
    where the last equality uses Step~$1$. \smallskip
    
    {\it Step~$3$. $\chi_\ell(g)\log \rho_u = \log \rho_{gug^{-1}}$} Firstly, we note that $gug^{-1}\in I_1$ by normality of $I_1$, so $\log \rho_{gug^{-1}}$ makes sense.  We put $P_{1, \ell} \coloneqq  I_1\cap P_\ell = I_1\cap \ker(t_\ell)$, so Corollary~\ref{cor:p1-doesnot-act} ensures that $\rho|_{I_1}$ factors through $t_\ell$, so we denote by 
    \[
    \ov{\rho}\colon I_1/P_{1, \ell} = t_\ell(I_1) \to \rm{Aut}_{\Q_\ell}(\pi_X^*\F)
    \]
    the unique continuous homomorphism such that $\rho_h=\ov{\rho}\circ t_\ell(h)$ for any $h\in I_1$. Then
    \[
    \rho_{gug^{-1}}=\ov{\rho}(t_\ell(gug^{-1}))=\ov{\rho}(\chi_\ell(g)t_\ell(u))=\ov{\rho}(t_\ell(u))^{\chi_\ell(g)}=\rho(u)^{\chi_\ell(g)},
    \]
    where the third equality uses continuity of $\ov{\rho}$ (that, in turn, comes from continuity of $\rho$ established in Corollary~\ref{cor:rho-continuous}). This formally implies that
    \[
    \log \rho_{gug^{-1}}=\log \rho_u^{\chi_\ell(g)}=\chi_\ell(g)\log \rho_u, 
    \]
    where the last equality comes from the continuity of logarithm (see Lemma~\ref{lemma:continuous}).\smallskip
    
    {\it Step~$4$. $\chi_\ell(g)N=\rho_g\circ \ov{g}^*N\circ \rho_g^{-1}$} The claim follows from a sequence of equalities:
    \begin{align*}
    \rho_g\circ \ov{g}^*N\circ \rho_g^{-1} & = \rho_g\circ \ov{g}^*\left(\frac{\log \rho_u}{\ell^n}\right) \circ \rho_g^{-1} \\
    & = \frac{1}{\ell^n}\rho_{g}\circ \log\left(\ov{g}^*(\rho_u)\right)\circ \rho_g^{-1} \\
    & = \frac{1}{\ell^n}\log\left(\rho_{g}\circ \ov{g}^*(\rho_u)\circ \rho_g^{-1}\right) \\
    & =\frac{1}{\ell^n}\log(\rho_{gug^{-1}})\\
    & =\frac{1}{\ell^n}\chi_\ell(g)\log\rho_u\\
    & =\chi_\ell(g)N.
    \end{align*}
    The first equality holds by the construction of $N$. The second and third equalities are trivial. The fourth equality follows from Step~$2$. The fifth equality follows from Step~$3$. And the last equality follows from the construction of $N$. Therefore, we get that
    \[
    N \circ \rho_g= \chi^{-1}_\ell(g) \rho_g \circ  \ov{g}^*( N)  = (\rho_g \otimes \chi_\ell^{-1}(g))\circ \ov{g}^*N.  \qedhere
    \]
\end{proof}

\subsection{Weight filtration}

For the rest of the section, we fix a non-archimedean arithmetic field $K$ (unless it is specified otherwise) of residue characteristic $p>0$ and a prime number $\ell\neq p$. \smallskip

The main goal of this section is to construct the weight filtration on any mixed perverse sheaf $\F$ on $X\times_s \eta$. Firstly, we recall the definition of perverse sheaves on Deligne's topos.

\begin{defn}\label{defn:perverse} An object $\F\in D^b_c(X\times_s\eta; \Q_\ell)$ is {\it perverse} if $\F$ lies in the heart of the perverse $t$-structure constructed in Lemma~\ref{lemma:perverse-t-structure-rationally}. We denote by $\rm{Perv}(X\times_s \eta; \Q_\ell)$ the (abelian) category of perverse objects in $D^b_c(X\times_s\eta;\Q_\ell)$.
\end{defn}

\begin{rmk} Alternatively, an object $\F\in D^b_c(X\times_s \eta; \Q_\ell)$ is perverse if and only if $\pi^*_X\F\in D^b_c(X_{\ov{s}}; \Q_\ell)$ is perverse.  
\end{rmk}

In order to construct the weight filtration, it will be convenient to descend the mondoromy operator $N\colon \pi^*_X\F\to \pi^*_X\F(-1)$ to a morphism $N\colon \F \to \F(-1)$ for a perverse sheaf $\F$. \smallskip

We start the section by explaining this descend argument. For any  $\F\in \rm{Perv}(X\times_s\eta; \Q_\ell)$, Lemma~\ref{lemma:monodromy-operator} provides us with a canonical nilpotent operator
\[
N\colon \pi^*_X\F \to \pi^*_X\F(-1).
\]
Now \cite[Lemma III.4.3]{KW} ensures that 
\[
\rm{RHom}_{\Q_\ell}(\pi^*_X\F, \pi^*_X\F(-1)) \in D^{\geq 0}(\Q_\ell),
\]
so Lemma~\ref{lemma:hom-formula-rational} implies that
\[
\rm{Hom}_{\Q_\ell}(\F, \F(-1)) = \rm{Hom}_{\Q_\ell}(\pi^*_X\F, \pi^*_X\F(-1))^{G_\eta}.
\]
Therefore, Corollary~\ref{cor:commutation} implies that the monodromy operator $N$ is $G_\eta$-invariant, and so it descends to a nilpotent morphism
\[
N\colon \F \to \F(-1)
\]
in the abelian category $\rm{Perv}(X\times_s \eta; \Q_\ell)$. We will often abuse the notation and denote two versions of $N$ by the same letter. \smallskip

\begin{defn}\label{defn:monodromy-operator-perverse} A {\it monodromy operator} of $\F\in \rm{Perv}(X\times_s \eta; \Q_\ell)$ is a nilpotent morphism 
\[
    N\colon \F \to \F(-1)
\]
constructed above.
\end{defn}

In order to construct the weight filtration, we follow the strategy of \cite[Th\'eorem\`e 5.3.5]{BBD}. The crucial missing ingredients is purity of all mixed simple perverse sheaves on $X\times_s \eta$, and vanishing of certain Ext groups between pure objects. We prove both results in this section. 
 
\begin{lemma}\label{lemma:simple-local-systems-pure} Let $X$ be an irreducible smooth finite type $k$-scheme and let $\F$ be a simple $\Q_\ell$-local system on $X\times_s \eta$ (see Definition~\ref{defn:lisse-objects}). Then $\F$ is pure. 
\end{lemma}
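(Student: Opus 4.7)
The plan is to reduce to the classical theorem of Deligne that every simple lisse $\Q_\ell$-sheaf on a smooth irreducible variety over a finite field is pure. The main obstacle is that simplicity of $\F$ on the Deligne topos $X\times_s\eta$ does not immediately force simplicity of $\sigma_X^*\F$ on $X$: the inertia action carries extra structure which could \emph{a priori} break irreducibility once a section $\sigma$ is fixed. I circumvent this by arranging, after a finite arithmetic extension of $K$, that inertia acts trivially on the underlying $\Q_\ell$-vector space of $\F$.

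First, I would use Corollary~\ref{cor:quasi-unipotent} together with Remark~\ref{rmk:normal-subgroup} to pick an open normal subgroup $G_1\subset G_\eta$ such that $I_1:=G_1\cap I$ acts unipotently on $V:=\pi_X^*\F$. Let $K'\subset\ov{K}$ be the fixed field of $G_1$; then $K'/K$ is a finite Galois extension, and $K'$ is still arithmetic by Lemma~\ref{lemma:finite-extensions}. Lemma~\ref{lemma:mixed-after-base-change} reduces purity of $\F$ to purity of its pullback $\F'$ to $X_{k'}\times_{s'}\eta'$, and the corresponding inclusion of arithmetic fundamental groups $\Gamma':=\pi_1(X_{k'})\times_{G_{s'}}G_{\eta'}\subset \Gamma:=\pi_1(X)\times_{G_s}G_\eta$ is normal of finite index with quotient $\Gal(K'/K)$, using that both $\pi_1(X_{k'})\subset \pi_1(X)$ and $G_{\eta'}\subset G_\eta$ are normal. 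By Clifford's theorem, $\F'$ is then semisimple as a $\Gamma'$-representation, and each simple summand inherits a unipotent $I_{K'}=I_1$-action. Replacing $\F$ by such a summand, I may assume that $\F$ is simple on $X\times_s\eta$ and that all of $I=I_K$ acts unipotently on $V$.

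Next, I would observe that $I\subset\Gamma$ is normal, because $I$ is normal in $G_\eta$ and $\Gamma$ is the pullback $\pi_1(X)\times_{G_s}G_\eta$; hence $V^I\subset V$ is a $\Gamma$-subrepresentation. Unipotence of the $I$-action implies $V^I\neq 0$, and simplicity of $\F$ then forces $V^I=V$. Consequently $I$ acts trivially on $V$, so the $\Gamma$-action factors through $\Gamma/I\cong \pi_1(X)$, and for any continuous section $\sigma\colon G_s\to G_\eta$, the pullback $\sigma_X^*\F$ is a simple lisse $\Q_\ell$-sheaf on the smooth irreducible $\mathbf{F}_q$-variety $X$. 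Applying the classical theorem of Deligne (see \cite{Weil2}, and the exposition in \cite{KW}) that any such sheaf is pure, we conclude that $\sigma_X^*\F$ is pure, and hence that $\F$ is pure by Definition~\ref{defn:pure-mixed-Deligne}.

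The hardest part of this argument is the initial reduction: one must verify that $\Gamma'\subset\Gamma$ is in fact normal of finite index with quotient $\Gal(K'/K)$, and that Clifford's theorem applies cleanly so that $\F'$ decomposes as a semisimple $\Gamma'$-representation whose summands inherit the simpleness-plus-unipotent-inertia hypotheses. Once this bookkeeping is in place, the remainder of the argument is formal, modulo the deep classical purity theorem cited in the last step.
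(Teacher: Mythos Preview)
Your proposal is correct and follows the same broad outline as the paper (pass to a finite extension, apply a Clifford-type decomposition, kill the inertia action, descend to $X$, and invoke Deligne), but the crucial technical step is handled differently.

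The paper argues as follows: since the monodromy operator $N\colon \F\to\F(-1)$ has already been descended to $\mathrm{Perv}(X\times_s\eta;\Q_\ell)$ (Definition~\ref{defn:monodromy-operator-perverse}), simplicity of $\F$ forces $\ker N=\F$, i.e.\ $N=0$; then Lemma~\ref{lemma:monodromy-operator} shows that $I$ acts through a finite quotient $I/I_1$, and one passes to a totally ramified $K'$ to kill this finite action. You instead work directly with the profinite group $\Gamma=\pi_1(X)\times_{G_s}G_\eta$: after arranging that all of $I$ acts unipotently, normality of $I$ in $\Gamma$ makes $V^I$ a $\Gamma$-subrepresentation, and unipotence gives $V^I\neq 0$, so simplicity forces $V^I=V$.

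Your route is more elementary in that it completely avoids the monodromy operator machinery of \S2.5 and the descent argument opening \S2.6. On the other hand, it rests on the identification of $\Q_\ell$-local systems on $X\times_s\eta$ with continuous $\Gamma$-representations, which the paper never states; you should justify this (and note it needs $X$ geometrically connected, or reduce to that case). Two further points deserve a sentence each: the claim $V^I\neq 0$ is not automatic from ``each $\rho_i$ is unipotent'' for an arbitrary group --- you need either Kolchin's theorem on unipotent groups or the factorization of the $I$-action through $\Z_\ell$ given by Corollary~\ref{cor:p1-doesnot-act}; and the phrase ``replacing $\F$ by such a summand'' hides the fact that Clifford's conjugacy clause is what guarantees all summands are pure of the \emph{same} weight, so that purity of one gives purity of $\F'$ and hence of $\F$. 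The paper makes this last point explicit in its Galois-descent argument.
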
 
\begin{proof}
    {\it Step~$1$. Reduce to the case of a trivial inertia action on $\pi^*_X\F$.} Consider a nilpotent monodromy operator $N\colon \F \to \F(-1)$. Since $\F$ is simple, we conclude that $\F=\ker N$, so $N=0$. Therefore, Lemma~\ref{lemma:monodromy-operator} ensures that the inertia group $I$ acts on $\pi^*_X\F$ via a finite subquotient $I/I_1$. Therefore, there is a finite totally ramified Galois extension $K\subset K'$ such that $I_{K'}$ acts trivially on $\pi^*_X\F$. Let us denote by 
    \[
    b\colon X\times_s \eta' \to X\times_s \eta
    \]
    the natural morphism of Deligne's topoi. Corollary~\ref{cor:pure-independent} ensures that $\F$ is pure if and only if $b^*\F$ is pure. Moreover, $I_{K'}$ acts trivially on $\pi^*_X b^*\F$ by construction. Thus we would like to say it suffices to show the claim for $b^*\F$. However, $b^*\F$ may not be simple anymore, so we cannot simply replace $\F$ with $b^*\F$ for the purpose of proving that $\F$ is pure. To make this reduction work, we use a version of the Galois descent. \smallskip
    
    First, we note that $\eta'$ is a slice topos $\eta_{/h_{\Spec K'}}$ for an effective epimorphism $h_{\Spec K'} \to *$. Therefore, \cite[Exp. IV, Proposition 5.11]{SGA4_2} implies that $X\times_s \eta'$ is a slice topos $(X\times_{s}\eta)_{/q^*(h_{\Spec K'})}$, where $q\colon X\times_s \eta \to \eta$ is the natural projection. In particular, $q^*(h_{\Spec K'}) \to *$ is an effective epimorphism as a pullback of an effective epimorphism. \smallskip
    
    Now we use that $q^*$ commutes with finite limits and colimits to deduce that 
    \begin{align*}
    q^*(h_{\Spec K'})\times_{*} q^*(h_{\Spec K'}) &\simeq q^*(h_{\Spec K'})\times_{q^*(*)} q^*(h_{\Spec K'}) \\
    & \simeq q^*(h_{\Spec K'}\times_* h_{\Spec K'}) \\
    & \simeq q^*(\sqcup_{g\in G_{K'/K}} h_{\Spec K'}) \\
    &\simeq \sqcup_{g\in G_{K'/K}} q^*(h_{\Spec K'}).
    \end{align*}
    So $q^*(h_{\Spec K'})\to *$ is a Galois covering with the Galois group $G_{K'/K}$. In particular, for each $g\in G_{K'/K}$, there is an equivalence
    \[
    c_g^*\colon D^b_c(X\times_s \eta'; \Q_\ell) \to D^b_c(X\times_s \eta'; \Q_\ell)
    \]
    satisfying the cocycle condition. By Galois descent\footnote{Galois descent for $\Z/\ell^n\Z$-local system is obvious. The case of $\Z_\ell$-local systems follows by taking a limit. To get descent for $\Q_\ell$-local systems, one uses Lemma~\ref{lemma:existence-of-a-lattice}.}, we can identify $\Q_\ell$-local systems on $X\times_s \eta$ with $\Q_\ell$-local systems on $X\times_s \eta'$ equipped with a family of isomorphisms
    \[
    \psi_g\colon c_g^*\F\to \F
    \]
    satisfying the cocycle condition. \smallskip
    
    Now suppose that $\G\subset b^*\F$ is a simple non-zero sub-local system. Denote by $\{\G_i\}_{i\in I}$ the set of isomorphism classes of $\Q_\ell$-local systems $c_g^*\G$ for all $g\in G_{K'/K}$. Since $b^*\F$ is defined over $X\times_s \eta$, we see that each $\G_i$ is also a sub-local system of $b^*\F$. Since $\G$ is simple (and so all $\G_i$ are simple), we conclude that there is an inclusion
    \[
    \varphi\colon \bigoplus_{i\in I} \G_i \to b^*\F.
    \]
    By construction, this inclusion is $G_{K'/K}$-stable, thus it defines a non-zero inclustion
    \[
    \cal{H} \to \F
    \]
    of $\Q_\ell$-local systems on $X\times_s \eta$. Since $\F$ is simple, we conclude that $\cal{H}\to \F$ is an isomorphism. Therefore, $\varphi$ is isomorphism as well. Now we use that, for each $i\in I$, there is $g\in G_{K'/K}$ such that $c_g^*\G \simeq \G_i$, thus $\G$ is pure of weight $w$ if and only if $\G_i$ is pure of weight $w$ for each $i\in I$. Therefore, $b^*\F$ is pure of weight $w$ if and only if $\G$ is pure of weight $w$. So  Corollary~\ref{lemma:mixed-after-base-change} ensures that $\F$ is pure of weight $w$ if and only if $\G$ is pure of weight $w$. Thus, we can replace $K$ with $K'$ and $\F$ with $\G$ for the purpose of proving that $\F$ is pure. \smallskip
    
    {\it Step~$2$. Finish the proof.} Since the inertia group $I$ acts trivially on $\F$, Corollary~\ref{cor:trivial-action-descend-rationally} guarantees that $\F$ descends to a $\Q_\ell$-local system on $X$. More precisely, there is a $\Q_\ell$-local system $\G$ on $X$ such that $p_X^*\G\simeq \F$, where $p_X\colon X\times_s \eta\to X_{\et}$ is the projection morphism. Since $p_X$ is conservative, we conclude that $\G$ must be a simple $\Q_\ell$-local system on $X$. In particular, for every continuous section $\sigma\colon G_s\to G_\eta$ of the projection $r\colon G_\eta \to G_s$, we see that the $\Q_\ell$-local system $\sigma_X^*\F\simeq \sigma^*_Xp^*_X\G \simeq \G$ is simple and mixed. Thus it is pure by \cite[Th\'eorem\`e (3.4.1)(ii) and Variante (3.4.9)]{Weil2}.   
\end{proof}
 
\begin{lemma}\label{lemma:simple-perverse-sheaves-pure} Let $X$ be a finite type $k$-scheme and let $\F\in \rm{Perv}(X\times_s \eta; \Q_\ell)$ be a mixed simple perverse sheaf. Then $\F$ is pure.
\end{lemma}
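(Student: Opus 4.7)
The strategy is to reduce purity of $\F$ to Lemma~\ref{lemma:simple-local-systems-pure} by realizing $\F$ as the intermediate extension of a simple $\Q_\ell$-local system on a smooth dense open of its support, then invoking the fact that $j_{!*}$ preserves purity.

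First I would establish a structure theorem for simple perverse sheaves on $X\times_s\eta$, in parallel with \cite[Th\'eor\`eme 4.3.1]{BBD}: if $\F$ is simple perverse, then its support $Z\subset X$ is irreducible (the standard recollement argument goes through since the functors $(j\times_s\eta)_!$, $(j\times_s\eta)_*$, and $(i\times_s\eta)_*$ are available with the expected exactness properties), there is a dense smooth open $j\colon U\hookrightarrow Z$ of some dimension $d$ on which $\F|_{U\times_s\eta}\simeq \mathcal{L}[d]$ is a shifted $\Q_\ell$-local system $\mathcal{L}$, and $\F\simeq (j\times_s\eta)_{!*}(\mathcal{L}[d])$ with $\mathcal{L}$ simple. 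The existence of a smooth open on which $\F$ restricts to a shifted lisse sheaf can be deduced from the corresponding property on $X_{\ov{s}}$ via $t$-exactness and conservativity of $\pi_X^*$.

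Next I would transfer the weight bounds: since $\F$ is mixed and restriction $(j\times_s\eta)^*$ preserves mixedness of a given range (checkable after $\sigma_X^*$ using Proposition~\ref{prop:mixed-independent} and Corollary~\ref{cor:pure-independent}), the local system $\mathcal{L}$ is mixed. Simplicity of $\mathcal{L}$ together with Lemma~\ref{lemma:simple-local-systems-pure} then yields that $\mathcal{L}$ is pure of some weight $w$, so $\mathcal{L}[d]$ is pure of weight $w+d$. Finally, I would invoke that $(j\times_s\eta)_{!*}$ carries pure perverse sheaves to pure perverse sheaves: after applying any continuous section pullback $\sigma_X^*$, this reduces to \cite[Corollaire 5.3.2]{BBD}, since $\sigma_X^*$ is $t$-exact, conservative, and commutes with $(j\times_s\eta)_!$ and $(j\times_s\eta)_*$ (hence with their image $(j\times_s\eta)_{!*}$). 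Combined with Corollary~\ref{cor:pure-independent}, this gives that $\F$ is pure of weight $w+d$.

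The main obstacle is verifying that BBD's structural results — the classification of simple perverse sheaves as intermediate extensions and the preservation of purity under $(j\times_s\eta)_{!*}$ — port cleanly to Deligne's topos. Given the six-functor formalism, the $t$-exact conservative pullback $\pi_X^*$, and the theory of continuous sections $\sigma$ developed earlier in this section, both amount to formal reductions to the analogous statements on $X$ proved in \cite{BBD}. The genuinely nontrivial weight-theoretic input has already been isolated in Lemma~\ref{lemma:simple-local-systems-pure}.
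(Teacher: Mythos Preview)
Your proposal is correct and matches the paper's proof essentially line for line. The structure theorem you plan to establish is exactly Lemma~\ref{lemma:classification-simple-perverse-objects}, and the compatibility of $(j\times_s\eta)_{!*}$ with $\sigma_X^*$ that you invoke is Lemma~\ref{lemma:extension-pullback}(\ref{lemma:extension-pullback-2}); with these in hand the paper's proof is precisely: apply the classification, then Lemma~\ref{lemma:simple-local-systems-pure}, then \cite[Corollaire 5.3.2]{BBD} via $\sigma_X^*$.
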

\begin{proof}
   By Lemma~\ref{lemma:classification-simple-perverse-objects}, there is a locally closed subscheme $U\subset X$ and a simple $\Q_\ell$-local system $\G$ on $U$ such that $U_{\rm{red}}$ is smooth and $\F\simeq (j\times_s \eta)_{!*}(\G[\dim U])$. Since $(j\times_s \eta)_{!*}$ preserves pure perverse sheaves by \cite[Corollaire 5.3.2]{BBD} and Lemma~\ref{lemma:extension-pullback}(\ref{lemma:extension-pullback-2}), it suffices to show that $\G[\dim U]$ is pure. This follows from Lemma~\ref{lemma:simple-local-systems-pure}.  
\end{proof}

Now we discuss the vanishing result for Ext groups. We start with a preliminary lemma on vanishing of Galois cohomology groups.

\begin{lemma}\label{lemma:no-H-1} Let $V$ be a continuous finite dimensional $\Q_\ell$-representation of $G_\eta$. Suppose that the inertia group $I$ acts trivially on $V$, and the eigenvalues of the action of the geometric Frobenius $F\in G_s$ are $q$-Weil number of strictly positive weights. Then $\rm{H}^0_{\rm{cont}}(G_\eta, V)=0$ and $\rm{H}^1_{\rm{cont}}(G_\eta, V)=0$. 
\end{lemma}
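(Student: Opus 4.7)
The plan is to exploit the hypothesis that $I$ acts trivially on $V$ in order to reduce both cohomology computations to statements about the action of a single topological generator, the geometric Frobenius $F\in G_s$. Since $I$ acts trivially, the representation factors through the quotient $G_\eta/I\simeq G_s\simeq \wdh{\Z}$, and the key algebraic input is that $F-1$ acts invertibly on $V$: indeed, the eigenvalues of $F$ on $V$ are $q$-Weil numbers of strictly positive weight, so none of them equals $1$.

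The vanishing of $\rm{H}^0$ is then immediate, since $\rm{H}^0_{\rm{cont}}(G_\eta, V)=V^{G_\eta}=V^F=\ker(F-1)=0$.

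For $\rm{H}^1$ I would apply the inflation-restriction sequence associated to $1\to I\to G_\eta\to G_s\to 1$,
\[
0\to \rm{H}^1_{\rm{cont}}(G_s, V^I)\to \rm{H}^1_{\rm{cont}}(G_\eta, V)\to \rm{H}^1_{\rm{cont}}(I, V)^{G_s},
\]
and show that both outer terms vanish. Since $V^I=V$ and $G_s\simeq\wdh{\Z}$ is topologically generated by $F$, the first group is computed as the coinvariants $V/(F-1)V$, which is zero because $F-1$ is invertible. For the third group, the trivial $I$-action gives $\rm{H}^1_{\rm{cont}}(I, V)=\rm{Hom}_{\rm{cont}}(I, V)$, and since $V$ is a $\Q_\ell$-vector space while $P$ is pro-$p$ and $\prod_{p'\neq p,\ell}\Z_{p'}(1)$ is pro-(prime-to-$\ell$), any such continuous homomorphism factors through $t_\ell\colon I\to \Z_\ell(1)$ (using Lemma~\ref{lemma:no-homs} to justify the vanishing on the pro-(prime-to-$\ell$) parts). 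This yields a $G_s$-equivariant identification
\[
\rm{H}^1_{\rm{cont}}(I, V)\simeq \rm{Hom}_{\rm{cont}}(\Z_\ell(1), V)\simeq V(-1),
\]
where the twist by $(-1)$ comes from the conjugation formula $t_\ell(ghg^{-1})=\chi_\ell(g)t_\ell(h)$ recalled in the excerpt. Taking $G_s$-invariants gives $V(-1)^F$, which vanishes since the eigenvalues of $F$ on $V(-1)$ have weights $w+2>0$ and in particular are not equal to $1$.

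There is no serious obstacle here; the only slightly delicate points are the $G_s$-equivariant identification of $\rm{H}^1_{\rm{cont}}(I, V)$ with $V(-1)$ (which requires checking the cyclotomic twist in the isomorphism $\rm{Hom}_{\rm{cont}}(I,V)\simeq \rm{Hom}_{\rm{cont}}(\Z_\ell(1),V)$ via $t_\ell$), and the verification that continuity and the $\Q_\ell$-linear structure force a homomorphism from a pro-(prime-to-$\ell$) group to $V$ to be trivial, both of which follow from the structure theory of $I$ recalled at the start of this section together with Lemma~\ref{lemma:no-homs}.
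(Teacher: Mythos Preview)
Your proposal is correct and follows essentially the same route as the paper: both use the low-degree exact sequence from Hochschild--Serre (which you call inflation--restriction) for $I\lhd G_\eta$, identify $\rm{H}^1_{\rm{cont}}(G_s,V)$ with Frobenius coinvariants, and compute $\rm{H}^1_{\rm{cont}}(I,V)\simeq V(-1)$ via the quotient $t_\ell\colon I\to\Z_\ell(1)$. One small quibble: Lemma~\ref{lemma:no-homs} as stated concerns finite $\ell^\infty$-torsion targets, not $\Q_\ell$-vector spaces, so to invoke it you should first note that a continuous homomorphism from a compact group lands in some $\Z_\ell$-lattice and then reduce modulo $\ell$; the paper instead appeals to the standard fact that pro-(prime-to-$\ell$) groups have trivial higher continuous cohomology with $\Q_\ell$-coefficients, which is the same phenomenon phrased cohomologically.
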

\begin{proof}
    The $\rm{H}^0$-claim is clear because 
    \[
    \rm{H}^0_{\rm{cont}}(G_\eta, V)= V^{G_\eta}=V^{G_s}=0
    \]
    because all eigenvalues of $F$ have strictly positive weights. \smallskip
    
    Now we discuss the $\rm{H}^1$-claim. The Hochschild--Serre spectral sequence for the closed normal subgroup $I\subset G_\eta$ gives us the spectral sequence
    \[
    \rm{E}^{p, q}_2=\rm{H}^p_{\rm{cont}}(G_s, \rm{H}^q_{\rm{cont}}(I, V)) \Longrightarrow \rm{H}^{p+q}_{\rm{cont}}(G_\eta, V).
    \]
    So we have the following short exact sequence
    \[
    0 \to \rm{H}^1_{\rm{cont}}(G_s, V^I) \to \rm{H}^1_{\rm{cont}}(G_\eta, V) \to \rm{H}^1_{\rm{cont}}(I, V)^{G_s}. 
    \]
    We use the isomorphism $G_s\simeq \wdh{\Z}$ to identify the first cohomology group $\rm{H}^1_{\rm{cont}}(G_s, V^I)$ with the Frobenius coinvariants $(V^I)_{F}$. Since the geometric Frobenius acts with positive weights on $V$, the same holds for the action of Frobenius on $V^I$. Therefore, we conclude that $(V^I)_{F}=0$. \smallskip
    
    We are only left to show that $\rm{H}^1_{\rm{cont}}(I, M)^{G_s}=0$. Note that  $I$ fits into a short exact sequence
    \[
    0 \to P_{\ell} \to I \xr{t_\ell} \Z_\ell(1) \to 0,
    \]
    where $P_{\ell}$ is pro-(prime-to-$\ell$)-group. We use the Hoschield-Serre spectral sequence and the fact that pro-(prime-to-$\ell$) groups have trivial higher continuous cohomology with coefficients in finite $\Q_\ell$-vector spaces to conclude that
    \[
    \rm{H}^1_{\rm{cont}}(I, V) = \rm{H}^1_{\rm{cont}}(\Z_\ell(1), V).
    \]
    Now the action of $I$ is trivial on $V$ by assumption. Therefore, the same holds for the $\Z_\ell(1)$-action, and so  
    \[
    \rm{H}^1_{\rm{cont}}(I, V) = \rm{H}^1_{\rm{cont}}(\Z_\ell(1), V) = \rm{Hom}_{\rm{cont}}(\Z_\ell(1), V)=V(-1).
    \]
    Therefore, since $\Q_\ell(-1)$ is pure of weight $2$, the weights of Frobenius action on $V(-1)$ are still strictly positive. Thus $V(-1)^{G_s}=0$ finishing the proof.
\end{proof}

\begin{lemma}\label{lemma:no-homs-pure} Let $X$ be a finite type $k$-scheme, let $\F\in \rm{Perv}(X\times_s \eta; \Q_\ell)$ be pure of weight $w$, and let $\G\in \rm{Perv}(X\times_s \eta; \Q_\ell)$ be pure of weight $w'$. Suppose $w< w'$, then $\rm{Hom}_{\Q_\ell}(\F, \G)=0$ and $\rm{Ext}^1_{\Q_\ell}(\F, \G)=0$.
\end{lemma}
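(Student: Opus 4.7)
My approach is to reduce both vanishing statements to Frobenius-weight arguments via a Hochschild--Serre-type spectral sequence on the Deligne topos. The starting point is (the derived enhancement of) Lemma~\ref{lemma:hom-formula-rational}, which provides a natural equivalence
\[
R\rm{Hom}_{X\times_s\eta}(\F,\G) \simeq R\Gamma_{\rm{cont}}\bigl(G_\eta,\, R\rm{Hom}_{X_{\ov s}}(\pi^*_X\F, \pi^*_X\G)\bigr),
\]
and hence a spectral sequence
\[
E_2^{p,q} = \rm{H}^p_{\rm{cont}}(G_\eta, V_q) \Longrightarrow \rm{Ext}^{p+q}_{X\times_s\eta}(\F, \G), \qquad V_q := \rm{Ext}^q_{\Q_\ell}(\pi^*_X\F, \pi^*_X\G).
\]
It therefore suffices to show $V_q^{G_\eta} = 0$ for $q \in \{0, 1\}$ together with $\rm{H}^1_{\rm{cont}}(G_\eta, V_0) = 0$.

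For the weight input, I would pick a continuous section $\sigma\colon G_s \to G_\eta$, which exists because $G_s \cong \wdh{\Z}$ is profinite. Via $\sigma$, smooth base change along $\Spec \ov k \to \Spec k$ identifies $V_q$, as a $\sigma(G_s)$-module, with the pullback to $\Spec \ov k$ of the Weil module $\rm{Ext}^q_{X_s}(\sigma_X^*\F, \sigma_X^*\G)$. By Corollary~\ref{cor:pure-independent}, $\sigma_X^*\F$ and $\sigma_X^*\G$ are pure perverse of weights $w < w'$ on $X_s$, so the weight bounds of \cite[5.1.14--5.1.15]{BBD} force this Weil $\Q_\ell$-module to have all Frobenius weights $\geq w' - w > 0$. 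Consequently every Frobenius eigenvalue on $V_q$ has absolute value strictly greater than $1$, so $V_q^{\sigma(G_s)} = V_q^F = 0$, and then $V_q^{G_\eta} \subseteq V_q^{\sigma(G_s)} = 0$. This yields $\rm{Hom}(\F, \G) = 0$ and supplies the vanishing of $E_2^{0,1}$.

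The remaining statement $\rm{H}^1_{\rm{cont}}(G_\eta, V_0) = 0$ is the main obstacle, since Lemma~\ref{lemma:no-H-1} is stated under a \emph{trivial} inertia assumption, whereas Corollary~\ref{cor:quasi-unipotent} only supplies a quasi-unipotent action of $I$ on $V_0$. I would circumvent this by running Hochschild--Serre for $1 \to I \to G_\eta \to G_s \to 1$ directly, which gives
\[
0 \to \rm{H}^1(G_s, V_0^I) \to \rm{H}^1_{\rm{cont}}(G_\eta, V_0) \to \rm{H}^1(I, V_0)^{G_s}.
\]
The subspace $V_0^I \subseteq V_0$ inherits the positive Frobenius weights, so $\rm{H}^1(G_s, V_0^I) = (V_0^I)_F = 0$ because $F - 1$ acts invertibly on positive-weight modules. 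For the third term, the wild inertia $P$ and the prime-to-$\ell$ tame part contribute trivially with $\Q_\ell$-coefficients (exactly as in the proof of Lemma~\ref{lemma:no-H-1}), reducing the computation to the $\Z_\ell(1)$-quotient of $I$ acting through the nilpotent monodromy $N_{V_0}$; this yields $\rm{H}^1(I, V_0) \cong \rm{coker}(N_{V_0}|_{V_0^{P_\ell}})(-1)$, a Frobenius-module of weights $\geq w' - w + 2 > 0$ whose $G_s$-invariants are zero. This completes the proof of $\rm{Ext}^1(\F, \G) = 0$.
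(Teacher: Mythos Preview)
Your spectral-sequence setup and the $\rm{Hom}$-vanishing argument are correct and match the paper's Step~1 closely. The genuine divergence is in the $\rm{Ext}^1$ part, and your route is valid but the displayed formula for $\rm{H}^1(I,V_0)$ is imprecise. What the Hochschild--Serre argument for $1\to P_\ell\to I\to\Z_\ell(1)\to 1$ actually yields is
\[
\rm{H}^1(I,V_0)\;\cong\;\bigl(V_0^{P_\ell}\bigr)_T(-1),
\]
the $(T-1)$-coinvariants of $V_0^{P_\ell}$ with a Tate twist, where $T$ is a topological generator of $\Z_\ell(1)$. Identifying this with $\rm{coker}(N_{V_0}|_{V_0^{P_\ell}})(-1)$ would require the $\Z_\ell(1)$-action on $V_0^{P_\ell}$ to be unipotent, which quasi-unipotence alone does not give: only an open subgroup acts unipotently, so $T$ may still have nontrivial finite-order part. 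Fortunately your weight conclusion survives this correction, since $(V_0^{P_\ell})_T$ is a genuine Frobenius-subquotient of $V_0$ (the image of $T-1$ is $\Phi$-stable because $\Phi T\Phi^{-1}=T^{\chi_\ell(\Phi)}$), hence has weights $\geq w'-w$, and the $(-1)$-twist pushes them to $\geq w'-w+2>0$.

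The paper takes a different and somewhat more conceptual path for $\rm{Ext}^1$: rather than computing $\rm{H}^1(I,V_0)$ directly, it first observes that purity of $\F$ and $\G$ forces their monodromy operators to vanish (apply the already-established $\rm{Hom}$-vanishing to $\F\to\F(-1)$ and $\G\to\G(-1)$), so inertia acts through a finite quotient. A Galois-descent step then reduces to an extension where inertia acts trivially, and Lemma~\ref{lemma:no-H-1} applies verbatim. Your approach avoids this reduction at the cost of a direct cohomological computation; the paper's approach exploits the purity hypothesis more fully but needs the auxiliary descent. Both are legitimate.
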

\begin{proof}
    {\it Step~$1$. The $\rm{Hom}$-statement.} Firstly, we note that the object
    \[
    \rm{RHom}_{/\eta,\Q_\ell}(\F, \G)\coloneqq \rm{R}(f\times_s \eta)_*\rm{R}\cal{H}om_{\Q_\ell}\in D^b_c(\eta; \Q_\ell)
    \]
    is mixed of weights $\geq w'-w$ by Lemma~\ref{lemma:pushforward-pullback}(\ref{lemma:pushforward-pullback-2}), Lemma~\ref{lemma:homs-after-base-change}, Remark~\ref{rmk:rational-good}, and \cite[Stabilit\'es 5.1.14]{BBD}. In particular, action of any lift of Frobenius $F\in G_\eta$ on 
    \[
    \rm{Ext}^i_{/\eta, \Q_\ell}(\F, \G)\coloneqq \rm{H}^i(\rm{RHom}_{/\eta,\Q_\ell}(\F, \G))
    \]
    has eigenvalues $q$-Weil numbers of weight $\geq w'-w>0$ for $i\geq 0$. Now Corollary~\ref{cor:compute-hom-Q-ell} and \cite[Lemma III.4.3]{KW} imply that
    \[
    \rm{Hom}_{\Q_\ell}(\F, \G) = \rm{Hom}_{/\eta, \Q_\ell}(\F, \G)^{G_\eta}.
    \]
    Since all eigenvalues of $F$ acting of $\rm{Hom}_{/\eta, \Q_\ell}(\F, \G)$ are $q$-Weil numbers of {\it strictly positive} weights, we conclude that there are no-nontrivial invariants. Thus $\rm{Hom}_{\Q_\ell}(\F, \G)=0$.\smallskip
    
    {\it Step~$2$. The $\rm{Ext}^1$-claim can be checked after a finite Galois extension $K\subset K'$.} Let $K\subset K'$ be a finite Galois extension with the Galois group $G_{K'/K}$, and $b\colon X_{s'}\times_{s'} \eta' \to X_s\times_s \eta$ the corresponding projection morphism. Then Lemma~\ref{lemma:hom-formula-rational} (or a Galois descent argument as in the proof of Lemma~\ref{lemma:simple-local-systems-pure}) implies that 
    \[
    \rm{RHom}_{\Q_\ell}(\F, \G) \simeq \rm{R}\Gamma(G_{K'/K}, \rm{RHom}_{\Q_\ell}(b^*\F, b^*\G)).
    \]
    Since a finite group does not have higher cohomology groups with coefficients in a $\Q_\ell$-vector space, we conclude that
    \[
    \rm{Ext}^i_{\Q_\ell}(\F, \G) \simeq \rm{Ext}^i_{\Q_\ell}(b^*\F, b^*\G)^{G_{K'/K}}. 
    \]
    Therefore, it suffices to prove the result after a finite Galois extension $K\subset K'$. \smallskip

    {\it Step~$3$. Reduce to the case $I$ acts trivially on both $\pi^*_X\F$ and $\pi^*_X\G$.} By Step~$1$, there are non-trivial homomorphisms 
    \[
    \pi_X^*\F \to \pi^*_X\F(-1)
    \]
    and 
    \[
    \pi_X^*\G \to \pi^*_X\G(-1).
    \]
    In particular, the monodromy operators $N_{\F}$ and $N_{\G}$ are zero. Thus, Lemma~\ref{lemma:monodromy-operator} implies that there is an open subgroup $I_1\subset I$ that acts trivially on $\pi^*_X\F$ and $\pi^*_X\G$. In particular there is a finite totally ramified Galois extension $K\subset K'$ such that $I_{K'}$ acts trivially on $\pi^*_Xb^*\F$ and $\pi^*_Xb^*\G$. Therefore, Step~$2$ guarantees that we can replace $K$ with $K'$, $\F$ with $b^*\F$, and $\G$ with $b^*\G$ to assume that the action of inertia is trivial on both $\pi^*_X\F$ and $\pi^*_X\G$. \smallskip

    {\it Step~$4$. Finish the proof.} Lemma~\ref{lemma:hom-formula-rational} implies that we have an exact sequence
    \[
    0\to \rm{H}^1_{\rm{cont}}(G_\eta, \rm{Hom}_{/\eta, \Q_\ell}(\F, \G))\to \rm{Ext}^1_{\Q_\ell}(\F, \G) \to \rm{Ext}^1_{/\eta, \Q_\ell}(\F, \G)^{G_\eta}.
    \]
    Since the weights are strictly positive for the $F$-action on $\rm{Ext}^1_{/\eta, \Q_\ell}(\F, \G)$, we conclude that 
    \[
    \rm{Ext}^1_{/\eta, \Q_\ell}(\F, \G)^{G_\eta}=0.
    \]
    Therefore, it suffices to show that
    \[
    \rm{H}^1_{\rm{cont}}(G_\eta, \rm{Hom}_{/\eta, \Q_\ell}(\F, \G))=0.
    \]
    This follows from Lemma~\ref{lemma:no-H-1}.
\end{proof}

\begin{thm}\label{thm:weight-filtration} Let $X$ be a finite type $k$-scheme and let $\F\in \rm{Perv}(X\times_s \eta; \Q_\ell)$ be a mixed perverse sheaf. Then there is a unique functorial increasing weight filtration
\[
\rm{Fil}_{\rm{W}}^n\F \subset \F
\]
such that 
\begin{enumerate}
    \item each $\rm{Fil}_{\rm{W}}^n\F$ is a perverse sheaf;
    \item $\rm{Gr}^n_{\rm{W}} \F$ is zero or a pure sheaf of weight $n$;
    \item $\rm{Fil}_{\rm{W}}^{-n}\F = 0$ and $\rm{Fil}_{\rm{W}}^n\F = \F$ for a large $n\gg 0$.
\end{enumerate}
Furthermore, the weight filtration satisfies the following properties:
\begin{enumerate}
    \item any morphism of mixed perverse sheaves $f\colon \F\to \G$ is strictly compatible with the weight filtrations, i.e.  $f(\rm{Fil}_{\rm{W}}^\bullet \F) =\rm{Fil}_{\rm{W}}^\bullet \G \cap f(\F)$;
    \item for any continuous section $\sigma\colon G_s \to G_\eta$ of the projection $r\colon G_\eta \to G_s$, there is an equality of filtrations
    \[
    \sigma_X^*\rm{Fil}_{\rm{W}}^\bullet \F = \rm{Fil}_{\rm{W}}^\bullet \sigma_X^*\F,
    \]
    where $\rm{Fil}_{\rm{W}}^\bullet \sigma_X^*\F$ is the weight filtration from \cite[Th\'eorem\`e 5.3.5]{BBD}.
\end{enumerate}
\end{thm}
\begin{proof}
    The proof of \cite[Th\'eorem\`e 5.3.5]{BBD} (or \cite[Lemma III in \textsection III.9]{KW}) adapts to this situation essentially without any change using the results already obtained in this section. For the convenience of the reader, we repeat the argument here. 

    We start with existence of the weight filtration. We note that $\rm{Perv}(X\times_s \eta)$ is noetherian and artinian by Lemma~\ref{lemma:perverse-artinian}. Thus every object is of finite length. We argue by induction on the length $l(\F)$. \smallskip
    
    If $l(\F)=1$, then $\F$ is a simple perverse sheaf, and so it is pure by Lemma~\ref{lemma:simple-perverse-sheaves-pure}. Thus it clearly admits a weight filtration. Now suppose that $l=l(\F)>1$ and we know existence of a weight filtration for any mixed perverse sheaf $\G$ of length $< l$. Then we pick any simple perverse sheaf $\F_0\subset \F$, it is mixed by \cite[Proposition 5.3.1]{BBD}, and so it is pure of some weight $w'$ by Lemma~\ref{lemma:simple-perverse-sheaves-pure}. Consider a short exact sequence 
    \begin{equation}\label{eqn:ses}
    0 \to \F_0 \to \F \xr{\alpha} \G \to 0.
    \end{equation}
    Clearly, $\G$ is a mixed perverse sheaf of length $<l$. Therefore, it admits a weight filtration by the induction hypothesis. We consider two cases:
    
    {\it Step~$1$. $\rm{Fil}_{\rm{W}}^{w'-1}\G=\G$.} Then (\ref{eqn:ses}) splits by Lemma~\ref{lemma:no-homs-pure} (applied to $\F_0$ and $\rm{gr}_{\rm{W}}^w\G$ for $w<w'$), so $\F=\F_0\oplus \G$. In particular, the filtration
    \[
    \rm{Fil}_{\rm{W}}^n\F = \rm{Fil}_{\rm{W}}^n\G \text{ for } n\leq w'-1,
    \]
    \[
    \rm{Fil}_{\rm{W}}^n \F = \F \text{ for } n\geq w'
    \]
    does the job. \smallskip
    
    {\it Step~$2$. General $\G$.} We consider a perverse sheaves $\G'\coloneqq \rm{Fil}_{\rm{W}}^{w'-1}\G$, and $\F' \coloneqq \alpha^{-1}(\G') \subset \F$. Step~$1$ guarantees that $\F'$ admits a weight filtration with weights $\leq w'-1$, and there is a short exact sequence
    \[
    0\to\F' \to \F \to \G'' \to 0
    \]
    with $\G''\coloneqq \G/\G'$. By construction, $\G''$ admits a weight filtration with weights $\geq w'$. Therefore, the filtration
    \[
    \rm{Fil}_{\rm{W}}^{n} \F = \rm{Fil}_{\rm{W}}^{n} \F' \text{ if } n\leq w'-1,
    \]
    \[
    \rm{Fil}_{\rm{W}}^n \F = \alpha^{-1}(\rm{Fil}_{\rm{W}}^n \G'') \text{ if } n \geq w'
    \]
    is a weight filtration on $\F$. \smallskip
    
    Uniqueness of a weight filtration follows from the Hom-statement in Lemma~\ref{lemma:no-homs-pure}. \smallskip
    
    Now note that \cite[Proposition 5.3.1]{BBD} implies that a subquotient of a perverse pure sheaf of weight $w$ is pure of weight $w$. Thus, for any morphism $f\colon \F \to \G$ of mixed perverse sheaves, both $f(\rm{Fil}_{\rm{W}}^\bullet \F)$ and $\rm{Fil}_{\rm{W}}^\bullet \G \cap f(\F)$ define weight filtrations on $f(\F)$. Therefore, they must coincide by the uniqueness of a weight filtration. \smallskip
    
    Finally, if $\sigma\colon G_s\to G_\eta$ is any continuous section, then $\sigma^*_X \rm{Fil}_{\rm{W}}^\bullet \F$ is an essentially finite filtration by mixed perverse sheaves such that $w$-th graded piece is pure of weight $w$. Thus it should coincide with the weight filtration from \cite[Th\'eorem\`e 5.3.5]{BBD} due to the uniqueness of a weight filtration. 
\end{proof}


\subsection{Monodromy-pure sheaves on Deligne's topos}\label{section:log-pure}

The main goal of this section is to define the notion of a monodromy-pure object in $D^b_c(X\times_s \eta; \Q_\ell)$. The motivation behind the definition is that most interesting mixed objects in $D^b_c(X\times_s \eta; \Q_\ell)$ are rarely pure (e.g. nearby cycles). However, one would wish to define some notion in-between pure and mixed sheaves to capture these interesting examples. This is done via the notion of a monodromy-pure sheaf that is essentially an axiomatization of monodromy weight conjecture. \smallskip

For the rest of the section, we fix an arithmetic non-archimedean field $K$ of residue characteristic $p$, a prime number $\ell\neq p$, and a finite type $k$-scheme $X$. 

We refer to Definition~\ref{defn:monodromy-operator-perverse} for the definition of a nilpotent operator for an perverse sheaf $\F\in \rm{Perv}(X\times_s \eta; \Q_\ell)$. By \cite[(1.6.1)]{Weil2}, there is a unique increasing {\it monodromy filtration} $\rm{Fil}_{\rm{M}}^\bullet\F$ such that
\begin{enumerate}
    \item $\rm{Fil}_{\rm{M}}^{-k}\F=0$ and $\rm{Fil}_{\rm{M}}^k\F = \F$ for a sufficiently large $k$;
    \item $N(\rm{Fil}^k_{\rm{M}} \F)$ lies in $\rm{Fil}_{\rm{M}}^{k-2}\F(-1)$;
    \item $N$ induces an isomorphism on the associated graded pieces
\[
N^k\colon \rm{gr}_M^k\F \xr{\sim} \rm{gr}_M^{-k}\F(-k)
\]
for each $k\geq 0$. 
\end{enumerate}

\begin{defn}\label{defn:monodromy-pure} A mixed perverse sheaf $\F\in \rm{Perv}(X\times_s\eta; \Q_\ell)$ is {\it monodromy-pure of weight $w$} if $\rm{gr}^M_{k}\F$ is pure of weight $k+w$ for each integer $k$. 

An object $\F\in D^b_c(X\times_s \eta; \Q_\ell)$ is {\it monodromy-pure of weight $w$} if ${}^p\cal{H}^i(\F)$ is monodromy-pure of weight $w+i$ for each integer $i$. 
\end{defn}  

\begin{rmk} Alternatively, one can reformulate Definition~\ref{defn:monodromy-pure} by saying that the weight filtration coincides with the shifted monodromy filtration. 
\end{rmk}

Now we show that we can check that a complex $\F\in D(X\times_s \eta; \Q_\ell)$ is monodromy-pure after a certain extension of arithmetic fields. 


\begin{defn} An extension of non-archimedean fields $K\subset L$ is {\it topologically algebraic} if there is an algebraic extension $K\subset K'$ with an isomorphism of topological $K$-algebras $\wdh{K'}\simeq L$ 
\end{defn}

\begin{lemma}\label{lemma:monodromy-pure-after-extension} Let $K\subset K'$ be a topologically algebraic extension of arithmetic non-archimedean fields, let $X$ be a finite type $k$-scheme, let $\F\in D^b_c(X\times_s \eta; \Q_\ell)$, and let $b\colon X_{s'}\times_{s'} \eta' \to X_s\times_s \eta$ be the morphism of topoi from Notation~\ref{notation:field-extension}. Then $\F$ is a monodromy-pure perverse sheaf of weight $w$ if and only if $b^*\F\in D^b(X_{s'}\times_{s'}\eta'; \Q_\ell)$ is a monodromy-pure perverse sheaf of weigth $w$.
\end{lemma}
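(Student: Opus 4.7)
My plan is to verify that each of the four conditions defining monodromy-purity is preserved and reflected by $b^*$. Perversity transfers both ways since it is detected after applying $\pi^*$, and $\pi_{X_{s'}}^* \circ b^* \simeq \pi_X^*$ via the canonical identification $X_{\ov{s}} \simeq X_{\ov{s'}}$. Mixedness, together with all weight bounds, is preserved and reflected by Lemma~\ref{lemma:mixed-after-base-change}. Assuming $\F$ is mixed perverse, applying $b^*$ to $\rm{Fil}_{\rm{W}}^\bullet \F$ produces a finite filtration of $b^*\F$ by perverse subsheaves whose graded pieces are pure of the prescribed weights, so by the uniqueness assertion of Theorem~\ref{thm:weight-filtration} it must equal the weight filtration of $b^*\F$, giving $b^* \rm{Fil}_{\rm{W}}^\bullet \F = \rm{Fil}_{\rm{W}}^\bullet b^*\F$.

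The technical core will be the compatibility of the two monodromy operators, which requires comparing $t_\ell^K|_{I_{K'}}$ with $t_\ell^{K'}$ as homomorphisms $I_{K'} \to \Z_\ell(1)$. The hypothesis that $K'$ is arithmetic provides $I_{K'}$ with a pro-$\ell$ quotient $\Z_\ell(1)$ via $t_\ell^{K'}$, while $\ker(t_\ell^K) = P_{K,\ell}$ is pro-(prime-to-$\ell$) because $\ell \neq p$. Hence $t_\ell^K|_{I_{K'}}$ cannot be trivial (otherwise $I_{K'}$ would be pro-(prime-to-$\ell$), contradicting the existence of its $\Z_\ell(1)$-quotient), so by the universal property of the tame-$\ell$ quotient the two maps differ by a nonzero scalar $c \in \Z_\ell$. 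Choosing $u$ in a common unipotent open subgroup inside $I_{K'}$ (Corollary~\ref{cor:quasi-unipotent}) with $t_\ell^{K'}(u) \neq 0$, the identity $\exp(N_\F \cdot t_\ell^K(u)) = \rho_u = \exp(N_{b^*\F} \cdot t_\ell^{K'}(u))$ combined with $t_\ell^K(u) = c \cdot t_\ell^{K'}(u)$ forces $N_{b^*\F} = c \cdot b^*N_\F$ as morphisms $b^*\F \to b^*\F(-1)$ in the perverse category.

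Since the monodromy filtration is characterized by the isomorphisms $N^k\colon \rm{gr}^k_{\rm{M}} \xr{\sim} \rm{gr}^{-k}_{\rm{M}}(-k)$, a property invariant under $\Q_\ell^\times$-scaling of $N$, it follows that $b^*\rm{Fil}_{\rm{M}}^\bullet \F = \rm{Fil}_{\rm{M}}^\bullet b^*\F$. To conclude, monodromy-purity of weight $w$ is the equality of subobjects $\rm{Fil}_{\rm{W}}^n \F = \rm{Fil}_{\rm{M}}^{n-w} \F$ in $\F$ for all $n$, and applying $b^*$ preserves these subobjects by the compatibilities above. For the converse direction, I will use that $b^*\colon \rm{Perv}(X \times_s \eta; \Q_\ell) \to \rm{Perv}(X_{s'} \times_{s'} \eta'; \Q_\ell)$ is exact and conservative (the latter via conservativity of $\pi^*$), so an inclusion of perverse subsheaves is an equality if and only if it becomes one after $b^*$.

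The main obstacle will be the tame-character comparison in the second paragraph: ruling out the pathological possibility that $t_\ell^K|_{I_{K'}}$ vanishes is precisely where the arithmeticity of $K'$ is essential. Without that hypothesis, the monodromy operator of $b^*\F$ could be forced to vanish while that of $\F$ is nontrivial, which would break the filtration compatibility and destroy the equivalence.
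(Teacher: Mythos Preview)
Your proof is correct and follows essentially the same strategy as the paper's: both reduce to showing that the monodromy filtration on $\F$ pulls back to the monodromy filtration on $b^*\F$, by comparing the monodromy operators via the tame $\ell$-characters.

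The paper's version is more streamlined. It asserts that the natural map $G_{K'}\to G_K$ induces a finite-index injection $I_{\ell,K'}\hookrightarrow I_{\ell,K}$, from which the uniqueness clause of Lemma~\ref{lemma:monodromy-operator} gives $b^*N_\F = N_{b^*\F}$ directly; then the definition of monodromy-purity (purity of each $\gr_{\rm M}^k$) combined with Lemma~\ref{lemma:mixed-after-base-change} finishes without ever invoking the weight filtration. Your route is slightly longer --- you only establish $N_{b^*\F} = c\cdot b^*N_\F$ with $c\neq 0$, then separately verify that both the weight and monodromy filtrations are compatible with $b^*$, and conclude via the reformulation $\Fil_{\rm W}^\bullet = \Fil_{\rm M}^{\bullet - w}$. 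This is equally valid, and in fact your justification of $c\neq 0$ (via the pro-$\ell$ quotient of $I_{K'}$ forced by arithmeticity of $K'$) supplies exactly the reasoning behind the paper's unproved finite-index claim. The detour through the weight filtration is unnecessary, however: once you know $b^*\Fil_{\rm M}^\bullet\F = \Fil_{\rm M}^\bullet b^*\F$, you can finish directly from the definition, since purity of $\gr_{\rm M}^k\F$ of weight $k+w$ is equivalent to that of $b^*\gr_{\rm M}^k\F = \gr_{\rm M}^k b^*\F$ by Lemma~\ref{lemma:mixed-after-base-change}.
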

\begin{proof}
    It is easy to see that $\F$ is perverse if and only if $b^*\F$ is perverse. Lemma~\ref{lemma:mixed-after-base-change} ensures that $\F$ is mixed if and only if $b^*\F$ is mixed. Now we denote by $I_{\ell, K}$ (resp. $I_{\ell, K'}$) be the canonical $\Z_\ell(1)$ quotient of the inertia group $I_K$ (resp. $I_{K'}$). Since $K\subset K'$ is topologically algebraic, \cite[Prop.~2.4.6(iii)]{Berkovich-etale}\footnote{We note that $M_K$ in \cite[Prop.~2.4.6(iii)]{Berkovich-etale} denotes the group $I_K/P_K$, where $I_K\subset G_K$ is the inertia subgroup and $P_K\subset G_K$ is the wild inertia subgroup} implies that the natural morphism $I_K/P_K \to I_{K'}/P_{K'}$ is injective. By passing to the pro-$\ell$ parts, we conclude that we get an injective morphism
    \[
    I_{\ell, K'} \subset I_{\ell, K}.
    \]
    Since $I_{\ell, K'}$ and $I_{\ell, K}$ are (non-canonically) isomorphic to $\Z_\ell$, \cite[Proposition 2.7.1(a,b)]{profinite} implies that $I_{\ell, K'} \to I_{\ell, K}$ is an injective morphism of finite index. Therefore, the uniqueness claim in Lemma~\ref{lemma:monodromy-operator} ensures that $b^*N=N$, and so the monodromy filtration on $\F$ pullbacks to the monodromy filtration on $b^*\F$ (for example, by the uniqueness property of the monodromy filtration). Therefore, the claim follows from Lemma~\ref{lemma:mixed-after-base-change}. 
\end{proof}

\begin{lemma}\label{lemma:finite-morphisms-monodromy-pure} Let $K$ be an arithmetic non-archimedean field, let $f\colon X\to Y$ be a finite morphism of finite type $k$-schemes, and let $\F\in D^b_c(X\times_s \eta; \Q_\ell)$. Then $\F$ is a monodromy-pure perverse sheaf of weight $w$ if and only if $\rm{R}(f\times_s \eta)_*\F\in D^b(Y\times_s \eta;\Q_\ell)$ is a monodromy-pure perverse sheaf of weigth $w$.
\end{lemma}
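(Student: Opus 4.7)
The plan is to show that for finite $f$, the pushforward $(f\times_s\eta)_*$ is t-exact for the perverse t-structure and compatible with both the monodromy operator and the weight filtration, so that monodromy-purity is transported in both directions. As a preliminary reduction, I factor $f$ through its scheme-theoretic image $Y' = \ov{f(X)}\subset Y$ as $X\xr{f'} Y'\xr{i} Y$ with $f'$ finite surjective and $i$ a closed immersion. Since $i\times_s\eta$ induces a fully faithful t-exact pushforward that preserves and reflects all of the relevant structure, the statement for $f$ reduces to the statement for $f'$, and I may assume $f$ is finite surjective.

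Since finite morphisms are both affine and proper, $(f\times_s\eta)_*$ is t-exact for the perverse t-structure of Lemma~\ref{lemma:perverse-t-structure-rationally}; since $f$ is additionally surjective, $f_{\ov{s},*}$ is conservative on perverse sheaves, so $(f\times_s\eta)_*$ also reflects perversity. Proper base change produces a natural isomorphism $\pi_Y^*(f\times_s\eta)_*\F\simeq f_{\ov{s},*}\pi_X^*\F$ that is $G_\eta$-equivariant with respect to the actions constructed in Construction~\ref{construction:action-adic}. By the uniqueness clause in Lemma~\ref{lemma:monodromy-operator}, the monodromy operator $N_{(f\times_s\eta)_*\F}$ corresponds under this isomorphism to $f_{\ov{s},*}(N_\F)$, and descending along $(f\times_s\eta)_*$ (using the t-exactness just noted) yields $N_{(f\times_s\eta)_*\F} = (f\times_s\eta)_*(N_\F)$. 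Consequently, by the uniqueness of the monodromy filtration, $(f\times_s\eta)_*$ intertwines the monodromy filtrations on both sides, and hence commutes with each $\rm{gr}^M_k$.

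It remains to see that for finite surjective $f$, $(f\times_s\eta)_*$ both preserves and reflects purity of a fixed weight $w$ on perverse sheaves. Using Proposition~\ref{prop:mixed-independent} and Corollary~\ref{cor:pure-independent}, these properties can be checked after pulling back along any continuous section $\sigma\colon G_s\to G_\eta$, and $\sigma_Y^*$ commutes with $(f\times_s\eta)_*$ (both being defined by base change against morphisms of classifying topoi), so the question reduces to the analogous classical statement for finite morphisms of finite-type $k$-schemes: preservation is \cite[Stabilit\'es 5.1.14]{BBD}, and reflection follows by a d\'evissage to the finite \'etale case together with the observation that $\F$ is a direct summand of $f^*f_*\F$ there. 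Combining these inputs, $\F$ is monodromy-pure of weight $w$ iff each $\rm{gr}^M_k\F$ is pure of weight $k+w$ iff each $(f\times_s\eta)_*\rm{gr}^M_k\F = \rm{gr}^M_k(f\times_s\eta)_*\F$ is pure of weight $k+w$ iff $(f\times_s\eta)_*\F$ is monodromy-pure of weight $w$.

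The principal technical obstacle is the $G_\eta$-equivariance of the proper base change isomorphism, which is essential for invoking the uniqueness in Lemma~\ref{lemma:monodromy-operator}. This compatibility ultimately comes from the functoriality of the action in Construction~\ref{construction:action-adic} developed in Appendix~\ref{appendix:deligne}, but must be checked explicitly against the six-functor formalism on Deligne's topos; once it is in hand, the remaining steps are straightforward manipulations of the weight and monodromy filtrations.
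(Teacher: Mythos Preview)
Your proof is correct and follows essentially the same route as the paper: perverse $t$-exactness of $(f\times_s\eta)_*$ for finite $f$ gives $\rm{R}(f\times_s\eta)_*\rm{gr}^i_{\rm{M}}\F\simeq \rm{gr}^i_{\rm{M}}\rm{R}(f\times_s\eta)_*\F$, reducing to the classical preservation and reflection of purity under finite pushforward, checked after applying $\sigma_X^*$. You are more explicit than the paper about why $(f\times_s\eta)_*$ intertwines the monodromy operators (via the $G_\eta$-equivariance of the base change isomorphism and the uniqueness in Lemma~\ref{lemma:monodromy-operator}) and about the reduction to the surjective case; the paper simply asserts the graded-piece compatibility and leaves the purity step as ``classical''.
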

\begin{proof}
    Since $f$ is a finite morphism, $\rm{R}(f\times_s \eta)_*$ is perverse exact. Thus, we conclude that $\rm{R}(f\times_s \eta)_* \rm{gr}_{\rm{M}}^i \F \simeq \rm{gr}_{\rm{M}}^i \rm{R}(f\times_s \eta)_*\F$. So it suffices to show that a perverse sheaf $\G\in \rm{Perv}(X\times_s \eta; \Q_\ell)$ is pure of weight $w$ if and only if $\rm{R}(f\times_s \eta)_*\G$ is pure of weight $w$. After choosing a continuous splitting $\sigma\colon G_s\to G_\eta$, it boils down to an analogous question for perverse sheaves on $X$, which is classical and left to the reader. 
\end{proof}

\section{Quasi-unipotent monodromy Theorem and mixedness of nearby cycles}

The main goal of this section is to prove the Grothendieck Quasi-Unipotent Monodromy Theorem for rigid-analytic varieties and mixedness of the $\ell$-adic nearby cycles (if residue field is a finite field). 

For the rest of the section, we fix a discretely valued non-archimedean field $K$ with a uniformizer $\varpi\in \O_K$, a completed algebraic closure $C=\wdh{\ov{K}}$, a prime number $\ell\neq p$, and a ring $\Lambda=\Z/\ell^n\Z$, $\Z_\ell$, or $\Q_\ell$.\smallskip

For a finite extension $K\subset L$, we denote by $k\subset l$ the induced extension of residue fields. We also denote by $\eta'=(\Spec L)_\et$ the \'etale topos $\Spec L$ and by $s'=(\Spec l)_\et$ the \'etale topos $\Spec l$.\smallskip

We denote the Galois group of $K$ by $G_\eta$ (or $G_K$ if there is any ambiguity) and the inertia group by $I$ (or $I_K$ if there is any ambiguity). \smallskip

\subsection{Nearby cycles of constant sheaves}\label{section:nearby-constant}

In this section, we discuss some preliminary results that we will need in our proof of the Grothendieck Quasi-Unipotent Monodromy Theorem. \smallskip

\begin{defn} A morphism of admissible formal $\O_K$-schemes $\mf\colon \X \to \cY$ is {\it rig-surjective} if its generic fiber $\mf_{\eta}\colon \X_\eta \to \cY_\eta$ is a surjective morphism of adic spaces. 

A {\it rig-surjective site} is a site whose underlying category is $\rm{Adm}_{\O_K}$ is the category of admissible formal $\O_K$-schemes, and whose coverings are given by families $\{\mf_i\colon \X_i \to \X\}_{i\in I}$ such that $I$ is finite and $|\X|=\cup_{i\in I} |\mf_i|(|\X_i|)$. 

A {\it $v$-site} is a site whose underlying category is $\rm{Ad}^{\rm{qcqs}}_{\Q_p}$ is the category of qcqs strongly noetherian adic spaces over $\Spa(\Q_p, \Z_p)$, and whose coverings are given by families $\{f_i\colon X_i\to X\}_{i\in I}$ such that $I$ is finite, $f_i$ are of finite type, and $|X|=\cup_{i\in I} |f_i|(|X_i|)$.
\end{defn}

The first step is to show that any $v$-hypercovering is of universal cohomological descent. Then we show that any rigid-analytic space admits a $v$-hypercovering by rigid-analytic varieties with especially nice formal models. \smallskip

We refer the reader to \cite{conrad-hyper} and \cite[\href{https://stacks.math.columbia.edu/tag/01FX}{Tag 01FX}]{stacks-project} for an extensive discussion of hypercovers. 

\begin{defn} An augmented simplicial object $a\colon Y_\bullet \to X$ in $\rm{Ad}^{\rm{qcqs}}_{\Q_p}$ is {\it of cohomological descent} if the natural morphism
\[
\F \to \rm{R}a_*a^*\F
\]
is an isomorphism for any $\F\in D^+(X_\et; \Lambda)$.

An augmented simplicial object $a\colon Y_\bullet \to X$ in $\rm{Ad}^{\rm{qcqs}}$ is {\it of universal cohomological descent} if, for every morphism $X'\to X$ in $\rm{Ad}^{\rm{qcqs}}_{\Q_p}$, the base change $Y_\bullet\times_X X'\to X'$ is of cohomological descent. 
\end{defn}

\begin{lemma}\label{lemma:hyperdescent} Let $a\colon Y_\bullet \to X$ be a $v$-hypercovering in $\rm{Ad}^{\rm{qcqs}}_{\Q_p}$. Then $a$ is of cohomological descent, i.e.
the natural morphism
\[
\F \to \rm{R}a_*a^*\F
\]
is an isomorphism for any $\F\in D^+(X_\et; \Lambda)$.
\end{lemma}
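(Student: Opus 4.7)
The plan is to reduce the hyperdescent statement to cohomological descent for single $v$-covers, and then to establish the latter by refining an arbitrary finite type surjection by a composition of morphisms whose descent is classical.

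First, I would invoke the abstract hyperdescent criterion (see \cite[\href{https://stacks.math.columbia.edu/tag/09VI}{Tag 09VI}]{stacks-project}): to prove that every $v$-hypercovering is of cohomological descent, it suffices to show that every single $v$-cover is of \emph{universal} cohomological descent. Replacing a finite $v$-covering $\{f_i\colon Y_i \to X\}_{i\in I}$ by the single morphism $\sqcup_{i\in I} Y_i \to X$, the question reduces to showing that any surjective finite type morphism $f\colon Y \to X$ of qcqs strongly noetherian adic $\Q_p$-spaces is of universal cohomological descent.

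Second, such descent would be established by refining $f$ by a composition of morphisms whose descent is classical. The two building blocks are (i) \'etale surjective morphisms, which are tautologically of universal cohomological descent, and (ii) proper surjective morphisms, where descent follows from proper base change together with the observation that $\Lambda \to \rm{R}f_*\Lambda$ is split injective on geometric stalks. To refine an arbitrary finite type surjection into such a composition, one argues by noetherian induction on $X$: there should exist a nonempty open $U \subset X$ over which $f^{-1}(U) \to U$ admits a section after an \'etale base change followed by a proper modification. This can be achieved by passing to an admissible formal model of $X$ and invoking Raynaud--Gruson-style flattening on the formal side, or equivalently by applying a de Jong alteration at the formal level; the complement $X \setminus U$ is then handled inductively, and the pieces are glued via the excision triangle for the constructible stratification.

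The principal obstacle is the second step: the geometric reduction of a generic finite type surjection to a composition of \'etale and proper covers. In the adic setting one lacks a direct analogue of Raynaud--Gruson flattening, so one must pass through formal models, and the interaction between the \'etale topology on the adic generic fibre and the formal/analytic topology needs care. A cleaner alternative would be to appeal to the already-established universal $v$-descent results for analytic adic spaces from the literature on diamonds, which directly yield universal cohomological descent for finite type surjections, immediately feeding into the abstract reduction above.
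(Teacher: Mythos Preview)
Your reduction from hypercoverings to single $v$-covers is correct and matches the paper's Step~3, which uses the same abstract machinery from \cite{conrad-hyper}. The divergence is in how to handle a single $v$-cover $Y\to X$.

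The paper's argument is considerably simpler than your refinement-by-\'etale-and-proper strategy, and bypasses the obstacle you yourself flag. The key input is Huber's result \cite[Theorem 4.1.1(c)']{H3}: for a finite type morphism of qcqs analytic adic spaces, the formation of $\rm{R}^j f_*$ commutes with pullback to any geometric point $\Spa(C,C^+)\to X$. This lets one check the isomorphism $\F \to \rm{R}a_* a^*\F$ stalkwise at geometric points of $X$, reducing immediately to the case $X=\Spa(C,C^+)$ with $C$ algebraically closed. Over such a base, any surjective map $Y\to X$ admits a \emph{section} (essentially \cite[Lemma 7.2.3]{H3}), so the \v{C}ech nerve is split and descent is automatic by \cite[Theorem 7.2]{conrad-hyper}. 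No flattening, no noetherian induction, no passage through formal models is required.

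Your proposed route via \'etale $+$ proper refinement is the classical scheme-theoretic approach to $h$-descent, but in the adic setting the flattening step is genuinely problematic, as you note. More importantly, it is unnecessary: you implicitly use ``reduce to a point'' inside your proper-surjective building block (via proper base change), but Huber's theorem gives you this commutation for \emph{all} finite type morphisms, not just proper ones. Applying it directly to the $v$-cover itself collapses the whole argument to the trivial case of a point. Your fallback suggestion of citing $v$-descent from the diamond literature would also work, but the paper's argument is self-contained and avoids that machinery.
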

\begin{proof}
    We give a proof for $\Lambda=\Z/\ell^n\Z$, the case of $\Lambda=\Z_\ell$ or $\Q_\ell$ follows formally from this one by passing to a limit. 

    {\it Step~$1$: $X=\Spa(C, C^+)$ for an algebraically closed field $C$, and $Y_\bullet \to X$ is a \v{C}ech covering for a $v$-covering $Y\to X$.} We note that the morphism $Y\to X$ admits a section by (the proof of) \cite[Lemma 7.2.3]{H3}. Thus $\check{C}(Y/X)\to X$ is of universal cohomological descent by \cite[Theorem 7.2]{conrad-hyper}. \smallskip
    
    {\it Step~$2$: $Y_\bullet \to X$ is a \v{C}ech covering for a $v$-covering $Y\to X$.} For any $x\in X$, we denote by $(C(x), C(x)^+)$ a Huber pair obtained as a completed algebraic closure of $(k(x), k(x)^+)$ where $k(x)$ is the residue field of $X$ at $x$. This pair comes with the natural morphism $g_x\colon \Spa(C(x), C(x)^+) \to X$ sending the unique closed point of $\Spa(C(x), C(x)^+)$ to $x\in X$. \smallskip
    
    Since each $a_i\colon Y_i \to X$ is of finite type, \cite[Theorem 4.1.1(c)']{H3} ensures that the formation of $\rm{R}^ja_{i, *}$ commutes with $g_x^*$ for any $j\geq 0$. Thus, we can argue as in the proof of \cite[Theorem 7.7]{conrad-hyper} to reduce to the case $X=\Spa(C(x), C(x)^+)$. This case was already done in Step~$1$. \smallskip

    {\it Step~$3$: General $v$-hypercovering $Y_\bullet \to X$.} Step~$2$ and the definition of a $v$-hypercovering imply that the natural morphisms
    \[
    Y_{n+1} \to \rm{cosk}_n\rm{sk}_n (Y_\bullet/X)
    \]
    are of universal cohomological descent. Thus, \cite[Theorem 7.15]{conrad-hyper} ensures that $\rm{cosk}_n\rm{sk}_n (Y_\bullet/X) \to X$ is of universal cohomological descent. Thus, \cite[Lemma 7.14]{conrad-hyper} implies that $Y_\bullet \to X$ is of universal cohomological descent as well. 
\end{proof}

Now we show that any admissible formal $\O_K$-scheme admits a rig-surjective hypercovering by strictly semi-stable formal $\O_K$-schemes in some weak sense.

\begin{defn} A finitely presented $\O_K$-scheme $X$ is called {\it strictly semi-stable} if Zariski-locally it admits an \'etale morphism 
\[
U \to \Spec \frac{\O_K[t_0, \dots, t_l]}{(t_{0}\cdots t_{m}-\pi)}
\]
for some integers $m\leq l$, and a uniformizer $\pi \in \m_K\setminus \m_K^2$. \smallskip

A formal $\O_K$-scheme $\X$ is {\it algebraically strictly semi-stable} if there a strictly semi-stable $\O_K$-scheme $X$ such that $\X$ is isomorphic to the formal $\varpi$-adic completion of $X$.
\end{defn}

\begin{lemma}\label{lemma:strictly-semi-stable-covering} Let $K$ be a $p$-adic non-archimedean discretely valued field and let $\X$ be an admissible formal $\O_K$-scheme such that $\X_\eta$ is of (pure) dimension $d$. Then there is a finite extension $K\subset K'$ and a rig-surjection $\X'\to \X_{\O_{K'}}$ such that $\X'$ is an algebraically strictly semi-stable formal $\O_{K'}$-scheme with $\X'_\eta$ of (pure) dimension $d$.
\end{lemma}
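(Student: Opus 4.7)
The strategy is to reduce to the affine case, pass from formal to algebraic via the tautological algebraization of an affine admissible formal scheme, apply de Jong's theorem on strictly semi-stable alterations, and then take the $\varpi$-adic formal completion.

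First, using that $\X$ is quasi-compact and quasi-separated, I would cover it by finitely many affine opens $\X = \bigcup_{i=1}^{n}\X_i$. The natural morphism $\bigsqcup_{i=1}^{n}\X_i \to \X$ is rig-surjective, so it suffices to produce a rig-surjective strictly semi-stable model for each $\X_i$ over a common finite extension of $K$ and then take the disjoint union (which is again algebraically strictly semi-stable over that extension). This reduces the claim to the case $\X = \Spf A$ with $A$ a topologically finitely presented flat $\O_K$-algebra. In this affine setting the algebraization is automatic: $X := \Spec A$ is a flat finite-type $\O_K$-scheme whose $\varpi$-adic completion is $\X$, and the generic fiber $X_K \simeq \X_\eta$ has pure dimension $d$ by hypothesis. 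After passing to irreducible components if necessary, we may further assume $X$ is integral.

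Second, I would apply de Jong's theorem on strictly semi-stable alterations to $X$. This produces a finite extension $K \subset K'$ and a proper surjective generically finite morphism $f\colon Y \to X_{\O_{K'}}$ such that $Y$ is a strictly semi-stable $\O_{K'}$-scheme. Because $f$ is generically finite and surjective and $X_K$ has pure dimension $d$, the generic fiber $Y_{K'}$ also has pure dimension $d$. Taking the $\varpi$-adic formal completion, $\X' := Y^{\wedge}$ is by definition algebraically strictly semi-stable over $\O_{K'}$, and $f$ induces a morphism $\wdh{f}\colon \X' \to \X_{\O_{K'}}$. Rig-surjectivity of $\wdh{f}$ is then immediate: its adic generic fiber is identified with the analytification of the proper surjective morphism $f_{K'}\colon Y_{K'} \to X_{K'}$, which is surjective on points, and the assertion on the pure dimension of $\X'_\eta$ is inherited from $\dim Y_{K'} = d$.

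\textbf{Main obstacle.} The substantive input is the \emph{strictly} semi-stable form of de Jong's theorem rather than the regular-with-SNC-special-fiber form. De Jong's original alteration gives a regular $\O_{K'}$-scheme whose special fiber is a strict normal crossings divisor, but the irreducible components may occur with non-trivial multiplicities, whereas the local model $\O_K[t_0,\dots,t_l]/(t_0\cdots t_m - \pi)$ in our definition of strict semi-stability forces all multiplicities to equal one. Eliminating these multiplicities requires extracting appropriate roots of the uniformizer in a further finite extension of $K'$ and then normalizing/resolving, which is exactly the step that forces the extension $K \subset K'$ in the statement. Once this refinement is invoked, the remaining bookkeeping (affine reduction, algebraization, and completion) is essentially formal.
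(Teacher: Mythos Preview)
Your algebraization step contains a genuine error. For $\X=\Spf A$ with $A$ topologically finitely presented over $\O_K$, the scheme $X=\Spec A$ is \emph{not} of finite type over $\O_K$: already $\O_K\langle T_1,\dots,T_n\rangle$ is not a finitely generated $\O_K$-algebra. So your ``tautological algebraization'' fails, and de Jong's alteration theorem (which is stated for finite-type schemes over the base) cannot be applied to $\Spec A$. The identification $X_K\simeq \X_\eta$ you assert is likewise between objects of different categories (an affine scheme versus an affinoid adic space), so the pure-dimension bookkeeping would also need care.

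The paper works around this precisely because no naive algebraization of a general admissible formal $\O_K$-scheme exists. It stays on the rigid side: first it resolves singularities of the \emph{rigid generic fiber} $\X_\eta$ via Temkin's resolution for rigid-analytic spaces (characteristic~$0$), obtaining a smooth rigid space $X'$ with a proper birational map $X'\to\X_\eta$; this extends to a rig-surjective morphism of admissible formal models. That reduces to the case of smooth generic fiber, where genuine algebraization becomes available (Elkik/Temkin), and then one can invoke the strictly semi-stable reduction results (Temkin or \cite{Z1}) after a finite extension. Your outline would be salvageable if you replaced the broken ``$\Spec A$ is finite type'' step with Elkik-type algebraization, but that only works once the generic fiber is smooth---which is exactly the preliminary reduction the paper performs.
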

\begin{proof}
    Consider the generic fiber $\X_\eta$. First, \cite[Theorem 5.2.2]{Temkin-resolution} implies that there exists a resolution of singularities $g \colon X' \to \X_{\eta, \rm{red}}$. By composing it with the natural surjection $\iota \colon \X_{\eta, \rm{red}} \to \X_\eta$, we obtain the morphism 
    \[
    f\colon X' \to \X_{\eta}.
    \]
    If $\X_\eta$ is of (pure) dimension $d$, the same holds for $X'$. Now $f$ can be extended to a morphism of formal $\O_K$-schemes $\X' \to \X$ by \cite[Lemma 8.4/4]{B} that is rig-surjective by construction. Therefore, it suffices to prove the question in the situation when $\X$ has smooth generic fiber $\X_\eta$. In this case, the result follows from the proof of \cite[Theorem 3.3.1]{Temkin} or \cite[Theorem 1.3]{Z1}. 
\end{proof}

\begin{cor}\label{cor:strictly-semi-stable-hypercovering} Let $K$ be a $p$-adic non-archimedean discretely valued field, let $\X$ be an admissible formal $\O_K$-scheme with generic fiber $\X_\eta$ of (pure) dimension $d$, and let $n$ be an integer. Then there is a finite extension $K\subset L$, and a rig-surjective hypercovering $a\colon \X_\bullet\to \X_{\O_L}$ such that, for each $i\leq n$, there is a subfield $K\subset K_i\subset L$ and algebraically strictly semi-stable formal $\O_{K_i}$-scheme $\cY_i$ such that $\cY_{i, \O_L}\simeq \X_i$ and $\cY_{i, \eta}$ are of (pure) dimension $d$ for $i\leq n$.
\end{cor}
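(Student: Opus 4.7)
The plan is to construct $\X_\bullet$ in two stages: first build the truncation $(\X_0,\dots,\X_n)$ inductively via repeated applications of Lemma~\ref{lemma:strictly-semi-stable-covering}, working over progressively larger finite extensions of $K$; then extend to a full simplicial object via the coskeleton construction. Since we only require strict semi-stability in degrees $\leq n$, only finitely many applications of Lemma~\ref{lemma:strictly-semi-stable-covering} are needed, and the resulting finite chain of extensions has a common top $L$.

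Inductively, we produce a tower $K =: K_{-1} \subset K_0 \subset \cdots \subset K_n$ of finite extensions together with algebraically strictly semi-stable formal $\O_{K_i}$-schemes $\Y_i$ of pure dimension $d$ as follows. At step $i=0$, apply Lemma~\ref{lemma:strictly-semi-stable-covering} to $\X$ to obtain $K_0$ and a rig-surjection $\Y_0 \to \X_{\O_{K_0}}$ with $\Y_0$ strictly semi-stable of pure dimension $d$. At step $i \geq 1$, base-change $\Y_0,\dots,\Y_{i-1}$ to $\O_{K_{i-1}}$ so that together with the maps already constructed they form an $(i-1)$-truncated simplicial admissible formal $\O_{K_{i-1}}$-scheme over $\X_{\O_{K_{i-1}}}$; form the $(i-1)$-st matching object $M_{i-1}$ as an iterated fiber product. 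This is admissible over $\O_{K_{i-1}}$ (fiber products preserve admissibility) and of pure dimension $d$ by the dimension discussion below. Applying Lemma~\ref{lemma:strictly-semi-stable-covering} to $M_{i-1}$ produces the next extension $K_{i-1} \subset K_i$ and a rig-surjection $\Y_i \to (M_{i-1})_{\O_{K_i}}$ with $\Y_i$ strictly semi-stable over $\O_{K_i}$ of pure dimension $d$; composing with the canonical projections $M_{i-1} \to \Y_{j,\O_{K_{i-1}}}$ supplies the required face maps.

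Now set $L := K_n$ and $\X_i := \Y_{i,\O_L}$ for $0 \leq i \leq n$; this yields an $n$-truncated simplicial object over $\X_{\O_L}$ whose matching maps are rig-surjections by construction. Extend to a full simplicial object by the coskeleton construction, namely $\X_\bullet := \rm{cosk}_n\bigl((\X_0,\dots,\X_n)/\X_{\O_L}\bigr)$. For $m > n$, the matching map $\X_m \to M_{m-1}(\X_\bullet/\X_{\O_L})$ is the identity by the defining property of $\rm{cosk}_n$, hence trivially a rig-surjection. Therefore $a\colon \X_\bullet \to \X_{\O_L}$ is a rig-surjective hypercovering, with each $\X_i$ ($i \leq n$) obtained by base change from the algebraically strictly semi-stable $\O_{K_i}$-scheme $\Y_i$ of pure dimension $d$, as desired.

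The main technical obstacle is controlling the dimension of the iterated matching objects $M_{i-1}$ in the inductive step: a priori, fiber products of admissible formal schemes can have generic fiber of dimension larger than $d$, which would force the subsequent $\Y_i$ (built from $M_{i-1}$ via Lemma~\ref{lemma:strictly-semi-stable-covering}) to fail the pure-dimension requirement. The escape is that the proof of Lemma~\ref{lemma:strictly-semi-stable-covering} proceeds via Temkin's resolution of singularities, so each $\Y_j \to M_{j-1,\O_{K_j}}$ is \emph{birational} on generic fibers. Iterated fiber products along birational maps preserve generic pure dimension $d$, which propagates the invariant along the induction. Once this dimension-preservation is checked, the remainder of the argument is routine bookkeeping of face and degeneracy compatibilities across the successive base changes $\O_{K_{i-1}} \subset \O_{K_i}$ and the standard formal properties of $\rm{cosk}_n$.
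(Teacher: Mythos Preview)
Your overall plan---inductively building the $n$-truncation by applying Lemma~\ref{lemma:strictly-semi-stable-covering} to successive matching objects, then passing to $\mathrm{cosk}_n$---is exactly the standard hypercovering argument, and agrees with the paper's proof (which simply cites the analogous construction in Conrad's Theorem~4.16 and points to Lemma~\ref{lemma:strictly-semi-stable-covering} as the only new input).

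The gap is in your dimension argument. You correctly flag that matching objects may have generic fiber of dimension $>d$, but the proposed escape---``iterated fiber products along birational maps preserve generic pure dimension $d$''---is false. Let $\X_\eta$ be a threefold with an isolated singular point whose resolution $\Y_{0,\eta}\to\X_\eta$ has exceptional divisor $E$ of dimension~$2$ (e.g.\ the affine cone over a smooth surface). Then $\Y_{0,\eta}\times_{\X_\eta}\Y_{0,\eta}$ contains $E\times E$ as an irreducible component of dimension $4>3=d$; birationality only controls the diagonal component, not the whole fiber product. Thus already $\dim (M_1)_\eta>d$, and the $\Y_1$ produced by the Lemma will inherit this. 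This is the familiar phenomenon in Deligne's simplicial resolutions, where the terms necessarily grow in dimension. Note that the paper's proof does not address this point either, and none of the subsequent applications (Lemma~\ref{lemma:trivial-action-any-model}, Theorem~\ref{thm:unipotent-action-strong}, Lemma~\ref{lemma:positive-weighs-constant}) ever use the dimension bound on $\Y_i$---only the strict semi-stability. The clause ``$\Y_i$ are of (pure) dimension $d$'' appears to be an overstatement in the Corollary; the honest fix is to drop it rather than attempt to prove it.
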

\begin{proof}
    The proof is similar to that of \cite[Theorem 4.16]{conrad-hyper}. The essential point is to show that every admissible formal $\O_K$-scheme $\X$ admits a rig-surjective covering by a strictly semi-stable formal scheme after a finite extension of $\O_K$. This was already done in Lemma~\ref{lemma:strictly-semi-stable-covering}.
\end{proof}

Now suppose that $\X$ is an admissible formal $\O_K$-scheme. Then Definition~\ref{defn:nearby-cycles} produces the functor
\[
\rm{R}\Psi_{\X}\colon D(\X_\eta; \Lambda) \to D(\X_s \times_s \eta; \Lambda),
\]
for any admissible simplicial formal $\O_K$-scheme $\X$. We extend it to simplicial admissible formal schemes via the formalism of simplicial topoi from \cite[\href{https://stacks.math.columbia.edu/tag/09WB}{Tag 09WB}]{stacks-project}. \smallskip

\begin{lemma}\label{lemma:trivial-action-good-model} Let $K\subset L$ be a finite extension of non-archimedean fields and let $\X$ be an algebraically strictly semi-stable formal $\O_K$-scheme.
Then
\begin{enumerate}
    \item $g-1$ acts trivially on $\pi^*_{\X_{s'}}\rm{R}^j\Psi_{\X_{\O_L}} \Lambda$ for any $g\in I_L$ and $j\geq 0$;
    \item if $k$ is a finite field, $\rm{R}\Psi_{\X_{\O_L}} \Q_\ell$ is mixed of weights $\leq \dim \X_\eta$, and $\geq -\dim \X_\eta$ (see Definition~\ref{defn:pure-mixed-Deligne}).
\end{enumerate}
\end{lemma}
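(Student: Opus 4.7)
The plan is to appeal to the explicit Rapoport--Zink / T.\,Saito computation of the nearby cycles sheaves for a strictly semi-stable formal scheme, and to use base-change compatibility to transfer this information from $\X/\O_K$ to $\X_{\O_L}/\O_L$.

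First I would reduce to the case $L=K$. The natural morphism of Deligne topoi
\[
b\colon \X_{\O_L,s}\times_{s_L}\eta_L \to \X_s\times_s\eta
\]
induced by the finite extension $L/K$ sits in a canonical base-change identification $b^*(\rm{R}\Psi_\X\Lambda)\simeq \rm{R}\Psi_{\X_{\O_L}}\Lambda$ (the usual compatibility of nearby cycles with finite base change of the base, recorded on the Deligne topos by Appendices~\ref{appendix:deligne}--\ref{appendix:adic}). Under this identification, the intrinsic $I_L$-action on $\pi^*\rm{R}^j\Psi_{\X_{\O_L}}\Lambda$ is the restriction along $I_L\hookrightarrow I_K$ of the intrinsic $I_K$-action on $\pi^*\rm{R}^j\Psi_\X\Lambda$. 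Moreover, since $L/K$ is a finite extension of local fields (hence of arithmetic fields), Lemma~\ref{lemma:mixed-after-base-change} ensures mixedness of any given weights is both preserved and reflected by $b^*$. Thus both (1) and (2) reduce to the case $L=K$.

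For (1), I would invoke the Rapoport--Zink / T.\,Saito explicit computation: for strictly semi-stable $\X/\O_K$, the sheaf $\rm{R}^j\Psi_\X\Lambda$ is an iterated extension of sheaves of the form $(a^{(m)})_*\Lambda(-c)$ with $0\le c\le j$, where $a^{(m)}\colon D^{(m)}\hookrightarrow\X_s$ is the closed immersion of the disjoint union of $m$-fold intersections of the irreducible components of $\X_s$. Each such building block is $\pi^*_{\X_s}$-pulled back from a sheaf on $\X_s$: the constant sheaf $(a^{(m)})_*\Lambda$ obviously descends, and the Tate twist $\Lambda(-c)$ has trivial inertia action because $\mu_{\ell^n}\subset K^{\rm{nr}}$ (as $\ell\ne p$), so its $G_K$-action factors through $G_k$. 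By Construction~\ref{construction:action} the intrinsic $I_K$-action on any $\pi^*_{\X_s}$-pullback is trivial, whence $I_K$ acts trivially on $\pi^*_{\X_s}\rm{R}^j\Psi_\X\Lambda$.

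For (2), the same description would show that $\rm{R}^j\Psi_\X\Q_\ell$ is a successive extension of sheaves $(a^{(m)})_*\Q_\ell(-c)$ with $0\le c\le j$, each pure of weight $2c$ (since $D^{(m)}$ is smooth over the finite field $k$, the closed immersion $a^{(m)}$ preserves purity under $*$-pushforward, and the Tate twist shifts weight by $2c$). Hence $\rm{R}^j\Psi_\X\Q_\ell$ is mixed of weights in $[0,2j]$. Using the Deligne normalization for complexes --- a complex $\mathcal{F}$ has weights $\le w$ iff $\mathcal{H}^i\mathcal{F}$ has weights $\le w+i$ --- the bound $2j$ together with $j\le d:=\dim\X_\eta$ yields $\rm{R}\Psi_\X\Q_\ell$ of weights $\le d$; the lower bound $\ge-d$ is immediate since all cohomology sheaves have weights $\ge 0$. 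The main technical step will be recording the Rapoport--Zink / T.\,Saito formula in the precise form needed on the Deligne topos (not merely as a $G_K$-equivariant sheaf on $\X_{\ov s}$), including the assertion that every building block arises from a genuine $\pi^*_{\X_s}$-pullback. Once the formula is available at this level of precision, both (1) and (2) fall out by inspection of the constituent terms.
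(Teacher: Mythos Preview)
Your overall strategy matches the paper's: reduce to $L=K$ via base-change compatibility (Lemma~\ref{lemma:compute-nearby-cycles}(\ref{lemma:compute-nearby-cycles-3})) and Lemma~\ref{lemma:mixed-after-base-change}, then invoke Saito's computation. However, there are two genuine gaps and one missing step.

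\textbf{Missing algebraization.} Saito's results in \cite{Saito} are stated for \emph{algebraic} strictly semi-stable $\O_K$-schemes, not for formal ones. The hypothesis ``algebraically strictly semi-stable'' means precisely that $\X$ is the $\varpi$-adic completion of such a scheme $X$, and the paper uses the comparison Theorem~\ref{thm:comparison} to identify $\rm{R}\Psi_\X\Lambda$ with $\rm{R}\Psi^{\rm{alg}}_X\Lambda$ before citing Saito. You skip this passage entirely; it should be made explicit.

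\textbf{Gap in (1).} Your argument ``each building block $(a^{(m)})_*\Lambda(-c)$ is a $\pi^*_{\X_s}$-pullback, hence $I_K$ acts trivially on $\rm{R}^j\Psi$'' is not valid: trivial $I$-action on the graded pieces of a filtration does \emph{not} imply trivial $I$-action on the total object. For instance, a nonsplit extension of $\Q_\ell$ by $\Q_\ell(1)$ coming from the Kummer class of a uniformizer has nontrivial $I$-action, even though both graded pieces are unramified. The paper avoids this by citing \cite[Proposition 1.1]{Saito} directly, which asserts the triviality of the $I$-action as a standalone result (proved there by an explicit identification of $\rm{R}^j\Psi$ with the kernel of a map between sheaves that genuinely live over $\X_s$, not merely a filtration with such graded pieces).

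\textbf{Gap in the lower bound of (2).} Your claim that ``weights $\geq -d$ is immediate since all cohomology sheaves have weights $\geq 0$'' is not correct as stated: the condition ``mixed of weights $\geq w$'' for a complex is defined via Verdier duality, and is \emph{not} equivalent to a punctual lower bound on the $\cal{H}^j$. (Already $j_!\Q_\ell$ on $\bf{A}^1_k$ has punctual weight $0$ but is not of weights $\geq 0$.) One can salvage your approach by checking that each building block $(a^{(m)})_*\Q_\ell(-c)$ is \emph{pure} of weight $2c$ in the complex sense (true, since $a^{(m)}$ is a closed immersion and $D^{(m)}$ is smooth), then running a d\'evissage via the standard truncation triangles. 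The paper instead takes the cleaner route: use that $\rm{R}\Psi$ commutes with Verdier duality (\cite[Corollary 3.8]{cat-traces}) and that $\bf{D}_{X_\eta}(\Q_\ell)\simeq\Q_\ell(d)[2d]$ on the smooth generic fiber, so $\bf{D}(\rm{R}\Psi\Q_\ell)\simeq\rm{R}\Psi\Q_\ell(d)[2d]$ has weights $\leq d$ by the already-established upper bound, whence $\rm{R}\Psi\Q_\ell$ has weights $\geq -d$.
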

In the proof of this lemma, we will freely use that $\rR\Psi_{\X} \Lambda \in D^b_{ctf}(\X_s\times_s \eta; \Lambda)$. This is guaranteed by Remark~\ref{rmk:finiteness-char-p} and its evident $\Z_\ell$ and $\Q_\ell$-versions. 
\begin{proof}
    By Lemma~\ref{lemma:compute-nearby-cycles}(\ref{lemma:compute-nearby-cycles-3}), we see that
    \[
    \rm{R}\Psi_{\X_{\O_L}} \Lambda \simeq b^*\rm{R}\Psi_{\X}\Lambda
    \]
    in $D(\X_{s'}\times_{s'}\eta')$, where $b\colon \X_{s'}\times_{s'} \eta' \to \X_{s}\times_s \eta$ is the natural morphism of Deligne's topoi. In particular,
    \[
    \pi^*_{\X_s}\rm{R}\Psi_{\X} \Lambda \simeq \pi^*_{\X_{s'}}\rm{R}\Psi_{\X_{\O_L}}\Lambda
    \]
    compatible with the $I_L$-action. Therefore, for the purpose of proving $(1)$, it suffices to prove the claim for $K=L$ and $\X$ an algebraically strictly semi-stable formal $\O_K$-scheme. Likewise, for the purpose of proving $(2)$, we can do same reduction by Lemma~\ref{lemma:mixed-after-base-change}. \smallskip
    
    By definition $\X$ is the $\varpi$-adic completion of a strictly semi-stable $\O_K$-scheme $X$. So we use the comparison of analytic and algebraic nearby cycles (see Theorem~\ref{thm:comparison}) to reduce the question to showing that, for a strictly semi-stable $\O_K$-scheme $X$, the element $(g-1)$ acts trivially on $\pi^*_X\rm{R}^j\Psi^{\rm{alg}}_X \Lambda$ for each $g\in I_K$ and $j\geq 0$, and $\rm{R}\Psi^{\rm{alg}}_X \Q_\ell$ is mixed of weights $\leq \dim X_\eta$ and $\geq -\dim X_\eta$ (we note that $\dim X_\eta=\dim \X_\eta$ as both are equal to $\dim X_s$). This essentially follows from the explicit computation of nearby cycles for strictly semi-stable schemes in \cite{Saito}. \smallskip 
    
    Namely, the first part is exactly \cite[Proposition 1.1]{Saito}. For the second part, we note that \cite[Proposition 1.2(2) and Corollary 1.3(1)]{Saito} imply that $\rm{R}^j\Psi^{\rm{alg}}_X \Q_\ell$ is mixed of weight $\leq 2j$. This already implies that $\rm{R}\Psi^{\rm{alg}}_X\Q_\ell$ is mixed. We are only left to show that it is mixed of weights $\leq \dim X_\eta$. Now note that $\rm{R}^j\Psi^{\rm{alg}}_X\Q_\ell=0$ for $j> \dim X_\eta$ by the Artin-Grothendieck Vanishing Theorem (see \cite[Corollary 7.5.2]{Lei-Fu} and its evident extension to $\Q_\ell$-coefficients). Therefore, we conclude that $\rm{R}\Psi^{\rm{alg}}_X\Q_\ell$ is mixed of weights\footnote{See \cite[Definition III.12.3]{FK} for the numerology around weights.} less or equal to
    \[
    \rm{max}_j\left(w\big(\rm{R}^j\Psi^{\rm{alg}}_X\Q_\ell\big)-j\right)=\dim X_\eta.
    \]
    Now we note that $\rm{R}\Psi^{\rm{alg}}$ commutes with Verdier duality by \cite[Corollary 3.8]{cat-traces} (and its evident extension to the $\Q_\ell$-case). Or, in other words, 
    \[
    \rm{R}\Psi^{\rm{alg}}_X\bf{D}_{X_\eta}(\Q_\ell) \simeq \bf{D}_{X_s\times_s \eta}(\rm{R}\Psi^{\rm{alg}}_X \Q_\ell).
    \]
    Since $X$ is regular, we can separately pass to each connected component to assume that $X_\eta$ is pure of dimension $d$ for some integer $d\geq 0$. Then $\bf{D}_{X_\eta}(\Q_\ell)\simeq \Q_\ell(d)[2d]$ and, therefore, 
    \[
    \bf{D}_{X_s\times_s \eta}(\rm{R}\Psi^{\rm{alg}}_X \Q_\ell)\simeq \rm{R}\Psi_{X}^{\rm{alg}}\Q_\ell(d)[2d] \leq d -2d+2d=d
    \]
    by the established above inequality on weights (see also \cite[Remark on p. 131]{KW}). Therefore, $\rm{R}\Psi^{\rm{alg}}_X \Q_\ell\geq -d$. 
\end{proof}

For the later use, we will also need the following lemma about weights on the \'etale cohomology groups of algebraically strictly semi-stable formal $\O_K$-schemes:

\begin{lemma}\label{lemma:weights-strictly-semistible} Let $K$ be a local field and let $\X$ be an algebraically strictly semi-stable formal $\O_K$-scheme of dimension $d$. Then, for any $g\in G_\eta$ projecting to the geometric Frobenius in $G_s$ and any integer $i\geq 0$, the eigenvalues of $g$ acting on $\rm{H}^i(X_{\wdh{\ov{\eta}}}, \Q_\ell)$ are $q$-Weil numbers of weights $\geq 0$.
\end{lemma}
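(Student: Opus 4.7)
The strategy is to reduce to a computation on the special fibre via nearby cycles and then invoke the Rapoport--Zink weight spectral sequence, whose $E_1$-terms only involve cohomology of smooth $k$-schemes.

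Write $\X=\wdh{X}$ for a strictly semi-stable $\O_K$-scheme $X$, so that $\X_s=X_s$. By construction of the nearby cycles functor (Leray for the specialisation morphism $\sp\colon \X_\eta\to \X_s$), one has
\[
\rm{H}^i(\X_{\wdh{\ov\eta}}, \Q_\ell)\simeq \rm{H}^i(\X_{\ov s}, \pi^*_{\X_s}\rm{R}\Psi_\X \Q_\ell),
\]
and the analytic--algebraic comparison (Theorem~\ref{thm:comparison}) further identifies this with $\rm{H}^i(X_{\ov s}, \pi^*_{X_s}\rm{R}\Psi^{\rm{alg}}_X \Q_\ell)$. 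Decompose $X_s=\bigcup_{i\in S}D_i$ into its smooth components, write $D_J=\bigcap_{i\in J}D_i$ and $D^{(k)}=\bigsqcup_{|J|=k}D_J$ (smooth over $k$ but not necessarily proper). T.~Saito's refinement of the Rapoport--Zink filtration on $\rm{R}\Psi^{\rm{alg}}_X\Q_\ell$ is constructed purely sheaf-theoretically (so does not require $X$ to be proper) and, after applying $\rm{R}\Gamma(X_{\ov s},-)$, produces a convergent spectral sequence
\[
E_1^{p,q}=\bigoplus_{r\geq \max(0,-p)}\rm{H}^{q-p-2r}\!\left(D^{(p+2r+1)}_{\ov s},\Q_\ell(-p-r)\right)\Longrightarrow \rm{H}^{p+q}(\X_{\wdh{\ov\eta}}, \Q_\ell).
\]

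For any smooth (not necessarily proper) scheme $Y$ over $\bf{F}_q$, Poincar\'e--Verdier duality combined with Deligne's Weil~II bound that Frobenius weights on $\rm{H}^m_c(Y_{\ov s},\Q_\ell)$ are $\leq m$ implies that Frobenius weights on $\rm{H}^m(Y_{\ov s},\Q_\ell)$ are $\geq m$. Applied to $Y=D^{(p+2r+1)}$ with $m=q-p-2r$ and twisted by $(-p-r)$, each direct summand of $E_1^{p,q}$ is mixed with Frobenius weights $\geq (q-p-2r)+2(p+r)=p+q$. Since $E_\infty^{p,q}$ is a subquotient of $E_1^{p,q}$, the abutment $\rm{H}^n(\X_{\wdh{\ov\eta}}, \Q_\ell)$ is mixed with Frobenius weights $\geq n\geq 0$, and in particular the eigenvalues are $q$-Weil numbers of weight $\geq 0$, as desired.

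The main obstacle is the sheaf-level construction of Saito's filtration and its associated spectral sequence in the non-proper setting: one must verify that the graded pieces of $\rm{R}\Psi^{\rm{alg}}_X\Q_\ell$ are of the form of Tate-twisted pushforwards $a^{(k)}_*\Q_\ell(-r)$ from the smooth strata $D^{(k)}$ (this is purely local on $X_s$, so carries over verbatim from the proper case already used in Lemma~\ref{lemma:trivial-action-good-model}), and that applying $\rm{R}\Gamma(X_{\ov s},-)$ yields the indicated $E_1$-page converging to $\rm{H}^\ast(\X_{\wdh{\ov\eta}},\Q_\ell)$ rather than to $\rm{H}^\ast(X_{\ov\eta},\Q_\ell)$ (the two agree only when $X$ is proper). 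All remaining steps are standard weight bookkeeping.
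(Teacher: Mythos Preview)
Your strategy is identical to the paper's: pass to algebraic nearby cycles via Theorem~\ref{thm:comparison} and apply Saito's weight spectral sequence, then bound the Frobenius weights on the $E_1$-page using Weil~II for the smooth strata $D^{(k)}$. However, you have written the $E_1$-page incorrectly. The correct form (Saito, Corollary~2.8, as cited in the paper) is
\[
E_1^{p,q}=\bigoplus_{r\geq \max(0,-p)} \rm{H}^{q-2r}\!\left(D^{(p+2r+1)}_{\ov s},\Q_\ell(-r)\right)\Longrightarrow \rm{H}^{p+q}(\X_{\wdh{\ov\eta}}, \Q_\ell):
\]
the Tate twist is $(-r)$ and the cohomological degree is $q-2r$, not $(-p-r)$ and $q-p-2r$. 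With your formula one would get $E_1^{p,q}$ of weight $\geq p+q$ and hence $\rm{H}^i$ of weight $\geq i$; but this is false already for a Tate curve with strictly semistable reduction, whose $\rm{H}^1$ carries a weight-$0$ summand coming from the $(p,q)=(1,0)$ term $\rm{H}^0(D^{(2)}_{\ov s},\Q_\ell)$ (no twist). Your $E_1$-page literally omits this contribution, since $\rm{H}^{q-p-2r}=\rm{H}^{-1}$ vanishes there.

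With the correct formula the computation runs exactly as in the paper: each summand of $E_1^{p,q}$ has weight $\geq (q-2r)+2r=q$, and nonzero terms force $q\geq 2r\geq 0$, so every Frobenius eigenvalue on the abutment is a $q$-Weil number of weight $\geq 0$. Your handling of the non-proper subtleties in the final paragraph is fine and matches the paper's implicit reasoning.
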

\begin{proof}
    Let us denote by the irreducible components of $\X_s$ by $D_1, \dots, D_m$. For a non-empty subset $I\subset \{1, \dots, m\}$, we put $\X_{s, I}\coloneqq \cap_{i\in I} D_i$, we also put 
    \[
    \X_{s}^{(n)}=\bigcup_{I\subset \{1, \dots, m\}, |I|=n+1} \X_{s, I}.
    \]
    
    We first start with the action of $G_\eta$ on $\rm{H}^i(X_{\wdh{\ov{\eta}}}, \Q_\ell)$. The proof of \cite[Corollary 2.8(1), (2)]{Saito} and the identification of the cohomology of the nearby cycle and the cohomology of generic fiber (see Remark~\ref{rmk:action-coincide} and its evident extension to  $\Q_\ell$-cohomology) construct a spectral sequence
    \[
    \rm{E}^{n, m}_1 = \bigoplus_{j\geq \rm{max}(0, -n)} \rm{H}^{m-2j}(\X_{\ov{s}}^{(n+2j)}, \Q_\ell(-j)) \Longrightarrow \rm{H}^{n+m}(\X_{\wdh{\ov{\eta}}}, \Q_\ell). 
    \]
    Therefore, it suffices to show that all eigenvalues of the action of any Frobenius-lift on 
    \[
    \rm{H}^{m-2j}\left(\X_{\ov{s}}^{(n+2j)}, \Q_\ell(-j)\right)=\rm{H}^{m}\left(\X_{\ov{s}}^{(n+2j)}, \Q_\ell(-j)[-2j]\right)
    \]
    are $q$-Weil number of weights $\geq 0$ for any $n, m, i$. Since $\X_{s}^{(n+2j)}$ is smooth, $\Q_\ell(-j)[-2j]$ is pure of weight $0$. Therefore, $\rm{H}^{m}\left(\X_{\ov{s}}^{(n+2j)}, \Q_\ell(-j)[-2j]\right)$ is mixed of weights $\geq m$ by Weil conjectures (see \cite[Stabilit\'es 5.1.14(i*)]{BBD}). 
\end{proof}

Now we discuss some consequences of Lemma~\ref{lemma:trivial-action-good-model}.

\begin{lemma}\label{lemma:trivial-action-any-model} Let $K$ be a discretely valued $p$-adic non-archimedean field and let $\X$ be an admissible formal $\O_K$-scheme. Then
\begin{enumerate}
    \item there is an open subgroup $I_1\subset I$ (independent of $\Lambda$ and $\ell\neq p$) such that, for each $j\geq 0$, there is an integer $N_j$ such that $(g-1)^{N_j}$ acts trivially on $\pi^*_{\X_s}\rm{R}^j\Psi_{\X} \Lambda$ for any $g\in I_1$;
    \item if $k$ is a finite field, $\rm{R}\Psi_{\X} \Q_\ell$ is mixed.
\end{enumerate}
\end{lemma}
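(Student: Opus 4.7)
Both statements will be deduced from Lemma~\ref{lemma:trivial-action-good-model} via a hypercovering reduction to the algebraically strictly semi-stable case. Since $\rm{R}^j\Psi_{\X}\Lambda = 0$ for $j > d := \dim \X_\eta$ by the Artin--Grothendieck vanishing theorem, only finitely many indices are relevant. Fix $n > d$. By Corollary~\ref{cor:strictly-semi-stable-hypercovering}, there exists a finite extension $K \subset L$ and a rig-surjective hypercovering $a\colon \X_\bullet \to \X_{\O_L}$ such that, for $i \leq n$, each $\X_i$ is the $\O_L$-base change of an algebraically strictly semi-stable formal $\O_{K_i}$-scheme with $K \subset K_i \subset L$. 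The construction of Lemma~\ref{lemma:strictly-semi-stable-covering} (extending proper resolutions of singularities of the generic fibers via Bosch's \cite[Lemma 8.4/4]{B}) allows us to arrange that each structure map $a_i\colon \X_i \to \X_{\O_L}$ is proper.

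Next, I would establish the cohomological descent formula
\[
\rm{R}\Psi_{\X_{\O_L}}\Lambda \simeq \rm{R}(a_s \times_s \eta)_* \rm{R}\Psi_{\X_\bullet}\Lambda.
\]
On the generic fiber, $a_\eta\colon \X_{\bullet,\eta} \to \X_{L,\eta}$ is a $v$-hypercovering, so $\Lambda \simeq \rm{R}a_{\eta,*}\Lambda$ by Lemma~\ref{lemma:hyperdescent}. Combining this with proper base change for nearby cycles along each proper $a_i$ (promoted to the simplicial setup via the standard simplicial topos formalism) yields the displayed formula. This gives a convergent spectral sequence $E_2^{p,q} = \rm{R}^p(a_s\times_s \eta)_* \rm{R}^q\Psi_{\X_\bullet}\Lambda \Rightarrow \rm{R}^{p+q}\Psi_{\X_{\O_L}}\Lambda$, and standard truncation arguments ensure that only terms indexed by $i \leq n$ contribute to degrees $\leq d$.

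For statement (1), Lemma~\ref{lemma:trivial-action-good-model}(1) tells us that every $g \in I_L$ acts trivially on each $\pi^* \rm{R}^q\Psi_{\X_i}\Lambda$ with $i \leq n$. Hence $g$ acts trivially on each $\pi^* E_2^{p,q}$ in the relevant range, and therefore on each $\pi^* E_\infty^{p,q}$. The action on the abutment $\pi^*_{\X_{L,s}}\rm{R}^j\Psi_{\X_{\O_L}}\Lambda$ is then $(g-1)^{N_j}$-unipotent for some $N_j$ bounded by the length of the induced filtration. Using Lemma~\ref{lemma:compute-nearby-cycles}(3) to identify $\pi^*_{\X_{L,s}}\rm{R}^j\Psi_{\X_{\O_L}}\Lambda \simeq \pi^*_{\X_s}\rm{R}^j\Psi_{\X}\Lambda$ compatibly with the inertia action, we take $I_1 := I_L \subset I_K$, which is nonempty open and manifestly independent of $\Lambda$ and $\ell \neq p$. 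For statement (2), Lemma~\ref{lemma:trivial-action-good-model}(2) gives that each $\rm{R}\Psi_{\X_i}\Q_\ell$ is mixed; since mixedness is preserved by the pushforward $\rm{R}(a_s \times_s \eta)_*$ (by the analogue of \cite[Stabilit\'es 5.1.14]{BBD} for Deligne's topos, checked after pulling back by any continuous section $\sigma$), $\rm{R}\Psi_{\X_{\O_L}}\Q_\ell$ is mixed; finally, Lemma~\ref{lemma:mixed-after-base-change} applied to $K \subset L$ transfers mixedness back to $\rm{R}\Psi_{\X}\Q_\ell$.

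\emph{Main obstacle.} The principal technical point is the proper base change formula for nearby cycles in the simplicial setting, together with the arrangement that all structure maps of the hypercovering can be taken proper. Once this is in hand, the rest of the proof is a bookkeeping exercise on the spectral sequence and uses only the semi-stable input provided by Lemma~\ref{lemma:trivial-action-good-model} and the base-change compatibility of mixedness.
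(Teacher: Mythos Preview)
Your approach is essentially the same as the paper's: reduce via a rig-surjective hypercovering to the algebraically strictly semi-stable case, use cohomological descent on the generic fiber (Lemma~\ref{lemma:hyperdescent}), push through the nearby cycles, and conclude via the two spectral sequences together with Lemma~\ref{lemma:trivial-action-good-model}. The bookkeeping for parts (1) and (2) matches the paper's.

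The one substantive discrepancy is your ``main obstacle.'' You insist on arranging the augmentation maps $a_i\colon \X_i\to\X_{\O_L}$ to be proper so as to invoke a proper base change for nearby cycles. This is both unnecessary and not obviously achievable. The identity
\[
\rm{R}\Psi_{\X_{\O_L}}\circ \rm{R}a_{\eta,*}\;\simeq\;\rm{R}(a_s\times_s\eta)_*\circ\rm{R}\Psi_{\X_\bullet}
\]
holds for \emph{any} morphism of admissible formal schemes, with no properness hypothesis: this is Lemma~\ref{lemma:compute-nearby-cycles}(\ref{lemma:compute-nearby-cycles-2}), whose proof is literally ``This is formal'' (it follows from the $2$-commutativity of the square of topoi defining $\Psi$). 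So the step you flag as the principal technical point is in fact trivial. Conversely, your claim that the maps in Corollary~\ref{cor:strictly-semi-stable-hypercovering} can be taken proper is not justified by the construction there (Bosch's \cite[Lemma 8.4/4]{B} extends a map of generic fibers to some map of formal models, but does not produce a proper one, and the semi-stable model from \cite{Temkin} or \cite{Z1} need not sit properly over the original $\X$). Fortunately, since properness is irrelevant, this gap is harmless: simply drop the properness requirement and cite Lemma~\ref{lemma:compute-nearby-cycles}(\ref{lemma:compute-nearby-cycles-2}) directly.
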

\begin{proof}
    Firstly, we note that rigid-analytic Artin-Grothendieck vanishing (see \cite[Theorem 7.3]{arc-topology}, \cite[Theorem 1.3]{Hansen-vanishing}, or \cite[Theorem 1.10]{GZ}) implies 
    \[
    \rm{R}^j\Psi_{\X} \Lambda = 0
    \]
    for $j> d\coloneqq \dim \X_\eta$. Therefore, it suffices to prove the claim for $j\leq d$. Both claims can be checked after a finite extension of $K$ (see Lemma~\ref{lemma:mixed-after-base-change} for mixedness), so Lemma~\ref{lemma:strictly-semi-stable-covering} and Lemma~\ref{lemma:trivial-action-good-model} ensure that we can assume that $\X$ admits a rig-surjective hypercovering 
    \[
    a\colon \cY_{\bullet} \to \X
    \]
    such that $\rm{R}^j\Psi_{\cY_n} \Q_\ell$ is mixed and $(g-1)$ acts trivially on $\rm{R}^j\Psi_{\cY_n} \Lambda$ for each $g\in I_K$, $j\geq 0$, $n\leq d$. \smallskip
    
    Now Lemma~\ref{lemma:hyperdescent} implies that
    \[
    \Lambda \simeq \rm{R}a_{\eta, *}a_{\eta}^* \Lambda.
    \]
    So 
    \[
    \rm{R}\Psi_{\X} \Lambda \simeq \rm{R}\Psi_{\X} \rm{R}a_{\eta, *} \Lambda \simeq \rm{R}(a_{s}\times_s\eta)_{ *}\rm{R}\Psi_{\cY_\bullet} \Lambda.
    \]
    Therefore, we can use the Grothendieck spectral sequence
    \[
    \rm{E}_2^{i, j} = \rm{R}^i (a_{s}\times_s\eta)_{ *}\rm{R}^j\Psi_{\cY_\bullet}\Lambda \Longrightarrow \rm{R}^{i+j}\Psi_{\X}\Lambda.
    \]
    So it suffices to show that, for each $i+j\leq d$,
    \begin{enumerate}
        \item there is an integer $M_{i, j}$ such that $(g-1)^{M_{i, j}}$ acts trivially on $\pi^*_{\X_s} \rm{R}^i (a_{s}\times_s\eta)_{ *}\rm{R}^j\Psi_{\cY_\bullet}\Lambda$;
        \item each $\rm{R}^i (a_{s}\times_s\eta)_{ *}\rm{R}^j\Psi_{\cY_\bullet}\Q_\ell$ is mixed if $k$ is a finite field.
    \end{enumerate}
    Now we use \cite[\href{https://stacks.math.columbia.edu/tag/0D7A}{Tag 0D7A}]{stacks-project} to get a spectral sequence
    \[
    \rm{E}_1^{n, m} = \rm{R}^m (a_{n, s}\times_s\eta)_{ *}\rm{R}^j\Psi_{\cY_n}\Lambda \Longrightarrow \rm{R}^{n+m} (a_{s}\times_s\eta)_{ *}\rm{R}^j\Psi_{\cY_\bullet}\Lambda,
    \]
    where $a_n\colon \cY_n\to \X$ is the augmentation morphism. Since each $a_{n, s}$ is of finite type, \cite[Theorem I.9.4]{KW} and Lemma~\ref{lemma:pushforward-pullback}(\ref{lemma:pushforward-pullback-2}) (and Remark~\ref{rmk:adic-good}) imply that $\rm{R}^m (a_{n, s}\times_s\eta)_{ *}$ preserves mixed complexes and triviality of an action, so it suffices to show that, for each $n+j+m\leq d$ (both claims below do not depend on $n$ and $m$ though),
    \begin{enumerate}
        \item there is an integer $M_n$ such that $(g-1)^{M_{n}}$ acts trivially on $\pi^*_{\X_s} \rm{R}^j\Psi_{\cY_n}\Lambda$;
        \item $\rm{R}^j\Psi_{\cY_n}\Q_\ell$ is mixed if $k$ is a finite field.
    \end{enumerate}
    This follows from our assumption on the hypercovering $\cY_\bullet \to \X$. 
\end{proof}

\begin{cor}\label{cor:trivial-action-any-model} Let $K$ be a discretely valued $p$-adic non-archimedean field and let $\X$ be an admissible formal $\O_K$-model. Then there is an open subgroup $I_1\subset I$ (independent of $\Lambda$ and $\ell$) such that, for each $j\geq 0$, there is an integer $N$ such that $(g-1)^{N}$ acts trivially on $\pi^*_{\X_s}\rm{R}\Psi_{\X} \Lambda$ for any $g\in I_1$.
\end{cor}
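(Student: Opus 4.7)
The plan is to deduce this corollary from Lemma~\ref{lemma:trivial-action-any-model}(1) by a formal d\'evissage on the Postnikov tower of $\rm{R}\Psi_\X \Lambda$. The only substantive inputs are (i) that $\rm{R}\Psi_\X \Lambda$ has bounded amplitude, so only finitely many pieces need to be glued, and (ii) a standard triangulated-category bound on the unipotency exponent of the middle term of a triangle.

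More precisely, by rigid-analytic Artin--Grothendieck vanishing (as already recorded in the proof of Lemma~\ref{lemma:trivial-action-any-model}), one has $\rm{R}^j\Psi_\X \Lambda = 0$ for $j > d \coloneqq \dim \X_\eta$, and of course $\rm{R}^j\Psi_\X\Lambda = 0$ for $j < 0$. Lemma~\ref{lemma:trivial-action-any-model}(1) then furnishes a non-empty open $I_1\subset I$ (independent of $\Lambda$ and $\ell\neq p$) and integers $N_0,\dots,N_d$ such that $(g-1)^{N_j}$ kills $\pi^*_{\X_s}\rm{R}^j\Psi_\X\Lambda$ for every $g\in I_1$ and every $0\leq j\leq d$. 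I claim $N \coloneqq N_0+N_1+\cdots+N_d$ does the job.

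The key elementary observation is: if $A\xr{i} B \xr{p} C \xr{+1}$ is a distinguished triangle in a triangulated category, and $\phi$ is an endomorphism of the triangle (i.e.\ $\phi_A, \phi_B, \phi_C$ commute with $i$ and $p$) with $\phi_A^a = 0$ and $\phi_C^c=0$, then $\phi_B^{a+c}=0$. Indeed, from $p\circ \phi_B^c = \phi_C^c \circ p = 0$ we see that $\phi_B^c$ factors as $i\circ \tilde f$ for some $\tilde f\colon B\to A$, and then
\[
\phi_B^{a+c} \;=\; \phi_B^a \circ (i\circ \tilde f) \;=\; i\circ \phi_A^a\circ \tilde f \;=\; 0.
\]

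Apply $\pi^*_{\X_s}$ to the Postnikov tower
\[
0 = \tau_{\leq -1}\rm{R}\Psi_\X\Lambda \to \tau_{\leq 0}\rm{R}\Psi_\X\Lambda \to \cdots \to \tau_{\leq d}\rm{R}\Psi_\X\Lambda = \rm{R}\Psi_\X\Lambda,
\]
whose successive cofibers are $\rm{R}^j\Psi_\X\Lambda[-j]$. Since $\pi^*_{\X_s}$ is $t$-exact (see Appendix~\ref{appendix:deligne}) and commutes with the $I$-action of Construction~\ref{construction:action}/\ref{construction:action-adic}, Lemma~\ref{lemma:trivial-action-any-model}(1) tells us that $(g-1)^{N_j}$ annihilates $\pi^*_{\X_s}\rm{R}^j\Psi_\X\Lambda[-j]$ for each $g\in I_1$. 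Iterating the observation above on the triangles
\[
\pi^*_{\X_s}\tau_{\leq j-1}\rm{R}\Psi_\X\Lambda \;\to\; \pi^*_{\X_s}\tau_{\leq j}\rm{R}\Psi_\X\Lambda \;\to\; \pi^*_{\X_s}\rm{R}^j\Psi_\X\Lambda[-j] \;\xr{+1}
\]
for $j=0,1,\dots,d$ yields by induction that $(g-1)^{N_0+\cdots+N_j}$ kills $\pi^*_{\X_s}\tau_{\leq j}\rm{R}\Psi_\X\Lambda$. Taking $j=d$ gives the claim with $N=N_0+\cdots+N_d$.

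There is no real obstacle: everything here is formal once Lemma~\ref{lemma:trivial-action-any-model} is established, and the argument is visibly uniform in $\ell\neq p$ and $\Lambda\in\{\Z/\ell^n\Z,\Z_\ell,\Q_\ell\}$ because both the open subgroup $I_1$ and the exponents $N_j$ produced by that lemma are uniform.
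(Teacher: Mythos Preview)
Your proof is correct and follows essentially the same route as the paper: both deduce the corollary from Lemma~\ref{lemma:trivial-action-any-model}(1) via d\'evissage along the Postnikov tower of $\rm{R}\Psi_\X\Lambda$, using the bounded amplitude coming from Artin--Grothendieck vanishing. The only cosmetic difference is the value of $N$: the paper takes $N = d\cdot\max_j N_j$ while you take the (sharper) $N = \sum_j N_j$, and you spell out the triangulated-category lemma that the paper leaves implicit.
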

\begin{proof}
    We choose $I_1$ as in Lemma~\ref{lemma:trivial-action-any-model}, and denote by $N'\coloneqq \max_{i=1, \dots, d}(N_i)$ where $d=\dim \X_\eta$. We also set up 
    \[
    N=d\cdot \max_{i=1,\dots, d}(N_i).
    \]
    Then, for any $g\in I_1$, $(g-1)^{N'}$ acts trivially on each $\pi^*_{\X_s} \rm{R}^j\Psi_{\X} \Lambda$ by the choice of $N'$ and the fact that $\rm{R}^j\Psi_{\X} \Lambda=0$ for $j>d$ (see \cite[Theorem 7.3]{arc-topology} and \cite[Theorem 1.3]{Hansen-vanishing}). Therefore, $((g-1)^{N'})^d=(g-1)^N$ acts trivially on $\pi^*_{\X_s} \rm{R}\Psi_{\X} \Lambda$.
\end{proof}

\subsection{Nearby cycles of the intersection complex}\label{section:nearby-IC}

The main goal of this section is to show a version of the Grothendieck's Local Monodromy Theorem for both (compactly supported) cohomology and intersection cohomology of a qcqs rigid-analytic variety. \smallskip

Throughout this section, we fix a $p$-adic non-archimedean field $K$ and a prime number $\ell \in \O_K^\times$. \smallskip
 
We recall that \cite[Construction 4.12]{Bhatt-Hansen} defines the notion of an IC-sheaf $\rm{IC}_{X, \Lambda}$ for any qcqs rigid-analytic $K$-space $X$ and a coefficient ring $\Lambda\in \{\Z/\ell^n\Z, \Q_\ell\}$. To define $\rm{IC}_{X,\Lambda}$, we fix an dense Zariski-open subset $U\subset X$ such that $U_{\rm{red}}$ is smooth and define $\rm{IC}_{X,\Lambda}\coloneqq j_{!*}\Q_\ell[d_{U}]$ where $d_U\colon |U_{\rm{red}}| \to \Z$ is the dimension function\footnote{The dimension function on a smooth rigid-analytic space is locally constant, so it makes sense to shift a complex by $d_U$.}.   

\begin{lemma}\label{lemma:ic-subquotient} Let $K$ be a $p$-adic non-archimedean field, let $X$ be a reduced irreducible qcqs rigid-analytic $K$-variety of pure dimension $d$,  let $f\colon X' \to X$ be a resolution of singularities that is an isomorphism on $U=X^{\rm{sm}}$, and let $\Lambda=\Z/\ell^n\Z$ or $\Q_\ell$. Then $\rm{IC}_{X, \Lambda}$ is one of the simple perverse subquotients of $^p\rm{R}^0f_*(\Lambda[d])$.
\end{lemma}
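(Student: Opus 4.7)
The plan is to establish that $\rm{R}f_*(\Lambda[d])$ is already a perverse sheaf on $X$ which restricts to $\Lambda[d]$ on $U$, and then to apply the standard property that any perverse extension of a simple perverse sheaf from $U$ admits the middle extension as a subquotient.

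First, I will verify perversity. Since $X$ is irreducible of pure dimension $d$ and $f$ is a resolution of singularities that is an isomorphism on the dense open $U\subset X$, the source $X'$ is smooth of pure dimension $d$; hence $\Lambda_{X'}[d]$ is a perverse sheaf on $X'$. Properness of $f$ gives $\rm{R}f_* = \rm{R}f_!$, and because $\rm{R}f_!$ is right $t$-exact while $\rm{R}f_*$ is left $t$-exact for the perverse $t$-structure, the common value $\rm{R}f_*(\Lambda[d])$ is perverse. In particular ${}^p\rm{R}^0f_*(\Lambda[d]) = \rm{R}f_*(\Lambda[d])$ and all other perverse cohomology sheaves vanish; moreover its restriction to $U$ is $\Lambda[d]|_U$, because $f$ is an isomorphism over $U$.

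Second, I will realize $\rm{IC}_{X,\Lambda}$ as a subquotient via the adjunctions. Write $j\colon U\hookrightarrow X$ and $\cal{F}={}^p\rm{R}^0f_*(\Lambda[d])$. The $(j_!,j^*)$- and $(j^*,\rm{R}j_*)$-adjunctions supply maps $j_!(\Lambda[d])\to \cal{F}$ and $\cal{F}\to \rm{R}j_*(\Lambda[d])$ in the derived category, whose composite is the canonical adjunction arrow. Applying ${}^p\cal{H}^0$, and using that $\cal{F}$ is already perverse so ${}^p\cal{H}^0(\cal{F})=\cal{F}$, produces a factorization
\[
{}^pj_!(\Lambda[d])\xrightarrow{\;\alpha\;}\cal{F}\xrightarrow{\;\beta\;}{}^pj_*(\Lambda[d])
\]
in $\rm{Perv}(X;\Lambda)$. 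By definition of the middle extension, the image of $\beta\circ\alpha$ equals $j_{!*}(\Lambda[d])=\rm{IC}_{X,\Lambda}$. Since $\mathrm{im}(\beta\circ\alpha)$ is a quotient of $\mathrm{im}(\alpha)\subseteq\cal{F}$, this exhibits $\rm{IC}_{X,\Lambda}$ as a subquotient of $\cal{F}={}^p\rm{R}^0f_*(\Lambda[d])$. Finally, $U$ is irreducible (being open and dense in the irreducible $X$), so $\Lambda[d]|_U$ is a simple perverse sheaf on $U$ and hence $j_{!*}(\Lambda[d])$ is a simple perverse sheaf on $X$.

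The only nontrivial point is that the perverse $t$-structure, the middle extension functor $j_{!*}$, and the $t$-exactness of proper pushforward all function in the rigid-analytic setting exactly as in the algebraic one for both coefficient rings $\Z/\ell^n\Z$ and $\Q_\ell$; this is part of the perverse formalism on rigid-analytic spaces established in \cite{Bhatt-Hansen} and used throughout this paper, so one simply quotes it.
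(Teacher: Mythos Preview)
Your Step~1 contains a genuine error: $\rm{R}f_*(\Lambda[d])$ is \emph{not} perverse for a general resolution of singularities. The assertion that ``$\rm{R}f_!$ is right $t$-exact while $\rm{R}f_*$ is left $t$-exact'' is the Artin vanishing statement for \emph{affine} morphisms, not proper ones (and you have the two directions reversed in any case). A resolution is typically neither affine nor semismall. Concretely, if $X$ is the affine cone over $\bf{P}^{d-1}$ with $d\geq 3$ and $f$ is the blow-up of the vertex, then the stalk of $\rm{R}f_*\Lambda[d]$ at the cone point is $\rm{R}\Gamma(\bf{P}^{d-1},\Lambda)[d]$, which has nonzero cohomology in degree $d-2>0$; this violates the support condition for ${}^pD^{\leq 0}$ at a zero-dimensional stratum.

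Fortunately this error is inessential to your argument. Simply drop the perversity claim and set $\cal{F}={}^p\rm{R}^0f_*(\Lambda[d])$ from the start. The adjunction maps $j_!(\Lambda[d])\to\rm{R}f_*(\Lambda[d])\to\rm{R}j_*(\Lambda[d])$ live in the derived category; applying ${}^p\cal{H}^0$ (and using only that $j_!(\Lambda[d])\in{}^pD^{\leq 0}$ and $\rm{R}j_*(\Lambda[d])\in{}^pD^{\geq 0}$, which holds for any open immersion) yields the factorization ${}^pj_!(\Lambda[d])\xrightarrow{\alpha}{}^p\rm{R}^0f_*(\Lambda[d])\xrightarrow{\beta}{}^pj_*(\Lambda[d])$ whose composite is the canonical map, with image $\rm{IC}_{X,\Lambda}$. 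So your Step~2 already proves the subquotient statement without any input from Step~1.

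With that correction, your route is slightly different from the paper's and arguably more direct: the paper instead lists the simple perverse subquotients of ${}^p\rm{R}^0f_*(\Lambda[d])$, invokes the classification of simple perverse sheaves from \cite{Bhatt-Hansen} to write each as an intermediate extension, and then singles out the unique one restricting to $\Lambda[d]$ on $U$ via \cite[Corollary~III.5.4]{KW}. Your adjunction argument bypasses the classification entirely. One small caveat: your closing remark that $\Lambda[d]|_U$ is simple fails when $\Lambda=\Z/\ell^n\Z$ with $n>1$, since $\Lambda$ is then not simple over itself; this affects only the word ``simple'' in the statement, not the subquotient claim actually used downstream.
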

\begin{proof}
    Let $\F_i$ be a finite set of all simple perverse subquotients of $^p\rm{R}^0f_*\left(\Lambda[d]\right)$. By \cite[Theorem 4.2, Theorem 4.11]{Bhatt-Hansen}, each $\F_i$ is isomorphic to $j_{!*} \cal{L}[d]$ for some Zariski locally-closed $j\colon V\to X$ and a simple locally constant $\Lambda$-sheaf $\cal{L}$ on $V$.\smallskip
    
    Now we take $U\subset X$ to be a non-empty Zariski open subset such that $f$ is an isomorphism over $U$. Then $\big({}^p\rm{R}^0f_*(\Lambda[d])\big)|_U\simeq \big(\Lambda[d]\big)|_U$. So there is a unique $\F_i$ among simple perverse subquotients of $^p\rm{R}^0f_*(\Lambda[d])$ such that $\F_i|_{U}\simeq \Lambda_{U}[d] \simeq \rm{IC}_{X, \Lambda}|_U$. Now one can argue as in algebraic geometry (see \cite[Corollary III.5.4]{KW}) to show that $\F_i \simeq \rm{IC}_{X, \Lambda}$.
\end{proof}

\begin{lemma}\label{lemma:ic-subquotient-smooth} Let $K$ be a $p$-adic non-archimedean field and let $f\colon X'\to X$ be an alteration of smooth connected finite type $K$-schemes of pure dimension $d$. Then $\Q_{\ell, X}[d]$ is one of the simple perverse subquotients of $^p\rm{R}^0f_*(\Q_{\ell, X'}[d])$.
\end{lemma}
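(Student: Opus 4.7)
The plan is to exploit the fact that the alteration $f$ is generically finite étale, and to use the trace splitting available in $\Q_\ell$-coefficients to extract $\Q_{\ell, X}[d]$ as a Jordan--Hölder constituent of ${}^p\rm{R}^0 f_*(\Q_{\ell, X'}[d])$. First I would pick a dense open $j\colon U \hookrightarrow X$ over which $f_U := f|_{f^{-1}(U)}$ is finite étale of some positive degree $n$; this exists because $K$ has characteristic zero and $X, X'$ are smooth, so the function field extension $K(X) \to K(X')$ is separable and $f$ is generically étale. Since $n$ is a unit in $\Q_\ell$, the composition of the unit $\Q_{\ell, U} \to (f_U)_*\Q_\ell$ with $\frac{1}{n}\mathrm{tr}$ is the identity, giving a direct sum decomposition $(f_U)_*\Q_\ell \cong \Q_{\ell, U} \oplus \F$ for some lisse $\Q_\ell$-sheaf $\F$ on $U$.

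Next, because $j^*$ is perverse $t$-exact and $\rm{R}(f_U)_*\Q_\ell$ is concentrated in degree zero (as $f_U$ is finite étale), proper base change along the Cartesian square with upper row $f^{-1}(U) \to X'$ gives an isomorphism of perverse sheaves on $U$
\[
{}^p\rm{R}^0 f_*(\Q_{\ell, X'}[d])|_U \;\cong\; (f_U)_*\Q_\ell[d] \;\cong\; \Q_{\ell, U}[d] \oplus \F[d].
\]
Picking a Jordan--Hölder filtration $0 = \cal{G}_0 \subset \cdots \subset \cal{G}_m = {}^p\rm{R}^0 f_*(\Q_{\ell, X'}[d])$ with simple perverse subquotients $S_i$ and restricting to $U$ then yields a filtration of $\Q_{\ell, U}[d] \oplus \F[d]$ in $\rm{Perv}(U; \Q_\ell)$; since $\Q_{\ell, U}[d]$ is a simple perverse sheaf on the smooth connected $U$ appearing as a direct summand of this restriction, some $S_{i_0}|_U$ must be isomorphic to $\Q_{\ell, U}[d]$ by uniqueness of Jordan--Hölder multiplicities.

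Finally I would identify $S_{i_0}$ itself with $\Q_{\ell, X}[d]$. Writing $S_{i_0} = (i_V)_{!*}(\cal{L}[d_V])$ for an irreducible smooth locally closed $i_V\colon V \hookrightarrow X$ and a simple lisse $\Q_\ell$-sheaf $\cal{L}$ on $V$, the non-vanishing of $S_{i_0}|_U$ with full support in $U$ forces $\overline V = X$ and $d_V = d$, and smoothness of $X$ then makes $V$ a dense open of $X$. Compatibility of $(-)_{!*}$ with restriction along the open immersion gives $\cal{L}|_{V\cap U} \cong \Q_{\ell, V\cap U}$; since $V$ is connected and $V\cap U$ is dense open in $V$, the induced surjection $\pi_1^{\et}(V\cap U) \twoheadrightarrow \pi_1^{\et}(V)$ on étale fundamental groups forces $\cal{L} \cong \Q_{\ell, V}$, and then smoothness of $X$ yields $S_{i_0} = (i_V)_{!*}(\Q_{\ell, V}[d]) = \Q_{\ell, X}[d]$ as desired.

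I do not anticipate any serious obstacle; the argument is softly formal once the trace splitting and the classification of simple perverse sheaves are in hand. The one point worth flagging is that the trace splitting in the first step requires $\Q_\ell$-coefficients, because the generic degree $n$ need not be invertible in $\Z/\ell^n\Z$; this is consistent with the fact that Lemma~\ref{lemma:ic-subquotient} had to be phrased as an IC-subquotient statement rather than a direct-summand statement in the $\Lambda$-coefficient case.
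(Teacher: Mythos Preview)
Your proof is correct and follows essentially the same approach as the paper's: pick a dense open $U$ over which $f$ is finite \'etale, use the trace to exhibit $\Q_{\ell,U}[d]$ as a summand of $(f_U)_*\Q_\ell[d]$, and then identify the corresponding simple subquotient via the classification of simple perverse sheaves as intermediate extensions. The paper's version is terser---it invokes the argument of Lemma~\ref{lemma:ic-subquotient} and defers the final identification to \cite[Corollary~III.5.4]{KW}---while you spell out the $\pi_1$-argument explicitly, but the substance is the same.
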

\begin{proof}
    The proof is essentially identical to that of Lemma~\ref{lemma:ic-subquotient}. The only new difference is to show that there is a dense non-empty open $U\subset X$ such that $\Q_{\ell, U}[d]$ is subquotient of $({}^p\rm{R}f_*\Q_{\ell, X'}[d])|_U$. Any alteration is generically finite by definition. Moreover, $f$ is generically smooth since it is a morphism of smooth finite type schemes over a field of characteristic $0$. Therefore, we can choose $U$ to be a non-empty open locus where $f$ is finite \'etale. Therefore, the question is reduced to showing that $\Q_{\ell, X}$ is a subquotient of $f_* \Q_{\ell, X'}$ for a finite \'etale $f\colon X'\to X$. This follows from the existence of the trace map $\rm{tr}_f\colon f_* \Q_{\ell, X'} \to \Q_{\ell, X}$ since the composition
    \[
    \Q_{\ell, X} \to f_* \Q_{\ell, X'}\xr{\rm{tr}_f} \Q_{\ell, X}
    \]
    is the multiplication by $\deg f$. 
\end{proof}

We use Lemma~\ref{lemma:ic-subquotient} to show that the action of inertia on the nearby cycles of the IC-complex is always quasi-unipotent: 

\begin{lemma}\label{lemma:ic-nearby-cycles-action} Let $K$ be a discretely valued $p$-adic non-archimedean field and let $\X$ be an admissible formal $\O_K$-scheme with adic generic fiber $\X_\eta$. Then there is an open subgroup $I_1\subset I$ (independent of $\Lambda$ and $\ell$) such that there is an integer $N$ such that $(g-1)^{N}$ acts trivially on $\pi^*_{\X_s}\rm{R}\Psi_{\X} \rm{IC}_{X, \Lambda}$ for any $g\in I_1$
\end{lemma}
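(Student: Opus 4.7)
I would prove this by reducing to Corollary~\ref{cor:trivial-action-any-model} through a resolution of singularities and the perverse-subquotient description of $\rm{IC}_{X,\Lambda}$ given by Lemma~\ref{lemma:ic-subquotient}. Working one irreducible component of $\X_\eta$ at a time (after an admissible blowup of $\X$ to separate the components at the formal level), one may assume that $X = \X_\eta$ is irreducible of pure dimension $d$. By \cite[Theorem 5.2.2]{Temkin-resolution}, choose a resolution of singularities $f \colon X' \to X$ with $X'$ smooth and $f$ projective birational. By \cite[Lemma 8.4/4]{B} (used as in Lemma~\ref{lemma:strictly-semi-stable-covering}), pick an admissible formal $\O_K$-model $\X'$ of $X'$ and, after possibly passing to an admissible blowup of $\X'$, extend $f$ to a proper morphism $\mf \colon \X' \to \X$.

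Next, apply Corollary~\ref{cor:trivial-action-any-model} to $\X'$: there is a non-empty open subgroup $I_1 \subset I$ and an integer $N$, both independent of $\Lambda$ and $\ell$, such that $(g-1)^{N}$ acts trivially on $\pi^*_{\X'_s}\rm{R}\Psi_{\X'}\Lambda$ for every $g \in I_1$. Proper base change for nearby cycles on Deligne's topos gives
$$\rm{R}\Psi_{\X}\rm{R}f_{*}(\Lambda[d]) \simeq \rm{R}(\mf_s \times_s \eta)_{*}\rm{R}\Psi_{\X'}(\Lambda[d]),$$
and $\pi^*_{\X_s}$ commutes with $\rm{R}(\mf_s \times_s \eta)_{*}$ by Lemma~\ref{lemma:pushforward-pullback} (compare Remark~\ref{rmk:adic-good}). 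Naturality of the inertia action $\rho$ then transfers the vanishing of $(g-1)^{N}$ to $\pi^*_{\X_s}\rm{R}\Psi_{\X}\rm{R}f_{*}(\Lambda[d])$.

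By Lemma~\ref{lemma:ic-subquotient}, $\rm{IC}_{X,\Lambda}$ is a simple perverse subquotient of ${}^p\rm{R}^{0}f_{*}(\Lambda[d])$. Using that the perverse $t$-structure on the Deligne topos is normalised so that $\pi^*_{\X_s}$ is $t$-exact, together with perverse $t$-exactness of $\rm{R}\Psi_{\X}$ up to the conventional shift, the nilpotency bound first descends through the perverse truncation ${}^p\rm{H}^{0}$ to $\pi^*_{\X_s}\rm{R}\Psi_{\X}({}^p\rm{R}^{0}f_{*}(\Lambda[d]))$. Since $\rho$ is natural in its argument, the same bound then restricts to every perverse subobject and quotient, in particular to $\pi^*_{\X_s}\rm{R}\Psi_{\X}\rm{IC}_{X,\Lambda}$. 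The independence of $I_1$ and $N$ from $\Lambda$ and $\ell$ is inherited from Corollary~\ref{cor:trivial-action-any-model}.

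The main obstacle is this last step: moving a nilpotency bound from the pushforward to the perverse subquotient $\rm{IC}_{X,\Lambda}$. This requires a careful audit of the perverse $t$-structure on Deligne's topos, of the exactness properties of $\pi^*_{\X_s}$, and of how $\rm{R}\Psi_{\X}$ interacts with perverse truncation, and then invoking naturality of the continuous action built in Construction~\ref{construction:action-adic} to push the bound through the truncation and through passage to subobjects and quotients. A secondary technicality is ensuring, via admissible blowups, that $f$ actually extends to a proper morphism $\mf$ between formal models, and that $\rm{R}\Psi$ is compatible with $\rm{R}f_*$ in the Deligne-topos six-functor formalism.
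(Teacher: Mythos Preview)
Your strategy matches the paper's proof exactly: resolve singularities, extend the resolution to a morphism of formal models, apply Corollary~\ref{cor:trivial-action-any-model} to the smooth model, use that $\rm{R}\Psi$ commutes with pushforward (Lemma~\ref{lemma:compute-nearby-cycles}(\ref{lemma:compute-nearby-cycles-2})), invoke Lemma~\ref{lemma:ic-subquotient} to recognize $\rm{IC}_{X,\Lambda}$ as a perverse subquotient, and then use perverse $t$-exactness of $\rm{R}\Psi_\X$ (via \cite[Theorems 4.2 and 4.11]{Bhatt-Hansen}) to pass the nilpotency bound to the subquotient.

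There is one real problem in your reduction step. An admissible blowup of $\X$ leaves the generic fiber $\X_\eta$ unchanged, so it cannot possibly separate irreducible components of $\X_\eta$ when they intersect. The paper instead first passes to the reduced structure, then takes the \emph{normalization} $\mf\colon \Y \to \X$ of the formal model. The point is that $\mf_\eta$ is finite and is an isomorphism over a Zariski-dense open, so $\rm{IC}_{\X_\eta,\Lambda}\simeq \mf_{\eta,*}\rm{IC}_{\Y_\eta,\Lambda}$ and hence $\rm{R}\Psi_\X \rm{IC}_{\X_\eta,\Lambda}\simeq \rm{R}(\mf_s\times_s\eta)_*\rm{R}\Psi_\Y\rm{IC}_{\Y_\eta,\Lambda}$. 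Now $\Y$ is normal, so idempotents lift from $\Y_\eta$ to $\Y$, and one may pass to connected (hence irreducible) components. This is the clean way to reach the irreducible case required by Lemma~\ref{lemma:ic-subquotient}. Once you replace your blowup reduction by this normalization step, the rest of your argument is the paper's argument verbatim.

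A minor remark: you do not need $\mf$ to be proper. The identity $\rm{R}\Psi_\X \circ \rm{R}\mf_{\eta,*}\simeq \rm{R}(\mf_s\times_s\eta)_*\circ \rm{R}\Psi_{\X'}$ holds for any morphism of admissible formal schemes (Lemma~\ref{lemma:compute-nearby-cycles}(\ref{lemma:compute-nearby-cycles-2})), so the ``secondary technicality'' you flag about properness of the extended morphism is not an issue.
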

\begin{proof}
    The topological invariance of the \'etale topos implies that one can replace $\X$ by $(\X, \O_{\X}/\rm{nil}(\X))$ to assume that $\X$ (and, therefore, $X$) are reduced. \smallskip
    
    Now we consider the normalization morphism $\mf\colon \cY \to \X$. Then $\mf_\eta\colon \cY_\eta \to \X_\eta$ is finite and an isomorphism over a Zariski-dense Zariski-open subset $V\subset \X_\eta$ by \cite[Theorem 2.1.2 and Theorem 2.1.3]{conrad}. Since $\mf_\eta$ is finite, it is both perverse and constructible exact, and so (arguing as in the proof of Lemma~\ref{lemma:ic-subquotient}) one sees that 
    \[
    \rm{IC}_{\X_\eta, \Lambda} \simeq \rm{R}\mf_{\eta, *}\rm{IC}_{\cY_\eta, \Lambda}\simeq \mf_{\eta, *}\rm{IC}_{\cY_\eta, \Lambda}.
    \]
    Therefore, Lemma~\ref{lemma:compute-nearby-cycles}(\ref{lemma:compute-nearby-cycles-2}) ensures that 
    \[
    \rm{R}\Psi_{\X}\rm{IC}_{\X_\eta, \Lambda} \simeq \rm{R}\mf_{s, *}\rm{R}\Psi_{\cY}\rm{IC}_{\cY_\eta, \Lambda}.
    \]
    So it suffices to prove the claim for $\cY$. In other words, we may and do assume that $\X$ is normal. In this case, $\O_\X(\X)$ is integrally closed in $\O_{\X_\eta}(\X_\eta)$, so every non-trivial idempotent in $\O_{\X_\eta}(\X_\eta)$ lives in $\O_\X(\X)$. Geometrically, this means that every connected component of $\X_\eta$ lifts to a connected component of $\X$, so it suffices to prove the claim for each connected component separately. \smallskip
    
    So we may assume that $X$ is normal and connected (and so is irreducible). Then \cite[Theorem 5.2.2]{Temkin-resolution} implies that there is a resolution of singularities
    \[
    f\colon X' \to \X_\eta
    \]
    that is an isomorphism on the (non-empty) smooth locus $\X_\eta^{\rm{sm}}$. By \cite[Lemma 8.4/4]{B}, we can extend it to a morphism of admissible formal $\O_K$-schemes
    \[
    \mathfrak{f}\colon \X' \to \X.
    \]
    Lemma~\ref{lemma:ic-subquotient} implies that $\rm{IC}_{\X_\eta, \Lambda}$ is a subquotient of 
    \[
    ^p\rm{R}^0\mathfrak{f}_{\eta, *} \Lambda[d].  
    \]
    Since $\pi^*_{\X_s} \rm{R}\Psi_{\X}$ is perverse $t$-exact by \cite[Theorem 4.2 and Theorem 4.11]{Bhatt-Hansen} and Lemma~\ref{lemma:compute-nearby-cycles}(\ref{lemma:compute-nearby-cycles-1}), we conclude that $\pi^*_{\X_s} \rm{R}\Psi_{\X} \rm{IC}_{\X, \Lambda}$ is a subquotient of a perverse sheaf
    \[
    ^p\cal{H}^0\left(\pi^*_{\X_s}\left(\rm{R}\Psi_{\X}\rm{R}\mathfrak{f}_{\eta, *} \Lambda[d]\right)\right)\simeq {}^p\cal{H}^0\left(\pi^*_{\X_s}\rm{R}(\mathfrak{f}_{s}\times_s \eta)_{*} \rm{R}\Psi_{\X'} \Lambda[d]\right).
    \]
   Corollary~\ref{cor:trivial-action-any-model} ensures that there is an open subgroup $I_1\subset I$ and integer $N$ such that, for any $g\in I_1$, $(g-1)^N$ acts as zero on $\pi^*_{\X'_s}\rm{R}\Psi_{\X'} \Lambda[d]$. Therefore, it formally implies that the same holds for 
   \[
   \pi^*_{\X_s}\rm{R}(\mathfrak{f}_{s}\times_s \eta)_{*} \rm{R}\Psi_{\X'} \Lambda[d] \simeq \rm{R}\mathfrak{f}_{\ov{s}, *} \pi^*_{\X'_s} \rm{R}\Psi_{\X'} \Lambda[d].
   \]
   And as a consequence, the same holds for the $I_1$-action on 
   \[
   {}^p\cal{H}^0\left(\pi^*_{\X_s}\rm{R}(\mathfrak{f}_{s}\times_s \eta)_{*}\rm{R}\Psi_{\X'} \Lambda[d]\right)\simeq {}^p\cal{H}^0\left(\pi^*_{\X_s}\left(\rm{R}\Psi_{\X}\rm{R}\mathfrak{f}_{\eta, *} \Lambda[d]\right)\right).
   \]
   Since $\pi^*_{\X_s} \rm{R}\Psi_{\X} \rm{IC}_{\X, \Lambda}$ is a perverse subquotient of ${}^p\cal{H}^0\left(\pi^*_{\X_s}\left(\rm{R}\Psi_{\X}\rm{R}\mathfrak{f}_{\eta, *} \Lambda[d]\right)\right)$, we conclude that the same claim holds for it.
\end{proof}

Now we discuss mixedness of the nearby cyles of the IC complex. The strategy is essentially the same as in the proof of Lemma~\ref{lemma:ic-nearby-cycles-action}:
we use Lemma~\ref{lemma:ic-subquotient} to reduce the case of a strictly semi-stable model that was already established in Lemma~\ref{lemma:trivial-action-good-model}:

\begin{lemma}\label{lemma:mixed-of-weight-d} Let $K$ be a $p$-adic local field and let $\X$ be an admissible formal $\O_K$-scheme with generic fiber $\X_\eta$ of dimension $d$. Then $\rm{R}\Psi_{\X}\rm{IC}_{\X_\eta, \Q_\ell}$ is mixed of weights $\leq 2d$ and $\geq 0$.
\end{lemma}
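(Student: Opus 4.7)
The plan is to mirror the proof of Lemma~\ref{lemma:ic-nearby-cycles-action}, replacing the quasi-unipotence input from Corollary~\ref{cor:trivial-action-any-model} by the precise weight bounds in the strictly semi-stable case furnished by Lemma~\ref{lemma:trivial-action-good-model}(2). The idea is to exhibit $\rm{R}\Psi_{\X}\rm{IC}_{\X_\eta,\Q_\ell}$ as a perverse subquotient of a complex whose weights can be controlled, coming from the nearby cycles on a strictly semi-stable alteration of $\X$.

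First, exactly as at the start of the proof of Lemma~\ref{lemma:ic-nearby-cycles-action}, I would use topological invariance of the \'etale topos, then pass to the normalization and to connected components, to reduce to the case where $\X$ is normal and $\X_\eta$ is irreducible of pure dimension $d$. By Lemma~\ref{lemma:mixed-after-base-change}, I am free to enlarge $K$ by any finite extension. Composing Temkin's resolution of $\X_\eta$ with strictly semi-stable reduction and extending to admissible formal models via \cite[Lemma 8.4/4]{B} (performing admissible blow-ups in the Raynaud formalism as needed), I obtain a finite extension $K\subset L$ and a proper morphism $h\colon \X''\to \X_{\O_L}$ of admissible formal $\O_L$-schemes such that $\X''$ is algebraically strictly semi-stable of pure dimension $d$ and $h_\eta$ is a proper, surjective, generically \'etale morphism of rigid spaces.

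Next, on the dense open $U\subset \X_{\eta,L}$ where $h_\eta$ is finite \'etale, the trace map realizes $\Q_{\ell,U}[d]$ as a direct summand of $h_{\eta,*}\Q_\ell[d]|_U$. Repeating the minimal-extension argument of Lemma~\ref{lemma:ic-subquotient}, this identifies $\rm{IC}_{\X_{\eta,L},\Q_\ell}$ as a perverse subquotient of ${}^p\rm{R}^0 h_{\eta,*}(\Q_\ell[d])$. Applying the perverse $t$-exact functor $\rm{R}\Psi_{\X_{\O_L}}$ (which commutes with ${}^p\cal{H}^0$) and using Lemma~\ref{lemma:compute-nearby-cycles}(\ref{lemma:compute-nearby-cycles-1}) to move $\rm{R}\Psi$ past $\rm{R}h_{\eta,*}$, I conclude that $\rm{R}\Psi_{\X_{\O_L}}\rm{IC}_{\X_{\eta,L},\Q_\ell}$ is a perverse subquotient of
\[
{}^p\cal{H}^0\bigl(\rm{R}(h_s\times_s \eta)_*\,\rm{R}\Psi_{\X''}(\Q_\ell[d])\bigr).
\]

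Finally, Lemma~\ref{lemma:trivial-action-good-model}(2) gives that $\rm{R}\Psi_{\X''}\Q_\ell$ is mixed of weights $\leq d$ and $\geq -d$, hence $\rm{R}\Psi_{\X''}(\Q_\ell[d])$ is mixed of weights $\leq 2d$ and $\geq 0$. Properness of $h_s$ yields $\rm{R}(h_s\times_s\eta)_*=\rm{R}(h_s\times_s\eta)_!$, which preserves mixedness in both directions---the Deligne-topos analog of \cite[Stabilit\'es 5.1.14]{BBD}, reducing via Proposition~\ref{prop:mixed-independent} and a continuous section $\sigma\colon G_s\to G_\eta$ to Deligne's theorem on finite type $k$-schemes. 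Perverse cohomology and perverse subquotients likewise preserve weight bounds, yielding the claimed range $[0,2d]$ for $\rm{R}\Psi_{\X_{\O_L}}\rm{IC}_{\X_{\eta,L},\Q_\ell}$; Lemma~\ref{lemma:mixed-after-base-change} then descends this to $\rm{R}\Psi_\X\rm{IC}_{\X_\eta,\Q_\ell}$ over $K$. The main obstacle is arranging $h$ to be proper as a morphism of formal schemes (so that proper-pushforward weight preservation on Deligne's topos applies); once this is in place the rest is a routine application of the six-functor and weight formalism developed in earlier sections.
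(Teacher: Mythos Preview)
Your overall strategy---realizing $\rm R\Psi_\X\rm{IC}_{\X_\eta,\Q_\ell}$ as a perverse subquotient of a pushforward from a strictly semi-stable model and invoking Lemma~\ref{lemma:trivial-action-good-model}(2)---is exactly right, and the weight bookkeeping in your final paragraph is correct. But the obstacle you flag at the end is a genuine gap, not a technicality. You need $h\colon\X''\to\X_{\O_L}$ to satisfy simultaneously (i) properness on special fibers and (ii) $\X''$ \emph{algebraically} strictly semi-stable. The semi-stable models furnished by Hartl--Temkin or \cite{Z1} are formal models of the smooth resolution $X'$, with no map to the prescribed $\X_{\O_L}$ a priori; the admissible blow-ups of the source needed to produce such a map (or to force its properness) via \cite[Lemma 8.4/4]{B} will in general destroy strict semi-stability. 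And the ``algebraically'' qualifier cannot be dropped, since Lemma~\ref{lemma:trivial-action-good-model} routes through Theorem~\ref{thm:comparison} and Saito's algebraic computation.

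The paper decouples these two requirements via a two-step reduction. Step~2 is essentially your argument: resolve singularities to obtain $\mathfrak f\colon\X'\to\X$ with $\X'_\eta$ smooth, and reduce to the smooth-generic-fiber case by the subquotient argument you describe. Step~1 then handles smooth $\X_\eta$ by the move you are missing: since mixedness is \emph{local} on $\X_s$, one may assume $\X=\Spf B$ is affine, and Elkik's algebraization (via \cite[Theorem 3.1.3]{Temkin}) produces an honest flat finitely presented $\O_K$-scheme $Y$ with smooth generic fiber and $\widehat Y\cong\X$. Theorem~\ref{thm:comparison} then identifies $\rm R\Psi_\X\Q_\ell\simeq\rm R\Psi^{\rm{alg}}_Y\Q_\ell$, and de~Jong's alterations \cite[Theorem 8.2]{DJ} now supply a \emph{proper algebraic} morphism $f\colon Y'\to Y_{\O_L}$ with $Y'$ strictly semi-stable---properness is built into de~Jong's statement, so no tension with semi-stability arises. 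The passage to the algebraic world via Elkik is precisely what reconciles (i) and (ii); your one-step route cannot access it because Elkik's theorem is inherently local. (Minor note: the commutation of $\rm R\Psi$ with $\rm Rh_{\eta,*}$ is Lemma~\ref{lemma:compute-nearby-cycles}(\ref{lemma:compute-nearby-cycles-2}), not~(\ref{lemma:compute-nearby-cycles-1}).)
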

\begin{proof}
    Arguing as at the beginning of the proof of Lemma~\ref{lemma:ic-nearby-cycles-action}, we can reduce to the case of an adimssible formal $\O_K$-scheme with reduced, irreducible, normal generic fiber $\X_\eta$ of (pure) dimension $d$.\smallskip
    
    {\it Step~$1$. Smooth $\X_\eta$.} The question is local on $\X$, so we can assume that $\X=\Spf B$ is affine. Now we note that \cite[Theorem 3.1.3]{Temkin} (it essentially boils down to \cite[Th\'eorem\`e 7 on page 582 and Remarque 2(c) on p.588]{Elkik} and \cite[Proposition 3.3.2]{T0}) says that there is a flat, finitely presented $\O_K$-algebra $A$ such that $A_K$ is $K$-smooth, and the $\varpi$-adic completion $\wdh{A}$ is isomorphic to $B$. Therefore, using the comparison between analytic and algebraic nearby cycles (see Theorem~\ref{thm:comparison} and Remark~\ref{rmk:adic-good}), we conclude that it suffices to show that 
    \[
    \rm{R}\Psi_X^{\rm{alg}} \Q_\ell \in D^b_c(X_s\times_s \eta; \Q_\ell)
    \]
    is mixed of weights $\leq 2d$ and $\geq 0$ for a flat finitely presented $\O_K$-scheme $X$ with smooth generic fiber $X_\eta$ of dimension $d$. By Lemma~\ref{lemma:mixed-after-base-change}, it suffices to prove the claim after a finite extension of $K$. Therefore, \cite[Theorem 8.2]{DJ} ensures that, after a finite extension $K\subset L$, there is a generically \'etale alteration 
    \[
    f\colon X'\to X_{\O_L}
    \]
    such that $X'$ is strictly semi-stable over $\O_L$. By Lemma~\ref{lemma:mixed-after-base-change} and Lemma~\ref{lemma:compute-nearby-cycles}(\ref{lemma:compute-nearby-cycles-3}), we can replace $K$ with $L$ to assume that $X$ admits an alteration by a strictly semistable $\O_K$-model. Then Lemma~\ref{lemma:ic-subquotient-smooth} implies that $\rm{IC}_{X_\eta, \Q_\ell}=\Q_{\ell, X_\eta}[d]$ is a subquotient of ${}^p\cal{H}^0(\rm{R}f_{\eta, *}\Q_{\ell, X'_\eta}[d])$. Since the (algebraic) nearby cycles are perverse exact (see Lemma~\ref{lemma:perverse-t-structure-rationally} for the definition of the perverse $t$-structure on $D^b_c(X_s\times_s \eta; \Q_\ell)$ and \cite[Appendix]{BBDG} for the proof that $\rm{R}\Psi^{\rm{alg}}_X$ is perverse exact), we conclude that $\rm{R}\Psi^{\rm{alg}}_{X}\rm{IC}_{X_\eta, \Q_\ell}$ is a subquotient of \[
    {}^{p}\cal{H}^0(\rm{R}\Psi_{X}^{\rm{alg}} \rm{R}f_{\eta, *}\Q_{\ell, X'_\eta}[d]) \simeq {}^{p}\cal{H}^0(\rm{R}(f_{s}\times_s \eta)_{*}\rm{R}\Psi^{\rm{alg}}_{X'}\Q_{\ell, X'_\eta}[d]),
    \]
    where the last isomorphism follows from the properness of $f$. Now \cite[Stabilit\'es 5.1.7 and Proposition 5.3.1]{BBD} imply that a perverse subquotient of mixed sheaf of weights $\leq n$ is mixed of weight $\leq n$. Therefore, it suffices to show that 
    \[
    {}^{p}\cal{H}^0(\rm{R}(f_s\times_s \eta)_*\rm{R}\Psi^{\rm{alg}}_{X'}\Q_{\ell, X'_\eta}[d])
    \]
    is mixed of weights $\leq 2d$ and $\geq 0$. Now \cite[Th\'eor\`eme 5.4.1]{BBD} implies that it suffices to show that 
    \[
    \rm{R}(f_s\times_s \eta)_*\rm{R}\Psi^{\rm{alg}}_{X'}\Q_{\ell, X'_\eta}[d]
    \]
    is mixed of weights $\leq 2d$ and $\geq 0$. Now properness of $f_s$ and Weil conjectures imply (see \cite[Stabilit\'es 5.1.14]{BBD}) that it is sufficient to show that 
    \[
    \rm{R}\Psi^{\rm{alg}}_{X'}\Q_{\ell, X'_\eta}[d]
    \]
    is mixed of weight $\leq 2d$ and $\geq 0$. This follows from Lemma~\ref{lemma:trivial-action-good-model} (or, really, from the results from \cite{Saito} used in the proof of Lemma~\ref{lemma:trivial-action-good-model}). \smallskip
    
    {\it Step~$2$. Reduced irreducible $\X_\eta$.} We reduce this to the result of Step~$1$ using essentially the same strategy. We only point out the main diffierences. Firstly, we use \cite[Theorem 5.2.2]{Temkin-resolution} instead of \cite[Theorem 8.2]{DJ} to find a resolution of singularities
    \[
    f\colon X' \to \X_\eta
    \]
    that is an isomorphism on the (non-empty) smooth locus $\X_\eta^{\rm{sm}}$. By \cite[Lemma 8.4/4]{B}, we can extend it to a morphism of admissible formal $\O_K$-schemes
    \[
    \mathfrak{f}\colon \X' \to \X.
    \]
    Then we use Lemma~\ref{lemma:ic-subquotient} in place of Lemma~\ref{lemma:ic-subquotient-smooth} to ensure that $\rm{IC}_{\X_\eta, \Q_\ell}$ is a subquotient of 
    \[
    {}^p\rm{R}^0\mathfrak{f}_{\eta, *} \Q_\ell[d].  
    \]
    Then we use \cite[Theorem 4.11]{Bhatt-Hansen} in place of \cite[Appendix]{BBDG} to conclude that $\rm{R}\Psi_\X$ is perverse exact. Finally, we use Step~$1$ in place of Lemma~\ref{lemma:trivial-action-good-model}. The rest of the argument is the same. 
\end{proof}

Lemma~\ref{lemma:mixed-of-weight-d} essentially proves the crucial part of the $\ell$-adic conjecture \cite[Conjecture 4.15(1)]{Bhatt-Hansen}. Now we discuss the second part  of \cite[Conjecture 4.15(1)]{Bhatt-Hansen} that relates $\rm{R}\Psi_\X \rm{IC}_{\X_\eta, \Q_\ell}$ to $\rm{IC}_{\X_s, \Q_\ell}$. 

For the next definition, we fix a $p$-adic local field $K$ and a finite type $k$-scheme $Y$. 

\begin{defn}\label{defn:IC-Deligne} The {\it IC-sheaf} $\rm{IC}_{Y\times_s\eta; \Q_\ell}\in \rm{Perv}(Y\times_s \eta; \Q_\ell)$ is the intermediate extension (see Definition~\ref{defn:intermediate-extension-Deligne}) 
\[
\rm{IC}_{Y\times_s\eta, \Q_\ell} \coloneqq (j\times_s \eta)_{!*}\left(\Q_{\ell, U}[d_U]\right),
\]
for $j\colon U\hookrightarrow Y$ an open dense subscheme such that $U_{\rm{red}}$ is smooth and $d_U\colon U \to \bf{Z}$ the dimension function\footnote{The dimension function on a smooth finite type $k$-scheme is locally constant, so it makes sense to shift a complex by $d_U$.}. 

More generally, for an open dense subscheme $j\colon U\hookrightarrow Y$ such that $U_{\rm{red}}$ is smooth and a $\Q_\ell$-local system $\cal{L}$ on $U\times_s\eta$ (see Definition~\ref{defn:lisse-objects}), we define the associated {\it IC-sheaf}
\[
\rm{IC}_{Y\times_s\eta}(\cal{L})\coloneqq (j\times_s \eta)_{!*}\left(\cal{L}[d_U]\right)
\]
as the intermediate extension of $\cal{L}$. 
\end{defn}


Before we start the proof of \cite[Conjecture 4.15]{Bhatt-Hansen}, we need two preliminary lemmas.

\begin{lemma}\label{lemma:split} Let $K$ be a $p$-adic local field, let $Y$ be a finite type $k$-scheme, let $U\subset Y$ be an open dense subscheme such that $U_{\rm{red}}$ is smooth, and let $\F\in \rm{Perv}(Y\times_s \eta; \Q_\ell)$. Suppose that 
\begin{enumerate}
    \item $\pi^*_Y\F\cong \bigoplus_{j\in J}\rm{IC}_{Y_j}(\cal{M}_j)$ for some closed subschemes $Y_j\subset Y_{\ov{s}}$ and $\Q_\ell$-local systems $\cal{M}_j$ on $Y_j$;
    \item $\F|_{U\times_s \eta}= \cal{L}[d_U]$ for a $\Q_\ell$-local system $\cal{L}$ on $U\times_s \eta$.
\end{enumerate}
Then $\rm{IC}_{Y\times_s \eta}(\cal{L})$ is a direct summand of $\F$.
\end{lemma}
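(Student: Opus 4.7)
The plan is to construct a canonical inclusion $\iota\colon \rm{IC}_{Y\times_s\eta,\Q_\ell}\to \F$ and projection $\pi\colon \F\to \rm{IC}_{Y\times_s\eta,\Q_\ell}$ in $\rm{Perv}(Y\times_s\eta;\Q_\ell)$ whose restrictions to $U\times_s\eta$ are the identity of $\Q_\ell[d_U]$, and then deduce $\pi\circ\iota=\rm{id}$ by restricting to $U$.

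First I would isolate the IC-summand of $\pi_Y^*\F$. By condition~(2), $\pi_Y^*\F$ is a semisimple perverse sheaf on $Y_{\ov{s}}$, and by condition~(1) it restricts to $\Q_\ell[d_U]$ on $U_{\ov{s}}$. A simple summand $\rm{IC}_{Y_i}(\cal{L}_i)$ can contribute nontrivially to $U_{\ov{s}}$ only if $Y_i$ is the closure of a union of components of $U_{\ov{s}}$ and $\cal{L}_i$ is the constant sheaf of rank one; comparing with condition~(1), exactly one such summand occurs, namely $\rm{IC}_{Y_{\ov{s}},\Q_\ell}$, and with multiplicity one. Thus $\pi_Y^*\F\cong \rm{IC}_{Y_{\ov{s}},\Q_\ell}\oplus \G$ with $\G|_{U_{\ov{s}}}=0$, and hence both $\rm{Hom}_{Y_{\ov{s}}}(\rm{IC}_{Y_{\ov{s}},\Q_\ell},\pi_Y^*\F)$ and $\rm{Hom}_{Y_{\ov{s}}}(\pi_Y^*\F,\rm{IC}_{Y_{\ov{s}},\Q_\ell})$ are 1-dimensional over $\Q_\ell$.

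Next I would descend to $Y\times_s\eta$ via the Hom formula. Using $\pi_Y^*\rm{IC}_{Y\times_s\eta,\Q_\ell}\cong \rm{IC}_{Y_{\ov{s}},\Q_\ell}$ (the remark after Definition~\ref{defn:IC-Deligne}) and the fact that negative Exts between perverse sheaves on $Y_{\ov{s}}$ vanish, Lemma~\ref{lemma:hom-formula-rational} yields
\[
\rm{Hom}_{Y\times_s\eta}(\rm{IC}_{Y\times_s\eta,\Q_\ell},\F) \cong \rm{Hom}_{Y_{\ov{s}}}(\rm{IC}_{Y_{\ov{s}},\Q_\ell},\pi_Y^*\F)^{G_\eta},
\]
and symmetrically for $\rm{Hom}(\F,\rm{IC}_{Y\times_s\eta,\Q_\ell})$. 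The main step — the place where condition~(1) is used essentially — is to show that $G_\eta$ acts trivially on these 1-dimensional spaces. For this, observe that the $G_\eta$-equivariant restriction map
\[
\rm{Hom}_{Y_{\ov{s}}}(\rm{IC}_{Y_{\ov{s}},\Q_\ell},\pi_Y^*\F)\to \rm{Hom}_{U_{\ov{s}}}(\Q_\ell[d_U],\Q_\ell[d_U])\cong \Q_\ell
\]
sends the direct-summand inclusion to $\rm{id}$, so it is an isomorphism of 1-dim $\Q_\ell$-spaces; the target has trivial $G_\eta$-action because condition~(1) trivializes $G_\eta$ on $\F|_{U\times_s\eta}=\Q_\ell[d_U]$, so the source does too. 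An identical argument handles $\rm{Hom}(\F,\rm{IC})$.

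Finally, pick generators $\iota$ and $\pi$ of these invariants normalized so that $\iota|_{U\times_s\eta}=\pi|_{U\times_s\eta}=\rm{id}_{\Q_\ell[d_U]}$; this is possible because the Hom spaces restrict isomorphically to $\rm{Hom}_{U\times_s\eta}(\Q_\ell[d_U],\Q_\ell[d_U])\cong \Q_\ell$. The composition $\pi\circ\iota$ lies in $\rm{End}_{Y\times_s\eta}(\rm{IC}_{Y\times_s\eta,\Q_\ell})$, which the same Hom-formula argument (applied with $\F$ replaced by $\rm{IC}_{Y\times_s\eta,\Q_\ell}$) identifies with $\Q_\ell$; since $(\pi\circ\iota)|_{U\times_s\eta}=\rm{id}$, it follows that $\pi\circ\iota=\rm{id}_{\rm{IC}_{Y\times_s\eta,\Q_\ell}}$, exhibiting $\rm{IC}_{Y\times_s\eta,\Q_\ell}$ as a direct summand of $\F$. (For reducible $Y$ one argues componentwise, which poses no additional difficulty.)
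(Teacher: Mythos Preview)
Your argument is correct and reaches the conclusion by a genuinely different mechanism from the paper's. Both proofs produce maps $\iota\colon\rm{IC}_{Y\times_s\eta}\to\F$ and $\pi\colon\F\to\rm{IC}_{Y\times_s\eta}$ restricting to the identity over $U$ and then conclude $\pi\circ\iota=\rm{id}$; the difference is in how these maps are built. The paper works internally in $\rm{Perv}(Y\times_s\eta;\Q_\ell)$ via the adjunction maps
\[
{}^p\cal{H}^0\bigl((j\times_s\eta)_!\,\F|_{U\times_s\eta}\bigr)\xrightarrow{\alpha}\F\xrightarrow{\beta}{}^p\cal{H}^0\bigl(\rm{R}(j\times_s\eta)_*\,\F|_{U\times_s\eta}\bigr),
\]
and shows that the induced maps $\rm{Im}(\alpha)\to\rm{IC}_{Y\times_s\eta}$ and $\rm{IC}_{Y\times_s\eta}\to\rm{Im}(\beta)$ are isomorphisms by applying the conservative $\pi_Y^*$ and using the decomposition in hypothesis~(2); this yields $a,b$ with $b\circ a=\rm{id}$ essentially by the definition of $j_{!*}$. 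You instead descend the splitting from $Y_{\ov{s}}$ through Corollary~\ref{cor:compute-hom-Q-ell}: you locate the inclusion and projection over $\ov{s}$ and verify $G_\eta$-invariance by restricting to $U_{\ov{s}}$, where condition~(1) makes it transparent. Your route makes the role of condition~(1) sharper and is arguably more direct; the paper's stays within the $j_!\to j_{!*}\to j_*$ formalism and never invokes the Hom formula. One small imprecision: the Hom spaces on $Y_{\ov{s}}$ are literally 1-dimensional only when $U_{\ov{s}}$ is connected (irreducibility of $Y$ over $k$ does not guarantee this); in general they have dimension $|\pi_0(U_{\ov{s}})|$ with $G_\eta$ permuting the factors, but your restriction argument still shows the identity element is $G_\eta$-fixed, which is all you use.
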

\begin{proof}
        In this proof, we will freely use Lemma~\ref{lemma:extension-pullback} without any further notice. In particular, we will freely use that $\pi^*_Y \rm{IC}_{Y\times_s \eta}(\cal{L}) \simeq \rm{IC}_{Y_{\ov{s}}}(\cal{L})$.\smallskip
        
        We start the proof by noting that the condition $\F|_{U\times_s \eta}=\cal{L}[d_U]$ implies that $\pi^*_Y\F|_{U_{\ov{s}}}\cong \pi_Y^*\cal{L}[d_U]$. Therefore, after some renaming, the first assumption on $\F$ can be rewritten as 
        \[
        \pi_Y^*\F \cong \rm{IC}_{Y_{\ov{s}}}(\cal{L})\oplus \bigoplus_{i\in I}\rm{IC}_{Z_i}(\cal{L}_i)
        \]
        for some closed subscheme $Z_i\subset X_{\ov{s}}\setminus U_{\ov{s}}$ and $\Q_\ell$-local systems $\cal{L}_i$ on $Z_i$. \smallskip
        
        We start the proof by considering the open immersion $j\colon U\hookrightarrow Y$ and the natural morphisms
        \[
        {}^p\cal{H}^0\left(\left(j\times_s \eta\right)_{!} \F|_{U\times_s \eta}\right)\xrightarrow{\alpha} \F \xrightarrow{\beta} {}^p\cal{H}^0\left(\rm{R}\left(j\times_s \eta\right)_{*} \F|_{U\times_s \eta}\right).
        \]
        We note that $\rm{Im}(\beta\circ \alpha)\simeq \rm{IC}_{Y\times_s \eta}(\cal{L})$, and put $\G\coloneqq \rm{Im}(\alpha)$. Then we observe that $\G$ comes with the natural inclusion $\iota \colon \G \hookrightarrow \F$ induced by $\alpha$ and the natural surjection
        \[
        \gamma \colon \G=\rm{Im}(\alpha) \twoheadrightarrow \rm{IC}_{Y\times_s \eta}(\cal{L}) = \rm{Im}(\beta\circ \alpha)
        \] 
        induced by $\beta$. 
        
        {\it Claim. $\gamma$ is an isomorphism.}  It suffices to show after applying $\pi^*_Y$ by Lemma~\ref{lemma:properties-Deligne}(\ref{lemma:properties-Deligne-3}). Let us denote by $i\colon Z \hookrightarrow Y$ the reduced closed complement of $U$. Since $i_{\ov{s}}^*$ is perverse right $t$-exact (see \cite[Lemma 4.1]{KW}) and $i_{\ov{s}}^*j_{\ov{s}, !}\big( \pi_Y^* \F|_{U\times_s \eta}\big)=0$, we conclude that 
        \[
        i_{\ov{s}}^*\big( \pi_Y^* {}^p\cal{H}^0\left(\left(j\times_s \eta\right)_{!} \F|_{U\times_s \eta}\right)\big) \in {}^pD^{\leq -1}(Y_{\ov{s}}; \Q_\ell).
        \]
        Using that $i_{\ov{s}}^*$ is perverse right $t$-exact once again, we conclude that $i_{\ov{s}}^* \pi_Y^*\G \in {}^pD^{\leq -1}(Y_{\ov{s}}; \Q_\ell)$. This implies that $\Hom(\pi_Y^* \G, \IC_{Z_i}(\cal{L}_i))=0$ for any $i\in I$. Therefore, we conclude that the composition
        \[
        \pi_Y^*\G \xhookrightarrow{\pi^*_Y(\iota)} \pi^*_Y \F \xrightarrowdbl{q} \bigoplus_{i\in I}\rm{IC}_{Z_i}(\cal{L}_i)
        \]
        is zero, where $q\colon \pi_Y^*\F \twoheadrightarrow \oplus_{i\in I} \IC_{Z_i}(\cal{L}_i)$ is the natural projection. Thus, we conclude that $\pi^*_Y(\iota) \colon \pi_Y^* \G \hookrightarrow \pi_Y^* \F$ induces an injection 
        \[
        \pi_Y^*\G \xhookrightarrow{\alpha_1} \rm{IC}_{Y_{\ov{s}}}(\pi_Y^*\cal{L})\subset \pi^*_Y\F.
        \]
        Now we use that $\rm{Hom}_{\Q_\ell}\big(\rm{IC}_{Z_i}(\cal{L}_i), \rm{IC}_{Y_{\ov{s}}}(\pi_Y^*\cal{L})\big)=0$ to conclude that 
        \[
        \pi^*_Y(\beta) \colon \pi_Y^* \F \to \pi^*_Y \big( {}^p\cal{H}^0\left(\rm{R}\left(j\times_s \eta\right)_{*} \F|_{U\times_s \eta}\right) \big)
        \]
        restricted on $\rm{IC}_{Y_{\ov{s}}}(\pi_Y^*\cal{L}) \subset \pi^*_Y\F$ induces an isomorphism
        \[
        \beta_1\colon \rm{IC}_{Y_{\ov{s}}}(\pi_Y^*\cal{L}) \xr{\sim} \rm{IC}_{Y_{\ov{s}}}(\pi_Y^*\cal{L}) \subset \pi^*_Y \big( {}^p\cal{H}^0\left(\rm{R}\left(j\times_s \eta\right)_{*} \F|_{U\times_s \eta}\right) \big).
        \]
        Therefore, we get a commutative diagram
        \[
        \begin{tikzcd}
            \pi^*_Y \G \arrow[r, hook, "\iota_1"]\arrow[rd, swap, "\pi^*_Y(\gamma)"] & \arrow{d}{\beta_1} \arrow[d, swap, "\wr"] \rm{IC}_{Y_{\ov{s}}}(\pi_Y^*\cal{L}) \\
            & \rm{IC}_{Y_{\ov{s}}}(\pi_Y^*\cal{L})
        \end{tikzcd}
        \]
        such that $\iota_1$ is injective. This implies that $\pi_Y^*(\gamma)$ and, therefore, $\gamma$ are injective. Since $\gamma$ was also surjective by construction, we conclude that $\gamma$ is an isomorphism. \smallskip
        
        The isomorphism $\G\xr{\sim} \rm{IC}_{Y\times_s \eta}(\cal{L})$ defines an injective morphism 
        \[
        a\colon \rm{IC}_{Y\times_s \eta}(\cal{L}) \hookrightarrow \F.
        \]
        Now we put $\cal{H}\coloneqq \rm{Im}(\beta)$. It comes with a natural injection
        \[
        \rm{IC}_{Y\times_s \eta}(\cal{L}) \xhookrightarrow{\gamma'} \cal{H}.
        \]
        An argument dual to the proof of Claim, implies that $\gamma'$ is an isomorphism. This gives a surjection
        \[
        b\colon \F \twoheadrightarrow \rm{IC}_{Y\times_s \eta}(\cal{L}).
        \]
        By construction, $b\circ a=\rm{id}$, so $\rm{IC}_{Y\times_s \eta}(\cal{L})$ is a direct summand of $\F$.
\end{proof}

\begin{lemma}\label{lemma:dense-opens-after-finite-cover} Let $k$ be a field, let $f\colon X\to Y$ be a finite morphism of finite type $k$-schemes, let $U\subset X$ be a dense open subscheme. Then there is a dense open subscheme $V\subset Y$ such that $f^{-1}(V)\subset Y$.
\end{lemma}
\begin{proof}
    First, we note that finiteness of $f$ implies that, for a generic point $\eta\in Y$, any point $\zeta\in f^{-1}(\{\eta\})$ is a generic point of $X$. Therefore, our assumptions on $f$ and $U$ imply that $Z\coloneqq f(X\setminus U)$ is a closed subset of $Y$ which does not contain any generic point of $Y$. Therefore, $V\coloneqq Y\setminus Z$ does the job. 
\end{proof}

Finally, we are ready to prove \cite[Conjecture 4.15]{Bhatt-Hansen}. 

\begin{thm}\label{thm:ell-adic-conjecture} Let $X$ be qcqs rigid-analytic variety over a $p$-adic local field $K$, and $\X$ an admissible formal $\O_K$-scheme with special fiber $\X_s$ of pure dimension $d$ and generic fiber $X=\X_\eta$. Then  
\begin{enumerate}
\item $\rm{R}\Psi_{\X} \rm{IC}_{X, \Q_\ell}$ is mixed of weights $\leq 2d$ and $\geq 0$;
\item $\rm{IC}_{\X_s\times_s \eta, \Q_\ell}$ is a direct summand of $\rm{gr}_{\rm{W}}^d \rm{R}\Psi_\X \rm{IC}_{X, \Q_\ell}$ the $d$-th graded piece of the weight filtration on $\rm{R}\Psi_\X \rm{IC}_{X, \Q_\ell}$ (see Theorem~\ref{thm:weight-filtration}).
\end{enumerate}
In particular, for any continuous section $\sigma\colon G_s\to G_\eta$ of the natural projection $G_\eta \to G_s$, $\rm{IC}_{\X_s, \Q_\ell}$ is a direct summand of the $d$-th graded piece of the weight filtration of $\sigma^*_{\X_s} \rm{R}\Psi_\X \rm{IC}_{X, \Q_\ell}$.
\end{thm}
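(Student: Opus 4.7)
The first assertion is exactly Lemma~\ref{lemma:mixed-of-weight-d}, so the main task is to establish the IC-summand claim in part~(2); once this summand statement on Deligne's topos is proved, the ``in particular'' clause follows by applying the exact pullback $\sigma^*_{\X_s}$ and using its compatibility with IC-sheaves and with the weight filtration (Theorem~\ref{thm:weight-filtration}(2)). The plan is to apply Lemma~\ref{lemma:split} to
\[
\F\coloneqq \rm{gr}^d_{\rm{W}}\rm{R}\Psi_\X\rm{IC}_{X,\Q_\ell},
\]
which is perverse (since $\rm{R}\Psi_\X$ is perverse $t$-exact and weight graded pieces of mixed perverse sheaves are perverse by Theorem~\ref{thm:weight-filtration}) and pure of weight~$d$ by construction.

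The second hypothesis of Lemma~\ref{lemma:split} is the easier one: pick any continuous section $\sigma\colon G_s\to G_\eta$, so that $\sigma^*_{\X_s}\F$ is pure perverse of weight~$d$ on the finite type $k$-scheme $\X_s$. By the classical decomposition theorem \cite[Th\'eor\`eme~5.3.8]{BBD}, this sheaf is semisimple and decomposes as a direct sum of IC-sheaves on $\X_s$; pulling back to $\X_{\ov{s}}$ via the identification $\pi^*_{\X_s}$ then gives the required decomposition of $\pi^*_{\X_s}\F$ as a direct sum of IC-sheaves.

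The first hypothesis of Lemma~\ref{lemma:split} — producing an open dense $U\subset\X_s$ with $U_{\rm{red}}$ smooth such that $\F|_{U\times_s\eta}\simeq \Q_\ell[d]$ — is the main technical content, and the idea is to mimic the reduction strategy from Lemma~\ref{lemma:mixed-of-weight-d}. After topological invariance and normalization reductions (as at the start of the proof of Lemma~\ref{lemma:ic-nearby-cycles-action}), we may assume $\X$ is normal with reduced irreducible generic fiber. Combining Temkin's resolution of singularities, Elkik's algebraization together with the comparison of analytic and algebraic nearby cycles (Theorem~\ref{thm:comparison}), and (after a finite extension $K\subset L$) de~Jong's alteration theorem, we produce a generically finite alteration $\mf\colon\X'\to\X_{\O_L}$ with $\X'$ algebraically strictly semi-stable. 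For such $\X'$ the formal smooth locus is open dense in $\X'_s$, and restricted to this locus $\rm{R}\Psi_{\X'}\rm{IC}_{\X'_\eta,\Q_\ell}\simeq \Q_\ell[d]$ is already pure of weight~$d$, so Lemma~\ref{lemma:split} applies directly to $\X'$ and produces $\rm{IC}_{\X'_s\times_{s'}\eta',\Q_\ell}$ as a direct summand of $\rm{gr}^d_{\rm{W}}\rm{R}\Psi_{\X'}\rm{IC}_{\X'_\eta,\Q_\ell}$.

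The main obstacle will be transferring this summand back to the original $\X$ over $K$. The plan is to strengthen Lemma~\ref{lemma:ic-subquotient} to realize $\rm{IC}_{X,\Q_\ell}$ as a direct summand (not merely a subquotient) of $\rm{R}\mf_{\eta,*}\rm{IC}_{\X'_\eta,\Q_\ell}$, which should hold after passing to the algebraic setting via Elkik and invoking the classical decomposition theorem. Combining this with the compatibility of nearby cycles with proper pushforward (Lemma~\ref{lemma:compute-nearby-cycles}(1)), with standard weight-theoretic arguments commuting $\rm{R}(\mf_s\times_{s'}\eta')_*$ with $\rm{gr}^d_{\rm{W}}$ on pure weight~$d$ pieces, and with the decomposition of $\rm{R}(\mf_s\times_{s'}\eta')_*\rm{IC}_{\X'_s\times_{s'}\eta',\Q_\ell}$ into IC-summands one of which is $\rm{IC}_{\X_{\O_L,s}\times_{s'}\eta',\Q_\ell}$, we can pin down the IC-summand inside $\rm{gr}^d_{\rm{W}}\rm{R}\Psi_{\X_{\O_L}}\rm{IC}_{X,\Q_\ell}$. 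A final Galois descent via Lemma~\ref{lemma:mixed-after-base-change} and the uniqueness of the weight filtration in Theorem~\ref{thm:weight-filtration} then transfers the conclusion from $L$ back to $K$.
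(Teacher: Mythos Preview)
Your overall strategy matches the paper: reduce to Lemma~\ref{lemma:split} applied to $\F = \rm{gr}^d_{\rm{W}} \rm{R}\Psi_\X \rm{IC}_{X, \Q_\ell}$, and your verification of the second hypothesis of that lemma (pick a section $\sigma$, invoke the algebraic decomposition theorem for pure perverse sheaves, then pull back along $\pi^*_{\X_s}$) is exactly what the paper does.

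The problem is your verification of the first hypothesis, which takes an unnecessarily circuitous route and contains a genuine gap. Your ``transfer back'' step requires realizing $\rm{IC}_{X, \Q_\ell}$ as a direct \emph{summand} (not merely a subquotient) of $\rm{R}\mf_{\eta, *}\rm{IC}_{\X'_\eta, \Q_\ell}$, but this is precisely the rigid-analytic decomposition theorem (\cite[Conjecture~4.17]{Bhatt-Hansen}), which is open. Your proposal to circumvent this via Elkik does not work: Elkik algebraizes individual rig-smooth affine formal schemes, not a global proper morphism $\mf \colon \X' \to \X_{\O_L}$ (and $\X_{\O_L}$ is not rig-smooth anyway). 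The subsequent steps---commuting $\rm{R}(\mf_s \times_{s'} \eta')_*$ with $\rm{gr}^d_{\rm{W}}$, and descending the summand along the possibly non-Galois extension $K \subset L$---are also not automatic.

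The paper's route to the first hypothesis is far simpler and avoids all of this. After replacing $\X$ by its nilreduction, $\X$ is generically smooth over $\O_K$, so take $U \coloneqq \X_s^{\rm{sm}}$, the special fiber of the $\O_K$-smooth locus of $\X$. Over $\X^{\rm{sm}}$ the generic fiber is smooth, so $\rm{IC}_{X, \Q_\ell}$ restricts to $\Q_\ell[d]$ there, and the nearby cycles of the constant sheaf on a \emph{smooth} formal $\O_K$-scheme are again constant (local acyclicity, which one reduces to the algebraic case via Elkik and Theorem~\ref{thm:comparison}). Thus $\rm{R}\Psi_\X \rm{IC}_{X, \Q_\ell}|_{U \times_s \eta} \simeq \Q_\ell[d]$ is already pure of weight $d$, giving $\F|_{U \times_s \eta} \simeq \Q_\ell[d]$ directly---no alterations, no pushforward, no descent. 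You already know this computation (you use it on $\X'$); the point is that it applies to $\X$ itself.
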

\begin{proof}
    The first claim is already proven in Lemma~\ref{lemma:mixed-of-weight-d}. Thus Theorem~\ref{thm:weight-filtration} ensures that it makes sense to speak about the weight filtration on $\rm{R}\Psi_{\X} \rm{IC}_{X, \Q_\ell}$. \smallskip 

    The topological invariance of the \'etale topos implies that one can replace $\X$ by $(\X, \O_{\X}/\rm{nil}(\X))$ to assume that $\X$ (and, therefore, $X$) are reduced. \smallskip

    In order to show that $\rm{IC}_{\X_s\times_s \eta, \Q_\ell}$ is a direct summand of $\rm{gr}_{\rm{W}}^d \rm{R}\Psi_\X \rm{IC}_{X, \Q_\ell}$, it suffices to show that $\rm{gr}_{\rm{W}}^d \rm{R}\Psi_\X \rm{IC}_{X, \Q_\ell}$ satisfies the assumptions of Lemma~\ref{lemma:split} with $\cal{L}$ containing the constant sheaf $\Q_{\ell}$ as a direct summand. \smallskip

    {\it Step~$1$. $\pi_{\X_s}^*\rm{gr}_{\rm{W}}^d \rm{R}\Psi_\X \rm{IC}_{X, \Q_\ell}$ is a direct sum of $\rm{IC}$-sheaves.} To prove this, we choose any continuous section $\sigma\colon G_s \to G_\eta$ of the projection morphism $G_\eta\to G_s$. Then we consider the natural projection morphism
    \[
    b^*_{\X_s}\colon X_{\ov{s}, \et} \to X_\et.
    \]
    The perverse sheaf $\sigma^*_{\X_s}\rm{gr}_{\rm{W}}^d \rm{R}\Psi_\X \rm{IC}_{X, \Q_\ell}$ is pure, thus 
    \[
    b^*_{\X_s} \sigma_{\X_s}^*\rm{gr}_{\rm{W}}^d \rm{R}\Psi_\X \rm{IC}_{X, \Q_\ell} \simeq \pi^*_{\X_s} \rm{gr}_{\rm{W}}^d \rm{R}\Psi_\X \rm{IC}_{X, \Q_\ell}
    \]
    is a direct sum of IC-sheaves by \cite[Theorem III.10.6 and Corollary III.5.5]{KW}. \smallskip

    {\it Step~$2$. There is a dense open $\cal{U}\subset \X$ such that the $K$-adic space $\cal{U}_\eta$ is smooth of pure dimension $d$, the $k$-scheme $(\cal{U}_s)_{\rm{red}}$ is smooth, and the complex $\big(\rR\Psi_\X \IC_{X, \Q_\ell}\big)|_{\cal{U}_s\times_s \eta}$ is a $\Q_\ell$-local system concentrated in degree $-d$.} First, \cite[\href{https://stacks.math.columbia.edu/tag/00LC}{Tag 00LC}]{stacks-project} implies that $\X_s$ has finitely many embedded points. Therefore, we can find a dense open $\cU\subset \X$ such that $\cU_s$ has no embedded points. Then Proposition~\ref{prop:equi-dimension}(\ref{prop:equi-dimension-2}) implies that $\cU_\eta$ is of pure dimension $d$. Now Corollary~\ref{cor:good-opens}, Lemma~\ref{lemma:lisse-stratification}, and the fact that $\rR\Psi_\X \IC_{X, \Q_\ell}$ is a perverse sheaf imply that we can shrink further to assume that $\cU_\eta$ is smooth, that $\cU_s$ is smooth, and that $\big(\rR\Psi_\X \IC_{X, \Q_\ell}\big)|_{\cal{U}_s\times_s \eta}$ is a $\Q_\ell$-local system concentrated in degree $-d$. 

    Therefore, for the purpose of verifying the second assumption from Lemma~\ref{lemma:split} with $\cal{L}$ containing $\Q_\ell$ as a direct summand, we can assume that $X$ is smooth of pure dimension $d$ (so $\IC_{X, \Q_\ell}=\Q_\ell[d]$), $(\X_s)_{\rm{red}}$ is smooth of pure dimension $d$, and $\rR \Psi_{\X} \Q_\ell[d] = \cal{L}[d]$ for some $\Q_\ell$-local system $\cal{L}$. In this case, it suffices to show that there is a dense open formal subscheme $\cal{U}\subset \X$ such that 
    \begin{enumerate}
        \item $(\rR \Psi_{\X} \Q_{\ell})|_{\cal{U}} = \cal{L}|_{\cal{U}}$ is pure of weight $0$;
        \item the unit morphism $\Q_{\ell} \xr{\psi_\X} \rR\Psi_\X \Q_{\ell} = \cal{L}$ admits a splitting over $\cal{U}$.
    \end{enumerate}
    
    {\it Step~$3$. We show the existence of such $\cal{U}$.} The Reduced Fiber Theorem (see \cite[Theorem 2.1]{BLR4}) implies that there are a finite extension $K\subset L$ and a finite rig-isomorphism $g\colon \X' \to \X_{\O_L}$ such that $\X'$ has (geometrically) reduced special fiber. Let $b\colon \X_{s'} \times_{s'} \eta' \to \X_s \times_s \eta$ be the morphism from Notation~\ref{notation:field-extension}. Note that \cite[Lemma 2.1.5]{conrad} implies that $\X_{\eta'}$ is still smooth of pure dimension $d$, one also sees that $(\X_{s'})_{\rm{red}}$ is smooth of pure dimension $d$. Furthermore, the $\Q_\ell$-version of Lemma~\ref{lemma:compute-nearby-cycles}(\ref{lemma:compute-nearby-cycles-3}) implies that 
    \[
    b^* \rR\Psi_\X \Q_\ell[d] \simeq \rR\Psi_{\X_{\O_L}} \Q_\ell[d].
    \]
    Therefore, Lemma~\ref{lemma:dense-opens-after-finite-cover}, Lemma~\ref{lemma:mixed-after-base-change}, and Corollary~\ref{cor:splitting-after-field-extension} ensure that it suffices to prove the result for $\X_{\O_L}$, i.e., we can replace $K$ with $L$ and $\X$ with $\X_{\O_L}$. \smallskip

    Now we note that $\X'_s$ is of pure dimension $d$ by virtue of Proposition~\ref{prop:equi-dimension}(\ref{prop:equi-dimension-1}). Since the special fiber $\X'_s$ is (geometrically) reduced, there is a dense open $\cal{V}\subset \X'$ such that $\cal{V}_s$ is smooth over the residue field $k$. Now we observe that $\X' \to \X$ is surjective because it is rig-surjective, so Lemma~\ref{lemma:dense-opens-after-finite-cover} implies that we can replace $\X$ by a dense open formal subscheme to assume that $\X'_s$ is smooth over $k$. Since $\X'$ is flat over $\O_K$, we conclude that $\X'$ is a smooth formal $\O_K$-scheme. In this case, a combination of \cite[Theorem 3.1.3]{Temkin}, Theorem~\ref{thm:comparison}, and \cite[Proposition 9.2.3]{Lei-Fu} implies that the natural morphism
    \[
    \psi_{\X'} \colon \Q_{\ell} \xr{\psi_{\X'}} \rR\Psi_{\X'} \Q_{\ell}
    \]
    is an isomorphism. In particular, $\rR\Psi_{\X'} \Q_{\ell, \X_\eta}$ is pure of weight $0$. Lemma~\ref{lemma:compute-nearby-cycles} and the fact that $(g_s\times_s \eta)_*$ preserves pure complexes of weight $0$ imply that $\rR \Psi_{\X} \Q_{\ell} \simeq (g_{s}\times_s \eta)_* \rR \Psi_{\X'} \Q_{\ell}$ is pure of weight $0$. 

    We are only left to show that $\psi_\X$ admits a section over a dense open formal subscheme $\cU \subset \X$. Since $\psi_{\X'}$ is an isomorphism, this question boils down to showing that the unit morphism
    \[
    \Q_{\ell, \X_s \times_s \eta} \to (g_{s} \times_s \eta)_{*} \Q_{\ell, \X'_s\times_s \eta}
    \]
    admits a section over a dense open subset $\cal{U}\subset \X$. This follows immediately from Corollary~\ref{cor:section-finite-morphism}. 
\end{proof}

\subsection{Global results}

The main goal of this section is to derive some global results from the local results obtained in Section~\ref{section:nearby-constant} and Section~\ref{section:nearby-IC}. In particular, we discuss a local monodromy theorem and (a weak version of) the second part of the $\ell$-adic conjecture  \cite[Conjecture 4.15(ii)]{Bhatt-Hansen}. 

\begin{thm}\label{thm:local-monodromy-cohomology} Let $K$ be a discretely valued $p$-adic non-archimedean field, let $\ell\neq p$ be a prime number, let $\Lambda$ be a ring $\Z/\ell^n\Z$, $\Z_\ell$, or $\Q_\ell$, and let $X$ be a qcqs rigid-analytic variety over $K$. Then there is an open subgroup $I_1\subset I$ and an integer $N$ (both independent of $\ell\neq p$ and $\Lambda$) such that, for each $g\in I_1$, $(g-1)^N$ acts trivially on 
\[
\rm{H}^i(X_{\wdh{\ov{\eta}}}, \Lambda), \rm{H}^i_c(X_{\wdh{\ov{\eta}}}, \Lambda), \rm{IH}^i(X_{\wdh{\ov{\eta}}},\Lambda), \text{ and }\rm{IH}^i_c(X_{\wdh{\ov{\eta}}},\Lambda)
\]
for each integer $i$.
\end{thm}
\begin{proof}
    We start the proof by choosing an admissible formal $\O_K$-scheme $\X$ such that $\X_\eta=X$. Then Remark~\ref{rmk:action-coincide} and Remark~\ref{rmk:action-coincide-compact} (and their adic analogues) guarantee that 
    \[
    \rm{R}\Gamma(\X_{\wdh{\ov{\eta}}}, \Lambda) \simeq \rm{R}\Gamma(\X_{\ov{s}}, \pi_{\X_s}^*\rm{R}\Psi_{\X}\Lambda)
    \]
    and
    \[
    \rm{R}\Gamma_c(\X_{\wdh{\ov{\eta}}}, \Lambda) \simeq \rm{R}\Gamma_c(\X_{\ov{s}}, \pi_{\X_s}^*\rm{R}\Psi_{\X}\Lambda)
    \]
    compatible with the $G_\eta$-action. The same applies to the cohomology complex of $\rm{IC}_{X, \Lambda}$. Therefore, the result follows from Lemma~\ref{lemma:trivial-action-any-model} and Lemma~\ref{lemma:ic-nearby-cycles-action}. 
\end{proof}

Now we show a more refined version of Theorem~\ref{thm:local-monodromy-cohomology} for the action of $I$ on the cohomology groups $\rm{H}^i(X_{\wdh{\ov{\eta}}}; \Q_\ell)$. The next theorem will crucially use the formalism of simplicial schemes (and adic spaces) and their associated simplicial topoi. We refer to \cite[\href{https://stacks.math.columbia.edu/tag/09VI}{Tag 09VI}]{stacks-project} (and especially to \cite[\href{https://stacks.math.columbia.edu/tag/09WB}{Tag 09WB}]{stacks-project}, \cite[\href{https://stacks.math.columbia.edu/tag/0D94}{Tag 0D94}]{stacks-project}, and \cite[\href{https://stacks.math.columbia.edu/tag/0D93}{Tag 0D93}]{stacks-project}) for the foundational material on this subject. 

\begin{thm}\label{thm:unipotent-action-strong} Let $X$ be a qcqs rigid-analytic variety over a $p$-adic discretely valued field $K$, let $\ell\neq p$ be a prime number, and let $\Lambda$ be a ring $\Z/\ell^n\Z$, $\Z_\ell$, or $\Q_\ell$. Then there is an open subgroup $I_1\subset I$, independent of $\ell$ and $\Lambda$, such that, for all $g\in I_1$, and all integers $i$, $(g-1)^{i+1}=0$ on  $\rm{H}^i(X_{\wdh{\ov{\eta}}}, \Lambda)$.
\end{thm}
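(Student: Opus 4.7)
The approach mirrors the strategy of Corollary~\ref{cor:trivial-action-any-model} but is sharpened to yield the linear nilpotence bound $i+1$ rather than a larger bound that naively stacks the nilpotence degrees on each $\rm{R}^j\Psi$. By the Artin--Grothendieck vanishing theorem for qcqs rigid-analytic varieties (\cite[Theorem 7.3]{arc-topology}, \cite[Theorem 1.3]{Hansen-vanishing}), one has $\rm{H}^i(X_{\wdh{\ov{\eta}}},\Lambda) = 0$ for $i > d := \dim X$, so it suffices to prove the result in the range $i \leq d$.

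Choose any admissible formal $\O_K$-model $\X$ of $X$ and apply Corollary~\ref{cor:strictly-semi-stable-hypercovering} with $n = d$ to obtain a finite extension $K \subset L$ together with a rig-surjective hypercovering $a\colon \Y_\bullet \to \X_{\O_L}$ such that $\Y_n$ is algebraically strictly semi-stable for every $n \leq d$; after a further $\rm{cosk}_d$-extension one may assume the simplicial formal scheme is $d$-coskeletal. Set $I_1 := I_L$, an open subgroup of $I_K$ depending only on $\X$ and hence independent of $\ell$ and $\Lambda$. By Lemma~\ref{lemma:hyperdescent} applied on geometric generic fibers and the compatibility of nearby cycles with pushforward along the (rig-proper, simplicial) morphism $a$, one obtains $G_\eta$-equivariantly
\[
\rm{R}\Gamma(X_{\wdh{\ov{\eta}}}, \Lambda) \simeq \rm{R}\Gamma(\Y_{\bullet,\ov{s}}, \rm{R}\Psi_{\Y_\bullet} \Lambda),
\]
together with the induced hypercohomology spectral sequence
\[
E_2^{p,q} = \rm{H}^p(\Y_{\bullet,\ov{s}}, \rm{R}^q\Psi_{\Y_\bullet} \Lambda) \;\Longrightarrow\; \rm{H}^{p+q}(X_{\wdh{\ov{\eta}}}, \Lambda).
\]

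The crucial input is Lemma~\ref{lemma:trivial-action-good-model}: for each $n \leq d$ and each $g \in I_L$, $g-1$ annihilates $\pi^*_{\Y_{n,s}}\rm{R}^q\Psi_{\Y_n}\Lambda$. Since the $d$-coskeletal simplicial formal scheme $\Y_\bullet$ is determined by its truncation $\Y_{\leq d}$, the hypercohomology $\rm{H}^p(\Y_{\bullet,\ov{s}}, \rm{R}^q\Psi_{\Y_\bullet}\Lambda)$ can be computed via the \v{C}ech-type complex built from the levels $\Y_{n, \ov{s}}$ with $n\leq d$, on each of which $g-1$ acts as zero on $\rm{R}^q\Psi_{\Y_n}\Lambda$; this forces $g-1$ to act trivially on every $E_2^{p,q}$, and hence on every subquotient $E_\infty^{p,q}$ for $p+q = i \leq d$.

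The conclusion is then formal: the outer spectral sequence endows $\rm{H}^i(X_{\wdh{\ov{\eta}}},\Lambda)$ with a filtration of length exactly $i+1$ (indexed by $p = 0, 1, \ldots, i$), and since $g-1$ is zero on each of the $i+1$ graded pieces it sends the $p$-th filtration step into the $(p+1)$-st, so that $(g-1)^{i+1}$ pushes any element through the entire filtration into the zero piece. The main technical difficulty is to correctly realize nearby cycles on the simplicial Deligne topos $\Y_{\bullet,\ov{s}} \times_s \eta$ with its $G_\eta$-action (using the simplicial-topos formalism of \cite[\href{https://stacks.math.columbia.edu/tag/09WB}{Tag 09WB}]{stacks-project}) and to verify that cohomology in the relevant range is insensitive to the $d$-coskeletal extension, so that the levelwise triviality of $g-1$ on $\rm{R}^q\Psi_{\Y_{\leq d}}$ really does translate to triviality on each $E_2^{p,q}$ rather than to a weaker quadratic bound.
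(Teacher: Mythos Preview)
Your overall strategy matches the paper's: reduce to a strictly semi-stable hypercovering via Corollary~\ref{cor:strictly-semi-stable-hypercovering} and Lemma~\ref{lemma:trivial-action-good-model}, then use the Grothendieck spectral sequence $E_2^{p,q}=\rm{H}^p(\Y_{\bullet,\ov{s}},\rm{R}^q\Psi_{\Y_\bullet}\Lambda)\Rightarrow \rm{H}^{p+q}(X_{\wdh{\ov{\eta}}},\Lambda)$ to get a length-$(i+1)$ filtration on $\rm{H}^i$, and show $(g-1)$ kills each graded piece. However, there is a genuine error in your vanishing input and your attempted coskeletal workaround does not repair it.

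The claim $\rm{H}^i(X_{\wdh{\ov{\eta}}},\Lambda)=0$ for $i>d=\dim X$ is false for general qcqs $X$: already $\rm{H}^2$ of a proper curve is nonzero. The references you cite give $\rm{R}^j\Psi_{\X}\Lambda=0$ for $j>d$ (a statement about formal fibers), not global vanishing; the correct bound is $\rm{H}^i=0$ for $i>2d$ by \cite[Corollary 2.8.3]{H3}. Consequently you must control $E_2^{p,q}$ for $p+q\leq 2d$, so $p$ ranges up to $2d$, not $d$.

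Your $d$-coskeletal fix does not bridge this gap. Passing to $\rm{cosk}_d$ makes $\Y_n$ for $n>d$ a fiber product of lower levels, but $\rm{R}^q\Psi_{\Y_n}\Lambda$ is not determined by the nearby cycles at lower levels, and there is no reason $(g-1)$ annihilates it. Equivalently, the simplicial cohomology $\rm{H}^p(\Y_{\bullet,\ov{s}},\F)$ with coefficients in a simplicial sheaf $\F$ that is not pulled back from the base genuinely depends on $\F_n$ for $n$ up to $p$, regardless of whether $\Y_\bullet$ is $d$-coskeletal. So ``insensitivity to the $d$-coskeletal extension'' fails, and your argument only yields $(g-1)=0$ on $E_2^{p,q}$ for $p\leq d$.

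The fix is simple and is what the paper does: drop the coskeletal maneuver entirely, apply Corollary~\ref{cor:strictly-semi-stable-hypercovering} with the parameter $2d$ rather than $d$, and use the factorization of $(g-1)$ on $\rm{H}^p(\Y_{\bullet,\ov{s}},\rm{R}^q\Psi_{\Y_\bullet}\Lambda)$ through $\rm{H}^p(\Y_{\bullet,\ov{s}},(g-1)\rm{R}^q\Psi_{\Y_\bullet}\Lambda)$. Since the sheaf $(g-1)\rm{R}^q\Psi_{\Y_n}\Lambda$ vanishes for every $n\leq 2d$, the auxiliary spectral sequence $E_1^{n,m}=\rm{H}^m(\Y_{n,\ov{s}},(g-1)\rm{R}^q\Psi_{\Y_n}\Lambda)$ has $E_1^{n,m}=0$ for $n\leq 2d$, whence $\rm{H}^p(\Y_{\bullet,\ov{s}},(g-1)\rm{R}^q\Psi_{\Y_\bullet}\Lambda)=0$ for $p\leq 2d$. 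This gives $(g-1)=0$ on every relevant $E_2^{p,q}$, and your final filtration argument then goes through verbatim.
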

\begin{proof}
    Since $\rm{H}^i(X_{\wdh{\ov{\eta}}}; \Q_\ell)=\rm{H}^i(X_{\wdh{\ov{\eta}}}; \Z_\ell)\left[\frac{1}{\ell}\right]$ and $\rm{H}^i(X_{\wdh{\ov{\eta}}}; \Z_\ell)=\lim_n \rm{H}^i(X_{\wdh{\ov{\eta}}}, \Z/\ell^n\Z)$, it suffices to prove the claim for $\Lambda=\Z/\ell^n\Z$. \smallskip
    
    We put $d\coloneqq \dim X$ and note that  $\rm{H}^i(X_{\wdh{\ov{\eta}}}, \Z/\ell^n\Z)=0$ for $i>2d$ by virtue of \cite[Corollary 2.8.3]{H3}. Therefore, it suffices to prove the claim for $0\leq i\leq 2d$. \smallskip
    
    Then Corollary~\ref{cor:strictly-semi-stable-hypercovering} and Lemma~\ref{lemma:trivial-action-good-model} imply that there is a finite extension $K\subset L$ and a rig-surjective hypercovering $a\colon \cY_{\bullet}\to \X_{\O_L}$ such that, for each $g\in I_L$ and an integer $b$, the action of $g-1$ on $\pi^*_{\cY_{b, s}}\rm{R}^j\Psi_{\cY_b} \Lambda_{\cY_{b, \eta}}$ is trivial for $b\leq \dim \X_\eta$. Since the statement we are trying to prove is insensitive to a finite extension of $K$, we may and do assume that $K=L$, and so we have a rig-surjective hypercovering 
    \[
    a\colon \cY_\bullet \to \X
    \]
    with the properties as above. Note that the generic fiber $a_\eta\colon \cY_{\bullet,\eta}\to \X_\eta$ is a $v$-hypercovering, and so Lemma~\ref{lemma:hyperdescent} ensures that 
    \[
    \Lambda\to \rm{R}a_{\eta, *}a_{\eta}^*\Lambda
    \]
    is an isomorphism. Therefore, we conclude that
    \[
    \rm{R}\Psi_{\X} \Lambda \simeq \rm{R}\Psi_{\X}\rm{R}a_{\eta,*}\Lambda \simeq \rm{R}a_{\ov{s}, *}\rm{R}\Psi_{\cY_{\bullet}} \Lambda. 
    \]
    Applying the functor $\rm{R}\Gamma(\X_{\ov{s}}, -)$ to this isomorphism, we get a sequence of isomorphisms
    \begin{align*}
        \rm{R}\Gamma(\X_{\wdh{\ov{\eta}}}, \Lambda) &\simeq \rm{R}\Gamma(\X_{\ov{s}}, \pi^*_{\X_s}\rm{R}\Psi_{\X} \Lambda) \\
        & \simeq \rm{R}\Gamma(\X_{\ov{s}}, \pi^*_{\X_s} \rm{R}a_{\ov{s}, *}\rm{R}\Psi_{\cY_{\bullet}} \Lambda)\\
        &\simeq \rm{R}\Gamma(\cY_{\bullet, \ov{s}}, \pi^*_{\cY_{\bullet, s}}\rm{R}\Psi_{\cY_{\bullet}} \Lambda)
    \end{align*}
    compatible with the $G_\eta$-action. Now we use the Grothendieck spectral sequence
    \[
    \rm{E}^{i, j}_2 = \rm{H}^i\left(\cY_{\bullet, \ov{s}}, \pi^*_{\cY_{\bullet, s}}\rm{R}^j\Psi_{\cY_\bullet} \Lambda\right) \Longrightarrow \rm{H}^{i+j}\left(\X_{\wdh{\ov{\eta}}}, \Lambda\right)
    \]
    to see that it suffices to show that, for any $g\in I$, $g-1$ acts trivially 
    \[
    \rm{H}^i\left(\cY_{\bullet,\ov{s}}, \pi^*_{\cY_{\bullet, s}}\rm{R}^j\Psi_{\cY_\bullet} \Lambda\right)
    \]
    for any $i+j\leq 2d$. Now this action factors through 
    \[
    \rm{H}^i\left(\cY_{\bullet,\ov{s}}, (g-1)\pi^*_{\cY_{\bullet, s}}\rm{R}^j\Psi_{\cY_\bullet} \Lambda\right),
    \]
    so it suffices to show that $\rm{H}^i\left(\cY_{\bullet,\ov{s}}, (g-1)\pi^*_{\cY_{\bullet, s}}\rm{R}^j\Psi_{\cY_\bullet} \Lambda\right)$ is zero for any $i+j\leq 2d$. For this we use \cite[\href{https://stacks.math.columbia.edu/tag/09WJ}{Tag 09WJ}]{stacks-project} to get another spectral sequence
    \[
    \rm{E}^{n, m}_1=\rm{H}^m\left(\cY_{n, \ov{s}}, (g-1)\pi^*_{\cY_{n, s}}\rm{R}^j\Psi_{\cY_n} \Lambda\right) \Longrightarrow \rm{H}^{n+m}\left(\cY_{\bullet, \ov{s}}, (g-1)\pi^*_{\cY_{\bullet, s}}\rm{R}^j\Psi_{\cY_\bullet} \Lambda\right).
    \]
    So, after all, it suffices to show that 
    \[
    \rm{H}^m\left(\cY_{n, \ov{s}}, (g-1)\pi^*_{\cY_{n, s}}\rm{R}^j\Psi_{\cY_n} \Lambda\right) = 0 
    \]
    for $n+m+j\leq 2d$. Now we conclude that it is actually enough to show that 
    \[
    (g-1)\pi^*_{\cY_{n, s}}\rm{R}^j\Psi_{\cY_n} \Lambda= 0
    \]
    for $n\leq 2d$ and any $j\geq 0$. This now follows from our assumption on $\cY_\bullet$ finishing the proof. 
\end{proof}

Now we discuss the action of Frobenius on the (compactly supported) cohomology of qcqs rigid-analytic varieties.  

\begin{lemma}\label{lemma:positive-weighs-constant} Let $K$ be a local $p$-adic field, let $\ell\neq p$ be a prime number, and let $X$ be a qcqs rigid-analytic variety over $K$. Then, for any $g\in G_\eta$ projecting to the geometric Frobenius in $G_s$ and any integer $i\geq 0$, the eigenvalues of $g$ acting on $\rm{H}^i(X_{\wdh{\ov{\eta}}}; \Q_\ell)$ are $q$-Weil numbers of weights $\geq 0$.
\end{lemma}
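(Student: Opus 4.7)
The plan is to reduce to the algebraically strictly semi-stable case, where Lemma~\ref{lemma:weights-strictly-semistible} directly applies. First, I would fix an admissible formal $\O_K$-model $\X$ of $X$ and invoke Remark~\ref{rmk:action-coincide} (and its evident adic analogue, already used in the proof of Theorem~\ref{thm:local-monodromy-cohomology}) to identify
\[
\rm{R}\Gamma(X_{\wdh{\ov{\eta}}}, \Q_\ell) \simeq \rm{R}\Gamma(\X_{\ov{s}}, \pi^*_{\X_s}\rm{R}\Psi_\X \Q_\ell)
\]
as $G_\eta$-representations. Since $\rm{H}^i(X_{\wdh{\ov{\eta}}}, \Q_\ell)=0$ for $i>2\dim X$ by \cite[Corollary 2.8.3]{H3}, it suffices to control finitely many cohomology groups. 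I would therefore apply Corollary~\ref{cor:strictly-semi-stable-hypercovering} with $n=2\dim X$ to produce, after a finite extension $K\subset L$, a rig-surjective hypercovering $a\colon \Y_\bullet \to \X_{\O_L}$ with each $\Y_m$ ($m\leq 2\dim X$) descending to an algebraically strictly semi-stable formal $\O_{K_m}$-scheme of pure dimension $d=\dim X$ for some local subfield $K\subset K_m\subset L$.

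Next, I would note that the conclusion is insensitive to replacing $K$ by any finite extension $L$. Indeed, if $g'\in G_L$ is a lift of the geometric Frobenius of $l$ and $\alpha'$ is an eigenvalue of $g'$ on $\rm{H}^i$ which is a $q'$-Weil number of weight $w\geq 0$ (where $q'=|l|$), then taking any $g\in G_K$ with $g^{[l:k]}$ acting by $g'$ one sees that the corresponding eigenvalue $\alpha$ of $g$ satisfies $\alpha^{[l:k]}=\alpha'$, so all Galois conjugates of $\alpha$ have absolute value $q^{w/2}$, making $\alpha$ a $q$-Weil number of the same weight. Thus I may replace $K$ by $L$ and reduce to proving the claim for $\X_{\O_L}$ and $g\in G_L$.

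Then I would invoke Lemma~\ref{lemma:hyperdescent}, which gives a $G_L$-equivariant isomorphism $\rm{R}\Gamma(\X_{\O_L, \wdh{\ov{\eta}}}, \Q_\ell) \simeq \rm{R}\Gamma(\Y_{\bullet, \wdh{\ov{\eta}}}, \Q_\ell)$, and the standard spectral sequence of a simplicial object (as in the proof of Theorem~\ref{thm:unipotent-action-strong})
\[
E_1^{p,q} = \rm{H}^q(\Y_{p, \wdh{\ov{\eta}}}, \Q_\ell) \Longrightarrow \rm{H}^{p+q}(\X_{\O_L, \wdh{\ov{\eta}}}, \Q_\ell),
\]
compatible with the $G_L$-action. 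Lemma~\ref{lemma:weights-strictly-semistible} applied to the algebraically strictly semi-stable $\Y_p$ (for $p\leq 2\dim X$) implies that each $E_1^{p,q}$ with $p+q\leq 2\dim X$ has Frobenius eigenvalues which are $q$-Weil numbers of weight $\geq 0$. Since this class of Galois modules is closed under subquotients and extensions, the abutment $\rm{H}^{p+q}(\X_{\O_L, \wdh{\ov{\eta}}}, \Q_\ell)$ inherits the same property, completing the proof.

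There is no real obstacle beyond bookkeeping: the finite base change step and the simplicial spectral sequence are standard, and the substantive arithmetic input is isolated in Lemma~\ref{lemma:weights-strictly-semistible}, which itself rests on T.~Saito's explicit computation of nearby cycles on strictly semi-stable schemes combined with the Weil conjectures.
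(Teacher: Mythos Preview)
Your proposal is correct and follows essentially the same route as the paper: pass to a finite extension, use Corollary~\ref{cor:strictly-semi-stable-hypercovering} to obtain a hypercovering by algebraically strictly semi-stable formal schemes, feed Lemma~\ref{lemma:weights-strictly-semistible} into the simplicial spectral sequence, and invoke the cohomological dimension bound for the remaining degrees. The only imprecision is in your finite-extension step, where you implicitly assume $g^{[l:k]}\in G_L$ for an arbitrary lift $g\in G_K$; this need not hold, but the conclusion is salvaged by the independence of weights from the choice of Frobenius lift (Proposition~\ref{prop:mixed-independent} and Lemma~\ref{lemma:mixed-after-base-change}), which the paper's terse ``similarly to the proof of Theorem~\ref{thm:unipotent-action-strong}'' also leans on.
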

\begin{proof}
    Similarly to the proof of Theorem~\ref{thm:unipotent-action-strong}, we see that Corollary~\ref{cor:strictly-semi-stable-hypercovering} and Lemma~\ref{lemma:weights-strictly-semistible} imply that there is a finite extension $K\subset L$ and a $v$-hypercovering 
    \[
    Y_\bullet \to X_L
    \]    
    such that, for any $g\in G_\eta$ projecting to the geometric Frobenius in $G_s$ and any integer $i\geq 0$, the eigenvalues of $g$ acting on $\rm{H}^i(Y_{n, \wdh{\ov{\eta}}}; \Q_\ell)$ are $q$-Weil numbers of weights $\geq 0$ for $n\leq 2\dim X$. Then we use \cite[\href{https://stacks.math.columbia.edu/tag/09WJ}{Tag 09WJ}]{stacks-project} to get a spectral sequence
    \[
    \rm{E}^{n, m}_1=\rm{H}^m\left(Y_{n, \wdh{\ov{\eta}}}, \Q_\ell\right) \Longrightarrow \rm{H}^{n+m}\left(X_{\wdh{\ov{\eta}}}, \Q_\ell\right).
    \]
    to conclude the same for the $G_\eta$-action on $\rm{H}^{i}\big(X_{\wdh{\ov{\eta}}}, \Q_\ell\big)$ for $i\leq 2\dim X$. Now \cite[Corollary 2.8.3]{H3} implies $\rm{H}^{i}\big(X_{\wdh{\ov{\eta}}}, \Q_\ell\big)=0$ for $i\geq 2\dim X+1$. This finishes the proof. 
\end{proof}

\begin{thm}\label{thm:global-weights} Let $K$ be a local $p$-adic field, let $\ell\neq p$ be a prime number, and let $X$ be a qcqs rigid-analytic variety over $K$. Then 
\begin{enumerate}
    \item\label{thm:global-weights-1} For any $g\in G_\eta$ projecting to the geometric Frobenius in $G_s$ and any integer $i\geq 0$, the eigenvalues of $g$ acting on $\rm{H}^i(X_{\wdh{\ov{\eta}}}; \Q_\ell)$ are $q$-Weil numbers of weights $\geq 0$;
    \item\label{thm:global-weights-2} For any $g\in G_\eta$ projecting to the geometric Frobenius in $G_s$ and any integer $i\geq 0$, the eigenvalues of $g$ acting on $\rm{H}^i_c(X_{\wdh{\ov{\eta}}}; \Q_\ell)$ are $q$-Weil numbers;
    \item\label{thm:global-weights-3} For any $g\in G_\eta$ projecting to the geometric Frobenius in $G_s$ and any integer $i$, the eigenvalues of $g$ acting on $\rm{IH}^i_c(X_{\wdh{\ov{\eta}}}; \Q_\ell)$ are $q$-Weil numbers of weights $\leq 2d+i$;
    \item\label{thm:global-weights-4} For any $g\in G_\eta$ projecting to the geometric Frobenius in $G_s$ and any integer $i$, the eigenvalues of $g$ acting on $\rm{IH}^i(X_{\wdh{\ov{\eta}}}; \Q_\ell)$ are $q$-Weil numbers of weights $\geq i$.
\end{enumerate}
Furthermore, if the $\ell$-adic Decomposition theorem for rigid-analytic varieties holds (see \cite[Conjecture 4.17]{Bhatt-Hansen}), then weights of a Frobenius lift action on $\rm{IH}^i(X_{\wdh{\ov{\eta}}}; \Q_\ell)$ are $\geq \rm{max}(0, i)$. 
\end{thm}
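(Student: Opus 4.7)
\textbf{Proof proposal for Theorem~\ref{thm:global-weights}.}

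The plan is to reduce each part to mixedness properties of an appropriate nearby-cycle sheaf already established in Section~\ref{section:nearby-constant} and Section~\ref{section:nearby-IC}, combined with the Galois-equivariant comparison
\[
\rm{R}\Gamma^{(?)}\big(X_{\wdh{\ov\eta}},\F\big) \simeq \rm{R}\Gamma^{(?)}\big(\X_{\ov s},\,\pi_{\X_s}^*\rm{R}\Psi_{\X}\F\big)
\]
for any admissible formal $\O_K$-model $\X$ of $X$ (which exists by Raynaud); cf.\ Remark~\ref{rmk:action-coincide}. Part (\ref{thm:global-weights-1}) is exactly Lemma~\ref{lemma:positive-weighs-constant}.

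For (\ref{thm:global-weights-2}) I pick any admissible formal model $\X$ and use Lemma~\ref{lemma:trivial-action-any-model}(2): the sheaf $\rm{R}\Psi_{\X}\Q_\ell \in D^b_c(\X_s\times_s\eta;\Q_\ell)$ is mixed. Pushing along the structural morphism $f\colon\X_s\to \Spec k$, the compactly supported pushforward preserves mixedness --- via a continuous section $\sigma\colon G_s\to G_\eta$ (Definition~\ref{defn:pure-mixed-Deligne}) this reduces to \cite[Stabilit\'es 5.1.14]{BBD} --- so $\rm{R}\Gamma_c(X_{\wdh{\ov\eta}},\Q_\ell)$ is mixed in $D^b_c(\eta;\Q_\ell)$, which is exactly the statement that Frobenius acts with $q$-Weil number eigenvalues. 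For (\ref{thm:global-weights-3}) I run the same argument with $\rm{R}\Psi_{\X}\rm{IC}_{X,\Q_\ell}$: Lemma~\ref{lemma:mixed-of-weight-d} supplies mixedness of weights $\leq 2d$, proper pushforward preserves this upper bound, and the complex convention of \cite[Definition III.12.3]{FK} translates into the pointwise bound: the Frobenius weights on $\rm{IH}^i_c(X_{\wdh{\ov\eta}},\Q_\ell)$ are $\leq 2d+i$.

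For (\ref{thm:global-weights-4}) I invoke Verdier self-duality $\bf{D}(\rm{IC}_X)\simeq\rm{IC}_X(d)$, reducing to the equidimensional case by passing to irreducible components, which yields the Galois-equivariant Poincar\'e-type duality
\[
\rm{IH}^i(X_{\wdh{\ov\eta}},\Q_\ell)^{\vee}\simeq \rm{IH}^{-i}_c(X_{\wdh{\ov\eta}},\Q_\ell)(d).
\]
Feeding (\ref{thm:global-weights-3}) into the right-hand side gives weight $\leq 2d+(-i)-2d=-i$, i.e.\ weight $\geq i$ on $\rm{IH}^i$.

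Finally, for the strengthening to $\geq\max(0,i)$: if $X$ is smooth of pure dimension $d$, then $\rm{IC}_X=\Q_{\ell,X}[d]$ so $\rm{IH}^i(X_{\wdh{\ov\eta}},\Q_\ell)=\rm{H}^{i+d}(X_{\wdh{\ov\eta}},\Q_\ell)$, and the lower bound $0$ is immediate from (\ref{thm:global-weights-1}). Under the $\ell$-adic decomposition theorem I pick a resolution of singularities $f\colon X'\to X$ with $X'$ smooth of pure dimension $d$; the decomposition theorem, combined with the argument of Lemma~\ref{lemma:ic-subquotient} to identify the summand corresponding to the open stratum, realizes $\rm{IC}_X$ as a Galois-equivariant direct summand of $\rm{R}f_*\Q_{\ell,X'}[d]$, so $\rm{IH}^i(X_{\wdh{\ov\eta}},\Q_\ell)$ is a direct summand of $\rm{H}^{i+d}(X'_{\wdh{\ov\eta}},\Q_\ell)$, whose weights are $\geq 0$ by (\ref{thm:global-weights-1}). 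Combined with (\ref{thm:global-weights-4}) this gives $\geq\max(0,i)$. The main bookkeeping point throughout, largely carried out in the preliminary lemmas, is tracking the interaction of shifts, Tate twists, and the ``complex-vs.-pointwise'' weight conventions --- particularly in the duality step for (\ref{thm:global-weights-4}) and in upgrading ``perverse subquotient'' to ``direct summand'' in the last argument.
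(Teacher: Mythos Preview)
Your proposal is correct and follows essentially the same approach as the paper: parts (\ref{thm:global-weights-1})--(\ref{thm:global-weights-3}) and the ``furthermore'' clause are handled exactly as in the paper, by combining Lemma~\ref{lemma:positive-weighs-constant}, Lemma~\ref{lemma:trivial-action-any-model}, Lemma~\ref{lemma:mixed-of-weight-d}, the Galois-equivariant comparisons of Remarks~\ref{rmk:action-coincide} and~\ref{rmk:action-coincide-compact} (you should cite the latter for the compactly supported case), and \cite[Stabilit\'es 5.1.14]{BBD}. The one organizational difference is in (\ref{thm:global-weights-4}): the paper reads off the bound $\geq i$ directly from the lower weight bound $\geq 0$ in Lemma~\ref{lemma:mixed-of-weight-d} together with the fact that $\rm{R}f_*$ preserves lower weight bounds, whereas you deduce it from (\ref{thm:global-weights-3}) via global Poincar\'e duality for $\rm{IC}$; both routes are valid, and since the lower bound in Lemma~\ref{lemma:mixed-of-weight-d} is itself obtained by commuting nearby cycles with Verdier duality, the two arguments are really the same duality applied at different stages.
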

\begin{proof}
    (\ref{thm:global-weights-1}) follows from Lemma~\ref{lemma:positive-weighs-constant}. (\ref{thm:global-weights-2}) follows from Lemma~\ref{lemma:trivial-action-any-model}, Remark~\ref{rmk:action-coincide-compact} (and its evident extension to $\Q_\ell$-coefficients), and \cite[Stabilit\'es 5.1.14]{BBD}. (\ref{thm:global-weights-3}) and (\ref{thm:global-weights-4}) follow from Lemma~\ref{lemma:mixed-of-weight-d}, Remark~\ref{rmk:action-coincide-compact}, Remark~\ref{rmk:action-coincide}, and \cite[Stabilit\'es 5.1.14]{BBD}. \smallskip{}

    Now we assume that the $\ell$-adic Decomposition theorem holds for a resolution of singularities $f\colon X' \to X$, then Lemma~\ref{lemma:ic-subquotient} and an argument similar to Lemma~\ref{lemma:split} imply that $\rm{IC}_{X, \Q_\ell}$ is a direct summand of $\rm{R}f_*\Q_\ell[d_{X}]$. Thus $\rm{IH}^i(X, \Q_\ell)$ is a direct summand of $\rm{H}^{i-d}(X', \Q_\ell)$, so the result follows from (\ref{thm:global-weights-1}). 
\end{proof}

\section{Local weight-monodromy conjecture}

\subsection{Overview}

Let $K$ be a local field of residue characteristic $p$, and $\ell\neq p$ a prime number. In this section, we study the following local analogue of the global weight-monodromy conjecture.

\begin{conjecture}\label{conj:local-weight-monodromy-nearby} (Local Weight-Monodromy Conjecture) Let $\X$ be an admissible formal $\O_K$-scheme with smooth generic fiber $\X_{\eta}$. Then the nearby cycles $\rm{R}\Psi_{\X} \Q_\ell \in D^{b}_c(\X_s \times_s \eta; \Q_\ell)$ are monodromy-pure of weight zero (see Definition~\ref{defn:monodromy-pure}).
\end{conjecture}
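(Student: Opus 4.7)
The plan follows the perfectoid strategy outlined in the introduction, directly modeled on Scholze's approach to the projective weight-monodromy conjecture. Monodromy-purity is a condition on the perverse cohomology sheaves together with their monodromy filtration $\rm{Fil}_{\rm{M}}^\bullet$, and by the uniqueness of the monodromy operator in Lemma~\ref{lemma:monodromy-operator} it is preserved and reflected by étale pullback on $\X_s$; so the question is étale-local on $\X$. The first step is then to reduce to the local chart setup of Theorem~\ref{thm:intro-8}: pass to a small enough étale neighborhood $(\sU,u)\to(\X,x)$ so that $\sU_\eta$ admits an étale morphism $\sU_\eta\to \bf{D}^d_K$ to the closed unit polydisc. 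Granting such a chart, the remaining steps are carried out as in Theorem~\ref{thm:intro-8}, which we outline below.

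Fix the $p^{1/p^\infty}$-Kummer extension $K\subset K_\infty$, which is an arithmetic non-archimedean field by Lemma~\ref{lemma:standard-zp-extension} and whose residue field is still finite. The extension $K\subset K_\infty$ is topologically algebraic, so by Lemma~\ref{lemma:monodromy-pure-after-extension} it suffices to establish monodromy-purity after base change to $K_\infty$. Let $\widetilde{\bf{D}}^d_{K_\infty}$ be the canonical affinoid perfectoid cover of $\bf{D}^d_{K_\infty}$ obtained by adjoining compatible $p$-power roots of the coordinates; pulling back along the étale map $\sU_\eta\to \bf{D}^d_K$ gives a perfectoid cover $\widetilde{\sU}_\eta\to \sU_{\eta,K_\infty}$. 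The tilting equivalence identifies $\widetilde{\sU}_\eta$ with a perfectoid space $\widetilde{\sU}_\eta^\flat$ that is étale over the tilted perfectoid disk over $K_\infty^\flat\cong \wdh{\bf{F}_p((T))_{\rm{perf}}}$ (cf.\ Lemma~\ref{lemma:perfectified-completion}) and preserves étale cohomology, hence also nearby cycles and the monodromy filtration on them.

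Next, descend to equi-characteristic. Using the approximation and algebraization results of Elkik and Gabber--Ramero, the étale map $\widetilde{\sU}_\eta^\flat\to \widetilde{\bf{D}}^d_{K_\infty^\flat}$ descends to an étale map $\Y_\eta\to \bf{D}^d_L$ over some finite extension $L$ of $\bf{F}_p((T))$, together with an admissible formal $\O_L$-model $\Y$ whose generic fiber is smooth. For such $\Y$ in equi-characteristic, monodromy-purity of $\rm{R}\Psi_\Y\Q_\ell$ is Gabber's theorem (Theorem~\ref{lemma:char-p-local-weight-monodromy}). Transporting back through the tilting equivalence, via a perfectoid descent statement for the monodromy filtration on nearby cycles, and then through Lemma~\ref{lemma:monodromy-pure-after-extension} for $K\subset K_\infty$, yields monodromy-purity of weight zero for $\rm{R}\Psi_\X\Q_\ell$ at the point $x$, and hence globally.

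The principal obstacle, and the reason the full statement is phrased as a Conjecture while only Theorem~\ref{thm:intro-8} is proved, lies in the very first reduction: even though $\X_\eta$ is smooth, the formal model $\X$ need not admit étale-local charts $\sU_\eta\to \bf{D}^d_K$ of the required form, because the special fiber $\X_s$ may be highly singular. The existence of such charts is exactly the assumption in Theorem~\ref{thm:intro-8}; it holds for every semi-stable model and, by Corollary~\ref{cor:intro}, for a cofinal family of models of any smooth rigid-analytic variety, but it is not known to hold for an arbitrary admissible model. Closing this gap would require either a new geometric result on the structure of admissible formal models of smooth rigid spaces — compatible with the formation of nearby cycles — or a perfectoid argument that bypasses the need for local étale coordinates on $\X$ altogether.
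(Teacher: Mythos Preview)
You correctly recognize that the statement is a \emph{conjecture}, not a theorem: the paper does not prove it in full, and your final paragraph accurately isolates the missing ingredient (the existence of \'etale-local charts $\sU_\eta\to\bf{D}^d_K$ for an arbitrary admissible model). So at the level of what is and is not proved, your assessment matches the paper.

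Where your sketch of the \emph{partial} result (Theorem~\ref{thm:local-weight-monodromy}) diverges from the paper is in the ``transport back'' step. You phrase this as a ``perfectoid descent statement for the monodromy filtration on nearby cycles,'' and this is precisely where the argument is delicate: the cover $\widetilde{\sU}_\eta\to\sU_{\eta,K_\infty}$ is only pro-\'etale, and there is no direct mechanism for descending monodromy-purity along it. The paper sidesteps this entirely. After passing to $K_\infty$, it first invokes Achinger's theorem to upgrade the given \'etale map to a \emph{finite} \'etale map $f_{\eta_\infty}\colon X\to\bf{D}^d_{K_\infty}$, which extends to a finite morphism $f\colon\X\to\widehat{\bf{A}}^d_{\O_{K_\infty}}$ on integral models. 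The non-standard tilt $f^\flat\colon X^\flat\to\bf{D}^d_{K^\flat}$ is then again finite \'etale. The crucial step (the ``Claim'' in the proof of Theorem~\ref{thm:first-weight-monodromy}) is a commutativity statement at the level of \emph{pushforwards to $\bf{A}^d_s$}: one shows that $(f_s\times_s\eta_\infty)_*\rm{R}\Psi_\X\Q_\ell$ is canonically a direct summand of $(f^\flat_s\times_s\eta^\flat)_*\rm{R}\Psi_{\X^\flat}\Q_\ell$, using the renormalized trace maps of Lemma~\ref{lemma:split-2}. Since finite pushforward both preserves and reflects monodromy-purity (Lemma~\ref{lemma:finite-morphisms-monodromy-pure}), Gabber's equicharacteristic theorem for $\X^\flat$ yields the result for $\X$.

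In short: your outline has the right global shape (\'etale localization, pass to $K_\infty$, tilt, invoke Gabber), but the mechanism you propose for comparing the two sides is not the one the paper uses, and as stated it hides a genuine difficulty. The paper's substitute---finite \'etale to a disk, push forward, split off via trace---is the key technical idea that makes the argument go through.
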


\begin{rmk} Conjecture~\ref{conj:local-weight-monodromy-nearby} implies that, for any flat finite type $\O_K$-scheme $X$ with smooth generic fiber $X_\eta$, the nearby cycles $\rm{R}\Psi^{\rm{alg}}_X \Q_\ell$ are monodromy pure of weight $0$. Indeed, Theorem~\ref{thm:comparison} implies that $\rm{R}\Psi_{X}^{\rm{alg}} \Q_\ell \cong \rm{R}\Psi_{\widehat{X}} \Q_\ell$, so the algebraic version follows immediately from the analytic one.
\end{rmk}

When $\mathrm{char}\,K = p$, the algebraic case of this conjecture is a classical result of Gabber, and the rigid analytic case can easily be deduced from this using Elkik's algebraization theorems. For the convenience of the reader, we discuss this reduction in the next section. \smallskip

In the mixed characteristic case, we prove a slightly weakened version of Conjecture~\ref{conj:local-weight-monodromy-nearby}. The essential idea is to use tilting equivalence and the approximation results to reduce the question to the equicharacteristic $p>0$ case treated by Gabber.

\subsection{Equi-characteristic case}\label{section:char-p-weight-monodromy}

For the rest of this section, we fix an equicharacteristic $p>0$ local field $K$ with ring of integers $\O_K$ and residue field $k$. Non-canonically, $K$ is isomorphic to $\bf{F}_q((T))$ for some finite extension $\bf{F}_p\subset \bf{F}_q$. We also fix a prime number $\ell\neq p$.

\begin{lemma}\label{lemma:char-p-local-weight-monodromy} Let $\X$ be an admissible formal $\O_K$-scheme with smooth generic fiber $\X_{\eta}$. Then the nearby cycles $\rR\Psi_{\X} \Q_\ell \in D^{b}_c(\X_s \times_s \eta,\Q_\ell)$ are monodromy-pure of weight zero.
\end{lemma}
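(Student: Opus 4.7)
The plan is to reduce the analytic statement to its algebraic counterpart, which is due to Gabber. Monodromy-purity of weight zero is a local condition on $\X_s$: the monodromy filtration on each perverse cohomology sheaf is characterized by its uniqueness property (recalled at the start of Section~\ref{section:log-pure}), hence is compatible with restriction to open subschemes, and punctual purity of each graded piece is trivially local. So we may assume $\X = \Spf B$ is affine with smooth generic fiber. Temkin's algebraization theorem (\cite[Theorem 3.1.3]{Temkin}, built on Elkik), already used in the proof of Lemma~\ref{lemma:mixed-of-weight-d}, then produces a flat finitely presented $\O_K$-algebra $A$ with $A_K$ smooth over $K$ and $\wdh{A}\simeq B$.

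Setting $X = \Spec A$, the comparison theorem between analytic and algebraic nearby cycles (Theorem~\ref{thm:comparison}) yields a $G_\eta$-equivariant isomorphism
\[
\rm{R}\Psi_{\X} \Q_\ell \simeq \rm{R}\Psi^{\rm{alg}}_{X} \Q_\ell
\]
in $D^b_c(\X_s \times_s \eta; \Q_\ell)$. By the characterization of the monodromy operator in Lemma~\ref{lemma:monodromy-operator} as a renormalization of $\log \rho_u$ for any chosen $u$ in a suitable open subgroup of the inertia group, this isomorphism intertwines the two monodromy operators and therefore preserves monodromy-purity. It suffices then to show that $\rm{R}\Psi^{\rm{alg}}_X \Q_\ell$ is monodromy-pure of weight zero whenever $X$ is a flat finitely presented $\O_K$-scheme with smooth generic fiber.

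This last assertion is Gabber's theorem in the algebraic equicharacteristic case, surveyed in \cite{Illusie-autor}. One possible route of proof combines a de Jong alteration $f\colon X' \to X$ with $X'$ strictly semistable (permitted after a finite separable extension of $K$, which remains equicharacteristic local, by Lemma~\ref{lemma:monodromy-pure-after-extension}), T.\,Saito's explicit computation of $\rm{R}\Psi^{\rm{alg}}_{X'}\Q_\ell$ in the strictly semistable case \cite{Saito} (which directly exhibits monodromy-purity of weight zero, since the weight filtration obtained from Saito's description is manifestly the shifted monodromy filtration), and a descent step transferring the conclusion from $X'$ to $X$ via the decomposition theorem in positive characteristic and the fact that $\rm{IC}_{X_\eta, \Q_\ell}= \Q_{\ell, X_\eta}[d]$ occurs as a direct summand of $\rm{R}f_{\eta, *}\Q_{\ell, X'_\eta}[d]$ (Lemma~\ref{lemma:ic-subquotient-smooth} plus the decomposition theorem).

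The main obstacle I anticipate is precisely this descent step from $X'$ to $X$: it amounts to a special case of the equicharacteristic form of Question~\ref{intro:question:super-Weil}, and while it is known to experts and is implicit in \cite{Weil2} combined with the decomposition theorem in characteristic $p$, a clean self-contained reference appears to be absent from the literature. Everything else — the local-to-global reduction, the algebraization, the comparison of nearby cycles, and the compatibility of the two monodromy operators — is routine given the machinery already set up in the earlier sections.
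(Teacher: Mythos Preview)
Your reduction to the algebraic case is correct and matches the paper exactly: localize to an affine $\Spf B$, algebraize via Elkik--Temkin, identify analytic and algebraic nearby cycles, and invoke Gabber. One point you elided: the paper algebraizes not merely over the complete ring $\O_K\simeq\bf{F}_q[[T]]$ but over its decompletion $\O=\bf{F}_q[T]^{h}_{(T)}$, and then uses \cite[Th.\ finitude, Proposition~3.7]{SGA41/2} alongside Theorem~\ref{thm:comparison} to match nearby cycles over $\O$ and over $\O_K$. This matters because Gabber's argument (cited in the paper as \cite[Theorem~5.1.2]{BB}) proceeds by spreading out to a family over a curve of finite type over $\bf{F}_q$, which is only possible starting from the henselian base.

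Your supplementary sketch of Gabber's theorem via alterations, however, should be dropped rather than repaired. The descent step you flag is not merely a missing reference: it is the equicharacteristic instance of Question~\ref{intro:question:super-Weil}, and it does \emph{not} fall out of Weil~II plus the decomposition theorem. Concretely, you would need $\rm{R}(f_s\times_s\eta)_*\rm{R}\Psi_{X'}\Q_\ell\simeq\rm{R}\Psi_X(\rm{R}f_{\eta,*}\Q_\ell)$ to be monodromy-pure, and purity of $\rm{R}f_{\eta,*}\Q_\ell$ on the generic fiber yields this only if one already has Gabber's theorem for an \emph{arbitrary} pure input, which is circular. Gabber's actual proof avoids alterations and semistable reduction entirely: after spreading out over a smooth curve over $\bf{F}_q$, one applies Deligne's \cite[Th\'eor\`eme~1.8.4]{Weil2} on the monodromy filtration of a pure lisse sheaf directly. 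So your argument is complete the moment you cite Gabber; the alteration route is neither needed nor, as stated, available.
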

\begin{proof}
    The question is clearly local on $\X$, so we may assume that $\X=\Spf B$ is a rig-smooth admissible affine formal $\O_K$-scheme. Choose a non-canonical isomorphism $\O_K\simeq \bf{F}_q[[T]]$ and denote by $\O\coloneqq \bf{F}_q[T]^{\rm{h}}_{(T)}$ the henselization of $\bf{F}_q[T]$ at the maximal ideal $(T)$. Then \cite[Theorem 3.1.3]{Temkin}\footnote{It is formulated under the additional hypothesis that $k^\circ$ is complete. However, the same proof works under the weaker assumption that $k^\circ$ is henselian.} that says that an affine rig-smooth formal scheme $\X$ can be algebraized to an affine flat finitely presented $\O_K$-scheme $Y=\Spec A$ with smooth generic fibre $Y_K$. In other words, there is an isomorphism $\wdh{A}\simeq B$.\smallskip
    
    Now a combination of Theorem~\ref{thm:comparison} and \cite[Th. finitude, Proposition 3.7]{SGA41/2} shows that $\rm{R}\Psi_{\X} \Q_\ell \simeq \rm{R}\Psi^{\rm{alg}}_Y \Q_\ell$. Therefore, it suffices to prove the result for $Y=\Spec A$ over $\Spec \O$. In this case, the result follows from Gabber's Theorem (see \cite[Theorem 5.1.2]{BB}\footnote{The shift by $-1$ occurs in the formulation of \cite[Theorem 5.1.2]{BB} due to a different normalization of the nearby cycles.}) and standard spreading out techniques. 
\end{proof}

\subsection{A non-standard tilting construction}

In this section, we explain a non-standard tilting construction. This is the essential tool to reduce questions about nearby cycles in mixed characteristic to analogous questions in positive characteristic. \smallskip

For the rest of this section, we fix a $p$-adic local field $K$ and a prime number $\ell\neq p$. We denote by $K\subset K_\infty$ its $p^{1/p^\infty}$-Kummer extension (see Definition~\ref{defn:standard-zp-extension}), and by $K^\flat$ its non-standard tilt (see Remark~\ref{rmk:non-standard-tilt}) with a fixed morphism $\a\colon K^\flat \to K^\flat_\infty$ realizing $K^\flat_\infty$ as a completed perfection of $K^\flat$. \smallskip

Let $\bf{D}_{K}^{d}=\Spa K \langle T_1,\dots,T_d \rangle$ be the usual $d$-dimensional affinoid ball over $K$, and similarly for $K_\infty$, $K^\flat_\infty$, and $K^\flat$. Let $\widetilde{\bf{D}}_{K_\infty}^{d}=\Spa K \langle T_1^{1/p^\infty},\dots,T_d^{1/p^\infty} \rangle$ be the $d$-dimensional perfectoid ball, and similarly for $K_\infty^\flat$. \smallskip 

We note that \cite[Proposition 5.20]{Sch0} ensures that $\left(\widetilde{\bf{D}}^d_{K_\infty}\right)^\flat \simeq \widetilde{\bf{D}}^d_{K_\infty^\flat}$. So  \cite[Theorem 7.12]{Sch0} implies that there is a natural equivalence of sites 
\[
    \Et\left(\widetilde{\bf{D}}^d_{K_\infty}\right) \simeq \Et\left(\widetilde{\bf{D}}^d_{K_\infty^\flat}\right).
\]
On the other hand, \cite[Proposition 2.3.7]{H3} implies that the natural morphism of sites\footnote{We follow the terminology of StacksProject and use \cite[\href{https://stacks.math.columbia.edu/tag/00X1}{Tag 00X1}]{stacks-project} for our definition of a morphism of sites. In particular, the actual functors of the underlying categories go in the opposite direction.}
\[
\Et\left(\bf{D}^d_{K^\flat_\infty}\right) \to \Et\left(\bf{D}^d_{K^\flat}\right)
\]
is an equivalence\footnote{\cite[Proposition 2.3.7]{H3} is formulated on the level of topoi, but it is not hard to see that it reduces to an equivalence of sites in our situation}. \smallskip

\begin{construction}\label{construction:gamma} We compose the above isomorphisms with the natural morphism of sites $\Et\left(\widetilde{\bf{D}}^d_{K_\infty}\right) \to \Et\left(\bf{D}^d_{K_\infty}\right)$ to get a morphism of sites
\[
\Et\left(\bf{D}^d_{K^\flat}\right) \simeq \Et\left(\widetilde{\bf{D}}^d_{K^\flat_\infty}\right) \simeq \Et\left(\widetilde{\bf{D}}^d_{K_\infty}\right) \to \Et\left(\bf{D}^d_{K_\infty}\right)
\]
that we denote by
\[
\gamma\colon \Et\left(\bf{D}^d_{K^\flat}\right) \to \Et\left(\bf{D}^d_{K_\infty}\right). 
\]
\end{construction}

\begin{rmk}\label{rmk:site-theoretic} For an \'etale morphism $f\colon X\to \bf{D}^d_{K_\infty}$, the pullback $\gamma^*(f)\in \Et\left(\bf{D}^d_{K^\flat}\right)$ is denoted by $f^\flat\colon X^\flat \to \bf{D}^d_{K^\flat}$. 

\end{rmk}

For the rest of the section, we fix a rigid-analytic $K_\infty$-variety $X$ with an \'etale morphism $f\colon X\to \bf{D}^d_{K_\infty}$. 

\begin{defn}\label{defn:non-standard-tilt} The {\it non-standard tilt} of $(X, f)$ is the pair $(X^\flat, f^\flat)$ of the rigid-analytic $K^\flat$-variety $X^\flat$ and the \'etale morphism $f^\flat\colon X^\flat \to \bf{D}^d_{K^\flat}$ defined in Remark~\ref{rmk:site-theoretic}.

\end{defn}

\begin{construction}\label{construction:gamma-2}  We apply Construction~\ref{construction:gamma} to the slice sites to get a natural morphism of sites \[
    \gamma\colon \Et\left(X^{\flat}\right) \to \Et\left(X\right). 
\]
It induces a morphism of the associated topoi
\[
    \gamma\colon X^\flat_\et \to X_{\et}.
\]
\end{construction}

\begin{variant}\label{construction:gamma-variant} One could instead consider the morphism $\Et\left(\widetilde{\bf{D}}^d_{K_\infty}\right) \to \Et\left(\bf{D}^d_K\right)$ in place of the morphism $\Et\left(\widetilde{\bf{D}}^d_{K_\infty}\right) \to \Et\left(\bf{D}^d_{K_\infty}\right)$ in the first line of Construction~\ref{construction:gamma}. Then the same approach would define a morphism of topoi
\[
\gamma'\colon X^\flat_\et \to X_\et
\]
for any adic space $X$ with an \'etale morphism $X\to \bf{D}^d_K$. 
\end{variant}

Construction~\ref{construction:gamma-2} is our main tool to approach  Conjecture~\ref{conj:local-weight-monodromy-nearby}. Namely, the construction of a non-standard tilting and the proposition below will later allow us to reduce the mixed characteristic version of Conjecture~\ref{conj:local-weight-monodromy-nearby} to the characteristic $p$ version that was already established in Section~\ref{section:char-p-weight-monodromy}.

\begin{lemma}\label{lemma:split-2} Let $X \to \bf{D}_{K_\infty}^{d}$ be an \'etale morphism, and let $\gamma\colon X^{\flat}_{\et} \to X$ be the morphism of topoi from Construction~\ref{construction:gamma-2}. Then for $\Lambda \in \{ \mathbf{Z}/\ell^n, \mathbf{Z}_\ell, \Q_\ell\}$, the natural adjunction $\mathrm{id} \to \rm{R}\gamma_{\ast} \gamma^{\ast}$ associated with the adjoint pair $(\gamma^{\ast}, \rm{R}\gamma_{\ast}) \colon \cal{D}(X_{\et}; \Lambda) \rightleftarrows \cal{D}(X^{\flat}_{\et}; \Lambda)$ is canonically split.
\end{lemma}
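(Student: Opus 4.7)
The strategy is to factor $\gamma$ as a composition of equivalences of topoi and a single pro-étale $\Z_p(1)^d$-torsor, and then construct a splitting of the unit for the torsor via Haar integration. First, I reduce to the ``universal'' case $X = \bf{D}^d_{K_\infty}$. The morphism $\gamma\colon X^\flat_\et \to X_\et$ is obtained from the universal morphism $\gamma_0\colon (\bf{D}^d_{K^\flat})_\et\to (\bf{D}^d_{K_\infty})_\et$ by base change, in the sense of slice topoi, along the étale morphism $f\colon X\to \bf{D}^d_{K_\infty}$. Since étale localization commutes with both $\gamma_0^\ast$ and $\rm{R}\gamma_{0,\ast}$, and in particular with the adjunction unit, a functorial splitting of $\mathrm{id}\to \rm{R}\gamma_{0,\ast}\gamma_0^\ast$ pulls back to the desired splitting for $\gamma$.

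Second, decompose $\gamma_0$ according to Construction~\ref{construction:gamma}:
\[
(\bf{D}^d_{K^\flat})_\et \xleftarrow{\simeq} (\bf{D}^d_{K^\flat_\infty})_\et \xleftarrow{\simeq} (\widetilde{\bf{D}}^d_{K^\flat_\infty})_\et \xrightarrow{\simeq} (\widetilde{\bf{D}}^d_{K_\infty})_\et \xrightarrow{\pi} (\bf{D}^d_{K_\infty})_\et,
\]
where the first arrow is Huber's equivalence \cite[Proposition 2.3.7]{H3}, the second is the characteristic-$p$ equivalence induced by the universal homeomorphism $\widetilde{\bf{D}}^d_{K^\flat_\infty}\to \bf{D}^d_{K^\flat_\infty}$, and the third is Scholze's tilting equivalence \cite[Theorem 7.12]{Sch0}. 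All three induce equivalences of topoi, and the problem reduces to producing a splitting of $\mathrm{id}\to \rm{R}\pi_\ast\pi^\ast$, where $\pi$ is the morphism of topoi induced by the pro-étale cover $\widetilde{\bf{D}}^d_{K_\infty}\to \bf{D}^d_{K_\infty}$, a torsor under the profinite group $\Gamma = \Z_p(1)^d$.

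Third, construct the splitting. Writing $\widetilde{\bf{D}}^d_{K_\infty} = \lim_n \widetilde{\bf{D}}^d_{K_\infty,n}$, with $\widetilde{\bf{D}}^d_{K_\infty,n}\to \bf{D}^d_{K_\infty}$ the finite étale Galois cover obtained by adjoining $p^n$-th roots of the coordinates (Galois group $G_n = \Gamma/p^n\Gamma$, of order $p^{nd}$, which is invertible in $\Lambda$ since $\ell\neq p$), a stalk-wise computation shows $R^i\pi_\ast\Lambda = 0$ for $i>0$ (since the geometric fibers of $\pi$ are profinite discrete sets with vanishing positive cohomology), while $\pi_\ast\Lambda$ is locally the sheaf of $\Lambda$-valued continuous functions $C(\Gamma,\Lambda)$ on $\Gamma$, with monodromy by $\Gamma$-translation. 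Since $|G_n|$ is invertible in $\Lambda$, the normalized Haar measure on the pro-$p$ group $\Gamma$ takes values in $\Lambda$ and produces a $\Gamma$-equivariant averaging map $\mathrm{avg}\colon C(\Gamma,\Lambda)\to \Lambda$ splitting the inclusion of constants. Using the projection-formula isomorphism $\rm{R}\pi_\ast\pi^\ast\F\simeq \F\otimes_\Lambda \pi_\ast\Lambda$ (valid because $\pi$ is pro-finite-étale), or working stalk-wise via $(\rm{R}\pi_\ast\pi^\ast\F)_{\bar x}\simeq \F_{\bar x}\otimes_\Lambda C(\Gamma,\Lambda)$, the map $\mathrm{id}\otimes \mathrm{avg}$ gives the required retraction of the unit.

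The main obstacle will be to rigorously justify the stalk-wise / projection-formula computation of $\rm{R}\pi_\ast\pi^\ast$ in the pro-finite-étale perfectoid setting, and to handle the three coefficient rings $\bf{Z}/\ell^n$, $\Z_\ell$, $\Q_\ell$ uniformly. The torsion case is essentially direct; the $\Z_\ell$-case follows by taking $\lim_n$ (the $\lim^1$ obstruction vanishes because the finite-level averaging maps $|G_n|^{-1}\sum_{g\in G_n}g$ are strictly compatible under the transition maps $G_{n+1}\twoheadrightarrow G_n$), and the $\Q_\ell$-case follows by inverting $\ell$.
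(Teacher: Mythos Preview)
Your proposal is correct and takes essentially the same approach as the paper: both reduce to splitting the unit for the pro-finite-\'etale $\Z_p(1)^d$-cover $\widetilde{\bf{D}}^d_{K_\infty}\to \bf{D}^d_{K_\infty}$ via normalized traces, using that $p$ is invertible in $\Lambda$. The only cosmetic difference is that the paper works directly over arbitrary qcqs \'etale $V\to X$ and writes the splitting as a compatible system of renormalized traces $\tfrac{1}{p^{di}}\mathrm{tr}$ at the finite levels $V_i\to V$ (together with the identification $\rm{R}\Gamma(V^\flat,\gamma^*\F)\simeq \hocolim_i \rm{R}\Gamma(V,\gamma_{i,*}\gamma_i^*\F)$), whereas you first pass to the universal case and package the same compatible traces as Haar integration on $\Gamma$.
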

\begin{proof}
It suffices to treat the case $\Lambda = \mathbf{Z}/\ell^n$; the case $\Lambda=\mathbf{Z}_\ell$ and $\Lambda=\Q_\ell$ then follow by a simple limit argument. \smallskip

Let $\cal{F} \in \cal{D}(X_{\et}; \Lambda)$ be any object, and let $V \to X$ be an \'etale map from some qcqs $V$, with associated \'etale map $V^\flat \to X^\flat$.  It then suffices to split the map 
\[ 
    \rm{R}\Gamma(V,\mathcal{F}) \to \rm{R}\Gamma(V, \rm{R}\gamma_{\ast} \gamma^{\ast}\mathcal{F}) \simeq \rm{R}\Gamma(V^\flat, \gamma^{\ast}\mathcal{F})
\]
functorially in $\mathcal{F}$ and $V$. Let $V_i$ resp. $\widetilde{V}$ be the pullback of $V\to X \to \mathbf{D}_{K_\infty}^{d}$ along the map 
\[ 
    \Spa K_\infty \langle T_1^{1/p^i},\dots,T_d^{1/p^i} \rangle \to \mathbf{D}_{K_\infty}^{d}, 
\]
resp. along the map $ \widetilde{\mathbf{D}}_{K_\infty}^{d} \to \mathbf{D}_{K_\infty}^{d}$. Then the $V_i$'s form an inverse system of qcqs rigid spaces such that $\widetilde{V} \simeq  \lim_i V_i$ as diamonds. Let $\gamma_i\colon V_i \to V$ and $\tilde{\gamma}\colon \tilde{V} \to V$ be the evident maps. Unwinding the constructions, we see that
\begin{align*}
    \rm{R}\Gamma(V^\flat, \gamma^{\ast}\mathcal{F}) & \simeq \rm{R}\Gamma(\tilde{V}, \tilde{\gamma}^{\ast}\mathcal{F}) \\
        & \simeq \hocolim \rm{R}\Gamma(V_i, \gamma_{i}^{\ast}\mathcal{F}) \\
        & \simeq \hocolim \rm{R}\Gamma(V, \gamma_{i,\ast}\gamma_{i}^{\ast}\mathcal{F}).
\end{align*}
By the projection formula, we get $\gamma_{i,\ast}\gamma_{i}^{\ast}\mathcal{F} \simeq \mathcal{F} \otimes^L \gamma_{i,\ast}\gamma_{i}^{\ast}\Lambda$ functorially in $\mathcal{F}$, so it suffices to split the map $\Lambda \to \gamma_{i,\ast}\gamma_{i}^{\ast}\Lambda$ compatibly with varying $i$. But $V_i \to V$ is finite flat of constant degree $p^{di}$, so the renormalized trace map $\frac{1}{p^{di}} \mathrm{tr}\colon   \gamma_{i,\ast}\gamma_{i}^{\ast}\Lambda \to \Lambda$ does the job (see \cite[Theorem 2.5.6]{LRZ} for the construction of the finite flat trace map in the rigid-analytic setup). 
\end{proof}

\subsection{Mixed characteristic case}

For the rest of the section, we fix a $p$-adic local field $K$ and a prime number $\ell\neq p$. The main goal of this section is to give a proof of Conjecture~\ref{conj:local-weight-monodromy-nearby} under some extra assumption on the admissible formal model $\X$. \smallskip

Before we do this, we need a preliminary lemma.

\begin{lemma}\label{lemma:stable-circle} Let $X=\Spa(A, A^+)$ be a smooth affinoid over $K$. Then $A^\circ$ is topologically finitely generated $\O_K$-algebra. Furthermore, if $A^\circ\otimes_{\O_K} k$ is reduced, then the natural morphism
\[
A^\circ \wdh{\otimes}_{\O_K} \O_{K_\infty}\to (A\wdh{\otimes}_K K_\infty)^\circ
\] 
is an isomorphism. In particular, $(A\wdh{\otimes}_K K_\infty)^\circ$ is topologically finitely generated.
\end{lemma}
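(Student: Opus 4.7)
The first assertion is the classical Grauert--Remmert finiteness theorem: smoothness of $X$ implies that the $K$-affinoid algebra $A$ is reduced, and for any reduced $K$-affinoid $A$, the subring $A^\circ$ of power-bounded elements is a topologically finitely generated $\O_K$-algebra (indeed, a finite module over any admissible $\O_K$-subalgebra with generic fiber $A$).

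For the isomorphism, write $\cal{A} = A^\circ$, $\cal{B} = \cal{A} \wdh{\otimes}_{\O_K} \O_{K_\infty}$, and $B = A \wdh{\otimes}_K K_\infty$. The natural map $\cal{B} \to B^\circ$ is an injection of $\pi$-adically complete, $\pi$-torsion free $\O_{K_\infty}$-algebras, so it suffices to show surjectivity. For each $n \geq 0$, set $K_n = K(p^{1/p^n}) \subset K_\infty$ with uniformizer $\varpi_n$ and residue field $k_n$; let $A_n = A \otimes_K K_n$ (already complete since $K_n/K$ is finite) and $\cal{A}_n = \cal{A} \otimes_{\O_K} \O_{K_n}$. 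Because $k$ is finite, the residue field extension $k \subset k_n$ is separable, and thus
\[
    \cal{A}_n/\varpi_n \simeq (\cal{A}/\pi) \otimes_k k_n
\]
is reduced. The standard argument---lift an integral dependence $f^N + \sum a_i f^i = 0$ for $f = g/\varpi_n^m$ with $m$ minimal, deduce $g^N \in \varpi_n \cal{A}_n$, and use reducedness of the special fiber to produce $g \in \varpi_n \cal{A}_n$, contradicting minimality---shows that $\cal{A}_n$ is integrally closed in $A_n = \cal{A}_n[1/\varpi_n]$, and since $A_n^\circ$ is the integral closure of any admissible model, $\cal{A}_n = A_n^\circ$.

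To pass to the limit, $B$ is the completion of $A_\infty := \colim_n A_n$, and the spectral norms on the $A_n$ are compatible under the inclusions $A_n \hookrightarrow A_{n+1}$. Consequently $A_\infty^\circ = \colim_n A_n^\circ = \colim_n \cal{A}_n$, whose $\pi$-adic completion is precisely $\cal{B}$. Since $B$ is uniform (as the completion of a directed system of reduced affinoids), $B^\circ$ is $\pi$-adically complete, and unraveling topologies, it coincides with the closure in $B$ of $A_\infty^\circ$, which is exactly its $\pi$-adic completion $\cal{B}$. This gives the desired isomorphism; topological finite generation of $\cal{B}$ over $\O_{K_\infty}$ is inherited from that of $\cal{A}$ over $\O_K$. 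The main technical obstacle is the identification $\cal{A}_n = A_n^\circ$ at each finite stage, which crucially uses the reducedness hypothesis on $\cal{A}/\pi$; the subsequent passage to the limit is then formal, relying only on the uniformity of $B$ and the compatibility of spectral norms under finite base change.
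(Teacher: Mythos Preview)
Your argument is correct and follows essentially the same underlying idea as the paper's proof---namely, that an admissible formal model with reduced special fiber coincides with the ring of power-bounded elements of its generic fiber---but you unpack this by hand rather than citing a reference. The paper's proof is a two-line application: it observes that $A^\circ \wdh{\otimes}_{\O_K} \O_{K_\infty}$ is a ring of definition in $A\wdh{\otimes}_K K_\infty$ with reduced special fiber, and then invokes \cite[Proposition 3.4.1]{L-Jac} directly over $\O_{K_\infty}$ to conclude. By contrast, you work at each finite level $K_n$, establish $\cal{A}_n = A_n^\circ$ via the classical ``reduced special fiber implies integrally closed'' argument (which is essentially what the cited proposition encapsulates), and then pass to the colimit and complete. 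Your route is more elementary and self-contained, at the cost of the bookkeeping in the limit step (compatibility of spectral norms, identification of the norm and $\pi$-adic topologies on $A_\infty^\circ$, and uniformity of $B$); the paper's route is cleaner but relies on a black-box reference that handles the non-discretely-valued base in one stroke.
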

\begin{proof}
    The first claim follows directly from \cite[Corollary 6.4/5]{BGR}. Now suppose that $A^\circ\otimes_{\O_K} k$ is reduced. Then note that $A^\circ \wdh{\otimes}_{\O_K} \O_{K_\infty}$ is a ring of definition in $A\wdh{\otimes}_K K_\infty$ with a reduced special fiber. Thus \cite[Proposition 3.4.1]{L-Jac} implies that 
    \[
    A^\circ \wdh{\otimes}_{\O_K} \O_{K_\infty}\to (A\wdh{\otimes}_K K_\infty)^\circ
    \]
    is an isomorphism. 
\end{proof}

\begin{rmk}\label{rmk:not-p-adic} The first part of Lemma~\ref{lemma:stable-circle} holds for any local field $K$ (not necessarily $p$-adic).
\end{rmk}

Now we show the first general result in the mixed characteristic case. In the proof below, we denote the \'etale topoi of $\Spec K$, $\Spec K_\infty$, $\Spec K_\infty^\flat$, and $\Spec K^\flat$ by $\eta$, $\eta_\infty$, $\eta_\infty^\flat$, and $\eta^\flat$ respectively. We note that the topoi $\eta_\infty$, $\eta_\infty^\flat$, and $\eta^\flat$ are canonically equivalent. 

\begin{thm}\label{thm:first-weight-monodromy} Let $X=\Spa(A, A^+)$ be a smooth $K_\infty$-affinoid space. Suppose that $A^\circ$ is topologically finitely generated $\O_{K_\infty}$-algebra and $X$ admits an \'etale map to an affinoid ball $\bf{D}^d_{K_\infty}$. Then $\rm{R}\Psi_{\X} \Q_\ell \in D^b_c(\X_s\times_s \eta_\infty; \Q_\ell)$ is monodromy-pure of weight zero for the canonical admissible formal $\O_K$-model $\X=\Spf A^\circ$.
\end{thm}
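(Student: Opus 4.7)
The plan is to reduce the mixed-characteristic statement to the equicharacteristic case of Lemma~\ref{lemma:char-p-local-weight-monodromy} via the non-standard tilting construction (Definition~\ref{defn:non-standard-tilt}) together with the splitting of Lemma~\ref{lemma:split-2}. Since $X \to \bf{D}^d_{K_\infty}$ is \'etale, Definition~\ref{defn:non-standard-tilt} produces a smooth $K^\flat$-affinoid $X^\flat = \Spa(B, B^+)$ with an \'etale morphism $X^\flat \to \bf{D}^d_{K^\flat}$. As $K^\flat$ is an equicharacteristic local field, the analogue of Lemma~\ref{lemma:stable-circle} (see Remark~\ref{rmk:not-p-adic}) implies that $B^\circ$ is topologically finitely generated over $\O_{K^\flat}$, so $\X^\flat \coloneqq \Spf B^\circ$ is an admissible formal $\O_{K^\flat}$-model of $X^\flat$ with smooth generic fiber. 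Lemma~\ref{lemma:char-p-local-weight-monodromy} then yields that $\rm{R}\Psi_{\X^\flat}\Q_\ell \in D^b_c(\X^\flat_s \times_s \eta^\flat; \Q_\ell)$ is monodromy-pure of weight zero.

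The key step is to construct a morphism of Deligne topoi
\[
\gamma_\X\colon \X^\flat_s \times_s \eta^\flat \to \X_s \times_s \eta_\infty,
\]
extending the non-standard tilting morphism $\gamma\colon X^\flat_\et \to X_\et$ of Construction~\ref{construction:gamma-2}, together with a canonical isomorphism $\gamma_\X^* \rm{R}\Psi_\X \Q_\ell \simeq \rm{R}\Psi_{\X^\flat}\Q_\ell$. On the special-fiber factor, the identification $\O_{K_\infty}/p \simeq \O_{K^\flat_\infty}/T$ together with topological invariance of the \'etale topos identifies $\X_{\ov{s},\et}$ and $\X^\flat_{\ov{s},\et}$; on the arithmetic factor, the tilting equivalence $G_{K_\infty} \simeq G_{K^\flat_\infty}$ combined with $G_{K^\flat_\infty} \simeq G_{K^\flat}$ (Lemma~\ref{lemma:perfectified-completion}) identifies $\eta_\infty$ with $\eta^\flat$. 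The identification of nearby cycles is then checked stalkwise on geometric points of the special fiber: both sides compute the \'etale cohomology of the rigid generic fiber of the corresponding strict henselization, and these two rigid spaces (over $K_\infty$ and $K^\flat$, respectively) become equivalent after passage to the perfectoid cover pulled back from $\widetilde{\bf{D}}^d_{K_\infty} \to \bf{D}^d_{K_\infty}$ and application of the perfectoid tilting equivalence.

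With these ingredients in place, the adjunction unit
\[
\rm{R}\Psi_\X \Q_\ell \to \rm{R}\gamma_{\X,*}\gamma_\X^* \rm{R}\Psi_\X \Q_\ell \simeq \rm{R}\gamma_{\X,*} \rm{R}\Psi_{\X^\flat}\Q_\ell
\]
admits a canonical splitting, obtained by propagating the renormalized trace splitting of Lemma~\ref{lemma:split-2} from the generic-fiber \'etale topoi through the nearby cycle formalism. Hence $\rm{R}\Psi_\X \Q_\ell$ is a direct summand of $\rm{R}\gamma_{\X,*}\rm{R}\Psi_{\X^\flat}\Q_\ell$. Monodromy-purity of weight zero is preserved by $\rm{R}\gamma_{\X,*}$ (since the weight and monodromy filtrations transport along the Galois isomorphism, in the spirit of Lemma~\ref{lemma:monodromy-pure-after-extension}) and by passage to direct summands (by uniqueness of the filtrations in Theorem~\ref{thm:weight-filtration}), so the conclusion follows.

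The hard part will be rigorously constructing $\gamma_\X$ and establishing the isomorphism $\gamma_\X^* \rm{R}\Psi_\X \Q_\ell \simeq \rm{R}\Psi_{\X^\flat}\Q_\ell$: this requires identifying the rigid generic fibers of strict henselizations of $\X$ and $\X^\flat$ at the level of their common perfectoid cover, and then descending the identification back down using the approximation and algebraization techniques of R.~Elkik and Gabber--Ramero mentioned in the introduction. A further technical point is extending the splitting of Lemma~\ref{lemma:split-2}, which is formulated at the level of the \'etale topos of a rigid space, functorially through the nearby cycle formalism to yield a splitting at the level of the Deligne topos, compatibly with the Galois action needed to see the monodromy operator.
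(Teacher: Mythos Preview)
Your overall strategy---tilt to characteristic $p$, invoke Lemma~\ref{lemma:char-p-local-weight-monodromy}, and transport monodromy-purity back via the splitting of Lemma~\ref{lemma:split-2}---matches the paper's. However, there is a genuine gap in the comparison step, and the paper fills it with an idea your proposal does not contain.

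The gap is in your construction of $\gamma_\X\colon \X^\flat_s\times_s\eta^\flat \to \X_s\times_s\eta_\infty$. You claim that the identification $\O_{K_\infty}/p \simeq \O_{K^\flat_\infty}/T$ yields $\X_{\ov{s},\et}\simeq \X^\flat_{\ov{s},\et}$. But $\X=\Spf A^\circ$ and $\X^\flat=\Spf B^\circ$ are built from the non-perfectoid, topologically finitely generated rings $A^\circ$ and $B^\circ$, and there is no reason for $A^\circ/p$ and $B^\circ/T$ to be related: the tilting equivalence acts on the \emph{perfectoid} level $\widetilde{\bf D}^d$, not on $A^\circ$ itself. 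So the morphism $\gamma_\X$ you need does not exist in any evident way, and the stalkwise verification you sketch would at best identify cohomologies after passing to the common perfectoid cover, not the nearby cycles on $\X_s$ versus $\X^\flat_s$.

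The paper sidesteps this by never comparing $\X_s$ and $\X^\flat_s$ directly. Instead it invokes a result of Achinger (\cite[Corollary D.5]{Zav-coherent}) to upgrade the given \'etale map to a \emph{finite} \'etale map $f_\eta\colon X\to \bf{D}^d_{K_\infty}$, which then extends to a \emph{finite} morphism of formal models $f\colon \X\to \widehat{\bf{A}}^d_{\O_{K_\infty}}$; likewise $f^\flat\colon \X^\flat\to \widehat{\bf{A}}^d_{\O_{K^\flat}}$ is finite. The comparison is then made after pushing forward to the disk: both $(f_s\times_s\eta_\infty)_*\rm{R}\Psi_\X\Q_\ell$ and $(f^\flat_s\times_s\eta^\flat)_*\rm{R}\Psi_{\X^\flat}\Q_\ell$ live on the \emph{same} Deligne topos $\bf{A}^d_s\times_s\eta_\infty\simeq \bf{A}^d_s\times_s\eta^\flat$ (same special fiber $\bf{A}^d_k$, and $\eta_\infty\simeq\eta^\flat$ via the Galois isomorphism). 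A direct diagram check (the ``Claim'' in the paper) shows $(f_s\times_s\eta_\infty)\circ\Psi_\X\circ\gamma \simeq (f^\flat_s\times_s\eta^\flat)\circ\Psi_{\X^\flat}$ as morphisms of topoi, so Lemma~\ref{lemma:split-2} makes $(f_s\times_s\eta_\infty)_*\rm{R}\Psi_\X\Q_\ell$ a direct summand of $(f^\flat_s\times_s\eta^\flat)_*\rm{R}\Psi_{\X^\flat}\Q_\ell$. Since finite pushforward both preserves and \emph{reflects} monodromy-purity (Lemma~\ref{lemma:finite-morphisms-monodromy-pure}), the result follows. The finite-\'etale upgrade and the reflection property of finite pushforward are the two ingredients your proposal is missing.
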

\begin{proof}
    Firstly, we note that \cite[Corollary D.5]{Zav-coherent} (based on \cite[Proposition 6.6.1]{Ach1}) ensures that $X$ admits a {\it finite} \'etale morphism $f_{\eta_\infty} \colon X \to \bf{D}^d_{K_\infty}$. This morphism clearly extends to a morphism 
    \[
    f\colon \X \to \widehat{\bf{A}}^d_{\O_{K_\infty}}
    \]
    that is automatically finite by \cite[Theorem 6.4/1(iii)]{BGR} and the fact that an integral morphism of topologically finitely generated $\O_{K_\infty}$-algebras must be finite. We denote the special fiber of $f$ by $f_s \colon \X_s \to \bf{A}^d_s$. \smallskip
    
    Now we consider the non-standard tilt $f^\flat_{\eta^\flat}\colon X^\flat \to \bf{D}^d_{K^\flat}$ and the morphism of topoi
    \[
    \gamma\colon X^\flat_\et \to X_\et 
    \]
    from Definition~\ref{defn:non-standard-tilt} and Construction~\ref{construction:gamma-2} respectively. By construction, $f^\flat_{\eta^\flat}$ is finite \'etale, so $X^\flat=\Spa(B, B^{+})$ is affine. So $f$ extends to a finite morphism\footnote{Remark~\ref{rmk:not-p-adic} ensures that $B^\circ$ is topologically finite type and so $\Spf B^\circ$ is an admissible formal $\O_{K^\flat}$-scheme.} 
    \[
    f^\flat\colon \X^\flat=\Spf B^\circ \to \widehat{\bf{A}}^d_{\O_K^\flat}.
    \]
    We denote its special fiber by $f^\flat_s\colon \X_s^\flat \to \bf{A}^d_s$. Now comes the key observation: \smallskip
    
    {\it Claim: The diagram 
    \[
    \xymatrix{
                \X_{\eta^{\flat}, \et}^{\flat}\ar[r]^{\gamma} \ar[d]^{\Psi_{\X^{\flat}}} & \X_{\eta_\infty, \et}   \ar[d]^{\Psi_{\X}}\\
                \X_{s}^{\flat}\times_{s}\eta^{\flat} \ar[d]^{f^{\flat}_s\times_s \eta^\flat} & \X_{s}\times_{s}\eta_\infty \ar[d]^{f_s\times_s \eta_\infty}\\
                \mathbf{A}_{s}^{d}\times_{s}\eta^{\flat}  \ar[r]^{\simeq} & \mathbf{A}_{s}^{d}\times_{s}\eta_\infty
            }
    \]
    commutes (up to an equivalence).} 
    
    We will prove this claim later, but now we assume the claim and deduce Theorem~\ref{thm:first-weight-monodromy} from it. Firstly we note that Lemma~\ref{lemma:split-2} and {\it Claim} imply that $(f_s\times_s\eta_{\infty})_{\ast} \rm{R}\Psi_{\X} \mathbf{Q}_\ell$ canonically splits as a summand of $(f_s\times_s\eta_{\infty})_{\ast} \rm{R}\Psi_{\X} \rm{R}\gamma_{\ast} \mathbf{Q}_\ell\simeq (f^{\flat}_s\times_s \eta^\flat)_{\ast} \rm{R}\Psi_{\X^{\flat}}\mathbf{Q}_\ell$. \smallskip
    
    Since $f^\flat_s$ is finite, Lemma~\ref{lemma:finite-morphisms-monodromy-pure} ensures that $(f^{\flat}_s\times_s \eta^\flat)_{\ast}$ preserves monodromy-pure perverse sheaves of weight $0$. Therefore,  Lemma~\ref{lemma:char-p-local-weight-monodromy} implies that $(f^{\flat}_s\times_s \eta^\flat)_{\ast} \rm{R}\Psi_{\X^{\flat}}\mathbf{Q}_\ell$ is monodromy-pure of weight $0$. \smallskip
    
    We use finiteness of $f_s$ to ensure that $(f_s\times_s \eta_\infty)_{\ast}$ reflects monodromy-pure perverse sheaves of weight $0$ (also due to Lemma~\ref{lemma:finite-morphisms-monodromy-pure}). Therefore, $\rm{R}\Psi_{\X} \mathbf{Q}_\ell$ is monodromy-pure of weight $0$ because $(f_s\times_s\eta_{\infty})_{\ast} \rm{R}\Psi_{\X} \mathbf{Q}_\ell$ is a direct summand of $(f^{\flat}_s\times_s \eta^\flat)_{\ast} \rm{R}\Psi_{\X^{\flat}}\mathbf{Q}_\ell$ that was shown to be monodromy-pure of weight $0$.

    \begin{proof}[Proof of Claim] By the universal property of $2$-fiber products, it suffices to show the diagram
    \[
    \xymatrix{
                \X_{\eta^{\flat}, \et}^{\flat}\ar[r]^{\gamma}\ar[d]^{\lambda_{\X^{\flat}}} & \X_{\eta_\infty, \et}\ar[d]^{\lambda_{\X}}\\
                \X_{s, \et}^{\flat} \ar[d]^{f^{\flat}_s} & \X_{s, \et} \ar[d]^{f_s}\\
                \mathbf{A}_{s, \et}^{d}\ar[r]^{\sim} & \mathbf{A}_{s, \et}^{d}
            }
    \]
    commutes (up to an equivalence). \smallskip
    
    {\it Step~$1$.} We note that the diagram
    \[
    \begin{tikzcd}
        \X^\flat_{\eta^\flat, \et}\arrow{d}{f^\flat_{\eta^\flat}} \arrow{r}{\gamma} & \X_{\eta_\infty, \et} \arrow{d}{f_\eta} \\
        \bf{D}^d_{K^\flat} \arrow{r}{\gamma} & \bf{D}^d_{K_\infty}
    \end{tikzcd}
    \]
    commutes (up to an equivalence) by construction (see Construction~\ref{construction:gamma-2}). \smallskip
    
    {\it Step~$2$.} We note that functoriality of the morphism $\lambda$ implies that the diagram
    \[
    \begin{tikzcd}
        \X_{\eta_\infty, \et} \arrow{r}{\lambda_{\X}} \arrow{d}{f_{\eta_\infty}} & \X_{s, \et}    \arrow{d}{f_s} \\
        \bf{D}^d_{K_\infty, \et} \arrow{r}{\lambda_{\wdh{\bf{A}}^d}} & \bf{A}^d_{s, \et}
    \end{tikzcd}
    \]
    commutes (up to an equivalence) and the same diagram for $\X^\flat$ and $f^\flat$ also commutes (up to an equivalence). \smallskip
    
    {\it Step~$3$.} Steps~$1$, $2$ and a standard diagram chase imply that it suffices to show that the diagram
    \[
    \begin{tikzcd}
        \bf{D}^d_{K^\flat, \et} \arrow{r}{\lambda} \arrow{d}{\lambda_{\wdh{\bf{A}}^d_{\O_K^\flat}}} & \bf{D}^d_{K_\infty, \et} \arrow{d}{\lambda_{\wdh{\bf{A}}^d_{\O_{K_\infty}}}} \\
        \bf{A}^d_{s, \et} \arrow{r}{\sim} & \bf{A}^d_{s, \et}
    \end{tikzcd}
    \]
    commutes (up to an equivalence). By construction (see Construction~\ref{construction:gamma}), it boils down to showing that the diagram
    \begin{equation}\label{eqn:final-commutativity}
    \begin{tikzcd}
        \widetilde{\bf{D}}^d_{K^\flat_\infty, \et} \arrow{r}{\sim} \arrow{d} & \widetilde{\bf{D}}^d_{K_\infty, \et} \arrow{d} \\
        \bf{A}^d_{s, \et} \arrow{r}{\sim} & \bf{A}^d_{s, \et}
    \end{tikzcd}
    \end{equation}
    commutes (up to an equivalence), where the top arrow is the tilting equivalence. To see commutativity of (\ref{eqn:final-commutativity}), we choose an element $p^\flat \in K_\infty^\flat$ as in \cite[Lemma 3.4(ii)]{Sch0} and note that any \'etale $k[T_1, \dots, T_d]$-algebra $C$ uniquely lifts to a formally \'etale $\O_{K_\infty}\langle T_1, \dots, T_d\rangle$-algebra $C_{K_\infty}$ (resp. a formally \'etale $\O_{K_\infty^\flat}\langle T_1, \dots, T_d\rangle$-algebra $C_{K^\flat_\infty}$). Furthermore, we put 
    \[
    \widetilde{C}_{K_\infty} \coloneqq C_{K_\infty} \wdh{\otimes}_{\O_{K_\infty}\langle T_1, \dots, T_d\rangle} \O_{K_\infty}\langle T_1^{1/p^\infty}, \dots, T_d^{1/p^\infty}\rangle,
    \]
    \[
    \widetilde{C}_{K^\flat_\infty} \coloneqq C_{K^\flat_\infty} \wdh{\otimes}_{\O_{K^\flat_\infty}\langle T_1, \dots, T_d\rangle} \O_{K^\flat_\infty}\langle T_1^{1/p^\infty}, \dots, T_d^{1/p^\infty}\rangle.
    \]
    Then (the proof of) \cite[Corollary 7.4.5]{Zav-coherent}\footnote{Strictly speaking, the proof of \cite[Corollary 7.4.5]{Zav-coherent} assumes that the ground field is of characteristic $0$, but the proof easily adapts to characteristic $p>0$ situation as well.} implies that $(\widetilde{C}_{K_\infty}[\frac{1}{p}], \widetilde{C}_{K_\infty})$ and $(\widetilde{C}_{K^\flat_\infty}[\frac{1}{p^\flat}], \widetilde{C}_{K^\flat_\infty})$ are perfectoid pairs. Therefore, commutativity of (\ref{eqn:final-commutativity}) boils down to constructing a functorial isomorphism 
    \[
    \Spa\left(\widetilde{C}_{K_\infty}\left[\frac{1}{p}\right], \widetilde{C}_{K_\infty} \right)^\flat \simeq \Spa\left(\widetilde{C}_{K^\flat_\infty}\left[\frac{1}{p^\flat}\right], \widetilde{C}_{K^\flat_\infty} \right).
    \]
    Following the construction of the tilting correspondence (see \cite[Theorem 5.2]{Sch0}), it suffices to construct a functorial $\O_{K_\infty}/p\simeq \O_{K_\infty^\flat}/p^\flat$-linear isomorphism
    \[
    \widetilde{C}_{K_\infty}/p \simeq \widetilde{C}_{K^\flat_\infty}/p^\flat.
    \]
    This isomorphism comes from the fact that the  \'etale $k[T_1, \dots, T_d]$-algebra $C$ admits a unique (up to a unique isomorphism) lift to an \'etale $\O_{K_\infty}/p[T_1^{1/p^\infty}, \dots, T_d^{1/p^\infty}] \simeq \O_{K^\flat_\infty}/p^\flat[T_1^{1/p^\infty}, \dots, T_d^{1/p^\infty}]$-algebra and the fact that both $\widetilde{C}_{K_\infty}/p$ and $\widetilde{C}_{K^\flat_\infty}/p^\flat$ provide such lifts. 
    \end{proof}
\end{proof}

\begin{thm}\label{thm:local-weight-monodromy} Let $K$ be a $p$-adic local field, and let $\X$ be an admissible formal $\O_K$-scheme with smooth generic fiber. Suppose that for each point $x\in \X$ there is an \'etale morphism $(\sU, u) \to (\X, x)$ of pointed formal schemes such that $\sU_\eta$ admits an \'etale morphism to $\bf{D}^d_K$. Then the nearby cycles $\rm{R}\Psi_{\X} \Q_\ell \in D^{b}_c(\X_s \times_s \eta,\Q_\ell)$ are monodromy-pure of weight zero.
\end{thm}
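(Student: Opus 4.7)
The plan is to reduce, via étale localization on $\X$ and base change to the $p^{1/p^\infty}$-Kummer extension $K\subset K_\infty$, to the situation already handled in Theorem~\ref{thm:first-weight-monodromy}. To begin with, I observe that monodromy-purity of weight $0$ is étale-local on $\X$: for an étale morphism $f\colon \sU\to \X$ of admissible formal $\O_K$-schemes, compatibility of nearby cycles with étale pullback yields $\rm{R}\Psi_\sU\Q_\ell \simeq (f_s\times_s\eta)^*\rm{R}\Psi_\X\Q_\ell$, and perverseness, mixedness, and the formation of both the weight and monodromy filtrations on mixed perverse sheaves on Deligne's topos are preserved and detected by $*$-pullback along étale surjections on $\X_s$. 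Combining this with the hypothesis of the theorem, I may assume $\X$ is affine and $\X_\eta$ admits an étale morphism to $\bf{D}^d_K$.

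The next step is to replace $\X$ by the ``canonical'' formal model $\X^\circ := \Spf A^\circ$, where $A := \O(\X_\eta)$, so as to bring ourselves into the precise setting of Theorem~\ref{thm:first-weight-monodromy}. Since $\X_\eta$ is a smooth affinoid, Lemma~\ref{lemma:stable-circle} guarantees that $A^\circ$ is a topologically finitely generated $\O_K$-algebra, and by a standard fact for smooth affinoids we may further shrink $\X$ so that $A^\circ/\varpi$ is reduced. The natural inclusion $\O(\X)\hookrightarrow A^\circ$ defines a finite morphism $\mf\colon \X^\circ\to \X$ that is an isomorphism on generic fibers, so Lemma~\ref{lemma:compute-nearby-cycles}(\ref{lemma:compute-nearby-cycles-1}) yields $\rm{R}\Psi_\X\Q_\ell \simeq \rm{R}(\mf_s\times_s\eta)_*\rm{R}\Psi_{\X^\circ}\Q_\ell$, and Lemma~\ref{lemma:finite-morphisms-monodromy-pure} reduces the claim to proving monodromy-purity for $\rm{R}\Psi_{\X^\circ}\Q_\ell$. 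I may therefore assume $\X = \Spf A^\circ$ with $A^\circ/\varpi$ reduced.

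Finally, I pass to $K_\infty$. Both $K$ and $K_\infty$ are arithmetic non-archimedean fields, and $K\subset K_\infty$ is topologically algebraic, so by Lemma~\ref{lemma:monodromy-pure-after-extension} monodromy-purity of $\rm{R}\Psi_\X\Q_\ell$ is equivalent to that of its pullback along the natural morphism of Deligne's topoi $b\colon \X_s\times_s\eta_{K_\infty}\to \X_s\times_s\eta_K$. By Lemma~\ref{lemma:compute-nearby-cycles}(\ref{lemma:compute-nearby-cycles-3}) this pullback is canonically identified with $\rm{R}\Psi_{\X_{\O_{K_\infty}}}\Q_\ell$, where $\X_{\O_{K_\infty}} := \Spf(A^\circ\wdh{\otimes}_{\O_K}\O_{K_\infty})$. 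Reducedness of $A^\circ/\varpi$ allows Lemma~\ref{lemma:stable-circle} to give $A^\circ\wdh{\otimes}_{\O_K}\O_{K_\infty}\simeq (A\wdh{\otimes}_K K_\infty)^\circ$, which is topologically finitely generated over $\O_{K_\infty}$, and the étale map $\X_\eta\to\bf{D}^d_K$ base-changes to an étale map $(\X_{\O_{K_\infty}})_\eta\to\bf{D}^d_{K_\infty}$. This puts us squarely in the setting of Theorem~\ref{thm:first-weight-monodromy}, whose conclusion finishes the proof.

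The main technical hurdle is in the second step: verifying that after étale localization we can indeed arrange the canonical model to have reduced mod-$\varpi$ special fiber, so that Lemma~\ref{lemma:stable-circle} is applicable after the base change to $K_\infty$, without destroying the étale map to the ball. The remaining reductions then assemble formally from the tools developed earlier in the paper --- compatibility of nearby cycles with étale pullback and with base change of the base field, invariance of monodromy-purity under finite pushforward and under topologically algebraic extensions of arithmetic fields, and Theorem~\ref{thm:first-weight-monodromy} itself.
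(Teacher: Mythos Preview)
Your overall strategy matches the paper's: localize étale on $\X$ to get an affine with generic fiber étale over a disk, pass to the canonical model $\Spf A^\circ$ via a finite pushforward, base change to $K_\infty$, and invoke Theorem~\ref{thm:first-weight-monodromy}. The étale-localization step, the use of Lemma~\ref{lemma:finite-morphisms-monodromy-pure} for the finite morphism, and the passage to $K_\infty$ via Lemma~\ref{lemma:monodromy-pure-after-extension}, Lemma~\ref{lemma:compute-nearby-cycles}(\ref{lemma:compute-nearby-cycles-3}), and Lemma~\ref{lemma:stable-circle} are all exactly as in the paper.

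The gap is the sentence ``by a standard fact for smooth affinoids we may further shrink $\X$ so that $A^\circ/\varpi$ is reduced.'' This is not standard and in general fails over $K$ itself. For instance, take $K=\Q_2$ and $A=\Q_2\langle T\rangle[u]/(u^2-(1+2T))$, which is finite étale over $\bf{D}^1_{\Q_2}$. One checks that $A^\circ=\Z_2\langle T\rangle[u]/(u^2-(1+2T))$ and $A^\circ/2\cong\bf{F}_2[T][u]/(u-1)^2$ is non-reduced; the underlying reason is that the fiber over the Gauss point is a \emph{ramified} quadratic extension, so the spectral norm hits $|2|^{1/2}\notin|K^\times|$. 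This ramification persists under étale localization on $\X$ (étale pullback does not enlarge value groups), so no amount of shrinking over $K$ produces reduced $A^\circ/\varpi$. What is always reduced is $A^\circ/A^{\circ\circ}$, but that is not what Lemma~\ref{lemma:stable-circle} requires.

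The paper handles this with the Reduced Fiber Theorem: it first passes to a finite extension $K\subset L$ so that $B\coloneqq (A\wdh{\otimes}_K L)^\circ$ has reduced special fiber and $A^+\wdh{\otimes}_{\O_K}\O_L\to B$ is finite. One then uses Lemma~\ref{lemma:monodromy-pure-after-extension} and Lemma~\ref{lemma:compute-nearby-cycles}(\ref{lemma:compute-nearby-cycles-3}) to replace $K$ by $L$, and Lemma~\ref{lemma:finite-morphisms-monodromy-pure} to replace $\X_{\O_L}$ by $\Spf B$. After that your final paragraph goes through verbatim (with $L$ in place of $K$). So the fix is simply to insert the finite-extension step supplied by the Reduced Fiber Theorem in place of your ``shrinking'' assertion. (Minor note: in your second paragraph the relevant reference is Lemma~\ref{lemma:compute-nearby-cycles}(\ref{lemma:compute-nearby-cycles-2}), not (\ref{lemma:compute-nearby-cycles-1}).)
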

\begin{proof}
    Firstly, we note that the claim is \'etale local on $\X$, so we may assume that $\X=\Spf A^+$ is affine and its generic fiber $X=\X_\eta$ admits an \'etale map to a disc $\bf{D}^d_K$. \smallskip
    
    Lemma~\ref{lemma:monodromy-pure-after-extension} and Lemma~\ref{lemma:compute-nearby-cycles}(\ref{lemma:compute-nearby-cycles-2}) (and its evident extension to the case of $\Q_\ell$-coefficients) imply that it suffices to show that $\rm{R}\Psi_{\X_{\O_L}} \Q_\ell$ is monodromy pure of weight zero after some finite extension $K\subset L$. Now the Reduced Fiber Theorem (see \cite[Theorem 3.4.2]{L-Jac}) ensures that there is a finite extension $K\subset L$ (with a finite extension $k\subset l$ of residue fields) such that $B\coloneqq (A^+[\frac{1}{p}]\wdh{\otimes}_K L)^\circ$ has a reduced special fiber and the map $A^+\wdh{\otimes}_{\O_K}\O_L \to B$ is finite. We denote the \'etale topos of $\Spec L$ by $\eta'$ and of $\Spec l$ by $s'$. Then the $2$-commutative diagram  
    \[
    \begin{tikzcd}[column sep = huge, row sep = 3em]
        & (\Spf B)_{s'}\times_{s'} \eta' \arrow{d}{f_{s'}\times_{s'}\eta'} \\
    X_{L, \et} \arrow{ur}{\Psi_{\Spf B}} \arrow{r}{\Psi_{\X_{\O_L}}}& \X_{s'}\times_{s'} \eta'. 
    \end{tikzcd}
    \]
    implies that $\rm{R}\Psi_{\X_{\O_L}} \Q_\ell \simeq (f_{s'}\times_{s'}\eta')_* \rm{R}\Psi_{\Spf B} \Q_\ell$, so Lemma~\ref{lemma:finite-morphisms-monodromy-pure} ensures that it suffices to prove the claim for $\X=\Spf B$ and $K=L$. Therefore, we may and do assume that $X=\Spa(A, A^+)$ is an affinoid with an \'etale map to a disc and $\X=\Spf A^\circ$ with reduced special fiber. \smallskip
    
    We use Lemma~\ref{lemma:monodromy-pure-after-extension} and Lemma~\ref{lemma:compute-nearby-cycles}(\ref{lemma:compute-nearby-cycles-2}) (and its evident extension to the case of $\Q_\ell$-coefficients) again to say that it suffices to show that 
    \[
    \rm{R}\Psi_{\X_{\O_{K_\infty}}} \Q_\ell \in D^b_c(\X_s\times_s \eta_\infty; \Q_\ell)
    \]
    is monodromy-pure of weight $0$. Now Lemma~\ref{lemma:stable-circle} guarantees that $\X_{\O_{K_\infty}}\simeq \Spf (A\wdh{\otimes}_K K_\infty)^\circ$, so Theorem~\ref{thm:first-weight-monodromy} implies that $\rm{R}\Psi_{\X_{\O_{K_\infty}}} \Q_\ell$ is monodromy-pure of weight $0$ finishing the proof. 
\end{proof}

\begin{cor}\label{cor:cofinal-family} Let $K$ be a $p$-adic local field, and $X$ a smooth rigid-analytic $K$-variety. Then $X$ admits a cofinal family of admissible formal models $\{\X_i\}_{i\in I}$ such that $\rm{R}\Psi_{\X_i} \Q_\ell$ is monodromy-pure of weight $0$ for each $i\in I$.
\end{cor}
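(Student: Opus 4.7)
The plan is to reduce Corollary~\ref{cor:cofinal-family} to Theorem~\ref{thm:local-weight-monodromy} by showing that the formal models satisfying the hypothesis of that theorem are cofinal. By Raynaud's theorem, for any fixed admissible formal model $\Y$ of $X$, the admissible formal blow-ups $\X\to \Y$ form a cofinal system among admissible formal models, so it suffices to produce, for every $\Y$, an admissible blow-up $\X \to \Y$ that satisfies the hypothesis of Theorem~\ref{thm:local-weight-monodromy}.

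First I would use smoothness of $X$ to cover it by finitely many affinoid subdomains $V_1,\dots,V_n$ such that each $V_i$ admits an \'etale morphism $V_i \to \bf{D}^{d_i}_K$ to a closed polydisc. Such \'etale coordinate charts exist locally on any smooth rigid-analytic $K$-variety. Then, given an admissible formal model $\Y$ of $X$, I would invoke the standard consequence of Raynaud's theory of admissible formal blow-ups (cf.\ Bosch--L\"utkebohmert): any finite cover of $\Y_\eta$ by quasi-compact open subspaces is induced by an open cover of an admissible blow-up. Applied to $\{V_i\}$, this produces an admissible blow-up $\X\to \Y$ together with open formal subschemes $\sU_1,\dots,\sU_n\subset \X$ such that $\bigcup \sU_i = \X$ and $\sU_{i,\eta} = V_i$.

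Given any point $x\in \X$, I choose $i$ with $x\in \sU_i$. Then the open immersion $\sU_i \hookrightarrow \X$ is \'etale, and $\sU_{i,\eta} = V_i$ admits the \'etale map $V_i \to \bf{D}^{d_i}_K$ by construction. Hence $\X$ satisfies the hypothesis of Theorem~\ref{thm:local-weight-monodromy}, so $\rm{R}\Psi_\X \Q_\ell$ is monodromy-pure of weight $0$. Since every $\Y$ is dominated by such an $\X$, the desired cofinal family exists.

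The main obstacle is the formal-geometric refinement step: producing an admissible blow-up of $\Y$ on which a prescribed quasi-compact open cover of the generic fiber is realized by open formal subschemes. This is well-known from Raynaud's theorem, and indeed is used elsewhere in this paper, so no new ideas are required; the rest of the argument is essentially formal.
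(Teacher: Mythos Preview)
Your proposal is correct and takes essentially the same approach as the paper: the paper's proof is the one-liner ``It follows directly from Theorem~\ref{thm:local-weight-monodromy} and \cite[Proposition 3.7]{BLR3},'' and the cited result from Bosch--L\"utkebohmert--Raynaud is precisely the formal-geometric refinement step you spelled out (realizing a finite affinoid cover of the generic fiber by Zariski-open formal subschemes after an admissible blow-up). Your write-up simply unpacks that citation.
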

\begin{proof}
    It follows directly from Theorem~\ref{thm:local-weight-monodromy} and \cite[Proposition 3.7]{BLR3}.
\end{proof}

\section{Conjectures and questions}

In this section, we mention some conjectures and questions about $\ell$-adic cohomology groups of $p$-adic rigid-analytic varieties. 

\begin{conjecture1}\label{conj:Weights}(Weights) Let $K$ be a $p$-adic local field, $X$ a quasi-compact and quasi-separated rigid-analytic $K$-variety, and $\ell\neq p$ a prime number. Then 
\begin{enumerate}
    \item For any $g\in G_\eta$ projecting to the geometric Frobenius in $G_s$ and any integer $i\geq 0$, the eigenvalues of $g$ acting on $\rm{IH}^i_c(X_{\wdh{\ov{\eta}}}; \Q_\ell)$ are $q$-Weil numbers of weights $\leq 2(i+d)$;
    \item For any $g\in G_\eta$ projecting to the geometric Frobenius in $G_s$ and any integer $i\geq 0$, the eigenvalues of $g$ acting on $\rm{IH}^i(X_{\wdh{\ov{\eta}}}; \Q_\ell)$ are $q$-Weil numbers of weights $\geq 0$.
\end{enumerate}
In particular, if $X$ is smooth and proper, the eigenvalues of any geometric Frobenius lift on $\rm{H}^i(X_{\ov{\wdh{\eta}}}, \Q_\ell)$ are $\geq 0$ and $\leq 2i$. 
\end{conjecture1}

\begin{rmk1} If $X$ is smooth, proper, and {\it algebraic}. Then one can show that the eigenvalues of any geometric Frobenius lift on $\rm{H}^i(X_{\ov{\wdh{\eta}}}, \Q_\ell)$ are $\geq 0$ and $\leq 2i$. Indeed, one can first reduce to the strictly semi-stable case by de using de Jong's alterations. Then the result follows from \cite[Lemma 3.7(i)]{Saito}.
\end{rmk1}

\begin{conjecture1}\label{conj:exponent}(Exponent of Unipotency) Let $K$ be a $p$-adic discretely valued field, $X$ a quasi-compact and quasi-separated rigid-analytic $K$-variety, $\ell\neq p$ a prime, and $\Lambda\in \{\Z/\ell^n\Z, \Z_\ell, \Q_\ell\}$. Then there is an open subgroup $I_1\subset I$ (independent of $\ell$ and $\Lambda$) such that, for all $g\in I_1$ and $i\in \bf{N}$,  $(g-1)^{i+1}=0$ on $\rm{H}^i_{(c)}(X_{\wdh{\ov{\eta}}}, \Lambda)$, $\rm{IH}^{i-d}_{(c)}(X_{\wdh{\ov{\eta}}}, \Lambda)$.
\end{conjecture1}

\begin{rmk1} In the algebraic case, Conjecture~\ref{conj:Weights} is known for $\rm{H}^\bullet$ and $\rm{H}^\bullet_c$ due to Gabber and Illusie (see \cite[Theorem 2.3]{Illusie-Grothendieck}). It is also know for $\rm{IH}^\bullet(X_{\ov{\eta}}, \Q_\ell)$ and $\rm{IH}^{\bullet}_c(X_{\ov{\eta}}, \Q_\ell)$ in the algebraic situation by reducing to the smooth case via the Decomposition theorem (see \cite[Remark 2.5]{Illusie-Grothendieck}\footnote{Note that \cite{Illusie-Grothendieck} uses a different normalization for the intersection cohomology. So the shift by $d$ in Conjecture~\ref{conj:exponent} does not appear in \cite{Illusie-Grothendieck}.}). The $\Z/\ell^n\Z$ and $\Z_\ell$ versions for the intersection cohomology seem to be unknown even in the algebraic case.
\end{rmk1}

Now we discuss a possible approach to reducing the Weight-Monodromy Conjecture from Theorem~\ref{thm:local-weight-monodromy}. The natural question to ask is how the notion of monodromy pure complexes interacts with $6$-functors. It is tempting to ask whether $\rm{R}(f\times_s \eta)_*$ preserves monodromy-pure complexes of weight $w$ for a proper morphism $f\colon X \to Y$ of $k$-schemes. However, this cannot be true in this generality as the following example shows:

\begin{example1} Let $X$ be a Hopf surface over $K$, $\X$ an admissible formal $\O_K$-model of $X$ as in Theorem~\ref{thm:local-weight-monodromy}, and $f_s\colon \X_s\to \Spec k$ the structure morphism. If $\rm{R}(f_{s}\times_s \eta)_*$ preserves monodromy-pure complexes of weight $0$, then $\rm{H}^i(X_{\wdh{\ov{\eta}}}, \Q_\ell)$ satisfies the weight-monodromy conjecture (see Conjecture~\ref{conj:weight-monodromy}). However, this is already false for $\rm{H}^1(X_{\wdh{\ov{\eta}}}, \Q_\ell)$. 
\end{example1}

A special feature of Hopf surfaces is that they never admit an admissible formal model with {\it projective} special fiber (see \cite[Theorem 1.2 and Example 5.2]{Hansen-Li}). Therefore, it still makes sense to ask if $\rm{R}(f\times_s \eta)_*$ preserves monodromy-pure complexes of weight $w$ for {\it projective} $f$.

\begin{question1}\label{question:super-Weil} Let $K$ be a $p$-adic local field, $\ell\neq p$ a prime number, $f\colon X \to Y$ a projective morphism of finite type $k$-schemes, and $\F\in D^b_c(X\times_s \eta; \Q_\ell)$ monodromy pure of weight $w$. Is $\rm{R}(f\times_s \eta)_*\F\in D^b_c(Y\times_s \eta; \Q_\ell)$ monodromy-pure of weight $w$?
\end{question1}

\begin{rmk1}\label{rmk:question-weight-monodromy} A positive answer to Question~\ref{question:super-Weil} would imply that Theorem~\ref{thm:local-weight-monodromy} holds for {\it every} admissible formal $\O_K$-model $\X$ of a smooth qcqs rigid-analytic $K$-variety $X$. More importantly, it would imply that the Weight-Monodromy Conjecture holds for any smooth, proper rigid-analytic varieties with a projective reduction.
\end{rmk1}

\begin{conjecture1} Let $K$ be a $p$-adic local field, $X$ a smooth qcqs rigid-analytic $K$-variety, and $\ell\neq p$ a prime number. Suppose that $X$ admits an admissible formal $\O_K$-model $\X$ with a projective special fiber $\X_s$. Then the eigenvalues of any geometric Frobenius lift on $\rm{gr}^j_{\rm{M}} \rm{H}^i(X_{\wdh{\ov{\eta}}}, \Q_\ell)$ are $q$-Weil numbers of weight $i+j$ for every integers $i, j$.
\end{conjecture1}

\newpage

\appendix

\section*{Appendix}

\section{Deligne's category and nearby cycles}\label{appendix:deligne}

Let $K$ be a non-archimedean field with a ring of integers $\O_K$ and residue field $k=k(s)$. Let $X$ be a finite type $k$-scheme. The main goal of this Appendix is to recall the construction and basic properties of the category of ``sheaves on $X_{\ov{s}}$ with a continuous action of $G_K$''. The results of this Appendix are well-known to experts, but are not always easy to extract from the literature. However, we do not usually give full proofs in this section, and only give references to other papers. For the most part, we follow \cite{SGA7_2}, \cite{deGabber}, and \cite{Lu-Zheng}. \smallskip

For the rest of this section, we fix a non-archimedean field $K$ with ring of integers $\O_K$ and residue field $k=k(s)$. In what follows, we denote by $G_s$ the absolute Galois group of $k$ and by $G_\eta$ the absolute Galois group of $K$.  \smallskip

We denote by $s$ (resp. $\eta$) the classifying topos of the pro-finite group $G_s$ (resp. $G_\eta$), or equivalently the \'etale topos of $\Spec k$ (resp. $\Spec K$ or $\Spa(K, \O_K)$); it consists of discrete sets with equipped with a continuous action of $G_s$ (resp. $G_\eta$). The natural morphism $r\colon G_\eta \to G_s$ induces a canonical morphism of topoi $r\colon \eta \to s$. For each $g\in G_\eta$, we often denote its image $r(g)\in G_s$ simply by $\ov{g}$.\smallskip

For a finite type $k$-scheme, we will freely abuse the notation and denote by $X_{\ov{s}}$ both $X_{\ov{k}}$ and $X_{k^{\rm{sep}}}$. It should not cause any confusion because the associated \'etale topoi are canonically equivalent. 

\subsection{Definition of Deligne's topos}

The main goal of this section is to formalize the notion of a sheaf on $X_{\ov{s}}$ with a ``continuous'' action of $G_\eta$. More precisely, let $X$ be a qcqs $k$-scheme; by functoriality $X_{\ov{s}}$ admits the natural {\it right} action of $G_s$, and so the natural action of $G_\eta$ through the quotient $r\colon G_\eta \to G_s$. In particular, for each $g\in G_\eta$, there is an automorphism \[
\ov{g}\colon X_{\ov{s}} \to X_{\ov{s}}.
\]
This induces the morphism of \'etale topoi $\ov{g}\colon X_{\ov{s},\et}\to X_{\ov{s}, \et}$, and so the pullback functors 
\[
\ov{g}^*\colon \rm{Shv}\left(X_{\ov{s}, \et}\right)\to \rm{Shv}\left(X_{\ov{s}, \et}\right).
\]
This data defines a {\it right} action of $G_\eta$ on $X_{\ov{s}, \et}$, and so pullbacks define a left action of $G_\eta$. In particular, these pullbacks come with the identifications $\ov{g}^*\circ\ov{h}^*\simeq (\ov{gh})^*$.

\begin{defn} An {\it action of $G_\eta$} on an \'etale sheaf $\F$ on $X_{\ov{s}}$ is family of isomorphisms 
\[
\rho_g\colon \ov{g}^*\F \to \F \text{ } (g\in G_\eta)
\]
such that $\rho_{e}=\rm{Id}$ and the diagram
\[
\begin{tikzcd}
\ov{g}^*\left(\ov{h}^*\F\right) \arrow{d}{\rm{iso}}\arrow{r}{\ov{g}^*\left(\rho_h\right)} & \ov{g}^*\F\arrow{d}{\rho_g} \\
\left(\ov{gh}\right)^*(\F) \arrow{r}{\rho_{gh}}& \F
\end{tikzcd}
\]
commutes for any $g,h\in G_\eta$. 

We denote by $\rm{S}_{G_\eta}(X_{\ov{s}})$ the {\it category of $G_\eta$-sheaves on $X_{\ov{s}}$}. Concretely, the objects of this category are pairs $(\F, \rho)$ of an \'etale $X_{\ov{s}}$-sheaf $\F$ equipped with an action $\rho$ of $G_\eta$, and morphisms $(\F, \rho) \to (\G, \rho')$ are morphisms between $\F \to \G$ that intertwine the $G_\eta$-actions.  
\end{defn}

Now we wish to define continuous $G_\eta$-actions in this context. Let $k\subset k'$ be a finite extension, we denote by $G_{\eta, k'}$ to be the pre-image $r^{-1}(G_{k'}) \subset G_{\eta}$. Now let $(\F, \rho)$ be a $G_\eta$-sheaf on $X_{\ov{s}}$. Then we note, for a finite Galois extension $k\subset k'$ and an \'etale morphism $U' \to X_{k'}$, the action $\rho$ defines an (honest) action of the group $G_{\eta, k'}$ on $\F(U'\times_{X_{k'}} X_{\ov{s}})$. 

\begin{defn} A $G_\eta$-action on an \'etale sheaf $\F$ on $X_{\ov{s}}$ is {\it continuous} if, for every finite Galois extension $k\subset k'$ and a qcqs \'etale morphism $U' \to X_{k'}$, the associated action of $G_{\eta, k'}$ on $\F(U'\times_{X_{k'}} X_{\ov{s}})$ is continuous with respect to the discrete topology on $\F(U'\times_{X_{k'}} X_{\ov{s}})$. 

We denote by $\rm{T}_{G_\eta}(X_{\ov{s}})$ the full subcategory of $\rm{S}_{G_\eta}(X_{\ov{s}})$ that consists of $G_\eta$-sheaves with a continuous action.
\end{defn}

This definition is rather concrete. However, it is also helpful to consider another (more abstract) equivalent definition. For this, we recall that the $2$-category of topoi $\cal{T}$ admits all $2$-fiber products by \cite[Proposition (3.4)]{Giraud} (also, see \cite[Exp. XI, Th\'eorem\`e 3.2]{deGabber} for an explicit site-theoretic construction). The construction of this $2$-fiber product is not obvious, and in particular we warn the reader that this fiber product {\it does not} commute with the forgetful functor $\cal{T}\to \cal{C}at_2$ from the $2$-category of topoi to the $2$-category of categories. \smallskip

We now apply this construction in our case of interest. Namely, let $X$ be a qcqs $k$-scheme. The structure morphism $X \to \Spec k$ defines a morphism of \'etale topoi $X_\et \to s$, while the continuous morphism $r\colon G_\eta \to G_s$ defines a morphism of classifying topoi $\eta\to s$. 

\begin{defn}\label{defn:deligne-topos} {\it Deligne's Topos} $X\times_s \eta$ is the $2$-fiber product $X_\et\times_s \eta$. 
\end{defn}

Now we choose a point\footnote{A point of a topos $T$ is morphism of topoi $\rm{pt}\to T$.} $p_{\eta}\colon \ov{\eta} \to \eta$ (that is unique up to a (non-unique) isomorphism by \cite[\href{https://stacks.math.columbia.edu/tag/04HU}{Tag 04HU}]{stacks-project}), and a point $p_s\colon \ov{s}\to s$, and an equivalence $\varphi\colon \ov{\eta}\simeq \ov{s}$ such that the diagram
\begin{equation}\label{eqn:choices}
\begin{tikzcd}
\ov{\eta}\arrow{d}{\varphi} \arrow{r}{p_\eta}& \eta \arrow{d}{r}\\
\ov{s} \arrow{r}{p_s} & s
\end{tikzcd}
\end{equation}
commutes\footnote{Geometrically, this choice corresponds to a choice of an algebraic closure $\ov{K}$ of $K$ together with an identification of the residue field of $\ov{K}$ with an algebraic closure of $k$.}. In what follows, for a topos $T$, we denote by $\rm{Points}(T)=\rm{Map}_{\cal{T}}(\rm{pt}, T)$ the category of points of $T$. We can now formulate the main properties of $X\times_s \eta$:

\begin{lemma}\label{lemma:properties-Deligne} Let $X$ be a qcqs $k$-scheme. Then 
\begin{enumerate}
    \item\label{lemma:properties-Deligne-1} there is an equivalence $(X\times_s \eta)\times_\eta \ov{\eta}\simeq X_{\ov{s}, \et}$;
    \item\label{lemma:properties-Deligne-2} there is an equivalence $X\times_s \eta\simeq \rm{T}_{G_\eta}(X_{\ov{s}})$ such that the under the natural projection morphism (that comes from $(\ref{lemma:properties-Deligne-1})$) $\pi_{X}\colon X_{\ov{s},\et} \to X\times_s \eta$ the pullback functor $\pi^*_{X}$ is identified with the forgetful functor $\rm{T}_{G_\eta}(X_{\ov{s}}) \to X_{\ov{s}, \et}$;
    \item\label{lemma:properties-Deligne-3} $\pi^*_X$ induces an essentially surjective functor $\rm{Points}(X\times_s \eta) \to \rm{Points}(X_{\ov{s}})$. In particular, for every ring $\Lambda$, the natural morphism $\cal{D}(X\times_s \eta; \Lambda) \to \cal{D}(X_{\ov{s}}; \Lambda)$ is conservative. 
\end{enumerate}
\end{lemma}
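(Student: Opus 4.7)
My plan is to prove the three claims in order, leveraging (1) to set up (2), and (2) to obtain (3).

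For (1), I will use the pasting law for $2$-fiber products of topoi together with the chosen compatibility square. Explicitly,
\[
(X\times_s \eta)\times_\eta \ov{\eta} \;\simeq\; X_\et \times_s \eta \times_\eta \ov{\eta} \;\simeq\; X_\et \times_s \ov{\eta} \;\simeq\; X_\et \times_s \ov{s},
\]
where the last step uses the equivalence $\varphi\colon \ov{\eta}\simeq \ov{s}$ over $s$ from diagram~(\ref{eqn:choices}). It remains to identify $X_\et \times_s \ov{s}$ with $X_{\ov{s},\et}$; this is the standard computation of base change in \'etale topoi (a site-theoretic calculation that can be done by taking \'etale covers of $X$ pulled back along $\Spec \ov{k}\to \Spec k$, cf. \cite[Exp.~XI, Th\'eorem\`e 3.2]{deGabber}).

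For (2), the strategy is to give a site-theoretic model of $X\times_s \eta$ and match it with $\rm{T}_{G_\eta}(X_{\ov{s}})$. Following the construction in \cite[Exp.~XI]{deGabber} (also used in \cite{Lu-Zheng}), a site defining $X\times_s \eta$ can be taken to have objects the triples $(k', U', U'\to X_{k'})$ where $k\subset k'$ is a finite Galois subextension of $\ov{s}/s$ corresponding to a finite Galois extension $K\subset K'$, and $U'\to X_{k'}$ is \'etale; coverings come from \'etale coverings in the middle variable. A sheaf on this site is the same datum as: for every such finite subextension, an \'etale sheaf $\F_{k'}$ on $X_{k'}$, together with compatible $G_{K'/K}$-equivariance data coming from the $G_{K'/K}$-action on $X_{k'}\to X$. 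Passing to the colimit along finite Galois subextensions of $\ov{s}/s$ (equivalently, of $\ov{K}/K$), I obtain an \'etale sheaf $\F$ on $X_{\ov{s}}$ together with isomorphisms $\rho_g\colon \ov{g}^*\F\to\F$ satisfying the cocycle condition and continuity (since at each finite level the action is through a finite quotient $G_{K'/K}$); this is exactly the data of an object of $\rm{T}_{G_\eta}(X_{\ov{s}})$. The construction is reversible by the usual descent dictionary, and under the morphism $\pi_X$ of part~(1), $\pi_X^*$ is by construction the forgetful functor (it just forgets the compatibility data after passing to the filtered colimit of $X_{k'}$'s). The main obstacle here is bookkeeping the $2$-categorical compatibilities, but no essentially new input beyond \'etale descent and the compatibility of topos-theoretic fiber products with the concrete site description of \cite[Exp.~XI, Th\'eorem\`e 3.2]{deGabber} is required.

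For (3), I will first describe the points of $X\times_s \eta$ via~(2). A point of $X\times_s \eta$ is a fiber functor, equivalently (after~(2)) a pair consisting of a geometric point $\ov{x}\to X_{\ov{s}}$ together with an $(\F,\rho)\mapsto \F_{\ov{x}}$ fiber functor; every geometric point $\ov{x}$ of $X_{\ov{s}}$ produces such a point simply by $\pi_X\circ \ov{x}$, and any point of $X\times_s \eta$ arises in this way up to the $G_\eta$-ambiguity in $\ov{x}$. This gives the essential surjectivity statement on points. For the conservativity statement, let $\F\in \cal{D}(X\times_s \eta; \Lambda)$ be such that $\pi_X^*\F\simeq 0$ in $\cal{D}(X_{\ov{s}};\Lambda)$. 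Since $X_{\ov{s},\et}$ has enough points (given by geometric points $\ov{x}\to X_{\ov{s}}$) and these points factor as $\pi_X\circ \ov{x}$, the stalks of $\F$ at all points of $X\times_s \eta$ vanish, so $\F\simeq 0$. This gives conservativity.

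The main technical obstacle is the precise bookkeeping in~(2): one must verify that the site-theoretic model of the $2$-fiber product really does match the concrete description $\rm{T}_{G_\eta}(X_{\ov{s}})$, and that continuity of the $G_\eta$-action (defined concretely in terms of finite Galois levels) corresponds exactly to the condition that a sheaf in the colimit is indeed compatible with the system at every finite level. Once this is done, (1) and (3) follow with little additional work.
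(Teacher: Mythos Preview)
Your proposal is correct and largely parallels the paper's proof. Two points of comparison are worth noting.

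For (1), you both reduce to $X_\et \times_s \ov{s} \simeq X_{\ov{s},\et}$; where you call this a standard site computation, the paper argues more explicitly by writing $\ov{s}$ and $X_{\ov{s},\et}$ as cofiltered $2$-limits over finite subextensions $k\subset k'$ (via \cite[Lemma 8.3]{Morin}), commuting the $2$-fiber product past the limit, and then invoking the finite-level identification $X_{s',\et}\simeq X_\et\times_s s'$ from \cite[Exp.~IV, Proposition 5.11]{SGA4_2}.

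For (3), you go through the concrete description from (2), producing points of $X\times_s \eta$ from geometric points of $X_{\ov{s}}$ and deducing conservativity via enough points. The paper instead works directly from the universal property, identifying $\rm{Points}(X\times_s \eta)\simeq \rm{Points}(X_\et)\times_{\rm{Points}(s)}\rm{Points}(\eta)$ as a $2$-fiber product of categories, so essential surjectivity reduces to the fact that $\eta$ has a single isomorphism class of points. The paper's route is shorter and avoids your slightly redundant phrasing (the fiber functor in your ``pair'' is already determined by $\ov{x}$), but your argument is equally valid and arguably more transparent.
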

\begin{proof}
    $(\ref{lemma:properties-Deligne-1})$ We note that $(X\times_s \eta)\times_\eta \ov{\eta} \simeq X_{\et}\times_s \ov{\eta}$. Using the diagram~(\ref{eqn:choices}), we conclude that it suffices to show that $X_\et\times_s \ov{s} \simeq X_{\ov{s}, \et}$. By the universal property of $2$-fiber products, there is a natural morphism 
    \[
    X_{\ov{s}, \et} \to X_\et\times_s \ov{s}
    \]
    that we need to show to be an equivalence.\smallskip
    
    For a finite Galois extension $k\subset k'\subset \ov{k}$, we denote by $s'$ the \'etale topos of $\Spec k'$ and by $X_{s'}$ the fiber product (of schemes) $X\times_k \Spec k'$. Then \cite[Lemma 8.3]{Morin} ensures that 
    \[
    \ov{s}\simeq \lim_{k\subset k'}\ov{s'} 
    \]
    and
    \[
    X_{\ov{s}, \et} \simeq \lim_{k\subset k'} X_{s', \et},
    \]
    where the (cofiltered) limit is taken in the $2$-category of topoi, and is taken over all finite Galois extensions of $k$ inside $\ov{k}$. Since cofiltered $2$-limits commute with $2$-fiber products, it suffices to show that 
    \[
    X_{s', \et}\to X_{\et} \times_s s'
    \]
    is an equivalence of topoi. This follows from \cite[Exp. IV, Proposition 5.11]{SGA4_2}.\smallskip
    
    $(\ref{lemma:properties-Deligne-2})$ This is \cite[Exp. XII, Construction 1.2.4]{SGA7_2} (and the discussion after this construction). Unfortunately, the discussion in SGA is pretty terse, so we also refer to \cite[Theorem 3.1]{Weil-site} for a proof of a similar result that can be adapted to this situation. \smallskip
    
    $(\ref{lemma:properties-Deligne-3})$ By the universal property of $2$-fiber products, we see that $\rm{Points}(X\times_s \eta)\simeq \rm{Points}(X)\times_{\rm{Points(s)}}\rm{Points}(\eta)$ where the fiber product is understood to be the $2$-fiber product in the $2$-category of categories. Then it suffices to show that $\rm{Points}(\eta) \to \rm{Points}(s)$ is essentially surjective. This follows from the fact that both categories contain only $1$ isomorphism class of objects (see \cite[\href{https://stacks.math.columbia.edu/tag/04HU}{Tag 04HU}]{stacks-project}). 
\end{proof}

Recall that every element $g\in G_\eta$ induces a morphism of \'etale topoi $\ov{g}\colon X_{\ov{s}, \et} \to X_{\ov{s}, \et}$ induced by the (right) action of $G_\eta$ on $X_{\ov{s}}$ (through the quotient $G_\eta \to G_s$).

\begin{construction}\label{construction:topos-theoretic} Lemma~\ref{lemma:properties-Deligne}~(\ref{lemma:properties-Deligne-1}, \ref{lemma:properties-Deligne-2}) implies that there is a natural morphism of topoi $\pi_X \colon X_{\ov{s}, \et}\to X\times_s \eta$ such that, for each $g\in G_\eta$, there is an isomorphism $\psi_g\colon \pi_X \simeq \pi_{X} \circ \ov{g}$ such that $\psi_{e}=\rm{Id}$ and the diagram
\[
\begin{tikzcd}
\pi_X\arrow{d}{\psi_{gh}} \arrow{r}{\psi_{g}} &  \pi_X\circ \ov{g}\arrow{d}{\psi_{h}\circ \ov{g}}\\
\pi_X \circ \ov{gh} \arrow{r}{\rm{iso}} &\pi_X \circ \ov{h}\circ \ov{g}
\end{tikzcd}
\]
commutes for every $g, h\in G_\eta$.
\end{construction}

We note that $\pi_X$ is natural in $X$, in the sense that for any morphism $X\to Y$ of qcqs $k$-schemes, the diagram
\[
\begin{tikzcd}
X_{\ov{s},\et}\arrow{d}{\pi_X} \arrow{r} &  Y_{\ov{s},\et} \arrow{d}{\pi_Y}\\
X \times_s \eta \ar{r} & Y \times_s \eta
\end{tikzcd}
\]
commutes up to canonical 2-isomorphism. 

\begin{construction}\label{construction:action} Now we note that, for every ring $\Lambda$, there is a strictly unitary functor (in the sense of \cite[\href{https://kerodon.net/tag/008K}{Tag 008K}]{kerodon} and \cite[\href{https://kerodon.net/tag/008R}{Tag 008R}]{kerodon}) 
\[
D(-;\Lambda)^*\colon \cal{T}^{\rm{op}} \to \cal{C}at
\]
from the $2$-category of topoi to the $2$-category of categories that sends a topos $T$ to the derived category $D(T; \Lambda)$ and a morphism of topoi $f\colon T\to T'$ to the pullback functor $f^*$ . In particular, for a prime number $\ell$, an integer $n\geq 1$, a qcqs $k$-scheme $X$, and a sheaf $\F\in D(X\times_s\eta; \Z/\ell^n\Z)$, we can pass to pullbacks in Construction~\ref{construction:topos-theoretic} to get a family of isomorphisms $\rho_g\colon \ov{g}^*\pi_X^*\F \to \pi_X^*\F$ such that $\rho_e=\rm{Id}$ and the diagram
\[
\begin{tikzcd}
\ov{g}^*\ov{h}^*\pi^*_X\F \arrow{d}{\rm{iso}} \arrow{r}{\ov{g}^*(\rho_h)}& \ov{g}^*\pi^*_X\F \arrow{d}{\rho_g}\\
(\ov{gh}^*) \pi^*_X \F \arrow{r}{\rho_{gh}}& \pi^*_X\F
\end{tikzcd}
\]
commutes for every $g, h\in G_\eta$. By restricting to the inertia subgroup $I\subset G_\eta$, we get a homomorphism
\[
\rho\colon I \to \rm{Aut}_{\Z/\ell^n\Z}(\pi^*_X\F)
\]
for any $\F\in D(X\times_s \eta; \Z/\ell^n\Z)$. 
\end{construction}

\begin{construction}\label{construction:sigma} Suppose $\sigma\colon G_s \to G_\eta$ is a continuous section of the projection morphism $r\colon G_\eta \to G_s$, so $\sigma$ defines a morphism of topoi $\sigma \colon s\to \eta$. The universal property of $2$-fiber products imply that this defines an essentially unique morphism of topoi 
\[
\sigma_X\colon X_{\et} \to X\times_s \eta. 
\]
In particular, for each prime number $\ell$ and an integer $n\geq 1$, we have the well-defined pullback functor
\[
\sigma_X^*\colon \cal{D}(X\times_s \eta; \Z/\ell^n\Z) \to \cal{D}(X; \Z/\ell^n\Z).
\]
\end{construction}

\begin{construction}\label{construction:p} For any $X$ be a qcqs $k$-scheme, Deligne's topos $X\times_s \eta$ comes with the natural canonical projection $p_X\colon X\times_s \eta \to X$. In particular, for any prime number $\ell$ and an integer $n$, there is a canonical pullback functor
\[
p^*_X\colon \cal{D}(X; \Z/\ell^n\Z)\to \cal{D}(X\times_s \eta; \Z/\ell^n\Z).
\]
\end{construction}

\begin{lemma}\label{lemma:trivial-action-descend} Let $X$ be a qcqs $k$-scheme, $\ell$ a prime number, and $n$ an positive integer. Suppose that $\F\in \rm{Shv}(X\times_s \eta; \Z/\ell^n\Z)$ such that the inertia action $I$ on $\pi^*_X\F$ is trivial. Then the natural morphism
\[
\F \to p_X^*p_{X, *}\F
\]
is an isomorphism. Furthermore, if $\F$ is in addition locally constant with finite rank free stalks, then $p_{X, *}\F$ is also locally constant with finite rank free stalks.
\end{lemma}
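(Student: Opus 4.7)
The strategy is to reinterpret the assertion as a statement of classical Galois descent via the description of Deligne's topos from Lemma~\ref{lemma:properties-Deligne}(\ref{lemma:properties-Deligne-2}), namely $X \times_s \eta \simeq \rm{T}_{G_\eta}(X_{\ov{s}})$, so that a sheaf $\F$ corresponds to an \'etale sheaf $\pi_X^*\F$ on $X_{\ov{s}}$ equipped with a continuous $G_\eta$-action compatible with the $G_\eta$-action on $X_{\ov{s}}$ (which itself factors through $G_s$).

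The first step is to unwind the projection $p_X\colon X\times_s \eta \to X_{\et}$ under this identification. By construction, for an \'etale sheaf $\G$ on $X$, the pullback $p_X^*\G$ is identified with $\G|_{X_{\ov{s}}}$ endowed with its canonical $G_\eta$-action, which automatically factors through $G_s$ and is therefore trivial on $I$. By adjunction, $p_{X,*}\F$ is then the \'etale sheafification on $X$ of the presheaf sending an \'etale $U \to X$ to the $G_\eta$-invariants of $\pi^*_X\F(U_{\ov{s}})$.

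Under the standing hypothesis that $I$ acts trivially on $\pi^*_X\F$, the $G_\eta$-action factors through $G_s = G_\eta/I$, so $\pi^*_X\F$ carries the structure of a continuous $G_s$-equivariant sheaf on $X_{\ov{s}}$. The key input is then the classical Galois descent equivalence
\[
\rm{Shv}(X_{\et}; \Z/\ell^n\Z) \os{\sim}{\to} \rm{Shv}(X_{\ov{s},\et}; \Z/\ell^n\Z)^{G_s\text{-}\rm{cont}},
\]
which follows from the presentation $X_{\ov{s},\et} \simeq \lim_{k \subset k'} X_{s',\et}$ (used in the proof of Lemma~\ref{lemma:properties-Deligne}(\ref{lemma:properties-Deligne-1})) together with \cite[Exp. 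IV, Proposition 5.11]{SGA4_2}. This produces an \'etale sheaf $\G$ on $X$ with $\G|_{X_{\ov{s}}} \simeq \pi^*_X\F$ compatibly with $G_s$-actions; comparing with the description of $p_{X,*}$ above, one identifies $\G \simeq p_{X,*}\F$ and the counit $p_X^*p_{X,*}\F \to \F$ with the descent isomorphism.

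For the second statement, assume $\F$ is locally constant with finite rank free stalks. Then so is $\pi^*_X\F$ on $X_{\ov{s}}$. Because each stalk is a finite $\Z/\ell^n\Z$-module, continuity of the $G_s$-action implies that it factors through the finite quotient $\rm{Gal}(k'/k)$ for some finite Galois extension $k \subset k'$. A trivializing \'etale cover of $X_{\ov{s}}$ for $\pi^*_X\F$ is then defined over $X_{s'}$ (after enlarging $k'$ if necessary), using the limit presentation $X_{\ov{s},\et} \simeq \lim X_{s',\et}$ and a standard spreading-out argument along the lines of \cite[\href{https://stacks.math.columbia.edu/tag/09YU}{Tag 09YU}]{stacks-project}. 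Descending further along the finite \'etale cover $X_{s'} \to X$ preserves local constancy, and the resulting sheaf on $X$ is $p_{X,*}\F$ by the first part. The only genuine subtlety is this last limit/descent bookkeeping; all other steps are formal consequences of the topos-theoretic identifications already established.
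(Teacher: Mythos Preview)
Your proof of the first assertion is correct and follows essentially the same path as the paper: both arguments identify $\rm{Shv}(X\times_s\eta;\Z/\ell^n\Z)$ with continuous $G_\eta$-equivariant sheaves on $X_{\ov{s}}$ and $\rm{Shv}(X;\Z/\ell^n\Z)$ with continuous $G_s$-equivariant sheaves on $X_{\ov{s}}$ (the paper cites \cite[Proposition 9.2.1]{Lei-Fu} for the latter, you invoke SGA4 and the limit description), after which $p_X^*$ is inflation along $G_\eta\to G_s$ and $p_{X,*}$ is $I$-invariants, so the claim is immediate.

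For the second assertion your spreading-out argument works, but the paper's route is considerably shorter: once you know $\F\simeq p_X^* p_{X,*}\F$, compose with $\pi_X^*$ to get $c_X^*\, p_{X,*}\F \simeq \pi_X^*\F$, where $c_X\colon X_{\ov{s},\et}\to X_\et$ is the projection. Since $\pi_X^*\F$ is locally constant with finite free stalks by hypothesis, and this property is detected after pullback along $c_X$, you are done. Your approach makes explicit the descent of a trivializing cover, which is exactly what underlies the fact that $c_X^*$ reflects local constancy; the paper simply uses that reflection as a black box.
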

\begin{proof}
    We note that \cite[Proposition 9.2.1]{Lei-Fu} identifies $\rm{Shv}(X; \Z/\ell^n\Z)$ with sheaves of $\Z/\ell^n\Z$-modules on $X_{\ov{s}}$ with a continuous $G_s$-action. Then $p_{X, *}$ corresponds to the functor of $I$-invariants, and $p_X^*$ to the functor that sends a sheaf on $X_{\ov{s}}$ with a continuous $G_s$ to the same sheaf with a continuous action of $G_\eta$ through the quotient $G_\eta\to G_s$. Under these identifications, it becomes clear that the natural morphism
    \[
    \F \to p_{X}^*p_{X, *}\F
    \]
    is an isomorphism if $I$ acts trivially on $\pi^*_X\F$. \smallskip
    
    Now we assume that $\F$ is locally constant with finite free stalks, and consider the natural projection morphism $c_X\colon X_{\ov{s}, \et} \to X_{\et}$. Using that $c_X=\pi_X\circ p_X$ and $\F\simeq p_X^*p_{X, *}\F$, we conclude that
    \[
    c^*_X p_{X, *}\F \simeq \pi^*_X p_X^* p_{X, *} \F\simeq \pi^*_X \F
    \]
    is locally constant with finite free stalks. Thus the same holds for $p_{X, *}\F$.
\end{proof}

Now we wish to discuss the various functors on the Deligne's topos $X\times_s \eta$ for a qcqs $k$-scheme $X$. 

\begin{defn} For a finite type $k$-scheme $X$, an object $\F\in \cal{D}(X_s\times_s \eta, \Z/\ell^n\Z)$ is {\it constructible of finite tor dimension} if $\pi_X^* \F\in \cal{D}^b_{ctf}(X_{\ov{s}}; \Z/\ell^n\Z)$. We denote this category by $\cal{D}^b_{ctf}(X_s\times_s\eta; \Z/\ell^n\Z)$.
\end{defn}

\begin{lemma}\label{lemma:pushforward-pullback} Let $f\colon X\to Y$ be a morphism of qcqs $k$-schemes, $\ell$ a prime number, and $n\geq 1$ a positive integer. Suppose that $\rm{R}f_{\ov{s}, *}\colon \cal{D}(X_{\ov{s}};\Z/\ell^n\Z) \to \cal{D}(X_{\ov{s}};\Z/\ell^n\Z)$ is of finite cohomological dimension\footnote{This condition is automatic if $X$ and $Y$ are finite type over $k$ by \cite[Corollary 7.5.6]{Lei-Fu}.}. Then 
\begin{enumerate}
    \item\label{lemma:pushforward-pullback-1} the diagram 
    \[
\begin{tikzcd}
    \cal{D}(X\times_s \eta; \Z/\ell^n\Z) \arrow{d}{\rm{R}(f\times_s\eta)_*} \arrow{r}{\pi^*_{X}} & \cal{D}(X_{\ov{s}}; \Z/\ell^n\Z) \arrow{d}{\rm{R}f_{\ov{s}, *}} \\
    \cal{D}(Y\times_s \eta; \Z/\ell^n\Z) \arrow{r}{\pi^*_{Y}} & \cal{D}(Y_{\ov{s}}; \Z/\ell^n\Z),
\end{tikzcd}
\] commutes (up to a canonical isomorphism);
\item\label{lemma:pushforward-pullback-2} the diagram
\[
\begin{tikzcd}
    \cal{D}(X\times_s \eta; \Z/\ell^n\Z) \arrow{d}{\rm{R}(f\times_s \eta)_*} \arrow{r}{{\sigma}_X^*} & \cal{D}(X; \Z/\ell^n\Z) \arrow{d}{\rm{R}f_*} \\
    \cal{D}(Y\times_s \eta; \Z/\ell^n\Z) \arrow{r}{\sigma_Y^*} & \cal{D}(Y; \Z/\ell^n\Z). 
\end{tikzcd}
\]
commutes (up to a canonical isomorphism) for every continuous section $\sigma\colon G_s \to G_\eta$;
\item\label{lemma:pushforward-pullback-new} the diagram
\[
\begin{tikzcd}
    \cal{D}(X; \Z/\ell^n\Z) \arrow{d}{\rm{R}f_*}  \arrow{r}{p_X^*} & \cal{D}(X\times_s \eta; \Z/\ell^n\Z) \arrow{d}{\rm{R}(f\times_s \eta)_*}\\
    \cal{D}(Y; \Z/\ell^n\Z) \arrow{r}{p_Y^*} & \cal{D}(Y\times_s \eta; \Z/\ell^n\Z). 
\end{tikzcd}
\]
commutes (up to a canonical isomorphism);
\item\label{lemma:pushforward-pullback-3} The natural morphism
\[
c_{\F,n, m}\colon \rm{R}(f\times_s \eta)_*(\F) \otimes^L_{\Z/\ell^n\Z} \Z/\ell^m\Z \to \rm{R}(f\times_s \eta)_*(\F \otimes^L_{\Z/\ell^n\Z} \Z/\ell^m\Z) 
\]
is an isomorphism for any $\F\in \cal{D}(X\times_s\eta; \Z/\ell^n\Z)$ and $n\geq m$;

\item\label{lemma:pushforward-pullback-4} If $f$ is a morphism of finite type $k$-schemes and $\ell$ is invertible in $k$, $\rm{R}(f\times_s \eta)_*$ carries $\cal{D}^b_{ctf}(X\times_s\eta; \Z/\ell^n\Z)$ to $\cal{D}^b_{ctf}(Y\times_s\eta; \Z/\ell^n\Z)$.
\end{enumerate}
\end{lemma}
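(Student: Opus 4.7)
The plan is to prove all five parts by exploiting the conservativity of $\pi_Y^*\colon \cal{D}(Y\times_s\eta;\Z/\ell^n\Z)\to \cal{D}(Y_{\ov{s}};\Z/\ell^n\Z)$ from Lemma~\ref{lemma:properties-Deligne}(\ref{lemma:properties-Deligne-3}), which reduces statements on Deligne's topos to analogous statements for $\rm{R}f_{\ov{s},*}$ on ordinary étale topoi, where they are classical.

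First I would prove part (\ref{lemma:pushforward-pullback-1}), which is the central technical input. By Lemma~\ref{lemma:properties-Deligne}(\ref{lemma:properties-Deligne-1}), the square
\[
\begin{tikzcd}
X_{\ov{s},\et}\arrow{r}{f_{\ov{s}}}\arrow{d}{\pi_X} & Y_{\ov{s},\et}\arrow{d}{\pi_Y}\\
X\times_s\eta \arrow{r}{f\times_s \eta} & Y\times_s\eta
\end{tikzcd}
\]
is $2$-Cartesian, with both vertical morphisms arising as the base change of the morphism of topoi $\ov{\eta}\to\eta$. Write $\ov{\eta}\simeq \lim_{K'}\eta_{K'}$ as a cofiltered limit of classifying topoi indexed by finite Galois subextensions $K\subset K'\subset\ov{K}$. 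Each morphism $\eta_{K'}\to\eta$ corresponds to the open subgroup $G_{K'}\subset G_\eta$ and realizes as the slice topos at the representable $G_\eta$-set $G_\eta/G_{K'}$; in particular it is an étale morphism of topoi, for which base change is automatic. Passing to the cofiltered limit, the finite-cohomological-dimension hypothesis on $\rm{R}f_{\ov{s},*}$ allows one to commute the pushforward past the limit of pullback functors, yielding $\pi_Y^*\rm{R}(f\times_s\eta)_*\simeq \rm{R}f_{\ov{s},*}\pi_X^*$.

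Parts (\ref{lemma:pushforward-pullback-2}) and (\ref{lemma:pushforward-pullback-new}) follow by analogous base-change arguments. For (\ref{lemma:pushforward-pullback-2}), the morphism $\sigma_X\colon X_\et\to X\times_s\eta$ is the base change of the section $\sigma\colon s\to\eta$ along $X\times_s\eta\to\eta$, and the same type of reasoning applies. For (\ref{lemma:pushforward-pullback-new}), after applying $\pi_Y^*$ and using $\pi_Y\circ p_Y = c_Y\colon Y_{\ov{s},\et}\to Y_\et$, the desired isomorphism reduces by part (\ref{lemma:pushforward-pullback-1}) to the classical base change $c_Y^*\rm{R}f_*\simeq \rm{R}f_{\ov{s},*}c_X^*$, which is itself proved by the same cofiltered-limit technique applied to $\ov{s}\simeq\lim_{k'} s_{k'}$ over finite residue-field extensions. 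Part (\ref{lemma:pushforward-pullback-3}) reduces, again via $\pi_Y^*$ and (\ref{lemma:pushforward-pullback-1}), to the analogous compatibility $\rm{R}f_{\ov{s},*}(\G)\otimes^L_{\Z/\ell^n\Z}\Z/\ell^m\Z\simeq \rm{R}f_{\ov{s},*}(\G\otimes^L_{\Z/\ell^n\Z}\Z/\ell^m\Z)$ on schemes, which follows from the projection formula because $\Z/\ell^m\Z$ is the pullback of a ``constant'' module from the base. Finally, (\ref{lemma:pushforward-pullback-4}) follows from the definition of $\cal{D}^b_{ctf}(Y\times_s\eta;\Z/\ell^n\Z)$ via $\pi_Y^*$ together with part (\ref{lemma:pushforward-pullback-1}) and the classical preservation of $\cal{D}^b_{ctf}$ by $\rm{R}f_{\ov{s},*}$ for a finite-type morphism with $\ell$ invertible on the base.

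The main obstacle is making the cofiltered-limit argument in part (\ref{lemma:pushforward-pullback-1}) rigorous: one must show that $\rm{R}f_{\ov{s},*}$ commutes with the cofiltered limit of pullbacks that computes $\pi_X^*$, and this is precisely where the finite-cohomological-dimension assumption enters, since it ensures the relevant pushforward commutes with the filtered colimits on sections that produce the continuous $G_\eta$-action on $\rm{R}f_{\ov{s},*}\pi_X^*\F$. Once (\ref{lemma:pushforward-pullback-1}) is established, the remaining parts follow rather formally by the conservativity mechanism sketched above.
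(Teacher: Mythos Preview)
Your proposal is correct and follows essentially the same strategy as the paper, which simply cites \cite[Lemma 1.3, Proposition 1.17, Remark 1.18, Corollary 1.20]{Lu-Zheng} for parts (\ref{lemma:pushforward-pullback-1})--(\ref{lemma:pushforward-pullback-3}) and reduces (\ref{lemma:pushforward-pullback-4}) to the classical scheme case via (\ref{lemma:pushforward-pullback-1}); what you have written is an accurate sketch of the content behind those references. One small point worth making precise in your write-up of (\ref{lemma:pushforward-pullback-2}): the phrase ``the same type of reasoning applies'' is correct because the section $\sigma\colon s\to\eta$ realizes $G_s$ as a closed subgroup of $G_\eta$, and hence $s\simeq \lim_U BU$ as a cofiltered limit over open subgroups $U\supset\sigma(G_s)$, so the same limit-of-\'etale-morphisms argument as in (\ref{lemma:pushforward-pullback-1}) goes through.
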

\begin{proof}
$(\ref{lemma:pushforward-pullback-1})$, $(\ref{lemma:pushforward-pullback-2})$, and $(\ref{lemma:pushforward-pullback-new})$ can be seen explicitly using the explicit site-theoretic construction of $2$-fiber products from \cite[Exp. XI, 3.1]{deGabber}. Alternatively, they follow directly from \cite[Lemma 1.3, Proposition 1.17, and Remark 1.18]{Lu-Zheng}.\smallskip

$(\ref{lemma:pushforward-pullback-3})$ follows directly from \cite[Corollary 1.20]{Lu-Zheng}.\smallskip

$(\ref{lemma:pushforward-pullback-4})$ We note that $(\ref{lemma:pushforward-pullback-1})$ implies that it suffices to show analogous claim for $\rm{R}f_{\ov{s}, *}$ which is standard (see \cite[Theorem 9.5.2]{Lei-Fu}).
\end{proof}

In what follows, we will need to be able to compute the Hom spaces in Deligne's topos. We now discuss some general results in this direction.

\begin{lemma}\label{lemma:homs-after-base-change} Let $X$ be a finite type $k$-scheme, let $\ell$ be a prime number, let $n$ be a positive integer, let $\F\in \cal{D}^b_{ctf}(X\times_s\eta; \Z/\ell^n\Z)$, and let $\G\in \cal{D}^+(X\times_s\eta; \Z/\ell^n\Z)$. Then the natural morphisms
\begin{equation}\label{eqn:pi}
\pi^*_X\rm{R}\cal{H}om_{\Z/\ell^n\Z}(\F, \G) \to \rm{R}\cal{H}om_{\Z/\ell^n\Z}(\pi_X^*\F, \pi_X^*\G),
\end{equation}
\begin{equation}\label{eqn:sigma}
\sigma^*_X\rm{R}\cal{H}om_{\Z/\ell^n\Z}(\F, \G) \to \rm{R}\cal{H}om_{\Z/\ell^n\Z}(\sigma_X^*\F, \sigma_X^*\G)
\end{equation}
are isomorphisms for any continuous section $\sigma\colon G_s\to G_\eta$ of the projection morphism $r\colon G_\eta\to G_s$. Similarly, if $\F\in \F\in \cal{D}^b_{ctf}(X; \Z/\ell^n\Z)$  and $\G \in \cal{D}^+(X; \Z/\ell^n\Z)$, then the natural morphism
\begin{equation}\label{eqn:p}
p^*_X\rm{R}\cal{H}om_{\Z/\ell^n\Z}(\F, \G) \to \rm{R}\cal{H}om_{\Z/\ell^n\Z}(p_X^*\F, p_X^*\G)
\end{equation}
is an isomorphism.
\end{lemma}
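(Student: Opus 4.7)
The plan is to prove all three assertions by a single dévissage argument in $\F$, with the first (for $\pi_X^*$) as the prototype and the other two (for $\sigma_X^*$ and $p_X^*$) by the same method, using the corresponding commutation statements in Lemma~\ref{lemma:pushforward-pullback}. The natural morphism in each case arises by pulling back the evaluation $\rm{R}\cal{H}om(\F,\G)\otimes^L \F \to \G$ through the symmetric monoidal pullback functor and passing to the adjoint.

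For fixed $\G$, the full subcategory of $\cal{D}^b_{ctf}(X\times_s\eta; \Z/\ell^n\Z)$ on which the natural morphism of the first claim is an isomorphism is a thick triangulated subcategory (closed under cones, shifts, and direct summands). Hence it suffices to exhibit a generating family. We use the following standard dévissage: every $\F \in \cal{D}^b_{ctf}(X\times_s\eta; \Z/\ell^n\Z)$ admits a finite stratification on which $\pi_X^*\F$ is locally constant with perfect stalks, and via the recollement triangles $(j)_! j^* \F \to \F \to i_* i^*\F$ together with finite étale descent for local systems (a local system on a qcqs stratum becomes free after a finite étale cover, whose pushforward $f_* = f_!$ may be used to generate it), it suffices to prove the claim for $\F$ of the form $(\mf\times_s\eta)_!\Lambda$ with $\mf\colon Y \to X$ étale between qcqs $k$-schemes.

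For such $\F$, one computes both sides explicitly. Using the adjunction $\rm{R}\cal{H}om((\mf\times_s\eta)_! A, B) \simeq \rm{R}(\mf\times_s\eta)_* \rm{R}\cal{H}om(A,(\mf\times_s\eta)^* B)$, Lemma~\ref{lemma:pushforward-pullback}(1), and the naturality identity $\pi_Y^*\circ (\mf\times_s\eta)^* = \mf_{\ov{s}}^*\circ \pi_X^*$ coming from the commutative square of topoi relating $\pi_X, \pi_Y$ and $\mf$, the left-hand side simplifies to
\[
\pi_X^*\rm{R}(\mf\times_s\eta)_*(\mf\times_s\eta)^*\G \simeq \rm{R}\mf_{\ov{s},*}\pi_Y^*(\mf\times_s\eta)^*\G \simeq \rm{R}\mf_{\ov{s},*}\mf_{\ov{s}}^*\pi_X^*\G.
\]
On the other hand, étale base change applied to the same square gives a canonical isomorphism $\pi_X^*(\mf\times_s\eta)_! \simeq \mf_{\ov{s},!}\pi_Y^*$, whence the right-hand side becomes $\rm{R}\cal{H}om(\mf_{\ov{s},!}\Lambda, \pi_X^*\G)\simeq \rm{R}\mf_{\ov{s},*}\mf_{\ov{s}}^*\pi_X^*\G$ by the adjunction $(\mf_{\ov{s},!}, \mf_{\ov{s}}^*)$. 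The two expressions coincide.

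The arguments for $\sigma_X^*$ and $p_X^*$ are formally identical, invoking Lemma~\ref{lemma:pushforward-pullback}(2) and (3) respectively and the analogous naturality squares built from $\sigma$ and $p$. The main obstacle is in the dévissage of Paragraph~2: verifying that local systems on Deligne's topos are generated under the thick subcategory operation by finite étale pushforwards of $\Lambda$, which requires controlling the interaction of the continuous $G_\eta$-action with finite étale descent. Once this is in place, the remainder is a direct computation using the established functorial compatibilities.
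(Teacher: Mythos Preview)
Your dévissage has a genuine gap: the proposed generating family $\{(\mf\times_s\eta)_!\Lambda : \mf\colon Y\to X \text{ étale in } k\text{-schemes}\}$ does \emph{not} generate $\cal{D}^b_{ctf}(X\times_s\eta;\Z/\ell^n\Z)$ as a thick subcategory. The reason is that for any such $\mf$, the inertia subgroup $I\subset G_\eta$ acts trivially on $\pi_X^*(\mf\times_s\eta)_!\Lambda \simeq \mf_{\ov{s},!}\Lambda$ (the $G_\eta$-action factors through $G_s$), and trivial $I$-action is preserved under cones, shifts, and retracts. So the thick subcategory generated by your family consists only of objects with trivial inertia action on their geometric pullback. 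Already for $X=\Spec k$ (where $X\times_s\eta=\eta$) this misses any $G_\eta$-representation on which $I$ acts nontrivially. The ``main obstacle'' you flag at the end is therefore not a detail to be filled in but a false statement as written.

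A repair is possible: enlarge the generating family to include objects coming from the $\eta$-direction as well, i.e.\ take $f_!\Lambda$ for $f$ ranging over étale maps in the site presenting $X\times_s\eta$ (pairs consisting of an étale $Y\to X$ together with a finite separable extension $K'/K$). The explicit computation in your third paragraph then needs the analogues of Lemma~\ref{lemma:pushforward-pullback} and the $(\,!\,,\,*\,)$-adjunction for such $f$, which requires some additional bookkeeping. The paper sidesteps this entirely: it invokes \cite[Lemma~1.29]{Lu-Zheng} for the weakly étale base change $\ov{\eta}\to\eta$ (resp.\ $\ov{s}\to s$) to get (\ref{eqn:pi}) and the analogue for $c_X\colon X_{\ov{s},\et}\to X_\et$ directly, after checking via \cite[Lemma~1.28]{Lu-Zheng} that objects of $\cal{D}^b_{ctf}(X\times_s\eta;\Z/\ell^n\Z)$ have noetherian cohomology sheaves. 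Then (\ref{eqn:sigma}) and (\ref{eqn:p}) follow formally by conservativity of $c_X^*$, since $c_X\circ\sigma_X=\pi_X$ and $\pi_X\circ p_X=c_X$. This is both shorter and avoids the generation issue altogether.
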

\begin{proof}
     We first apply \cite[Lemma 1.29]{Lu-Zheng} to the weakly \'etale morphism $f\colon \Spec \ov{K}\to \Spec K$ to conclude that (\ref{eqn:pi}) is an isomorphism if $\F$ has constructible cohomology sheaves in the sense of \cite{Lu-Zheng} (see the discussion after \cite[Corollary 1.26]{Lu-Zheng} for a precise definition). By \cite[Lemma 1.28]{Lu-Zheng} (and noting that $G_\eta \to G_s$ is already surjective), it suffices to show that a sheaf $\F \in \rm{Shv}(X\times_s\eta; \Z/\ell^n\Z)$ is noetherian if $\pi_X^*\F$ is constructible. By Lemma~\ref{lemma:properties-Deligne}~(\ref{lemma:properties-Deligne-3}), it suffices to show that $\pi_X^*\F$ is noetherian. This, in turn, follows from \cite[\href{https://stacks.math.columbia.edu/tag/09YV}{Tag 09YV}]{stacks-project}. \smallskip
    
    Before we discuss other isomorphisms, we note that the same proof applied to the morphism $\Spec \ov{k}\to \Spec k$ shows that the natural morphism
    \begin{equation}\label{eqn:c}
    c^*_X\rm{R}\cal{H}om_{\Z/\ell^n\Z}(\F, \G) \to \rm{R}\cal{H}om_{\Z/\ell^n\Z}(c_X^*\F, c_X^*\G)
    \end{equation}
    is an isomorphism, where $c_X\colon X_{\ov{s}, \et}\to X_\et$ is the natural projection, $\F\in \cal{D}^b_{ctf}(X; \Z/\ell^n\Z)$, and $\G \in \cal{D}^+(X; \Z/\ell^n\Z)$. \smallskip
    
    Now we show that the fact that (\ref{eqn:sigma}) is an isomorphism follows formally from the established above facts. To see this, we note that $c_X^*$ is conservative, so it suffices to show that the morphism
    \begin{equation}\label{eqn:sigma-hom}
    \sigma^*_X\rm{R}\cal{H}om_{\Z/\ell^n\Z}(\F, \G) \to \rm{R}\cal{H}om_{\Z/\ell^n\Z}(\sigma_X^*\F, \sigma_X^*\G),
    \end{equation}
    is an isomorphism after applying $c^*_X$. Then the result follows from the fact that (\ref{eqn:pi}) and (\ref{eqn:c}) are isomorphisms. Similarly, one can show that (\ref{eqn:p}) is an isomorphism. 
\end{proof}

\begin{cor}\label{cor:projection-hom} Let $X$ be a finite type $k$-scheme, $\ell$ a prime number invertible in $k$, and $\F, \G\in \cal{D}^b_{ctf}(X\times_s\eta; \Z/\ell^n\Z)$ for some integer $n\geq 1$. Then $\rm{R}\cal{H}om_{\Z/\ell^n\Z}(\F, \G)$ lies in $\cal{D}^b_{ctf}(X\times_s \eta; \Z/\ell^n\Z)$ and the natural morphism
\[
\rm{R}\cal{H}om_{\Z/\ell^n\Z}(\F, \G) \otimes^L_{\Z/\ell^n\Z} \Z/\ell^{n-1}\Z \to \rm{R}\cal{H}om_{\Z/\ell^{n-1}\Z}\left(\F\otimes^L_{\Z/\ell^n\Z} \Z/\ell^{n-1}\Z, \G\otimes^L_{\Z/\ell^n\Z} \Z/\ell^{n-1}\Z\right)
\]
is an isomorphism.
\end{cor}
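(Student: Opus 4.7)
The plan is to reduce both assertions to the analogous statements in $\cal{D}^b_{ctf}(X_{\ov{s}}; \Z/\ell^n\Z)$ by applying the conservative pullback $\pi_X^*$, and then invoke the corresponding classical constructibility facts for the \'etale site of $X_{\ov{s}}$.

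First, I would record the two properties of $\pi_X^*$ that make this reduction work. By the very definition of $\cal{D}^b_{ctf}(X\times_s\eta; \Z/\ell^n\Z)$, an object $\cal{E}$ lies there if and only if $\pi_X^*\cal{E}$ lies in $\cal{D}^b_{ctf}(X_{\ov{s}}; \Z/\ell^n\Z)$; and $\pi_X^*$ is conservative by Lemma~\ref{lemma:properties-Deligne}(\ref{lemma:properties-Deligne-3}). Moreover, as the pullback along a morphism of topoi, $\pi_X^*$ is symmetric monoidal, so it commutes with the derived tensor product $-\otimes^L_{\Z/\ell^n\Z}\Z/\ell^{n-1}\Z$. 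Finally, by Lemma~\ref{lemma:homs-after-base-change} (applied twice, once for each of the two internal Hom's in sight), the natural map
\[
\pi_X^*\rm{R}\cal{H}om_{\Z/\ell^n\Z}(\F, \G)\to \rm{R}\cal{H}om_{\Z/\ell^n\Z}(\pi_X^*\F, \pi_X^*\G)
\]
is an isomorphism (and similarly modulo $\ell^{n-1}$).

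Combining these three facts, the constructibility claim becomes: $\rm{R}\cal{H}om_{\Z/\ell^n\Z}(\pi_X^*\F, \pi_X^*\G)$ lies in $\cal{D}^b_{ctf}(X_{\ov{s}}; \Z/\ell^n\Z)$, which is standard for constructible sheaves of finite Tor-dimension on a finite type $\ov{k}$-scheme (cf.\ \cite[Theorem 9.5.3]{Lei-Fu}). Similarly, the base-change morphism in the statement, after applying $\pi_X^*$, becomes the classical comparison
\[
\rm{R}\cal{H}om_{\Z/\ell^n\Z}(\pi_X^*\F, \pi_X^*\G)\otimes^L_{\Z/\ell^n\Z} \Z/\ell^{n-1}\Z \;\longrightarrow\; \rm{R}\cal{H}om_{\Z/\ell^{n-1}\Z}\bigl(\pi_X^*\F\otimes^L\Z/\ell^{n-1}\Z,\;\pi_X^*\G\otimes^L\Z/\ell^{n-1}\Z\bigr)
\]
on $X_{\ov{s}}$, which is an isomorphism by the same classical constructibility result.

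There is essentially no obstacle: the whole argument is a formal consequence of Lemma~\ref{lemma:homs-after-base-change} together with the conservativity and symmetric monoidality of $\pi_X^*$. The only bookkeeping point is to verify that $\pi_X^*$ commutes with internal $\rm{R}\cal{H}om$ on the nose (as opposed to up to the adjunction-unit map), so that reducing to $X_{\ov{s}}$ gives exactly the stated comparison map; this is guaranteed by the way Lemma~\ref{lemma:homs-after-base-change} is formulated.
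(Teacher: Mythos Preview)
Your proposal is correct and follows essentially the same approach as the paper: reduce both claims to $X_{\ov{s}}$ via $\pi_X^*$ using Lemma~\ref{lemma:homs-after-base-change} and the conservativity from Lemma~\ref{lemma:properties-Deligne}(\ref{lemma:properties-Deligne-3}), then invoke the classical results (the paper cites \cite[Theorem 9.5.3(ii)]{Lei-Fu} for constructibility and \cite[Proposition 10.1.17]{Lei-Fu} for the base-change isomorphism).
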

\begin{proof}
    We firstly show that $\rm{R}\cal{H}om_{\Z/\ell^n\Z}(\F, \G)$ lies in $\cal{D}^b_{ctf}(X\times_s \eta; \Z/\ell^n\Z)$. Lemma~\ref{lemma:homs-after-base-change} ensures that it suffices to show that \[
    \rm{R}\cal{H}om_{\Z/\ell^n\Z}(\pi^*_X\F, \pi^*_X\G) \in \cal{D}^b_{ctf}(X_{\ov{s}}; \Z/\ell^n\Z).
    \]
    This follows from \cite[Theorem 9.5.3(ii)]{Lei-Fu}. \smallskip
    
    Now we show that the natural morphism 
    \[
    \rm{R}\cal{H}om_{\Z/\ell^n\Z}\left(\F, \G\right) \otimes^L_{\Z/\ell^n\Z} \Z/\ell^{n-1}\Z \to \rm{R}\cal{H}om_{\Z/\ell^{n-1}\Z}\left(\F\otimes^L_{\Z/\ell^n\Z} \Z/\ell^{n-1}\Z, \G\otimes^L_{\Z/\ell^n\Z} \Z/\ell^{n-1}\Z\right)
    \]
    is an isomorphism. Lemma~\ref{lemma:properties-Deligne}(\ref{lemma:properties-Deligne-3}) implies that $\pi^*_X$ is conservative, so it suffices to prove the claim after applying $\pi^*_X$. Therefore, it suffices to prove analogous claim for constructible, finite tor dimension complexes on $X_{\ov{s}}$. This is standard (see \cite[Proposition 10.1.17]{Lei-Fu}).
\end{proof}

Now let $X, \F, \G$ be as in Lemma~\ref{lemma:homs-after-base-change} and set $f\colon X \to \Spec k$ to be the structure morphism. We define
\[
\rm{R}\Hom_{/\eta, \Z/\ell^n\Z}(\F, \G) \coloneqq \rm{R}(f\times_s \eta)_*\rm{R}\cal{H}om_{\Z/\ell^n\Z}(\F, \G) \in \cal{D}(\eta; \Z/\ell^n\Z).
\]
By Lemma~\ref{lemma:pushforward-pullback}(\ref{lemma:pushforward-pullback-1}, \ref{lemma:pushforward-pullback-2}) and Lemma~\ref{lemma:homs-after-base-change}, we see that 
\[
\pi^*_s\rm{R}\Hom_{/\eta, \Z/\ell^n\Z}(\F, \G) \simeq \rm{R}\Gamma(X_{\ov{s}}, \rm{R}\cal{H}om_{\Z/\ell^n\Z}(\pi^*_X\F,\pi^*_X\G)) \simeq \rm{RHom}_{\Z/\ell^n\Z}(\pi^*_X\F, \pi^*_X\G).
\]
Informally, $\rm{R}\Hom_{/\eta, \Z/\ell^n\Z}(\F, \G) \in \cal{D}(\eta; \Z/\ell^n\Z)$ is a canonical descent of $\rm{RHom}_{\Z/\ell^n\Z}(\pi^*_X\F, \pi^*_X\G)\in \cal{D}(\ov{s}; \Z/\ell^n\Z)$ to an object of $\cal{D}(\eta;\Z/\ell^n\Z)$.

\begin{lemma}\label{lemma:Hom-formula} Let $X$ be a finite type $k$-scheme, $\ell$ a prime number, and $\F, \G\in \cal{D}^b_{ctf}(X\times_s\eta; \Z/\ell^n\Z)$ for some integer $n\geq 1$. Then there is a canonical isomorphism
\[
\rm{RHom}_{\Z/\ell^n\Z}(\F, \G) \simeq \rm{R}\Gamma_{\rm{cont}}(G_\eta, \rm{R}\Hom_{/\eta, \Z/\ell^n\Z}(\F, \G))
\]
\end{lemma}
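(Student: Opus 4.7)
The plan is to chain together three essentially formal identifications, using nothing more than the interpretation of the topos $\eta$ as the classifying topos of $G_\eta$ and the composition of derived pushforward with global sections.

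First, I would observe that under the identification of $\eta$ with the classifying topos of the profinite group $G_\eta$, the abelian category $\mathrm{Shv}(\eta;\Z/\ell^n\Z)$ is canonically equivalent to the category of discrete $\Z/\ell^n\Z$-modules equipped with a continuous action of $G_\eta$, and the global sections functor $\Gamma(\eta,-)$ corresponds to $G_\eta$-invariants. Deriving on both sides yields the standard equality
\[
\rm{R}\Gamma(\eta,-)\simeq \rm{R}\Gamma_{\rm{cont}}(G_\eta,-)
\]
on $\cal{D}^+(\eta;\Z/\ell^n\Z)$. (Since $\F,\G\in \cal{D}^b_{ctf}(X\times_s \eta;\Z/\ell^n\Z)$, Corollary~\ref{cor:projection-hom} and Lemma~\ref{lemma:pushforward-pullback} ensure that $\rm{R}\Hom_{/\eta,\Z/\ell^n\Z}(\F,\G)$ is bounded, so there is no issue with boundedness.)

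Second, let $f\colon X\to\Spec k$ be the structure morphism, so that $f\times_s\eta\colon X\times_s\eta\to\eta$ composed with the terminal map $\eta\to \ast$ equals the terminal map of $X\times_s\eta$. By composition of derived pushforwards, for any $\cal{K}\in \cal{D}(X\times_s\eta;\Z/\ell^n\Z)$,
\[
\rm{R}\Gamma(\eta,\rm{R}(f\times_s\eta)_*\cal{K}) \simeq \rm{R}\Gamma(X\times_s\eta,\cal{K}).
\]
Applying this to $\cal{K}=\rm{R}\cal{H}om_{\Z/\ell^n\Z}(\F,\G)$ yields
\[
\rm{R}\Gamma(\eta,\rm{R}\Hom_{/\eta,\Z/\ell^n\Z}(\F,\G))\simeq \rm{R}\Gamma(X\times_s\eta,\rm{R}\cal{H}om_{\Z/\ell^n\Z}(\F,\G)).
\]

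Third, in any topos $T$, one has the tautological adjunction identity $\rm{R}\Gamma(T,\rm{R}\cal{H}om(\F,\G))\simeq \rm{RHom}(\F,\G)$, which applied to $T=X\times_s\eta$ gives the right hand side equal to $\rm{RHom}_{\Z/\ell^n\Z}(\F,\G)$. Concatenating the three identifications yields the desired formula. I do not expect any genuine obstacle: every step is either a standard equivalence between sheaves on a classifying topos and continuous group representations, or the basic Leray/adjunction formalism already used implicitly throughout this appendix.
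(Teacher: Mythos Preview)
Your proposal is correct and follows essentially the same approach as the paper's own proof: the paper chains the identifications $\rm{RHom}_{\Z/\ell^n\Z}(\F,\G)\simeq \rm{R}\Gamma(X\times_s\eta,\rm{R}\cal{H}om(\F,\G))\simeq \rm{R}\Gamma(\eta,\rm{R}(f\times_s\eta)_*\rm{R}\cal{H}om(\F,\G))\simeq \rm{R}\Gamma_{\rm{cont}}(G_\eta,\rm{R}\Hom_{/\eta}(\F,\G))$, which are exactly your three steps in reverse order.
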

\begin{proof}
We have a sequence of isomorphism
\begin{align*}
    \rm{RHom}_{\Z/\ell^n\Z}(\F, \G) & \simeq \rm{R}\Gamma(X\times_s \eta, \rm{R}\cal{H}om_{\Z/\ell^n\Z}(\F, \G)) \\
    & \simeq \rm{R}\Gamma(\eta; \rm{R}(f\times_s \eta)_*\rm{R}\cal{H}om_{\Z/\ell^n\Z}(\F, \G))\\
    & \simeq \rm{R}\Gamma_{\rm{cont}}(G_\eta, \rm{R}\Hom_{/\eta, \Z/\ell^n\Z}(\F, \G)),
\end{align*}
where the last isomorphism uses an identification of $\cal{D}(\eta;\Z/\ell^n\Z)$ with the category of discrete $\Z/\ell^n\Z[G_\eta]$-modules. 
\end{proof}

\begin{cor}\label{cor:hom-formula} Let $X$ be a finite type $k$-scheme, $\ell$ a prime number, and $\F, \G \in \cal{D}^b_{ctf}(X\times_s\eta; \Z/\ell^n\Z)$ for some integer $n\geq 1$. Suppose that $\rm{RHom}_{\Z/\ell^n\Z}(\pi^*_X\F, \pi^*_X\G)\in \cal{D}^{\geq 0}(\Z/\ell^n\Z)$. Then 
\[
\rm{Hom}_{\Z/\ell^n\Z}(\F, \G)=\rm{Hom}_{\Z/\ell^n\Z}(\pi^*_X\F, \pi^*_X\G)^{G_\eta}.
\]
\end{cor}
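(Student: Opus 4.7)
The plan is to deduce this as a direct corollary of Lemma~\ref{lemma:Hom-formula} combined with the observation that the spectral sequence for continuous Galois cohomology degenerates in degree zero when the coefficients are concentrated in non-negative degrees.

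First, I would recall from the discussion just before Lemma~\ref{lemma:Hom-formula} the canonical identification
\[
\pi^*_s \rm{R}\Hom_{/\eta, \Z/\ell^n\Z}(\F, \G) \simeq \rm{RHom}_{\Z/\ell^n\Z}(\pi^*_X\F, \pi^*_X\G),
\]
which uses Lemma~\ref{lemma:pushforward-pullback}(\ref{lemma:pushforward-pullback-1}) and Lemma~\ref{lemma:homs-after-base-change}. Since $\pi^*_s$ is $t$-exact (it simply forgets the $G_\eta$-action after identifying $\cal{D}(\eta;\Z/\ell^n\Z)$ with the derived category of discrete $\Z/\ell^n\Z[G_\eta]$-modules) and conservative, the hypothesis that $\rm{RHom}_{\Z/\ell^n\Z}(\pi^*_X\F, \pi^*_X\G)\in \cal{D}^{\geq 0}(\Z/\ell^n\Z)$ upgrades to
\[
M \coloneqq \rm{R}\Hom_{/\eta, \Z/\ell^n\Z}(\F, \G)\in \cal{D}^{\geq 0}(\eta;\Z/\ell^n\Z),
\]
and we have the identification $\rm{H}^0(M) = \rm{Hom}_{\Z/\ell^n\Z}(\pi^*_X\F, \pi^*_X\G)$ as a discrete $G_\eta$-module.

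Next, I would invoke Lemma~\ref{lemma:Hom-formula} to write
\[
\rm{RHom}_{\Z/\ell^n\Z}(\F, \G) \simeq \rm{R}\Gamma_{\rm{cont}}(G_\eta, M).
\]
Taking $\rm{H}^0$ and using the standard Hochschild--Serre / hyper-cohomology spectral sequence
\[
\rm{E}_2^{p,q}=\rm{H}^p_{\rm{cont}}(G_\eta, \rm{H}^q(M)) \Longrightarrow \rm{H}^{p+q}_{\rm{cont}}(G_\eta, M),
\]
the condition $M\in \cal{D}^{\geq 0}$ forces $\rm{H}^q(M)=0$ for $q<0$, and the edge map therefore gives an isomorphism
\[
\rm{H}^0\rm{R}\Gamma_{\rm{cont}}(G_\eta, M) \simeq \rm{H}^0_{\rm{cont}}(G_\eta, \rm{H}^0(M)) = (\rm{H}^0 M)^{G_\eta}.
\]
Combining with the identification of $\rm{H}^0(M)$ above yields the desired formula
\[
\rm{Hom}_{\Z/\ell^n\Z}(\F, \G) = \rm{Hom}_{\Z/\ell^n\Z}(\pi^*_X\F, \pi^*_X\G)^{G_\eta}.
\]

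There is no serious obstacle: everything is a bookkeeping consequence of Lemma~\ref{lemma:Hom-formula} and the fact that $\pi^*_s$ is conservative and $t$-exact, so the only point to be careful about is tracking the $G_\eta$-equivariant identification of $\rm{H}^0(M)$ with $\rm{Hom}_{\Z/\ell^n\Z}(\pi^*_X\F, \pi^*_X\G)$ under the equivalence of $\cal{D}(\eta;\Z/\ell^n\Z)$ with discrete $\Z/\ell^n\Z[G_\eta]$-modules used in the proof of Lemma~\ref{lemma:Hom-formula}.
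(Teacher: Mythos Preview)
Your proof is correct and is exactly the argument the paper intends: the corollary is stated without proof because it follows immediately from Lemma~\ref{lemma:Hom-formula} together with the identification $\pi^*_s\rm{R}\Hom_{/\eta,\Z/\ell^n\Z}(\F,\G)\simeq \rm{RHom}_{\Z/\ell^n\Z}(\pi^*_X\F,\pi^*_X\G)$ recorded just before that lemma. The only content is precisely the degeneration you spell out, so nothing is missing.
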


Now we discuss the finiteness assumptions for the Hom groups in Deligne's topos. 

\begin{lemma}\label{lemma:finitenss-group-cohomology} Let $K$ be a non-archimedean arithmetic field (see Definition~\ref{defn:arithmetic}), $\ell$ a prime number invertible in $\O_K$, and $M\in D^b_{c}(\eta; \Z/\ell^n\Z)$ for some integer $n\geq 1$. Then $\rm{R}\Gamma(\eta, M)\simeq \rm{R}\Gamma_{\rm{cont}}(G_\eta, M)\in D^b_{coh}(\Z/\ell^n\Z)$.
\end{lemma}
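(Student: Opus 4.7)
The plan is to reduce the statement to a classical finiteness theorem of Tate for Galois cohomology of local fields. First, I would observe that both sides are computed termwise: the equivalence $\rm{R}\Gamma(\eta,M)\simeq \rm{R}\Gamma_{\rm{cont}}(G_\eta,M)$ is just the fact that the \'etale topos $\eta$ is the classifying topos of the profinite group $G_\eta$, so cohomology of a sheaf on $\eta$ (identified with a discrete $G_\eta$-module in the heart) agrees with continuous cochain cohomology. The content is therefore the finiteness claim $\rm{R}\Gamma_{\rm{cont}}(G_\eta,M)\in D^b_{\rm{coh}}(\Z/\ell^n\Z)$.

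Next I would reduce to the case where $M$ lies in the heart, i.e.\ $M$ is a finite discrete $\Z/\ell^n\Z[G_\eta]$-module. Since $M\in D^b_c$, it has finitely many non-zero cohomology sheaves, each a constructible $\Z/\ell^n\Z$-sheaf on $\eta$, i.e.\ a finite discrete $G_\eta$-module. The class of complexes $N$ with $\rm{R}\Gamma_{\rm{cont}}(G_\eta,N)\in D^b_{\rm{coh}}(\Z/\ell^n\Z)$ is stable under cones, so a standard d\'evissage along the truncation triangles $\tau^{\leq i-1}M\to \tau^{\leq i}M\to \cal{H}^i(M)[-i]$ reduces us to a single finite discrete module $M$.

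Now I would use the arithmeticity hypothesis. By Definition~\ref{defn:arithmetic}, there is a local field $L$ and an isomorphism of topological groups $\psi\colon G_K\simeq G_L$. This identifies the category of finite discrete $\Z/\ell^n\Z[G_K]$-modules with the analogous category for $G_L$, and identifies continuous cochain cohomology on both sides. Hence we may replace $K$ by $L$ and assume $K$ itself is a local field with finite residue field of characteristic $p$, and $\ell\neq p$ by hypothesis.

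Finally, the claim is classical for local fields: for a local field $L$ of residue characteristic $p$ and any $\ell\neq p$, the strict $\ell$-cohomological dimension of $G_L$ is $\leq 2$ and $\rm{H}^i_{\rm{cont}}(G_L, N)$ is a finite abelian group for every finite discrete $\ell$-primary $G_L$-module $N$ and every $i$ (see e.g.\ Serre, \emph{Cohomologie Galoisienne}, II.\S5.2 and II.\S5.3, or Neukirch--Schmidt--Wingberg, \emph{Cohomology of Number Fields}, Ch.~VII). Applied to each $G_L$-submodule of $M$ arising from its filtration by powers of $\ell$, this gives $\rm{R}\Gamma_{\rm{cont}}(G_L,M)\in D^b_{\rm{coh}}(\Z/\ell^n\Z)$, as desired. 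There is no serious obstacle here; the whole proof is a bookkeeping exercise once one quotes the Tate finiteness theorem, with the arithmeticity hypothesis playing the sole role of transferring the question to the local field setting where that theorem applies.
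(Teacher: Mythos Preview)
Your proof is correct and follows essentially the same route as the paper: reduce to a local field via the arithmeticity hypothesis, d\'evissage to a single finite discrete $G_\eta$-module, and then invoke Tate's finiteness theorem (the paper cites Serre, \emph{Cohomologie Galoisienne}, Proposition II.5.2.14 and the remark on p.~92). The only difference is expository: the paper phrases the reduction to the heart as ``a standard spectral sequence'' rather than truncation triangles, and your final sentence about filtering by powers of $\ell$ is unnecessary since Tate's theorem already applies to any finite $\ell$-primary module.
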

\begin{proof}
    Since $\rm{R}\Gamma_{\rm{cont}}(G_\eta, M)$ depends only on the Galois group of $K$, we can assume that $K$ is a local field. 
    
    First, we use a standard spectral sequence to reduce to the case of a finite discrete $G_\eta$-module $M$. Then the claim follows from \cite[Proposition 5.2/14 and Remark 2) on p.92]{Serre-Galois}.
\end{proof}

\begin{cor}\label{cor:finiteness-Deligne-topos} Let $K$ be a non-archimedean arithmetic field, and $X$ a finite type $k$-scheme, $\ell$ a prime number invertible in $\O_K$, and $\F,\G\in D^b_{ctf}(X\times_s \eta; \Z/\ell^n\Z)$ for some integer $n\geq 1$. Then $\rm{RHom}_{\Z/\ell^n\Z}(\F, \G)\in D^b_{coh}(\eta; \Z/\ell^n\Z)$. In particular, $\rm{Ext}^i_{\Z/\ell^n\Z}(\F, \G)$ are finite groups all integers $i$.
\end{cor}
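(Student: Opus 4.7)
The plan is to chain together the machinery just developed: first upgrade the internal sheaf-Hom from $D^b_{ctf}$, then push forward to $\eta$, and finally take continuous $G_\eta$-cohomology using that $K$ is arithmetic. Concretely, by Corollary~\ref{cor:projection-hom} the complex $\rm{R}\cal{H}om_{\Z/\ell^n\Z}(\F,\G)$ already lies in $\cal{D}^b_{ctf}(X\times_s\eta;\Z/\ell^n\Z)$. Applying Lemma~\ref{lemma:pushforward-pullback}(\ref{lemma:pushforward-pullback-4}) to the structure morphism $f\colon X\to\Spec k$ then gives
\[
\rm{R}\Hom_{/\eta,\Z/\ell^n\Z}(\F,\G)\;=\;\rm{R}(f\times_s\eta)_*\rm{R}\cal{H}om_{\Z/\ell^n\Z}(\F,\G)\;\in\;\cal{D}^b_{ctf}(\eta;\Z/\ell^n\Z),
\]
i.e.\ a bounded complex of discrete $G_\eta$-modules whose cohomology groups are finite. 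This is the coherent object on $\eta$ that the statement demands.

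Next, I would invoke Lemma~\ref{lemma:Hom-formula}, which gives a canonical identification
\[
\rm{RHom}_{\Z/\ell^n\Z}(\F,\G)\;\simeq\;\rm{R}\Gamma_{\rm{cont}}\bigl(G_\eta,\,\rm{R}\Hom_{/\eta,\Z/\ell^n\Z}(\F,\G)\bigr),
\]
so the finiteness of $\rm{Ext}^i_{\Z/\ell^n\Z}(\F,\G)$ is reduced to the finiteness of the continuous cohomology of $G_\eta$ with coefficients in the bounded complex obtained above. Using the standard hypercohomology spectral sequence
\[
E_2^{p,q}=\rm{H}^p_{\rm{cont}}\bigl(G_\eta,\cal{H}^q(\rm{R}\Hom_{/\eta,\Z/\ell^n\Z}(\F,\G))\bigr)\Longrightarrow \rm{Ext}^{p+q}_{\Z/\ell^n\Z}(\F,\G),
\]
the question is further reduced to showing that, for each finite discrete $G_\eta$-module $M$ appearing as one of these cohomology sheaves, every $\rm{H}^p_{\rm{cont}}(G_\eta,M)$ is finite and vanishes for $p$ large. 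This is exactly the content of Lemma~\ref{lemma:finitenss-group-cohomology}, which is applicable precisely because $K$ is assumed arithmetic (so $G_\eta$ is identified with the Galois group of a local field).

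The argument is essentially bookkeeping once the earlier lemmas are in place; there is no real obstacle. The only delicate point is ensuring boundedness of the spectral sequence: since $\rm{R}\Hom_{/\eta,\Z/\ell^n\Z}(\F,\G)$ is cohomologically bounded (from the $\cal{D}^b_{ctf}$-assertion above) and $G_\eta$ has finite cohomological dimension with respect to finite $\ell$-power-torsion coefficients (by Lemma~\ref{lemma:finitenss-group-cohomology}), the spectral sequence converges with only finitely many nonzero terms on each diagonal, so the abutment is finite and bounded. This simultaneously yields that the total complex $\rm{RHom}_{\Z/\ell^n\Z}(\F,\G)$ lies in $D^b_{coh}(\Z/\ell^n\Z)$ and that each individual $\rm{Ext}^i_{\Z/\ell^n\Z}(\F,\G)$ is a finite group, completing the proof.
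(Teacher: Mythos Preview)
Your proof is correct and follows essentially the same route as the paper: show that $\rm{R}\Hom_{/\eta,\Z/\ell^n\Z}(\F,\G)$ lands in $D^b_{ctf}(\eta;\Z/\ell^n\Z)$ (you cite Corollary~\ref{cor:projection-hom} where the paper cites Lemma~\ref{lemma:homs-after-base-change} directly, but the former is proved from the latter), then invoke Lemma~\ref{lemma:Hom-formula} and Lemma~\ref{lemma:finitenss-group-cohomology}. One small wording issue: when you write ``This is the coherent object on $\eta$ that the statement demands,'' note that the statement is asking about $\rm{RHom}_{\Z/\ell^n\Z}(\F,\G)$ itself (the target should really read $D^b_{coh}(\Z/\ell^n\Z)$, not $D^b_{coh}(\eta;\Z/\ell^n\Z)$---this appears to be a typo in the paper), not about $\rm{RHom}_{/\eta}$; your spectral sequence argument in the second half correctly handles this, so the proof is complete as written.
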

\begin{proof}
    Lemma~\ref{lemma:Hom-formula} implies that 
    \[
    \rm{RHom}_{\Z/\ell^n\Z}(\F, \G)\simeq \rm{R}\Gamma_{\rm{cont}}\left(G_\eta, \rm{R}\Hom_{/\eta, \Z/\ell^n\Z}\left(\F, \G\right)\right).
    \]
    Lemma~\ref{lemma:pushforward-pullback}(\ref{lemma:pushforward-pullback-4}) and Corollary~\ref{cor:projection-hom} imply that 
    \[
    \rm{R}\Hom_{/\eta, \Z/\ell^n\Z}(\F, \G)\in D^b_{coh}(\eta; \Z/\ell^n\Z).
    \]
    Thus the result follows from Lemma~\ref{lemma:finitenss-group-cohomology}. 
\end{proof}

\subsection{Shriek functors}

Th main goal of this section is to discuss the construction of shriek functors for the Deligne topoi. \smallskip

For the rest of this section, we fix a non-archimedean field $K$ with residue field $k$, a prime number $\ell$ invertible in $k$, and an integer $n\geq 1$. 

\begin{construction} For a separated morphism $f\colon X\to Y$ between finite type $k$-schemes, \cite[Construction 1.8]{Lu-Zheng} defines a {\it lower shriek functor}
\[
\rm{R}(f\times_s \eta)_! \colon D(X\times_s \eta; \Z/\ell^n\Z) \to D(Y\times_s \eta;\Z/\ell^n\Z)
\]
such that $\rm{R}(f\times_s \eta)_!= \rm{R}(f\times_s \eta)_*$ for a proper $f$ and $\rm{R}(f\times_s \eta)_!$ is left adjoint to $(f\times_s \eta)^*$ for an open immersion $f$. 

In \cite[Construction 1.9]{Lu-Zheng}, they also define the {\it upper shriek functor}
\[
(f\times_s \eta)^!\colon D(Y\times_s \eta; \Z/\ell^n\Z) \to D(X\times_s \eta; \Z/\ell^n\Z)
\]
as a right adjoint to $\rm{R}(f\times_s \eta)_!$. 
\end{construction}

\begin{rmk} Using \cite[Appendix A.5]{lucas-thesis}, it is possible to promote $\rm{R}(f\times_s \eta)_!$ and $(f\times_s \eta)^!$ to functors of $\infty$-categories. We do not do this in this paper as we will never need this.
\end{rmk}

\begin{lemma}\label{lemma:shriek-pushforward-pullback} Let $f\colon X\to Y$ be a separated morphism of finite type $k$-schemes, $\ell$ a prime number invertible in $k$, and $n\geq 1$ a positive integer. Then 
\begin{enumerate}
    \item\label{lemma:shriek-pushforward-pullback-1} the diagram 
    \[
\begin{tikzcd}
    D(X\times_s \eta; \Z/\ell^n\Z) \arrow{d}{\rm{R}(f\times_s\eta)_!} \arrow{r}{\pi^*_{X}} & D(X_{\ov{s}}; \Z/\ell^n\Z) \arrow{d}{\rm{R}f_{\ov{s}, !}} \\
    D(Y\times_s \eta; \Z/\ell^n\Z) \arrow{r}{\pi^*_{Y}} & D(Y_{\ov{s}}; \Z/\ell^n\Z),
\end{tikzcd}
\] commutes (up to a canonical isomorphism);
\item\label{lemma:shriek-pushforward-pullback-2} the diagram
\[
\begin{tikzcd}
    D(X\times_s \eta; \Z/\ell^n\Z) \arrow{d}{\rm{R}(f\times_s \eta)_!} \arrow{r}{{\sigma}_X^*} & D(X; \Z/\ell^n\Z) \arrow{d}{\rm{R}f_!} \\
    D(Y\times_s \eta; \Z/\ell^n\Z) \arrow{r}{\sigma_Y^*} & D(Y; \Z/\ell^n\Z). 
\end{tikzcd}
\]
commutes (up to a canonical isomorphism) for every continuous section $\sigma\colon G_s \to G_\eta$;
\item\label{lemma:shriek-pushforward-pullback-new} the diagram
\[
\begin{tikzcd}
   D(X; \Z/\ell^n\Z) \arrow{d}{\rm{R}f_!}  \arrow{r}{p_X^*} & D(X\times_s \eta; \Z/\ell^n\Z) \arrow{d}{\rm{R}(f\times_s \eta)_!}\\
    D(Y; \Z/\ell^n\Z) \arrow{r}{p_Y^*} & D(Y\times_s \eta; \Z/\ell^n\Z). 
\end{tikzcd}
\]
commutes (up to a canonical isomorphism);

\item\label{lemma:shriek-pushforward-pullback-3} The natural morphism
\[
c_{\F,n, m}\colon \rm{R}(f\times_s \eta)_!\,\F \otimes^L_{\Z/\ell^n\Z} \Z/\ell^m\Z \to \rm{R}(f\times_s \eta)_!\left(\F \otimes^L_{\Z/\ell^n\Z} \Z/\ell^m\Z\right)
\]
is an isomorphism for any $\F\in D(X\times_s\eta; \Z/\ell^n\Z)$ and $n\geq m$;

\item\label{lemma:shriek-pushforward-pullback-4} $\rm{R}(f\times_s \eta)_!$ carries $D^b_{ctf}(X\times_s\eta; \Z/\ell^n\Z)$ to $D^b_{ctf}(Y\times_s\eta; \Z/\ell^n\Z)$.
\end{enumerate}
\end{lemma}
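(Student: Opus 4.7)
The plan is to reduce all five statements to the analogous statements for $\rm{R}(-\times_s\eta)_*$ already established in Lemma~\ref{lemma:pushforward-pullback}, via Nagata compactification. Since $f$ is separated and of finite type between qcqs noetherian schemes, we may factor $f=\bar{f}\circ j$ where $j\colon X\hookrightarrow \bar{X}$ is a dense open immersion and $\bar{f}\colon\bar{X}\to Y$ is proper. Unwinding the definition of $\rm{R}(f\times_s\eta)_!$ in \cite[Construction~1.8]{Lu-Zheng}, we get a canonical isomorphism
\[
\rm{R}(f\times_s\eta)_! \;\simeq\; \rm{R}(\bar{f}\times_s\eta)_* \circ (j\times_s\eta)_!.
\]
For the proper morphism $\bar{f}$, $\rm{R}(\bar{f}\times_s\eta)_! = \rm{R}(\bar{f}\times_s\eta)_*$ and the five assertions reduce to Lemma~\ref{lemma:pushforward-pullback}. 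It therefore suffices to check each assertion separately for the open immersion $j$.

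For the open immersion $j$, the functor $(j\times_s\eta)_!$ is characterized as the left adjoint of $(j\times_s\eta)^*$ and can be described by the gluing data: a complex $\G \in D(X\times_s\eta;\Z/\ell^n\Z)$ goes to the essentially unique object $\F$ on $\bar{X}\times_s\eta$ with $(j\times_s\eta)^*\F\simeq \G$ and $(i\times_s\eta)^*\F=0$ for $i$ the complementary closed immersion. For (1), (2), (3) we use Beck--Chevalley: in each case we have a commutative square of topoi whose upper-star pullbacks obviously commute (e.g.\ $j_{\ov{s}}^*\pi_{\bar X}^*\simeq \pi_X^*(j\times_s\eta)^*$), and we pass to left adjoints to obtain the comparison map (e.g.\ $j_{\ov{s},!}\pi_X^* \to \pi_{\bar X}^*(j\times_s\eta)_!$). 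To see this map is an isomorphism, it suffices to restrict to the open piece $U_{\ov{s}}$, where both sides become $\pi_X^*$ by adjunction, and to the closed complement, where both sides vanish by the gluing characterization above combined with Lemma~\ref{lemma:pushforward-pullback}(\ref{lemma:pushforward-pullback-1}),(\ref{lemma:pushforward-pullback-2}),(\ref{lemma:pushforward-pullback-new}) applied to the closed immersion $i$.

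For (4), the open extension $(j\times_s\eta)_!$ is a left adjoint and so preserves all colimits; in particular it commutes with $-\otimes^L_{\Z/\ell^n\Z}\Z/\ell^m\Z$, since the latter can be realized as a finite colimit (the pushout along $\ell^{n-m}\colon \Z/\ell^n\Z\to \Z/\ell^n\Z$) that is preserved by any cocontinuous functor. For (5), it is immediate from the gluing description that $(j\times_s\eta)_!$ carries $D^b_{ctf}(X\times_s\eta;\Z/\ell^n\Z)$ into $D^b_{ctf}(\bar{X}\times_s\eta;\Z/\ell^n\Z)$, since constructibility and tor-finiteness of $\pi_{\bar X}^*(j\times_s\eta)_!\G = j_{\ov{s},!}\pi_X^*\G$ on $\bar{X}_{\ov s}$ follow from constructibility and tor-finiteness of $\pi_X^*\G$ on $X_{\ov s}$.

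The main (minor) obstacle is verifying the Beck--Chevalley step cleanly, especially because $\pi$, $\sigma$, and $p$ are not étale morphisms of topoi; however, because extension by zero along an open immersion is characterized entirely by its restrictions to the open piece and to the complementary closed piece, the standard proof of the open-immersion case of proper base change carries over verbatim once one invokes Lemma~\ref{lemma:pushforward-pullback} for the complementary closed immersion. With this in hand, composing through the decomposition $f=\bar{f}\circ j$ finishes all five statements simultaneously.
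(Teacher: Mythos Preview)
Your argument is correct, but the paper takes a much shorter route: for (1), (2), (3) it simply cites \cite[Construction~1.8, Property~(2)]{Lu-Zheng}, and for (4) it cites \cite[Construction~1.8, Property~(3)]{Lu-Zheng}; these base-change and projection-formula properties are built directly into Lu--Zheng's construction of $\rm{R}(f\times_s\eta)_!$, so no separate argument is needed. For (5) the paper argues exactly as you do, reducing via (1) to the corresponding statement for $\rm{R}f_{\ov{s},!}$ on $X_{\ov s}$.

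Your approach via Nagata compactification and a hands-on Beck--Chevalley check for the open-immersion piece is more self-contained and makes visible what is happening inside the Lu--Zheng black box. The cost is that your gluing argument implicitly uses that an open/closed decomposition $\bar X = X \sqcup Z$ of $k$-schemes induces a recollement on Deligne's topoi, i.e.\ that $(j\times_s\eta)$ is an open immersion of topoi with closed complement $(i\times_s\eta)$; this is true (e.g.\ via the slice-topos identification \cite[Exp.~IV, Proposition~5.11]{SGA4_2}, used elsewhere in the paper), but you should state it explicitly rather than leave it tacit. Also, your ``$U_{\ov s}$'' in the second paragraph should read $X_{\ov s}$.
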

\begin{proof}
    (\ref{lemma:shriek-pushforward-pullback-1}), (\ref{lemma:shriek-pushforward-pullback-2}), and (\ref{lemma:shriek-pushforward-pullback-new}) follow from \cite[Construction 1.8, Property (2)]{Lu-Zheng}. (\ref{lemma:shriek-pushforward-pullback-3}) follows from \cite[Construction 1.8, Property (3)]{Lu-Zheng}. And (\ref{lemma:shriek-pushforward-pullback-4}) can be proven similarly to Lemma~\ref{lemma:pushforward-pullback}(\ref{lemma:pushforward-pullback-4}). 
\end{proof}

Now we discuss the basic properties of the upper shriek functor:

\begin{lemma}\label{lemma:upper-shriek-pullback} Let $f\colon X\to Y$ be a separated morphism of finite type $k$-schemes, $\ell$ a prime number invertible in $k$, and $n\geq 1$ a positive integer. Then 
\begin{enumerate}
    \item\label{lemma:upper-shriek-pullback-1} the diagram 
    \[
\begin{tikzcd}
    D(Y\times_s \eta, \Z/\ell^n\Z) \arrow{d}{(f\times_s\eta)^!} \arrow{r}{\pi^*_{Y}} & D(Y_{\ov{s}},\Z/\ell^n\Z) \arrow{d}{f^!_{\ov{s}}} \\
    D(X\times_s \eta, \Z/\ell^n\Z) \arrow{r}{\pi^*_{X}} & D(X_{\ov{s}}, \Z/\ell^n\Z),
\end{tikzcd}
\] commutes (up to a canonical isomorphism);
\item\label{lemma:upper-shriek-pullback-2} the diagram
\[
\begin{tikzcd}
    D(Y\times_s \eta; \Z/\ell^n\Z) \arrow{d}{(f\times_s \eta)^!} \arrow{r}{{\sigma}_Y^*} & D(Y; \Z/\ell^n\Z) \arrow{d}{f^!} \\
    D(X\times_s \eta; \Z/\ell^n\Z) \arrow{r}{\sigma_X^*} & D(X; \Z/\ell^n\Z). 
\end{tikzcd}
\]
commutes (up to a canonical isomorphism) for every continuous section $\sigma\colon G_s \to G_\eta$;
\item\label{lemma:upper-shriek-pullback-3} the diagram
\[
\begin{tikzcd}
    D(Y; \Z/\ell^n\Z) \arrow{r}{p_Y^*} \arrow{d}{f^!} & D(Y\times_s \eta; \Z/\ell^n\Z) \arrow{d}{(f\times_s \eta)^!} \\
    D(X; \Z/\ell^n\Z)  \arrow{r}{p_X^*}& D(X\times_s \eta; \Z/\ell^n\Z) 
\end{tikzcd}
\]
commutes (up to a canonical isomorphism), where $p_X\colon X\times_s\eta\to X$ is the natural projection morphism (and the same for $p_Y$);

\item\label{lemma:upper-shriek-pullback-4} If $f$ is smooth of pure relative dimension $d$, there is a natural isomorphism $(f\times_s\eta)^!\simeq (f\times_s\eta)^*(d)[2d]$;

\item\label{lemma:upper-shriek-pullback-5} The natural morphism
\[
c_{\F,n, m}\colon (f\times_s \eta)^!\,\F \otimes^L_{\Z/\ell^n\Z} \Z/\ell^m\Z \to (f\times_s \eta)^!\left(\F \otimes^L_{\Z/\ell^n\Z} \Z/\ell^m\Z\right) 
\]
is an isomorphism for any $\F\in D(Y\times_s\eta; \Z/\ell^n\Z)$ and $n\geq m$;
\item\label{lemma:upper-shriek-pullback-6} $(f\times_s \eta)^!$ carries $D^b_{ctf}(Y\times_s\eta; \Z/\ell^n\Z)$ to $D^b_{ctf}(X\times_s\eta; \Z/\ell^n\Z)$.
\end{enumerate}
\end{lemma}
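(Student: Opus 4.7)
The plan is to reduce each of (1)--(6) to the corresponding classical statement on $X_{\ov s}$ (for $f_{\ov s}\colon X_{\ov s}\to Y_{\ov s}$) using conservativity of $\pi_X^*$ (Lemma~\ref{lemma:properties-Deligne}(\ref{lemma:properties-Deligne-3})) together with the standard six-functor formalism on ordinary $k$-schemes. First I would use Nagata compactification plus a graph embedding to factor $f=p\circ i\circ j$, where $j$ is an open immersion, $i$ is a closed immersion, and $p$ is smooth and proper (say, of pure relative dimension $d$). Since $(gh)^!\simeq h^!g^!$ for the upper shriek (automatic from the analogous factorization $(gh)_!\simeq g_!h_!$ by passing to right adjoints, cf.~\cite[Construction 1.9]{Lu-Zheng}), it suffices to prove all six assertions for each of these three building blocks separately.

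For an open immersion $j$, the adjunction $(\rm{R}(j\times_s\eta)_!,(j\times_s\eta)^*)$ combined with $\rm{R}(j\times_s\eta)_!=(j\times_s\eta)_!$ identifies $(j\times_s\eta)^!$ with $(j\times_s\eta)^*$, and every assertion reduces to a tautology for pullback. For a smooth proper morphism $p$, I would first prove (4) by constructing a trace/fundamental class $\rm{R}(p\times_s\eta)_!\,\Z/\ell^n\Z(d)[2d]\to \Z/\ell^n\Z$; by the projection formula this produces a natural transformation $(p\times_s\eta)^*(d)[2d]\to (p\times_s\eta)^!$, and one checks it is an isomorphism after applying $\pi_Y^*$ using Lemma~\ref{lemma:shriek-pushforward-pullback}(\ref{lemma:shriek-pushforward-pullback-1}) and the classical Poincar\'e duality on $Y_{\ov s}$. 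With (4) established, assertions (1)--(3), (5), (6) for $p$ reduce to the corresponding trivial assertions for $(p\times_s\eta)^*$ combined with compatibility of Tate twist and shift.

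For a closed immersion $i\colon Z\hookrightarrow \tilde X$ with open complement $j'\colon U\hookrightarrow\tilde X$, I would use the localization triangle
\[
(i\times_s\eta)_*(i\times_s\eta)^!\to \rm{id}\to \rm{R}(j'\times_s\eta)_*(j'\times_s\eta)^*
\]
obtained by adjunction from the analogous triangle for lower shriek (cf.~\cite[Construction 1.8]{Lu-Zheng}). Applying $\pi^*$, $\sigma^*$, or $p^*$ to this triangle and using Lemma~\ref{lemma:pushforward-pullback} together with the case of open immersions handled above yields (1)--(3) for $i$. Assertion (5) is checked after applying $\pi^*$, where it becomes the classical statement, and (6) follows by combining (5) with (1) and the classical preservation of $D^b_{ctf}$ by $i_{\ov s}^!$.

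The main obstacle will be the construction of the trace map in the Deligne-topos setting: concretely, showing that the classical trace $\rm{R}p_{\ov s,!}\Z/\ell^n\Z(d)[2d]\to\Z/\ell^n\Z$ descends to a morphism on $Y\times_s\eta$. This amounts to $G_\eta$-equivariance of the classical trace, which follows from the naturality of $\rm{R}f_{!}$ with respect to the $G_s$-action on the base via the cocycle formalism of Construction~\ref{construction:action}; more conceptually, one constructs a global dualizing complex $\omega_{X\times_s\eta}\in D^b_{ctf}(X\times_s\eta;\Z/\ell^n\Z)$ whose pullback $\pi_X^*\omega_{X\times_s\eta}$ is the classical dualizing complex on $X_{\ov s}$, and derives (1)--(6) uniformly from its properties together with $(f\times_s\eta)^!(-)\simeq \bf D_{X\times_s\eta}(f\times_s\eta)^*\bf D_{Y\times_s\eta}(-)$.
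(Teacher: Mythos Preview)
Your approach is genuinely different from the paper's: the paper simply cites the relevant results from Lu--Zheng (\cite[Propositions~1.23--1.25, Corollary~1.26]{Lu-Zheng}), where the base-change compatibilities for $(f\times_s\eta)^!$ on general oriented products of topoi are established abstractly, and then deduces (\ref{lemma:upper-shriek-pullback-6}) from (\ref{lemma:upper-shriek-pullback-1}) exactly as in Lemma~\ref{lemma:pushforward-pullback}(\ref{lemma:pushforward-pullback-4}). No factorization of $f$ is used.

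Your hands-on strategy is reasonable in spirit, but the first step does not go through as written. A separated morphism of finite type $k$-schemes need \emph{not} factor as $p\circ i\circ j$ with $p$ smooth and proper: Nagata gives $f=\bar f\circ j$ with $\bar f$ proper, and the graph trick produces a closed immersion into $\bar X\times_k Y$, but the projection $\bar X\times_k Y\to Y$ is smooth only when $\bar X$ is smooth over $k$, and in general $\bar X$ is not even projective over $Y$ (Hironaka). The fix is either to work Zariski-locally on $X$ (the comparison maps in (\ref{lemma:upper-shriek-pullback-1})--(\ref{lemma:upper-shriek-pullback-3}) are defined globally from the adjunctions, so it suffices to check they are isomorphisms locally) and use the factorization $X\hookrightarrow\mathbf{A}^n_Y\to Y$ with $p=\mathbf{A}^n_Y\to Y$ smooth but not proper, or to invoke Chow's lemma. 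Either way ``smooth proper'' must be relaxed; the trace map you need still exists for smooth separated $p$.

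Separately, your ``more conceptual'' alternative via $(f\times_s\eta)^!\simeq\bf D_{X\times_s\eta}\circ(f\times_s\eta)^*\circ\bf D_{Y\times_s\eta}$ is circular here: in the paper's development, the compatibility $\pi_X^*\bf D_{X\times_s\eta}\simeq\bf D_{X_{\ov s}}\pi_X^*$ (Remark~\ref{rmk:verdier-duality-compatible}) is deduced \emph{from} Lemma~\ref{lemma:upper-shriek-pullback}, so it cannot be used to prove it. Citing Lu--Zheng directly, as the paper does, avoids both issues.
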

\begin{proof}
    (\ref{lemma:upper-shriek-pullback-1}) and (\ref{lemma:upper-shriek-pullback-2}) follow from \cite[Proposition 1.24]{Lu-Zheng} (the boundedness assumption can be dropped in our situation by using \cite[Remark 1.18]{Lu-Zheng} in place of \cite[Proposition 1.17]{Lu-Zheng}). (\ref{lemma:upper-shriek-pullback-3}) follows from \cite[Corollary 1.26]{Lu-Zheng} applied to $g=\rm{Id}_{\eta}$ and $M=\ud{\Z/\ell^n\Z}$. (\ref{lemma:upper-shriek-pullback-4}) follows from \cite[Proposition 1.23]{Lu-Zheng}. (\ref{lemma:upper-shriek-pullback-5}) follows from \cite[Proposition 1.25]{Lu-Zheng}. And (\ref{lemma:upper-shriek-pullback-6}) can be proven similarly to Lemma~\ref{lemma:pushforward-pullback}(\ref{lemma:pushforward-pullback-4}).
\end{proof}

For the next definition, we fix a finite type separated $k$-scheme with structure morphism $f\colon X\to \Spec k$.

\begin{defn} The {\it dualizing complex} $\omega_{X\times_s \eta}\in D(X\times_s \eta; \Z/\ell^n\Z)$ is defined to be $\omega_{X\times_s \eta}\coloneqq (f\times_s \eta)^!(\ud{\Z/\ell^n\Z})$. 

The {\it Verdier duality functor} $\bf{D}_{X\times_s \eta}(-)\colon D(X\times_s \eta; \Z/\ell^n\Z)^{\rm{op}} \to D(X\times_s\eta; \Z/\ell^n\Z)$ is defined as
\[
\bf{D}_{X\times_s \eta}(-) \coloneqq \rm{R}\cal{H}om_{\Z/\ell^n\Z}(-, \omega_{X\times_s \eta}).
\]
\end{defn}

\begin{rmk} Lemma~\ref{lemma:upper-shriek-pullback}(\ref{lemma:upper-shriek-pullback-3}), there is a natural isomorphism
\[
\omega_{X\times_s \eta} \simeq p^*_X\omega_X = p^*_X f^!(\ud{\Z/\ell^n\Z}),
\]
where $p_X\colon X\times_s \eta\to X$ is the natural projection morphism.
\end{rmk}

\begin{rmk}\label{rmk:verdier-duality-compatible} By Lemma~\ref{lemma:homs-after-base-change} and Lemma~\ref{lemma:upper-shriek-pullback}, the natural morphisms
\[
\sigma_X^*\bf{D}_{X\times_s\eta}(\F)\to \bf{D}_{X}(\sigma_X^*\F),
\]
\[
\pi^*_X\bf{D}_{X\times_s \eta}(\F)\to \bf{D}_{X_{\ov{s}}}(\pi_X^*\F)
\]
are isomorphisms for $\F\in D^b_{ctf}(X\times_s \eta; \Z/\ell^n\Z)$.
\end{rmk}

\begin{lemma}\label{lemma:biduality} Let $X$ be a finite type $k$-scheme, and let $\ell$ be a prime number invertible in $k$. Then the Verdier duality restricts to an equivalence 
\[
\bf{D}_{X\times_s \eta}\colon D^b_{ctf}(X\times_s \eta; \Z/\ell^n\Z)^{\rm{op}} \to D^b_{ctf}(X\times_s \eta; \Z/\ell^n\Z).
\]
\end{lemma}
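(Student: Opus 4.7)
The plan is to reduce biduality on $X\times_s\eta$ to the classical biduality on $X_{\ov s}$ via the conservative pullback functor $\pi^*_X$. This works smoothly because all the relevant structure (the dualizing complex, the Hom-sheaf, the natural biduality transformation) is compatible with $\pi^*_X$ by the results already established in this appendix.

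First, we check that $\bf{D}_{X\times_s\eta}$ carries $D^b_{ctf}(X\times_s\eta;\Z/\ell^n\Z)$ into itself. For $\F\in D^b_{ctf}(X\times_s\eta;\Z/\ell^n\Z)$, Remark~\ref{rmk:verdier-duality-compatible} gives a natural isomorphism
\[
\pi^*_X\bf{D}_{X\times_s\eta}(\F) \simeq \bf{D}_{X_{\ov s}}(\pi^*_X\F).
\]
By definition of $D^b_{ctf}(X\times_s\eta;\Z/\ell^n\Z)$, we have $\pi^*_X\F\in D^b_{ctf}(X_{\ov s};\Z/\ell^n\Z)$. The classical fact that Verdier duality on a finite type $\ov k$-scheme preserves $D^b_{ctf}$ (see, e.g., \cite[Theorem 8.4.7]{Lei-Fu}) then shows that $\bf{D}_{X_{\ov s}}(\pi^*_X\F)\in D^b_{ctf}(X_{\ov s};\Z/\ell^n\Z)$, and hence $\bf{D}_{X\times_s\eta}(\F)\in D^b_{ctf}(X\times_s\eta;\Z/\ell^n\Z)$.

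Second, we verify that the natural biduality transformation $\eta_\F\colon \F \to \bf{D}_{X\times_s\eta}(\bf{D}_{X\times_s\eta}(\F))$ is an isomorphism. By Lemma~\ref{lemma:properties-Deligne}(\ref{lemma:properties-Deligne-3}), the pullback functor $\pi^*_X$ is conservative, so it suffices to show that $\pi^*_X(\eta_\F)$ is an isomorphism. Iterating the compatibility from Remark~\ref{rmk:verdier-duality-compatible} and the naturality of the biduality transformation with respect to the pullback $\pi^*_X$, we identify $\pi^*_X(\eta_\F)$ with the classical biduality map
\[
\pi^*_X\F \to \bf{D}_{X_{\ov s}}(\bf{D}_{X_{\ov s}}(\pi^*_X\F)).
\]
This map is an isomorphism by classical biduality on the finite type $\ov k$-scheme $X_{\ov s}$ (see \cite[Theorem 8.4.7]{Lei-Fu}), so $\eta_\F$ itself is an isomorphism.

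The anticipated difficulty is essentially bookkeeping: one must check that the biduality transformation on Deligne's topos is genuinely intertwined with the classical biduality transformation under $\pi^*_X$. This reduces to the assertion that the canonical maps in Remark~\ref{rmk:verdier-duality-compatible} are compatible with the adjunction unit, which in turn follows from the construction of these canonical morphisms via the adjoint pair $((f\times_s\eta)^*, \rm{R}(f\times_s\eta)_*)$ and their compatibility with $\pi^*$ established in Lemma~\ref{lemma:pushforward-pullback}(\ref{lemma:pushforward-pullback-1}) and Lemma~\ref{lemma:upper-shriek-pullback}(\ref{lemma:upper-shriek-pullback-1}). Once these base-change compatibilities are available, no further work is needed.
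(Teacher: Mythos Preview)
Your proof is correct and follows essentially the same approach as the paper: reduce both preservation of $D^b_{ctf}$ and the biduality isomorphism to the geometric special fiber via the conservative pullback $\pi^*_X$ and the compatibility recorded in Remark~\ref{rmk:verdier-duality-compatible}. The only cosmetic difference is that for the first step the paper invokes Corollary~\ref{cor:projection-hom} directly (which already packages the statement that $\rm{R}\cal{H}om$ preserves $D^b_{ctf}$), whereas you unwind this via $\pi^*_X$ and the classical result on $X_{\ov s}$; both routes are equivalent.
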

\begin{proof}
    We need to show that, for every $\F\in D^b_{ctf}(X\times_s\eta; \Z/\ell^n\Z)$, the complex $\bf{D}_{X\times_s \eta}(\F)$ lies in $D^b_{ctf}(X\times_s\eta; \Z/\ell^n\Z)$ and the natural morphism
    \[
    \F \to \bf{D}_{X\times_s \eta}\left(\bf{D}_{X\times_s \eta}\left(\F\right)\right)
    \]
    is an isomorphism. The first claim follows from Corollary~\ref{cor:projection-hom}. The second claim can be proven after applying $\pi^*_X$ by Lemma~\ref{lemma:properties-Deligne}(\ref{lemma:properties-Deligne-3})). Then Remark~\ref{rmk:verdier-duality-compatible} ensures that it suffices to prove analogous fact for a complex $\G\in D^b_{ctf}(X_{\ov{s}}; \Z/\ell^n\Z)$. This follows from \cite[Theorem 9.6.1]{Lei-Fu}. 
\end{proof}

\subsection{Analytic nearby cycles}\label{section:nearby-cycles-finite-finite-type}

The main goal of this section is to define the functor of nearby cycles for  admissible formal $\O_K$-schemes. For this, we fix a completed algebraic closure $C\coloneqq \wdh{\ov{K}}$ of $K$, the ring of integers $\O_C\subset C$, and the residue field $\ov{k}$. \smallskip

We recall that, for every admissible formal $\O_K$-scheme $\X$, there is a morphism of topoi
\[
\lambda_{\X}\colon \X_\eta \to \X_s
\]
constructed in \cite[Lemma 3.5.1]{H3}. On the level of sites, this morphism sends on \'etale morphism $\sU_s\to \X_s$ to $\sU_\eta \to \X_\eta$, where $\sU \to \X$ is the unique \'etale map of formal schemes lifting $\sU_s \to \X_s$. \smallskip

Now we wish to define the nearby cycles functor. We consider the $(2, 1)$-commutative diagram:
\begin{equation}\label{eqn:nearby}
\begin{tikzcd}
\X_{\eta, \et}\arrow{d} \arrow{r}{\lambda_\X} & \X_{s, \et} \arrow{d}\\
\eta\arrow{r}{r} & s,
\end{tikzcd}
\end{equation}
where vertical arrows are the structure morphisms\footnote{Here, we implicitly identify $\eta$ with the \'etale topos $\Spa(K, \O_K)_\et$.}. By the universal property of the $2$-fiber products, Diagram~(\ref{eqn:nearby}) defines the morphism of topoi
\[
\Psi_\X \colon \X_{\eta, \et} \to (\X_{s} \times_s \eta)_\et.
\]

For the next definition, we fix a prime number $\ell$ and a positive integer $n\geq 1$. 

\begin{defn}\label{defn:nearby-cycles} For a nice formal $\O_K$-scheme, the \emph{nearby cycles functor} is the right derived functor
\[
\rm{R}\Psi_\X \colon \cal{D}(\X_\eta; \Z/\ell^n\Z)\to \cal{D}(\X_s\times_s \eta; \Z/\ell^n\Z).
\]
\end{defn}

\begin{warning} Even though it is not explicitly emphasized in the notation, the nearby cycles functor depends on a choice of a ground field $K$. Even the category $\cal{D}(\X_s\times_s \eta; \Z/\ell^n\Z)$ depends on a choice of $s$ and $\eta$ and not merely on $\X$ as an abstract formal scheme.
\end{warning}

Now we establish some basic properties of this functor. In particular, we will show that $\rm{R}\Psi_\X$ is indeed a canonical ``descent'' of $\rm{R}\lambda_{\X_{\O_C}, *}\circ b_{\X_\eta}^*$, where $b_{\X_\eta}\colon \X_{\wdh{\ov{\eta}}} \to \X_\eta$ is the natural projection morphism. But before we do this, we need to recall the definition of Zariski constructible sheaves:

\begin{defn} Let $X$ be a rigid-analytic space over a non-archimedean field $K$.
\begin{enumerate}
    \item An \'etale sheaf $\cal{F} \in \rm{Shv}(X_\et; \Z/\ell^n\Z)$ is {\it lisse} there exists an étale cover $\{U_i \to X\}_{i\in I}$ such that $\cal{F}|_{U_i}$ is the constant sheaf associated to a finitely generated $\Z/\ell^n\Z$-module.
    \item An étale sheaf $\cal{F} \in \rm{Shv}(X_\et; \Z/\ell^n\Z)$ is {\it Zariski-constructible} if $X$ admits a locally finite stratification $X =
    \sqcup_{i\in I} X_i$ into Zariski locally closed subsets $X_i$ such that $\cal{F}|_{X_i}$ is a lisse sheaf of $\Z/\ell^n\Z$-modules for all $i \in I$. \item A complex $\cal{F}\in \cal{D}(X; \Z/\ell^n\Z)$ is {\it bounded Zariski-constructible} if only finite number of cohomology sheaves $\cal{H}^i(\cal{F})$ are non-zero, and all of them are Zariski-constructible. We denote this category by $\cal{D}_{zc}^b(X; \Z/\ell^n\Z)$ 
    
    \item A complex $\cal{F}\in \cal{D}(X; \Z/\ell^n\Z)$ is {\it Zariski-constructible of finite tor dimension} if $\cal{F}$ is Zariski-constructible and the complex $\cal{F}\otimes^L_{\Z/\ell^n\Z} M$ is bounded for every finitely generated $\Z/\ell^n\Z$-module $M$. We denote this category by $\cal{D}_{zc, ftd}^b(X; \Z/\ell^n\Z)$. 
\end{enumerate}
\end{defn}

\begin{lemma}\label{lemma:compute-nearby-cycles} Let $\mf\colon \X\to \cY$ be a morphism of admissible formal $\O_K$-schemes, let $\ell$ be a prime number, and let $n$ be a positive integer. Then 
\begin{enumerate}
    \item\label{lemma:compute-nearby-cycles-1} the diagram 
    \[
\begin{tikzcd}
    \cal{D}(\X_\eta; \Z/\ell^n\Z) \arrow{d}{b_{\X_\eta}^*} \arrow{r}{\rm{R}\Psi_\X} & \cal{D}(\X_s\times_s\eta; \Z/\ell^n\Z) \arrow{d}{\pi_{\X_s}^*} \\
    \cal{D}(\X_{\wdh{\ov{\eta}}}; \Z/\ell^n\Z) \arrow{r}{\rm{R}\lambda_{\X_{\O_C}, *}} & \cal{D}(\X_{\ov{s}}; \Z/\ell^n\Z)
\end{tikzcd}
\] commutes (up to a canonical isomorphism);
\item\label{lemma:compute-nearby-cycles-2} the diagram
\[
\begin{tikzcd}
    \cal{D}(\X_{\eta}; \Z/\ell^n\Z)\arrow{d}{\rm{R}\mf_{\eta, *}}\arrow{r}{\rm{R}\Psi_\X} & \cal{D}(\X_s\times_s \eta; \Z/\ell^n\Z) \arrow{d}{\rm{R}(\mf_s\times_s \eta)_*} \\
    \cal{D}(\cY_\eta; \Z/\ell^n\Z)\arrow{r}{\rm{R}\Psi_\cY} & \cal{D}(\cY_s\times_s \eta; \Z/\ell^n\Z)
\end{tikzcd}
\]
commutes (up to a canonical isomorphism);
\item\label{lemma:compute-nearby-cycles-3} Let $K\subset K' \subset C$ be an extension of non-archimedean fields inducing an algebraic extension $k\subset k'$ on residue fields, and let $\eta'$ and $s'$ be the classifying topoi of the absolute Galois groups $G_{K'}$ and $G_{k'}$. Then the following diagram
\[
\begin{tikzcd}[column sep = huge]
    \cal{D}(\X_\eta; \Z/\ell^n\Z) \arrow{d}{b_{\eta, \eta'}^*} \arrow{r}{\rm{R}\Psi_\X} & \cal{D}(\X_s\times_s \eta; \Z/\ell^n\Z) \arrow{d}{b_{s, s'}^*}\\
    \cal{D}(\X_{\eta'}; \Z/\ell^n\Z) \arrow{r}{\rm{R}\Psi_{\X_{\O_{K'}}}} & \cal{D}(\X_{s'}\times_{s'} \eta'; \Z/\ell^n\Z)
\end{tikzcd}
\]
commutes (up to a canonical isomorphism), where the vertical functors are the natural pullbacks;
\item\label{lemma:compute-nearby-cycles-4} The natural morphism
\[
c_{\F,n, m}\colon \rm{R}\Psi_\X \F \otimes^L_{\Z/\ell^n\Z} \Z/\ell^m\Z \to \rm{R}\Psi_\X(\F \otimes^L_{\Z/\ell^n\Z} \Z/\ell^m\Z) 
\]
is an isomorphism for any $\F\in \cal{D}(X\times_s\eta; \Z/\ell^n\Z)$ and $n\geq m$;
\item\label{lemma:compute-nearby-cycles-5} If $\ell$ is invertible in $\O_K$ and $\rm{char}\, K=0$, then the functor $\rm{R}\Psi_\X$ carries $\cal{D}^b_{zc, ftd}(\X_\eta; \Z/\ell^n\Z)$ to $\cal{D}^b_{ctf}(\X_s\times_s\eta; \Z/\ell^n\Z)$;
\item\label{lemma:compute-nearby-cycles-6} The nearby cycles $\rm{R}\Psi_\X\colon \cal{D}(\X_\eta;\Z/\ell^n\Z) \to \cal{D}(\X_s\times_s \eta; \Z/\ell^n\Z)$ commutes with colimits. 
\end{enumerate}
\end{lemma}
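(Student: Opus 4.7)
The six assertions should all be proved by reducing to general facts about $2$-fiber products of topoi and then invoking either Lu--Zheng's results already quoted earlier, or Huber's computation of $R\lambda_{*}$. The common principle is that $\Psi_{\X}$ is defined via the universal property of $\X_s\times_s \eta$ as the $2$-fiber product $\X_{s,\et}\times_s \eta$, so everything one wants to prove can be checked either after applying $\pi^{*}_{\X_s}$ (which lands in $\cal{D}(\X_{\ov{s}};\Z/\ell^n\Z)$ and is conservative by Lemma~\ref{lemma:properties-Deligne}(\ref{lemma:properties-Deligne-3})), or after composing with one of the other structural projections of the topos.

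First I would prove $(\ref{lemma:compute-nearby-cycles-1})$. Pulling the defining diagram~(\ref{eqn:nearby}) of $\Psi_{\X}$ along the point $p_{\eta}\colon \ov{\eta}\to \eta$ and using Lemma~\ref{lemma:properties-Deligne}(\ref{lemma:properties-Deligne-1}) to identify $(\X_s\times_s \eta)\times_{\eta}\ov{\eta}\simeq \X_{\ov{s}, \et}$, we see that $\pi_{\X_s}\circ \Psi_{\X}$ factors canonically as a morphism $\X_{\eta,\et}\to \X_{\ov{s},\et}$; functoriality of $\lambda$ and the construction of $2$-fiber products identify this morphism with $\lambda_{\X_{\O_C}}\circ b_{\X_\eta}$. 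The identity $\pi_{\X_s}^{*}\circ R\Psi_{\X}\simeq R\lambda_{\X_{\O_C},*}\circ b_{\X_\eta}^{*}$ then follows after passing to derived pushforwards; here one uses that $b_{\X_\eta}^{*}$ agrees with the pullback along $p_\eta$ and that $R\lambda_{*}$ commutes with the relevant pullbacks along weakly \'etale morphisms (this is why we may exchange $*$ and ${}^{*}$). The same universal-property pattern then proves $(\ref{lemma:compute-nearby-cycles-2})$ and $(\ref{lemma:compute-nearby-cycles-3})$: for $(\ref{lemma:compute-nearby-cycles-2})$ one uses the functoriality square $\lambda_{\Y}\circ \mf_{\eta}\simeq \mf_s\circ \lambda_{\X}$ at the level of topoi and passes to derived pushforwards; for $(\ref{lemma:compute-nearby-cycles-3})$ one uses that $\X_{\O_{K'},\eta}=\X_\eta\times_\eta \eta'$ together with the compatibility of $\lambda$ with such a base change, and then the universal property of the $2$-fiber product $\X_{s'}\times_{s'}\eta'$ over $\X_s\times_s\eta$.

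Next I would prove $(\ref{lemma:compute-nearby-cycles-4})$ and $(\ref{lemma:compute-nearby-cycles-6})$. For $(\ref{lemma:compute-nearby-cycles-4})$ it suffices, by Lemma~\ref{lemma:properties-Deligne}(\ref{lemma:properties-Deligne-3}) and part~$(\ref{lemma:compute-nearby-cycles-1})$, to check the analogous compatibility for $R\lambda_{\X_{\O_C},*}$ after base change to $C$, where it is the classical statement for Huber's $\lambda$-pushforward and follows from finite cohomological dimension of $R\lambda_{*}$ together with the projection formula. Similarly $(\ref{lemma:compute-nearby-cycles-6})$ reduces via $\pi_{\X_s}^{*}$ to the statement that $R\lambda_{\X_{\O_C},*}$ commutes with colimits, which is again a consequence of bounded cohomological dimension of $R\lambda_{*}$ on qcqs adic spaces together with standard presentability.

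The main obstacle is $(\ref{lemma:compute-nearby-cycles-5})$. Here I would argue as follows: by $(\ref{lemma:compute-nearby-cycles-1})$ and conservativity of $\pi^{*}_{\X_s}$, it suffices to show that $\pi_{\X_s}^{*}R\Psi_{\X}\F\simeq R\lambda_{\X_{\O_C},*}b_{\X_\eta}^{*}\F$ lies in $\cal{D}^b_{ctf}(\X_{\ov{s}};\Z/\ell^n\Z)$ for any $\F\in \cal{D}^b_{zc,ftd}(\X_\eta;\Z/\ell^n\Z)$. The latter is a theorem of Huber--Hansen: $R\lambda_{*}$ sends bounded Zariski-constructible complexes on a qcqs rigid-analytic space to bounded constructible complexes on the special fiber of any admissible formal model (this is the main input of \cite[Theorem~4.2]{Bhatt-Hansen}), and the finite tor dimension statement follows by combining this with $(\ref{lemma:compute-nearby-cycles-4})$ and the vanishing $\F\otimes^{L}_{\Z/\ell^n\Z}\Z/\ell\Z\in \cal{D}^{b}_{zc}$. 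Finally, checking that the resulting constructible complex is in fact an honest object of $\cal{D}(\X_s\times_s \eta;\Z/\ell^n\Z)$ (not just of its image in $\cal{D}(\X_{\ov{s}})$) is automatic from the construction of $R\Psi_{\X}$, so constructibility of finite tor dimension follows.
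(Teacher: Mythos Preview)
Your proposal is essentially correct and follows the same strategy as the paper: reduce everything, via conservativity of $\pi_{\X_s}^*$ and the identification in part~(1), to known facts about $R\lambda_{\X_{\O_C},*}$ (finite cohomological dimension, preservation of constructibility, etc.), with parts~(2)--(3) being formal consequences of the universal property of the $2$-fiber product. One notational slip to fix: in your argument for~(1) you write ``$\pi_{\X_s}\circ \Psi_{\X}$ factors as a morphism $\X_{\eta,\et}\to \X_{\ov{s},\et}$'', but $\pi_{\X_s}$ and $\Psi_{\X}$ both have target $\X_s\times_s\eta$ and do not compose; what you actually need is the $2$-cartesian square with $\X_{\widehat{\ov{\eta}},\et}\simeq \X_{\eta,\et}\times_\eta \ov{\eta}$ in the upper-left corner, and then the base-change isomorphism $\pi_{\X_s}^*R\Psi_{\X,*}\simeq R\lambda_{\X_{\O_C},*}b_{\X_\eta}^*$ for this square (this is exactly the Lu--Zheng base-change result, using that $\Psi_\X$ is coherent).
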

\begin{proof}
    Before we start the proof, we note that $\rm{R}\lambda_{\X_{\O_C}, *}$ has finite cohomological dimension by \cite[Corollary 2.8.3]{H3}. We will freely use this in the proof. \smallskip
    
    $(\ref{lemma:compute-nearby-cycles-1})$ It can be seen explicitly using the explicit site-theoretic construction of $\X_s\times_s \eta$ from \cite[Exp. XI, \textsection 3]{deGabber}. Alternatively, $(\X_s\times_s \eta)_\et$ is coherent by \cite[Lemma 1.3]{Lu-Zheng} (or \cite[Exp.XI, Lemme 2.5]{deGabber}), and the proof of {\it loc. cit.} implies that $\Psi_\X\colon \X_{\eta, \et}\to (\X_s\times_s \eta)_{\et}$ is coherent. Furthermore, an argument analogous to that of Lemma~\ref{lemma:properties-Deligne}~(\ref{lemma:properties-Deligne-1}) implies that $\X_{\eta}\times_{\eta} \ov{\eta}\simeq \X_{\wdh{\ov{\eta}}}$ (use \cite[Proposition 2.4.4]{H3} in place of \cite[Lemma 8.3]{Morin}). Then the result follows from the base change result, see \cite[Proposition 1.17 and Remark 1.18]{Lu-Zheng}. \smallskip
    
    $(\ref{lemma:compute-nearby-cycles-2})$ This is formal. \smallskip
    
    $(\ref{lemma:compute-nearby-cycles-3})$ By Lemma~\ref{lemma:properties-Deligne}~(\ref{lemma:properties-Deligne-3}), it suffices to show that the natural morphism
    \[
    b_{s,s'}^*\circ \rm{R}\Psi_\X \to \rm{R}\Psi_{\X_{\O_K'}}\circ b_{\eta, \eta'}^*
    \]
    is an isomorphism after applying $\pi_{\X_{s'}}^*\colon \cal{D}(\X_{s'}\times_{s'}\eta';\Z/\ell^n\Z) \to \cal{D}(\X_{\ov{s'}}=\X_{\ov{s}}; \Z/\ell^n\Z)$. But then both compositions are canonically identified with 
    \[
    \rm{R}\lambda_{\X_{\O_C}, *}\circ p_{\X_\eta}^*
    \]
    by $(\ref{lemma:compute-nearby-cycles-1})$. \smallskip
    
    $(\ref{lemma:compute-nearby-cycles-4})$ By (\ref{lemma:compute-nearby-cycles-1}) and Lemma~\ref{lemma:properties-Deligne}~(\ref{lemma:properties-Deligne-3}), cohomological dimension of $\rm{R}\Psi_{\X}$ is bounded by the cohomological dimension of $\rm{R}\lambda_{\X_{\O_C},*}$. Therefore, the result follows from \cite[Corollary 1.20]{Lu-Zheng}.
     
    $(\ref{lemma:compute-nearby-cycles-5})$ By (\ref{lemma:compute-nearby-cycles-1}), it suffices to show that $\rm{R}\lambda_{\X_{\O_C}, *}$ carries $\cal{D}^b_{zc, ftd}(\X_{\wdh{\ov{\eta}}}; \Z/\ell^n\Z)$ to $\cal{D}^b_{ctf}(\X_{\ov{s}}; \Z/\ell^n\Z)$. We first show that $\rm{R}\lambda_{\X_{\O_C}, *}$ carries $\cal{D}^b_{zc}(\X_{\wdh{\ov{\eta}}}; \Z/\ell^n\Z)$ to $\cal{D}^b_{c}(\X_{\ov{s}}; \Z/\ell^n\Z)$. By \cite[Proposition 3.6]{Bhatt-Hansen} and Lemma~\ref{lemma:pushforward-pullback}(\ref{lemma:pushforward-pullback-4}), it suffices to show that $\rm{R}\lambda_{\X_{\O_C}, *}(\ud{M})\in \cal{D}^b_c(\X_{\ov{s}}; \Z/\ell^n\Z)$ for a finitely generated $\Z/\ell^n\Z$-module $M$. This follows from \cite[Proposition 3.11]{Huber-finiteness} or \cite[Theorem 1.1.2]{Berkovich-finiteness}. Now it is easy to see that $\rm{R}\lambda_{\X_{\O_C}, *}$ carries $\cal{D}^b_{zc, ftd}(\X_{\wdh{\ov{\eta}}}; \Z/\ell^n\Z)$ to $\cal{D}^b_{ctf}(\X_{\ov{s}}; \Z/\ell^n\Z)$ using the projection formula.
    
    (\ref{lemma:compute-nearby-cycles-6}) By \cite[Proposition 1.4.4.1(2)]{HA}, it suffices to show that $\rm{R}\Psi_\X$ commutes with (infinite) direct sums. Since $\pi^*_{\X_s}$ commutes with (infinite) direct sums and conservative, it suffices to show that $\rm{R}\lambda_{\X_{\O_C}, *}$ commutes with infinite direct sums. Now this is classical; for example, it follows from \cite[Theorem 1.1(i)]{Hansen-nearby}.
\end{proof}

\begin{rmk}\label{rmk:finiteness-char-p} The proof of Lemma~\ref{lemma:compute-nearby-cycles}(\ref{lemma:compute-nearby-cycles-5}) shows that $\rR\Psi_{\X} \Z/\ell^n\Z$ lies in $\cal{D}^b_{ctf}(\X_s\times_s \eta; \Z/\ell^n\Z)$ for any admissible formal $\O_K$-scheme and any prime number $\ell$ invertible in $\O_K$.
\end{rmk}

\begin{rmk}\label{rmk:action-coincide} Lemma~\ref{lemma:compute-nearby-cycles}(\ref{lemma:compute-nearby-cycles-2}) and Lemma~\ref{lemma:pushforward-pullback}(\ref{lemma:pushforward-pullback-1}) imply that, for an admissible formal $\O_K$-scheme $\X$ with the structure morphism $\mf\colon \X\to \Spf \O_K$ and $\F\in \cal{D}(\X_\eta, \Z/\ell^n\Z)$, we have 
\[
\rm{R}\Gamma\big(\X_{\wdh{\ov{\eta}}}, \F\big) \simeq \left(\rm{R}\mf_{\eta, *}\F\right)_{\wdh{\ov{\eta}}} \simeq \big(\rm{R}\left(\mf_s\times_s \eta\right)_* \rm{R}\Psi_{\X}\F\big)_{\wdh{\ov{\eta}}} \simeq \rm{R}\Gamma\left(\X_{\ov{s}}, \pi^*_{\X_s}\rm{R}\Psi_{\X} \F\right)
\]
compatibly with the $G_\eta$-actions on both sides. 
\end{rmk}

Now we wish to discuss an analogue of Remark~\ref{rmk:action-coincide} for compactly supported cohomology groups. We will not be able to establish a result in such generality, but we will prove a subtitute that is sufficient for all our purposes. The question turns out to be more subtle than Remark~\ref{rmk:action-coincide} since the lower shriek functors do not come from morphisms of topoi, so it is somewhat difficult to control the $G_\eta$-action on compactly supported cohomology groups. Before we discuss this, we record the following preliminary lemma.

\begin{lemma}\label{lemma:open-lower-shriek} Let $j\colon \sU \to \X$ be an open immersion of admissible formal $\O_K$-schemes, and $\ell$ a prime number. Then there is a natural isomorphism of functors
\[
(j_s\times_s \eta)_! \circ \rm{R}\Psi_{\sU}\simeq \rm{R}\Psi_{\X}\circ j_{\eta,!}
\]
as functors $D(\sU_\eta; \Z/\ell^n\Z) \to D(\X_s\times_s \eta;\Z/\ell^n\Z)$.
\end{lemma}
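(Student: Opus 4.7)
The plan is to construct the natural transformation via adjunction together with a base change isomorphism, and then verify it is an isomorphism after applying the conservative pullback $\pi^*_{\X_s}$ from Lemma~\ref{lemma:properties-Deligne}(\ref{lemma:properties-Deligne-3}). First, I would establish the base change
\[
(j_s \times_s \eta)^* \circ \rm{R}\Psi_{\X} \simeq \rm{R}\Psi_{\sU} \circ j_\eta^*.
\]
By the universal property of $2$-fiber products, this reduces to the commutativity of
\[
\begin{tikzcd}
\sU_{\eta, \et} \arrow{r}{\lambda_\sU} \arrow{d}{j_\eta} & \sU_{s, \et} \arrow{d}{j_s}\\
\X_{\eta, \et} \arrow{r}{\lambda_\X} & \X_{s, \et}
\end{tikzcd}
\]
together with the classical base change $j_s^* \rm{R}\lambda_{\X, *} \simeq \rm{R}\lambda_{\sU, *} j_\eta^*$ for an open immersion $j$ (which follows directly from the site-theoretic description of $\lambda$: both functors restrict to the ``open part'' $\sU$ in the formal-scheme tower).

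Next, using this base change and the fact that $j_\eta^* j_{\eta,!} \simeq \rm{id}$ (for the open immersion $j_\eta$), I would define the natural transformation
\[
\alpha\colon (j_s\times_s \eta)_! \circ \rm{R}\Psi_{\sU} \longrightarrow \rm{R}\Psi_\X \circ j_{\eta,!}
\]
as the one adjoint (with respect to the adjunction $((j_s\times_s\eta)_!, (j_s\times_s \eta)^*)$) to the composite
\[
\rm{R}\Psi_{\sU} \xrightarrow{\sim} \rm{R}\Psi_\sU \circ j_\eta^* \circ j_{\eta,!} \xrightarrow{\sim} (j_s\times_s \eta)^* \circ \rm{R}\Psi_\X \circ j_{\eta,!}.
\]

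To verify that $\alpha$ is an isomorphism, I would apply the conservative functor $\pi^*_{\X_s}$. Using Lemma~\ref{lemma:shriek-pushforward-pullback}(\ref{lemma:shriek-pushforward-pullback-1}) on the left-hand side and Lemma~\ref{lemma:compute-nearby-cycles}(\ref{lemma:compute-nearby-cycles-1}) on both sides, the question reduces to showing that for any $\F \in \cal{D}(\sU_\eta; \Z/\ell^n\Z)$, the natural morphism
\[
j_{\ov{s}, !} \circ \rm{R}\lambda_{\sU_{\O_C}, *} \circ b^*_{\sU_\eta}(\F) \longrightarrow \rm{R}\lambda_{\X_{\O_C}, *} \circ j_{\wdh{\ov{\eta}}, !} \circ b_{\sU_\eta}^*(\F)
\]
is an isomorphism; that is, $\rm{R}\lambda_{\X_{\O_C}, *}$ commutes with extension by zero along the open immersion $j_{\wdh{\ov{\eta}}}\colon \sU_{\wdh{\ov{\eta}}} \to \X_{\wdh{\ov{\eta}}}$. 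This is a standard statement, verified directly on stalks (or by noting that $j_{\ov{s}}^* \rm{R}\lambda_{\X_{\O_C}, *} = \rm{R}\lambda_{\sU_{\O_C}, *} j_{\wdh{\ov{\eta}}}^*$ is an iso and $\rm{R}\lambda_{\X_{\O_C}, *}$ preserves the vanishing locus, since the fiber of $\lambda$ over a point outside $\sU_{\ov{s}}$ is contained in the complement of $\sU_{\wdh{\ov{\eta}}}$).

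The main technical point is the base change isomorphism in the first step; once this is in hand the adjunction argument is formal, and the reduction to the classical commutation of $\rm{R}\lambda_*$ with $j_!$ for an open immersion is routine. Naturality of $\alpha$ in $\F$ is automatic from the construction as an adjoint of a natural transformation.
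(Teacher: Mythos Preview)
Your proof is correct and follows essentially the same strategy as the paper: construct the transformation via adjunction, then verify it is an isomorphism after applying the conservative functor $\pi^*_{\X_s}$, reducing to the classical commutation of $\rm{R}\lambda_*$ with $j_!$ over $\O_C$. The only notable difference is in how the transformation is built. You use the \'etale-localization base change $(j_s\times_s\eta)^*\circ \rm{R}\Psi_\X \simeq \rm{R}\Psi_{\sU}\circ j_\eta^*$ directly, together with $j_\eta^* j_{\eta,!}\simeq \mathrm{id}$. The paper instead starts from the natural map $j_{\eta,!}\to \rm{R}j_{\eta,*}$, composes with the pushforward compatibility $\rm{R}\Psi_\X\circ \rm{R}j_{\eta,*}\simeq \rm{R}(j_s\times_s\eta)_*\circ \rm{R}\Psi_{\sU}$ from Lemma~\ref{lemma:compute-nearby-cycles}(\ref{lemma:compute-nearby-cycles-2}), adjoints to obtain $(j_s\times_s\eta)^*\circ \rm{R}\Psi_\X\circ j_{\eta,!}\to \rm{R}\Psi_{\sU}$, checks this is an isomorphism, and then inverts before adjointing back. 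Your route is a bit more direct; the paper's has the advantage of only invoking results already stated. One small point the paper makes explicit and you do not: the verification on the geometric side (via \cite[Theorem~5.9.3 and Corollary~5.9.3]{H3}) is first done on $D^+$, and the extension to all of $D$ uses that all functors involved commute with colimits (Lemma~\ref{lemma:compute-nearby-cycles}(\ref{lemma:compute-nearby-cycles-6})).
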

\begin{proof}
    The hard part is to construct the natural tranformation. First we note that there is a natural transformation $j_{\eta, !} \to \rm{R}j_{\eta, *}$ essentially by construction (see \cite[Definition 5.2.1(ii) and Proposition 5.2.4]{H3}). This induces a transformation
    \[
    \rm{R}\Psi_\X \circ j_{\eta, !} \to \rm{R}\Psi_{\X}\circ \rm{R}j_{\eta, *} \simeq \rm{R}(j_s\times_s \eta)_*\circ \rm{R}\Psi_{\sU}
    \]
    where the last isomorphism comes from Lemma~\ref{lemma:compute-nearby-cycles}(\ref{lemma:compute-nearby-cycles-2}). By adjunction, this gives a morphism
    \[
    (j_s\times_s\eta)^*\circ \rm{R}\Psi_X \circ j_{\eta,!} \to \rm{R}\Psi_{\sU}.
    \]
    This morphism is easily seen to be an isomorphism (after applying $\pi^*_{\sU_s}$ as always). By adjunction, its inverse defines a morphism
    \begin{equation}\label{eqn:transformation}
    (j_s\times_s \eta)_!\circ \rm{R}\Psi_{\sU} \to \rm{R}\Psi_{\X}\circ j_{\eta, !}.
    \end{equation}
    It suffices to show that this transformation is an isomorphism after applying $\pi^*_{\X_s}$ by Lemma~\ref{lemma:properties-Deligne}(\ref{lemma:properties-Deligne-3}). Therefore, using Lemma~\ref{lemma:shriek-pushforward-pullback}(\ref{lemma:shriek-pushforward-pullback-1}), Lemma~\ref{lemma:compute-nearby-cycles}(\ref{lemma:compute-nearby-cycles-1}), and \cite[Corollary 3.5.11(ii)]{H3}, one proves that the transformation (\ref{eqn:transformation}) is an isomorphism on $D^+(\sU_\eta, \Z/\ell^n\Z)$. The general case follows from the fact that all functors commute with colimits (see Lemma~\ref{lemma:compute-nearby-cycles}(\ref{lemma:compute-nearby-cycles-6})).
\end{proof}

\begin{lemma}\label{lemma:adjoint-to-nearby-cycles} Let $\X$ be an admissible separated formal $\O_K$-scheme with structure morphism $\mf\colon \X \to \Spf \O_K$, let $\ell$ be a prime number, and let $n\geq 1$ be a positive integer. Then:
\begin{enumerate}
    \item\label{lemma:adjoint-to-nearby-cycles-1} The nearby cycles functor $\rm{R}\Psi_\X\colon \cal{D}(\X_\eta; \Z/\ell^n\Z) \to \cal{D}(\X_s\times_s \eta; \Z/\ell^n\Z)$ has a right adjoint $\Psi_\X^!$;
    \item\label{lemma:adjoint-to-nearby-cycles-2} For $\F\in \cal{D}(\X_\eta; \Z/\ell^n\Z)$ and $\G\in \cal{D}(\X_s\times_s \eta; \Z/\ell^n\Z)$, there is a functorial isomorphism
    \[
    \rm{R}\Psi_{\X} \rm{R}\cal{H}om_{\Z/\ell^n\Z}(\F, \Psi_\X^! \G) \to \rm{R}\cal{H}om_{\Z/\ell^n\Z}(\rm{R}\Psi_\X \F, \G);
    \]
    \item\label{lemma:adjoint-to-nearby-cycles-3} If $\ell$ is invertible in $\O_K$, there is an isomorphism
    \[
    \Psi^!_\X\circ \mf_s^!\left(\ud{\Z/\ell^n\Z}\right) \simeq \mf_{\eta}^! \left(\ud{\Z/\ell^n\Z}\right);
    \]
    \item\label{lemma:adjoint-to-nearby-cycles-4} If $\ell$ is invertible in $\O_K$, there is a natural isomorphism of functors
    \[
    \rm{R}\Psi_{\X}\circ \bf{D}_{\X_\eta} \simeq \bf{D}_{\X_s\times_s\eta}\circ \rm{R}\Psi_{\X}.
    \]
    
\end{enumerate}
\end{lemma}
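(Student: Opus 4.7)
My plan is to establish the four parts in order. Part~(1) is immediate: both $\cal{D}(\X_\eta;\Z/\ell^n\Z)$ and $\cal{D}(\X_s\times_s\eta;\Z/\ell^n\Z)$ are presentable $\infty$-categories, and $\rm{R}\Psi_\X$ preserves all small colimits by Lemma~\ref{lemma:compute-nearby-cycles}(\ref{lemma:compute-nearby-cycles-6}), so the $\infty$-categorical adjoint functor theorem produces the right adjoint $\Psi_\X^!$.

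For Part~(2), I will construct the canonical map from the counit $\rm{R}\Psi_\X\Psi^!_\X\G\to\G$ together with the lax monoidality of $\rm{R}\Psi_\X$ (as right adjoint to the symmetric monoidal functor $\Psi^*_\X$). A Yoneda argument then shows that this map is an isomorphism if and only if the projection formula
\[
C\otimes^L_{\Z/\ell^n\Z}\rm{R}\Psi_\X\F\xrightarrow{\sim}\rm{R}\Psi_\X\bigl(\Psi^*_\X C\otimes^L_{\Z/\ell^n\Z}\F\bigr)
\]
holds for all $C\in\cal{D}(\X_s\times_s\eta;\Z/\ell^n\Z)$ and $\F\in\cal{D}(\X_\eta;\Z/\ell^n\Z)$. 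Both sides preserve colimits, so I can verify this after applying the conservative functor $\pi^*_{\X_s}$ of Lemma~\ref{lemma:properties-Deligne}(\ref{lemma:properties-Deligne-3}); combining Lemma~\ref{lemma:compute-nearby-cycles}(\ref{lemma:compute-nearby-cycles-1}) with the canonical $2$-isomorphism $\Psi_\X\circ b_{\X_\eta}\simeq\pi_{\X_s}\circ\lambda_{\X_{\O_C}}$ of morphisms of topoi reduces this to the standard projection formula for $\lambda_{\X_{\O_C}}$, which holds because $\rm{R}\lambda_{\X_{\O_C},*}$ has finite cohomological dimension by \cite[Corollary 2.8.3]{H3}.

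For Part~(3), I will use formal Nagata compactification to embed $\X$ as an open subscheme $j\colon\X\hookrightarrow\bar{\X}$ of a proper admissible formal $\O_K$-scheme with structure map $\bar{\mf}\colon\bar{\X}\to\Spf\O_K$. Since $\bar{\mf}_s$ and $\bar{\mf}_\eta$ are proper, derived lower-shriek coincides with derived pushforward for them. Chaining Lemma~\ref{lemma:open-lower-shriek} with Lemma~\ref{lemma:compute-nearby-cycles}(\ref{lemma:compute-nearby-cycles-2}) then gives
\[
\rm{R}(\mf_s\times_s\eta)_!\circ\rm{R}\Psi_\X\simeq\rm{R}(\bar{\mf}_s\times_s\eta)_*\circ(j_s\times_s\eta)_!\circ\rm{R}\Psi_\X\simeq\rm{R}(\bar{\mf}_s\times_s\eta)_*\circ\rm{R}\Psi_{\bar{\X}}\circ j_{\eta,!}\simeq\rm{R}\bar{\mf}_{\eta,*}\circ j_{\eta,!}\simeq\rm{R}\mf_{\eta,!};
\]
passing to right adjoints yields $\Psi^!_\X\circ(\mf_s\times_s\eta)^!\simeq\mf_\eta^!$, and evaluating at $\ud{\Z/\ell^n\Z}$ proves~(3). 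Part~(4) then follows by combining (3) and (2):
\[
\rm{R}\Psi_\X\bf{D}_{\X_\eta}\F\simeq\rm{R}\Psi_\X\rm{R}\cal{H}om(\F,\Psi^!_\X\omega_{\X_s\times_s\eta})\simeq\rm{R}\cal{H}om(\rm{R}\Psi_\X\F,\omega_{\X_s\times_s\eta})=\bf{D}_{\X_s\times_s\eta}\rm{R}\Psi_\X\F.
\]
The main obstacle will be producing the formal Nagata compactification $\bar{\X}$ in Part~(3): this is a nontrivial input for admissible formal $\O_K$-schemes (available via Temkin-style arguments), and once it is in hand the remaining content of (3) and (4) reduces to routine bookkeeping with the adjoint and projection-formula yoga assembled earlier in this appendix.
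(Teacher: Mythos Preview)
Your arguments for parts~(\ref{lemma:adjoint-to-nearby-cycles-1}), (\ref{lemma:adjoint-to-nearby-cycles-2}), and (\ref{lemma:adjoint-to-nearby-cycles-4}) coincide with the paper's: adjoint functor theorem for~(\ref{lemma:adjoint-to-nearby-cycles-1}), the projection formula for $\rm{R}\Psi_\X$ for~(\ref{lemma:adjoint-to-nearby-cycles-2}), and the formal combination of~(\ref{lemma:adjoint-to-nearby-cycles-2}) and~(\ref{lemma:adjoint-to-nearby-cycles-3}) for~(\ref{lemma:adjoint-to-nearby-cycles-4}).

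For part~(\ref{lemma:adjoint-to-nearby-cycles-3}) you take a genuinely different route. The paper does \emph{not} compactify $\X$ globally. Instead it first shows the vanishing $\cal{E}xt^i_{\Z/\ell^n\Z}\bigl(\mf_\eta^!(\ud{\Z/\ell^n\Z}),\mf_\eta^!(\ud{\Z/\ell^n\Z})\bigr)=0$ for $i<0$, then invokes the BBD gluing lemma to reduce to constructing the isomorphism locally (compatibly with restriction). In the affinoid case the paper reduces further to a closed ball, which it embeds explicitly into formal projective space; the isomorphism then comes from Lemma~\ref{lemma:open-lower-shriek} and Lemma~\ref{lemma:compute-nearby-cycles}(\ref{lemma:compute-nearby-cycles-2}) applied to this \emph{explicit} compactification. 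Your approach instead black-boxes a formal Nagata compactification $\X\hookrightarrow\bar\X$, obtains the functor isomorphism $\rm{R}(\mf_s\times_s\eta)_!\circ\rm{R}\Psi_\X\simeq\rm{R}\mf_{\eta,!}$ on the nose, and passes to right adjoints. This is cleaner and in fact yields the stronger functorial statement $\Psi^!_\X\circ(\mf_s\times_s\eta)^!\simeq\mf_\eta^!$, but it trades the Ext-vanishing/gluing bookkeeping for a heavy external input: Nagata-type compactification for admissible formal $\O_K$-schemes is available (e.g.\ in Fujiwara--Kato) but is not nearly as elementary as the ``Temkin-style arguments'' gloss suggests, and is certainly less self-contained than the paper's explicit ball-into-projective-space step. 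Both approaches are correct; the paper's is more elementary in its inputs, yours is shorter once the compactification is granted.
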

\begin{proof}
    (\ref{lemma:adjoint-to-nearby-cycles-1}) follows from the fact that $\rm{R}\Psi_\X$ commutes with colimits (see Lemma~\ref{lemma:compute-nearby-cycles}(\ref{lemma:compute-nearby-cycles-6})), the fact that both $\cal{D}(\X_\eta; \Z/\ell^n\Z)$ and $\cal{D}(\X_s\times_s\eta;\Z/\ell^n\Z)$ are presentable $\infty$-categories (see \cite[Proposition 1.3.5.21]{HA}), and the Adjoint Functor Theorem (see \cite[Corollary 5.5.2.9]{HTT}).  \smallskip
    
    (\ref{lemma:adjoint-to-nearby-cycles-2}) is essentially formal from the standard adjunctions and the projection formula for $\rm{R}\Psi_\X$. We refer to \cite[p. 8]{Hansen-nearby} and \cite[Corollary 4.3(2)]{Gaisin} for similar arguments (see \cite[Theorem 2.5]{Hansen-nearby} or \cite[Lemma 4.2]{Gaisin} for the proof of the projection formula). \smallskip
    
    (\ref{lemma:adjoint-to-nearby-cycles-3}) This can be proven similarly to \cite[Corollary 4.3(iii)]{Gaisin} using Lemma~\ref{lemma:open-lower-shriek} in place of \cite[Corollary 3.5.11]{H3}. First, the proof of \cite[Corollary 4.3(iii)]{Gaisin} shows that 
    \[
    \cal{E}xt^i_{\Z/\ell^n\Z}\left(\mf_\eta^!\left(\ud{\Z/\ell^n\Z}\right),\mf_\eta^!\left(\ud{\Z/\ell^n\Z}\right)\right) = 0
    \]
    for $i< 0$. Therefore, using the BBD gluing lemma, it suffices to construct such isomorphism locally (provided that it is compatible with open immersions). In the affinoid case, one reduces first to the case of a ball, where one can embed it into the projective space. Then the isomorphism comes from the combination of Lemma~\ref{lemma:open-lower-shriek} and Lemma~\ref{lemma:compute-nearby-cycles}(\ref{lemma:compute-nearby-cycles-2}). We refer to \cite[Corollary 4.3(iii) and Lemma 2.34]{Gaisin} for more detail. \smallskip
    
    (\ref{lemma:adjoint-to-nearby-cycles-4}) follows formally from (\ref{lemma:adjoint-to-nearby-cycles-2}) and (\ref{lemma:adjoint-to-nearby-cycles-3}). 
\end{proof}

\begin{thm}\label{thm:compact-pushforward-nearby-cycles} Let $K$ be a non-archimedean field of characteristic $0$, let $\mf \colon \X\to \cY$ be a morphism of separated admissible formal $\O_K$-schemes, let $\ell$ be a prime number invertible in $\O_K$, and let $\F\in D^b_{zc, ftd}(\X_\eta, \Z/\ell^n\Z)$ for some integer $n\geq 1$. Then there is a functorial isomorphism
\[
\rm{R}(\mf_{s}\times_s \eta)_{!}\circ \rm{R}\Psi_\X \F \simeq \rm{R}\Psi_{\cY} \circ \rm{R}f_{\eta, !} \F. 
\]
\end{thm}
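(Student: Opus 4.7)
The plan is to deduce the statement from the already-proven $\rm{R}$-pushforward version (Lemma~\ref{lemma:compute-nearby-cycles}(\ref{lemma:compute-nearby-cycles-2})) by means of Verdier duality, using the compatibility of $\rm{R}\Psi_{-}$ with duality established in Lemma~\ref{lemma:adjoint-to-nearby-cycles}(\ref{lemma:adjoint-to-nearby-cycles-4}) and biduality on both the source and target constructible categories. Concretely, the hypothesis $\F \in D^b_{zc,ftd}(\X_\eta;\Z/\ell^n\Z)$ guarantees that $\bf{D}_{\X_\eta}\F$ again lies in $D^b_{zc,ftd}$, that $\rm{R}\mf_{\eta,!}\F$ and $\rm{R}\mf_{\eta,*}\bf{D}_{\X_\eta}\F$ are again Zariski-constructible of finite tor dimension (by Huber--Bhatt--Hansen finiteness for rigid-analytic spaces), and that the nearby cycles of all these complexes are constructible of finite tor dimension on the relevant Deligne topos (Lemma~\ref{lemma:compute-nearby-cycles}(\ref{lemma:compute-nearby-cycles-5})), so that biduality applies via Lemma~\ref{lemma:biduality}.

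The calculation I would carry out is the chain of natural isomorphisms
\begin{align*}
\rm{R}(\mf_s\times_s\eta)_!\,\rm{R}\Psi_\X \F
&\simeq \bf{D}_{\Y_s\times_s\eta}\,\rm{R}(\mf_s\times_s\eta)_*\,\bf{D}_{\X_s\times_s\eta}\,\rm{R}\Psi_\X\F \\
&\simeq \bf{D}_{\Y_s\times_s\eta}\,\rm{R}(\mf_s\times_s\eta)_*\,\rm{R}\Psi_\X\,\bf{D}_{\X_\eta}\F \\
&\simeq \bf{D}_{\Y_s\times_s\eta}\,\rm{R}\Psi_\Y\,\rm{R}\mf_{\eta,*}\,\bf{D}_{\X_\eta}\F \\
&\simeq \rm{R}\Psi_\Y\,\bf{D}_{\Y_\eta}\,\rm{R}\mf_{\eta,*}\,\bf{D}_{\X_\eta}\F \\
&\simeq \rm{R}\Psi_\Y\,\rm{R}\mf_{\eta,!}\F,
\end{align*}
where the first and last isomorphisms use the identity $\rm{R}f_!\simeq \bf{D}\circ \rm{R}f_*\circ \bf{D}$ (on the Deligne-topos side this follows from the definition of $\rm{R}(-)_!$ in \cite{Lu-Zheng} together with Lemma~\ref{lemma:biduality}; on the rigid-analytic side this is Verdier duality for rigid spaces), the second and fourth use Lemma~\ref{lemma:adjoint-to-nearby-cycles}(\ref{lemma:adjoint-to-nearby-cycles-4}), and the third is precisely Lemma~\ref{lemma:compute-nearby-cycles}(\ref{lemma:compute-nearby-cycles-2}).

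The hardest step will be not the existence but the \emph{canonicity} of the natural transformation. By adjunction between $\rm{R}(\mf_s\times_s\eta)_!$ and $(\mf_s\times_s\eta)^*$ (and analogously on the analytic side), there is a preferred candidate natural transformation $\rm{R}(\mf_s\times_s\eta)_!\circ \rm{R}\Psi_\X \to \rm{R}\Psi_\Y\circ \rm{R}\mf_{\eta,!}$, obtained from the base-change isomorphism $\rm{R}\Psi_\X\circ \mf_\eta^* \simeq (\mf_s\times_s\eta)^*\circ \rm{R}\Psi_\Y$ (which itself comes from Lemma~\ref{lemma:compute-nearby-cycles}(\ref{lemma:compute-nearby-cycles-2}) by adjunction). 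One must verify that the abstract Verdier-duality chain above recovers this specific transformation, rather than producing an unrelated isomorphism; this is the standard but nontrivial bookkeeping of unit/counit compatibilities in a six-functor formalism. Once canonicity is checked, one could alternatively verify the isomorphism directly on a local factorization $\mf = \ov{\mf}\circ j$ (with $j$ an open immersion and $\ov{\mf}$ proper, e.g.\ via Temkin's formal compactification), reducing to Lemma~\ref{lemma:open-lower-shriek} for $j$ and to Lemma~\ref{lemma:compute-nearby-cycles}(\ref{lemma:compute-nearby-cycles-2}) for $\ov{\mf}$---this is a useful sanity check but the duality approach is cleaner.
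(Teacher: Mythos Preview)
Your proposal is correct and is essentially the paper's own proof, run in the opposite direction and with two pairs of steps compressed: the paper starts from $\rm{R}\Psi_{\Y}\circ \rm{R}\mf_{\eta,!}\F$, applies biduality on $\X_\eta$, the identity $\rm{R}\mf_{\eta,!}\simeq \bf{D}\,\rm{R}\mf_{\eta,*}\,\bf{D}$, Lemma~\ref{lemma:adjoint-to-nearby-cycles}(\ref{lemma:adjoint-to-nearby-cycles-4}) twice, Lemma~\ref{lemma:compute-nearby-cycles}(\ref{lemma:compute-nearby-cycles-2}), the sheafified $(\rm{R}(\mf_s\times_s\eta)_!,(\mf_s\times_s\eta)^!)$-adjunction, and finally Lemma~\ref{lemma:biduality}. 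The only substantive point the paper makes more explicit is the precise citation for $\rm{R}\mf_{\eta,!}\simeq \bf{D}_{\Y_\eta}\,\rm{R}\mf_{\eta,*}\,\bf{D}_{\X_\eta}$ on Zariski-constructible complexes (Gaisin's \cite[Corollary 4.9(2)]{Gaisin}, together with the fact that Zariski-constructible implies constructible in Gaisin's sense), and your closing discussion of canonicity and the factorization sanity check is extra material not present in the paper.
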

\begin{proof}
    The claim follows from a sequence of isomorphisms:
    
    \begin{align*}
        \rm{R}\Psi_{\cY}\circ \rm{R}\mf_{\eta, !}\F & \simeq \rm{R}\Psi_{\cY}\circ \rm{R}\mf_{\eta, !} \circ \bf{D}_{\X_\eta}\circ \bf{D}_{\X_\eta} \F \\
        & \simeq \rm{R}\Psi_{\cY}\circ \bf{D}_{\cY_\eta}\circ \rm{R}\mf_{\eta, *}\circ \bf{D}_{\X_\eta} \F \\
        & \simeq \bf{D}_{\cY_s\times_s \eta} \circ \rm{R}\Psi_{\cY} \circ \rm{R}\mf_{\eta, *} \circ \bf{D}_{\X_\eta} \F \\
        & \simeq \bf{D}_{\cY_s\times_s \eta}\circ \rm{R}(\mf_{s}\times_s\eta)_* \circ \rm{R}\Psi_\X \circ \bf{D}_{\X_\eta} \F \\
        & \simeq \bf{D}_{\cY_s\times_s \eta}\circ \rm{R}(\mf_{s}\times_s\eta)_* \circ \bf{D}_{\X_s\times_s \eta}\circ \rm{R}\Psi_{\X} \F \\
        & \simeq \bf{D}_{\cY_s\times_s\eta} \circ \bf{D}_{\cY_s\times_s\eta} \circ \rm{R}(\mf_s\times_s\eta)_!  \circ \rm{R}\Psi_\X \F \\
        & \simeq \rm{R}(\mf_s\times_s\eta)_!  \circ \rm{R}\Psi_\X \F.
    \end{align*}
    Now we explain each isomorphism in more detail. The first isomorphism follows from \cite[Theorem 3.21(3)]{Bhatt-Hansen}. The second isomorphism follows from  \cite[Corollary 4.9(2)]{Gaisin}, the fact that $\bf{D}_{\X_\eta}\F$ is Zariski-constructible (see \cite[Corollary 3.14]{Bhatt-Hansen}), and the fact that Zariski-constructible complexes are constructible in the sense of \cite[Definition 3.1]{Gaisin} (this is not hard to deduce from \cite[Proposition 3.6]{Bhatt-Hansen} and \cite[Remark 3.2]{Gaisin}). The third isomorphism follows from Lemma~\ref{lemma:adjoint-to-nearby-cycles}(\ref{lemma:adjoint-to-nearby-cycles-4}). The fourth isomorphism follows from Lemma~\ref{lemma:compute-nearby-cycles}(\ref{lemma:adjoint-to-nearby-cycles-2}). The fifth isomorphism follows from Lemma~\ref{lemma:adjoint-to-nearby-cycles}(\ref{lemma:adjoint-to-nearby-cycles-4}). The sixth isomorphism follows from (the sheafified version of) the $(\rm{R}(\mf_{s}\times_s\eta)_{!}, (\mf_{s}\times_s \eta)^!)$-adjunction. The seventh isomorphism follows from Lemma~\ref{lemma:compute-nearby-cycles}(\ref{lemma:compute-nearby-cycles-5}) and Lemma~\ref{lemma:biduality}.
\end{proof}

\begin{rmk}\label{rmk:action-coincide-compact} Similarly to Remark~\ref{rmk:action-coincide}, Theorem~\ref{thm:compact-pushforward-nearby-cycles} and Lemma~\ref{lemma:shriek-pushforward-pullback}(\ref{lemma:shriek-pushforward-pullback-1}) imply that, for a non-archimedean field $K$ of characteristic $0$, an admissible separated formal $\O_K$-scheme $\X$ with the structure morphism $\mf\colon \X\to \Spf \O_K$, and $\F\in \cal{D}^b_{zc, ftd}(\X_\eta, \Z/\ell^n\Z)$, we have
\[
\rm{R}\Gamma_c\big(\X_{\wdh{\ov{\eta}}}, \F\big) \simeq  \rm{R}\Gamma_c\big(\X_{\ov{s}}, \pi^*_{\X_s}\rm{R}\Psi_{\X} \F\big)
\]
compatibly with the $G_\eta$-actions on both sides.
\end{rmk}

\begin{rmk} We expect that the methods of \cite{GW2} could be adapted to extend both Theorem~\ref{thm:compact-pushforward-nearby-cycles} and Remark~\ref{rmk:action-coincide-compact} to any non-archimedean field $K$, any prime number $\ell$, and any $\F\in \cal{D}^+(\X_\eta; \Z/\ell^n\Z)$. We do not pursue it as we never need this result in this paper. 
\end{rmk}

\subsection{Comparison of analytic and algebraic nearby cycles}

The main goal of this section is to compare the nearby cycles functor from Section~\ref{section:nearby-cycles-finite-finite-type} to the standard construction of algebraic nearby cycles. 

For the rest of this section, we fix a {\it henselian} rank-$1$ valuation ring $\O_K$ with fraction field $K$ and residue field $k$. We also fix its completed algebraic closure $C\coloneqq \wdh{\ov{K}}$. It is a non-archimedean field with ring of integers $\O_C$ and residue field $\ov{k}$, an algebraic closure of $k$. In what follows, we denote by $S$ the spectrum $\Spec \O_K$. \smallskip

We start by briefly reviewing the construction of the algebraic nearby cycles. Let $X$ be a finitely presented, flat $\O_K$-scheme. We consider the oriented fiber product $X_\et\overleftarrow{\times}_{S_\et}\eta$ (see \cite[Exp. XI, \textsection 1]{deGabber}), where the morphism $\eta \to S_\et$ is induced by a morphism of schemes $\Spec K\to \Spec \O_K$. Thus the $(2,1)$-commutative square
\[
\begin{tikzcd}
X_{\eta,\et} \arrow{r} \arrow{d}& X_\et \arrow{d}\\
\eta \arrow{r}& S_\et
\end{tikzcd}
\]
and the universal property of the oriented fiber products define the morphism of topoi
\[
\Psi^{\rm{alg}}_{X, \eta}\colon X_{\eta,\et} \to X_\et\overleftarrow{\times}_{S_\et}\eta.
\]

However, unlike the analytic situation, this does not finish the construction of the algebraic nearby cycles. To construct the desired nearby cycles, we consider the morphism of topoi 
\[
\pi\colon S_\et\to s
\]
induced by the functor of underlying sites $\pi^*\colon \text{\'Et.qcqs}(\Spec k)\to \text{\'Et}(\Spec \O_K)$ sending $\Spec \ov{A} \to \Spec k$ to the unique (finite \'etale) lift $\Spec A\to \Spec \O_K$. By functoriality of the oriented fiber products, it defines the morphism 
\[
X_{s, \et} \overleftarrow{\times}_{S_{\et}} S_{\et} \to X_{s, \et}\overleftarrow{\times}_s S_{\et} 
\]

\begin{lemma}\label{lemma:smaller-oriented-product} The natural morphism $X_{s, \et} \overleftarrow{\times}_{S_\et} S_{\et} \to X_{s, \et}\overleftarrow{\times}_s S_{\et}$ is an equivalence for any $k$-scheme $X$.
\end{lemma}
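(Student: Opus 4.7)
The plan is to reduce the claim to the statement that the morphism of topoi $\pi\colon S_\et \to s$ is itself an equivalence, and then invoke functoriality of oriented fiber products.

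First, one verifies that $\pi$ is an equivalence of topoi. By the description recalled just before the lemma, $\pi$ is induced by the functor of sites $\pi^*\colon \text{\'Et.qcqs}(\Spec k) \to \text{\'Et}(\Spec \O_K)$ that sends a qcqs \'etale $k$-scheme $\Spec \overline{A} \to \Spec k$ to its unique \'etale lift $\Spec A \to \Spec \O_K$. The existence and uniqueness of such lifts is a standard consequence of the henselian hypothesis on $\O_K$: base change along $\O_K \twoheadrightarrow k$ gives an equivalence between the category of \'etale $\O_K$-algebras and the category of \'etale $k$-algebras (see e.g.\ \cite[\href{https://stacks.math.columbia.edu/tag/04GK}{Tag 04GK}]{stacks-project}). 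This equivalence is compatible with the \'etale topologies on both sides, so $\pi^*$ defines an equivalence of sites and hence an equivalence of the associated small \'etale topoi.

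Once one knows that $\pi$ is an equivalence, the lemma follows purely formally from the $2$-categorical universal property of oriented fiber products (\cite[Exp.~XI, \textsection 1]{deGabber}). Given morphisms of topoi $f\colon T_1 \to T$ and $g\colon T_2 \to T$, the oriented product $T_1 \overleftarrow{\times}_T T_2$ is characterized by its universal property with respect to pairs of morphisms and a $2$-morphism between their compositions into $T$; composing $f$ and $g$ with an equivalence $e\colon T \xrightarrow{\sim} T'$ gives the same universal property (with $T'$ in place of $T$) up to canonical equivalence. Applying this with $e = \pi$, $f$ the structure morphism $X_{s,\et} \to S_\et$, and $g$ the identity $S_\et \to S_\et$ produces the desired equivalence $X_{s,\et}\overleftarrow{\times}_{S_\et} S_\et \xrightarrow{\sim} X_{s,\et}\overleftarrow{\times}_s S_\et$, and by inspection this is indeed the natural morphism in the statement.

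The only substantive point is the verification that $\pi$ is an equivalence, and henselianness of $\O_K$ is exactly what is needed; no further technical obstacle is expected, since the rest is a direct application of the universal property of oriented products.
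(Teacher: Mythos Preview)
Your argument rests on the claim that $\pi\colon S_\et \to s$ is an equivalence of topoi, but this is false. Henselianness of $\O_K$ gives an equivalence between \emph{finite} \'etale $\O_K$-algebras and \emph{finite} \'etale $k$-algebras (this is what Tag 04GK provides), but the small \'etale site of $\Spec\O_K$ contains non-finite \'etale schemes such as the open immersion $j\colon \Spec K \hookrightarrow \Spec \O_K$, which has empty special fiber. Concretely, the sheaf $j_!\underline{\Z}$ on $\Spec\O_K$ is nonzero (its stalk at the generic point is $\Z$), yet $\pi_*(j_!\underline{\Z}) = 0$ since $(j_!\underline{\Z})(\tilde V) = 0$ for every henselian lift $\tilde V$ of an \'etale $k$-scheme. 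Hence $\pi_*$ is not conservative and $\pi$ cannot be an equivalence. Your statement that ``base change along $\O_K \twoheadrightarrow k$ gives an equivalence between the category of \'etale $\O_K$-algebras and the category of \'etale $k$-algebras'' is simply incorrect: $K$ itself is an \'etale $\O_K$-algebra sent to zero.

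The paper's proof uses a weaker but correct input: $\pi$ and $i\colon s \to S_\et$ are \emph{adjoint} as $1$-morphisms in the $2$-category of topoi (this follows from the identification $\pi^* \simeq i_*$, valid because of henselianness), together with the fact that $\pi\circ i \simeq \rm{id}_s$. This adjunction is exactly what is needed to set up a bijection between $2$-morphisms $i\circ(fa) \Rightarrow b$ in $\Hom(T, S_\et)$ and $2$-morphisms $fa \Rightarrow \pi\circ b$ in $\Hom(T, s)$, which identifies the two universal properties; the paper points to \cite[Lemma~1.41]{Lu-Zheng} for the details. An adjunction is genuinely weaker than an equivalence, and that distinction is the whole content of the lemma.
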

\begin{proof}
    Using the adjunction between $\pi\colon S_\et \to s$ and $i\colon s\to S_\et$, one checks that both oriented fiber products satisfy the same universal property. See \cite[Lemma 1.41]{Lu-Zheng} for details. 
\end{proof}

Recall that, for any topos $T$, the category $\rm{Hom}_{\cal{T}}(T, s)$ is a groupoid (see \cite[Remark 1.15]{Lu-Zheng}), so the oriented and $2$-fiber products over $s$ coincide. In particular, $X_{s,\et}\times_s S\simeq X_{s, \et}\overleftarrow{\times}_s S$. We combine it with Lemma~\ref{lemma:smaller-oriented-product} to get a canonical equivalence $X_{s, \et}\times_s S_\et \simeq X_{s, \et} \overleftarrow{\times}_{S_\et} S_{\et}$. We also define the morphism
\[
\overleftarrow{i_\eta}\colon (X_s\times_s\eta)_\et\to X_\et \overleftarrow{\times}_{S_\et} \eta
\]
as the composition
\[
(X_{s}\times_s\eta)_\et\to X_{s, \et}\times_s S_{\et}\simeq X_{s, \et} \overleftarrow{\times}_{S_\et} S_{\et} \to X_\et\overleftarrow{\times}_{S_\et} S_{\et},
\]
where the first and third maps come from functoriality of the $2$-fiber and oriented products respectively, and the middle equivalence is the equivalence discussed above. Finally, we are ready to define the algebraic nearby cycles:

\begin{defn}\label{defn:nearby-algebraic} The \emph{algebraic nearby cycles functor} 
\[
\rm{R}\Psi^{\rm{alg}}_X \colon \cal{D}(X_\eta; \Z/\ell^n\Z)\to \cal{D}(X_s\times_s \eta; \Z/\ell^n\Z)
\]
 is the composition
\[
\rm{R}\Psi^{\rm{alg}}_X \coloneqq \overleftarrow{i_\eta}^*\circ \rm{R}\Psi^{\rm{alg}}_{X, \eta, *}
\]
\end{defn}

\begin{lemma}\label{lemma:compute-nearby-cycles-algebraic} Let $X$ be a flat, finitely presented $\O_K$-scheme, $\ell$ a prime number, and $n\geq 1$ a positive integer. Then the diagram 
    \[
\begin{tikzcd}
    \cal{D}(X_\eta; \Z/\ell^n\Z) \arrow{d}{c_{X_\eta}^*} \arrow{rr}{\rm{R}\Psi^{\rm{alg}}_X} & & \cal{D}(\X_s\times_s\eta; \Z/\ell^n\Z) \arrow{d}{\pi_{X_s}^*} \\
    \cal{D}(X_{\ov{\eta}}; \Z/\ell^n\Z) \arrow{r}{\rm{R}\ov{j}_*}& \cal{D}(X_{\O_{\ov{K}}}; \Z/\ell^n\Z) \arrow{r}{\ov{i}^*}& \cal{D}(X_{\ov{s}}; \Z/\ell^n\Z)
\end{tikzcd}
\] 
commutes (up to a canonical isomorphism), where $X_{\ov{\eta}}$ is the generic fiber of $X_{\O_{\ov{K}}}$ and $X_{\ov{s}}$ is its special fiber. In other words, the algebraic nearby cycles coincide with the other constructions given in \cite[Exp. XIII]{SGA7_2} and \cite[Section 4.2]{H3}
\end{lemma}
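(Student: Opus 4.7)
The plan is to unfold the definition $\rm{R}\Psi^{\rm{alg}}_X = \overleftarrow{i_\eta}^* \circ \rm{R}\Psi^{\rm{alg}}_{X,\eta, *}$ and show that pulling back along $\pi_{X_s}$ turns this into the classical formula $\ov{i}^* \circ \rm{R}\ov{j}_* \circ c_{X_\eta}^*$. First, using the identification $(X_s\times_s\eta)\times_\eta \ov{\eta} \simeq X_{\ov{s},\et}$ from Lemma~\ref{lemma:properties-Deligne}(\ref{lemma:properties-Deligne-1}) together with Lemma~\ref{lemma:smaller-oriented-product} and the functoriality of oriented fiber products, I would show that the composite morphism of topoi
\[
X_{\ov{s}, \et} \xrightarrow{\pi_{X_s}} X_s\times_s \eta \xrightarrow{\overleftarrow{i_\eta}} X_\et \overleftarrow{\times}_{S_\et} \eta
\]
factors, up to canonical $2$-isomorphism, as $X_{\ov{s}, \et}\to X_\et \overleftarrow{\times}_{S_\et} \ov{\eta}\to X_\et \overleftarrow{\times}_{S_\et} \eta$, where the second arrow is induced by base change along the geometric point $\ov{\eta}\to \eta$.

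Next, I would identify $X_\et \overleftarrow{\times}_{S_\et} \ov{\eta}$ with $X_{\O_{\ov{K}}, \et}$. Since $\O_K$ is henselian by hypothesis, its strict localization at $\ov{\eta}$ gives the same \'etale topos as $X_{\O_{\ov{K}}}$, and the identification follows by passing to the cofiltered limit $\ov{\eta}\simeq \lim_{K'} \eta'$ over finite separable extensions $K\subset K'$, applying the universal property of the oriented fiber product at each finite stage via \cite[Exp. IV, Proposition 5.11]{SGA4_2} (as in the proof of Lemma~\ref{lemma:properties-Deligne}(\ref{lemma:properties-Deligne-1})) and then taking the limit using \cite[Lemma 8.3]{Morin}. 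Under this identification, $\Psi^{\rm{alg}}_{X, \ov{\eta}}\colon X_{\ov{\eta}, \et}\to X_\et \overleftarrow{\times}_{S_\et} \ov{\eta}$ becomes the open immersion $\ov{j}\colon X_{\ov{\eta}}\to X_{\O_{\ov{K}}}$, and the morphism $X_{\ov{s}, \et}\to X_\et \overleftarrow{\times}_{S_\et} \ov{\eta}$ becomes the closed immersion $\ov{i}\colon X_{\ov{s}}\to X_{\O_{\ov{K}}}$.

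The final step is a base change argument for oriented fiber products along the geometric point $\ov{\eta}\to \eta$: by \cite[Proposition 1.17, Remark 1.18]{Lu-Zheng}, applied in the same way as in the proof of Lemma~\ref{lemma:compute-nearby-cycles}(\ref{lemma:compute-nearby-cycles-1}), one commutes the pullback along $\ov{\eta}\to \eta$ through the pushforward $\rm{R}\Psi^{\rm{alg}}_{X, \eta, *}$. Combining with the identifications of the previous paragraph, one obtains a canonical chain of isomorphisms
\[
\pi_{X_s}^* \circ \overleftarrow{i_\eta}^* \circ \rm{R}\Psi^{\rm{alg}}_{X, \eta, *}(\cal{F}) \simeq \ov{i}^* \circ \rm{R}\ov{j}_* \circ c_{X_\eta}^*(\cal{F}),
\]
which is exactly the desired commutativity of the diagram.

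The main obstacle will be making the identification $X_\et \overleftarrow{\times}_{S_\et} \ov{\eta} \simeq X_{\O_{\ov{K}}, \et}$ fully precise at the site-theoretic level and verifying that it is compatible with the morphisms of topoi induced by $\ov{i}$ and $\ov{j}$. This step relies crucially on the hypothesis that $\O_K$ is henselian; without it, the oriented fiber product would only capture the strict henselization of $\O_K$ rather than the full integral closure $\O_{\ov{K}}$ in $\ov{K}$. I would handle this by first establishing the identification for finite separable extensions $K'/K$ directly from the universal property of oriented fiber products, and then carefully passing to the cofiltered limit, paralleling Step~(\ref{lemma:properties-Deligne-1}) of the proof of Lemma~\ref{lemma:properties-Deligne}.
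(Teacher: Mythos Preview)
The paper gives essentially no argument: it points to the explicit site for oriented products in \cite[Exp.~XI, \S1]{deGabber} and to \cite[(1.2)]{Illusie-Thom}, where the discretely valued case is handled by a direct stalk computation via cofinality on that site. Your route through base change is more structured, and your steps 1 and 3 are fine; the gap is the identification $X_\et \overleftarrow{\times}_{S_\et} \ov{\eta} \simeq X_{\O_{\ov K}, \et}$.

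Two concrete problems with your justification. First, the strict localization of $\Spec \O_K$ at the geometric generic point is $\Spec K^{\mathrm{sep}}$, not $\Spec \O_{\ov K}$, so the sentence invoking ``strict localization at $\ov\eta$'' does not produce $X_{\O_{\ov K}}$. Second, at a finite stage $K'/K$, \cite[Exp.~IV, Prop.~5.11]{SGA4_2} concerns slice topoi and does not yield $X_\et \overleftarrow{\times}_{S_\et} \eta' \simeq X_{\O_{K'}, \et}$: the site underlying the left side has objects $(U\to X\text{ \'etale},\,V\to U_{K'}\text{ \'etale})$, but when $K'/K$ is ramified the map $X_{\O_{K'}}\to X$ is not \'etale, so \'etale neighborhoods in $X_{\O_{K'}}$ need not arise from such pairs---the two sites are genuinely different. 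Even if an equivalence of the associated topoi does hold, establishing it amounts to exactly the cofinality argument on the explicit site that Illusie already carries out; your categorical reformulation relocates rather than bypasses that work.
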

\begin{proof}
    The easiest way to show the claim is to use the explicit construction of the oriented fiber product from \cite[Exp. XI, \textsection 1]{deGabber}. If $\O_K$ is discretely valued, this is explained in \cite[(1.2)]{Illusie-Thom}. In general the same argument applies.
\end{proof}

Now we assume that $\O_K$ is {\it complete} with a choice of a pseudo-uniformizer $\varpi\in \O_K$. We would like to compare the algebraic nearby cycles for a flat, finitely presented $\O_K$-scheme $X$ with the analytic nearby cycles for its $\varpi$-adic completion $\wdh{X}$ considered as an admissible $\O_K$-scheme.  \smallskip

The first step is to construct the comparison morphism. For this, we recall that there are two different analytic generic fibers associated to $X$. The first one $X_\eta^{\rm{an}}$ is obtained by taking the analytification of algebraic generic fiber, this comes with the natural morphism of \'etale topoi
\[
\iota\colon X_{\eta, \et}^{\rm{an}} \to X_{\eta, \et}.
\]
The other generic fiber $\wdh{X}_\eta$ is the adic generic fiber of the admissible formal scheme $\wdh{X}$. This comes with the natural morphism 
\[
\wdh{X}_\eta\to X_\eta^{\rm{an}}
\]
that is an open immersion for a separated $X$ (see \cite[Theorem 5.3.1]{conrad}). By passing to the associated \'etale topoi, we get the morphism
\[
\gamma_X\colon \wdh{X}_{\eta, \et}\to X_{\eta, \et}^{\rm{an}}.
\]
By composing it with $\iota$, we get the morphism
\[
\alpha\colon \wdh{X}_{\eta, \et} \to X_{\eta, \et}. 
\]
We note the diagram of topoi 
\begin{equation}\label{eqn:does-not-commute}
\begin{tikzcd}
\wdh{X}_{\eta, \et}\arrow{r}{\Psi_{\wdh{X}}}\arrow{d}{\a} & (X_s\times_s \eta)_\et \arrow{d}{\overleftarrow{i_\eta}}\\
X_{\eta, \et} \arrow{r}{\Psi_{X, \eta}^{\rm{alg}}} & X_\et\overleftarrow{\times}_{S_\et} \eta. 
\end{tikzcd}
\end{equation}
does not commute. However, there is a non-invertible $2$-tranformation
\[
\gamma\colon \Psi_{X, \eta}^{\rm{alg}}\circ \a\to \overleftarrow{i_{\eta}} \circ \Psi_{\wdh{X}}.
\]
To construct it, we consider the natural projections $q_X\colon X_\et\overleftarrow{\times}_{S_\et} \eta \to X_\et$ and $q_\eta\colon X_\et\overleftarrow{\times}_{S_\et} \eta \to \eta$. By the universal property of oriented fiber products, it suffices to define the transformation $\gamma$ after applying $q_\eta$ and $q_X$ (in a compatible way). One sees that $q_\eta \circ \Psi_{X, \eta}^{\rm{alg}}\circ \a$ is canonically identified with $q_\eta \circ \overleftarrow{i_{\eta}} \circ \Psi_{\wdh{X}}$. And the transformation 
\[
\gamma_X \colon q_X \circ \Psi_{X, \eta}^{\rm{alg}}\circ \a \to q_X \circ \overleftarrow{i_{\eta}} \circ \Psi_{\wdh{X}}
\]
is induced (on the level of sites) by the natural transformation
\[
\gamma_U\colon \wdh{U}_\eta=(q_X \circ \overleftarrow{i_{\eta}} \circ \Psi_{\wdh{X}})^*\left(U\right) \to (q_X \circ \Psi_{X, \eta}^{\rm{alg}}\circ \a)^*(U) = U_\eta^{\rm{an}}\times_{X_\eta^{\rm{an}}} \wdh{X}_\eta.
\] 

The $2$-morphism $\gamma$ defines the natural transformation of functors
\[
\rm{R}\Psi_{X, \eta, *}^{\rm{alg}}\circ \rm{R}\a_*\to \rm{R}\overleftarrow{i_{\eta}}_{\ast}\circ \rm{R}\Psi_{\wdh{X}}
\]
that, by adjunction, defines the following natural transformation of functors
\[
c\colon \overleftarrow{i_{\eta}}^*\circ  \rm{R}\Psi_{X, \eta, *}^{\rm{alg}} \to \rm{R}\Psi_{\wdh{X}}\circ \a^*.
\]
Note that the source of $c$ is by definition equal to $\rm{R}\Psi_{X}^{\rm{alg}}$, so $c$ can be rewritten as the natural tranformation
\[
c\colon \rm{R}\Psi_{X}^{\rm{alg}}\to \rm{R}\Psi_{\wdh{X}}\circ \a^*.
\]

\begin{thm}\label{thm:comparison} Let $\O_K$ be complete rank-$1$ valuation ring, $X$ a flat, finitely presented $\O_K$-scheme, $\ell$ a prime number, and $n$ an integer $\geq 1$. Then the natural morphism
\[
c\colon \rm{R}\Psi_X^{\rm{alg}} \left(\F\right) \to \rm{R}\Psi_{\wdh{X}} \left(\a^* \F\right) 
\]
is an isomorphism for any $\F\in \cal{D}(X_\eta; \Z/\ell^n\Z)$.
\end{thm}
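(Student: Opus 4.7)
The plan is to reduce the theorem to a stalk-level comparison on the geometric special fiber $X_{\ov{s}}$, and then invoke the classical Fujiwara--Huber comparison between algebraic and formal nearby cycles. First, by Lemma~\ref{lemma:properties-Deligne}(\ref{lemma:properties-Deligne-3}) the pullback $\pi^*_{X_s}$ is conservative, so it suffices to verify the comparison after applying $\pi^*_{X_s}$. Combining Lemma~\ref{lemma:compute-nearby-cycles-algebraic} with Lemma~\ref{lemma:compute-nearby-cycles}(\ref{lemma:compute-nearby-cycles-1}), the assertion becomes the claim that the natural map
\[
\ov{i}^*\, \rm{R}\ov{j}_*\, (c_{X_\eta}^*\F) \longrightarrow \rm{R}\lambda_{\wdh{X}_{\O_C}, *}(b_{\wdh{X}_\eta}^*\a^*\F)
\]
is an isomorphism on $X_{\ov{s}}$, where $\ov{j}\colon X_{\ov{\eta}} \hookrightarrow X_{\O_{\ov{K}}}$ and $\ov{i}\colon X_{\ov{s}} \hookrightarrow X_{\O_{\ov{K}}}$ are the complementary immersions.

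The next step is a devissage reducing to the case $\F = \ud{\Z/\ell^n\Z}$. Both sides, viewed as functors $\cal{D}(X_\eta;\Z/\ell^n\Z) \to \cal{D}(X_{\ov{s}}; \Z/\ell^n\Z)$, commute with filtered colimits (the analytic side by Lemma~\ref{lemma:compute-nearby-cycles}(\ref{lemma:compute-nearby-cycles-6}), the algebraic side by the analogous classical computation). Since $\cal{D}(X_\eta; \Z/\ell^n\Z)$ is generated under filtered colimits and shifts by sheaves of the form $(j_U)_!\,\ud{\Z/\ell^n\Z}$ for \'etale morphisms $j_U\colon U \to X_\eta$, and since both nearby-cycle constructions are compatible with \'etale base change on $X$ (which, on the formal side, uses that every \'etale morphism $U \to X_\eta$ algebraizes locally to an \'etale morphism of flat finitely presented $\O_K$-schemes), one reduces to the case $\F = \ud{\Z/\ell^n\Z}$.

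Finally, I would verify the resulting isomorphism stalkwise at a geometric point $\ov{x} \in X_{\ov{s}}$. The left-hand stalk is the \'etale cohomology of the algebraic generic fiber of the strict henselization $\Spec \O^{\rm{sh}}_{X_{\O_{\ov{K}}}, \ov{x}}$, by the standard computation of algebraic nearby cycles in \cite[Exp.~XIII]{SGA7_2}, while the right-hand stalk is the \'etale cohomology of the adic generic fiber of $\Spf \wdh{\O^{\rm{sh}}_{X_{\O_C}, \ov{x}}}$, by the description of the stalks of $\rm{R}\lambda_*$ in \cite[Section~3.5]{H3}. The agreement of these two cohomology groups is a form of the classical Fujiwara--Huber comparison theorem: it follows from Elkik's approximation (which ensures that $\varpi$-adic completion does not alter the \'etale site of the generic fiber of a henselian local ring) together with Huber's comparison of algebraic and adic \'etale cohomology of algebraizable adic spaces. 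The main obstacle is precisely this last step: verifying that the formally defined comparison map $c$ of Section~\ref{section:nearby-cycles-finite-finite-type}, built from universal properties of oriented and $2$-fiber products, coincides after passage to stalks with the classical comparison map between algebraic and formal strict henselizations. This compatibility is known to experts but requires a careful, if routine, unwinding of the various topos-theoretic conventions in order to identify the two a priori different maps.
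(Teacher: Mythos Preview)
Your first two steps---using conservativity of $\pi^*_{X_s}$ and identifying the two sides via Lemma~\ref{lemma:compute-nearby-cycles-algebraic} and Lemma~\ref{lemma:compute-nearby-cycles}(\ref{lemma:compute-nearby-cycles-1})---match the paper exactly. The divergence, and the gap, is in your d\'evissage.

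Your reduction to the constant sheaf relies on the claim that an \'etale morphism $j_U\colon U \to X_\eta$ extends, locally on $U$, to an \'etale morphism of flat finitely presented $\O_K$-schemes. This is false. Take $X=\Spec \O_K$ for $K$ a $p$-adic field and let $U=\Spec L$ for a nontrivial totally ramified finite extension $L/K$; then $U\to X_\eta$ is finite \'etale, yet there is no \'etale $\O_K$-scheme with generic fibre $\Spec L$, since \'etale maps to $\Spec \O_K$ have unramified generic fibre. Without such an extension, there is no way to pass from $(j_U)_!\,\ud{\Z/\ell^n\Z}$ on $X_\eta$ to a constant sheaf on some integral model, and the compatibility of nearby cycles with ``\'etale base change on $X$'' that you invoke does not apply.

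The paper avoids the d\'evissage entirely: after the reduction to the geometric special fibre, it directly cites \cite[Theorem~3.5.13]{H3}, which already proves that $i^*\rm{R}j_*\F \to \rm{R}\nu_*(\a^*\F)$ is an isomorphism for all $\F\in\cal{D}^+$, and then extends to unbounded $\F$ using that both functors have finite cohomological dimension. Your stalkwise comparison (Elkik plus Fujiwara--Huber) is essentially how one proves that theorem, but note that it must be carried out for arbitrary $\F$, not only the constant sheaf---so even if the d\'evissage worked, it would not save any effort. The cleanest fix is simply to drop the d\'evissage and invoke the comparison theorem for general $\F$, exactly as the paper does.
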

\begin{proof}
    By Lemma~\ref{lemma:properties-Deligne}(\ref{lemma:properties-Deligne-3}), it suffices to show that $c$ is an isomorphism after applying $\pi_{X_s}^*$. Now using Lemma~\ref{lemma:compute-nearby-cycles}, the question boils down to the following one: for a flat, finitely presented $\O_{\ov{K}}$-scheme $X$, the natural morphism
    \[
    d\colon i^*\rm{R}j_* \F \to \rm{R}\nu_*(\a^*\F)
    \]
    is an isomorphism for any $\F\in \cal{D}(X; \Z/\ell^n\Z)$, $j\colon X_{\ov{K}} \to X$ the natural open immersion of the generic fiber of $X$ into $X$, $i\colon X_{s} \to X$ the natural closed immersion of the special fiber of $X$, and $\nu\colon \wdh{X}_{\eta, \et} \to X_{s, \et}$ is the natural morphism between the \'etale topoi of the adic generic fiber of a formal scheme $\wdh{X}$ to its special fiber. Now $d$ is an isomorphism by \cite[Theorem 3.5.13]{H3} for bounded below complexes. The result extends formally to the unbounded case since both functors are of finite cohomological dimension. 
\end{proof}

\section{Adic and rational coefficients}\label{appendix:adic}

The main goal of this Appendix is to review the theory of ``derived categories with $\Z_\ell$ and $\Q_\ell$-coefficients'' in the generality needed for the purposes of this paper. We pay extra attention to the categories of $\Z_\ell$ and $\Q_\ell$ complexes on Deligne's topos $X\times_s \eta$. \smallskip

Our approach is based on the theory of $\infty$-categories. For the rest of the section, we fix a prime number $\ell$. In this section, we freely identify $(2, 1)$-categories with their Duskin nerves considered as $\infty$-categories (see \cite[\href{https://kerodon.net/tag/00AC}{Tag 00AC}]{kerodon}). We will also freely use the notions of $\infty$-categorical limit and colimit (see \cite[\href{https://kerodon.net/tag/02H0}{Tag 02H0}]{kerodon} for some general discussion). We denote by $\cal{T}$ the $2$-category of topoi and by $\rm{Pith}(\cal{T})$ the associated $(2, 1)$-category, see \cite[\href{https://kerodon.net/tag/00AL}{Tag 00AL}]{kerodon}. \smallskip

\subsection{Adic complexes on a general topos}

The main goal of this section is to discuss the general notion of ``complexes of $\Z_\ell$ and $\Q_\ell$ sheaves'' on a topos. \smallskip

\begin{defn} The {\it $\infty$-derived category of $\Z_\ell$ sheaves} $\cal{D}(T; \Z_\ell)$ on a topos $T$ is the limit
\[
\cal{D}(T; \Z_\ell) \coloneqq \lim_n \cal{D}(T; \Z/\ell^n\Z).
\]
We denote its homotopy category by $D(T; \Z_\ell)\coloneqq \rm{h}\cal{D}(T; \Z_\ell)$.

The {\it $\infty$-derived category of sheaves of $\Q_\ell$-modules} $\cal{D}(T; \Q_\ell)$ on a topos $T$ is the localization\footnote{See \cite[\href{https://kerodon.net/tag/01ME}{Tag 01ME}]{kerodon}
 for the notion of a localization in the $\infty$-categorical context.} $\cal{D}(T, \Q_\ell)\coloneqq \cal{D}(T; \Z_\ell)\left[\frac{1}{\ell}\right]$. We denote its homotopy category by $D(T; \Q_\ell)\coloneqq \rm{h}\cal{D}(T; \Q_\ell)$
\end{defn}

\begin{rmk} An object $\F\in \cal{D}(T; \Z_\ell)$ is a sequence of objects $\F_n\in \cal{D}(T; \Z/\ell^n\Z)$ equipped with isomorphisms $\F_n\otimes^{L}_{\Z/\ell^n\Z} \Z/\ell^{n-1}\Z \simeq \F_{n-1}$. We informally denote the object $\F$ as $``\lim_n\text{''} \F_n$. 
\end{rmk}

Now we wish to show that the formation of $\cal{D}(T; \Z_\ell)$ and $\cal{D}(T; \Q_\ell)$ are $\infty$-functorial in $T$. For this, it will be convenient to identify $\cal{D}(T; \Z_\ell)$ with a subcategory of $\cal{D}(T; \Z)$.

\begin{defn} An object $\F\in \cal{D}(T; \Z)$ is $\ell$-adically derived complete if the natural morphism
\[
\F \to \lim_n \left(\F\otimes^{L}_\Z \Z/\ell^n\Z\right)
\]
is an isomorphism. We denote by $\cal{D}_{\ell}(T; \Z)$ the full subcategory of $\cal{D}(T; \Z)$ that consists of $\ell$-adic derived complete objects.
\end{defn}

\begin{lemma}\label{lemma:ell-adic=derived-complete} Let $T$ be a topos. Then the natural morphism
\[
\cal{D}_{\ell}(T; \Z) \to \cal{D}(T; \Z_\ell)
\]
is an equivalence.
\end{lemma}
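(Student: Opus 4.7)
The plan is to exhibit an explicit quasi-inverse to the natural functor
$$\Phi \colon \cal{D}_\ell(T;\Z) \longrightarrow \cal{D}(T;\Z_\ell) = \lim_n \cal{D}(T;\Z/\ell^n\Z), \qquad \F \longmapsto (\F \otimes^L_\Z \Z/\ell^n\Z)_n.$$
Concretely, given a compatible system $``\lim''\F_n$ with $\F_n \in \cal{D}(T;\Z/\ell^n\Z)$ satisfying $\F_n \otimes^L_{\Z/\ell^n\Z} \Z/\ell^{n-1}\Z \simeq \F_{n-1}$, I would set
$$\Lambda\bigl(``\lim''\F_n\bigr) \coloneqq \lim_n \iota_n \F_n,$$
where $\iota_n \colon \cal{D}(T;\Z/\ell^n\Z) \to \cal{D}(T;\Z)$ denotes restriction of scalars and the limit is taken in $\cal{D}(T;\Z)$.

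First I would verify that $\Lambda$ lands in $\cal{D}_\ell(T;\Z)$. This rests on two observations: each $\iota_n\F_n$ is derived $\ell$-complete (since $\ell^n$ annihilates it, the pro-system $\{\iota_n\F_n \otimes^L \Z/\ell^m\Z\}_m$ is pro-isomorphic to $\iota_n\F_n$), and the subcategory $\cal{D}_\ell(T;\Z) \subset \cal{D}(T;\Z)$ is closed under arbitrary limits (derived completeness is equivalent to the vanishing of $\rm{R}\cal{H}om(\Z[\ell^{-1}], -)$, a functor that commutes with limits in its second argument).

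Next, $\Lambda \circ \Phi \simeq \rm{id}$ is built into the definition: for $\F \in \cal{D}_\ell(T;\Z)$, one has $\Lambda(\Phi(\F)) = \lim_n \F \otimes^L \Z/\ell^n\Z$, which is $\F$ precisely by the derived completeness of $\F$. The reverse identity $\Phi \circ \Lambda \simeq \rm{id}$ reduces to the natural identification
$$\left(\lim_m \iota_m\F_m\right) \otimes^L_\Z \Z/\ell^n\Z \;\simeq\; \F_n \qquad (n \geq 1).$$
The crucial input is that $\Z/\ell^n\Z \simeq \mathrm{cofib}(\ell^n \colon \Z \to \Z)$ is a perfect $\Z$-complex, so $(-)\otimes^L_\Z \Z/\ell^n\Z$ is the cofiber of multiplication by $\ell^n$; in the stable $\infty$-category $\cal{D}(T;\Z)$ such a cofiber commutes with arbitrary limits (being a shifted fiber, and fibers commute with limits). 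Hence the left-hand side equals $\lim_m (\iota_m \F_m \otimes^L \Z/\ell^n\Z)$, and the compatibility of the system forces this tower to be constant with value $\iota_n\F_n$ for $m \geq n$, whose limit is $\F_n$.

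The main obstacle is upgrading these pointwise identifications to coherent equivalences inside the $\infty$-categorical limit $\lim_n \cal{D}(T;\Z/\ell^n\Z)$: the pro-constancy of $\{\iota_m\F_m \otimes^L \Z/\ell^n\Z\}_{m \geq n}$ must be promoted from an equivalence of isomorphism classes to an equivalence of $\infty$-categorical diagrams, and the tensor--limit commutation must be implemented functorially in the system. Both steps are formal but notationally delicate; they are instances of the general theory of derived completion in a presentable stable $\infty$-category (as developed in Lurie's \emph{Spectral Algebraic Geometry}, Chapter~7, for the ideal $(\ell) \subset \Z$), and the proof reduces to checking that those results apply verbatim to $\cal{D}(T;\Z)$ for an arbitrary topos $T$, which presents no additional difficulty since only the underlying stable presentable structure intervenes.
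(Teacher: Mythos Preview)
Your strategy is the standard one, and the paper itself gives no argument beyond citing \cite[Proposition~4.3.9]{GL}, so your proposal is already more detailed than the paper's own proof. There is, however, one incorrect step in your verification of $\Phi\circ\Lambda\simeq\rm{id}$: the tower $\bigl(\iota_m\F_m\otimes^L_{\Z}\Z/\ell^n\Z\bigr)_{m\geq n}$ is \emph{not} constant with value $\iota_n\F_n$. Already for $T$ a point and $\F_m=\Z/\ell^m\Z$ one has $\Z/\ell^m\Z\otimes^L_{\Z}\Z/\ell^n\Z\simeq\Z/\ell^n\Z\oplus\Z/\ell^n\Z[1]$ for every $m\geq n$, which is not $\iota_n\F_n$. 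You are conflating $\iota_m\F_m\otimes^L_{\Z}\Z/\ell^n\Z$ (tensor over $\Z$, which is what appears after commuting with the limit) with $\F_m\otimes^L_{\Z/\ell^m\Z}\Z/\ell^n\Z\simeq\F_n$ (tensor over $\Z/\ell^m\Z$, which is what the compatibility of the system controls).

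The limit of the tower is nonetheless $\F_n$, but this requires additional work: in the example above the spurious $H^{-1}$-contribution has transition maps given by multiplication by~$\ell$, hence is pro-zero. A general argument can be extracted from the observation that the compatibility gives $\mathrm{fib}(\iota_m\F_m\to\iota_n\F_n)\simeq\iota_{m-n}\F_{m-n}$ compatibly in $m$, so that the fiber of $\F\to\iota_n\F_n$ identifies with $\F$ and the resulting self-map of $\F$ is multiplication by~$\ell^n$; but carrying this out carefully is exactly the sort of coherence issue you flag at the end. Your closing deferral to Lurie's \emph{Spectral Algebraic Geometry} is adequate to cover all of this, and is entirely in the spirit of the paper's one-line citation --- but the explicit computation you wrote down for this step does not go through as stated.
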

\begin{proof}
    The proof is completely analogous to \cite[Proposition 4.3.9]{GL}.
\end{proof}

Now we recall that the assignment of the $\infty$-category $\cal{D}(T, \Z)$ to a topos $T\in \cal{T}$ can be made into an $\infty$-functor
\[
\cal{D}(-; \Z)_*\colon \rm{Pith}(\cal{T}) \to \cal{C}at_{\infty}
\]
that, on vertices, associates to a topos $T$ the $\infty$-category $\cal{D}(T; \Z)$ and, on edges, sends a morphism $f\colon T'\to T$ to $\rm{R}f_*\colon \cal{D}(T'; \Z) \to \cal{D}(T; \Z)$. Since $\rm{R}f_*$ preserves $\ell$-adically derived complete objects by \cite[\href{https://stacks.math.columbia.edu/tag/099J}{Tag 099J}]{stacks-project}, we conclude that $\cal{D}(-; \Z)_*$ restricts to an $\infty$-functor
\[
\cal{D}(-; \Z_\ell)_*\colon \rm{Pith}(\cal{T}) \to \cal{C}at_{\infty}
\]
that sends a topos $T$ to $\cal{D}_\ell(T; \Z)\simeq \cal{D}(T; \Z_\ell)$ (see Lemma~\ref{lemma:ell-adic=derived-complete}). By passing to adjoints, we get an $\infty$-functor
\[
\cal{D}(-; \Z_\ell)^*\colon \rm{Pith}(\cal{T})^{\rm{op}} \to \cal{C}at_{\infty}.
\]
After localizing at $\ell$, we also get the $\infty$-functor
\[
\cal{D}(-; \Q_\ell)^*\colon \rm{Pith}(\cal{T})^{\rm{op}} \to \cal{C}at_{\infty}.
\]

\begin{rmk}\label{rmk:functor-adic} Lemma~\ref{lemma:ell-adic=derived-complete} and \cite[\href{https://stacks.math.columbia.edu/tag/0B54}{Tag 0B54}]{stacks-project} formally imply that, for a morphism of topoi $f\colon T \to T'$ and objects  $\F=``\lim_n\F_n\text{''}\in \cal{D}(T; \Z_\ell)$ and $\G=``\lim_n \G_n\text{''}\in \cal{D}(T'; \Z_\ell)$, there are formulas
\[
\rm{R}f_*\F = ``\lim_n\rm{R}f_* \F_n\text{''}  \in \cal{D}(T'; \Z_\ell),
\]
\[
f^*\G = ``\lim_n f^*\G_n\text{''}  \in \cal{D}(T; \Z_\ell).
\]
\end{rmk}

\begin{rmk}\label{rmk:2-functor} By composing the $\infty$-functor
\[
\cal{D}(-; \Z_\ell)^*\colon \rm{Pith}(\cal{T})^{\rm{op}} \to \cal{C}at_{\infty}
\]
with the functor $\rm{h}(-)\colon \cal{C}at_{\infty} \to \rm{Pith}(\cal{C}at)$ that sends an $\infty$-category $\cal{C}$ to its homotopy category $\rm{h}\cal{C}$, we get a $2$-functor
\[
D(-; \Z_\ell)^*\colon  \rm{Pith}(\cal{T})^{\rm{op}} \to \rm{Pith}(\cal{C}at)
\]
that sends a topos $T$ to the triangulated category $D(T; \Z_\ell)$. The same applies to $D(-; \Q_\ell)$. 
\end{rmk}

\subsection{Adic complexes on Deligne's topos} 

The main goal of this section is to apply the constructions of the previous section to Deligne's topos defined in Appendix~\ref{appendix:deligne}. \smallskip

For the rest of the section, we fix a non-archimedean field $K$ with the residue field $k$, a finite type $k$-scheme $X$, and a prime number $\ell$ {\it invertible} in $k$.\smallskip 

We start with the observation that essentially all the results of Apendix~\ref{appendix:deligne} formally generalize to the case of adic coefficients:

\begin{rmk1}\label{rmk:adic-good} Using Remark~\ref{rmk:functor-adic}, we extend the functors $\pi^*_{X}$, $\sigma_X^*$, $p_X^*$, $\rm{R}(f\times_s\eta)_*$, $\rm{R}\Psi_\X$ and $\rm{R}\Psi^{\rm{alg}}_{X}$ to the setting of $\Z_\ell$ and $\Q_\ell$ coefficients. By passing to the limit, one can easily check that the results of Lemma~\ref{lemma:pushforward-pullback}, Lemma~\ref{lemma:compute-nearby-cycles}, Lemma~\ref{lemma:compute-nearby-cycles-algebraic}, and Theorem~\ref{thm:comparison} hold with $\Z_\ell$ and $\Q_\ell$ coefficients. 
\end{rmk1}

Now we show that any sheaf $\F\in D(X\times_s \eta; \Z_\ell)$ (resp. $\F\in D(X\times_s \eta; \Q_\ell)$) admits an ``action''of $G_\eta$ after applying the pullback functor $\pi^*_X\colon D(X\times_s \eta; \Z_\ell) \to D(X_{\ov{s}}; \Z_\ell)$:

\begin{construction}\label{construction:action-adic} Using $2$-functoriality of $D(T; \Z_\ell)$ (resp.\,$D(T; \Q_\ell)$) established in Remark~\ref{rmk:2-functor}, we can repeat Construction~\ref{construction:action} for $\Z_\ell$-coefficients (resp. $\Q_\ell$-coefficients). More precisely, for an object $\F\in D(X\times_s\eta; \Z_\ell)$ (resp. $\F\in D(X\times_s\eta; \Q_\ell)$), we get a family of isomorphisms $\rho_g\colon \ov{g}^*\pi_X^*\F \to \pi_X^*\F$ such that $\rho_e=\rm{Id}$ and the diagram
\[
\begin{tikzcd}
\ov{g}^*\ov{h}^*\pi^*_X\F \arrow{d}{\rm{iso}} \arrow{r}{\ov{g}^*(\rho_h)}& \ov{g}^*\pi^*_X\F \arrow{d}{\rho_g}\\
(\ov{gh}^*) \pi^*_X \F \arrow{r}{\rho_{gh}}& \pi^*_X\F
\end{tikzcd}
\]
commutes for every $g, h\in G_\eta$. By restricting to the inertia subgroup $I\subset G_\eta$, we get a homomorphism
\[
\rho\colon I \to \rm{Aut}(\pi^*_X\F)
\]
for any $\F\in D(X\times_s \eta; \Z_\ell)$ (resp. $\F\in D(X\times_s \eta; \Q_\ell)$). 
\end{construction}

\begin{defn}\label{defn:constructible-deligne} An object $\F\in \cal{D}(X\times_s \eta; \Z_\ell)$ is called {\it constructible} if $\F\otimes^L_{\Z_\ell} \bf{F}_\ell \in \cal{D}^b_{ctf}(X\times_s \eta; \bf{F}_\ell)$. We denote by $\cal{D}^b_c(X\times_s \eta; \Z_\ell)$ the full $\infty$-subcategory of $\cal{D}(X\times_s \eta; \Z_\ell)$ consisting of constructible objects, and by $D^b_c(X\times_s \eta; \Z_\ell)$ its homotopy category.

We define {\it the bounded derived category of constructible $\Q_\ell$-sheaves} $\cal{D}^b_c(X\times_s \eta; \Q_\ell)\coloneqq \cal{D}^b_c(X\times_s \eta; \Z_\ell)[\frac{1}{\ell}]$ as the evident localization of $\cal{D}^b_c(X\times_s \eta; \Z_\ell)$. We denote by $D^b_c(X\times_s \eta; \Q_\ell)$ the homotopy category of $\cal{D}^b_c(X\times_s \eta; \Q_\ell)$. 
\end{defn}

\begin{rmk} It is straighforward to check that $\F\in D(X\times_s\eta; \Z_\ell)$ is constructible if and only if $\pi^*_X\F\in D(X_{\ov{s}}; \Z_\ell)$ is constructible (in the usual sense). 
\end{rmk}

\begin{rmk} Lemma~\ref{lemma:homs-after-base-change} easily generalizes to $\Z_\ell$ and $\Q_\ell$-sheaves. 
\end{rmk}

We denote by $f\colon X \to k$ the structure morphism of $X$. Then for $\Lambda=\Z_\ell$ or $\Q_\ell$ and $\F, \G \in D^b_c(X\times_s \eta; \Lambda)$, we set
\[
\rm{RHom}_{/\eta, \Lambda}(\F, \G) \coloneqq \rm{R}(f\times_s \eta)_*\rm{R}\cal{H}om_{\Lambda}(\F, \G)\in D^b_c(\eta; \Lambda).
\]

\begin{lemma}\label{lemma:hom-formula-rational} Let $X$ be a finite type $k$-scheme, let $\ell$ be a prime number invertible in $\O_K$,  let $\Lambda=\Z_\ell$ or $\Q_\ell$, and let $\F, \G \in D^b_{c}(X\times_s\eta; \Lambda)$. Then
\[
\rm{RHom}_{\Lambda}\left(\F, \G\right) \simeq \rm{R}\Gamma_{\rm{cont}}\left(G_\eta, \rm{RHom}_{/\eta, \Lambda}\left(\F, \G\right)\right).
\]
\end{lemma}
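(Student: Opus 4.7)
The approach is to bootstrap from the $\Z/\ell^n\Z$-version (Lemma~\ref{lemma:Hom-formula}) via the presentation $\cal{D}(X\times_s\eta;\Z_\ell)=\lim_n\cal{D}(X\times_s\eta;\Z/\ell^n\Z)$, and then pass from $\Z_\ell$ to $\Q_\ell$ by inverting $\ell$. The statement has no arithmeticity hypothesis, so all finiteness inputs must come from the level-$n$ constructibility results already established in the appendix, rather than from Galois-cohomological finiteness over $K$.

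For $\Lambda=\Z_\ell$, I would write $\F \simeq ``\lim_n\text{''}\,\F_n$ and $\G \simeq ``\lim_n\text{''}\,\G_n$ with $\F_n\coloneqq \F\otimes^L_{\Z_\ell}\Z/\ell^n\Z$ and $\G_n\coloneqq\G\otimes^L_{\Z_\ell}\Z/\ell^n\Z$ in $\cal{D}^b_{ctf}(X\times_s\eta;\Z/\ell^n\Z)$. The universal property of the $\infty$-limit gives
\[
\rm{RHom}_{\Z_\ell}(\F,\G)\simeq \lim_n\rm{RHom}_{\Z/\ell^n\Z}(\F_n,\G_n),
\]
while Corollary~\ref{cor:projection-hom} combined with Remark~\ref{rmk:functor-adic} yields
\[
\rm{RHom}_{/\eta,\Z_\ell}(\F,\G)\simeq ``\lim_n\text{''}\,\rm{RHom}_{/\eta,\Z/\ell^n\Z}(\F_n,\G_n).
\]
Applying Lemma~\ref{lemma:Hom-formula} level-wise identifies the right side of the first display with $\lim_n \rm{R}\Gamma_{\rm{cont}}(G_\eta,\rm{RHom}_{/\eta,\Z/\ell^n\Z}(\F_n,\G_n))$, so the $\Z_\ell$-case reduces to commuting $\rm{R}\Gamma_{\rm{cont}}(G_\eta,-)$ past the inverse limit over $n$.

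For $\Lambda=\Q_\ell$, I would choose $\Z_\ell$-lattices $\F_0,\G_0 \in \cal{D}^b_c(X\times_s\eta;\Z_\ell)$ with $\F\simeq \F_0[\tfrac{1}{\ell}]$ and $\G\simeq \G_0[\tfrac{1}{\ell}]$. Then $\rm{RHom}_{\Q_\ell}(\F,\G)\simeq \rm{RHom}_{\Z_\ell}(\F_0,\G_0)[\tfrac{1}{\ell}]$ and the analogous identity holds for $\rm{RHom}_{/\eta}$, so the $\Q_\ell$-case follows from the $\Z_\ell$-case together with the fact that the exact filtered colimit $(-)[\tfrac{1}{\ell}]$ commutes with $\rm{R}\Gamma_{\rm{cont}}(G_\eta,-)$ on bounded complexes with finitely generated cohomology (being the colimit along multiplication by $\ell$).

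The technical heart of the argument is the commutation of $\rm{R}\Gamma_{\rm{cont}}(G_\eta,-)$ with the inverse limit in the $\Z_\ell$ step. The key input is that, by Corollary~\ref{cor:projection-hom} and Lemma~\ref{lemma:pushforward-pullback}(\ref{lemma:pushforward-pullback-4}), each $\rm{RHom}_{/\eta,\Z/\ell^n\Z}(\F_n,\G_n)$ lies in $\cal{D}^b_{ctf}(\eta;\Z/\ell^n\Z)$, so in particular each of its cohomology sheaves is a finite discrete $G_\eta$-module; such a tower of finite groups is automatically Mittag--Leffler (images of the transition maps into any fixed level form a decreasing chain in a finite group). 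A Milnor-type exact sequence then forces the relevant $\rm{R}^1\lim$ terms to vanish and yields the desired commutation degree by degree, completing the proof.
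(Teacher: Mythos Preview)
Your approach is correct and matches the paper's one-line proof (``pass to a cofiltered limit, then a filtered colimit''): reduce mod $\ell^n$, apply Lemma~\ref{lemma:Hom-formula} levelwise, take the inverse limit for $\Z_\ell$, and invert $\ell$ for $\Q_\ell$. One simplification: the Mittag--Leffler argument you give for commuting $\rm{R}\Gamma_{\rm{cont}}(G_\eta,-)$ past the inverse limit is not needed, since $\rm{R}\Gamma(\eta,-)\colon \cal{D}(\eta;\Z)\to\cal{D}(\Z)$ is a right adjoint and hence commutes with all limits automatically (equivalently, this is Remark~\ref{rmk:functor-adic} applied to the structure morphism $\eta\to *$).
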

\begin{proof}
    It follows from Lemma~\ref{lemma:Hom-formula} by passing to a cofiltered limit, and then filtered colimit.
\end{proof}

\begin{cor}\label{cor:compute-hom-Q-ell} In the notation of Lemma~\ref{lemma:hom-formula-rational}, assume that $\rm{RHom}_{\Lambda}(\pi^*_X \F, \pi^*_X\G) \in D^{\geq 0}_c(X_{\ov{s}}; \Lambda)$. Then
\[
\rm{Hom}_{\Lambda}(\F, \G) \simeq \rm{Hom}_{\Lambda}(\pi^*_X\F, \pi^*_X\G)^{G_\eta}.
\]
\end{cor}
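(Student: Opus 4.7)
The plan is to combine Lemma~\ref{lemma:hom-formula-rational} with the connectivity hypothesis to collapse the resulting continuous group-cohomology spectral sequence.

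First, I would invoke Lemma~\ref{lemma:hom-formula-rational} to rewrite
\[
\rm{RHom}_{\Lambda}(\F, \G) \simeq \rm{R}\Gamma_{\rm{cont}}\bigl(G_\eta,\, \rm{RHom}_{/\eta, \Lambda}(\F, \G)\bigr),
\]
where by definition $\rm{RHom}_{/\eta, \Lambda}(\F,\G) = \rm{R}(f\times_s \eta)_* \rm{R}\cal{H}om_\Lambda(\F,\G)\in D^b_c(\eta;\Lambda)$. Next I would verify that pulling this complex back to $\ov{\eta}$ recovers the absolute Hom complex. Combining the adic version of Lemma~\ref{lemma:pushforward-pullback}(1) (see Remark~\ref{rmk:adic-good}) with Lemma~\ref{lemma:homs-after-base-change} (extended to $\Z_\ell$- and $\Q_\ell$-coefficients by passing to the limit and then inverting $\ell$) gives a canonical isomorphism
\[
\pi_s^* \,\rm{RHom}_{/\eta, \Lambda}(\F,\G)\;\simeq\;\rm{RHom}_\Lambda(\pi_X^*\F,\pi_X^*\G),
\]
and this isomorphism intertwines the $G_\eta$-action on the left (as an object of $D(\eta;\Lambda)$) with the $G_\eta$-action on the right coming from Construction~\ref{construction:action-adic} applied to source and target; this compatibility is a direct consequence of the $2$-functoriality recorded in Remark~\ref{rmk:2-functor}.

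Since $\pi_s^*$ is $t$-exact and conservative (Lemma~\ref{lemma:properties-Deligne}(\ref{lemma:properties-Deligne-3})), the hypothesis $\rm{RHom}_\Lambda(\pi_X^*\F,\pi_X^*\G)\in D^{\geq 0}$ forces $\rm{RHom}_{/\eta, \Lambda}(\F,\G)\in D^{\geq 0}_c(\eta;\Lambda)$. Using the standard identification of $D(\eta;\Lambda)$ with complexes of continuous $G_\eta$-modules, the Grothendieck spectral sequence for continuous group cohomology
\[
E_2^{i,j} = \rm{H}^i_{\rm{cont}}\bigl(G_\eta,\,\rm{Ext}^j_{/\eta, \Lambda}(\F,\G)\bigr)\;\Longrightarrow\;\rm{Ext}^{i+j}_\Lambda(\F,\G)
\]
then has $E_2^{i,j}=0$ for $j<0$, so in total degree zero only $E_2^{0,0}$ survives. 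This yields
\[
\rm{Hom}_\Lambda(\F,\G) = \rm{Hom}_{/\eta, \Lambda}(\F,\G)^{G_\eta} = \rm{Hom}_\Lambda(\pi_X^*\F,\pi_X^*\G)^{G_\eta},
\]
the last equality using the identification above on $\rm{H}^0$.

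There is no genuine obstacle here; once Lemma~\ref{lemma:hom-formula-rational} and Lemma~\ref{lemma:homs-after-base-change} are available, the argument is essentially bookkeeping. The only point deserving a line of justification is the compatibility of the two a priori different $G_\eta$-actions on $\rm{Hom}_\Lambda(\pi_X^*\F,\pi_X^*\G)$, which however follows cleanly from the construction of both actions via the same $2$-transformations $\psi_g\colon \pi_X \simeq \pi_X\circ \ov{g}$ of Construction~\ref{construction:topos-theoretic}.
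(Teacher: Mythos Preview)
Your proposal is correct and is exactly the natural expansion of what the paper leaves implicit: the corollary is stated without proof as an immediate consequence of Lemma~\ref{lemma:hom-formula-rational}, and your spectral-sequence argument is the intended one-line justification (compare the torsion analogue, Corollary~\ref{cor:hom-formula}, also stated without proof).
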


\begin{cor}\label{cor:section-finite-morphism} Let $f\colon X \to Y$ be a finite surjective morphism of finite type $k$-schemes. Then there is a dense open $U\subset Y$ such that the natural morphism $\Q_{\ell}|_{U\times_s \eta} \to \big((f\times_s \eta)_* \Q_\ell\big)|_{U\times_s \eta}$ admits a splitting.
\end{cor}
\begin{proof}
    Since the universal homeomorphisms do not change the \'etale topos, we can assume that $X$ and $Y$ are reduced. In this case, we can find a dense open subscheme $U\subset Y$ such that $f|_{f^{-1}(U)}$  factors as the composition of a universal homeomorphism followed by a surjective finite étale map. For the purpose of proving this lemma, we can assume that $U=Y$. Using that universal homeomorphisms do not change the \'etale topos, we can reduce to the case that $f$ is a surjective finite \'etale map. In this case, the natural morphism $(f\times_s \eta)_! \to (f\times_s \eta)_*$ is an isomorphism because it could be checked after applying $\pi_Y^*$. Therefore, the counit of the adjunction defines a map $\tr_f\colon (f\times_s\eta)_*\Q_\ell \to \Q_\ell$ such that the composition 
    \[
    \Q_\ell \to (f\times_s\eta)_*\Q_\ell \xr{\tr_f} \Q_\ell
    \]
    is equal to $\deg f\neq 0$ (this claim can be checked after applying $\pi_Y^*$ due to Corollary~\ref{cor:compute-hom-Q-ell}). Therefore, $\frac{\tr_f}{\deg f}$ gives the desired splitting. 
\end{proof}

We next discuss the standard $t$-structure for sheaves on $X\times_s \eta$. This is a little bit subtle, because the individual categories $\cal{D}^b_{ctf}(X\times_s \eta; \Z/\ell^n\Z)$ do not admit natural $t$-structures when $n>1$.\footnote{As usual, this is ``because" $\mathrm{Perf}(\mathbf{Z}/\ell^n \mathbf{Z})$ does not admit any natural t-structure for $n>1$.} 

\begin{lemma}\label{lemma:constructible-t-structure} Let $X$ be a finite type $k$-scheme, and $\ell$ a prime number invertible in $k$. Then the triangulated category $D^b_c(X\times_s \eta; \Z_\ell)$ admits a \emph{standard} $t$-structure:
\begin{enumerate}
    \item $D^{\leq 0}_c(X\times_s \eta; \Z_\ell)$ is the full subcategory of $D^b_c(X\times_s \eta; \Z_\ell)$ consisting of objects $\F$ such that $\pi^*_X\F\in D^{\leq 0}_c(X_{\ov{s}}; \Z_\ell)$;
    \item $D^{\geq 0}_c(X\times_s \eta; \Z_\ell)$ is the full subcategory of $D^b_c(X\times_s \eta; \Z_\ell)$ consisting of objects $\F$ such that $\pi^*_X\F\in D^{\geq 0}_c(X_{\ov{s}}; \Z_\ell)$.
\end{enumerate}
\end{lemma}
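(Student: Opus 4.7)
The plan is to verify the three axioms of a $t$-structure for the subcategories prescribed in the statement, leveraging conservativity of the pullback functor $\pi_X^*\colon \cal{D}^b_c(X\times_s\eta;\Z_\ell) \to \cal{D}^b_c(X_{\ov s};\Z_\ell)$ from Lemma~\ref{lemma:properties-Deligne} together with the already-known standard $t$-structure on $\cal{D}^b_c(X_{\ov s};\Z_\ell)$. Stability under shifts of each subcategory is immediate from the definitions, since $\pi_X^*$ is triangulated.

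For the Hom-vanishing axiom, suppose $\F \in D^{\leq 0}_c$ and $\G \in D^{\geq 1}_c$. The $\ell$-adic analogue of Lemma~\ref{lemma:hom-formula-rational} (obtained by passing to the limit in $n$) identifies $\rm{Hom}(\F,\G)$ with $\rm{H}^0 \rm{R}\Gamma_{\rm{cont}}(G_\eta,\, \rm{RHom}_{/\eta,\Z_\ell}(\F,\G))$. The inner Hom-complex lies in $D^{\geq 1}(\eta;\Z_\ell)$: indeed, its pullback along $\pi_s^*\colon \cal{D}(\eta;\Z_\ell)\to\cal{D}(\ov{\eta};\Z_\ell)$ is $\rm{RHom}(\pi_X^*\F,\pi_X^*\G) \in D^{\geq 1}(\Z_\ell)$ by the standard $t$-structure on $X_{\ov s}$, and $\pi_s^*$ is conservative and $t$-exact. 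Since continuous Galois cohomology is left $t$-exact, its $\rm{H}^0$ vanishes.

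The substantial step is constructing the truncation triangle. For $\F \in \cal{D}^b_c(X\times_s\eta;\Z_\ell)$, I would use Lemma~\ref{lemma:ell-adic=derived-complete} to realize $\cal{D}(X\times_s\eta;\Z_\ell)$ as the full $\infty$-subcategory of $\ell$-adically derived complete objects inside $\cal{D}(X\times_s\eta;\Z)$. The ambient category carries the standard $t$-structure from its description as the derived category of the abelian category of sheaves of $\Z$-modules on $X\times_s\eta$. I would set $\tau^{\leq 0}\F$ to be the derived $\ell$-adic completion of the standard truncation in $\cal{D}(X\times_s\eta;\Z)$, and similarly for $\tau^{\geq 1}\F$, assembling these into a distinguished triangle. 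Because $\pi_X^*$ is the inverse image along a morphism of topoi, it is $t$-exact on the standard $t$-structures of the underlying derived categories of sheaves of $\Z$-modules, and it commutes with derived $\ell$-adic completion by Remark~\ref{rmk:functor-adic}. Hence $\pi_X^*\tau^{\leq 0}\F$ coincides with the standard truncation of $\pi_X^*\F$ in $\cal{D}^b_c(X_{\ov s};\Z_\ell)$, which is known to be constructible. Constructibility of $\tau^{\leq 0}\F$ and $\tau^{\geq 1}\F$ on $X\times_s\eta$ then follows by the pullback-preimage characterization of constructibility together with conservativity of $\pi_X^*$.

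The main obstacle is circumventing the issue flagged in the footnote: $D^b_{ctf}(X\times_s\eta;\Z/\ell^n)$ does not admit a natural $t$-structure for $n>1$, so one cannot naively truncate a compatible system $\F = ``\lim_n\text{''}\F_n$ level by level (tensoring with $\Z/\ell^{n-1}$ over $\Z/\ell^n$ is not $t$-exact, so the level-wise truncations would fail to reassemble into a compatible system). The workaround described above performs the truncation one level higher, in the ambient derived category of sheaves of $\Z$-modules where a standard $t$-structure is available and is preserved by the relevant operations, returning to the adic setting only afterwards via derived $\ell$-completion.
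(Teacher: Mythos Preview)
Your approach is correct and is precisely the argument the paper has in mind: the paper simply cites \cite[Proposition 2.3.6.1]{GL} for the truncation triangle, and your sketch (truncate in the ambient $\cal{D}(X\times_s\eta;\Z)$, remain in the derived $\ell$-complete subcategory, transport to $X_{\ov s}$ via $\pi_X^*$ and invoke the known $t$-structure there) is exactly how that argument runs. Your Hom-vanishing step via Lemma~\ref{lemma:hom-formula-rational} is the same as the paper's appeal to Corollary~\ref{cor:compute-hom-Q-ell}.

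Two small technical points. First, the derived $\ell$-completion after truncating is harmless but redundant: derived $\ell$-completeness is detected on cohomology sheaves, so $\tau^{\leq 0}\F$ and $\tau^{\geq 1}\F$ are already complete. Second, Remark~\ref{rmk:functor-adic} does not say that the $\Z$-level $\pi_X^*$ commutes with derived $\ell$-completion; it only identifies the \emph{adic} pullback as level-wise, which amounts to ``adic $\pi_X^* = $ completion of $\Z$-level $\pi_X^*$''. What you actually need is that $\Z$-level $\pi_X^*$ preserves derived-complete objects, and this follows instead from Lemma~\ref{lemma:properties-Deligne}(\ref{lemma:properties-Deligne-2}): $\pi_X^*$ is the forgetful functor $\rm{T}_{G_\eta}(X_{\ov s}) \to X_{\ov s,\et}$, which is exact and preserves all products, hence preserves homotopy limits at the derived level.
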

\begin{proof}
    We note that Corollary~\ref{cor:compute-hom-Q-ell} implies that $\rm{Hom}_{\Z_\ell}(\F, \G)=0$ for $\F\in D^{\leq 0}_c(X\times_s \eta; \Z_
    \ell)$ and $\G\in D^{\geq 1}_c(X\times_s \eta; \Z_\ell)$. Thus the only non-trivial part of the definition of a $t$-structure one needs to verify is that every object $\F\in D^b_c(X\times_s \eta; \Z_\ell)$ fits into an exact triangle
    \[
    \F' \to \F \to \F''
    \]
    with $\F'\in D^{\leq 0}_c(X\times_s\eta; \Z_\ell)$ and $\F''\in D^{\geq 1}_c(X\times_s \eta; \Z_\ell)$. For this, we note that the proof of the analogous fact in \cite[Proposition 2.3.6.1]{GL} goes through with little changes; we leave details to the interested reader.    
\end{proof}
    
\begin{cor}\label{cor:constructible-t-structure-rationally} Let $X$ be a finite type $k$-scheme, and $\ell$ a prime number invertible in $k$. Then the triangulated category $D^b_c\left(X\times_s \eta; \Q_\ell\right)$ admits a \emph{standard} $t$-structure:
\begin{enumerate}
    \item $D^{\leq 0}_c\left(X\times_s \eta; \Q_\ell\right)$ is the full subcategory of $D^b_c(X\times_s \eta; \Q_\ell)$ consisting of objects $\F$ such that $\pi^*_X\F\in D^{\leq 0}_c(X_{\ov{s}}; \Q_\ell)$;
    \item $D^{\geq 0}_c(X\times_s \eta; \Q_\ell)$ is the full subcategory of $D^b_c(X\times_s \eta; \Q_\ell)$ consisting of objects $\F$ such that $\pi^*_X\F\in D^{\geq 0}_c(X_{\ov{s}}; \Q_\ell)$.
\end{enumerate}
\end{cor}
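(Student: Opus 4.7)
The strategy is to deduce this corollary from Lemma~\ref{lemma:constructible-t-structure} via the localization
\[
L\colon \cal{D}^b_c(X\times_s\eta; \Z_\ell) \to \cal{D}^b_c(X\times_s\eta; \Q_\ell)=\cal{D}^b_c(X\times_s\eta; \Z_\ell)\left[\tfrac{1}{\ell}\right].
\]
Since this is localization at a multiplicative endomorphism of the identity, it preserves $t$-structures in a very clean way; the substance of the claim has already been done in the $\Z_\ell$-case.

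First I would establish the basic dictionary between $\Z_\ell$- and $\Q_\ell$-complexes. By construction of the localization, every $\F \in D^b_c(X\times_s \eta; \Q_\ell)$ admits a \emph{lattice} $\F_0 \in D^b_c(X\times_s \eta; \Z_\ell)$ with $L(\F_0)\simeq \F$, and for any two objects $\F_0,\G_0$ of $D^b_c(X\times_s\eta;\Z_\ell)$ one has
\[
\rm{Hom}_{\Q_\ell}(L\F_0,L\G_0)=\rm{Hom}_{\Z_\ell}(\F_0,\G_0)\left[\tfrac{1}{\ell}\right].
\]
A key mechanical observation is that $L(\G)=0$ if and only if $\G$ is annihilated by some power of $\ell$: this follows from the calculus of fractions together with the conservativity of $\pi_X^*$ on $D^b_c(X\times_s\eta;\Z_\ell)$ (which in turn follows by reduction modulo $\ell^n$ from Lemma~\ref{lemma:properties-Deligne}(\ref{lemma:properties-Deligne-3})).

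Given this dictionary, both axioms of a $t$-structure are routine. For the truncation triangles, pick a lattice $\F_0$ of $\F$ and apply $L$ to the $\Z_\ell$-truncation triangle $\tau^{\leq 0}_{\Z_\ell}\F_0 \to \F_0 \to \tau^{\geq 1}_{\Z_\ell}\F_0$ from Lemma~\ref{lemma:constructible-t-structure}; since $\pi_X^*$ is $t$-exact for the $\Z_\ell$-$t$-structure by definition and commutes with $L$, the resulting triangle has outer terms in the prescribed $\Q_\ell$-subcategories. For Hom vanishing, given $\F\in D^{\leq 0}_c(X\times_s\eta;\Q_\ell)$ and $\G\in D^{\geq 1}_c(X\times_s\eta;\Q_\ell)$, pick arbitrary lattices $\F_0,\G_0$; by the key observation above, $\tau^{\geq 1}_{\Z_\ell}\F_0$ and $\tau^{\leq 0}_{\Z_\ell}\G_0$ are $\ell$-power torsion, so $\tau^{\leq 0}_{\Z_\ell}\F_0$ and $\tau^{\geq 1}_{\Z_\ell}\G_0$ are again lattices of $\F$ and $\G$ respectively. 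Replacing $\F_0,\G_0$ by these, the Hom-vanishing in Lemma~\ref{lemma:constructible-t-structure} gives $\rm{Hom}_{\Z_\ell}(\F_0,\G_0)=0$, whence $\rm{Hom}_{\Q_\ell}(\F,\G)=0$ by the displayed formula.

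The only subtle point — and the main thing to verify carefully — is the replacement of an arbitrary lattice by one that lies in the correct $\Z_\ell$-cohomological degrees. This rests on the fact that ``becoming zero after inverting $\ell$'' and ``being $\ell$-power torsion'' coincide for bounded constructible $\Z_\ell$-complexes, which is where boundedness and conservativity of $\pi_X^*$ enter essentially.
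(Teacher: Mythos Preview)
Your argument is correct and is exactly the standard way to pass a $t$-structure through an $\ell$-localization; the paper states this result as a direct corollary of Lemma~\ref{lemma:constructible-t-structure} and gives no proof, so your write-up is the natural omitted argument. The one step worth making fully explicit is that for a bounded constructible $\Z_\ell$-complex on $X_{\ov{s}}$, vanishing after inverting $\ell$ implies that some fixed $\ell^N$ kills it (not merely that each stalk is $\ell^\infty$-torsion); this is what lets you replace an arbitrary lattice by a truncated one, and it uses finiteness of the Hom-groups (cf.\ Lemma~\ref{lemma:different-topologies}).
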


\begin{cor}\label{cor:splitting-after-field-extension} Let $K \subset K'$ be a finite extension of non-archimedean fields, let $X$ be a finite type $k$-scheme, let $b\colon X_{s'} \times_{s'} \eta' \to X_s \times_s \eta$ be the morphism of topoi from Notation~\ref{notation:field-extension}, and let $\varphi \colon \F \to \G$ be a morphism in $D^b_c(Y\times_s \eta; \Q_\ell)^{\heartsuit}$. Then there exists a morphism $\psi \colon \G \to \F$ such that $\psi \circ \varphi = \id$ if and only if there exists a morphism $\psi'\colon b^*\G \to b^*\F$ such that $\psi' \circ b^*(\varphi) = \id$. 
\end{cor}
\begin{proof}
    Let $G_K$ and $G_{K'}$ be the absolute Galois groups of $K$ and $K'$ respectively. Then $G_{K'}$ can be identified with a finite index subgroup of $G_K$. Let us denote by $g_1, \dots, g_n \in G_K$ a choice of representatives for every residue element in $G_K/G_{K'}$. Then Lemma~\ref{lemma:hom-formula-rational} implies that $\psi'$ can be identified with a $G_{K'}$-invariant element in $\Hom(\pi_X^* \F, \pi_X^*\G)$ such that $\pi_X^*(\psi')\circ \pi_X^*(\varphi) = \id$. Our goal is to construct a $G_K$-invariant element $\psi$ in $\Hom(\pi_X^* \F, \pi_X^*\G)$ such that $\pi_X^*(\psi)\circ \pi_X^*(\varphi) = \id$. Using that $\varphi$ is $G_K$-invariant, one easily checks that $\psi \coloneqq \frac{1}{n}\sum_{i=1}^n g_i(\psi')$ does the job.
\end{proof}

We next discuss ``local systems'' on the topos $X\times_s \eta$. For the next definition, we fix a finite type $k$-scheme $X$ and a ring $\Lambda\in \{\Q_\ell, \Z_\ell, \Z/\ell^n\Z\}$ for a prime number $\ell$ invertible in $k$. 

\begin{defn}\label{defn:lisse-objects} An object $\F\in D^b_c(X\times_s \eta; \Lambda)$ is {\it lisse} if $\pi^*_X\F\in D^b_c(X_{\ov{s}}; \Lambda)$ has lisse cohomology sheaves.

An object $\F\in D^b_c(X\times_s \eta; \Lambda)$ is a {\it $\Lambda$-local system} if it lies in the heart of the standard $t$-structure, lisse, and all stalks of $\pi^*_X\F$ are finite flat $\Lambda$-modules. 
\end{defn}

\begin{lemma}\label{lemma:lisse-stratification} Let $X$ a finite type $k$-scheme, $\ell$ a prime number invertible in $k$, $\Lambda\in \{\Q_\ell, \Z_\ell, \Z/\ell^n\Z\}$, and $\F\in D^b_c(X\times_s \eta;\Lambda)$. Then there is a finite stratification $X=\bigsqcup_{i\in I} X_i$ such that $\F|_{X_i\times_s \eta}$ is lisse and $\big(X_{i, \ov{k}}\big)_{\rm{red}}$ is smooth for each $i\in I$.
\end{lemma}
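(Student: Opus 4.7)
I would argue by Noetherian induction on $\dim X$; the zero-dimensional case is immediate, since every constructible sheaf on a zero-dimensional $\ov{k}$-scheme is lisse on each connected component and the reduction of any finite $\ov{k}$-scheme is a disjoint union of copies of $\Spec \ov{k}$. For the inductive step, it suffices to exhibit a dense open subscheme $W\subset X$ such that $\F|_{W\times_s\eta}$ is lisse and $(W_{\ov{k}})_{\rm{red}}$ is smooth over $\ov{k}$: applying the induction hypothesis to $X\setminus W$ (with reduced induced structure) and the pullback of $\F$ then yields the desired finite stratification of $X$.

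\textbf{Constructing $W$.} First, let $U \subset X_{\ov{s}}$ be the intersection of the (finitely many) lisse loci of the cohomology sheaves $\cal{H}^i(\pi^*_X\F)$, each of which is a dense open subset of $X_{\ov{s}}$ by classical constructibility. By Construction~\ref{construction:action} (or, in the adic/rational cases, Construction~\ref{construction:action-adic}), the complex $\pi^*_X\F$ carries a cocycle of isomorphisms $\rho_g\colon \ov{g}^{*}\pi^*_X\F \xrightarrow{\sim} \pi^*_X\F$ for $g\in G_\eta$, which induces isomorphisms $\ov{g}^{*}\cal{H}^i \xrightarrow{\sim}\cal{H}^i$ on cohomology sheaves; since lisse-ness is an intrinsic property of a sheaf, it follows that $\ov{g}^{-1}(U) = U$, i.e.\ $U$ is $G_s$-stable. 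Because $X_{\ov{s}}$ is Noetherian, $U$ is quasi-compact, hence already defined at some finite Galois intermediate level $X_{k'}$, and ordinary finite Galois descent produces a dense open $V\subset X$ with $V_{\ov{s}} = U$, on which $\F$ is lisse. Next, inside $V$, the smooth locus $W_0 \subset (V_{\ov{s}})_{\rm{red}}$ is dense open (smoothness over $\ov{k}$ coincides with regularity, as $\ov{k}$ is algebraically closed) and $G_s$-invariant, since $G_s$ acts on $V_{\ov{s}}$ by $k$-scheme automorphisms that preserve the intrinsic property of regularity on the reduction. The same descent argument then produces a dense open subscheme $W\subset V$ with $(W_{\ov{s}})_{\rm{red}} = W_0$, satisfying both required conditions; as $W$ is dense in $X$, the complement $X\setminus W$ has strictly smaller dimension, and Noetherian induction concludes.

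\textbf{Main obstacle.} The substantive content of the proof is not the existence of such a stratification on the geometric fiber $X_{\ov{s}}$ (which is classical), but rather the descent of this stratification to $X$ itself. The main point to verify will be $G_s$-invariance of the relevant loci: of the lisse locus, which is controlled by the cocycle $\rho_g$ furnished by Construction~\ref{construction:action}, and of the smooth-reduction locus, which rests on the intrinsicness of regularity over an algebraically closed field; together with finite Galois descent for quasi-compact open subschemes along $X_{\ov{s}}\to X$.
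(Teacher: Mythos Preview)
Your proof is correct but follows a genuinely different route from the paper's.

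The paper first reduces explicitly to torsion coefficients: from $\Q_\ell$ to $\Z_\ell$ by choosing a lattice, then from $\Z_\ell$ to $\Z/\ell^n\Z$ by observing that lisse-ness of $\pi^*_X\F$ can be detected after $-\otimes^L_{\Z_\ell}\bf{F}_\ell$. It then invokes the Lu--Zheng notion of constructibility \emph{directly on the product topos} $X\times_s\eta$: the proof of Lemma~\ref{lemma:homs-after-base-change} shows that the cohomology sheaves of $\F$ are noetherian objects there, and the Lu--Zheng formalism (the discussion after \cite[Corollary~1.26]{Lu-Zheng}) then yields, for each generic point of $X$, an open neighbourhood in $X$ itself on which $\F$ has locally constant cohomology. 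No explicit descent from $X_{\ov{s}}$ to $X$ is needed.

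Your approach instead works on the geometric special fibre $X_{\ov{s}}$, where the existence of a dense lisse locus is classical, and then descends: the cocycle $\rho_g$ from Construction~\ref{construction:action} (resp.\ Construction~\ref{construction:action-adic}) forces $G_s$-invariance of the lisse locus, and finite Galois descent for quasi-compact opens brings it down to $X$. This is more elementary and self-contained---it does not rely on the Lu--Zheng machinery---at the cost of spelling out the invariance-and-descent step. The paper's method is slicker because that machinery has already been set up earlier; yours would work even in a context where it had not.
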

\begin{proof}
    The case of $\Q_\ell$-coefficients easily reduces to the case of $\Z_\ell$-coefficients by choosing a $\Z_\ell$-lattice. Now, for any $\F\in D^b_c(X\times_s \eta; \Z_\ell)$, $\pi^*_X\F$ has lisse cohomology groups if and only if 
    \[
    \left(\pi^*_X \F\right)\otimes^L_{\Z_\ell} \bf{F}_\ell \simeq \pi^*_X \left(\F\otimes^L_{\Z_\ell}\bf{F}_\ell\right)
    \]
    has lisse cohomology sheaves. Therefore, it suffices to prove the claim for $\Lambda=\Z/\ell^n\Z$. \smallskip
    
    By noetherian induction, it suffices to show that, for each generic point $\eta\in X$, there is an open $\eta\in U \subset X$ such that $\pi^*_X\F|_{U_{\ov{k}}}\in D^b_c\left(X_{\ov{s}}; \Z/\ell^n\Z\right)$ has locally constant cohomology sheaves and $\left(U_{\ov{k}}\right)_{\rm{red}}$ is smooth. \smallskip
    
    The proof of Lemma~\ref{lemma:homs-after-base-change} ensures that cohomology sheaves of $\F\in D^b_c\left(X\times_s \eta; \Z/\ell^n\Z\right)$ are constructible in the sense of \cite{Lu-Zheng} (see the discussion after \cite[Corollary 1.26]{Lu-Zheng}). Using the definition of constructible sheaves in \cite{Lu-Zheng} and boundedness of $\F$, we conclude that there is an open $\eta\in U\subset X$ such that $\F|_U$ has locally constant cohomology sheaves (in particular, the same holds for $\pi^*_X\F$). Then a standard argument shows that, after possibly shrinking $U$, one can also achieve that $U_{\ov{k}, \rm{red}}$ is smooth. 
\end{proof}

\begin{lemma}\label{lemma:existence-of-a-lattice} Let $X$ be a geometrically normal (i.e. $X_{\ov{k}}$ is normal) finite type $k$-scheme, $\ell$ a prime number invertible in $k$, and $\F$ a $\Q_\ell$-local system on $X\times_s \eta$. Then there is a $\Z_\ell$-local system $\G$ and an isomorphism $\G\left[\frac{1}{\ell}\right]\simeq \F$.
\end{lemma}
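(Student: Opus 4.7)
The plan is to reduce the statement to the classical fact that any continuous representation of a profinite (hence compact) group on a finite-dimensional $\Q_\ell$-vector space admits a stable $\Z_\ell$-lattice. First, since $X$ is geometrically normal, its geometrically connected components are open and closed in $X$, and the restriction of $\F$ to each such component is again a $\Q_\ell$-local system. A global $\Z_\ell$-lattice can then be assembled from lattices on each component, so we may and do assume $X$ is geometrically connected; in particular $X_{\ov{s}}$ is integral.

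Next, fix a geometric point $\ov{x}$ of $X_{\ov{s}}$ compatible with the geometric points of $\eta$ and $s$ chosen before Lemma~\ref{lemma:properties-Deligne}. Combining the explicit description of $X\times_s \eta$ in Lemma~\ref{lemma:properties-Deligne}(\ref{lemma:properties-Deligne-2}) with the standard equivalence between lisse \'etale sheaves on $X_{\ov{s}}$ and continuous representations of $\pi_1(X_{\ov{s}}, \ov{x})$, one obtains a profinite group $\Pi$ fitting into a short exact sequence
\[
1 \to \pi_1(X_{\ov{s}}, \ov{x}) \to \Pi \to G_\eta \to 1,
\]
such that evaluation at $\ov{x}$ identifies the category of rank-$n$ $\Z_\ell$-local systems on $X\times_s\eta$ with the category of continuous homomorphisms $\Pi \to \GL_n(\Z_\ell)$, and similarly identifies rank-$n$ $\Q_\ell$-local systems with continuous homomorphisms $\Pi \to \GL_n(\Q_\ell)$ (the latter topologized via the profinite subgroup $\GL_n(\Z_\ell)$ and its $\Q_\ell$-conjugates).

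Finally, let $\rho\colon \Pi \to \GL_n(\Q_\ell)$ be the continuous representation corresponding to $\F$, where $n$ is the rank of $\pi_X^*\F$. Since $\Pi$ is profinite and thus compact, $\rho(\Pi)$ is a compact subgroup of $\GL_n(\Q_\ell)$. By the classical fact that every compact subgroup of $\GL_n(\Q_\ell)$ stabilizes some $\Z_\ell$-lattice in $\Q_\ell^n$ (equivalently, is contained in a conjugate of $\GL_n(\Z_\ell)$), we may conjugate $\rho$ to obtain a continuous homomorphism $\rho_0\colon \Pi \to \GL_n(\Z_\ell)$ giving rise to the same $\Q_\ell$-representation. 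The associated $\Z_\ell$-local system $\G$ on $X\times_s\eta$ then satisfies $\G[\frac{1}{\ell}] \simeq \F$.

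The main technical point is the equivalence asserted in the second paragraph: one must pass from the discrete-coefficient description of Deligne's topos given in Lemma~\ref{lemma:properties-Deligne}(\ref{lemma:properties-Deligne-2}) to adic coefficients by taking a limit over $n$, and then verify that the rationalization procedure $\cal{D}^b_c(-;\Z_\ell)[\frac{1}{\ell}]$ is compatible with the passage from $\Z_\ell$-representations of $\Pi$ to their $\Q_\ell$-rationalizations. Both steps are essentially formal, reducing via a choice of auxiliary $\Z_\ell$-lattice on the underlying $X_{\ov{s}}$-side to the classical theory in the absence of a Galois action.
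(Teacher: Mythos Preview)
Your strategy is genuinely different from the paper's, and the step you flag as ``the main technical point'' is not formal---it is precisely the content of the lemma. The identification of $\Q_\ell$-local systems on $X\times_s\eta$ with continuous $\Q_\ell$-representations of $\Pi$ is equivalent to essential surjectivity of the functor
\[
\{\text{$\Z_\ell$-local systems on $X\times_s\eta$}\}\big[\tfrac{1}{\ell}\big]\longrightarrow \{\text{$\Q_\ell$-local systems on $X\times_s\eta$}\},
\]
since on the representation side every continuous $\Q_\ell$-representation of a profinite group already admits a stable $\Z_\ell$-lattice. But essential surjectivity of this functor is exactly the statement you are trying to prove. Your proposed fix---choosing an auxiliary $\Z_\ell$-lattice for $\pi_X^*\F$ on $X_{\ov{s}}$ via the classical theory---does not close the gap: such a lattice need not be stable under the $G_\eta$-action on $\pi_X^*\F$, and there is no mechanism to stabilize it (summing over translates fails since $G_\eta$ is infinite, and continuity of the action on the stalk is not available to you at this point). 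Concretely, the $\Pi$-action on the stalk is only visible through a \emph{lisse} integral model, and by definition $\F$ is only represented by a \emph{constructible} integral object $\G'\in D^b_c(X\times_s\eta;\Z_\ell)^\heartsuit$.

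The paper avoids $\Pi$ altogether and works inside the six-functor formalism. Starting from any $\ell$-torsion-free constructible $\G'$ with $\G'[\tfrac{1}{\ell}]\simeq\F$, it passes to a dense open $j\colon U\hookrightarrow X$ where $\G'$ is lisse, and replaces $\G'$ by $\cal{H}^0\bigl(\rm{R}(j\times_s\eta)_*(\G'|_{U\times_s\eta})\bigr)$. After applying the conservative functor $\pi_X^*$, normality of $X_{\ov{s}}$ ensures this pushforward is a $\Z_\ell$-local system and that the analogous construction with $\Q_\ell$-coefficients recovers $\F$. This is exactly the argument one would also use to prove the ``classical fact'' you invoke on $X_{\ov{s}}$; the point is that it lifts to Deligne's topos because $\rm{R}(j\times_s\eta)_*$ and $\pi_X^*$ commute (Lemma~\ref{lemma:pushforward-pullback}). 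So the paper's approach is in effect the missing ingredient in yours.
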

\begin{proof}
    The standard $t$-structure on $D^b_c(X\times_s \eta; \Q_\ell)$ is induced from the standard $t$-structure on $D^b_c(X\times_s \eta; \Z_\ell)$, so there is a sheaf $\G\in D^b_c(X\times_s \eta; \Z_\ell)^{\heartsuit}$ with an isomorphism 
    \[
    \F\simeq \G\left[\frac{1}{\ell}\right].
    \]
    Without loss of generality, we may and do assume that $\G$ is $\ell$-torsionfree. Furthermore, we can pass to connected components of $X$ to assume that $X$ is connected and, therefore, irreducible due to normality of $X$. Therefore, there is an open dense subset $U\subset X$ such that $\G|_{U\times_s \eta}$ is a $\Z_\ell$-local system (it suffices to check the same claim for $\G\otimes_{\Z_\ell} \bf{F}_\ell$ that follows from the proof of Lemma~\ref{lemma:lisse-stratification}). We denote by $j\colon U \to X$ the open immersion of $U$ into $X$.  Then the result follows from the following two claims: \smallskip
    
    {\it Claim~$1$. The natural morphism $\F\to \cal{H}^0\left(\rm{R}\left(j\times_s \eta\right)_*\F|_{U\times_s \eta}\right)$ is an isomorphism.} \smallskip
    
    {\it Claim~$2$. The object $\cal{H}^0\left(\rm{R}\left(j\times_s \eta\right)_*\G|_{U\times_s \eta}\right)$ is a $\Z_\ell$-local system.} \smallskip
    
    Now we discuss the proofs of both claims. In what follows we use Remark~\ref{rmk:adic-good} without saying, so we give references to the facts about torsion coefficients and freely apply them to the adic coefficients. \smallskip
    
    With that in mind, we recall that $\pi^*_X$ is conservative by Lemma~\ref{lemma:properties-Deligne}(\ref{lemma:properties-Deligne-3}) and $\pi^*_X\rm{R}(j\times_s \eta)_{*}$ is canonically isomorphic to $\rm{R}j_{\ov{k}, *}\pi^*_X$ by Lemma~\ref{lemma:pushforward-pullback}(\ref{lemma:pushforward-pullback-1}). Therefore, it suffices to prove analogous claims for a $\Q_\ell$-local system $\F$ on a normal, finite type $\ov{k}$-scheme $X_{\ov{k}}$ and a constructible $\Z_\ell$-lattice $\G$. This is standard and left to the reader. 
\end{proof}

\begin{cor}\label{cor:trivial-action-descend-rationally} Let $X$ be a geometrically normal finite type $k$-scheme, $\ell$ a prime number invertible in $k$, and $\F$ a $\Lambda$-local system on $X\times_s \eta$ for $\Lambda\in \{\Q_\ell, \Z_\ell, \Z/\ell^n\Z\}$. Suppose that the action of $I$ is trivial on $\pi^*_X\F$. Then the natural morphism
\[
\F \to p_X^*\cal{H}^0(\rm{R}p_{X, *} \F)
\]
is an isomorphism, and $\cal{H}^0(\rm{R}p_{X, *} \F)$ is a $\Lambda$-local system on $X$. 
\end{cor}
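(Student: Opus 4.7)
The plan is to reduce the corollary to the torsion coefficient case (Lemma~\ref{lemma:trivial-action-descend}) in three successive stages, following the pattern $\Z/\ell^n\Z \rightsquigarrow \Z_\ell \rightsquigarrow \Q_\ell$. The hypothesis of geometric normality only enters at the $\Q_\ell$-stage, where we need Lemma~\ref{lemma:existence-of-a-lattice} to choose a $\Z_\ell$-lattice.

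For $\Lambda=\Z/\ell^n\Z$ the statement is immediate: Lemma~\ref{lemma:trivial-action-descend} gives both the isomorphism $\F\simeq p_X^*p_{X,*}\F$ and the fact that $p_{X,*}\F$ is locally constant with finite free stalks, and since $\F$ sits in the heart of the standard $t$-structure we have $p_{X,*}\F=\cal{H}^0(\rm{R}p_{X,*}\F)$. For $\Lambda=\Z_\ell$, I would present $\F$ via Lemma~\ref{lemma:ell-adic=derived-complete} as $``\lim_n\text{''}\F_n$ with $\F_n\coloneqq \F\otimes^L_{\Z_\ell}\Z/\ell^n\Z$. Each $\F_n$ is a $\Z/\ell^n\Z$-local system (as $\F$ is $\ell$-torsion free), and the trivial $I$-action on $\pi_X^*\F$ descends to a trivial $I$-action on $\pi_X^*\F_n\simeq \pi_X^*\F\otimes^L_{\Z_\ell}\Z/\ell^n\Z$ by functoriality of Construction~\ref{construction:action-adic}. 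Applying the first stage produces compatible $\Z/\ell^n\Z$-local systems $\G_n\coloneqq p_{X,*}\F_n$ on $X$ with $\F_n\simeq p_X^*\G_n$. Remark~\ref{rmk:functor-adic} then yields
\[
\rm{R}p_{X,*}\F\simeq ``\lim_n\text{''}\rm{R}p_{X,*}\F_n\simeq ``\lim_n\text{''}\G_n\eqqcolon \G,
\]
which is concentrated in degree zero since the Mittag-Leffler condition (surjectivity of the transition maps $\G_n\to \G_{n-1}$) kills the Milnor $\rm{R}^1\lim$. Similarly $p_X^*\G\simeq \F$.

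For $\Lambda=\Q_\ell$, I would invoke Lemma~\ref{lemma:existence-of-a-lattice} to obtain a $\Z_\ell$-lattice $\F_0$ with $\F\simeq \F_0[\tfrac{1}{\ell}]$. The crucial point is that the $I$-action on $\pi_X^*\F_0$ is already trivial, not merely trivial after inverting $\ell$. Indeed, $\pi_X^*\F=\pi_X^*\F_0[\tfrac{1}{\ell}]$ as $I$-equivariant objects, so for each $g\in I$ the endomorphism $\rho_g-\id$ of $\pi_X^*\F_0$ vanishes after inverting $\ell$; but after passing to connected components we may assume $X_{\ov{s}}$ is connected, and global endomorphisms of the $\Z_\ell$-local system $\pi_X^*\F_0$ form an $\ell$-torsion-free $\Z_\ell$-module (a submodule of any stalk), so $\rho_g=\id$. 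Applying the $\Z_\ell$-stage to $\F_0$ produces $\G_0$ on $X$ with $\F_0\simeq p_X^*\G_0$, and inverting $\ell$ gives $\G\coloneqq \G_0[\tfrac{1}{\ell}]\simeq \cal{H}^0(\rm{R}p_{X,*}\F)$ with $\F\simeq p_X^*\G$. The main (minor) obstacle is precisely the torsion-freeness argument in this last stage, together with keeping the $I$-equivariance coherent across the three stages; everything else is a straightforward limit computation.
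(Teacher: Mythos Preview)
Your proposal is correct and follows essentially the same three-stage reduction as the paper's proof: the torsion case via Lemma~\ref{lemma:trivial-action-descend}, the $\Z_\ell$ case by passing to the limit, and the $\Q_\ell$ case by choosing a $\Z_\ell$-lattice via Lemma~\ref{lemma:existence-of-a-lattice}. You have spelled out in more detail two points the paper leaves implicit, namely the Mittag--Leffler argument in the $\Z_\ell$ stage and the torsion-freeness argument showing the inertia action on the chosen lattice is already trivial, but the underlying strategy is identical.
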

\begin{proof}
    The case of $\Lambda=\Z/\ell^n\Z$ follows from Lemma~\ref{lemma:trivial-action-descend} and the trivial observation that a lisse sheaf $\G\in \rm{Shv}_{\rm{lisse}}(X, \Z/\ell^n\Z)$  is a local system if and only if $c^*_X \G \in \rm{Shv}_{\rm{lisse}}(X_{\ov{s}}; \Z/\ell^n\Z)$ is. The case of $\Lambda=\Z_\ell$ follows by passing to the limit. The case of $\Lambda=\Q_\ell$ follows from Lemma~\ref{lemma:existence-of-a-lattice} by taking a $\Z_\ell$-local system $\G$ with an isomorphism $\G\left[\frac{1}{\ell}\right] \simeq \F$ (so the action of $I$ on $\G$ is automatically trivial). 
\end{proof}

Finally, we discuss the ``perverse'' $t$-structure on $D^b_c(X\times_s \eta; \Q_\ell)$. The idea of the construction is similar to that of constructible $t$-structure on $D^b_c(X\times_s \eta; \Q_\ell)$: we descend it from the perverse $t$-structure from $D^b_c(X_{\ov{s}}; \Q_\ell)$ (see \cite[Section III.1]{KW}). 

\begin{lemma}\label{lemma:perverse-t-structure-rationally} Let $X$ be a finite type $k$-scheme. Then the category $D^b_c(X\times_s \eta; \Q_\ell)$ admits a ``perverse'' $t$-structure:
\begin{enumerate}
    \item ${}^pD^{\leq 0}_c(X\times_s \eta; \Q_\ell)$ is the full subcategory of $D^b_c(X\times_s \eta; \Q_\ell)$ consisting of objects $\F$ such that $\pi^*_X\F\in {}^pD^{\leq 0}_c(X_{\ov{s}}; \Q_\ell)$;
    \item ${}^{p}D^{\geq 0}_c(X\times_s \eta; \Q_\ell)$ is the full subcategory of $D^b_c(X\times_s \eta; \Q_\ell)$ consisting of objects $\F$ such that $\pi^*_X\F\in {}^{p}D^{\geq 0}_c(X_{\ov{s}}; \Q_\ell)$.
\end{enumerate}
\end{lemma}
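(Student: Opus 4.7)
The plan is to verify the three $t$-structure axioms: closure under shifts, Hom-orthogonality, and existence of truncation triangles. Closure under shifts is automatic: if $\F \in {}^pD^{\leq 0}$ then $\pi_X^*\F[1] = (\pi_X^*\F)[1] \in {}^pD^{\leq -1}(X_{\ov{s}};\Q_\ell) \subset {}^pD^{\leq 0}(X_{\ov{s}};\Q_\ell)$, and similarly for the coconnective part, because $\pi_X^*$ commutes with shifts and the perverse $t$-structure on $D^b_c(X_{\ov{s}}; \Q_\ell)$ satisfies the analogous closure.

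For Hom-orthogonality, take $\F \in {}^pD^{\leq 0}_c(X\times_s \eta; \Q_\ell)$ and $\G \in {}^pD^{\geq 1}_c(X\times_s \eta; \Q_\ell)$. Combining the $\Q_\ell$-version of Lemma~\ref{lemma:hom-formula-rational} with the adic analogue of Lemma~\ref{lemma:homs-after-base-change} (both legitimated by Remark~\ref{rmk:adic-good}), we get
\[
\rm{RHom}_{\Q_\ell}(\F, \G) \simeq \rm{R}\Gamma_{\rm{cont}}\bigl(G_\eta, \rm{RHom}_{/\eta, \Q_\ell}(\F, \G)\bigr),
\]
and $\pi_s^*\rm{RHom}_{/\eta, \Q_\ell}(\F, \G) \simeq \rm{RHom}_{\Q_\ell}(\pi_X^*\F, \pi_X^*\G) \in D^{\geq 1}(\Q_\ell)$ by the classical perverse $t$-structure on $X_{\ov{s}}$. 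Conservativity of $\pi_s^*$ (Lemma~\ref{lemma:properties-Deligne}~(\ref{lemma:properties-Deligne-3})) then places $\rm{RHom}_{/\eta, \Q_\ell}(\F, \G)$ in $D^{\geq 1}(\eta; \Q_\ell)$, and the left $t$-exact functor $\rm{R}\Gamma_{\rm{cont}}(G_\eta, -)$ preserves this bound, giving $\rm{Hom}_{\Q_\ell}(\F, \G) = 0$.

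The substantive axiom is the existence of truncation triangles, which we obtain by BBD gluing of $t$-structures along a smooth stratification. Proceed by noetherian induction on $\dim X$: by topological invariance of the \'etale site we may replace $X$ by $X_{\rm{red}}$, and Lemma~\ref{lemma:lisse-stratification} then yields a smooth dense open $j\colon U \hookrightarrow X$ of pure dimension $d$ with closed complement $i\colon Z \hookrightarrow X$. On $U$, take ${}^pD^{\leq 0}_c(U\times_s \eta; \Q_\ell) := D^{\leq -d}_c(U\times_s \eta; \Q_\ell)$ (a shift of the standard $t$-structure from Corollary~\ref{cor:constructible-t-structure-rationally}), and similarly for ${}^pD^{\geq 0}_c$; on $Z$, invoke the inductive hypothesis. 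The adic analogues of Lemma~\ref{lemma:pushforward-pullback}, Lemma~\ref{lemma:shriek-pushforward-pullback}, and Lemma~\ref{lemma:upper-shriek-pullback} (via Remark~\ref{rmk:adic-good}) provide the six functors $(j_!, j^*, \rm{R}j_*, i^*, i_*, i^!)$ on Deligne's topoi. Running the standard BBD gluing argument then produces perverse truncation functors on $D^b_c(X\times_s \eta; \Q_\ell)$ whose target and source subcategories coincide with those declared in the statement, once one checks that the declared subcategories match the result of gluing.

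The main obstacle is verifying the full recollement structure $(j_! j^*\F \to \F \to i_* i^*\F)$ and $(i_* i^!\F \to \F \to \rm{R}j_* j^*\F)$, together with the orthogonality identities $i^* j_! = 0 = i^! \rm{R}j_*$, at the Deligne level and not merely after $\pi_X^*$: this requires unpacking the Lu--Zheng construction of $(-)_!$ and $(-)^!$ on Deligne's topos and matching their adjunction units/counits with those over $X_{\ov{s}}$. All individual identities can be checked after $\pi_X^*$ by conservativity, where they reduce to the classical recollement for $D^b_c(X_{\ov{s}}; \Q_\ell)$; but lifting the resulting exact triangles back to $X\times_s \eta$ is the one piece of bookkeeping that is not entirely formal. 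Once this is done, the remainder of the BBD gluing argument, and the identification of the glued $t$-structure with the one defined in the statement, follows verbatim from \cite[Th\'eor\`eme 1.4.10]{BBD}.
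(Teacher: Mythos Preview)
Your overall strategy---dimension induction plus an open/closed decomposition---is the same as the paper's, but there is a genuine gap in the gluing step. You assert that on a smooth open $U$ of pure dimension $d$ one may take ${}^pD^{\leq 0}_c(U\times_s\eta;\Q_\ell) = D^{\leq -d}_c(U\times_s\eta;\Q_\ell)$. This is false: the perverse $t$-structure on a smooth variety agrees with the $[-d]$-shift of the standard one only on objects with lisse cohomology sheaves. For instance, a skyscraper $\Q_\ell$ at a closed point of $\bf{A}^2$ is perverse but lies in standard degree $0$, not $-2$. Hence the $t$-structure you obtain by BBD gluing from the shifted standard $t$-structure on $U$ and the inductive perverse $t$-structure on $Z$ will \emph{not} coincide with the one described in the statement, and the final check you flag (``the declared subcategories match the result of gluing'') fails.

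The paper circumvents this by working object-by-object rather than gluing fixed $t$-structures. For a given $\cal{E}\in D^b_c(X\times_s\eta;\Q_\ell)$, it uses Lemma~\ref{lemma:lisse-stratification} to choose $U$ so that $\pi_X^*\cal{E}|_{U_{\ov s}}$ is lisse; then on $\pi_X^*\cal{E}|_{U_{\ov s}}$ the perverse and shifted standard truncations agree. The perverse truncation of $E=\pi_X^*\cal{E}$ is then written down explicitly, following \cite[Lemma III.1.1]{KW}, as an iterated cone involving only $\rm{R}j_{\ov s,*}$, $j_{\ov s}^*$, $i_{\ov s,*}$, $i_{\ov s}^*$, the standard truncation on $U_{\ov s}$, and the perverse truncation on $Z_{\ov s}$. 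Each of these ingredients descends to $X\times_s\eta$ (the first four by Lemma~\ref{lemma:pushforward-pullback}, the standard truncation by Corollary~\ref{cor:constructible-t-structure-rationally}, and the perverse truncation on $Z$ by the inductive hypothesis), so the whole formula descends. This avoids both the incorrect identification of $t$-structures on $U$ and the recollement bookkeeping you identify as the main obstacle. Your approach can be repaired either by making the choice of $U$ depend on $\cal{E}$ in this way, or by fixing a full smooth stratification $\cal{S}$ and first constructing the $t$-structure on the subcategory of $\cal{S}$-constructible objects (where your identification on each stratum is correct), then passing to the colimit over stratifications.
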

\begin{proof}
    As in the proof of Lemma~\ref{lemma:constructible-t-structure}, the only hard part is to show that the object ${}^{p}\tau^{\leq 0} \pi^*_X\F\in D^b_c(X_{\ov{s}}; \Q_\ell)$ and the morphism 
    \[
    ^{p}\tau^{\leq 0} \pi^*_X\cal{E}\to \pi^*_X\cal{E}
    \]
    in $D^b_c(X_{\ov{s}}; \Q_\ell)$ descends to $D^b_c(X\times_s \eta; \Q_\ell)$ for each $\cal{E}\in D^b_c(X\times_s \eta; \Q_\ell)$. \smallskip
    
    We prove it by induction on $\dim X$. If $\dim X=0$, then the constructible and perverse $t$-structures on $X_{\ov{s}}$ coincide, so the result follows from Corollary~\ref{cor:constructible-t-structure-rationally}. Now we suppose that the claim is known for all finite type $k$-schemes of dimension $< d$, and deduce it for $X$ of dimension $d$.\smallskip

    For brevity, we denote $\pi^*_X\cal{E}$ simply by $E$. Lemma~\ref{lemma:lisse-stratification} implies that there is a dense open $U\subset X$ such that $U_{\ov{k},\rm{red}}$ is smooth and $E|_{U_{\ov{k}}}$ has lisse cohomology sheaves. Let us denote by $F\in D^b_c(X_{\ov{s}}; \Q_\ell)$ the shifted cone\footnote{In the formula below, we treat $\dim U$ as a locally constant function $\dim U \colon |U| \to \Z_{\geq 0}$.}:
    \[
    F\coloneqq \rm{cone}(E \to \rm{R}j_{\ov{s}, *}\tau^{\geq 1 + \dim U}j_{\ov{s}}^*E)[-1],
    \]
    where $j\colon U\to X$ is the open immersion and $\tau^{\geq 1}$ is the truncation functor for the standard $t$-structure on $D^b_c(X_{\ov{s}}; \Q_\ell)$. Let us also denote by $A$ the shifted cone 
    \[
    A\coloneqq \rm{cone}(F \to i_{\ov{s}, *}{}^p\tau_Z^{\geq 1}i^*_{\ov{s}}E)[-1]
    \]
    where $i\colon Z=X\setminus U \to X$ is the complementary closed immersion and ${}^p\tau_Z^{\geq 1}$ is the perverse truncation on $D^b_c(Z_{\ov{s}}; \Q_\ell)$. This comes with a natural morphism $A \to E$, and the construction of ${}^p\tau^{\leq 0}$ in the proof of \cite[Lemma III.1.1]{KW} (in particular, see \cite[p.140 and Claim on p.141]{KW}) guarantees that this morphism is isomorphic to 
    \[
    ^{p}\tau^{\leq 0} E \to E. 
    \]
    Therefore, in order to descend the morphism $^{p}\tau^{\leq 0} E \to E$ it suffices to descend $\rm{R}j_{\ov{s}, *}$, $i_{\ov{s}, *}$, $j_{\ov{s}}^*$, $i^*_{\ov{s}}$, $\tau^{\geq 1}$, and ${}^p\tau_Z^{\geq 1}$. The first two functors descend by Lemma~\ref{lemma:pushforward-pullback}(\ref{lemma:pushforward-pullback-1}) (and Remark~\ref{rmk:adic-good}), the next two functors clearly descend, the truncation functor for the standard $t$-structure descends by Corollary~\ref{cor:constructible-t-structure-rationally}, and the perverse truncation ${}^p\tau_Z^{\geq 1}$ descends by the induction assumption.
\end{proof}

\begin{defn} A complex $\F\in D^b_c(X\times_s \eta; \Q_\ell)$ is {\it perverse} if it lies in the heart of the perverse $t$-structure. We denote the category of perverse sheaves by $\rm{Perv}(X\times_s \eta; \Q_\ell)$.
\end{defn}

\begin{lemma}\label{lemma:perverse-artinian} Let $X$ be a finite type $k$-scheme. Then $\rm{Perv}(X\times_s \eta; \Q_\ell)$ is an Artinian and Noetherian category.
\end{lemma}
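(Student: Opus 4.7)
The plan is to reduce both chain conditions on $\rm{Perv}(X\times_s \eta; \Q_\ell)$ to the corresponding (well-known) chain conditions on $\rm{Perv}(X_{\ov{s}}; \Q_\ell)$, using the conservative and $t$-exact pullback $\pi^*_X$ that was used to define the perverse $t$-structure in the first place.

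First I would collect two structural inputs already established earlier in the paper. The perverse $t$-structure on $D^b_c(X\times_s \eta; \Q_\ell)$ was defined in Lemma~\ref{lemma:perverse-t-structure-rationally} precisely as the preimage under $\pi^*_X$ of the perverse $t$-structure on $D^b_c(X_{\ov{s}}; \Q_\ell)$, so $\pi^*_X$ is tautologically $t$-exact for the perverse $t$-structures and therefore restricts to an exact functor of abelian hearts
\[
\pi^*_X\colon \rm{Perv}(X\times_s \eta; \Q_\ell) \to \rm{Perv}(X_{\ov{s}}; \Q_\ell).
\]
Next, by Lemma~\ref{lemma:properties-Deligne}(\ref{lemma:properties-Deligne-3}) together with Remark~\ref{rmk:adic-good}, this functor is conservative on $D^b_c$, and in particular an object of the heart is zero if and only if its pullback is.

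The second ingredient is the classical fact that $\rm{Perv}(X_{\ov{s}}; \Q_\ell)$ is Noetherian and Artinian, which follows from the standard BBD arguments (finite-dimensionality of Hom groups combined with the classification of simple perverse sheaves as intermediate extensions of simple lisse sheaves); see e.g. \cite[Th\'eor\`eme 4.3.1]{BBD} or \cite[Theorem III.5.6]{KW}. With these two inputs in place, the argument is essentially automatic. Given an ascending chain $\F_1 \subset \F_2 \subset \cdots \subset \F$ in $\rm{Perv}(X\times_s \eta; \Q_\ell)$, exactness of $\pi^*_X$ yields an ascending chain $\pi^*_X\F_1 \subset \pi^*_X\F_2 \subset \cdots \subset \pi^*_X\F$ of perverse subobjects on $X_{\ov{s}}$, which stabilizes by Noetherianity there. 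Exactness then identifies $\pi^*_X(\F_{n+1}/\F_n)$ with $\pi^*_X\F_{n+1}/\pi^*_X\F_n$, which vanishes for $n$ large; conservativity upgrades this to $\F_{n+1}/\F_n = 0$, i.e.\ $\F_n = \F_{n+1}$. The same argument applied to chains of quotients gives the Artinian property.

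There is essentially no obstacle here: all the real work has already been done in setting up the perverse $t$-structure via pullback and in establishing the conservativity and $t$-exactness of $\pi^*_X$. The only point to be stated carefully is that Noetherianity and Artinianity of the perverse heart on $X_{\ov{s}}$ are indeed available with $\Q_\ell$-coefficients (and not merely with $\ov{\Q}_\ell$-coefficients), which is the setting of \cite[Chapter III]{KW}.
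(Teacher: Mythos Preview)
Your proof is correct and is essentially the same as the paper's: both reduce to the conservativity of $\pi^*_X$ (via Lemma~\ref{lemma:properties-Deligne}(\ref{lemma:properties-Deligne-3}) and Remark~\ref{rmk:adic-good}) together with the Artinian/Noetherian property of $\rm{Perv}(X_{\ov{s}};\Q_\ell)$ from \cite{KW}. The paper is simply terser, leaving the chain-condition argument implicit and citing \cite[Corollary III.5.7]{KW} rather than Theorem III.5.6.
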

\begin{proof}
    This follows from the facts that $\pi^*_X$ is conservative (see Lemma~\ref{lemma:properties-Deligne}(\ref{lemma:properties-Deligne-3}) and Remark~\ref{rmk:adic-good}) and that $\rm{Perv}(X_{\ov{s}}; \Q_\ell)$ is Artinian and Noetherian (see \cite[Corollary III.5.7]{KW}).
\end{proof}

\subsection{Six functors over an arithmetic field}

The main goal of this section is to develop a $6$-functor formalism for the $\Q_\ell$-constructible complexes on the Deligne's topoi $X\times_s \eta$. We develop this formalism under the additional assumption that the ground field $K$ is arithmetic (see Definition~\ref{defn:arithmetic}). Most likely, one can avoid this assumption by using the categorical gluing formalism from \cite{enhanced-operations}, but we do not pursue it in this paper. \smallskip

For the rest of the section, we assume that $K$ is an arithmetic field. 

\begin{lemma}\label{lemma:homotopy-2-limit} Let $K$ be an arithmetic non-archimedean field, $X$ a finite type $k$-scheme, and $\ell$ a prime number invertible in $\O_K$. Then the natural morphism
\[
D^b_c(X\times_s \eta; \Z_\ell) \to 2-\lim_n D^b_{ctf}(X\times_s \eta;\Z/\ell^n\Z)
\]
is an equivalence, where $2-\lim_n$ stands for the projective $2$-limit in the $2$-category of categories. 
\end{lemma}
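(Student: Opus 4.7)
The plan is to establish full faithfulness and essential surjectivity of the comparison functor separately. Both steps rely crucially on the finiteness of $\rm{Hom}$ and $\rm{Ext}$ groups in $D^b_{ctf}(X\times_s\eta; \Z/\ell^n\Z)$ provided by Corollary~\ref{cor:finiteness-Deligne-topos}; this is the only place where the arithmetic hypothesis on $K$ enters.

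For full faithfulness, given $\F, \G \in \cal{D}^b_c(X\times_s\eta; \Z_\ell)$ represented by compatible systems $(\F_n)$ and $(\G_n)$ with $\F_n, \G_n \in \cal{D}^b_{ctf}(X\times_s\eta; \Z/\ell^n\Z)$, the definition of $\cal{D}(X\times_s\eta; \Z_\ell)$ as an $\infty$-categorical limit means that mapping spaces are computed as homotopy limits of mapping spaces. Exactly as in the proof of Lemma~\ref{lemma:different-topologies}, this yields Milnor's short exact sequence
\[
0 \to \rm{R}^1\lim_n \rm{Ext}^{-1}_{\Z/\ell^n\Z}(\F_n, \G_n) \to \rm{Hom}_{\Z_\ell}(\F, \G) \to \lim_n \rm{Hom}_{\Z/\ell^n\Z}(\F_n, \G_n) \to 0.
\]
The rightmost term is precisely the set of morphisms from $(\F_n)$ to $(\G_n)$ in the 2-limit category. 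By Corollary~\ref{cor:finiteness-Deligne-topos}, each $\rm{Ext}^{i}_{\Z/\ell^n\Z}(\F_n, \G_n)$ is finite, so the Mittag-Leffler condition holds automatically and the $\rm{R}^1\lim$-term vanishes, giving the bijection on Homs.

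For essential surjectivity, let $(\F_n, \phi_n \colon \F_n \otimes^L_{\Z/\ell^n\Z} \Z/\ell^{n-1}\Z \xrightarrow{\sim} \F_{n-1})_{n\geq 1}$ be an object of the 2-limit. To promote it to an object of the $\infty$-limit, one must construct a homotopy-coherent tower extending the given 1-coherent data; this is a Postnikov-type lifting problem. The obstructions to extending a $k$-coherent lift to a $(k{+}1)$-coherent one live in the higher derived inverse limits $\rm{R}^i\lim_n \rm{Ext}^{-j}_{\Z/\ell^n\Z}(\F_n, \F_n)$ for $i \geq 1$ and suitable $j \geq 0$. By Corollary~\ref{cor:finiteness-Deligne-topos} these groups are all finite, so the relevant $\rm{R}^i\lim$'s vanish by Mittag-Leffler and the lift can be constructed inductively. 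Concretely, one chooses a coherent representative of the underlying object at each finite level $n$, and the vanishing of $\rm{R}^1\lim$ of automorphism groups successively supplies the higher homotopies of the transition maps $\phi_n$.

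The main obstacle is essential surjectivity, where the core difficulty is upgrading 1-coherent data to genuinely $\infty$-coherent data. Once the finiteness of Ext groups from Corollary~\ref{cor:finiteness-Deligne-topos} is in hand, both steps reduce to standard Mittag-Leffler calculus for limits of $\infty$-categories; the argument then parallels Ekedahl's classical comparison for $D^b_c(X_{\ov{s}}; \Z_\ell)$ (see e.g. \cite[Chapter 10]{Lei-Fu}) with essentially no modification, the point being that all homotopy groups of mapping spaces in $\cal{D}^b_{ctf}(X\times_s\eta; \Z/\ell^n\Z)$ are finite in the arithmetic setting.
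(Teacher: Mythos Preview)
Your argument for full faithfulness matches the paper's approach exactly: Milnor's exact sequence for Homs in the homotopy category of an $\infty$-categorical limit, together with the finiteness from Corollary~\ref{cor:finiteness-Deligne-topos} to kill the $\rm{R}^1\lim$ term.

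Your treatment of essential surjectivity, however, is more elaborate than necessary and inverts where the content lies. The paper simply notes that essential surjectivity holds ``essentially by construction.'' The reason is that the indexing category $\mathbf{N}^{\rm{op}}$ is the free category on the quiver $\cdots \to 2 \to 1 \to 0$; equivalently, each finite truncation $[n]$ is the iterated pushout $[1]\sqcup_{[0]}\cdots\sqcup_{[0]}[1]$. Hence an object of the $\infty$-categorical limit $\lim_n \cal{D}(X\times_s\eta;\Z/\ell^n\Z)$ is nothing more than a sequence $(\F_n)$ together with equivalences $\F_n\otimes^L_{\Z/\ell^n\Z}\Z/\ell^{n-1}\Z\simeq\F_{n-1}$ for the generating arrows, with no higher coherences to supply. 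Since any isomorphism in a homotopy category lifts to an equivalence in the underlying $\infty$-category, every object of the $2$-limit already lifts on the nose. The Postnikov-style obstruction theory you invoke is therefore not needed: the putative obstructions are vacuous because the indexing diagram has no nontrivial higher cells to witness. Your approach would still go through (finiteness kills $\rm{R}^1\lim$, and $\rm{R}^i\lim$ for $i\geq 2$ vanishes automatically for countable towers of abelian groups), but it obscures the elementary nature of this step and misidentifies it as ``the main obstacle'' when in fact full faithfulness is the only place the arithmetic hypothesis is actually used.
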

\begin{proof}
    First, we note that $\F\otimes^L_{\Z_\ell} \Z/\ell^n\Z\in D^b_{ctf}(X\times_s \eta; \Z/\ell^n\Z)$ for any $\F\in D^b_c(X\times_s\eta; \Z_\ell)$ and an integer $n\geq 1$. Therefore, there is a natural functor
    \[
    \gamma\colon D^b_c(X\times_s \eta; \Z_\ell) \to 2-\lim_n D^b_{ctf}(X\times_s \eta;\Z/\ell^n\Z).
    \]
    Essentially by construction, this functor is essentially surjective. Thus we only need to show that it is also fully faithful. \smallskip
    
    Let $\F=``\lim_n\text{''} \F_n\in D^b_c(X\times_s \eta; \Z_\ell)$ and $\G=``\lim_n\text{''} \G_n\in D^b_c(X\times_s \eta; \Z_\ell)$. Since $D^b_c(X\times_s \eta; \Z_\ell)$ is the homotopy category of a full $\infty$-subcategory of $\lim_n \cal{D}(X\times_s \eta; \Z/\ell^n\Z)$, we see that there is Milnor's short exact sequence computing Hom groups:
    \[
    0 \to \rm{R}^1\lim_n \rm{Ext}^{-1}_{\Z/\ell^n\Z} (\F_n, \G_n) \to \rm{Hom}_{\Z_\ell}(\F, \G) \to \lim_n \rm{Hom}_{\Z/\ell^n\Z}(\F_n, \G_n) \to 0.
    \]
    Corollary~\ref{cor:finiteness-Deligne-topos} implies that $\rm{Ext}^{-1}_{\Z/\ell^n\Z} (\F_n, \G_n)$ are finite group. Thus the Mittag-Leffler criterion implies that the $\rm{R}^1\lim_n$-term vanishes. In other words, 
    \[
    \rm{Hom}_{\Z_\ell}(\F, \G) \simeq \lim_n \rm{Hom}_{\Z/\ell^n\Z}(\F_n, \G_n).
    \]
    This exactly means that $\gamma$ is fully faithful. 
\end{proof}

\begin{lemma}\label{lemma:functors} Let $K$ be an arithmetic non-archimedean field, $f\colon X \to Y$ a separated morphism of finite type $k$-schemes, and $\ell$ a prime number invertible in $\O_K$. Let $\F=``\lim_n\text{''} \F_n$ and $\G = ``\lim_n\text{''}\G_n$ be objects in $D^b_c(X\times_s \eta; \Z_\ell)$, and $\cal{H}=``\lim_n\text{''} \cal{H}_n$ an object in $D^b_c(Y\times_s \eta; \Z_\ell)$. Define 
\[
\rm{R}(f\times_s \eta)_* \F \coloneqq ``\lim_n\text{''} \rm{R}(f\times_s \eta)_* \F_n,
\]
\[
(f\times_s \eta)^* \cal{H} \coloneqq ``\lim_n\text{''} (f\times_s \eta)^* \cal{H}_n,
\]
\[
\rm{R}(f\times_s \eta)_! \F \coloneqq ``\lim_n\text{''} \rm{R}(f\times_s \eta)_! \F_n,
\]
\[
(f\times_s \eta)^! \cal{H} \coloneqq ``\lim_n\text{''} (f\times_s \eta)^! \cal{H}_n,
\]
\[
\F\otimes^L_{\Z_\ell} \G \coloneqq ``\lim_n\text{''} \F_n \otimes^L_{\Z/\ell^n\Z} \G_n,
\]
\[
\rm{R}\cal{H}om_{\Z_\ell}(\F, \G)\coloneqq ``\lim_n\text{''} \rm{R}\cal{H}om_{\Z/\ell^n\Z}(\F_n, \G_n).
\]
Then $\rm{R}f_*\F$ and $\rm{R}f_! \F$ are objects $D^b_c(Y\times_s \eta; \Z_\ell)$, while $(f\times_s \eta)^* \cal{H}$, $(f\times_s \eta)^! \cal{H}$, $\F\otimes^L_{\Z_\ell} \G$, and $\rm{R}\cal{H}om_{\Z_\ell}(\F, \G)$ are objects in $D^b_c(X\times_s \eta; \Z_\ell)$.
\end{lemma}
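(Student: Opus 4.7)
The plan is to verify, for each of the six operations $F$ listed, two things: first, that the prescribed system $\{F(\F_n)\}_n$ really defines an object of $\cal{D}(\,\cdot\,;\Z_\ell)=\lim_n\cal{D}(\,\cdot\,;\Z/\ell^n\Z)$, i.e.\ that the required transition isomorphisms
\[
F(\F_n)\otimes^L_{\Z/\ell^n\Z}\Z/\ell^{n-1}\Z \;\simeq\; F(\F_{n-1})
\]
are present; and second, that the resulting object lies in the constructible subcategory $D^b_c(\,\cdot\,;\Z_\ell)$.

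The compatibility with mod-$\ell^{n-1}$ reduction is supplied directly by results already established in the excerpt: Lemma~\ref{lemma:pushforward-pullback}(\ref{lemma:pushforward-pullback-3}) for $\rm{R}(f\times_s \eta)_*$, Lemma~\ref{lemma:shriek-pushforward-pullback}(\ref{lemma:shriek-pushforward-pullback-3}) for $\rm{R}(f\times_s\eta)_!$, Lemma~\ref{lemma:upper-shriek-pullback}(\ref{lemma:upper-shriek-pullback-5}) for $(f\times_s\eta)^!$, and Corollary~\ref{cor:projection-hom} for $\rm{R}\cal{H}om$; for $\otimes^L$ it boils down to the standard base-change identity
\[
(\F_n\otimes^L_{\Z/\ell^n\Z}\G_n)\otimes^L_{\Z/\ell^n\Z}\Z/\ell^{n-1}\Z \;\simeq\; \F_{n-1}\otimes^L_{\Z/\ell^{n-1}\Z}\G_{n-1},
\]
valid because $\F_n,\G_n$ are of finite Tor-dimension by constructibility; and for $(f\times_s\eta)^*$ it is immediate since pullback is symmetric monoidal. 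For constructibility, the identification $D^b_c(\,\cdot\,;\Z_\ell)\simeq 2\text{-}\lim_n D^b_{ctf}(\,\cdot\,;\Z/\ell^n\Z)$ from Lemma~\ref{lemma:homotopy-2-limit} reduces the problem to showing each $F(\F_n)\in D^b_{ctf}(\,\cdot\,;\Z/\ell^n\Z)$. This is Lemma~\ref{lemma:pushforward-pullback}(\ref{lemma:pushforward-pullback-4}), Lemma~\ref{lemma:shriek-pushforward-pullback}(\ref{lemma:shriek-pushforward-pullback-4}), and Lemma~\ref{lemma:upper-shriek-pullback}(\ref{lemma:upper-shriek-pullback-6}) for the three push/pull operations, and for the remaining cases it is Corollary~\ref{cor:projection-hom} (for $\rm{R}\cal{H}om$) together with the standard preservation of $D^b_{ctf}$ by $(f\times_s\eta)^*$ and $\otimes^L$.

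The main delicacy is coherence: the isomorphisms produced above live in the homotopy categories $D(\,\cdot\,;\Z/\ell^n\Z)$, but to genuinely produce an object of the $\infty$-limit $\lim_n\cal{D}(\,\cdot\,;\Z/\ell^n\Z)$ one needs compatible lifts to the underlying $\infty$-categories. I would address this by working from the start with $\infty$-functorial enhancements of the six operations on $\cal{D}(\,\cdot\,;\Z/\ell^n\Z)$ (the $*$-pushforward and pullback cases are already $\infty$-functorial, as in Appendix~\ref{appendix:deligne}, and the shriek functors admit such enhancements via \cite[Appendix A.5]{lucas-thesis} as noted in the excerpt), assembling them into a single $\infty$-functor on $\lim_n\cal{D}(\,\cdot\,;\Z/\ell^n\Z)$ using the coherent mod-$\ell^{n-1}$ compatibilities from Remark~\ref{rmk:functor-adic} and its shriek-analogues, and then restricting to the constructible subcategory via Lemma~\ref{lemma:homotopy-2-limit}.
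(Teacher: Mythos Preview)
Your approach is essentially the same as the paper's, citing the identical lemmas for the mod-$\ell^{n-1}$ compatibilities. The one point worth flagging is your third paragraph on $\infty$-coherence: this concern is precisely what Lemma~\ref{lemma:homotopy-2-limit} is designed to eliminate. That lemma identifies $D^b_c(X\times_s\eta;\Z_\ell)$ with the \emph{$2$-limit} (projective $2$-limit in the $2$-category of categories) of the $D^b_{ctf}(X\times_s\eta;\Z/\ell^n\Z)$, not the $\infty$-limit, so one only needs the transition isomorphisms at the level of homotopy categories---exactly what the cited lemmas provide---and no higher coherence data is required. The paper's proof accordingly dispenses with your third paragraph entirely and simply invokes Lemma~\ref{lemma:homotopy-2-limit} together with the base-change lemmas you list; your more elaborate route through $\infty$-enhancements of the shriek functors would work but is unnecessary here.
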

\begin{proof}
    Lemma~\ref{lemma:homotopy-2-limit} implies it suffices to show that all these functors satisfy base change with respect to the morphisms $\Z/\ell^n\Z \to \Z/\ell^{n-1}\Z$. The claim is essentially obvious for $(f\times_s \eta)^*$ and $-\otimes^L_{\Z/\ell^n\Z} -$. For other functors, this follows from Lemma~\ref{lemma:pushforward-pullback}(\ref{lemma:pushforward-pullback-3}), Lemma~\ref{lemma:shriek-pushforward-pullback}(\ref{lemma:shriek-pushforward-pullback-3}), Lemma~\ref{lemma:upper-shriek-pullback}(\ref{lemma:upper-shriek-pullback-5}), and Corollary~\ref{cor:projection-hom}. 
\end{proof}

\begin{defn}\label{defn:functors-rationally} Let $f\colon X \to Y$ be a separated morphism of finite type $k$-schemes. We define the {\it six functors}
\[
\rm{R}(f\times_s \eta)_*, \rm{R}(f\times_s \eta)_! \colon D^b_c(X\times_s \eta; \Z_\ell) \to D^b_c(X\times_s \eta; \Z_\ell),
\]
\[
(f\times_s \eta)^*, (f\times_s \eta)^! \colon D^b_c(Y\times_s \eta; \Z_\ell) \to D^b_c(X\times_s\eta; \Z_\ell),
\]
\[
-\otimes^L_{\Z_\ell} - \colon D^b_c(X\times_s\eta; \Z_\ell) \times D^b_c(X\times_s \eta; \Z_\ell) \to D^b_c(X\times_s \eta; \Z_\ell),
\]
\[
\rm{R}\cal{H}om_{\Z_\ell}(-, - )\colon D^b_c(X\times_s \eta; \Z_\ell)^{\rm{op}}\times D^b_c(X\times_s \eta; \Z_\ell)\to D^b_c(X\times_s \eta; \Z_\ell)
\]
as in Lemma~\ref{lemma:functors}. All these functors formally induce functors 
\[
\rm{R}(f\times_s \eta)_*, \rm{R}(f\times_s \eta)_! \colon D^b_c(X\times_s \eta; \Q_\ell) \to D^b_c(X\times_s \eta; \Q_\ell),
\]
\[
(f\times_s \eta)^*, (f\times_s \eta)^! \colon D^b_c(Y\times_s \eta; \Q_\ell) \to D^b_c(X\times_s\eta; \Q_\ell),
\]
\[
-\otimes^L_{\Q_\ell} - \colon D^b_c(X\times_s\eta; \Q_\ell) \times D^b_c(X\times_s \eta; \Q_\ell) \to D^b_c(X\times_s \eta; \Q_\ell),
\]
\[
\rm{R}\cal{H}om_{\Q_\ell}(-, - )\colon D^b_c(X\times_s \eta; \Q_\ell)^{\rm{op}}\times D^b_c(X\times_s \eta; \Q_\ell)\to D^b_c(X\times_s \eta; \Q_\ell). 
\]
\end{defn}

\begin{rmk}\label{rmk:rational-good} By a standard limit argument, one easily checks that all results from Appendix~\ref{appendix:deligne} stays true for the objects of $D^b_c(X\times_s \eta; \Z_\ell)$ and $D^b_c(X\times_s \eta; \Q_\ell)$.
\end{rmk}

\begin{rmk} Using \cite[Proposition 2.2.5]{BBD}, Lemma~\ref{lemma:pushforward-pullback}(\ref{lemma:pushforward-pullback-1}), and Lemma~\ref{lemma:shriek-pushforward-pullback}(\ref{lemma:shriek-pushforward-pullback-1}) (and Remark~\ref{rmk:rational-good}), we see that, for every quasi-finite morphism $f\colon X \to Y$ of finite type $k$-schemes, the functor
\[
\rm{R}(f\times_s \eta)_! \colon D^b_c(X\times_s \eta; \Q_\ell) \to D^b_c(Y\times_s \eta; \Q_\ell)
\]
is right perverse exact (see Lemma~\ref{lemma:perverse-t-structure-rationally}), and
\[
\rm{R}(f\times_s \eta)_* \colon D^b_c(X\times_s \eta; \Q_\ell) \to D^b_c(Y\times_s \eta; \Q_\ell)
\]
is left perverse exact. 
\end{rmk}

\begin{defn}\label{defn:intermediate-extension-Deligne} For a locally closed immersion $j\colon X\to Y$ between finite type $k$-schemes and a perverse sheaf $\F\in \rm{Perv}(X\times_s \eta; \Q_\ell)$, we define the {\it intermediate extension}
\[
(j\times_s \eta)_{!*} \F \coloneqq {}^p\rm{Im}\left({}^p\cal{H}^0\left(\left(j\times_s\eta\right)_!\F\right) \to {}^p\cal{H}^0\left(\rm{R}\left(j\times_s\eta\right)_*\F\right)\right) \in \rm{Perv}\left(Y\times_s \eta; \Q_\ell\right).
\]
\end{defn}

\begin{lemma}\label{lemma:extension-pullback} Let $K$ be an arithmetic field, $j\colon X\to Y$ be a locally closed immersion of finite type $k$-schemes, $\ell$ a prime number invertible in $k$, and $n\geq 1$ a positive integer. Then 
\begin{enumerate}
    \item\label{lemma:extension-pullback-1} the diagram 
    \[
\begin{tikzcd}
    \rm{Perv}(X\times_s \eta, \Q_\ell) \arrow{d}{(j\times_s\eta)_{!*}} \arrow{r}{\pi^*_{X}} & \rm{Perv}(X_{\ov{s}},\Q_\ell) \arrow{d}{j_{\ov{s}, !*}} \\
    \rm{Perv}(Y\times_s \eta, \Q_\ell) \arrow{r}{\pi^*_{Y}} & \rm{Perv}(Y_{\ov{s}}, \Q_\ell),
\end{tikzcd}
\] commutes (up to a canonical isomorphism);
\item\label{lemma:extension-pullback-2} the diagram
\[
\begin{tikzcd}
    \rm{Perv}(X\times_s \eta; \Q_\ell) \arrow{d}{(j\times_s\eta)_{!*}} \arrow{r}{{\sigma}_X^*} & \rm{Perv}(X; \Q_\ell) \arrow{d}{j_{!*}} \\
    \rm{Perv}(Y\times_s \eta; \Q_\ell) \arrow{r}{\sigma_Y^*} & \rm{Perv}(Y; \Q_\ell). 
\end{tikzcd}
\]
commutes (up to a canonical isomorphism) for every continuous section $\sigma\colon G_s \to G_\eta$;
\item\label{lemma:extension-pullback-3} the diagram
\[
\begin{tikzcd}
    \rm{Perv}(X; \Q_\ell) \arrow{r}{p_X^*} \arrow{d}{j_{!*}} & \rm{Perv}(X\times_s \eta; \Q_\ell) \arrow{d}{(j\times_s\eta)_{!*}} \\
    \rm{Perv}(Y; \Q_\ell)  \arrow{r}{p_Y^*}& \rm{Perv}(Y\times_s \eta; \Q_\ell) 
\end{tikzcd}
\]
commutes (up to a canonical isomorphism).
\end{enumerate}
\end{lemma}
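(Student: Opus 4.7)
The plan is to reduce all three parts to a single observation: if $F^{\ast}$ is any perverse $t$-exact functor that commutes via canonical isomorphisms with the two functors $(j\times_s\eta)_!$ and $\rm{R}(j\times_s\eta)_{\ast}$, then applying $F^{\ast}$ to the defining formula
\[
(j\times_s\eta)_{!*}\F = {}^p\rm{Im}\bigl({}^p\cal{H}^0\left((j\times_s\eta)_!\F\right)\to {}^p\cal{H}^0\left(\rm{R}(j\times_s\eta)_*\F\right)\bigr)
\]
yields the desired compatibility: $F^{\ast}$ commutes with ${}^p\cal{H}^0$ by $t$-exactness, and the exact functor it induces on perverse hearts preserves images. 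Thus the proof reduces, in each of the three cases, to verifying two ingredients: commutativity of the pullback with $(j\times_s\eta)_!$ and $\rm{R}(j\times_s\eta)_{\ast}$, and perverse $t$-exactness of the pullback.

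The commutativities with $(j\times_s\eta)_!$ and $\rm{R}(j\times_s\eta)_{\ast}$ are already contained in the appendix. For $\pi_X^{\ast}$ these are Lemma~\ref{lemma:pushforward-pullback}(\ref{lemma:pushforward-pullback-1}) and Lemma~\ref{lemma:shriek-pushforward-pullback}(\ref{lemma:shriek-pushforward-pullback-1}); for $\sigma_X^{\ast}$, Lemma~\ref{lemma:pushforward-pullback}(\ref{lemma:pushforward-pullback-2}) and Lemma~\ref{lemma:shriek-pushforward-pullback}(\ref{lemma:shriek-pushforward-pullback-2}); for $p_X^{\ast}$, Lemma~\ref{lemma:pushforward-pullback}(\ref{lemma:pushforward-pullback-new}) and Lemma~\ref{lemma:shriek-pushforward-pullback}(\ref{lemma:shriek-pushforward-pullback-new}); in each case combined with Remark~\ref{rmk:rational-good} to pass from $\Z/\ell^n\Z$- to $\Q_\ell$-coefficients.

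The only substantive point is perverse $t$-exactness. For $\pi_X^{\ast}$ it is literally the \emph{definition} of the perverse $t$-structure on $D^b_c(X\times_s\eta;\Q_\ell)$ in Lemma~\ref{lemma:perverse-t-structure-rationally}, which also says that $\pi_X^{\ast}$ is conservative and detects the perverse $t$-structure. For $\sigma_X^{\ast}$ and $p_X^{\ast}$ we use the tautological factorizations of functors
\[
\pi_X^{\ast} \simeq c_X^{\ast}\circ\sigma_X^{\ast}, \qquad c_X^{\ast} \simeq \pi_X^{\ast}\circ p_X^{\ast},
\]
where $c_X\colon X_{\ov{s},\et}\to X_{\et}$ is the base change morphism; the first follows from the identification of $X\times_s\eta$ with $\rm{T}_{G_\eta}(X_{\ov{s}})$ (Lemma~\ref{lemma:properties-Deligne}(\ref{lemma:properties-Deligne-2})) by unwinding what the underlying sheaf of $\sigma_X^{\ast}\F$ is, and the second follows from $c_X = p_X\circ\pi_X$ as morphisms of topoi. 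Combining these with the classical fact that $c_X^{\ast}$ is perverse $t$-exact and detects the perverse $t$-structure on $D^b_c(X;\Q_\ell)$, a direct chase of the defining inequalities yields perverse $t$-exactness: for instance, for any $\F\in D^b_c(X\times_s\eta;\Q_\ell)$, one has $\sigma_X^{\ast}\F\in{}^p D^{\leq 0}_c(X;\Q_\ell)$ iff $\pi_X^{\ast}\F \simeq c_X^{\ast}\sigma_X^{\ast}\F\in{}^p D^{\leq 0}_c(X_{\ov{s}};\Q_\ell)$ iff $\F\in{}^p D^{\leq 0}_c(X\times_s\eta;\Q_\ell)$, and similarly with $\geq 0$ in place of $\leq 0$; the argument for $p_X^{\ast}$ is strictly analogous via $c_X^{\ast}\simeq\pi_X^{\ast}\circ p_X^{\ast}$. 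The main, and essentially only, obstacle is this $t$-exactness verification; once it is in place, preservation of the intermediate extension is a formal consequence of its construction and of the base change results already established in the appendix.
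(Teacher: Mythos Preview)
Your proof is correct and takes essentially the same approach as the paper: the paper's proof simply says the result is ``an easy consequence of Lemma~\ref{lemma:pushforward-pullback}, Lemma~\ref{lemma:shriek-pushforward-pullback}, and Lemma~\ref{lemma:perverse-t-structure-rationally}'' and leaves the details to the reader, while you have supplied exactly those details. Your explicit verification of perverse $t$-exactness for $\sigma_X^{\ast}$ and $p_X^{\ast}$ via the factorizations $\pi_X^{\ast}\simeq c_X^{\ast}\circ\sigma_X^{\ast}$ and $c_X^{\ast}\simeq\pi_X^{\ast}\circ p_X^{\ast}$ is the natural way to fill in what the paper omits.
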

\begin{proof}
    The proof is an easy consequence of Lemma~\ref{lemma:pushforward-pullback}, Lemma~\ref{lemma:shriek-pushforward-pullback}, and Lemma~\ref{lemma:perverse-t-structure-rationally}. Details are left to the reader. 
\end{proof}

\begin{lemma}\label{lemma:classification-simple-perverse-objects} Let $X$ be a finite type $k$-scheme, and $\F$ a simple perverse sheaf on $\rm{Perv}(X\times_s \eta; \Q_\ell)$. Then there is an irreducible subscheme $Y\subset X$, an open dense $U\subset Y$, and an irreducible local system $\G$ on $U$ such that $U_{\rm{red}}$ is smooth, and $\F\simeq (j\times_s \eta)_{!*}(\G[\dim Y])$. 
\end{lemma}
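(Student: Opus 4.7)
The plan is to follow the classical BBD classification strategy (cf.\ the argument of Lemma~\ref{lemma:ic-subquotient}), adapted to Deligne's topos using the formalism of this appendix.

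First I would apply Lemma~\ref{lemma:lisse-stratification} to produce a finite stratification $X = \bigsqcup_{i} X_i$ with $(X_i)_{\rm{red}}$ smooth and $\F|_{X_i \times_s \eta}$ lisse. Let $Y \subset X$ be the closure of the union of strata on which $\F$ is non-zero, and choose a stratum $U$ that is open in $Y$ (one of maximal dimension among those supporting $\F$). Then $U$ is locally closed in $X$, open dense in $Y$, with $U_{\rm{red}}$ smooth, and $\F|_{U \times_s \eta}$ is simultaneously lisse and perverse. Checking after the conservative pullback $\pi_U^*$ (Lemma~\ref{lemma:properties-Deligne}(\ref{lemma:properties-Deligne-3})), the combination of Lemma~\ref{lemma:perverse-t-structure-rationally} and the standard fact that a lisse perverse sheaf on a smooth $d$-dimensional variety is concentrated in degree $-d$ would force $\F|_{U \times_s \eta} \simeq \G[d_U]$ for a $\Q_\ell$-local system $\G$ on $U \times_s \eta$ in the sense of Definition~\ref{defn:lisse-objects}, where $d_U$ denotes the locally constant dimension function on $U$.

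Next, with $j \colon U \hookrightarrow X$ the composite locally closed immersion, I would consider the natural adjunction maps and their perverse cohomology
\[
\alpha \colon {}^p\cal{H}^0((j \times_s \eta)_!(\G[d_U])) \to \F, \qquad \beta \colon \F \to {}^p\cal{H}^0(\rm{R}(j \times_s \eta)_*(\G[d_U])).
\]
The composition $\beta \circ \alpha$ factors through $(j \times_s \eta)_{!*}(\G[d_U])$ by Definition~\ref{defn:intermediate-extension-Deligne} and restricts to the identity on $U \times_s \eta$, so it is non-zero. Simplicity of $\F$ then forces $\alpha$ to be surjective and $\beta$ injective, and hence $\F$ coincides with the image of $\beta \circ \alpha$, namely $(j \times_s \eta)_{!*}(\G[d_U])$.

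Finally I would verify that $Y$ is irreducible and $\G$ is simple. If $Y = Y_1 \cup Y_2$ were a union of two proper closed subsets, then after possibly refining the stratification one can arrange $U = U_1 \sqcup U_2$ with $U_i$ dense open in $Y_i$, giving $\G[d_U] = \G_1[d_{U_1}] \oplus \G_2[d_{U_2}]$; the intermediate extension would then decompose $\F$, contradicting simplicity. Hence $Y$ is irreducible, $U$ is irreducible, $U_{\rm{red}}$ is smooth of pure dimension $\dim Y$, and $d_U \equiv \dim Y$. Similarly, a proper non-zero sublocal system $\G' \subsetneq \G$ would produce a proper non-zero subperverse sheaf $(j \times_s \eta)_{!*}(\G'[\dim Y]) \subsetneq \F$, using that $(j \times_s \eta)_{!*}$ preserves injections of perverse sheaves (as it is the image of a map whose target $\rm{R}(j \times_s \eta)_*$ is left perverse $t$-exact by Remark~\ref{rmk:rational-good}), again contradicting simplicity. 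The main bookkeeping step will be verifying that $(j \times_s \eta)_{!*}$ behaves functorially with respect to direct sums and sub-objects of perverse sheaves, but this should reduce to formal manipulations with the perverse $t$-structure on Deligne's topos supplied by Lemma~\ref{lemma:perverse-t-structure-rationally}.
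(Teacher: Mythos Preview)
Your proposal is correct and follows essentially the same approach as the paper. The paper's proof simply remarks that the argument is identical to the standard classification of simple perverse sheaves in \cite{KW} once one has Lemma~\ref{lemma:lisse-stratification} and the six-functor formalism on Deligne's topos, and you have spelled out precisely that classical BBD argument.
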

\begin{proof}
    The proof is identical to that of \cite[Corollary 5.5]{KW} using Lemma~\ref{lemma:lisse-stratification} and the usual properties of the six functors. 
\end{proof}

\section{Some facts from rigid-analytic geometry}\label{appendix:rigid-geometry}

In this section, we collect some facts about rigid-analytic spaces. These facts play an important role in our proof of \cite[Conjecture 4.15]{Bhatt-Hansen}. These results are probably well-known to the experts, but we find it difficult to extract them from the existing literature. For this reason, we decided to include the proofs in this appendix.

Throughout this section, we fix a non-archimedean field $K$. We denote by $\O_K$ its ring of integers and by $k$ its residue field. For an $\O_K$-algebra $A$, we set $A_K\coloneqq A\otimes_{\O_K} K$ and $A_k\coloneqq A\otimes_{\O_K} k$. 

\begin{lemma1} Let $\X$ be a quasi-compact admissible formal $\O_K$-scheme with generic fiber $X=\X_\eta$ and special fiber $\X_s$, let $\sp_\X \colon |X| \to |\X_s|$ be the specialization morphism, and let $\zeta\in \X_s$ be a generic point in the special fiber. Then $\sp_{\X}^{-1}(\zeta)$ consists of finitely many points of rank-$1$.
\end{lemma1}
\begin{proof}
    First, we can assume that $\X$ is a reduced formal scheme. Furthermore, the question is Zariski-local around the point $\zeta\in \X$, so we can assume that $\X=\Spf A$ is an irreducible reduced affine admissible formal $\O_K$-scheme and $\zeta\in \X_s$ is the unique generic point. For brevity, we denote by $k$ the residue field of $\O_K$ and by $A_k$ the tensor product $A\otimes_{\O_K} k$. Then \cite[Theorem 0.9.3.6]{FujKato} implies that there is a finite injective morphism $\O_K\langle T_1, \dots, T_d\rangle \to A$ such that the induced morphism $k[T_1, \dots, T_d] \to A_k$ is (finite) injective as well. Let $g\colon \Spf A \to \Spf \O_K\langle T_1, \dots, T_d\rangle$ be the induced map and let $\xi \in \Spec k[T_1, \dots, T_d]$ be the generic point. Since $\Spec A_k$ is irreducible, the dimension formula (see \cite[\href{https://stacks.math.columbia.edu/tag/02IJ}{Tag 02IJ}]{stacks-project}) applied to $(A_k)_{\rm{red}}$ implies that $g_{s}^{-1}(\{\xi\})=\{\zeta\}$. Therefore, \cite[Lemma~2.3.1]{LRZ} implies that it suffices to prove the claim for $\X = \Spf \O_K\langle T_1, \dots, T_d\rangle$. In this case, the result follows immediately from \cite[Lemma 4.3.2]{LRZ}. 
\end{proof}

\begin{cor1}\label{cor:good-opens} Let $K$ be a perfect non-archimedean field and let $\X$ be a reduced admissible formal $\O_K$-scheme. Then there is a dense open formal subscheme $\mathcal{U} \subset \X$ such that $\mathcal{U}_\eta$ is smooth over $\Spa(K, \O_K)$.
\end{cor1}
\begin{proof}
    For the purpose of proving this corollary, we can replace $\X$ with any quasi-compact dense open $\cal{U}\subset \X$. Therefore, we can assume that $\X$ is a disjoint union of its irreducible components. Dealing with one component at a time, we can assume that $\X$ is irreducible. Let $\zeta\in \X_s$ be the unique generic point, and let $\sp_{\X}^{-1}(\{\zeta\}) = \{\xi_1, \dots, \xi_n\}$. Then \cite[Lemma 2.8]{BLR3} implies that the regular locus of $X_\eta$ coincides with the smooth locus because $K$ is perfect. Therefore, \cite[Proposition 2.9 and Corollary 2.10]{Bhatt-Hansen} imply that there is a quasi-compact open $V\subset \X_\eta$ such that $\xi_1, \dots, \xi_n\in V$ and $V$ is smooth over $\Spa(K,\O_K)$. Now we note that $\cap_{i\in I} U_i = \{\zeta\}$, where the intersection is taken over all (quasi-compact) open non-empty subsets $U_i\subset \X_s$. Therefore, 
    \[
    \cap_{i\in I} \sp_{\X}^{-1}(U_i)= \sp_{\X}^{-1}(\cap_{i\in I} U_i) = \{\xi_1, \dots, \xi_n\}  \subset V. 
    \]
    Thus, \cite[\href{https://stacks.math.columbia.edu/tag/0A2W}{Tag 0A2W}]{stacks-project} implies that there is a (quasi-compact) open non-empty subset $U\subset \X_s$ such that $\sp_{\X}^{-1}(U)\subset V$. Let us denote by $\cal{U}\subset \X$ the associated open formal subscheme of $\X$. Then $\cal{U}$ does the job. 
\end{proof}


\begin{lemma1}\label{lemma:irreducible-components-hit-associated-points} Let $A$ be a flat, topologically finitely presented $\O_K$-algebra. Let $\p$ be a minimal prime ideal of $A$. Then every generic point in $\rV(\p) \cap \Spec A_k$ is an associated point of $\Spec A_k$. 
\end{lemma1}
\begin{proof}
    Choose a pseudo-uniformizer $\varpi\in A$. Then \cite[Proposition 3.4.3]{RG} and \cite[\href{https://stacks.math.columbia.edu/tag/058A}{Tag 058A}]{stacks-project} imply that $|\WeakAss(A/\varpi^n)|=|\Ass(A_k)|$ for any integer $n\geq 1$, so it suffices to show that every generic point of $\rV(\p) \cap \Spec A/\varpi^n$ lies in $\WeakAss(A/\varpi^n)$ for some $n\geq 1$. 

    Since $A$ is flat over $\O_K$, we conclude that $\varpi\notin \p$. This implies that $\p$ is saturated (see \cite[\textsection 0.8.1(c)]{FujKato}). In particular, we conclude that $\p$ is a finitely generated ideal by virtue of \cite[Proposition 0.8.5.3 and Corollary 0.9.2.7]{FujKato}. Therefore, \cite[\href{https://stacks.math.columbia.edu/tag/05C4}{Tag 05C4}]{stacks-project} and \cite[\href{https://stacks.math.columbia.edu/tag/058A}{Tag 058A}]{stacks-project} imply that there is an element $a\in A$ such that $\Ann_A(a)=\p$. Set $I\coloneqq (a)$, so \cite[\href{https://stacks.math.columbia.edu/tag/00L2}{Tag 00L2}]{stacks-project} ensures that $\Supp(I) = \rV(\p)$. Then \cite[\href{https://stacks.math.columbia.edu/tag/00L3}{Tag 00L3}]{stacks-project} implies that 
    \[
    \Supp(I/\varpi^n I) = \rV(\p)\cap \Spec A/\varpi^n.
    \]
    Then the Artin--Rees Lemma (see \cite[Proposition 0.8.2.13]{FujKato}) implies that there is an integer $c\geq 1$ such that $\varpi^c I \subset \varpi^c A\cap I \subset \varpi I$. Then $|\Supp(I/\varpi^c I)| = |\Supp(I/(\varpi^c A\cap I))| = |\Supp(I/\varpi I)|$. Therefore, this formula, the last sentence of the first paragraph, and \cite[\href{https://stacks.math.columbia.edu/tag/05C4}{Tag 05C4}]{stacks-project} imply that it suffices to show that $\WeakAss(I/(\varpi^c A\cap I)) \subset \WeakAss(A/\varpi^c)$. This follows directly from \cite[\href{https://stacks.math.columbia.edu/tag/0548}{Tag 0548}]{stacks-project}. 
\end{proof}

We now discuss the relation between the dimension of a rigid-analytic space $X$ and of its formal model $\X$. We use \cite[Definition 1.8.1]{H3} as our definition of the dimension in the rigid-analytic situation. However, we freely use that this definition coincides with the definition in \cite[Definition II.10.1.1]{FujKato} by virtue of \cite[Corollary II.10.1.10]{FujKato} and \cite[Lemma 1.8.6]{H3}. Furthermore, both definitions agree with the Krull dimension $\dim R$ for an affinoid rigid-analytic space $X=\Spa(R, R^\circ)$.

\begin{prop1}\label{prop:equi-dimension} Let $\X$ be an admissible formal $\O_K$-scheme. Then 
\begin{enumerate}
   \item\label{prop:equi-dimension-1} the scheme $\X_s$ is of pure dimension $d$ if the rigid-analytic space $\X_\eta$ is of pure dimension $d$;
   \item\label{prop:equi-dimension-2} the rigid-analytic space $\X_\eta$ is of pure dimension $d$ if the scheme $\X_s$ is of pure dimension $d$ and has no embedded points (see \cite[\href{https://stacks.math.columbia.edu/tag/05AK}{Tag 05AK}]{stacks-project}).
\end{enumerate}
\end{prop1}
    We note that \cite[\href{https://stacks.math.columbia.edu/tag/0346}{Tag 0346}]{stacks-project} ensures that $\X_s$ has no embedded points if and only if it is $(S_1)$. 
\begin{proof}
    Part~(\ref{prop:equi-dimension-1}) was already proven in \cite[Corollary B.4]{Z-thesis}. Therefore, we only need to show part~(\ref{prop:equi-dimension-2}). The claim is local on $\X$, so we can assume that $\X = \Spf A$ is affine. Furthermore, we note that \cite[Lemma 2.1.5]{conrad} together with \cite[Theorem II.10.1.8 and Proposition II.10.1.9]{FujKato} imply that it suffices to show that each irreducible component of $\X_\eta$ has dimension $d$. In other words, we need to show that $\dim A_K/\p' =d$ for any minimal prime ideal $\p'\subset A_K$. By the $\O_K$-flatness of $A$, any such minimal prime ideal corresponds to a minimal prime ideal $\p\subset A$. Then $A/\p$ is a flat, topologically finitely presented $\O_K$-algebra, so Lemma~\ref{lemma:irreducible-components-hit-associated-points} ensures that $\Spec (A/\p)_k$ meets a generic point of $\Spec A_k$. Since $\Spec A_k$ is of pure dimension $d$, we conclude that $\dim (A/\p)_k$ is of dimension $d$. Therefore, \cite[Lemma B.3]{Z-thesis} implies that $\dim A_K/\p' = \dim (A/\p)_K =d$. This finishes the proof. 
\end{proof}

The assumption in Proposition~\ref{prop:equi-dimension}(\ref{prop:equi-dimension-2}) is optimal. The following example was communicated to us by Shizhang Li.

\begin{example1} Let $\varpi\in \O_K$ be a pseudo-uniformizer, let $A\coloneqq \O_K\langle X\rangle$, let $B\coloneqq \O_K$, let $C\coloneqq \O_K\langle Y\rangle/(Y^2-\varpi Y)$, and let $f\colon A\to B$ and $g\colon C \to B$ be the unique continuous $\O_K$-linear homomorphisms that send $X$ and $Y$ to $0$. Set $D \coloneqq A\times_B C$ to be the fiber product of these rings. Then $D$ is a flat, topologically finitely presented $\O_K$-algebra such that $(\Spf D)_s$ is pure of dimension $1$, but $(\Spf D)_\eta$ is not of pure dimension. 
\end{example1}
\begin{proof}
    First, we note that $D$ is a subring of $A\times C$, therefore it is flat over $\O_K$. Then one see that $I\coloneqq \ker(D \twoheadrightarrow C) \simeq \ker(A \twoheadrightarrow B)$. Since $A$ and $B$ are $\varpi$-adically complete,  \cite[\href{https://stacks.math.columbia.edu/tag/091T}{Tag 091T}]{stacks-project} and \cite[\href{https://stacks.math.columbia.edu/tag/091U}{Tag 091U}]{stacks-project} imply that $I$ is derived $\varpi$-adically complete. Now using that $C$ is $\varpi$-adically complete, we also conclude that $D$ is derived $\varpi$-adically complete. In particular, $D\simeq \rR\lim_n [D/\varpi^n]$. Since $D$ has no $\varpi$-torsion, we conclude that $D\simeq \lim_n D/\varpi^n D$, i.e, $D$ is $\varpi$-adically complete. Now we note that the natural morphism $D \hookrightarrow A\times C$ realizes $A\times C$ as a finite $D$-module because both projections $D \to A$ and $D\to C$ are surjective. Since $B$ is $\O_K$-flat, we conclude that $D\subset A\times C$ is saturated. Therefore, \cite[Lemma 3.2.5]{Z-quotient} implies that $D$ is topologically finite type over $\O_K$. Finally, \cite[Corollary 7.3/5]{B} implies that $D$ is topologically finitely presented over $\O_K$. 

    Now we need to show that $\Spf D$ has special fiber of pure dimension $1$ and that it has generic fiber which is not of pure dimension. Using that the short exact sequences $0 \to I \to D \to C \to 0$ and $0 \to I \to A \to B \to 0$ remain exact after applying $-\otimes_{\O_K} K$ and $-\otimes_{\O_K} k$, we conclude that $D_k \simeq A_k\times_{B_k} C_k$ and $D_K \simeq A_K \times_{B_K} C_K$. Therefore, one easily checks that 
    \[
    D_k \simeq k[X] \times_k k[Y]/(Y^2) \simeq k[X, Y]/(XY, Y^2) \text{ and }\]
    \[
    D_K \simeq K\langle X\rangle \times_K \big(K\langle Y\rangle/(Y) \times K\langle Y\rangle/(Y-\varpi)\big) \simeq K\langle X\rangle \times K. 
    \]
    Therefore, $\Spec D_k$ is of pure dimension $1$, but $\Spa(D_K, D_K^\circ)$ has an irreducible component of dimension $0$. 
\end{proof}




\printbibliography

\end{document}